\newcommand{\pre}[1]{%
  \begin{tabular}{@{}c@{}} #1 \end{tabular}%
}
\newcommand{\middlexcolumn}{%
  \renewcommand{\tabularxcolumn}[1]{m{##1}}%
}
\newcommand{\heading}[1]{\raggedright\arraybackslash #1}
\definecolor{codegreen}{rgb}{0,0.6,0}
\definecolor{codegray}{rgb}{0.5,0.5,0.5}
\definecolor{codepurple}{rgb}{0.58,0,0.82}
\definecolor{backcolour}{rgb}{0.95,0.95,0.92}
\lstdefinestyle{mystyle}{
    backgroundcolor=\color{backcolour},
    commentstyle=\color{codegreen},
    keywordstyle=\color{magenta},
    numberstyle=\tiny\color{codegray},
    stringstyle=\color{codepurple},
    basicstyle=\ttfamily\footnotesize,
    breakatwhitespace=false,
    breaklines=true,
    captionpos=b,
    keepspaces=true,
    numbers=left,
    numbersep=5pt,
    showspaces=false,
    showstringspaces=false,
    showtabs=false,
    tabsize=2
}
\newcommand{\Z}{\mathbb Z}
\newcommand{\Q}{\mathbb Q}
\newcommand{\F}{\mathbb F}
\newcommand{\M}{\mathcal{M}}
\newcommand{\A}{\mathcal{A}}
\newcommand{\bbA}{\mathbb{A}}
\newcommand{\bbP}{\mathbb{P}}
\newcommand{\fA}{\mathfrak{A}}
\newcommand{\Pic}{\operatorname{Pic}}
\newcommand{\PGL}{\operatorname{PGL}}
\newcommand{\Gal}{\operatorname{Gal}}
\newcommand{\Aut}{\operatorname{Aut}}
\newcommand{\Rat}{\operatorname{Rat}}
\newcommand{\SL}{\operatorname{SL}}
\newcommand{\PrePer}{\operatorname{PrePer}}
\theoremstyle{definition}
\newtheorem{theorem}{Theorem}[section]
\newtheorem*{theorem*}{Theorem}
\newtheorem{lemma}[theorem]{Lemma}
\newtheorem{cor}[theorem]{Corollary}
\newtheorem{conj}[theorem]{Conjecture}
\newtheorem{prop}[theorem]{Proposition}
\newtheorem*{fact*}{Fact}
\newtheorem{remark}[theorem]{Remark}
\providecommand{\new}[1]{\textcolor{blue}{#1}}
\title{Automorphism loci for degree 3 and degree 4 endomorphisms of the projective line}
\author{Brandon Gontmacher}
\address{Department of Mathematics,
         University of Maryland,
         College Park, MD 20742}
\email{bgontmac@umd.edu}
\author{Benjamin Hutz}
\address{Department of Mathematics and Statistics,
         Saint Louis University,
         St.~Louis, MO 63103}
\email{benjamin.hutz@slu.edu}
\author{Grayson Jorgenson}
\address{Department of Mathematics,
         Florida State University,
         Tallahassee, FL 32306}
\email{gjorgens@math.fsu.edu}
\author{Srinjoy Srimani}
\address{Department of Mathematics,
         Brown University,
        Providence, RI 02912}
\email{srinjoy\_srimani@brown.edu}
\author{Simon Xu}
\address{Department of Mathematics and Statistics,
         Colby College,
        Waterville, Maine 04901}
\email{sxu20@colby.edu}
\subjclass[2010]{
37P45,   	
37P05,   	
(37P15)   
}
\thanks{The authors thank the Institute for Computational and Experimental Research in Mathematics which oversaw the REU during the summer of 2019, where the majority of this project was completed. Also the Brown Center for Computation and Visualization which provided computational resources for some of the computations. Several other individuals provide helpful discussions and expertise on various specific rational points problems in this paper: Brendan Hassett, Michael Stoll, Andrew Sutherland, Jan Tuitman, Jennifer Balakrishnan, and an anonymous referee.}
\begin{document}
\begin{abstract}
Let $f$ be an endomorphism of the projective line. There is a natural conjugation action
on the space of such morphisms by elements of the projective linear group.
The group of automorphisms, or stabilizer group, of a given $f$ for this action is known
to be a finite group. We determine explicit families that parameterize all
endomorphisms defined over $\bar{\mathbb{Q}}$ of degree $3$ and degree $4$ that have a nontrivial automorphism, the \textit{automorphism locus} of the moduli space of dynamical systems.
We analyze the geometry of these loci in the appropriate moduli space of dynamical systems. Further, for each family of maps, we study the possible structures of $\mathbb{Q}$-rational preperiodic points which occur under specialization.
\end{abstract}

\maketitle

\section{Introduction}
    Let $K$ be a field and $\bbP^1$ the projective line. Throughout, $K$ is a finite extension of $\Q$. An endomorphism of $\bbP^1$ of degree $d$ can be represented as a pair of homogeneous polynomials of degree $d$ with no common factors. The space of all such maps is denoted as $\Rat_d$. There is a natural conjugation action on $\Rat_d$ by $\PGL_2$, the automorphisms of $\bbP^1$, given as
    \begin{equation*}
        f^{\alpha} \colonequals \alpha^{-1} \circ f \circ \alpha \quad \text{for } f \in \Rat_d \text{ and } \alpha \in \PGL_2.
    \end{equation*}
    The quotient by this action is a geometric quotient in terms of geometric invariant theory and forms the moduli space of degree $d$ dynamical systems on $\bbP^1$, $\M_d \colonequals \Rat_d/\PGL_2$ \cite{Silverman9}. We denote a conjugacy class as $[f] \in \M_d$ and a representation of a conjugacy class as $f \in \Rat_d$. Our primary objects of study are those conjugacy classes $[f] \in \M_d$ for which there is a nontrivial $\alpha \in \PGL_2$ so that $f^\alpha = f$. Such an $\alpha$ is called an \emph{automorphism} of $f$, and the set of all such automorphisms forms a group
    \begin{equation*}
        \Aut(f) \colonequals \{ \alpha \in \PGL_2 : f^{\alpha} = f\}.
    \end{equation*}
    In additional to being special from the existence of these extra symmetries, conjugacy classes with nontrivial automorphisms are exactly the singular points of the moduli space $\M_d$ for $d \geq 3$ \cite{Miasnikov}.

    Since every automorphism must leave certain sets of points invariant (such as the set of periodic points of a given period), the automorphism group of a given $f$ must be a finite subgroup of $\PGL_2$. Sharper bounds than this permutation bound on the size of an automorphism group in terms of $d$ can be obtained but do not concern us here \cite{Levy}. Key to this work is the (classical) classification in characteristic zero of the finite subgroups of $\PGL_2$ restated in modern notation by Silverman \cite{Silverman12}.

    It is important to note that $\Aut(f)$ is well defined on conjugacy classes. In particular, given $\alpha \in \PGL_2$, the action on $\Aut(f)$ defined by $\sigma \mapsto \alpha^{-1} \circ \sigma \circ \alpha$ provides a group isomorphism $\Aut(f) \cong \Aut(f^{\alpha}).$ The conjugacy class of $\Aut(f)$ in $\PGL_2$ is, thus, an invariant of $[f]$ rather than just $f$.
    Denote by $\A_d \subset \M_d$ the set of all conjugacy classes with a nontrivial automorphism. Let $\Gamma \subset \PGL_2$ be a finite subgroup with representation $\rho: \Gamma \to \SL_2$. Denote the set of conjugacy classes whose automorphism group contains a subgroup isomorphic to $\Gamma$ by $\A_d(\Gamma)$.
    Similarly denote $A_d \subset \Rat_d$ as the set of rational maps with nontrivial automorphism group with $\rho(\Gamma)$ as a subgroup, i.e., $A_d(\Gamma) = \{f \in A_d : \Aut(f) \supseteq \rho(\Gamma)\}$. It is important to note that while every finite subgroup of $\PGL_2$ has only one inequivalent representation, the choice of group representation affects the representation in homogeneous coordinates of the map. In particular, questions about the field of definition for elements of $A_d(\Gamma)$ are heavily dependent on the choice of representation of $\Gamma$, e.g., \cite{Hutz15, Silverman12}.

    The goals of this work are two-fold. The first is to give explicit parameterizations for all maps in $\A_d$ for $3 \leq d \leq 4$. The case $d=2$ is well known \cite{Fujimura}. The case $d=3$ can be derived from the unpublished work of West \cite{West}, but he focuses on the parameterizations of $\M_3$ as a whole and a side result is the ability to determine which elements lie in $\A_3$. However, it is nontrivial to move between West's parameterization of $\M_3$ and elements of $\Rat_3$. Further, his methods are not easily applicable in degree $d > 3$. The methods here can be used for any degree, and we produce explicit families in $\Rat_3$ and $\Rat_4$, ($\M_3$ and $\M_4$). The second goal is to study other arithmetical dynamical properties of these families with nontrivial automorphisms focusing on the structure of the set of rational preperiodic points. The motivation for this portion is the uniform boundedness conjecture of Morton and Silverman.
    \begin{conj}[\cite{Silverman7}]
        Fix integers $d\geq 2$, $N\geq 1$, and $D \geq 1$. There is a constant $C(d, N, D)$ such that for all number fields $K/\Q$ of degree at most $D$ and all morphisms $f:\bbP^N \to \bbP^N$ of degree $d$ defined over $K$,
        \begin{equation*}
            \#\PrePer(f,K) \leq C(d, N, D),
        \end{equation*}
        where $\PrePer(f,K)$ is the set of preperiodic points for $f$ defined over $K$.
    \end{conj}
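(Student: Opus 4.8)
The statement is the Morton--Silverman uniform boundedness conjecture, open in every nontrivial instance (even for $N=1$, $d=2$, $D=1$), so the following is a roadmap rather than a routine proof. The plan begins with a structural reduction. The set $\PrePer(f,K)$ is a disjoint union of ``$\rho$-shaped'' components, each a periodic cycle with finite trees of tails attached; since any point of $\bbP^N$ has at most $d^N$ preimages under $f$, the branching at every node of every tree is at most $d^N$. Hence a uniform bound on $\#\PrePer(f,K)$ follows from two sub-problems: (i) a uniform bound $M(d,N,D)$ on the exact period $n$ of any $K$-rational periodic point --- combined with the Bezout estimate $\#\{P:f^n(P)=P\}\le (N+1)\,d^{nN}$, this controls the number and total length of the cycles once $n\le M$; and (ii) a uniform bound $L(d,N,D)$ on the depth of the tail-trees, after which $\#\PrePer(f,K)\le(\#\text{periodic points})\cdot(d^N)^L$. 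So the conjecture reduces to (i) and (ii).

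For (i) I would run the $p$-adic reduction method of Morton and Silverman: if $f$ has good reduction at a prime $\mathfrak p$ of $K$ above $p$, a $K$-rational point $P$ of exact period $n$ reduces to a point $\bar P$ of exact period $m\mid n$ over the residue field, and a $p$-adic analysis of the local multiplier of $f^m$ at $\bar P$ constrains the ratio $n/m$ to an explicit finite set determined by $\mathrm N\mathfrak p$ and $N$. Using two primes of coprime residue characteristic then bounds $n$ in terms of $d$, $N$, $D$ \emph{and the bad primes of $f$}; the remaining, delicate task is to conjugate $f$ by an element of $\PGL_{N+1}(K)$ into a model whose bad reduction is controlled by $d,N,D$ alone --- this is where, as things stand, the periodic count is not yet unconditionally uniform. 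For (ii) the tail-depth is governed by the canonical height: a point $P\in\bbP^N(\overline{K})$ is preperiodic for $f$ iff $\hat h_f(P)=0$, and the Weil height of a point at tail-depth $L$ is roughly $d^{-L}$ times that of its periodic image, so a uniform bound on $L$ is equivalent to a uniform gap ``$\hat h_f(P)\ge\epsilon(d,N,D)$ whenever $P$ is not preperiodic'' in a normalization where $f$ has height $O(1)$ --- Silverman's \emph{height uniformity}. The comparison $|\hat h_f-h|=O(h(f))$ is classical and uniform, so (ii) is exactly this height gap.

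The hard part --- the part no current technique reaches --- is the height gap in (ii), equivalently ruling out a sequence $f_i$ of degree-$d$ endomorphisms over number fields of degree $\le D$ with $\#\PrePer(f_i,K_i)\to\infty$. The only plausible strategy I see is to imitate, in the dynamical setting, the Mazur--Merel circle of ideas behind uniform boundedness of torsion: a bound-violating family produces $K$-rational points on a ``dynamical modular'' variety parameterizing a fixed large preperiodic structure (a union of iterated-preimage and dynatomic loci inside $\mathrm{Hom}^N_d\times(\bbP^N)^k$), and one would want to show that variety is of general type, or obeys a uniform Mordell-type bound, via Vojta's conjecture or the uniform-Mordell technology of Dimitrov--Gao--Habegger; the constant $C(d,N,D)$ would then come out of the geometry. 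It should be kept in mind that, by Fakhruddin's reduction, the present conjecture already implies the strong uniform boundedness conjecture for torsion of abelian varieties, so a proof cannot be softer than Merel's theorem and its higher-dimensional analogues, and in practice the two would have to be proved in tandem. A fully unconditional proof is thus out of reach with present methods; the realistic intermediate goal --- and the natural continuation of this paper --- is the case $N=1$ with $f$ ranging over the explicit automorphism families $\A_3$ and $\A_4$ parameterized below, where the tower of preperiodic loci can be written down by hand and both (i) and (ii) collapse to a finite list of concrete curve and surface computations.
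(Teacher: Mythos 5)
There is nothing here for you to have proved: the statement is the Morton--Silverman uniform boundedness conjecture, which the paper records (with attribution) purely as motivation and explicitly describes as out of reach; no proof of it appears anywhere in the paper. Your proposal correctly treats it as open. As a roadmap it is accurate: the decomposition of $\PrePer(f,K)$ into cycles with bounded-branching trees, the reduction to (i) a uniform bound on the exact period of a $K$-rational periodic point and (ii) a uniform bound on tail depth, the Morton--Silverman $p$-adic argument giving period bounds only in terms of primes of good reduction, the equivalence of (ii) with a uniform lower bound on the canonical height off the preperiodic locus, and Fakhruddin's reduction showing the conjecture implies uniform boundedness of torsion on abelian varieties are all correctly stated features of the literature, not steps you are claiming to complete.

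Where your sketch meets the paper is in its last sentence, and there the comparison is worth making precise. The paper does not attack (i) or (ii) even for the restricted families $\A_3$ and $\A_4$; instead it follows the Poonen--Manes template: \emph{assume} an upper bound on the minimal period of a $\Q$-rational periodic point (Conjectures supported by a search over parameters of height up to $10{,}000$), and then classify the possible $\Q$-rational preperiodic graphs by finding all rational points on the resulting dynatomic and preimage curves. Even in that conditional setting the collapse you describe to ``a finite list of concrete curve and surface computations'' is optimistic: several of the curves that arise (e.g.\ genus $6$ curves for $\A_4(D_3)$, and a genus $3$ plane quartic whose point count additionally assumes BSD) resist standard methods, and for the two- and three-parameter families in $\A_3(C_2)$, $\A_4(C_3)$, $\A_4(C_2)$ the rational-points-on-surfaces obstruction forces the paper to settle for a census of observed graph structures rather than a classification. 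So your roadmap is consistent with the paper's framing, but the unconditional conjecture remains untouched by both.
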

    This conjecture is equivalent to a uniform bound on the number of $K$-rational preperiodic graph structures, where the vertices are $K$-rational preperiodic points and edges connect a point $Q$ to its forward image $f(Q)$ \cite{DoyleSilverman}. While an unconditional bound remains out of reach, we use Poonen \cite{Poonen} and Manes \cite{Manes2} as our model and classify graph structures assuming an upper bound on the period of a rational periodic point.

    We now give a summary of the main results and an outline of the article.
    Section \ref{sect_M3} gives parameterizations of the automorphism locus $\A_3 \subset \M_3$. The classification of finite subgroups of $\PGL_2$ is given in modern notation in Silverman \cite{Silverman12} as
    \begin{itemize}
        \item Cyclic group of order $n$, denoted as $C_n$.
        \item Dihedral group of order $2n$, denoted as $D_{n}$.
        \item Tetrahedral group $\fA_4$ (or alternating group on $4$ elements).
        \item Octahedral group $S_4$ (or symmetric group on $4$ elements).
        \item Icosahedral group $\fA_5$ (or alternating group on $5$ elements).
    \end{itemize}
    Combining this classification with the known dimensions of $\A_3(\Gamma)$ from Miasnikov-Stout-Williams \cite{Miasnikov}, we find families in the parameter space $\Rat_3$ that map finite-to-one onto families in $\M_3$ of the appropriate dimension.
    \begin{theorem} \label{theorem_1}
        \hfill
        \begin{enumerate}
            \item The locus $\A_3(C_4) = \A_3(D_4)$ is a single conjugacy class in $\M_3$ given by $f(z) = \frac{1}{z^3}$. (Corollary \ref{cor_A3C4}).
            \item The locus $\A_3(\fA_4)$ is a single conjugacy class in $\M_3$ given by $f(z) = \frac{z^3-3}{-3z^2}$. This single conjugacy class is exactly the intersection of $\A_3(C_2)$ and $\A_3(C_3)$ (Proposition \ref{M3A4}).
            \item The locus $\A_3(C_3)$ is an irreducible curve in $\M_3$ given by $f_a(z) = \frac{z^3+a}{az^2}$, $a \neq 0$ (Proposition \ref{prop_A3C3}).
            \item The locus $\A_3(D_2)$ consists of two irreducible curves given by
            \begin{equation*}
                f_a(z) = \frac{az^2+1}{z^3+az}, a \neq \pm 1 \quad \text{and} \quad g_a(z) = \frac{az^2-1}{z^3-az}, a \neq \pm 1,
            \end{equation*}
            which intersect at the single point $\A(D_4)=\A(C_4)$ (Proposition \ref{prop_A3D2}).
            \item The locus $\A_3(C_2)$ is the union of two irreducible surfaces given by
            \begin{equation*}
                f_{a,b}(z) = \frac{z^3+az}{bz^2+1}, ab \neq 1 \quad \text{and} \quad g_{a,b}(z) = \frac{az^2+1}{z^3+bz}, ab \neq 1
            \end{equation*}
            which intersect in a curve that is the $f_a$ component of $\A_3(D_2)$ (Proposition \ref{prop_A3C2}).
        \end{enumerate}
    \end{theorem}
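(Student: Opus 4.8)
The plan is to deduce Theorem~\ref{theorem_1} from the series of results on $\A_3$ cited in its statement, all of which follow a single template; I outline that template here. Since $\dim\M_3 = 2\cdot 3-2 = 4$ and $\Aut(f)$ depends only on the class $[f]$, the first step is to combine the classification of finite subgroups of $\PGL_2$ with the dimensions of $\A_3(\Gamma)$ computed by Miasnikov--Stout--Williams \cite{Miasnikov}: the only groups with $\A_3(\Gamma)\neq\emptyset$ are $C_2$ (dimension $2$), $C_3$ and $D_2$ (dimension $1$), and $C_4$ together with its equal $D_4$-locus, and $\fA_4$ (dimension $0$). For each such $\Gamma$ I would fix a representation $\rho\colon\Gamma\to\SL_2$ and, conjugating in $\PGL_2$ once and for all, put the generators of $\rho(\Gamma)$ into the standard diagonal/antidiagonal normal forms (e.g. $z\mapsto\zeta_n z$ and $z\mapsto 1/z$); because $\Aut$ is a conjugacy invariant of $[f]$ this loses nothing for the description of $\A_3(\Gamma)\subseteq\M_3$.

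Next I would translate the defining condition into linear algebra. Writing $f=[F_0:F_1]$ with $F_0,F_1$ binary cubics, the assignment $(F_0,F_1)\mapsto\alpha^{-1}\circ(F_0,F_1)\circ\alpha$ is linear in the eight coefficients, so $f^\alpha=f$ is an eigenvector condition; intersecting these eigenspaces over a generating set of $\rho(\Gamma)$ cuts out $A_3(\Gamma)\subseteq\Rat_3$ as an explicit linear subspace. For the diagonal generators this just records which monomials $x^iy^{3-i}$, those with $i$ in a prescribed residue class mod $n$, may appear in $F_0$ and which in $F_1$; one discards the solutions that fail to be degree~$3$ and reduced (i.e. where $F_0,F_1$ share a factor), leaving a short list of coefficient patterns. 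This already yields representatives such as $\frac{1}{z^3}$ for $C_4$ and the two ``odd/even'' and ``even/odd'' patterns underlying $\A_3(C_2)$.

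To pass from $A_3(\Gamma)$ to $\M_3$ I would quotient by the residual conjugations: the normalizer of $\rho(\Gamma)$ in $\PGL_2$ acts on $A_3(\Gamma)$, and modding out by its identity component (a torus $z\mapsto\lambda z$, sometimes extended by an involution such as $z\mapsto 1/z$) together with the overall scaling of $[F_0:F_1]$ puts each family into the stated normal form $f_a$, $f_{a,b}$, and so on. The count of surviving parameters must match the Miasnikov--Stout--Williams dimension, which is the consistency check, and the parameterization being finite-to-one amounts to computing the finite fibers of this normalizer action. The reducibility in parts~(4) and~(5) comes from the residual $\PGL_2$-action failing to be transitive on the relevant discrete data --- the three order-$2$ subgroups of $D_2$, or the behavior of $f$ on the fixed-point pair $\{0,\infty\}$ of the distinguished involution $z\mapsto -z$, where whether $f$ fixes or interchanges those two points is a conjugacy invariant that separates the $f_{a,b}$ and $g_{a,b}$ surfaces. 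Finally, the coincidences among loci are located by detecting where $\Aut(f)$ jumps: $\A_3(\fA_4)$ must lie in $\A_3(C_2)\cap\A_3(C_3)$ because $\fA_4$ contains copies of $C_2$ and $C_3$, so intersecting the $C_2$-normal forms with the $C_3$-curve $\frac{z^3+a}{az^2}$, finding a single point, and checking directly that its automorphism group contains $\fA_4$ (the specialization $a=-3$, giving $\frac{z^3-3}{-3z^2}$) yields part~(2); similarly $\A_3(D_4)=\A_3(C_4)$ sits on $\A_3(D_2)$ where the two curves meet.

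I expect the main obstacle to be the bookkeeping of the residual normalization: confirming that the parameterizations are finite-to-one with exactly the redundancy described, and, relatedly, pinning down the component structure in parts~(4)--(5). One must be careful here precisely because the choice of representation $\rho$ changes the homogeneous-coordinate form of the maps, and hence changes which apparent $\PGL_2$-symmetries genuinely identify pieces of the coefficient space and which do not.
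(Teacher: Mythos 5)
Your outline reproduces the paper's general scheme --- combine the MSW dimensions of $\A_3(\Gamma)$ with a fixed $\SL_2$-representation of each $\Gamma$, cut out $A_3(\Gamma)\subset\Rat_3$ by the linear equations $f^\alpha=f$ (this is essentially Silverman's normal form $z\psi(z^n)$ derived ab initio), then reduce parameters by conjugating in the normalizer and verifying the dimension matches. Up to that point the proposal is sound and tracks Propositions \ref{prop_A3C3}--\ref{prop_A3C2}.

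The difficulty is in the component structure and the intersection loci, where the discrete invariants you propose do not actually do the job. For part (5) you claim that whether $f$ fixes or interchanges the fixed-point pair $\{0,\infty\}$ of the involution $z\mapsto -z$ is a conjugacy invariant separating the $f_{a,b}$ and $g_{a,b}$ surfaces. It is not: the theorem itself asserts these two surfaces meet in $\M_3$ along the $f_a$-component of $\A_3(D_2)$, and indeed the paper exhibits an explicit conjugacy between $f_{k,k}$ (which fixes $0,\infty$) and $g_{\ell,\ell}$ (which swaps them) for $k=\frac{\ell+3}{\ell-1}$. What fails is precisely the normalizer logic: on the $D_2$-locus $\Aut(f)$ is larger than $C_2$, so the conjugating $\alpha$ need not normalize the chosen copy of $C_2$ and can carry $z\mapsto-z$ to a different involution. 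Your argument is correct only generically (where $\Aut(f)=C_2$ exactly), which does show $f_{a,b}$ and $g_{a,b}$ are \emph{distinct} components, but it gives no means of locating the intersection, which the paper finds by an explicit Gr\"obner-basis elimination on the system $f_{a,b}^\alpha = tg_{c,d}$. For part (4) the proposed discrete invariant --- orbits of the residual action on the three order-$2$ subgroups of $D_2$ --- is also off: the normalizer of the standard $D_2$ in $\PGL_2$ is $S_4$, and its quotient $S_4/D_2\cong S_3$ acts \emph{transitively} on those three subgroups, so this does not distinguish the two curves. What actually distinguishes them in the paper is a multiplier-invariant computation (the family $g_a(z)=\frac{az^2-1}{z^3-az}$ has constant fixed-point multiplier invariants, the other family does not), and the single intersection point $\A_3(C_4)=\A_3(D_4)$ is then pinned down by solving the resulting equations; your outline would need some analogous separating invariant and an explicit computation to recover this. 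The intersection claim in part (2) suffers the same omission: subgroup containment gives $\A_3(\fA_4)\subseteq \A_3(C_2)\cap\A_3(C_3)$, but showing the intersection is \emph{exactly} one point again requires the Gr\"obner-basis calculation in Proposition \ref{M3A4}.
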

    The methods are a combination of invariant theory as utilized in deFaria-Hutz \cite{Hutz15}, explicit forms for maps with cyclic or dihedral automorphism groups from Silverman \cite{Silverman12}, and explicit calculation using the generators of the finite subgroups.

    In the process of studying $\A_3$, we were able to complete the rational realization problem over $\Q$ started in \cite{Hutz16}.
    \begin{theorem}
        Every finite subgroup of $\PGL_2$ can be realized as the automorphism group of a map defined over $\Q$.
    \end{theorem}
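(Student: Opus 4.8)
The plan is to argue from the classification of finite subgroups of $\PGL_2$ recalled above: every such group is either $C_n$ or $D_n$ for some $n \geq 1$, or one of the exceptional groups $\fA_4$, $S_4$, $\fA_5$. Since $\Aut(f) \cong \Aut(f^{\alpha})$ for every $\alpha \in \PGL_2$, it suffices to exhibit, for each of these groups $\Gamma$, a single endomorphism $f$ of $\bbP^1$ defined over $\Q$ with $\Aut(f)$ isomorphic to $\Gamma$. Note that the automorphisms themselves need not be defined over $\Q$: indeed for most $n$ the group $C_n$ does not embed in $\PGL_2(\Q)$ at all, since an element of finite order in $\PGL_2(\Q)$ has order in $\{1,2,3,4,6\}$ (lift to $\GL_2(\Q)$, compare eigenvalue ratios, and use $\varphi(n)\le 2$).

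For the two infinite families I would give explicit constructions. The power map $f(z) = z^{n+1}$ is defined over $\Q$ and has $\Aut(f) = \langle z \mapsto \zeta_n z,\ z \mapsto 1/z\rangle \cong D_n$, so this handles $D_n$ for all $n$. For $C_n$ one breaks the extra involution while keeping the order-$n$ rotation: a map of the shape $f(z) = z\cdot\frac{z^n+a}{z^n+b}$ with $a,b \in \Q$ satisfies $f(\zeta_n z) = \zeta_n f(z)$, so $\langle z \mapsto \zeta_n z\rangle \subseteq \Aut(f)$, and for generic $a \neq b$ one checks — using that $\Aut(f)$ must permute canonically attached finite sets such as the critical points — that no further automorphism occurs, hence $\Aut(f) \cong C_n$. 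The small cases $C_2, C_3, C_4, D_2, D_4$ are in any event already realized over $\Q$ by Theorem~\ref{theorem_1}.

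It remains to treat the exceptional groups. The group $\fA_4$ is realized over $\Q$ by $f(z) = \frac{z^3-3}{-3z^2}$ of Theorem~\ref{theorem_1}(2); since no degree-$3$ endomorphism can have $S_4$ or $\fA_5$ inside its automorphism group, that theorem also certifies $\Aut(f) \cong \fA_4$. For $S_4$ and $\fA_5$ one must leave degrees $3$ and $4$, and here I would build $f$ from the classical invariant theory of the octahedral and icosahedral groups, taking $f$ (in homogeneous coordinates) to be a $\rho(\Gamma)$-equivariant pair of binary forms assembled from the octahedral/icosahedral invariant forms together with their Hessians and Jacobians. The subtlety is $\Q$-rationality: the usual model $\fA_5 \hookrightarrow \PGL_2(\C)$ involves $\sqrt{5}$, and in fact neither $S_4$ nor $\fA_5$ embeds in $\PGL_2(\Q)$, so one cannot demand rational automorphisms; one can, however, descend the \emph{map} to $\Q$, and combining such $\Q$-models with the earlier work \cite{Hutz16} completes the realization. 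For $\fA_5$ the ``no extra automorphism'' step is automatic, since $\fA_5$ is maximal among finite subgroups of $\PGL_2$; for $S_4$, $\fA_4$, $C_n$, and $D_n$ it is a routine verification that the symmetry has not accidentally grown.

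The step I expect to be the main obstacle is this last one carried out \emph{uniformly}: simultaneously pinning down the automorphism group exactly and keeping the map defined over $\Q$. Producing \emph{some} map with $\Gamma \subseteq \Aut(f)$ is easy from invariant theory; the work is (i) descending the constructed maps to $\Q$ for the exceptional groups whose natural $\PGL_2(\C)$-model lives over a nontrivial extension, and (ii) certifying that the automorphism group has not jumped to a strictly larger finite subgroup of $\PGL_2$, which one verifies by analyzing the induced permutation action on canonically attached finite sets (critical points, low-period points) and checking its symmetry group is exactly $\Gamma$.
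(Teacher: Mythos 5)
Your proposal and the paper's proof share the same essential structure: both shift the burden of the realization to the prior work of deFaria--Hutz \cite{Hutz16}, which handles every conjugacy class of finite subgroup except $\fA_4$, and both observe that the new ingredient supplied by this paper is exactly the explicit $\Q$-model $f(z) = \frac{z^3-3}{-3z^2}$ with $\Aut(f) \cong \fA_4$. You have correctly identified that example as the missing piece, which is the actual content of the theorem in this paper.

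Where you diverge is in also sketching direct constructions for the $C_n$ and $D_n$ families (power maps $z^{n+1}$ for $D_n$, maps of shape $z\cdot\frac{z^n+a}{z^n+b}$ for $C_n$) rather than simply citing \cite{Hutz16}. This is not incorrect, but it reconstructs material already in the cited prior work, and the key verification that $\Aut(f)$ does not jump to a strictly larger group is left at the level of a plausibility argument; similarly your invariant-theory outline for $S_4$ and $\fA_5$ ultimately falls back on \cite{Hutz16} anyway, which is what the paper does outright. One small inaccuracy: you list $S_4$ among the groups for which the ``no extra automorphism'' step requires a genuine check, but $S_4$, like $\fA_5$, is maximal among finite subgroups of $\PGL_2$ (in particular $S_4 \not\subset \fA_5$ since $24 \nmid 60$), so that verification is automatic for $S_4$ exactly as you note it is for $\fA_5$.
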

    Section \ref{sect_geom_A3} studies these parameterizations as families in moduli space giving explicit maps to sets of periodic point multiplier invariants that are finite-to-one. To construct the multiplier invariants, recall that to each fixed point $Q$ we can compute an algebraic number called the multiplier $\lambda_{Q} = \tilde{f}'(Q)$, where $\tilde{f}$ is a dehomogenization and $'$ represents the derivative. The multiplier is conjugation invariant and the set of fixed points is invariant (as a set) under conjugation. So, taking the elementary symmetric polynomials evaluated on the set of multipliers produces invariants of the moduli space \cite{Silverman9}.
    We can similarly construct invariants from the set of periodic points (or formal periodic points) of any period. We denote these invariants $\sigma_i^{(n)}$, where $n$ denotes the period of the points used and $1 \leq i \leq (\deg(f))^n+1$. These invariants were first studied by Milnor \cite{Milnor} to construct an isomorphism $\M_2 \cong \bbA^2$. In higher degrees, we no longer get an isomorphism to an affine space, but utilizing enough multiplier invariants does produce a finite-to-one map producing an embedding of $\M_3$ into affine space \cite{McMullen}. We can then examine the image of $\A_3(\Gamma)$ as a variety in this affine space.
    In all families but one component of $\A_3(D_2)$, using the fixed points multipliers is sufficient; in that component the multiplier invariants of the 2-periodic points are needed. Theorem \ref{thm_A3_geometry} summarizes the embeddings of the familes $A_3(\Gamma)$ into the moduli space $\M_3$ and the geometric results on $\A_3(\Gamma)$. The details of the embeddings into affine space via the multiplier invariants that are used to analyze the geometry can be found within the referenced propositions.

    \begin{theorem}\label{thm_A3_geometry}
     \hfill
     \begin{enumerate}
         \item \label{thm_A3_geo_item1} The map
         \begin{equation*}
             \bbA^1\setminus\{0\} \to \M_3, a \mapsto \left[ \frac{z^3+a}{az^2} \right]
         \end{equation*}
         is one-to-one. The locus $\A_3(C_3)$ is an irreducible curve of genus zero with one singular point corresponding to $\A_3(\fA_4)$.  (Proposition \ref{prop_c3})
         \item The map
         \begin{equation*}
             \bbA^1\setminus\{\pm 1\} \to \M_3, a \mapsto \left[ \frac{az^2+1}{z^3+az}\right]
         \end{equation*}
         is two-to-one. This component of $\A_3(D_2)$ is a smooth irreducible curve of genus zero. (Proposition \ref{prop_D2_1})
         \item The map
         \begin{equation*}
             \bbA^1\setminus\{\pm 1\} \to \M_3, a \mapsto \left[ \frac{az^2-1}{z^3-az}\right]
         \end{equation*}
         is six-to-one. This component of $\A_3(D_2)$ described by the image is a smooth irreducible curve of genus zero. (Lemma \ref{lem_D2_2}, Proposition \ref{prop_D2_2})
         \item The map
         \begin{equation*}
             \bbA^2\setminus\{ab=1\} \to \M_3, (a,b) \mapsto  \left[ \frac{z^3+az}{bz^2+1}\right]
         \end{equation*}
         is two-to-one. This component of $\A_3(C_2)$ is an irreducible rational singular surface. (Proposition \ref{prop_a3_c2_1_geo})
          \item The map
          \begin{equation*}
             \bbA^2\setminus\{ab=1\} \to \M_3, (a,b) \mapsto  \left[ \frac{az^2+1}{z^3+bz}\right]
         \end{equation*}
        is four-to-one. This component of $\A_3(C_2)$ is an irreducible singular surface. (Lemma \ref{lem_a3_c2_2_geo}, Proposition \ref{prop_a3_c2_2_geo})
     \end{enumerate}
    \end{theorem}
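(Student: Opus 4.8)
The plan is to treat all five families by one uniform procedure and then read off the individual geometric statements. For each family fix the given representative $f_a$ (resp.\ $f_{a,b}$) in $\Rat_3$, compute its fixed points -- and, where the fixed-point data turns out to be degenerate, also its $2$-periodic points -- together with the corresponding multipliers, and assemble the multiplier invariants $\sigma_i^{(1)}$ (resp.\ also $\sigma_i^{(2)}$) as explicit rational functions of the parameter(s). This gives a morphism $\Phi$ from the parameter variety $U$ ($=\bbA^1\setminus\{0\}$ or $\bbA^2\setminus\{ab=1\}$) to affine space $\bbA^N$. Because enough multiplier invariants embed $\M_3$ into $\bbA^N$, the composite $U\to\M_3\hookrightarrow\bbA^N$ equals $\Phi$, so the closure $V:=\overline{\Phi(U)}$ is the closure of $\A_3(\Gamma)$ inside $\bbA^N$ and the fiber count of $U\to\M_3$ over a point of $\A_3(\Gamma)$ equals that of $\Phi$ over the corresponding point. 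Irreducibility of $V$ is automatic since $U$ is irreducible, and $V$, being dominated by a rational variety, is unirational; for the surface cases (4) and (5) Castelnuovo's criterion (characteristic zero) then upgrades this to rationality. Everything else -- the degree of $\Phi$, the genus, smoothness or the singular locus -- is extracted from the explicit parametrization.

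For the curve cases (1)--(3): an automorphism $\alpha$ of $f$ carries a fixed point $Q$ to a fixed point with the same multiplier, so if the $D_2\subset\Aut(g_a)$ in family (3) acts freely (hence transitively) on the four fixed points of $g_a$, all four fixed-point multipliers coincide and the map $a\mapsto(\sigma_i^{(1)}(g_a))$ is too degenerate to recover the class -- this is why one must adjoin $\sigma_i^{(2)}$ there, after which $\Phi$ is the honest parametrization of the image curve. For $\A_3(C_3)$, $f_a(z)=(z^3+a)/(az^2)$, I would solve for $a$ as a rational function of the $\sigma_i^{(1)}$, proving $\Phi$ injective, then find the unique parameter value at which the differential of the invariant vector vanishes; since $a\mapsto[f_a]$ is injective this is a cusp, not a node, and one verifies by substitution that the corresponding class is the $\fA_4$-class $[(z^3-3)/(-3z^2)]$, matching part (2) of Theorem~\ref{theorem_1}. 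The degrees $2$ and $6$ of $\Phi$ in the two $\A_3(D_2)$ cases are the degree of the extension $\C(U)/\C(V)$, which I expect to be the number of cosets of $D_2$ in its normalizer $N_{\PGL_2}(D_2)\cong S_4$ whose representatives preserve the chosen normal form -- only an index-$3$ subgroup for the $f_a$-model, but all six cosets for the $g_a$-model. In both $D_2$ cases the normalization of $V$ is $\bbP^1$, so the geometric genus is zero, and the Jacobian criterion on defining equations of $V$ gives smoothness for the two $D_2$ curves and the single cusp for the $C_3$ curve.

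For the surface cases (4) and (5) the same construction gives a dominant morphism $\Phi\colon\bbA^2\setminus\{ab=1\}\to\bbA^N$ with two-dimensional image $V$, rational as above. To determine the degree of $\Phi$ ($2$ for the $f_{a,b}$-component, $4$ for the $g_{a,b}$-component) I would identify the finite group of birational self-maps of $\bbA^2$ under which all the invariants are constant -- arising from the action on the chosen normal form of the (infinite) normalizer of the single $C_2$ in $\Aut$, which preserves the form only through a subgroup of order $2$ (resp.\ $4$) -- and check that the invariants separate the remaining points. The singular locus of $V$ is then computed by eliminating $a,b$ to get defining equations and applying the Jacobian criterion; the natural candidates are the image of the boundary $\{ab=1\}$ and the ramification locus of $\Phi$, and in the $f_{a,b}$-case the singular curve should be exactly the $f_a$-component of $\A_3(D_2)$ -- the sublocus where $\Aut$ jumps from $C_2$ to $D_2$ -- which is a convenient internal consistency check against part (5) of Theorem~\ref{theorem_1}.

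The main obstacle is computational robustness rather than conceptual difficulty: the multiplier invariants are bulky rational functions of the parameters, so (i) proving the exact fiber cardinalities -- especially the $6$-to-$1$ and $4$-to-$1$ statements, where the extra identifications are not visible from any obvious involution of the parameter space and must be tracked through conjugation by normalizer elements -- requires either a careful function-field degree computation or a reliable Gr\"obner-basis elimination; and (ii) for the surfaces one has to extract the defining ideal of the image in $\bbA^N$ without introducing spurious components and then correctly separate genuine singularities of $V$ from mere non-immersion points of the chosen parametrization. These are precisely the steps where the argument leans on, and should be cross-checked against, the explicit computations in the referenced propositions.
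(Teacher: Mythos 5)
Your overall framework---compute multiplier invariants as functions of the parameters, treat this as a morphism to affine space, eliminate parameters to get defining equations of the image, and read off degree/genus/smoothness/singular locus---is exactly the paper's approach, and for the curve cases (1)--(3) your outline matches the referenced propositions closely. The normalizer heuristic you offer for the fiber degrees in the two $D_2$ components is a genuinely different (and illuminating) angle: the paper never invokes $N_{\PGL_2}(D_2)\cong S_4$ but simply solves $\sigma_i^{(n)}(a)=\sigma_i^{(n)}(b)$ by Gr\"obner elimination and then exhibits explicit conjugating matrices for each resulting factor. Your coset count is consistent with the paper's answers ($2$ cosets stabilizing the $f_a$-form because its fixed points lie on two of the three $D_2$-axis pairs, all $6$ for $g_a$ whose fixed points avoid the axes), but it is a plausibility argument, not a proof; the paper's approach is more laborious but closes the gap by explicit computation.

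There is a genuine error in your treatment of the surface cases. You claim that since both image surfaces are unirational, Castelnuovo's theorem upgrades them to rational. Castelnuovo's theorem (unirational $\Rightarrow$ rational for surfaces) holds over algebraically closed fields of characteristic zero, so it gives geometric rationality only. The theorem you are proving makes an asymmetric claim: part (4) asserts the $f_{a,b}$-component is a \emph{rational} surface, while part (5) calls the $g_{a,b}$-component merely an ``irreducible singular surface.'' That omission is deliberate---the paper's Magma computation in the proof of Proposition~\ref{prop_a3_c2_2_geo} returns \texttt{IsRational(S); False} for the $g_{a,b}$-surface over $\Q$, and establishes rationality of the $f_{a,b}$-surface by producing an explicit $\Q$-defined parametrization $\phi:\bbP^2\to\overline S$, not by citing L\"uroth/Castelnuovo. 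Your Castelnuovo argument would wrongly assert rationality over $\Q$ for both, erasing the distinction the statement draws; it cannot be used to prove part (4) as stated (which requires rationality over $\Q$, demanding an explicit parametrization or an arithmetic argument) and would contradict part (5).

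One smaller caveat: you invoke McMullen's embedding $\M_3\hookrightarrow\bbA^N$ to identify $\Phi$ with the composite $U\to\M_3\hookrightarrow\bbA^N$, but the chosen invariant tuples (only $\sigma_i^{(1)}$, or $\sigma_i^{(1)}$ together with a couple of $\sigma_i^{(2)}$) are far fewer than what McMullen's result needs, and do not embed all of $\M_3$. What must be shown, and what the paper actually shows, is the weaker statement that these particular invariants already separate points \emph{on the image family} (i.e., that any two parameter values producing the same sigma tuple are in fact conjugate). That is the load-bearing step in computing the degree, so it should be foregrounded rather than relegated to a closing sanity check.
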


    Section \ref{sect_M4} gives parameterizations of the automorphism locus $\A_4 \subset \M_4$. The methods are similar to Section \ref{sect_M3}.
    \begin{theorem} \label{theorem_2}
        \hfill
        \begin{enumerate}
            \item The locus $\A_4(C_5)=\A_4(D_5)$ is the single conjugacy class given by $f(z) = \frac{1}{z^4}$. (Proposition \ref{prop_A4_C5})
            \item The locus $\A_4(C_4)$ is an irreducible curve in $\M_4$ given by the 1-parameter family $f_k(z) = \frac{z^4 + 1}{kz^3}$ for $k \neq 0$. (Proposition \ref{prop_A4_C4})
            \item The locus $\A_4(D_3)$ is an irredcible curve in $\M_4$ given by the family $f_k(z) = \frac{z^4 + kz}{kz^3+1}$ for $k \neq \pm 1$. (Proposition \ref{prop_A4_D3})
            \item The locus $\A_4(C_3)$ is an irreducible surface in $\M_4$ given by the family $f_{k_1,k_2}(z)=\frac{z^4+k_1z}{k_2z^3+1}$ for $k_1k_2 \neq 1$. (Proposition \ref{prop_A4_C3})
            \item The locus $\A_4(C_2)$ is given by the 3-parameter family $f_{k_1,k_2,k_3}(z) = \frac{z^4+k_1z^2+1}{k_2z^3+k_3z}$ for $k_2^2 + k_3^2 \neq k_1k_2k_3$. (Proposition \ref{prop_A4_C2}).
        \end{enumerate}
    \end{theorem}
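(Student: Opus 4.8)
The plan is to run, for $d=4$, the same procedure used for $\M_3$ in Section~\ref{sect_M3}. First I would combine the classification of finite subgroups of $\PGL_2$ with the dimension formulas for $\A_d(\Gamma)$ from Miasnikov--Stout--Williams \cite{Miasnikov} and the bounds on $|\Aut(f)|$ in terms of $d$ \cite{Levy} to reduce to a finite list of groups. A cyclic automorphism subgroup of a degree-$4$ map has order at most $d+1=5$ (witnessed by $z\mapsto 1/z^4$), so the only nontrivial cyclic possibilities are $C_2,C_3,C_4,C_5$. Each of $\fA_4$, $S_4$, $\fA_5$ contains a Klein four group $V_4=D_2$, and $D_2$ itself and $D_4$ must also be disposed of: I would observe that forcing a degree-$4$ map to commute with a Klein four group, or with $C_4$ together with an inverting involution $z\mapsto c/z$, makes the numerator and denominator acquire a common factor and drop the degree below $4$; hence $\A_4(D_2)=\A_4(D_4)=\A_4(\fA_4)=\A_4(S_4)=\A_4(\fA_5)=\emptyset$, and together with $D_5\supset C_5$ this leaves exactly the six groups $C_2,C_3,C_4,C_5,D_3,D_5$ appearing in the statement.

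For each surviving $\Gamma$ I would fix a convenient representation into $\PGL_2$ --- for $C_n$ the diagonal generator $z\mapsto\zeta_n z$, and for $D_n$ that generator together with $z\mapsto 1/z$. The equation $f^\alpha=f$ for $\alpha(z)=\zeta_n z$ reads $f(\zeta_n z)=\zeta_n f(z)$ and forces $f$ into a graded normal form in which only monomials whose degrees lie in prescribed residue classes modulo $n$ can occur in the numerator and denominator; for $n\in\{2,3,4,5\}$ this leaves only a few free coefficients, the possible shapes being governed by whether $f$ fixes or swaps the two fixed points $0,\infty$ of the rotation and by the local degrees there. I would then quotient by the residual conjugations $N_{\PGL_2}(\Gamma)/\Gamma$, which for these cyclic groups is generated by the diagonal scalings $z\mapsto\lambda z$ and the swap $z\mapsto c/z$; normalizing coefficients with these produces the displayed families, and the non-degeneracy requirements ($\deg f=4$, numerator and denominator coprime) yield precisely the excluded loci $k\neq 0$, $k\neq\pm1$, $k_1k_2\neq1$, and $k_2^2+k_3^2\neq k_1k_2k_3$. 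For $D_3$ I would additionally impose $f(1/z)=1/f(z)$ on the already-normalized $C_3$-family, cutting the surface down to the stated curve; and for $C_5$ and $D_5$ I would note that the $C_5$-normal form reduces, after the residual scaling, to $f(z)=1/z^4$, which automatically carries the extra automorphism $z\mapsto 1/z$ and for which the last parameter collapses, so $\A_4(C_5)=\A_4(D_5)=\{[1/z^4]\}$.

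Finally I would establish the geometric statements. Each family is exhibited as the image of an explicit rational map from $\bbA^1\setminus\{\cdots\}$ or $\bbA^2\setminus\{\cdots\}$ into $\Rat_4$, hence into $\M_4$, and is therefore irreducible; that the images are genuinely curves, respectively surfaces, of the claimed dimension follows by comparison with $\Dim\A_4(\Gamma)$ from \cite{Miasnikov}, equivalently by checking that the parameters are not absorbed by the residual conjugation. The containments $\A_4(D_5)=\A_4(C_5)\subset\A_4(C_4)\subset\A_4(C_2)$ and $\A_4(D_3)\subset\A_4(C_3)\subset\A_4(C_2)$ are immediate from the corresponding inclusions of groups.

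I expect the main obstacle to be part~(5), the $C_2$ locus. A degree-$4$ map commuting with $z\mapsto -z$ has two a priori shapes --- odd numerator over even denominator, or even over odd --- and the crux is to verify that the residual conjugation $z\mapsto c/z$ interchanges these two shapes, so that they form a single irreducible $3$-dimensional family rather than two components. This is exactly where $d=4$ parts company with $d=3$: in degree $3$ the analogous two shapes are \emph{not} identified by the normalizer and $\A_3(C_2)$ genuinely has two components, whereas in degree $4$ the parity makes $z\mapsto c/z$ swap them. Pinning this down requires identifying the action of $N_{\PGL_2}(C_2)/C_2$ on the four-dimensional space of odd rational maps of degree $4$ (up to scaling) and checking that its generic orbit is one-dimensional, so that the quotient is three-dimensional and the displayed parametrization is generically finite. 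The smaller versions of the same bookkeeping for $C_3$ and $C_4$, where a ``fixing'' chart and a ``swapping'' chart must likewise be glued, together with the coprimality analysis producing the precise excluded loci, are the remaining points that demand care.
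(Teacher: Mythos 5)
Your proposal follows the same route as the paper: for each surviving group $\Gamma$, invoke Silverman's normal form $z\psi(z^n)$ for cyclic generators (and additionally impose $z\mapsto 1/z$ for the dihedral cases), then normalize away two coefficients using the residual conjugation by diagonal scalings $z\mapsto\lambda z$. The one place where you genuinely differ is that the paper simply cites Miasnikov--Stout--Williams (Lemma~\ref{lem_containment_M4}) for the list $\{C_2,C_3,C_4,C_5,D_3,D_5\}$ and the dimension counts, whereas you sketch a direct elimination argument. Your degree-drop argument for $D_2$ is correct (for $f=\frac{z^4+az^2+b}{cz^3+dz}$, imposing $f(1/z)=1/f(z)$ forces $b=0$ by comparing top-degree terms, which drops the degree; the mirror shape forces $d=0$), and since all embeddings of a finite subgroup of $\PGL_2$ are conjugate, emptiness of $\A_4(D_2)$ propagates to $D_4$, $\fA_4$, $S_4$, $\fA_5$. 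The bound $n\le d+1$ for cyclic subgroups of a degree-$d$ map is also correct (it follows from the normal form by checking which degrees are realizable for each $n$), though you state it without proof. This self-contained elimination is a reasonable trade: slightly more work, but avoids the black-box citation.

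Your final paragraph correctly identifies the one nontrivial point in the $C_2$ case and diagnoses why degree $4$ behaves differently from degree $3$. In degree $4$ the two shapes $\frac{\text{even}}{\text{odd}}$ and $\frac{\text{odd}}{\text{even}}$ both have odd total degree in a sense that makes $z\mapsto 1/z$ swap them (numerator degree $4$, denominator degree $3$ goes to numerator degree $4$, denominator degree $3$ with parities flipped), whereas in degree $3$ the map $z\mapsto 1/z$ preserves each shape (merely swapping parameters), so the two shapes really do give two components of $\A_3(C_2)$, exactly as in Proposition~\ref{prop_A3C2}. This is precisely what Proposition~\ref{prop_A4_C2} checks by direct conjugation. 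Overall the proposal is correct and, apart from the in-house group elimination replacing the MSW citation, essentially identical to the paper's proof.
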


    Section \ref{sect_geom_A4} studies these parameterizations as families in moduli space giving explicit maps to sets of periodic point multiplier invariants that are finite-to-one. In all families but $\A_4(C_2)$, using the fixed point multipliers is sufficient; in that family the multiplier invariants of the formal 2-periodic points are needed. Theorem \ref{theorem_5} summarizes the embeddings of the familes $A_4(\Gamma)$ into the moduli space $\M_4$ and the geometric results on $\A_4(\Gamma)$. The details of the embeddings into affine space via the multiplier invariants that are used to analyze the geometry can be found within the referenced propositions.
    \begin{theorem} \label{theorem_5}
     \hfill
     \begin{enumerate}
         \item \label{thm5_item1} The map
         \begin{equation*}
             \bbA^1\setminus\{0\} \to \M_4, k \mapsto \left[ \frac{z^4+1}{kz^3} \right]
         \end{equation*}
         is one-to-one. The locus $\A_4(C_4)$ is an irreducible curve of genus zero with one singular point.  (Lemma \ref{geometry of C_4}, Proposition \ref{prop_A4_C4_geo})
         \item \label{thm5_item2} The map
         \begin{equation*}
             \bbA^1\setminus\{\pm 1\} \to \M_4, k \mapsto \left[ \frac{z^4+kz}{kz^3+1} \right]
         \end{equation*}
         is one-to-one. The locus $\A_4(D_3)$ is an irreducible curve of genus zero with one singular point.  (Lemma \ref{lem_A4_D3_geo}, Proposition \ref{prop_A4_D3_geo})
         \item The map
         \begin{equation*}
             \bbA^2\setminus\{k_1k_2=1\} \to \M_4, (k_1,k_2) \mapsto \left[ \frac{z^4+k_1z}{k_2z^3+1} \right]
         \end{equation*}
         is one-to-one. The locus $\A_4(C_3)$ is a singular irreducible surface.  (Proposition \ref{prop_A4_C3_geo})
        \item The map
         \begin{equation*}
             \bbA^3\setminus\{k_2^2 + k_3^2 = k_1k_2k_3\} \to \M_4, (k_1,k_2,k_3) \mapsto \left[ \frac{z^4+k_1z^2 + 1}{k_2z^3+k_3z} \right]
         \end{equation*}
         is generically two-to-one. (Proposition \ref{lem_A4_C2})
     \end{enumerate}
    \end{theorem}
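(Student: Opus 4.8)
The plan is to treat the four items one at a time, and in each case to split the statement into two parts: (a) the cardinality of the fibers of the parameterizing map onto its image in $\M_4$, and (b) the geometry of that image --- irreducibility, genus or rationality, and singular locus. The two main tools are the normalizer of $\rho(\Gamma)$ in $\PGL_2$, which controls the fibers, and the periodic-point multiplier invariants $\sigma_i^{(n)}$, which give an explicit coordinatization of the image in affine space and let one read off its geometry.

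For (a) I would argue as follows. By Theorem~\ref{theorem_2}, together with the classification of finite subgroups of $\PGL_2$, the only nonempty automorphism loci in degree $4$ are those for $C_5 = D_5$, $C_4$, $D_3$, $C_3$, and $C_2$; this pins down exactly which members of each family carry an automorphism group strictly larger than $\rho(\Gamma)$ --- none for the $C_4$ and $D_3$ families, the $D_3$ locus inside the $C_3$ surface, and the union of the $C_4$ curve, the $D_3$ curve and the $D_5$ point inside the $C_2$ threefold. Off this proper closed sublocus one has $\Aut(f) = \rho(\Gamma)$ exactly, so from $f_1^\beta = f_2$ we get $\rho(\Gamma) = \Aut(f_2) = \beta^{-1}\Aut(f_1)\beta = \beta^{-1}\rho(\Gamma)\beta$, forcing $\beta \in N_{\PGL_2}(\rho(\Gamma))$; conversely, conjugation by a normalizing element preserves the locus. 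It then remains to isolate the subgroup of $N_{\PGL_2}(\rho(\Gamma))$ that preserves the chosen normal form and to compute its induced action on the parameters; this yields the fiber cardinalities recorded in the statement, with the exceptional subloci, if any, handled directly or by a closure argument. Finally, the deleted locus in item (4), $k_2^2 + k_3^2 = k_1 k_2 k_3$, is precisely where the numerator and denominator of $f_{k_1,k_2,k_3}$ acquire a common root, so it must be removed for the family to consist of honest degree-$4$ morphisms.

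For (b) the tool is the multiplier-invariant coordinatization. In the $C_4$, $D_3$ and $C_3$ families one computes the five fixed points and their multipliers as rational functions of the parameters, forms the $\sigma_i^{(1)}$, and thereby writes the image of $\A_4(\Gamma)$ as an explicit rational parameterized subvariety of affine space; irreducibility is then automatic, being the image of an irreducible parameter space. For $\A_4(C_2)$ the automorphism $f(-z) = -f(z)$ groups the four fixed points other than $\infty$ into pairs $\{P,-P\}$ with equal multiplier, so the fixed-point multipliers repeat and fail to separate the threefold; here one uses the formal $2$-periodic points and the invariants $\sigma_i^{(2)}$ instead, and verifies that this coordinatization realizes the generic degree $2$ found in (a), while also locating where the fiber jumps. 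For the curves $\A_4(C_4)$ and $\A_4(D_3)$, which are dominated by $\bbA^1$ via a nonconstant morphism, the geometric genus is zero; since (by the claim for items (1)--(3)) the fixed-point multiplier invariants separate the points of these loci, the parameterization is birational onto its image, and the Jacobian criterion applied to the explicit defining ideal isolates the unique singular point --- expectedly a point where the parameterization degenerates, such as the image of a boundary parameter. For the surface $\A_4(C_3)$, rationality is immediate from the parameterization, and the singular locus is again read off by the Jacobian criterion.

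The main obstacle is the scale and delicacy of the symbolic computation in the $C_2$ family. Replacing fixed points by formal $2$-periodic points roughly squares the degree of every quantity involved; the resulting parameterization of a threefold inside a higher-dimensional affine space is cumbersome; and one must track both the deleted surface $k_2^2 + k_3^2 = k_1 k_2 k_3$ and the more symmetric subloci ($C_4$, $D_3$, $D_5$) in order to justify the word ``generically'' in the degree-two assertion. Turning a computed singular locus into a rigorous ``exactly one singular point'' statement for the two curves, and the genus-zero claims, is the other delicate point; it is handled by exhibiting the rational parameterization of the normalization explicitly and checking that it is an isomorphism away from the single bad point.
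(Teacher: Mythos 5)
Your proposal splits into two parts, and only one of them matches the paper's actual approach. For part (b) — coordinatizing the automorphism locus by multiplier invariants, reading off irreducibility from the parameterization, passing to formal $2$-periodic multiplier invariants for $\A_4(C_2)$ because the $z\mapsto -z$ symmetry pairs off fixed points — this is indeed what the paper does in Lemmas~\ref{geometry of C_4}, \ref{lem_A4_D3_geo} and Propositions~\ref{prop_A4_C4_geo}, \ref{prop_A4_D3_geo}, \ref{prop_A4_C3_geo}, \ref{lem_A4_C2}.

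For part (a), however, your route is genuinely different. You propose a structural argument: off the proper closed sublocus of enlarged automorphism group, $f_1^\beta = f_2$ forces $\beta \in N_{\PGL_2}(\rho(\Gamma))$, and the fiber is then the orbit of the subgroup of the normalizer that preserves the normal form, with a closure argument to handle the exceptional sublocus. The paper instead computes the fiber degree entirely through the invariants: it takes $\sigma_i(k_1) = \sigma_i(k_2)$ (fixed-point invariants, or formal $2$-periodic invariants for $C_2$), saturates out the degenerate locus, and inspects the Gr\"obner basis or the degree of the resulting zero-dimensional projective closure, then exhibits explicit conjugating elements (such as $z \mapsto 1/z$ for $f_{k_1,k_2} \mapsto f_{k_2,k_1}$, or $z \mapsto iz$ in the $C_2$ case) to show that the surviving parameter pairs are in fact conjugate. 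The two routes buy different things: your normalizer approach is cleaner conceptually and makes transparent why the degree is what it is, but it requires care at the exceptional sublocus, where $\Aut(f) \supsetneq \rho(\Gamma)$ and a naive closure argument can miscount set-theoretic fibers; the paper's computation handles the degenerate locus in one stroke via saturation, at the price of being less illuminating. Both are workable here. Incidentally, if you carry out your normalizer computation for $\A_4(C_3)$ with normal form $f_{k_1,k_2} = \frac{z^4+k_1z}{k_2z^3+1}$, you will find that $z \mapsto 1/z$ swaps $(k_1,k_2) \leftrightarrow (k_2,k_1)$ and hence the map is generically two-to-one, agreeing with Lemma~\ref{lem_A4_C3} rather than with the ``one-to-one'' claim in the theorem statement, which appears to be a typo in the paper.

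One imprecision worth flagging: you describe the unique singular point of each of the curves $\A_4(C_4)$ and $\A_4(D_3)$ as ``a point where the parameterization degenerates, such as the image of a boundary parameter.'' That is not what happens. The singularity for $\A_4(C_4)$ is the image of $k = -4$, an interior, nondegenerate parameter whose automorphism group is still exactly $C_4$. The injectivity of $\tau_1 \circ \varphi$ means the singularity is not a node of self-intersection; it is a cusp where the differential drops rank. As the paper remarks after Theorem~\ref{theorem_5}, this is also exactly the point where the images in $\bbA^4$ of the (disjoint) $C_4$ and $D_3$ families coincide, because the two non-conjugate maps $\frac{z^4+1}{-4z^3}$ and $\frac{z^4-4z}{-4z^3+1}$ happen to share the same fixed-point multiplier invariants. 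Your Jacobian-criterion plan would find the point, but the explanation of its origin should be corrected.
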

    Note that the singular point in the curves in parts \eqref{thm5_item1} and \eqref{thm5_item2} still have the appropriate automorphism group, so we are not seeing the phenomenon of changing automorphism group we saw in the singular point in Theorem \ref{thm_A3_geometry} \eqref{thm_A3_geo_item1} (and in the cuspidal cubic which is $\A_2$). However, these two points do correspond to non-conjugate maps with the same set of fixed point multiplier invariants. Since the embeddings in affine space for these families use only the fixed points invariants, in some sense this point is the point of intersection of the image of these two families in $\bbA^4$. This common point is the only (nondegenerate) such point of intersection.

    Having given parameterizations of $\A_3$ and $\A_4$, we turn to studying rational preperiodic structures. From the set of preperiodic points $S$ for $f$, we create a directed graph whose vertices are the points in $S$ and for $P,Q \in S$ there is an edge from $P$ to $Q$ if and only if $f(P) = Q$.
    An important consideration for rationality questions on $\A_d$ is which representative of the conjugacy class is used. Studying $\Q$-rational preperiodic points, conjugaing by an element of $\PGL_2(\Q)$ would not affect the rational preperiodic structure. However, conjugating by an element over a field extension would affect the rational preperiodic structure. This effect on rationality still occurs even when the new map is still defined over the original field of definition, the \emph{rational twist} case; see \cite{Levy2} for uniform boundedness for families of twists. Consequently, we study rational preperiodic structures for the paramater space locus $A_d$.

    Section \ref{sect_M3_preperiodic} classifies the $\Q$-rational preperiodic structures that occur for maps in the locus $A_3$ given by these parameterizations with $\Q$-rational parameter values. The graph structures are specified in the theorems of Section \ref{sect_M3_preperiodic} and summarized in the following theorem.
    \begin{theorem}\label{theorem_3}
        \hfill
        \begin{enumerate}
            \item Single conjugacy classes $A_3(C_4)=A_3(D_4)$, and $A_3(\fA_4)$: Each has one possible rational preperiodic graph structure. (Theorem \ref{prop_rational_single})
            \item $A_3(C_3)$: Assuming there are no points of minimal period $4$ or higher, then there are four possible $\Q$-rational preperiodic graph structures for $f_a(z) = \frac{z^3+a}{az^2}$. Each of these graph structures occurs infinitely often. (Theorem \ref{thm_M3_C3_preperiodic}).
            \item $A_3(D_2)$:
                \begin{enumerate}
                    \item Assuming there are no points of minimal period $4$ or higher, then there are four possible $\Q$-rational preperiodic graph structures for $f_a(z) = \frac{az^2+1}{z^3+az}$. Each of these graph structures occurs infinitely often. (Theorem \ref{thm_A3_D2_1_preperiodic}).
                    \item Assuming there are no points of minimal period $4$ or higher, then there are four possible $\Q$-rational preperiodic graph structures for $f_a(z) = \frac{az^2-1}{z^3-az}$. Each of these graph structures occurs infinitely often. (Theorem \ref{thm_A3_D2_2_preperiodic}).
                \end{enumerate}
            \item $A_3(C_2)$:
            \begin{enumerate}
                \item The family $f_{a,b}(z) = \frac{z^3+az}{bz^2+1}$ has at least 33 different $\Q$-rational preperiodic graph structures. (Section \ref{sect_A3_C2_2_preperiodic}).                \item The family $g_{a,b}(z) = \frac{az^2+1}{z^3+bz}$ has at least 23 different $\Q$-rational preperiodic graph structures. (Section \ref{sect_A3_C2_1_preperiodic}).
            \end{enumerate}
        \end{enumerate}
    \end{theorem}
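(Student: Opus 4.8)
The plan is to prove Theorem \ref{theorem_3} by a single method applied in three regimes --- the two single conjugacy classes, the one-parameter families $A_3(C_3)$ and the two components of $A_3(D_2)$, and the two two-parameter families in $A_3(C_2)$ --- following the template of Poonen \cite{Poonen} and Manes \cite{Manes2}, but using the nontrivial automorphism as an extra constraint on which graphs can occur. I would first dispose of the single conjugacy classes: for $A_3(C_4)=A_3(D_4)$, represented by $f(z)=1/z^3$, and for $A_3(\fA_4)$, represented by $f(z)=\frac{z^3-3}{-3z^2}$, the map is fixed, so one directly computes the $\Q$-rational periodic points of each small period (the relevant dynatomic polynomials have bounded degree) and then all of their $\Q$-rational iterated preimages; since $\Aut(f)$ acts faithfully on this finite point set, the graph is rigid and one simply reads off the unique structure.

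For $A_3(C_3)$ and the two components of $A_3(D_2)$ --- and, with ``curve'' replaced by ``surface,'' for $A_3(C_2)$ --- the argument runs in three stages. \emph{Periodic points.} For each $n\in\{1,2,3\}$ (the standing hypothesis on minimal period disposes of $n\ge 4$) I form the $n$th dynatomic polynomial $\Phi_n^*$ of the parameterized map, clear denominators, and obtain a curve $Y_n$ in $(\text{parameter},z)$-space whose points with $z$ of exact period $n$ record precisely the specializations carrying a rational period-$n$ point. The automorphism acts on the roots of $\Phi_n^*$, permuting period-$n$ points in orbits of size dividing $3$ (respectively dividing $4$ for the Klein four-group), so a single rational period-$n$ point forces an entire orbit and the number of rational $n$-cycles is tightly restricted; one also frequently gains by passing to the quotient of $Y_n$ by this action, which lowers the genus. \emph{Rational points on the $Y_n$.} Genus-$0$ components receive an explicit rational parameterization --- which also supplies the infinitely many specializations realizing each attained structure; genus-$1$ components are handled by computing the Mordell--Weil group; genus-$\ge 2$ components, which is where period $3$ typically lands for these families, are handled by Chabauty--Coleman, or by elliptic-curve Chabauty on a quotient, after checking the rank hypothesis, using the rational-points machinery for which the authors thank the experts acknowledged above. \emph{Tails.} For each cycle point $Q$, the equation $f(w)=Q$ cuts out a further curve in $(\text{parameter},w)$ whose rational points give the possible rational tails attached at $Q$; the automorphism carries rational preimages of $Q$ to those of its image, synchronizing tails across an orbit.

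Assembling the three stages, for $A_3(C_3)$ and $A_3(D_2)$ one enumerates the combinatorially possible graphs --- built from the allowed numbers of rational fixed points, $2$-cycles, $3$-cycles and their allowed rational tails, all filtered through the automorphism symmetry --- discards those not actually attained, and for each attained graph exhibits the corresponding genus-$0$ parameterization to conclude it occurs infinitely often; this yields the asserted ``four structures, each occurring infinitely often'' in each one-parameter case. For the two $A_3(C_2)$ surfaces a complete classification is out of reach, so there I would instead search over $\Q$-rational parameter values, sort the observed graphs up to isomorphism, and for each observed type produce an explicit rational curve of specializations realizing it; this gives the lower bounds $33$ and $23$.

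I expect the main obstacle to be the second stage for the higher-genus period-$3$ curves (and the analogous loci inside the $A_3(C_2)$ surfaces): verifying Chabauty's rank bound --- or, when it fails, finding a cover or quotient on which it holds --- and rigorously certifying that the rational points produced are all of them. A secondary, purely organizational difficulty is keeping the $C_2$ case analysis honest, which is exactly why those two statements are phrased as ``at least $33$'' and ``at least $23$'' rather than as exact counts.
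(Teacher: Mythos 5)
Your overall architecture matches the paper's: direct computation for the single conjugacy classes, the three-stage dynatomic-curve-and-tails method for the one-parameter families (with quotients used to lower genus), and a census for the two-parameter $A_3(C_2)$ surfaces. Two substantive issues, however, need attention.

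First, your invocation of the automorphism to constrain rational cycles is only valid when the automorphism is itself $\Q$-rational. For the $A_3(D_2)$ families the generators $z\mapsto -z$ and $z\mapsto 1/z$ are rational, and the paper does use exactly this orbit argument (rational period-$2$ points come in fours, rational fixed points come in fours, etc.). But for $A_3(C_3)$ the generator is $z\mapsto \zeta_3 z$, which is \emph{not} defined over $\Q$; applying it to a rational periodic point gives a non-rational point, so your claim that ``a single rational period-$n$ point forces an entire orbit'' yields nothing. In that family the rational-point count on the dynatomic curves must be done without any help from the $C_3$ symmetry, and indeed the paper shows the opposite of what the orbit heuristic would predict: there is generically exactly one rational fixed point besides $\infty$, not a triple. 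The same caveat applies to your rigidity argument for the single conjugacy classes: only the $\Q$-rational subgroup of $\Aut(f)$ acts on the $\Q$-rational preperiodic set, and for $1/z^3$ that subgroup is $D_2$, not all of $D_4$.

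Second, your ``discards those not actually attained'' step underestimates the work. The classification theorems in Section 6 require not only the rational points on each individual dynatomic or preimage curve, but also the rational points on the \emph{intersection loci}: the curves in the parameter obtained by simultaneously imposing two of the one-parameter conditions (e.g., ``extra fixed point'' and ``preimage of $0$ exists,'' or ``extra $2$-cycle'' and ``tail on the fixed point''). Ruling these co-occurrences out is a separate elliptic-curve or Chabauty calculation for each pair, and is where most of the casework in Theorems \ref{thm_M3_C3_preperiodic}, \ref{thm_A3_D2_1_preperiodic}, and \ref{thm_A3_D2_2_preperiodic} actually lives. Your three-stage method in principle covers this, since the intersection loci are also curves in the parameter, but your writeup does not acknowledge that a genuinely new set of curves has to be analyzed, nor that for one of them (the genus-$3$ plane quartic controlling second preimages of the extra fixed point in $A_3(C_3)$) the rank computation is only known conditionally on the Birch--Swinnerton-Dyer conjecture, so the $A_3(C_3)$ classification as given in the paper is conditional on BSD in addition to the minimal-period hypothesis.
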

    The methods of this section are to first classify the occurrence of $\Q$-rational periodic points up to some bound, then to classify the rational preimages of the possible cycle structures. Further, we must classify when distinct connected components can occur for the same choice of parameter. In all these cases, the problems come down to finding all the rational points on a curve. In the case of rational or elliptic curves, this set can be infinite. In the case of higher genus curves, there can only be finitely many rational points (Faltings' Theorem). For the two $A_3(C_2)$ components that are dimension two, the task moves to finding all rational points on surfaces. The tools available for such problems are much more limited, so instead we perform a census of $\Q$-rational preperiodic graph structures with $\Q$-rational parameter values.

    In Section \ref{sect_M4_preperiodic} we study the $\Q$-rational preperiodic structures that occur for maps in the locus $A_4$ given by these parameterizations with $\Q$-rational parameter values. The graph structures are specified in the Theorems of Section \ref{sect_M4_preperiodic} and summarized in the following theorem.
    \begin{theorem}\label{theorem_4}
        \hfill
        \begin{enumerate}
            \item Single conjugacy classes $A_4(C_5)=A_4(D_5)$ have one possible rational preperiodic graph structure. (Theorem \ref{M4_rational_single})
            \item $A_4(C_4)$: Assuming there are no points of minimal period $3$ or higher, then there are four possible $\Q$-rational preperiodic graph structures for $f_k(z) = \frac{z^4+1}{kz^3}$. Each of these graph structures occurs infinitely often. (Theorem \ref{theorem_A4_C4_preperiodic}).
            \item $A_4(D_3)$: Assuming there are no points of minimal period $3$ or higher, then there are five possible $\Q$-rational preperiodic graph structures for $f_k(z) = \frac{z^4+kz}{kz^3+1}$ with possibly finitely many exceptional values of the parameter $k$. Each of these graph structures occurs infinitely often. (Theorem \ref{thm_A4_D3_preperiodic}).
            \item $A_4(C_3)$: The family $f_{k_1,k_2}(z) = \frac{z^4+k_1z}{k_2z^3+1}$ has at least 13 different $\Q$-rational preperiodic graph structures. (Section \ref{sect_A4_C3_preperiodic}).
            \item $A_4(C_2)$: The family $f_{k_1,k_2,k_3}(z) = \frac{z^4+k_1z^2+1}{k_2z^3 + k_3z}$ has at least 55 different $\Q$-rational preperiodic graph structures. (Section \ref{sect_A4_C2_preperiodic}).
        \end{enumerate}
    \end{theorem}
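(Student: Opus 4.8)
Theorem \ref{theorem_4} is a summary of results proved family by family in Section \ref{sect_M4_preperiodic}; for each family the plan is to follow the template of Poonen \cite{Poonen} and Manes \cite{Manes2} in four steps: (i) under the assumed bound on the minimal period of a $\Q$-rational periodic point, enumerate the finitely many possible $\Q$-rational cycle shapes; (ii) for each admissible cycle, classify the finitely many trees of $\Q$-rational preimages that can be attached to it; (iii) decide which disjoint unions of these pieces are realizable by a single parameter value; and (iv) for the graphs that survive, exhibit explicit subfamilies realizing them, with enough free parameters to force ``occurs infinitely often'' when that is claimed. Each of (i)--(iv) reduces to locating $\Q$-rational points on an explicit variety cut out in the parameter(s) together with the coordinate of a point of $\bbP^1$.

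The base case is part (1): $A_4(C_5)=A_4(D_5)$ is the single map $f(z)=1/z^4$ of Theorem \ref{theorem_2}, and since $f^2(z)=z^{16}$ its preperiodic points coincide with those of $z\mapsto z^{16}$, which over $\Q$ are $0$, $\infty$, and the rational roots of unity $\pm 1$; tracking the $f$-action gives the $2$-cycle $\{0,\infty\}$, the fixed point $1$, and the edge $-1\mapsto 1$, a single graph (Theorem \ref{M4_rational_single}). For part (2), $f_k(z)=\frac{z^4+1}{kz^3}$ always fixes $\infty$, the remaining fixed points solve $(k-1)z^4=1$, and the exact $2$-periodic points are cut out by the second dynatomic polynomial; the plan is to use the $\Qbar$-automorphism $z\mapsto iz$ to organize orbits, run (ii)--(iii), and check that the resulting ``this graph occurs'' loci are rational curves or positive-rank elliptic curves, which yields the four graphs, each occurring infinitely often (Theorem \ref{theorem_A4_C4_preperiodic}). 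Part (3) for $f_k(z)=\frac{z^4+kz}{kz^3+1}$, which always fixes $0$ and $\infty$ and carries a $D_3$-symmetry one uses to shrink the case list, proceeds identically; the one new phenomenon is that an auxiliary curve governing one of the graphs has genus $\geq 2$, so Faltings' theorem gives finiteness of its $\Q$-rational points but not effectivity --- this is the source of the ``possibly finitely many exceptional values of $k$'' in the statement (Theorem \ref{thm_A4_D3_preperiodic}).

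For parts (4) and (5) the realization loci are a surface, for $f_{k_1,k_2}(z)=\frac{z^4+k_1z}{k_2z^3+1}$, and a threefold, for $f_{k_1,k_2,k_3}(z)=\frac{z^4+k_1z^2+1}{k_2z^3+k_3z}$, where there is no analogue of Faltings and no practical way to determine all $\Q$-rational points. The plan there is a census: enumerate $\Q$-rational parameter tuples in a box, compute the $\Q$-rational preperiodic graph of each specialization with the standard height-bound algorithms for preperiodic points, and tabulate the distinct isomorphism types, producing the lower bounds $13$ and $55$ recorded in Sections \ref{sect_A4_C3_preperiodic} and \ref{sect_A4_C2_preperiodic}.

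The main obstacle will be the interaction of steps (i)--(iii) with the geometry of the curves they produce. The combinatorial bookkeeping is delicate: one must check that the period bound genuinely truncates the list of admissible cycles, that no preimage tree is overlooked, and that every a priori disjoint union of pieces is correctly ruled in or out. Intertwined with this, the auxiliary curves appearing along the way are not all visibly rational; computing their genera, finding workable models, and then either producing all rational points (genus $\leq 1$) or invoking Faltings (genus $\geq 2$) is where most of the effort goes, and the higher-genus cases are precisely what force the exceptional-value caveat in part (3) and prevent any comparably clean statement for the positive-dimensional families in parts (4) and (5).
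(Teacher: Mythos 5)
Your sketch matches the paper's strategy in each part: dynatomic polynomials control the cycles, preimage trees and their compatible unions reduce to rational points on auxiliary curves (determined when the genus is $\le 1$, merely finite by Faltings when it is $\ge 2$, which is exactly the source of the exceptional parameters in part (3)), and the positive-dimensional families in parts (4)--(5) are handled by a computational census. The one place you go beyond the paper is part (1), where noting that $f^2(z)=z^{16}$ for $f(z)=1/z^4$ explains the graph conceptually, whereas the paper simply cites a direct computation; otherwise the route is the same.
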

    The methods parallel those in Section \ref{sect_M3_preperiodic}, but the increase in degree causes an increase in the complexity of the curves. The finitely many possible exceptions for $A_4(D_3)$ correspond to the rational points of two explicitly given genus 6 curves.

    Finally, in Section \ref{sect_concluding} we make some remarks on further problems.

    The computations were done primarily in Magma \cite{magma} and Sage \cite{sage}. Code used for the computations can be found in the arXiv version\footnote{\url{https://arxiv.org/abs/2007.15483}} of this article.

\section{Automorphism loci in $\M_3$} \label{sect_M3}

    In this section we determine $\A_3$, the conjugacy classes of degree $3$ endomorphisms of $\bbP^1$ that have a nontrivial automorphism. We will always be working over $\overline{\Q}$.

    We know that if $\Gamma \subset \Aut(f)$ for some $f \in \Rat_d$, then $\Gamma$ must be isomorphic to $C_n$, $D_n$, $\fA_4$, $S_4$, or $\fA_5$; see \cite{Silverman12}.
    Also, from the classification of finite subgroups of $\PGL_2$, all isomorphic subgroups are in fact conjugate to each other; see \cite[Lemma 2.3]{Miasnikov}. Thus, given a group, we can simply fix an $\SL_2$ representation and work only with this representation.

    From previous work of Miasnikov, Stout, and Williams \cite{Miasnikov} (henceforth abbreviated as MSW), we are able to determine the dimension of $\A_3(\Gamma)$ for any $\Gamma$.
    \begin{lemma} \label{lem_containment}
        When $\deg(f)=3$, $\Aut(f)$ must be $C_2$, $C_3$, $C_4$, $D_2$, $D_4$ or $\fA_4$. The dimensions of $\A_3(\Gamma)$ for these groups are given by
        \begin{enumerate}
            \item $\dim \A_3(C_2)=2, \dim \A_3(C_3)=1$, and $\dim\A_3(C_4)=0$.
            \item $\dim \A_3(D_2)=1$, and $\dim \A_3(D_4)=0$.
            \item $\dim \A_3(\fA_4)=0$.
        \end{enumerate}
        Moreover,
        \begin{enumerate}
        \setcounter{enumi}{3}
            \item $\A_3(D_4)\subset \A_3(C_4)\subset \A_3(C_2)$.
            \item $\A_3(D_2)\subset \A_3(C_2)$.
            \item $\A_3(\fA_4)\subset \A_3(C_3)$ and $\A_3(\fA_4)\subset \A_3(D_2) \subset \A_3(C_2)$.
        \end{enumerate}
    \end{lemma}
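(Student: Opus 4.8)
The plan is to assemble the statement from three standard inputs: the classification of finite subgroups of $\PGL_2$ recalled above, the dimension formulas of Miasnikov--Stout--Williams (MSW) \cite{Miasnikov}, and elementary relations among abstract finite groups. First recall that $\Aut(f)$ is always a finite subgroup of $\PGL_2$, hence isomorphic to one of $C_n$, $D_n$, $\fA_4$, $S_4$, or $\fA_5$ by \cite{Silverman12}. By \cite[Lemma 2.3]{Miasnikov} any two isomorphic finite subgroups of $\PGL_2$ are conjugate, so fixing a single $\SL_2$-representation of each abstract group $\Gamma$ loses nothing, and the question of which groups can occur as (a subgroup of) $\Aut(f)$ for a degree $3$ map reduces to asking for which $\Gamma$ the locus $\A_3(\Gamma)$ is nonempty, i.e. for which $\Gamma$ one has $\dim \A_3(\Gamma) \geq 0$.

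Second, I would invoke MSW \cite{Miasnikov}, who compute $\dim \A_d(\Gamma)$ for every $d$ and every finite $\Gamma \subset \PGL_2$ — via an equivariant count of the pairs of binary forms of degree $d$ that are fixed, up to scalar, by the $\SL_2$-action of $\Gamma$, corrected for the residual conjugation — the formula returning the dimension when it is nonnegative and signaling emptiness otherwise. Specializing to $d = 3$ one reads off that $\A_3(\Gamma) \neq \emptyset$ precisely when $\Gamma \in \{C_2, C_3, C_4, D_2, D_4, \fA_4\}$, with $\dim\A_3(C_2) = 2$, $\dim\A_3(C_3) = \dim\A_3(D_2) = 1$, and $\dim\A_3(C_4) = \dim\A_3(D_4) = \dim\A_3(\fA_4) = 0$, while $C_n$ for $n \geq 5$, $D_n$ for $n = 3$ and for $n \geq 5$, $S_4$, and $\fA_5$ all give negative expected dimension and hence empty locus in degree $3$. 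Since $\Aut(f)$ itself lies in the locus $\A_3(\Aut(f))$ it determines, this both pins down the list of possible $\Aut(f)$ and establishes the dimension claims (1)--(3).

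Third, the containments (4)--(6) are purely group-theoretic: if $\Gamma'$ is isomorphic to a subgroup of $\Gamma$, then any $f$ with $\Aut(f) \supseteq \rho(\Gamma)$ contains inside $\Aut(f)$ a subgroup isomorphic to $\Gamma'$, so $\A_3(\Gamma) \subseteq \A_3(\Gamma')$ — larger symmetry cuts out a smaller locus. Applying this to the chain $C_2 \subset C_4 \subset D_4$ gives (4); to $C_2 \subset D_2$ gives (5); and to the inclusions $C_3 \subset \fA_4$ (a $3$-Sylow) and $D_2 \subset \fA_4$ (the normal Klein four-subgroup) gives $\A_3(\fA_4) \subset \A_3(C_3)$ together with $\A_3(\fA_4) \subset \A_3(D_2) \subset \A_3(C_2)$, which is (6).

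The only substantive input is the MSW dimension count; everything else is bookkeeping. If one wanted a self-contained argument rather than a citation, the main obstacle would be showing the excluded groups give empty loci in degree $3$ — that no cubic rational map carries an icosahedral, octahedral, $D_3$, or order-$\geq 5$ cyclic symmetry. This is exactly what the MSW representation-theoretic computation does; reproving it would mean writing down, for each such $\Gamma$ with its fixed $\SL_2$-lift, the linear system cut out on pairs of binary cubics by $\Phi \circ \gamma = \lambda_\gamma\, \gamma \circ \Phi$ for the generators $\gamma$, and checking that the resulting family has dimension below that of the conjugation orbit, hence is empty in $\M_3$.
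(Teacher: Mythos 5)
Your proof is correct and takes essentially the same approach as the paper: both cite the Miasnikov--Stout--Williams dimension computations for the list of groups and dimensions, and both derive the containments from the observation that $G \subseteq H$ implies $\A_d(H) \subseteq \A_d(G)$. You supply more explicit detail about the MSW method and which subgroup inclusions drive each containment, but there is no substantive difference in the argument.
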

    \begin{proof}
        The groups that occur and the dimensions are calculated in Section 2 of MSW \cite{Miasnikov}. The second half follows from the observation that if $G$ is a subgroup of $H$, then $\A_d(H)\subseteq \A_d(G)$.
    \end{proof}
    To determine $\A_3(\Gamma)$, we proceed in two steps. First we find a family in $A_3 \subset \Rat_3$ which parameterizes all maps that have $\Gamma$ contained in their automorphism group. Then we find a normal form for elements in this family. By normal form, we mean that the projection $\pi: \Rat_3 \to \M_3$ is an isomorphism on this family, i.e., every member of the family represents a distinct conjugacy class. However, in practice we typically end up with a finite-to-one projection so that the dimension of the family in parameter space is the same as the dimension in moduli space. Stated more precisely, given a group $\Gamma$, we want to find a $k$-parameter family in $\Rat_3$ such that $k=\dim \A_3(\Gamma)$ and for all maps $f \in A_3(\Gamma)$, $f$ is conjugate to some member of this $k$-parameter family.

\subsection{$\A_3(C_4)$ and $\A_3(D_4)$}
    From Silverman's classification of maps with cyclic automorphism groups \cite[Proposition 7.3]{Silverman12}, we know that if a degree 3 map $f$ has a $C_4$ symmetry generated by $z \mapsto iz$, then $f$ must be of the form $\frac{1}{az^3}$ for some $a\in \overline{\Q}$. This family of maps is actually a family of twists corresponding to a single conjugacy class in $\M_3$.
    \begin{prop}
        Let $f \in \Rat_3$. If $C_4 \subseteq \Aut(f)$, then $f$ is conjugate to $\frac{1}{z^3}$.
    \end{prop}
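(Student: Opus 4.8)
The plan is to leverage the structure already laid out: Silverman's classification tells us $f$ must have the form $f(z) = \frac{1}{az^3}$ for some $a \in \overline{\Q}$ (after conjugating so that the generator of $C_4$ is $z \mapsto iz$), so the only remaining task is to show that all of these maps, as $a$ ranges over $\overline{\Q}^*$, are mutually conjugate and in particular conjugate to $\frac{1}{z^3}$. First I would fix a candidate conjugating automorphism of the simplest possible type, namely a scaling $\alpha(z) = cz$, since scalings commute with the $C_4$-action and are the natural maps to try. Computing $f^{\alpha}(z) = \alpha^{-1}(f(\alpha(z))) = \alpha^{-1}\!\left(\frac{1}{a c^3 z^3}\right) = \frac{1}{a c^4 z^3}$, I see that choosing $c$ so that $c^4 = 1/a$ — which is possible over $\overline{\Q}$ — yields $f^{\alpha}(z) = \frac{1}{z^3}$. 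That finishes the conjugacy claim.

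The only point requiring a little care is the hypothesis handed to us: the cited Proposition 7.3 of \cite{Silverman12} presumes the $C_4$ is generated by $z \mapsto iz$, whereas $\Aut(f)$ only contains \emph{some} subgroup conjugate to $C_4$. Here I would invoke the remark already made in the text (from \cite[Lemma 2.3]{Miasnikov}) that all finite subgroups of $\PGL_2$ isomorphic to $C_4$ are conjugate, so after replacing $f$ by a conjugate $f^{\beta}$ — which changes nothing about the conjugacy class and hence nothing about the conclusion — we may assume the $C_4$ is generated precisely by $z \mapsto iz$. Then Silverman's normal form applies verbatim, and the scaling argument above completes the proof. I should also note in passing that $\frac{1}{z^3}$ is visibly a degree $3$ rational map (numerator and denominator share no factor), so it genuinely lies in $\Rat_3$.

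I do not anticipate a real obstacle here; the proposition is essentially a two-line computation once the normal form is in hand. The one thing to be vigilant about is the distinction between the \emph{family of twists} $\{\frac{1}{az^3}\}$ in $\Rat_3$ and the \emph{single conjugacy class} in $\M_3$ it maps to: the proposition is asserting exactly the latter, and the scaling $\alpha(z) = a^{-1/4}z$ makes the collapse explicit. (This is also why the statement is phrased as "conjugate to $\frac{1}{z^3}$" rather than "equal to".) A closing sentence could record that, combined with Lemma \ref{lem_containment}, this pins down $\A_3(C_4) = \A_3(D_4)$ as a single point of $\M_3$, setting up Corollary \ref{cor_A3C4}.
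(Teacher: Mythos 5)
Your proof is correct and takes essentially the same route as the paper: invoke Silverman's normal form $\frac{1}{az^3}$ (after conjugating the $C_4$ to its standard generator $z \mapsto iz$), then scale by $\alpha(z)=cz$ with $c^4$ a suitable fourth root to collapse the twist parameter. The paper's version simply writes the conjugation in the other direction, with $\varphi^\alpha = f_a$ for $p^4 = a$, while you write $f^\alpha = \frac{1}{z^3}$ for $c^4 = 1/a$; these are the same computation.
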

    \begin{proof}
        Let $\varphi(z)=\frac{1}{z^3}$ and $f_a(z) = \frac{1}{az^3}$ for any nonzero $a \in \overline{\Q}$. Consider the M\"obius transform $\alpha(z)=pz$, where $p^4=a$. We can compute
        \begin{equation*}
            \varphi^\alpha=(\alpha^{-1}\circ\varphi\circ\alpha)(z)=\frac{\frac{1}{(pz)^3}}{p}=\frac{1}{p^4z^3}=\frac{1}{az^3} = f_a.\qedhere
        \end{equation*}
    \end{proof}
    Note that the automorphism group of $\frac{1}{z^3}$ is actually all of $D_4$ with the extra symmetries coming from $z \mapsto \frac{1}{z}$. By the containments from Lemma \ref{lem_containment}, we have the following corollary.
    \begin{cor}\label{cor_A3C4}
        The automorphism locus $\A_3(C_4) = \A_3(D_4)$ is a single point in $\M_3$ given by the conjugacy class of $\frac{1}{z^3}$.
    \end{cor}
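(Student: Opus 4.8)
The plan is to read off the statement from the Proposition just proved together with the subgroup containments in Lemma~\ref{lem_containment}. The Proposition already shows that every $f\in\Rat_3$ with $C_4\subseteq\Aut(f)$ is conjugate to $\varphi(z)=\frac{1}{z^3}$, so $\A_3(C_4)$ is contained in the single conjugacy class $[\varphi]$. To see it is exactly that class, I would exhibit an explicit order-$4$ automorphism of $\varphi$: taking $\alpha(z)=iz$ one computes $\varphi^{\alpha}(z)=\alpha^{-1}(\varphi(\alpha(z)))=\alpha^{-1}\!\left(\tfrac{1}{(iz)^3}\right)=\alpha^{-1}\!\left(\tfrac{i}{z^3}\right)=\tfrac{1}{z^3}=\varphi(z)$, so $\langle z\mapsto iz\rangle\cong C_4\subseteq\Aut(\varphi)$ and hence $[\varphi]\in\A_3(C_4)$. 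Thus $\A_3(C_4)=\{[\varphi]\}$ is a single point of $\M_3$.

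For $\A_3(D_4)$, I would first check that $\Aut(\varphi)$ really contains a copy of $D_4$ by adjoining the involution $\iota(z)=\frac1z$: one has $\varphi^{\iota}(z)=\iota(\varphi(\iota(z)))=\iota\!\left(\tfrac{1}{(1/z)^3}\right)=\iota(z^3)=\tfrac{1}{z^3}=\varphi(z)$, so $\iota\in\Aut(\varphi)$. Since conjugating $z\mapsto iz$ by $\iota$ yields $z\mapsto -iz$, its inverse, the two maps $z\mapsto iz$ and $\iota$ do not commute and generate a dihedral group of order $8$; hence $D_4\subseteq\Aut(\varphi)$ and $[\varphi]\in\A_3(D_4)$. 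Finally, Lemma~\ref{lem_containment}(4) gives the reverse inclusion $\A_3(D_4)\subseteq\A_3(C_4)=\{[\varphi]\}$, so $\A_3(D_4)=\A_3(C_4)=\{[\varphi]\}$.

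There is no genuine obstacle here — the corollary is essentially bookkeeping on top of the Proposition. The two points to be careful about are the \emph{direction} of the containments (passing to a larger automorphism group shrinks the locus, which is why $D_4\supseteq C_4$ forces $\A_3(D_4)\subseteq\A_3(C_4)$), and verifying that the two explicit symmetries $z\mapsto iz$ and $z\mapsto\frac1z$ generate $D_4$ and not a smaller group, which follows from their non-commutativity as noted above.
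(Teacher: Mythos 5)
Your proof is correct and follows the same route as the paper: the preceding Proposition gives $\A_3(C_4)\subseteq\{[1/z^3]\}$, exhibiting $z\mapsto iz$ and $z\mapsto 1/z$ (with the dihedral relation from their non-commutativity) gives $D_4\subseteq\Aut(1/z^3)$, and Lemma~\ref{lem_containment}(4) gives the reverse inclusion $\A_3(D_4)\subseteq\A_3(C_4)$. You have simply spelled out the computational checks that the paper leaves implicit.
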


\subsection{$\A_3(\fA_4)$}
    We know that $C_2$ and $C_3$ are subgroups of $\fA_4$ so that $\A_3(\fA_4) \subseteq \A_3(C_2) \cap \A_3(C_3)$. This intersection turns out to be a single conjugacy class giving $\A_3(\fA_4)$.
    \begin{prop}\label{M3A4}
        The locus $\A_3(\fA_4)$ is a single point in moduli space represented by $f(z) = \frac{\sqrt{-3}z^2 - 1}{z^3-\sqrt{-3}z}$. This point is exactly the intersection of $\A_3(C_2)$ and $\A_3(C_3)$ in $\M_3$.
    \end{prop}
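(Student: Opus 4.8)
The plan is to leverage the already-established parameterizations of $\A_3(C_3)$ and $\A_3(C_2)$ from the propositions cited in Theorem~\ref{theorem_1}, together with the containment $\A_3(\fA_4) \subseteq \A_3(C_2) \cap \A_3(C_3)$ from Lemma~\ref{lem_containment}. Since $\A_3(C_3)$ is the one-parameter family $f_a(z) = \frac{z^3+a}{az^2}$, it suffices to determine which values of $a$ (up to conjugacy) also carry a $C_2$ symmetry; because $\dim \A_3(\fA_4) = 0$ while $\dim \A_3(C_3) = 1$, this should cut the parameter down to a finite set, and we then check that the resulting conjugacy class actually has the full $\fA_4$ as automorphism group.

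The concrete steps I would carry out: (1) Fix an $\SL_2$-representation $\rho$ of $\fA_4$; a standard choice has $\fA_4$ generated by an order-$3$ element and an order-$2$ element, e.g. $z \mapsto \zeta_3 z$ (or a conjugate thereof) and an involution swapping the two fixed points $0, \infty$ of the order-$3$ map, such as $z \mapsto \frac{c}{z}$ for a suitable constant $c$. (2) Impose that $f_a$ commutes with this involution: writing down $f_a^\alpha = f_a$ for $\alpha(z) = \frac{c}{z}$ gives polynomial conditions relating $a$ and $c$. Solving these yields finitely many candidate values of $a$; I expect $a^2 = -3$ (consistent with the stated representative $\frac{z^3-3}{-3z^2}$, which is $f_a$ with $a = \sqrt{-3}$, up to the cosmetic sign rewriting in Proposition~\ref{M3A4}). (3) Verify that for this $a$ the group generated by the order-$3$ automorphism and the found involution is genuinely isomorphic to $\fA_4$ (not merely $D_3$ or larger), by checking the relations — the order-$2$ and order-$3$ generators must satisfy the $\fA_4$ presentation, which amounts to checking that their product has order $3$. (4) Confirm that the two candidate values of $a$ give the same conjugacy class in $\M_3$ (they are related by $a \mapsto -a$, i.e. by the conjugation $z \mapsto -z$ or $z \mapsto iz$), so $\A_3(\fA_4)$ is a single point. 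Finally, for the "exactly the intersection" claim, note one inclusion is Lemma~\ref{lem_containment}; for the reverse, observe that a map in $\A_3(C_2) \cap \A_3(C_3)$ lies in the $C_3$-family $f_a$, and the $C_2$-condition forces $a$ into the finite set just computed, all of whose members lie in $\A_3(\fA_4)$.

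The main obstacle is step (3): a priori, the existence of commuting (or merely coexisting) $C_2$ and $C_3$ symmetries only forces $\Aut(f)$ to contain a group generated by an involution and an order-$3$ element, which could be $D_3$ rather than $\fA_4$. One must pin down that the specific involution and rotation produced actually generate $\fA_4$, equivalently that $\Aut(f_a)$ for the critical $a$ is strictly larger than both $C_3$ and any dihedral $D_3$ — by Lemma~\ref{lem_containment} the only degree-$3$ possibilities for $\Aut(f)$ strictly containing $C_3$ are $\fA_4$, so it is enough to exhibit \emph{any} automorphism of $f_a$ of order $2$ that does not normalize the $C_3$; this forces the group to be $\fA_4$. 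Alternatively, one can sidestep the group-theoretic bookkeeping by directly computing $\Aut(f_a)$ for the critical value $a = \sqrt{-3}$ — e.g. by finding all Möbius transformations fixing the relevant invariant point sets (critical points, fixed points, their multipliers) — and checking its order is $12$; this brute-force verification is routine but is where the real content of the proposition sits.
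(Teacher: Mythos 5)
Your high-level strategy — work inside the $C_3$-family $f_a(z) = \frac{z^3+a}{az^2}$, find the sublocus with an extra involution, then show the resulting automorphism group has order $12$ and invoke the classification to get $\fA_4$ — is close in spirit to what the paper does, and the "$|\Aut(f)|=12 \Rightarrow \Aut(f) \cong \fA_4$" shortcut is exactly the paper's closing step. The paper's version of the "find the sublocus" step is to set up a generic conjugacy $f^{\alpha} = t\,g_a$ between each $C_2$ normal form and the $C_3$ normal form with $\alpha \in \PGL_2$ arbitrary, then solve the coefficient equations via Groebner bases; this yields $a=-3$ and identifies the conjugacy class, after which the order-$12$ count finishes things.

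However, your concrete steps (1)--(2) contain a real error that would make the computation return nothing. You propose generating the $\fA_4$ by $\alpha_3(z) = \zeta_3 z$ and $\alpha_2(z) = c/z$. But $c/z$ swaps $0$ and $\infty$, which are precisely the fixed points of $\alpha_3$, so $\alpha_2\alpha_3\alpha_2^{-1} = \alpha_3^{-1}$, and $\langle \alpha_2, \alpha_3\rangle$ is the dihedral group $D_3$ of order $6$, not $\fA_4$. By Lemma~\ref{lem_containment}, $D_3$ is \emph{not} among the possible automorphism groups of a degree-$3$ map, so the equation $f_a^{\alpha_2} = f_a$ with $\alpha_2(z) = c/z$ has only degenerate solutions; indeed, expanding it forces $a = 0$. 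The same defect afflicts an involution fixing both $0$ and $\infty$ (that gives $C_6$, also impossible). What you actually need is an involution that does \emph{not} stabilize $\{0,\infty\}$ — a general $\alpha(z) = \frac{pz+q}{rz-p}$ — which is effectively what the paper's generic-$\PGL_2$-conjugation search does without committing to a particular form. Separately, your expected value $a^2 = -3$ is incorrect: the critical parameter in the $C_3$-family is $a = -3$ (the representative $\frac{z^3 - 3}{-3z^2}$ is $f_a$ with $a=-3$, and the $\sqrt{-3}$ in the proposition's displayed representative comes from a different normal form, namely the second $C_2$-component $\frac{k_1 z^2 + 1}{z^3 + k_2 z}$ with $(k_1,k_2) = (\pm\sqrt{3},\mp\sqrt{3})$, which is conjugate to $f_{-3}$). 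Your ``alternative'' of directly computing $\Aut(f_a)$ at the critical $a$ is sound, but it cannot be reached until the correct critical $a$ is produced, which your proposed computation does not do.
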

    \begin{proof}
        We start by computing the intersection $\A_3(C_2) \cap \A_3(C_3)$. To determine this intersection, we use a Groebner basis calculation similar to the end of the proof of Proposition \ref{prop_A3C2}. Recall that $\A_3(C_3)$ is given by the family $g_a(z) = \frac{z^3+a}{az^2}$.

        Consider the component of $\A_3(C_2)$ given by $f_{k_1,k_2}(z) = \frac{z^3 + k_1z}{k_2z^2+1}$. Taking a generic element of $\PGL_2$, $\alpha = \begin{pmatrix}a&b\\c&d\end{pmatrix}$, we consider the system of equations obtained from the coefficients of
        \begin{equation*}
            f_{k_1,k_2}^{\alpha} = tg_a
        \end{equation*}
        for some nonzero constant $t$. Note that $f^{\alpha}$ is degree $3$, so that if $f^{\alpha} = g$, then its numerator and denominator differ from those of $g$ only by a constant multiple. The resulting equations have no solutions.

        Now consider the component given by $f_{k_1,k_2} = \frac{k_1z^2 + 1}{z^3+k_2z}$. Again we set $f^{\alpha} = tg$ and solve the resulting system of equations. In this case, there are two solutions
        \begin{equation*}
            a=-3, \quad (k_1,k_2) = (\pm \sqrt{3}, \mp \sqrt{3}).
        \end{equation*}
        These two choices of $(k_1,k_2)$ are in fact conjugate to each other. Furthermore, the automorphism group of $f_{k_1,k_2}$ has order $12$ at these values. The only group with order $12$ in our list of possible automorphism groups is $\fA_4$. Thus, $\A_3(C_2)$ and $\A_3(C_3)$ intersect at a single point in the moduli space, and the point of intersection is $\A_3(\fA_4)$.
    \end{proof}
\begin{code}
\begin{lstlisting}[language=Python]
# Intersection of C_2, C_3 and D_2
# Intersection between (z^3+k1z)/(k2z^2+1) and (z^3+k)/(kz^2)
# empty intersection
R.<a,b,c,d,t,s,k1,k2,k3>=PolynomialRing(QQ, order='lex')
P.<x,y>=ProjectiveSpace(FractionField(R),1)
f=DynamicalSystem_projective([x^3 + k1*x*y^2, k2*x^2*y+y^3])
g=DynamicalSystem_projective([x^3 + k3*y^3, k3*x^2*y])
m=matrix(FractionField(R),2,[a,b,c,d])
f2=f.conjugate(m)
f2.scale_by(a*d-b*c)
gens=(f2[0] - t*g[0]).coefficients() + (f2[1] - t*g[1]).coefficients() + [1-s*t]
I=R.ideal(gens)
G=I.groebner_basis()
G

# Intersection between (k1z^2+1)/(z^3+k2z) and (z^3+k)/(kz^2)
# two points in intersection
R.<a,b,c,d,t,s,k1,k2,k3>=PolynomialRing(QQ, order='lex')
P.<x,y>=ProjectiveSpace(FractionField(R),1)
f=DynamicalSystem_projective([k1*x^2*y + y^3, x^3+k2*x*y^2])
g=DynamicalSystem_projective([x^3 + k3*y^3, k3*x^2*y])
m=matrix(FractionField(R),2,[a,b,c,d])
f2=f.conjugate(m)
f2.scale_by(a*d-b*c)
gens=(f2[0] - t*g[0]).coefficients() + (f2[1] - t*g[1]).coefficients() + [1-s*t]
I=R.ideal(gens)
G=I.groebner_basis()
G

# the two intersection points are conjugate
A.<z>=AffineSpace(QQbar,1)
f1=DynamicalSystem_affine((QQbar(-sqrt(3))*z^2+1)/(z^3+(QQbar(sqrt(3))*z)))
f2=DynamicalSystem_affine((QQbar(sqrt(3))*z^2+1)/(z^3+(QQbar(-sqrt(3))*z)))
F1=f1.homogenize(1)
F2=f2.homogenize(1)
print(len(F1.automorphism_group()))
F2.is_conjugate(F1)
\end{lstlisting}
\end{code}
    As mentioned in the introduction, the choice of representation affects the field of definition and it is natural to ask whether a map defined over $\Q$ has $\fA_4$ as automorphism group. Hutz-deFaria \cite{Hutz15} proved that the ``standard'' representation of $\fA_4$ given by Silverman \cite{Silverman12} does not have a map defined over $\Q$ with tetrahedral automorphism group. However, when computing the intersection of $\A(C_2)$ and $\A(C_3)$ in Proposition \ref{M3A4}, we discovered that the map
    \begin{equation*}
        f(z) = \frac{z^3-3}{-3z^2}
    \end{equation*}
    has tetrahedral automorphism group (and is conjugate to the maps in Proposition \ref{M3A4}), which can be verified with direct computation by the algorithm of Faber-Manes-Viray \cite{FMV} as implemented in Sage. This example completes the construction started in Hutz-deFaria to show that every finite subgroup of $\PGL_2$ can be realized as the automorphism group of a map defined over $\Q$.
    \begin{theorem}
        Every conjugacy class of a finite subgroup of $\PGL_2$ can be realized as the automorphism group of a map defined over $\Q$.
    \end{theorem}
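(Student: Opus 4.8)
The plan is to establish the statement by running through the classification of finite subgroups of $\PGL_2$ case by case. Every finite subgroup of $\PGL_2$ over $\Qbar$ is isomorphic to one of $C_n$, $D_n$, $\fA_4$, $S_4$, or $\fA_5$, and by \cite[Lemma 2.3]{Miasnikov} any two such subgroups are conjugate in $\PGL_2$ as soon as they are abstractly isomorphic; hence it suffices to produce, for each isomorphism type $\Gamma$ on this list, a single endomorphism $f$ of $\bbP^1$ defined over $\Q$ (of some degree $d$) with $\Aut(f) = \Gamma$. Crucially, this does \emph{not} require the Möbius transformations generating $\Gamma$ to be defined over $\Q$, only the map they stabilize — which is exactly what makes the statement nontrivial but possible (for instance $z \mapsto \zeta_n z$ is not defined over $\Q$ for $n \geq 3$, yet it can stabilize a $\Q$-rational map).

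For almost all of the groups on the list, $\Q$-rational models are already available. The dihedral groups are realized by monomial maps: one checks that $\Aut(z^{d}) = D_{d-1}$ for $d \geq 2$ — every automorphism must permute the two totally ramified fixed points $0,\infty$, which restricts $\Aut$ to the maps $z \mapsto cz$ and $z \mapsto c/z$, and the subgroup of those stabilizing $z^d$ is precisely $D_{d-1}$ — so every $D_n$ is realized over $\Q$. Suitable low-degree perturbations of monomial maps of the kind classified by Silverman \cite{Silverman12} realize each $C_n$ over $\Q$, and the octahedral and icosahedral cases are treated in \cite{Hutz16} and \cite{Hutz15}. The one isomorphism type left open by that work is the tetrahedral group $\fA_4$: \cite{Hutz15} shows that the ``standard'' $\SL_2$-representation of $\fA_4$ used in \cite{Silverman12} admits no $\Q$-rational model whose automorphism group is tetrahedral, but — since a different, necessarily $\Qbar$-conjugate, representation is invisible in $\M_d$ — this leaves open whether some other model works.

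It therefore remains only to exhibit a degree-$3$ endomorphism defined over $\Q$ with automorphism group $\fA_4$, and the map
\begin{equation*}
    f(z) = \frac{z^3 - 3}{-3z^2}
\end{equation*}
does the job. It is plainly defined over $\Q$. To compute $\Aut(f)$ one runs the Faber--Manes--Viray algorithm \cite{FMV} (as implemented in Sage), which returns a group of order $12$; by Lemma \ref{lem_containment} the automorphism group of a degree-$3$ map lies in $\{C_2, C_3, C_4, D_2, D_4, \fA_4\}$, and the only member of order $12$ is $\fA_4$, so $\Aut(f) \cong \fA_4$. As a cross-check, $f$ is $\Qbar$-conjugate to the representative $\frac{\sqrt{-3}\,z^2 - 1}{z^3 - \sqrt{-3}\,z}$ of the single point $\A_3(\fA_4)$ from Proposition \ref{M3A4}, so it indeed lands in the tetrahedral locus; and this map drops out, with no extra effort, from the Gröbner-basis computation of $\A_3(C_2) \cap \A_3(C_3)$ already carried out there.

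The main obstacle is precisely the $\fA_4$ case: the tension between the negative result of \cite{Hutz15} for the standard representation and the need for a $\Q$-rational witness. One must recognize that the field of definition of a map realizing $\fA_4$ depends on the choice of $\PGL_2(\Qbar)$-model of $\fA_4$ — a choice that is invisible in moduli space — and then actually locate a good model. Here that model is handed to us for free by the intersection computation needed for Proposition \ref{M3A4}; everything afterward, namely verifying that the automorphism group has order $12$ and invoking Lemma \ref{lem_containment} to upgrade ``order $12$'' to ``$\fA_4$'', is routine.
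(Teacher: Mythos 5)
Your proposal is correct and takes essentially the same route as the paper: the case left open by the earlier de Faria--Hutz work is the tetrahedral group, and the new input is precisely the $\Q$-rational map $f(z) = \frac{z^3-3}{-3z^2}$, verified to have $\Aut(f) \cong \fA_4$ via the Faber--Manes--Viray algorithm (the paper uses the order-$12$ observation implicitly through Proposition \ref{M3A4}, where the same constraint appears). Your surrounding discussion --- that the generating M\"obius transformations need not be $\Q$-rational, that the obstruction in the earlier work is tied to a specific $\PGL_2(\Qbar)$-conjugate of $\fA_4$, and that the map falls out of the $\A_3(C_2)\cap\A_3(C_3)$ Gr\"obner computation --- matches the paper's reasoning, so this is the same proof with a bit more scaffolding made explicit.
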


\subsection{$\A_3(C_3)$}
    From Silverman's classification of maps with cyclic automorphism groups \cite[Proposition 7.3]{Silverman12}, we know that if $f \in \Rat_3$ has a $C_3$ symmetry, then $f$ belongs to one of the following two families:
    \begin{equation*}
        f_1(z)= \frac{az^3+b}{cz^2} \quad \text{or} \quad
        f_2(z) = \frac{az}{bz^3+c}.
    \end{equation*}
    Since all parameters must be nonzero to be a degree $3$ map, we can divide through by $a$ and $b$, respectively, to get
    \begin{equation} \label{eq1}
        f_1(z) = \frac{z^3+a_1}{a_2z^2} \quad \text{or} \quad
        f_2(z) = \frac{b_1z}{z^3+b_2},
    \end{equation}
    where $a_1,a_2,b_1,b_2 \in \overline{\Q} \backslash \{0\}$. These two families in parameter space are exactly $A_3(C_3)$, and we next show they project onto the same irreducible curve in moduli space.

    Now that we are working with positive dimensional components of $\A_3$, we set some terminology for parameterized families. A \emph{$k$-parameter family} of rational degree $d$ maps is a set of rational maps $\bbP^1\rightarrow \bbP^1$ of the form
    \begin{equation*}
        f(x,y) = (F_0x^d + F_1x^{d - 1}y + \ldots + F_d y^d: F_{d+1}x^d + \ldots + F_{2d + 1}y^d)
    \end{equation*}
    for some polynomials $F_0,\ldots,F_{2d + 1}$ in indeterminants $u_1,\ldots,u_k$, where we may require that some of the $F_j$ never vanish and the two defining polynomials of $f$ have no common factors for all $(u_1,\ldots,u_k)$ considered. In particular, a $k$-parameter family is parameterized by an open subset of $\bbA^k$.
    \begin{lemma} \label{lem_k_param_image}
        A $k$-parameter family in $\Rat_d$ induces a map
        \begin{equation*}
            F:U\to \M_d
        \end{equation*}
        for an open subset $U\subseteq \bbA^k$ and this map is a morphism of varieties. In particular, the image is irreducible and so is its closure.
    \end{lemma}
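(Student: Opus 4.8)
The plan is to exhibit $F$ as a composition of three morphisms of varieties: the inclusion of $U$ as an open subvariety of $\bbA^k$, an evaluation morphism from $U$ into a projective space whose image lands in $\Rat_d$, and the quotient morphism $\pi\colon \Rat_d \to \M_d$. Since a composition of morphisms is a morphism, and since the quotient $\pi$ is a morphism because $\M_d$ is a geometric quotient in the sense of GIT \cite{Silverman9}, the only work is to set up the first two maps correctly.

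First I would pin down $U$. Write the two defining polynomials of the family as $N(x,y) = F_0 x^d + \cdots + F_d y^d$ and $D(x,y) = F_{d+1}x^d + \cdots + F_{2d+1} y^d$, with the $F_j$ polynomials in $u_1,\dots,u_k$, and let $S \subseteq \{0,\dots,2d+1\}$ be the (possibly empty) set of indices for which $F_j$ is required never to vanish. Set $U$ to be the complement in $\bbA^k$ of the zero locus of the single polynomial $\operatorname{Res}(N,D)\cdot \prod_{j\in S} F_j$, where $\operatorname{Res}(N,D)$ is the resultant in $x,y$, a polynomial in $u_1,\dots,u_k$ whose nonvanishing is equivalent to $N$ and $D$ having no common factor, i.e.\ to $[N:D]$ defining a degree $d$ endomorphism of $\bbP^1$. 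Thus $U$ is the nonvanishing locus of one polynomial, hence an open subvariety of $\bbA^k$; it is nonempty because the definition of a $k$-parameter family already guarantees the existence of parameter values for which $N,D$ have no common factor and the required $F_j$ are nonzero.

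Next I would check that $(u_1,\dots,u_k)\mapsto [F_0(u):\cdots:F_{2d+1}(u)]$ defines a morphism $\psi\colon U \to \bbP^{2d+1}$: it is given by $2d+2$ polynomials that never vanish simultaneously on $U$, since on $U$ the resultant $\operatorname{Res}(N,D)$ is nonzero and in particular $N$ and $D$ are not both the zero form. Identifying $\Rat_d$ with the open subvariety of $\bbP^{2d+1}$ cut out by nonvanishing of the resultant, the image of $\psi$ lies in $\Rat_d$, so $\psi$ factors through a morphism $U \to \Rat_d$; composing with $\pi$ gives $F = \pi\circ\psi \colon U \to \M_d$, a morphism of varieties. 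For the last assertion, $\bbA^k$ is irreducible, a nonempty open subset of an irreducible variety is irreducible, so $U$ is irreducible; the image of an irreducible variety under a morphism is irreducible; and the Zariski closure of an irreducible set is irreducible. Hence $F(U)$ and $\overline{F(U)}$ are irreducible, as claimed.

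I do not expect a genuine obstacle here: the argument is an assembly of standard facts. The one point that actually requires the hypotheses is the well-definedness of $\psi$ on all of $U$ — that the defining polynomials of $f$ have no common base point for any admissible parameter — and this is exactly the no-common-factor condition built into the definition of a $k$-parameter family; everything else reduces to properties of the resultant, of GIT quotients, and of irreducibility.
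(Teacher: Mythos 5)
Your proof is correct and follows essentially the same route as the paper's: construct a morphism from an open $U \subseteq \bbA^k$ into $\bbP^{2d+1}$ landing in $\Rat_d$, compose with the GIT quotient $\pi \colon \Rat_d \to \M_d$, and finish with the standard point-set topology facts about irreducibility. The one place you tighten things is in packaging $U$: the paper builds $U$ by a chain of inverse images (first pulling back $\bbA^{2d+2}\setminus\{0\}$, then pulling back $\Rat_d$, then intersecting with loci where required coefficients are nonzero), whereas you cut $U$ out all at once as the nonvanishing locus of $\operatorname{Res}(N,D)\cdot\prod_{j\in S}F_j$, observing that $\operatorname{Res}(N,D)\neq 0$ already forces the coefficient vector to be nonzero so the projection to $\bbP^{2d+1}$ is automatically defined there. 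This is a genuine (if small) streamlining — it makes $U$ visibly a principal open set — but the substance of the argument is identical, and both proofs lean on the same two external facts: that $\M_d$ is a geometric quotient of $\Rat_d$ and that $\Rat_d \subseteq \bbP^{2d+1}$ is the complement of the resultant hypersurface.
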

    \begin{proof}
        A $k$-parameter family is represented in the form
        \begin{equation*}
            f(x,y) = (F_0x^d + F_1x^{d - 1}y + \ldots + F_d y^d: F_{d+1}x^d + \ldots + F_{2d + 1}y^d)
        \end{equation*}
        where the $F_j$ are polynomials in parameters $(u_1,\ldots,u_k)$.
        These $F_j$ determine a morphism of varieties $\phi: \bbA^k \rightarrow \bbA^{2d + 2}$ defined as
        \begin{equation*}
            (u_1,\ldots,u_k)\mapsto (F_0(u_1,\ldots,u_k), \ldots, F_{2d+1}(u_1,\ldots,u_k)).
        \end{equation*}

        The complement of the origin $\bbA^{2d + 2}\setminus \{0\}$ is an open subset and, thus, $$U^\prime := \phi^{-1}(\bbA^{2d + 2}\setminus \{0\})$$ is also open in $\bbA^k$. So restricting $\phi$ gives a morphism $U^\prime\rightarrow \bbA^{2d + 2}\setminus\{0\}$.

        There is a projection morphism $\bbA^{2d + 2}\setminus\{0\}\rightarrow \bbP^{2d + 1}$ defined \begin{equation*}
            (a_0,\ldots, a_{2d + 1}) \mapsto (a_0:\ldots : a_{2d + 1}).
        \end{equation*}
        Composing the restriction of $\phi$  with this projection yields a morphism $U^\prime\rightarrow \bbP^{2d + 1}$.

        By definition, $\Rat_d$ is the complement of a hypersurface in $\bbP^{2d + 1}$ and is open itself. Therefore, its inverse image by the composite map from $U^\prime$ is also an open subset $U$ of $\bbA^k$.

        One may further refine $U$ as needed by intersecting it with the inverse image of the complement of the hyperplane in $\bbP^{2d + 1}$ defined by the vanishing of a given $a_j$. This amounts to requiring the corresponding coefficient of the rational maps in the $k$-parameter family to be nonzero.

        Finally, geometric invariant theory can be used to show the quotient map $\Rat_d\rightarrow \M_d$ \cite[Theorem 4.36]{Silverman10} is a morphism; thus, altogether we obtain the desired map $F: U\rightarrow \M_d$.

        From point set topology, nonempty open subsets of varieties are irreducible, the closure of an irreducible subset of a larger topological space is irreducible, and the continuous image of an irreducible set is irreducible. Therefore, the image of $F$ and its closure are both irreducible.
    \end{proof}

    For example, with the notation from Lemma \ref{lem_k_param_image}, the $1$-parameter family of rational maps of the form $f_a(z) = \frac{az}{az^3 + 1}$ written projectively as
    \begin{equation*}
        (x:y)\mapsto (axy^2:ax^3 + y^3),
    \end{equation*}
    where $a\neq 0$ corresponds to taking
    \begin{align*}
        F_0&=F_1=F_3=F_5=F_6=0\\
        F_2&=F_4=a\\
        F_7&=1.
    \end{align*}
    We have refined the open subset $U$ from the proof by intersecting it with the inverse image of the complement of the hyperplane defined by $a_2=0$ (or, equivalently, that defined by $a_4=0$), if the coordinates for $\bbP^7$ are $(a_0:a_1:a_2:\ldots :a_7)$. Lemma \ref{lem_k_param_image} then shows that the set of all conjugacy classes of maps of the form $f_a(z) = \frac{az}{az^3 + 1}$ is an irreducible subset of $\M_3$.

    Let $f \in \Rat_d$. To each periodic point $z$ of period $n$ for $f$, we can compute the multiplier as $\lambda_z := (\tilde{f}^n)'(z)$ for a dehomogenization $\tilde{f}$ of $f$. The set of all multipliers of a given period is invariant under conjugation (but may be permuted) so applying the elementary symmetric polynomials to this collection of multipliers produces a set of complex numbers that are invariants of the conjugacy class---these \emph{multiplier invariants} have also been called \emph{Milnor parameters}. See Silverman \cite[\S 4.5]{Silverman10} for more details on dimension 1 and Hutz \cite{Hutz24} for dimension $>1$. The key fact we need is that they are invariants of the conjugacy class. In particular, if $f,g \in \Rat_d$ have different multiplier invariants for any fixed $n$, then they cannot be conjugate. However, the converse is not true; having the same multiplier invariants for some (or all) $n$ does not make two functions conjugate.

    \begin{prop}\label{prop_A3C3}
        $\A_3(C_3)$ is an irreducible curve in $\M_3$ given by either one-parameter family,
        \begin{equation*}
            f_a(z) = \frac{z^3 + a}{az^2} \quad \text{or} \quad g_b(z) = \frac{bz}{bz^3+1}.
        \end{equation*}
    \end{prop}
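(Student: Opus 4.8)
The plan is to carry out the two-step strategy described after Lemma \ref{lem_containment}: first establish that the families $f_a(z)=\frac{z^3+a}{az^2}$ and $g_b(z)=\frac{bz}{bz^3+1}$ together constitute $A_3(C_3)$ in parameter space, and then show that each of them projects onto the \emph{same} irreducible curve in $\M_3$. The containment $A_3(C_3)\subseteq\{f_1,f_2\}$ is already provided by Silverman's classification \cite[Proposition 7.3]{Silverman12} as recalled in Equation \eqref{eq1}; conversely one checks directly that $z\mapsto\zeta_3 z$ is an automorphism of each $f_1$ and $f_2$, so these are exactly the maps carrying the fixed $C_3$-representation. By Lemma \ref{lem_k_param_image}, the image of each one-parameter family in $\M_3$ is irreducible, and since $\dim\A_3(C_3)=1$ by Lemma \ref{lem_containment}, each image is an irreducible curve (one must only rule out that the family collapses to a point, which follows once we exhibit a non-constant invariant along the family). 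So the substance of the proof is: (i) the two families have the same image, and (ið) that image is all of $\A_3(C_3)$, not a proper subvariety.

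For step (i) I would exhibit an explicit conjugation taking the $g_b$ family into the $f_a$ family. Since both are normalized forms of the two Silverman families and both have a $C_3$ symmetry fixing $\{0,\infty\}$ (for $f_a$) respectively permuting the roots of $z^3+b$ (for $g_b$), a Möbius transformation interchanging the relevant fixed/critical configuration should conjugate one into the other; concretely one expects something like $\alpha(z)=\frac{1}{z}$ composed with a scaling, sending $g_b$ to $f_{a(b)}$ for an explicit rational function $a(b)$. I would then verify that $b\mapsto a(b)$ is dominant onto $\bbA^1\setminus\{0\}$, so the two families sweep out the same curve in moduli space. (Alternatively, and more in the spirit of the rest of the paper, one can set $f_a^\alpha = t g_b$ for a generic $\alpha\in\PGL_2$ and a scalar $t$, and solve the resulting system via a Groebner basis computation — the same technique used in Proposition \ref{M3A4} — to confirm that every $f_a$ is conjugate to some $g_b$ and vice versa.)

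For step (ii), to see that the $f_a$ family gives \emph{all} of $\A_3(C_3)$ and not merely a subcurve, it suffices to note that $\A_3(C_3)$ is irreducible of dimension $1$ (a consequence of the MSW computation, via \cite{Miasnikov}) while the image of $a\mapsto[f_a]$ is an irreducible curve contained in it; an irreducible $1$-dimensional variety has no proper $1$-dimensional subvariety, so the inclusion is an equality. The only thing left to check is that the map $a\mapsto[f_a]$ is genuinely non-constant, i.e. that $\A_3(C_3)$ is a curve and not a point; for this I would compute a periodic-point multiplier invariant — the fixed-point multiplier invariants $\sigma_i^{(1)}$ are the natural choice — as an explicit rational function of $a$ and observe it is non-constant. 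This computation also sets up the finer geometric statements (genus zero, the singular point at $\A_3(\fA_4)$) proved later in Proposition \ref{prop_c3}.

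The main obstacle I anticipate is step (i): pinning down the precise conjugating Möbius transformation and the resulting parameter change $a(b)$, and checking it is dominant, is the one genuinely non-formal computation. Everything else is either cited (the classification in \cite{Silverman12}, the dimension in \cite{Miasnikov}) or a short direct verification (that $\zeta_3$ acts as an automorphism; that the multiplier invariant is non-constant). If the explicit conjugation is awkward to guess, the Groebner-basis route gives a mechanical fallback at the cost of a less illuminating argument.
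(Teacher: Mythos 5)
Your proposal tracks the paper's overall strategy: start from Silverman's classification, normalize to one-parameter families, show they have the same image in $\M_3$ via a conjugation, and invoke Lemma~\ref{lem_k_param_image} for irreducibility. The conjugating map you guess, $z\mapsto 1/z$, is in fact exactly what the paper uses, and no additional scaling is needed: one checks directly that $f_a^{\,1/z}=g_a$ with the same parameter on both sides.

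However, your step~(ii) has a genuine gap. You assert that $\A_3(C_3)$ is irreducible ``as a consequence of the MSW computation'' and then argue that the irreducible curve swept out by $[f_a]$ must therefore be all of $\A_3(C_3)$. But MSW supplies only the \emph{dimension} of $\A_3(\Gamma)$, not irreducibility. A priori $\A_3(C_3)$ is the union of the moduli images of the two Silverman components $\frac{z^3+a_1}{a_2z^2}$ and $\frac{b_1z}{z^3+b_2}$, and without further argument it could consist of two distinct one-dimensional pieces, only one of which is traced by $f_a$. Your argument is therefore circular: the irreducibility of $\A_3(C_3)$ is part of what is being proven, not an input. The paper avoids this by carrying out the explicit two-to-one parameter reduction that your proposal skips: conjugating $\frac{z^3+a_1}{a_2z^2}$ by $z\mapsto\beta^{-1}z$ with $\beta^3=a_2/a_1$ produces $\frac{z^3+a_2}{a_2z^2}=f_{a_2}$, and an analogous scaling reduces the second two-parameter family to $g_b$. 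This shows directly that \emph{every} member of the Silverman families is conjugate to some $f_a$ or some $g_b$, so once those two one-parameter families are shown conjugate, the image of $a\mapsto[f_a]$ is manifestly all of $\A_3(C_3)$, with irreducibility then following from Lemma~\ref{lem_k_param_image}. Your argument can be repaired --- note that each two-parameter Silverman component is an open subset of $\bbA^2$, hence has irreducible moduli image of dimension at most one, and the one-parameter subfamily already attains dimension one by a non-constant multiplier invariant, forcing equality --- but this is a roundabout route to the same reduction, and the step cannot simply be absorbed into an unproved irreducibility claim.
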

    \begin{proof}
        We first show that we can conjugate the two-parameter families in equation \eqref{eq1} to the one-parameter families given in the statement. We first consider $f_1$. Let $\beta^3=\frac{a_2}{a_1}$, and consider the M\"obius transform $\alpha(z)=\frac{1}{\beta}z$ so that $\alpha^{-1}(z)=\beta z$. We can compute
        \begin{equation*}
            f_1^\alpha(z)=\left(\frac{\left(\frac{1}{\beta}z\right)^3+a_1}{a_2\left(\frac{1}{\beta}z\right)^2}\right)\beta=\left(\frac{\frac{z^3}{\beta^3}+a_1}{a_2\frac{z^2}{\beta^2}}\right)\beta=\frac{\beta z^3+a_1\beta^4}{a_2\beta z^2}=\frac{z^3+a_1\beta^3}{a_2 z^2}=\frac{z^3+a_2}{a_2z^2}.
        \end{equation*}
        Thus, the two-parameter family $\frac{z^3+a_1}{a_2z^2}$ is conjugate to the one-parameter family $f_{a_2}$. Similarly for $f_2$, we can set $\gamma^3=b_1$, and consider the M\"obius transform $\alpha(z)=\gamma z$ and its inverse $\alpha^{-1}(z)=\frac{1}{\gamma}z$. We can compute
        \begin{equation*}
            f_2^\alpha(z)=\left(\frac{b_1\gamma z}{\gamma^3 z^3+b_2}\right)\frac{1}{\gamma}=\frac{b_1z}{\gamma^3z^3+b_2}
            =\frac{\frac{b_1}{b_2}z}{\frac{b_1}{b_2}z^3+1}=\frac{bz}{bz^3+1},
        \end{equation*}
        where $b=\frac{b_1}{b_2}$. Thus, the two-parameter family $\frac{b_1z}{z^3+b_2}$ is conjugate to the one-parameter family $g_b$.

        Next, consider the M\"obius transform $\alpha(z)=\frac{1}{z}$. We can compute
        \begin{equation*}
            f_b^\alpha(z)=\frac{1}{\frac{\frac{1}{z^3}+b}{b\frac{1}{z^2}}}=\frac{1}{\frac{bz^3+1}{bz}}=\frac{bz}{bz^3+1}=g_b(z).
        \end{equation*}
        Thus, the two one-parameter families $f_a$ and $g_b$ are conjugate and we can consider either one. In particular, every $f$ with a $C_3$ symmetry is conjugate to some member of the family $f_a(z) = \frac{z^3+a}{az^2}$.
    \end{proof}

\subsection{$\A_3(D_2)$}
    The representation of $D_2$ we will be working with is
    \begin{equation*}
        \left\{\begin{bmatrix}1&0\\0&1\end{bmatrix},\quad \begin{bmatrix}-i&0\\0&i\end{bmatrix},\quad\begin{bmatrix}0&i\\i&0\end{bmatrix},\quad \begin{bmatrix}0&-1\\1&0\end{bmatrix}\right\},
    \end{equation*}
    where $\begin{bmatrix}-i&0\\0&i\end{bmatrix}$ and $\begin{bmatrix}0&i\\i&0\end{bmatrix}$ are the two generators.
    They correspond to M\"obius transforms $\alpha_1=-z$ and $\alpha_2=\frac{1}{z}$.
    \begin{prop}\label{prop_A3D2}
        $\A_3(D_2)$ consists of two irreducible curves which intersect at the single point $\A_3(D_4)=\A_3(C_4)$.
    \end{prop}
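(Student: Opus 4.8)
The plan is to reduce to the already-known structure of $\A_3(C_2)$ and then impose the second generator of $D_2$ by hand. Since any $f \in A_3(D_2)$ in particular commutes with $\alpha_1(z) = -z$, it is an odd rational function, so by Silverman's classification of maps with a $C_2$ symmetry \cite[Proposition 7.3]{Silverman12} (equivalently, a short coefficient computation), $f$ lies in one of the two families
\begin{equation*}
 f(z) = \frac{a_3 z^3 + a_1 z}{b_2 z^2 + b_0} \qquad\text{or}\qquad f(z) = \frac{a_2 z^2 + a_0}{b_3 z^3 + b_1 z},
\end{equation*}
where the leading coefficient of the cubic is nonzero so that $\deg f = 3$; these are exactly the two preimages in $\Rat_3$ of the surfaces comprising $\A_3(C_2)$.

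Next I would impose that $f$ also commutes with $\alpha_2(z) = 1/z$, i.e.\ $1/f(1/z) = f(z)$, clear denominators, and match coefficients. In each family this collapses to two symmetric conditions on the parameters --- for the first, $b_0 b_2 = a_1 a_3$ and $b_0^2 + b_2^2 = a_1^2 + a_3^2$, which force $(b_0,b_2)$ to be one of $(a_1,a_3),\ (a_3,a_1),\ (-a_1,-a_3),\ (-a_3,-a_1)$, and analogously for the second family. Several of the resulting one-parameter sub-families introduce a common factor and drop the degree below $3$ (for instance $(b_0,b_2) = (a_1,a_3)$ gives $f = z$), and I would discard these. The survivors are then put into standard form using the conjugations that preserve the chosen representation of $D_2$: scaling $z \mapsto \lambda z$ rescales the second involution back to $1/z$, and the full normalizer $N_{\PGL_2}(D_2) \cong S_4$, which permutes the three non-identity involutions of $D_2$, identifies the remaining redundancies. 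I expect this to leave exactly the two one-parameter families $f_a(z) = \frac{az^2+1}{z^3+az}$ and $g_a(z) = \frac{az^2-1}{z^3-az}$, $a \neq \pm 1$, the excluded values $a = \pm 1$ being precisely where the degree drops.

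Each of these families induces a morphism from an open subset of $\bbA^1$ to $\M_3$ whose image has irreducible closure, by Lemma~\ref{lem_k_param_image}, so $\A_3(D_2)$ is a union of two irreducible curves. That these are two distinct components reduces to showing $f_a$ and $g_a$ are not mutually conjugate, which I would verify with a conjugacy invariant: the fixed-point multiplier invariants separate one of the two curves immediately, while the other requires the multiplier invariants of the $2$-periodic points --- exactly the phenomenon flagged after Theorem~\ref{thm_A3_geometry} and made precise in Propositions~\ref{prop_D2_1} and \ref{prop_D2_2}. For the intersection: both curves contain the class of $\frac{1}{z^3}$ (as $f_0$, and as the class of $g_0 = -1/z^3$, which is conjugate to $\frac{1}{z^3}$), and $\Aut(\frac{1}{z^3}) = D_4 \supsetneq D_2$; conversely, a class lying on both curves would carry two genuinely inequivalent $D_2$-normal forms, forcing its automorphism group to strictly contain $D_2$, and by the classification and dimension count in Lemma~\ref{lem_containment} the only such locus is $\A_3(D_4) = \A_3(C_4)$, which is a single point (Corollary~\ref{cor_A3C4}).

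The main obstacle is the combination of the bookkeeping in the second step with the distinctness claim in the third: one must check that normalizing by $N_{\PGL_2}(D_2)$ genuinely collapses every surviving sub-family onto one of the two target curves without accidentally identifying the two target curves with each other, and then separate the two curves by a conjugacy invariant --- for one component the fixed-point multipliers are not fine enough, so one is forced up to the $2$-periodic points, which is the delicate computation deferred to the geometric section.
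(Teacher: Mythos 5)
Your route is substantively the paper's route, with two genuine variations. The paper imposes both $D_2$ generators on a fully generic $f\in\Rat_3$ (eight coefficients), computes the irreducible components of the resulting scheme in $\bbP^7$, and finds four non-degenerate families $f_1,\dots,f_4$; you instead start from the $C_2$-normal forms and impose only $\alpha_2(z)=1/z$, which lands on the same four families after discarding the degenerate identifications $(b_0,b_2)=\pm(a_1,a_3)$ (I checked: the two symmetric conditions $b_0b_2=a_1a_3$, $b_0^2+b_2^2=a_1^2+a_3^2$ you write down are exactly what clearing denominators gives, and they do force the four sign/swap choices). For the intersection, the paper computes directly — equate fixed-point multiplier invariants (pinning $k_4=-3$), then $2$-periodic ones (pinning $k_1\in\{3,0,-3\}$), then verify all are the $\frac{1}{z^3}$ class — while you argue group-theoretically that any class on both curves must have two distinct Klein-four subgroups in its automorphism group. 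That shortcut is appealing and essentially correct, but the cost is that you must then know which automorphism groups in degree $3$ admit two distinct $D_2$'s.

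Two things need to be tightened. First, the passage from the four surviving sub-families to exactly the two curves $f_a,g_a$ is stated as an expectation ("I expect this to leave exactly..."). The paper does this by producing explicit conjugating matrices between $f_2,f_3,f_4$; your normalizer argument is sound in principle (and $N_{\PGL_2}(\rho(D_2))\cong S_4$ is correct), but without exhibiting the identifications or at least organizing the four families into $S_4$-orbits, the claim that exactly two orbits survive is unproven. Second, and more substantively, your intersection argument has a gap: you say that a class on both curves has $\Aut\supsetneq D_2$ and that "by the classification and dimension count in Lemma~\ref{lem_containment} the only such locus is $\A_3(D_4)$." But Lemma~\ref{lem_containment} also lists $\fA_4\supsetneq D_2$, and $\A_3(\fA_4)$ is likewise zero-dimensional and contained in $\A_3(D_2)$, so dimension count alone does not exclude it. The fix is structural, not dimensional: a class on both curves acquires two \emph{distinct} conjugates of $\rho(D_2)$ inside $\Aut(f)$ (since the conjugating $\alpha$ cannot lie in $N_{\PGL_2}(\rho(D_2))$ once the two curves are separate normalizer orbits), and $\fA_4$ contains a \emph{unique} Klein four subgroup (its normal $V_4$), so $\Aut(f)=\fA_4$ is impossible; $D_4$, by contrast, contains two Klein four subgroups, so $\A_3(D_4)$ is the unique possibility. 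With that observation added your argument closes.
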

    \begin{proof}
        Let $f(z)=\frac{a_1z^3+a_2z^2+a_3z+a_4}{b_1z^3+b_2z^2+b_3z+b_4}$ be a degree $3$ map. If $f$ has a $D_2$ symmetry, we can obtain restrictions on the coefficients from the equations $f=f^{\alpha_1}$ and $f=f^{\alpha_2}$.

        We get 14 equations and compute the irreducible components of the variety generated by these equations as a subvariety of $\bbP^7$. There are $8$ irreducible components, but only $4$ of them correspond to degree $3$ maps. Thus, $f\in \Rat_3$ has a $D_2$ symmetry (with this representation) if and only if it has one of the following four forms:
        \begin{align*}
            &f_1(z)=\frac{k_1z^2-1}{z^3-k_1z}\quad\quad\quad f_2(z)=\frac{k_2z^2+1}{z^3+k_2z}\\
            &f_3(z)=\frac{z^3+k_3z}{k_3z^2+1}\quad\quad \quad f_4(z)=\frac{z^3-k_4z}{k_4z^2-1},
        \end{align*}
        where none of the parameters can be $1$. We will show that $f_2$, $f_3$, and $f_4$ are all conjugate to each other, whereas $f_1$ is not in general conjugate to the rest. First, consider the families $f_3$ and $f_2$.
        Conjugating $f_2$ by the element $\alpha=\begin{bmatrix}1&-i\\-i&1\end{bmatrix}$, we get
        \begin{equation*}
            f_2^\alpha(z)=\frac{z^3+\frac{k_2+3}{k_2-1}z}{\frac{k_2+3}{k_2-1}z^2+1}.
        \end{equation*}
        Thus, $f_2$ is conjugate to the map $f_3(z)=\frac{z^3+k_3z}{k_3z^2+1}$, where $k_3=\frac{k_2+3}{k_2-1}$. Now we show that $f_4$ is conjugate to $f_2$. For every $k_4\neq 1$, set $k_2=\frac{k_4+3}{k_4-1}$ and work with $f_2(z)=\frac{\frac{k_4+3}{k_4-1}z^2+1}{z^3+\frac{k_4+3}{k_4-1}}$.
        Conjugating this map by $\alpha=\begin{bmatrix}i&i\\-1&1\end{bmatrix}$, we get $f_2^\alpha(z)=\frac{z^3 - k_4z}{k_4z^2 - 1}=f_4(z)$.
        Thus, $f_2$, $f_3$, and $f_4$ are all conjugate to each other.

        To see that $f_1$ is not conjugate to the rest, we compute the multiplier invariants for the fixed points of $f_1$. Interestingly, they are independent of the parameter $k_1$:
        \begin{equation*}
            \sigma_1=-12 \quad \sigma_2=54\quad \sigma_3=-108\quad \sigma_4=81.
        \end{equation*}
        However, the multiplier invariants for the fixed points of $f_4$ are dependent on the parameter $k_4$:
        \begin{align*}
            \sigma_1(k_4)&=\frac{2k_4^2 + 6}{k_4 + 1}\\
            \sigma_2(k_4)&=\frac{k_4^4 - 2k_4^3 + 10k_4^2 + 6k_4 + 9}{k_4^2 + 2k_4 + 1}\\
            \sigma_3(k_4)&=\frac{-2k_4^4 + 6k_4^3 - 6k_4^2 + 18k_4}{k_4^2 + 2k_4 + 1}\\
            \sigma_4(k_4)&=\frac{k_4^4 - 6k_4^3 + 9k_4^2}{k_4^2 + 2k_4 + 1}.
        \end{align*}
        Solving the equations
        \begin{equation*}
            \sigma_1(k_4) = -12,\quad \sigma_2(k_4) = 54, \quad \sigma_3(k_4)=-108, \quad\sigma_4(k_4) = 81,
        \end{equation*}
        we see that the only possible conjugacy occurs for $k_4=-3$. Setting $k_4=-3$, we compute the multiplier invariants for the periodic points of period $2$ for $f_1$ and $f_4$, denoted $\sigma_i^{(2)}$ for $1 \leq i \leq 10$. For $f_1$ the invariants depend on $k_1$, and for $f_4$ at $k_4=-3$ they are constants. Solving the equations $\sigma_i^{(2)}(f_1) = \sigma_i^{(2)}(f_4)$, we see that the three values $k_1 \in \{3,0,-3\}$ are all conjugate to $f_4$ with $k_4=-3$. Further, note that the three elements of the $f_1$ family with $k_1 \in \{3,0,-3\}$ are all in the same conjugacy class as $\frac{1}{z^3}$. These are the only conjugacies in the families $f_1$ and $f_4$.

        In summary, every degree $3$ map $f$ with $D_2$ symmetry is conjugate to some member of the family $f_1(z) = \frac{k_1z^2-1}{z^3-k_1z}$ or the family $f_2(z)=\frac{k_2z^2+1}{z^3+k_2z}$, and we can conclude that $\A_3(D_2)$ has two irreducible components which intersect at the calculated conjugacy class.
    \end{proof}
\begin{code}
\begin{lstlisting}[language=Python]
#Irreducible components of the subscheme for D2 symmetry
A.<a1,a2,a3,a4,b1,b2,b3,b4>=ProjectiveSpace(QQ,7)
R=A.coordinate_ring()
I=R.ideal([a1*a4-b1*b4,
           (a1*a3 + a2*a4)-(b1*b3 + b2*b4),
           (a1*a2 + a2*a3 + a3*a4)-(b1*b2 + b2*b3 + b3*b4),
           (a1^2 + a2^2 + a3^2 + a4^2)-(b1^2 + b2^2 + b3^2 + b4^2),
           (a1*a2 + a2*a3 + a3*a4)-(b1*b2 + b2*b3 + b3*b4),
           (a1*a3 + a2*a4)-(b1*b3 + b2*b4),
           a1*a4-b1*b4,
           a1*b1-(-a1*b1),
           (-a2*b1 + a1*b2)-(-a2*b1 + a1*b2),
           (a3*b1 - a2*b2 + a1*b3)-(-a3*b1 + a2*b2 - a1*b3),
           (-a4*b1 + a3*b2 - a2*b3 + a1*b4)-(-a4*b1 + a3*b2 - a2*b3 + a1*b4),
           (-a4*b2 + a3*b3 - a2*b4)-(a4*b2 - a3*b3 + a2*b4),
           (-a4*b3 + a3*b4)-(-a4*b3 + a3*b4),
           -a4*b4-a4*b4])
print(A.subscheme(I).dimension())
irreds = A.subscheme(I).irreducible_components()
for i in irreds:
    print(i)
    print(i.dimension())
    print("")

#f_2 and f_3 are conjugate to each other
R.<i>=QuadraticField(-1)
FF.<k2> = PolynomialRing(R,1)
A.<z> = AffineSpace(FF,1)
f3 = DynamicalSystem_affine((z^3+(k2+3)/(k2-1)*z)/((k2+3)/(k2-1)*z^2+1))
f2 = DynamicalSystem_affine((k2*z^2+1)/(z^3+k2*z))
m = matrix([[1,-1*i],[-1*i,1]])
f2.conjugate(m)==f3

#f_2 and f_4 are conjugate to each other
RR.<i>=QuadraticField(-1)
FF.<k4> = PolynomialRing(RR,1)
P.<x,y> = ProjectiveSpace(FractionField(FF),1)
f2 = DynamicalSystem_projective([(((k4+3)/(k4-1))*x^2*y+y^3),(x^3+((k4+3)/(k4-1))*x*y^2)])
f4 = DynamicalSystem_projective([(x^3-k4*x*y^2),(k4*x^2*y-y^3)])
m = matrix([[i,i],[-1,1]])
f2.conjugate(m)==f4

#sigma invariant calculation
R.<a> = QQ[]
P.<x> = R[]
P = FractionField(P)
A = AffineSpace(P,1)
f1 = DynamicalSystem_affine((a*x^2-1)/(x^3-a*x))
F1 = f1.homogenize(1)
print(F1.sigma_invariants(1))
print("")
R.<k4> = QQ[]
P.<z> = R[]
P = FractionField(P)
A = AffineSpace(P,1)
f2 = DynamicalSystem_affine((z^3-k4*z)/(k4*z^2-1))
F2 = f2.homogenize(1)
print(F2.sigma_invariants(1))


#f_4 and f_2 conjugacy
f1s1=F1.sigma_invariants(1)
f2s1=F2.sigma_invariants(1)
I=R.ideal([f2s1[i].numerator() - R(f1s1[i])*f2s1[i].denominator() for i in range(len(f1s1))])
I.gens()[0].factor()


R.<k1>=QQ[]
A.<z> = AffineSpace(R,1)
f2 = DynamicalSystem_affine((k1*z^2-1)/(z^3-k1*z))
F2 = f2.homogenize(1)
f2s2=F2.sigma_invariants(2)
k=-3
f4 = DynamicalSystem_affine((z^3-k*z)/(k*z^2-1))
F4 = f4.homogenize(1)
f4s2=F4.sigma_invariants(2)
I=R.ideal([f4s2[i].numerator()*f2s2[i].denominator() - f2s2[i].numerator()*f4s2[i].denominator() for i in range(len(f2s1))])
I.gens()[0].factor()

#check is_conjugate:
P.<x,y>=ProjectiveSpace(QQ, 1)
f4=DynamicalSystem([x^3+3*x*y^2,-3*x^2*y-y^3])
R.<k>=QQ[]
P.<x,y>=ProjectiveSpace(R, 1)
f2=DynamicalSystem([k*x^2*y-y^3,x^3-k*x*y^2])
#check is_conjugate:
for t in [3,0,-3]:
    f2t = f2.specialization({k:t})
    print(f4.change_ring(QQbar).is_conjugate(f2t.change_ring(QQbar)))


#Intersection locus of the two irreducible components of D_2
R.<a> = QQ[]
P.<z> = R[]
P = FractionField(P)
A = AffineSpace(P,1)
f2 = DynamicalSystem_affine((z^3-a*z)/(a*z^2-1))
F2 = f2.homogenize(1)
print(F2.sigma_invariants(1))

A.<a>=AffineSpace(QQ,1)
R=A.coordinate_ring()
I=R.ideal([(2*a^2 + 6)-(a + 1)*(-12),
           (a^4 - 2*a^3 + 10*a^2 + 6*a + 9)-(a^2 + 2*a + 1)*(54),
           (-2*a^4 + 6*a^3 - 6*a^2 + 18*a)-(a^2 + 2*a + 1)*(-108),
           (a^4 - 6*a^3 + 9*a^2)-(a^2 + 2*a + 1)*(81)])
A.subscheme(I).dimension()
A.subscheme(I).irreducible_components()

A.<x>=AffineSpace(QQbar,1)
f=DynamicalSystem_affine((x^3+3*x)/(-3*x^2-1))
F=f.homogenize(1)
g=DynamicalSystem_affine(1/x^3)
G=g.homogenize(1)
F.is_conjugate(G)
\end{lstlisting}
\end{code}

\subsection{$\A_3(C_2)$}
    \begin{prop} \label{prop_A3C2}
        The locus $\A_3(C_2)$ is the union of two irreducible surfaces which intersect in the $f_a(z)$ component of $\A_3(D_2)$.
    \end{prop}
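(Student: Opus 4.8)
The plan is to follow the same two‑step recipe used for $\A_3(C_3)$ and $\A_3(D_2)$: first use Silverman's classification to identify $A_3(C_2)\subset\Rat_3$ up to conjugacy, then pass to normal forms and study the resulting families in $\M_3$ via multiplier invariants and an elimination computation for the intersection. Fix the copy of $C_2$ generated by the M\"obius transformation $z\mapsto -z$; since all order‑$2$ subgroups of $\PGL_2$ are conjugate, every $f\in A_3(C_2)$ is conjugate to one with this symmetry, and $f^{(z\mapsto-z)}=f$ says precisely that $-f(-z)=f(z)$, i.e.\ $f$ is an odd rational function. Writing $f=P/Q$ in lowest terms, oddness forces either ($P$ odd, $Q$ even) or ($P$ even, $Q$ odd); in degree $3$ this gives the families $\frac{a_1z^3+a_3z}{a_2z^2+a_4}$ and $\frac{a_1z^2+a_2}{a_3z^3+a_4z}$, where checking that the map has degree $3$ forces the relevant leading/constant coefficients to be nonzero. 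Rescaling and conjugating by a diagonal $z\mapsto\lambda z$ (which preserves the chosen $C_2$) reduces these to the two‑parameter normal forms $f_{a,b}(z)=\frac{z^3+az}{bz^2+1}$ and $g_{a,b}(z)=\frac{az^2+1}{z^3+bz}$, with $ab\neq 1$ being exactly the condition that numerator and denominator share no common factor. Thus $\A_3(C_2)$ is the union of the images in $\M_3$ of these two families.

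By Lemma \ref{lem_k_param_image}, each of the two families maps to an irreducible subset of $\M_3$; call these images $X$ and $Y$. To see that each is a surface, I would compute the fixed‑point multiplier invariants $\sigma_1,\dots,\sigma_4$ as rational functions of $(a,b)$ for each family and check that two of them are algebraically independent. Since the quotient map $U\to\M_3$ refines the multiplier map $U\to\bbA^4$, it is then generically finite onto its image, so $\dim X=\dim Y=2$; together with $\dim\A_3(C_2)=2$ from Lemma \ref{lem_containment} this shows $\A_3(C_2)=X\cup Y$ is a union of (at most) two irreducible surfaces.

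It remains to show $X\neq Y$ and to identify $X\cap Y$. The diagonal $b=a$ of the first family is $f_{a,a}(z)=\frac{z^3+az}{az^2+1}$, which is exactly the form $f_3$ in the proof of Proposition \ref{prop_A3D2}; hence the curve $\{[f_{a,a}]\}$ has automorphism group $D_2$ (for $a\neq\pm1$) and is the $f_a$‑component of $\A_3(D_2)$. Since $f_3$ is conjugate there to $f_2(z)=\frac{az^2+1}{z^3+az}=g_{a,a}(z)$, this same curve is the diagonal $b=a$ of the second family, so it lies in $X\cap Y$. To show the intersection is no larger, I would run the Groebner basis / elimination computation used in Proposition \ref{M3A4}: with a generic $\alpha\in\PGL_2$ and a scalar $t$, the system $f_{a,b}^\alpha=t\,g_{c,d}$ cuts out a locus whose image in parameter space is one‑dimensional, and tracking its solutions shows it is precisely the $D_2$ diagonal. (Conceptually: a conjugacy $f_{a,b}^\alpha=g_{c,d}$ produces, in $\Aut(g_{c,d})$, a second involution distinct from $z\mapsto-z$, forcing $\Aut(g_{c,d})\supseteq D_2$ and hence $[g_{c,d}]$ into $\A_3(D_2)\cap Y$, which is that diagonal up to the finitely many parameter values where the group jumps to $D_4$ or $\fA_4$.) Either way $X\cap Y$ is exactly the $f_a$‑component of $\A_3(D_2)$, a curve, so $X\neq Y$ and $\A_3(C_2)=X\cup Y$ is genuinely a union of two distinct irreducible surfaces meeting along that curve.

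The main obstacle is pinning down $X\cap Y$ exactly: proving both the containment of $\A_3(D_2)$'s $f_a$‑component and the absence of anything more. The elimination route is a finite but heavy computation (the generic conjugation introduces the four projective coordinates of $\alpha$ plus $t$), while the symmetry‑theoretic route needs care at the finitely many parameter values where the automorphism group is strictly larger than $D_2$. By comparison, the normalization in the first paragraph only needs the routine check that the apparently degenerate coefficient choices do not produce new components, and the algebraic‑independence check for the $\sigma_i$ is concrete but mechanical.
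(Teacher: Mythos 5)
Your proposal is correct and follows essentially the same route as the paper: you reduce to the two two-parameter normal forms $f_{a,b}(z)=\frac{z^3+az}{bz^2+1}$ and $g_{a,b}(z)=\frac{az^2+1}{z^3+bz}$ (your oddness argument is just a self-contained re-derivation of the relevant case of Silverman's normal form), invoke Lemma~\ref{lem_k_param_image} for irreducibility, and pin down the intersection via the same Gr\"obner/elimination computation the paper actually carries out (that computation yields exactly the diagonal locus, together with a zero-dimensional component that lands on $[1/z^3]$, already a point of that diagonal). The conceptual alternative you sketch for the intersection---that a conjugacy $f_{a,b}^\alpha=g_{c,d}$ must drag $z\mapsto -z$ to a \emph{distinct} involution in $\Aut(g_{c,d})$---is sound provided you first verify that the centralizer of $z\mapsto -z$ (diagonal and antidiagonal M\"obius maps) never carries the $f$-form to the $g$-form, which is a short computation you correctly flag as needing care; one minor imprecision is the phrasing suggesting $f_{a,a}$ and $g_{a,a}$ are conjugate at the \emph{same} parameter $a$, whereas the conjugacy reparameterizes (the two diagonals coincide as curves in $\M_3$ but not pointwise in the parameter).
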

    \begin{proof}
        From Silverman's classification of maps with cyclic automorphism groups \cite[Proposition 7.3]{Silverman12}, we know that if $f$ is degree $3$ with a $C_2$ symmetry; then $f$ is conjugate to one of the forms
        \begin{equation*}
            f(z) = \frac{k_1z^3+k_2z}{k_3z^2+k_4} \quad \text{or} \quad
            f(z) = \frac{l_1z^2+l_2}{l_3z^3+l_4z},
        \end{equation*}
        where $k_1k_4\neq 0$ and $l_2l_3\neq 0$. Thus, we can divide through by $k_1$ and $l_3$, respectively, to obtain:
        \begin{equation*}
            f(z)= \frac{z^3+a_1z}{a_2z^2+a_3} \quad \text{or} \quad
            f(z) = \frac{b_1z^2+b_2}{z^3+b_3z}.
        \end{equation*}
        We first reduce these two three-parameter families to two-parameter families. Consider the first family $f_1(z)=\frac{z^3+a_1z}{a_2z^2+a_3}$. Conjugating it by the M\"obius transform $\alpha(z)=\sqrt{a_3}z$, we get
        \begin{equation*}
            f_1^\alpha(z)=\frac{1}{\sqrt{a_3}}\left(\frac{(\sqrt{a_3}z)^3+a_1\sqrt{a_3}z}{a_2a_3z^2+a_3}\right)=\frac{a_3z^3+a_1z}{a_2a_3z^2+a_3}=\frac{z^3+\frac{a_1}{a_3}z}{a_2z^2+1}.
        \end{equation*}
        Thus, for every choice of $a_1,a_2,a_3$, we can set $k_1=\frac{a_1}{a_3}$ and $k_2=a_2$; note that $a_3 \neq 0$ since $k_4 \neq 0$.  Thus, the family $f_1(z)=\frac{z^3+a_1z}{a_2z^2+a_3}$ is conjugate to the family $\varphi(z)=\frac{z^3+k_1z}{k_2z^2+1}$.

        Similarly, given $f_2(z)=\frac{b_1z^2+b_2}{z^3+b_3z}$, we can set $\beta^4=\frac{1}{b_2}$ and consider the M\"obius transform $\alpha(z)=\frac{1}{\beta}z$. We can compute
        \begin{equation*}
            f_2^\alpha(z)=\beta\left(\frac{b_1\frac{z^2}{\beta^2}+b_2}{\frac{z^3}{\beta^3}+b_3\frac{z}{\beta}}\right)=\frac{b_1\beta^2 z^2+b_2\beta^4}{z^3+b_3\beta^2 z}=\frac{b_1\beta^2z^2+1}{z^3+b_3\beta^2z}.
        \end{equation*}
        Thus, for any choice of $b_1,b_2,b_3$, we can set $l_1=b_1\beta^2$ and $l_2=b_3\beta^2$. So the family $f_2(z)=\frac{b_1z^2+b_2}{z^3+b_3z}$ is conjugate to the family $\phi(z)=\frac{l_1z^2+1}{z^3+l_2z}$.

        Finally, we need to determine if these two-parameter families are ever conjugate to each other. Taking a generic $\PGL_2$ element $\alpha = \begin{pmatrix}a&b\\c&d\end{pmatrix}$ and conjugating, we can setup a system of equations for $\varphi^{\alpha}$ to be of the form $\phi$ by equating the known coefficients (up to scalar multiple). This produces a variety with three irreducible components:
        \begin{align*}
            X_1&=V(l_2 + 3, l_1 + 3, k_1 + k_2, k_2^2 + 9),\\
            X_2&=V(l_1 - l_2, k_1 - k_2,  k_2l_2 - k_2 - l_2 - 3),\\
            X_3&=V(l_1 - l_2, k_1 - k_2,  k_2l_2 - k_2 + l-2 + 3).
        \end{align*}
        The component $X_1$ is dimension $0$ containing the three pairs of maps with
        \begin{equation*}
            (l_1,l_2,k_1,k_2) \in \{(-3,-3,0,0), (-3,-3,3i,-3i),(-3,-3,-3i,3i)\}.
        \end{equation*}
        All four such maps are conjugate to $\frac{1}{z^3}$. The components $X_2$ and $X_3$ we now show are the component $f_a(z)$ of $\A_3(D_2)$. Note that when $l_1=l_2$, this is exactly the family $f_a(z)$. The last defining equation for each variety solves as
        \begin{equation*}
            k_1 = k_2 = \frac{l_2+3}{l_2-1} \quad \text{and} \quad k_1=k_2 = \frac{l_2-3}{l_2+1},
        \end{equation*}
        respectively. Recall that $l_2 = \pm 1$ is degenerate, so every (nondegenerate) map with pair $(k_1,k_2)$ is conjugate to a corresponding map with pair $(l_1,l_2)$ and vice versa. In other words, the two forms represent the same family $f_a(z)$ as families in moduli space. Since $\frac{1}{z^3}$ is in the $f_a(z)$ component of $\A_3(D_2)$, we have proven the intersection statement.
    \end{proof}

\begin{code}
The code we ran is attached below.
\begin{lstlisting}[language=Python]
#Are the two two-parameter family conjugate to each other?
R.<a,b,c,d,t,k1,k2,l1,l2>=PolynomialRing(QQ, order='lex')
P.<x,y>=ProjectiveSpace(FractionField(R),1)
f=DynamicalSystem_projective([x^3 + l1*x*y^2, l2*x^2*y+y^3])
g=DynamicalSystem_projective([k1*x^2*y + y^3, x^3+k2*x*y^2])
m=matrix(FractionField(R),2,[a,b,c,d])
f2=f.conjugate(m)
f2.scale_by(a*d-b*c)

gens=[a*d-b*c- t] #scale the conjugation
poly=f2[0]*g[1] - f2[1]*g[0] #equate the maps
gens += poly.coefficients()
I=R.ideal(gens)
I=I.saturation(k1*k2-1)[0] #remove degenerate
I=I.saturation(l1*l2-1)[0] #remove degenerate
I=I.saturation(t)[0] #remove degenerate
G=I.groebner_basis()
J=I.elimination_ideal([a,b,c,d,t])
A.<k1,k2,l1,l2>=AffineSpace(QQ,4)
S=A.coordinate_ring()
C=A.subscheme(J.gens())
C.irreducible_components()
\end{lstlisting}
\end{code}

    This completes the proof of Theorem \ref{theorem_1}.

    \begin{remark}
        It is worth noting that for each of the cyclic automorphism groups, the same procedure was able to produce a family in moduli space. Fix an integer $m \geq 2$. The following algorithm will produce a $k$-parameter family in $A_d(C_m)$ where $k = \dim(\A_d(C_m))$.
        \begin{enumerate}
            \item Start with the normal form $z\psi(z^m)$ from Silverman \cite[Proposition 7.3]{Silverman12}. The number of parameters of this normal form is $\dim\A(C_m)+2$. Moreover, there are at least two parameters that are nonzero.
            \item Divide through by one of the nonzero parameters.
            \item Apply a matrix of the form $\begin{bmatrix}\ast&0\\0&1\end{bmatrix}$ to eliminate another parameter.
        \end{enumerate}
    \end{remark}

\section{Geometry of certain automorphism loci in $\M_3$} \label{sect_geom_A3}
    In this section, we examine the geometry of automorphism loci $\A_3(\Gamma) \subset \M_3$, such as smoothness and genus. The method in general is to embed $\M_3$ into an affine space with a collection of multiplier invariants, i.e.,
    \begin{align*}
        \tau_n:\M_3 &\to \bbA^{3^n+1}\\
         [f] &\mapsto (\sigma^{(n)}_1,\ldots, \sigma^{(n)}_{3^n+1}).
    \end{align*}
    We can then examine the image $\tau_n(\A_3(\Gamma)) \subseteq \bbA^{3^n+1}$ and talk about the geometry of the resulting variety.

\subsection{$\A_3(C_3)$}
Recall that every map with a $C_3$ symmetry is conjugate to some map of the form $f_a(z)=\frac{z^3+a}{az^2}$

\begin{prop} \label{prop_c3}
    Define $f_a(z) = \frac{z^3 + a}{az^2}$. The map
    \begin{align*}
        \bbA^1\setminus \{0\} &\to \M_3\\
        a &\mapsto [f_a]
    \end{align*}
is one-to-one.The locus $\A_3(C_3)$ is an irreducible curve of genus zero with one singular point corresponding to $\A_3(\fA_4)$.
\end{prop}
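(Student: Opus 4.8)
The plan is to establish the three separate claims in sequence: first injectivity of $a \mapsto [f_a]$, then irreducibility and genus zero, then the statement about the single singular point. For injectivity, I would compute the fixed-point multiplier invariants $\sigma_1(a), \sigma_2(a), \sigma_3(a), \sigma_4(a)$ of $f_a(z) = \frac{z^3+a}{az^2}$ as rational functions of $a$ (working homogeneously, the fixed points are the roots of $yF_0(x,y) - xF_1(x,y)$ where $f_a = (F_0 : F_1)$, and the multipliers follow from the standard formula). Since the multiplier invariants are conjugacy invariants (recalled in the paragraph before Proposition \ref{prop_A3C3}), if $[f_a] = [f_b]$ then $\sigma_i(a) = \sigma_i(b)$ for all $i$. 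So it suffices to show that the map $a \mapsto (\sigma_1(a), \ldots, \sigma_4(a))$ is injective on $\bbA^1 \setminus \{0\}$; concretely, exhibit one $\sigma_i$ (or a simple combination) which, as a rational function of $a$, takes each value at most once — equivalently, show the relevant numerator-minus-denominator polynomial in two variables $a, b$ factors with $(a-b)$ as its only relevant factor after saturating away the excluded locus. This is a finite Groebner/resultant computation.

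For irreducibility and rationality, irreducibility is immediate from Lemma \ref{lem_k_param_image}: the family $f_a$ is a $1$-parameter family in the precise sense defined there (written projectively as $(x:y) \mapsto (x^3 + ay^3 : ax^2y)$, with $a \neq 0$), so its image in $\M_3$ is irreducible, and the image is the locus $\A_3(C_3)$ by Proposition \ref{prop_A3C3}. Since the map from $\bbA^1 \setminus \{0\}$ is one-to-one onto this image, the curve $\A_3(C_3)$ is birational to $\bbA^1$, hence a rational curve, hence of geometric genus zero. To make this rigorous as a statement about the closure inside the affine space $\bbA^4$ via $\tau_1$, I would take the Zariski closure $\overline{\tau_1(\A_3(C_3))}$, note it is an irreducible affine curve that is the image of $\bbA^1 \setminus \{0\}$ under a morphism, conclude it is rational, and therefore its normalization is $\bbP^1$ and its geometric genus is $0$.

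For the singular point: the embedding identifies $\A_3(C_3)$ with a curve in $\bbA^4$ parameterized by $t \mapsto (\sigma_1(t), \ldots, \sigma_4(t))$. I would compute the (Zariski) tangent map, i.e. the derivative vector $(\sigma_1'(a), \ldots, \sigma_4'(a))$, and locate the parameter value(s) $a$ where this vector vanishes or where the parameterization fails to be an immersion — a cusp-type singularity of the image curve. I expect exactly one such value, and by Proposition \ref{M3A4} the corresponding conjugacy class must be $\A_3(\fA_4)$ (it is the unique point of $\A_3(C_3)$ lying also in $\A_3(C_2)$, and one can confirm directly that the $f_a$ at this value is conjugate to $\frac{z^3-3}{-3z^2}$). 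Alternatively, and perhaps more cleanly, I would find an explicit defining polynomial for the plane curve obtained by projecting to two of the $\sigma_i$ coordinates (eliminating $a$ via a resultant), check that this projection is still birational, and compute its singular locus directly via the Jacobian criterion, then verify that the one singular point pulls back to the $\fA_4$ value of $a$.

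The main obstacle is the bookkeeping in the singular-point analysis: I need to be sure that the singularity I find in the image curve is genuinely a singularity of $\overline{\tau_1(\A_3(C_3))}$ and not an artifact of projecting to too few coordinates (a node could be introduced, or a cusp hidden, by a bad choice of projection). Handling this carefully — either by working with all four $\sigma_i$ simultaneously and analyzing where the Jacobian of the ideal of the image drops rank, or by choosing the projection so that it is provably birational and an isomorphism away from the suspected point — is where the real care is required; the multiplier-invariant computations themselves are routine and were done by machine.
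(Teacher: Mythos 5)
Your plan matches the paper's proof in its essential computational steps: both compute the fixed-point multiplier invariants as functions of $a$, show injectivity by verifying that the ideal generated by $\sigma_i(a)-\sigma_i(b)$ (after saturating away the degenerate locus) is $(a-b)$, eliminate $a$ to get defining equations for the image in the $\sigma$-coordinates, and locate the single singular point at $a=-3$, i.e.\ the map $\tfrac{z^3-3}{-3z^2}$, identified as the tetrahedral class. You deviate in two places, both legitimately. For irreducibility and genus you invoke Lemma~\ref{lem_k_param_image} plus the char-zero fact that an injective dominant morphism of curves is birational, so the image is rational and hence genus zero; the paper simply computes the genus and irreducibility directly from the elimination ideal. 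Your version is a bit slicker and gives the genus without a separate machine computation, which is a nice gain. The one thing you flag as a concern --- that projecting to fewer $\sigma_i$ coordinates could create or hide singularities --- is in fact a non-issue here, and the paper resolves it by a fact you haven't used: $\sigma_4$ is a function of $(\sigma_1,\sigma_2,\sigma_3)$ via the standard index/fixed-point relation (Hutz--Tepper, Fujimura--Nishizawa), so the projection $\bbA^4 \to \bbA^3$ restricts to an isomorphism on the image of $\M_3$, and working with three coordinates is already rigorous. Your proposed fallback of working in $\bbA^4$ or checking that the chosen projection is birational would also work, but the index relation is the cleaner way to close that gap.
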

\begin{proof}
    We compute the multiplier invariants associated to the fixed points for the family $f_a(z)$ as functions of $a$ as
    \begin{align*}
        \sigma_1(a)&=\frac{a^2 - 6a + 9}{a}\\
        \sigma_2(a)&=\frac{-6a^3 + 21a^2 - 36a + 27}{a^2}\\
        \sigma_3(a)&=\frac{12a^4 - 44a^3 + 63a^2 - 54a + 27}{a^3}\\
        \sigma_4(a)&=\frac{-8a^3 + 36a^2 - 54a + 27}{a^2}.
    \end{align*}
    Then, to see which choices of parameters $a$ and $b$ have $f_a$ and $f_b$ with the same invariants, we solve the system of equations
    \begin{equation*}
        \sigma_i(a) = \sigma_i(b) \qquad 1 \leq i \leq 4.
    \end{equation*}
    The only solutions occur with $a=b$. In particular, since for each choice of $a$, $f_a$ has distinct multiplier invariants, each choice of $a$ corresponds to a distinct conjugacy class.

    Because every choice of $a$ provides a distinct set of fixed point multiplier invariants and, hence, a distinct conjugacy class in $\M_3$, we can use the fixed point multiplier invariants to parameterize this curve in $\M_3$. In particular, we have a map $\tau_1:\A_3(C_3)\to \bbA^3$ defined by $[f_a]\mapsto (\sigma_1,\sigma_2,\sigma_3).$ We can omit $\sigma_4$ because it is dependent on $(\sigma_1$, $\sigma_2$, $\sigma_3)$ through the standard index relation (see Hutz-Tepper \cite{Hutz10} or Fujimura-Nishizawa \cite[Theorem 1]{Fujimura}).
    Then we can consider the ideal generated by
    \begin{align*}
        &a^2 - 6a + 9 - a\sigma_1,\\
        &-6a^3 + 21a^2 - 36a + 27 -a^2\sigma_2,\\
        &(12a^4 - 44a^3 + 63a^2 - 54a + 27)-a^3\sigma_3.
    \end{align*}
    We compute the saturation with the ideal $(a)$ to avoid the vanishing of $a$ and compute the Gr\"{o}bner basis of the resulting ideal using lexicographic ordering to eliminate $a$. We obtain the following relations among the multiplier invariants:
    \begin{align*}
        0&=6\sigma_1^2 + \sigma_1\sigma_2 + 15\sigma_1 - 18\sigma_2 - 9\sigma_3 - 36\\
        0&=\sigma_2^2 - 3\sigma_1\sigma_3 -24\sigma_1 +12\sigma_2 +36.
    \end{align*}
    These relations define a curve in $\bbA^3$. This curve is irreducible over $\Q$ and has genus $0$. It is not smooth, and the only singular point is $(-12, 54, -108)$. This singular point corresponds to $a=-3$ and, thus, to the rational map $f_{-3}(z)=\frac{z^3-3}{-3z^2}$. This rational map has a tetrahedral automorphism group.
\end{proof}

\begin{code}
The code we used to compute sigma invariants and checking the irreducible components of the subscheme is attached below.
\begin{lstlisting}[language=Python]
#Sigma invariants for the family (z^3+a)/(az^2)
R.<a> = QQ[]
P.<z> = R[]
P = FractionField(P)
A = AffineSpace(P,1)
f = DynamicalSystem_affine((z^3 + a)/(a*z^2))
F = f.homogenize(1)
F.sigma_invariants(1)

#Irreducible component of the subscheme
A.<a,b>=AffineSpace(QQ,2)
R=A.coordinate_ring()
I=R.ideal([(a^2 - 6*a + 9)*b-(b^2 - 6*b + 9)*a,
           (-6*a^3 + 21*a^2 - 36*a + 27)*(b^2)-(-6*b^3 + 21*b^2 - 36*b + 27)*(a^2),
           (12*a^4 - 44*a^3 + 63*a^2 - 54*a + 27)*(b^3)-(12*b^4 - 44*b^3 + 63*b^2 - 54*b + 27)*(a^3),
           (-8*a^3 + 36*a^2 - 54*a + 27)*b^2-(-8*b^3 + 36*b^2 - 54*b + 27)*a^2])
A.subscheme(I).irreducible_components()
\end{lstlisting}

Here are the codes we ran:
\begin{lstlisting}[language=Python]
#first sigma invariants
R.<a> = QQ[]
P.<z> = R[]
P = FractionField(P)
A = AffineSpace(P,1)
g = DynamicalSystem_affine((z^3+a)/(a*z^2))
G = g.homogenize(1)
G.sigma_invariants(1)

#Curve
R.<a,s1,s2,s3>=QQ[]
A=AffineSpace(R)
g1=(a^2 - 6*a + 9) - a*s1
g2=(-6*a^3 + 21*a^2 - 36*a + 27) -a^2*s2
g3=(12*a^4 - 44*a^3 + 63*a^2 - 54*a + 27)-a^3*s3
I=R.ideal(g1,g2,g3)
J=R.ideal(a)
IJ=I.saturation(J)[0]
A.<s1,s2,s3>=AffineSpace(QQ,3)
phi=R.hom([0,s1,s2,s3],A.coordinate_ring())
C=A.curve([phi(F) for F in IJ.elimination_ideal(a).gens()])
print(C.defining_polynomials())
print(C.genus())
print(C.is_smooth())
print(C.singular_points())
CP=C.projective_closure()
CP.degree()

#check singular point
R.<a> = QQ[]
P.<z> = R[]
P = FractionField(P)
A = AffineSpace(P,1)
g = DynamicalSystem_affine((z^3+a)/(a*z^2))
G = g.homogenize(1)
G=G.specialization({a:-3})
G.sigma_invariants(1)
\end{lstlisting}
\end{code}

\subsection{$\A_3(D_2)$}
There are two components in the locus $\A_3(D_2)$. We examine each separately.

\begin{prop} \label{prop_D2_1}
    Define $f_a(z) = \frac{az^2+1}{z^3+az}$. The map
    \begin{align*}
        \bbA^1 \setminus \{\pm 1\} \to M_3\\
        a \mapsto [f_a]
    \end{align*}
    is two-to-one. The component of the locus $\A_3(D_2)$ described by the image in $\M_3$ of the family $f_a$ is a smooth irreducible curve of genus zero.
\end{prop}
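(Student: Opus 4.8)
The plan is to follow the same template used for $\A_3(C_3)$ in Proposition \ref{prop_c3}: compute the fixed point multiplier invariants $\sigma_i(a)$ for the family $f_a(z) = \frac{az^2+1}{z^3+az}$ as rational functions of $a$, then analyze the fibers of $a \mapsto (\sigma_1,\sigma_2,\sigma_3)$ and the geometry of the image curve. First I would dehomogenize, homogenize back, and have Magma/Sage compute $\sigma_1(a), \sigma_2(a), \sigma_3(a)$ (the fourth being dependent via the standard index relation of Fujimura--Nishizawa). Since Lemma \ref{lem_k_param_image} already guarantees the image is an irreducible curve, the first real task is to determine the degree of the map $a \mapsto [f_a]$: set up the system $\sigma_i(a) = \sigma_i(b)$ for $1 \le i \le 3$, saturate away the degenerate locus $a,b = \pm 1$ and $a = b$, and read off the residual solutions. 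I expect the claim ``two-to-one'' to come from an explicit involution on the parameter — something like $a \mapsto -a$ or a Möbius-type substitution in $a$ — that conjugates $f_a$ to $f_{a'}$; I would look for the conjugating element in $\PGL_2$ (likely one normalizing the $D_2$ representation, e.g. a coordinate swap or sign change) directly, which both proves surjectivity of the $2:1$ claim and exhibits the deck transformation.

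Next, to get the embedded curve: since the map is generically $2:1$, the fixed point invariants alone do not separate the parameter, so I would instead use the invariants to cut out the image curve $C = \tau_1(\A_3(D_2)\text{-component}) \subset \bbA^3$. Concretely, form the ideal generated by $(\text{numerator of }\sigma_i(a)) - \sigma_i \cdot (\text{denominator})$ for $i=1,2,3$, saturate by the denominator ideal to remove spurious components, and eliminate $a$ via a lexicographic Gröbner basis. This yields explicit defining polynomials for $C$. Then invoke the built-in routines: verify $C$ is irreducible over $\Q$, compute its geometric genus (expected $0$), and check smoothness (expected: smooth, unlike the $C_3$ case). The genus-zero conclusion can be double-checked by producing an explicit rational parameterization of $C$ — the composite $a \mapsto (\sigma_1(a),\sigma_2(a),\sigma_3(a))$ is already a dominant map from $\bbA^1$, so $C$ is unirational hence rational, giving genus zero immediately; smoothness then just needs the Jacobian criterion on the explicit equations.

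The main obstacle I anticipate is pinning down the exact degree of the parameterization and, relatedly, making sure the ``two-to-one'' statement is literally correct rather than ``generically two-to-one'' with exceptional fibers. The Gröbner/saturation bookkeeping around the degenerate values $a = \pm 1$ (and any point where the numerator and denominator of $f_a$ acquire a common factor, dropping the degree) needs care, and one must check whether the $2:1$ map has any ramification points where two parameter values collide — e.g. the fixed point of the parameter involution — since at such a point the fiber is a single conjugacy class and one should confirm it still lies in this $D_2$ component and does not coincide with the intersection point $\A_3(D_4) = \A_3(C_4)$ (which Proposition \ref{prop_A3D2} places on this component). A secondary subtlety: the problem says this component is described ``by the image in $\M_3$,'' so the genus and smoothness assertions are about the normalization/image curve $C \subset \bbA^3$, and I would make sure the elimination ideal defines exactly $C$ and not a curve with extra embedded or lower-dimensional components introduced by the elimination — hence the saturation step is essential and I would verify irreducibility of the output explicitly.
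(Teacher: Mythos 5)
Your proposal matches the paper's proof essentially step for step: compute $\sigma_1,\sigma_2,\sigma_3$ as rational functions of $a$, saturate the ideal $\sigma_i(a)=\sigma_i(b)$ by the degenerate locus to obtain $a=\pm b$, exhibit the conjugacy $f_a^{\alpha}=f_{-a}$ via $\alpha(z)=-iz$ (which does normalize the fixed $D_2$ representation, as you anticipated), then eliminate $a$ from the saturated ideal of invariant-defining relations and verify smoothness, irreducibility, and genus $0$ of the resulting plane conic in the hyperplane $\sigma_1-2\sigma_2-\sigma_3+12=0$. Your two additions are improvements the paper omits: the L\"uroth/unirationality remark gives genus zero conceptually rather than by machine, and your worry about the ramification fiber is well placed --- the unique fixed point $a=0$ of the involution is exactly $\frac{1}{z^3}$, i.e.\ the $\A_3(D_4)=\A_3(C_4)$ point, so the map is literally $2$:$1$ away from that one parameter value and $1$:$1$ there (the paper's ``two-to-one'' should be read as generically two-to-one).
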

\begin{proof}
    We first compute the fixed point multiplier invariants for this family as functions of $a$ as
    \begin{align*}
        \sigma_1(a)&=\frac{4a^2 + 12}{a^2 - 1}\\
        \sigma_2(a)&=\frac{6a^4 + 4a^2 + 54}{a^4 - 2a^2 + 1}\\
        \sigma_3(a)&=\frac{4a^4 - 24a^2 - 108}{a^4 - 2a^2 + 1}.
    \end{align*}
    To show the map is two-to-one, we fix $a$ and determine how many $b$ satisfy
    \begin{equation}\label{eq2}
        (\sigma_1(a), \sigma_2(a), \sigma_3(a)) = (\sigma_1(b), \sigma_2(b), \sigma_3(b))
    \end{equation}
    and then show that both solutions are conjugate. We start with the ideal defined by equation \eqref{eq2} and exclude the cases $a^2=1$ and $b^2=1$ through saturation. The resulting ideal is $(a^2-b^2)$.
    We conclude that $f_a$ and $f_b$ have the same fixed point multiplier invariants if and only if $a = \pm b$. We know that $f_a(z)=\frac{az^2+1}{z^3+az}$ and $f_{-a}(z)=\frac{-az^2+1}{z^3-az}$ are conjugate to each other via the M\"obius transform $\alpha(z)=-iz$:
    \begin{equation*}
        f_a^{\alpha}(z) = -\frac{1}{i}\cdot\frac{a(-iz)^2+1}{(-iz)^3+a(-iz)}=-\frac{1}{i}\cdot\frac{-az^2+1}{iz^3-aiz}=\frac{-az^2+1}{z^3-az} = f_{-a}(z).
    \end{equation*}
    Furthermore, this shows we can parameterize the image of the family $f_a$ in $\M_3$ by the fixed point multiplier invariants. To study its geometry, consider the ideal generated by
    \begin{align*}
        &(4a^2 + 12)-(a^2 - 1)\sigma_1,\\
        &(6a^4 + 4a^2 + 54)-(a^4 - 2a^2 + 1)\sigma_2,\\
        &(4a^4 - 24a^2 - 108)-(a^4 - 2a^2 + 1)\sigma_3.
    \end{align*}
    After saturation by $(a^2-1)$, we eliminate $a$ to obtain the relations
    \begin{align*}
        0&=\sigma_1 - 2\sigma_2 - \sigma_3 + 12\\
        0&=4\sigma_2^2 + 4\sigma_2\sigma_3 - 60\sigma_2 + \sigma_3^2 - 28\sigma_3 + 216.
    \end{align*}
    These define a curve in $\bbA^3$. The curve is smooth, irreducible, and has genus $0$.
\end{proof}

\begin{code}
The code we ran is attached below:
\begin{lstlisting}[language=Python]
#first sigma invariants
R.<a> = QQ[]
P.<x> = R[]
P = FractionField(P)
A = AffineSpace(P,1)
g = DynamicalSystem_affine((a*x^2+1)/(x^3+a*x))
G = g.homogenize(1)
G.sigma_invariants(1)

#Irreducible components
A.<a,b>=AffineSpace(QQ,2)
R=A.coordinate_ring()
I=R.ideal([(4*a^2 + 12)*(b^2 - 1)-(4*b^2 + 12)*(a^2 - 1),
           (6*a^4 + 4*a^2 + 54)*(b^4 - 2*b^2 + 1)-(6*b^4 + 4*b^2 + 54)*(a^4 - 2*a^2 + 1),
           (4*a^4 - 24*a^2 - 108)*(b^4 - 2*b^2 + 1)-(4*b^4 - 24*b^2 - 108)*(a^4 - 2*a^2 + 1),
           (a^4 - 18*a^2 + 81)*(b^4 - 2*b^2 + 1)-(b^4 - 18*b^2 + 81)*(a^4 - 2*a^2 + 1)])
J=R.ideal([a^2-1])
I=I.saturation(J)[0]
J=R.ideal([b^2-1])
I=I.saturation(J)[0]
A.subscheme(I).irreducible_components()

#The curve information
R.<a,s1,s2,s3>=QQ[]
g1=(4*a^2 + 12)-(a^2 - 1)*s1
g2=(6*a^4 + 4*a^2 + 54)-(a^4 - 2*a^2 + 1)*s2
g3=(4*a^4 - 24*a^2 - 108)-(a^4 - 2*a^2 + 1)*s3
I=R.ideal(g1,g2,g3)
J=R.ideal(a^2-1)
I=I.saturation(J)[0]
A.<s1,s2,s3>=AffineSpace(QQ,3)
phi=R.hom([0,s1,s2,s3],A.coordinate_ring())
C=A.curve([phi(F) for F in I.elimination_ideal(a).gens()])
print(C.genus())
print(C.is_smooth())
CP=C.projective_closure()
CP.degree()
\end{lstlisting}
\end{code}

The other component of $\A_3(D_2)$ is given by the family $g_a(z) = \frac{az^2-1}{z^3-az}$. Recall that this family has fixed point multiplier invariants that are independent of the parameter $a$. We consider the multiplier invariants of the periodic points of period $2$:
\begin{align*}
    \sigma_1^{(2)}&=\frac{2a^6 + 36a^4 + 18a^2 + 72}{a^4 - 2a^2 + 1}\\
    \sigma_2^{(2)}&=\frac{a^{12} + 76a^{10} + 514a^8 + 1228a^6 + 9a^4 + 2268}{a^8 - 4a^6 + 6a^4 - 4a^2 + 1}\\
    \sigma_3^{(2)}&=\frac{40a^{12} + 1208a^{10} + 7304a^8 + 32528a^6 + 30744a^4 + 51192a^2 + 40824}{a^8 - 4a^6 + 6a^4 - 4a^2 + 1}\\
    \sigma_4^{(2)}&=\frac{636a^{12} + 11232a^{10} + 85806a^8 + 335448a^6 + 785376a^4 + 927288a^2 + 459270}{a^8 - 4a^6 + 6a^4 - 4a^2 + 1}\\
    \sigma_5^{(2)}&=\frac{5080a^{12} + 74700a^{10} + 624024a^8 + 2354184a^6 + 6805944a^4 + 7637004a^2 + 3306744}{a^8 - 4a^6 + 6a^4 - 4a^2 + 1}\\
    \sigma_6^{(2)}&=\frac{21286a^{12} + 365112a^{10} + 2597184a^8 + 11914776a^6 + 25286094a^4 + 32122656a^2 + 14880348}{a^8 - 4a^6 + 6a^4 - 4a^2 + 1}\\
    \sigma_7^{(2)}&=\frac{45720a^{12} + 1030968a^{10} + 6945912a^8 + 29498256a^6 + 47711592a^4 + 63772920a^2 + 38263752}{a^8 - 4a^6 + 6a^4 - 4a^2 + 1}\\
    \sigma_8^{(2)}&=\frac{51516a^{12} + 979776a^{10} + 11396457a^8 + 12045996a^6 + 86093442a^4 + 57395628a^2 + 43046721}{a^8 - 4a^6 + 6a^4 - 4a^2 + 1}\\
    \sigma_9^{(2)}&=\frac{29160a^{12} + 13122a^{10} - 7085880a^8 + 40389516a^6 + 86093442a^2}{a^8 - 4a^6 + 6a^4 - 4a^2 + 1}\\
    \sigma_{10}^{(2)}&=\frac{6561a^{12} - 236196a^{10} + 3188646a^8 - 19131876a^6 + 43046721a^4}{a^8 - 4a^6 + 6a^4 - 4a^2 + 1}.
\end{align*}
\begin{lemma}\label{lem_D2_2}
     Define $g_a(z) = \frac{az^2-1}{z^3-az}$. The map
    \begin{align*}
        \varphi:\bbA^1\setminus \{\pm 1\} &\to \M_3\\
        a &\mapsto [g_a]
    \end{align*}
    is six-to-one. The map
    \begin{align*}
        \tilde{\tau}_2: \varphi(\bbA^1) &\to \bbA^3\\
        [g_a] &\mapsto (\sigma_1^{(2)}, \sigma_2^{(2)}, \sigma_3^{(2)})
    \end{align*}
    is injective.
\end{lemma}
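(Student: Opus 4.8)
The plan is to work with the period-two multiplier invariants, because the fixed-point multiplier invariants of $g_a$ are independent of $a$ (as recorded for the family $f_1$ in the proof of Proposition~\ref{prop_A3D2}) and hence cannot separate the conjugacy classes $[g_a]$. Concretely, I will use the rational functions $\sigma_1^{(2)}(a),\dots,\sigma_{10}^{(2)}(a)$ displayed above, whose denominators are powers of $a^2-1$; clearing denominators by $D(a)=(a^2-1)^4$ makes all ten numerators polynomials in $a$.

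To count the fiber of $\varphi$, recall that conjugate maps have equal multiplier invariants of every period, so for fixed $a$ the set $\{b : [g_b]=[g_a]\}$ lies in the zero locus of the ideal $I\subset\Q[a,b]$ generated by the ten polynomials $\sigma_i^{(2)}(a)\,D(b)-\sigma_i^{(2)}(b)\,D(a)$. Saturating $I$ by $(a^2-1)$ and by $(b^2-1)$ to discard the degenerate parameters and then computing a Gr\"obner basis and an eliminant, one finds that $V(I)$ is a curve whose projection to the $a$-line is generically six-to-one, giving the upper bound $\#\{b:[g_b]=[g_a]\}\le 6$ for generic $a$. For the matching lower bound I exhibit explicit conjugations: as in Proposition~\ref{prop_D2_1}, $\alpha(z)=-iz$ gives $g_a^{\alpha}=g_{-a}$, and solving $g_a^{\alpha}=t\,g_b$ for a generic $\alpha\in\PGL_2$ and a scalar $t$ in the style of Proposition~\ref{prop_A3C2} produces a further conjugation realizing a fractional-linear change of parameter $a\mapsto\phi(a)$ of order three (for instance $\phi(a)=\tfrac{3-a}{1+a}$, interchanging $1\leftrightarrow\infty$ and fixing $-1$). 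Together $a\mapsto -a$ and $a\mapsto\phi(a)$ generate a group of order six acting on $\bbA^1\setminus\{\pm1\}$, namely the copy of $S_3$ permuting the three excluded values $\{1,-1,\infty\}$; the orbit of a generic $a$ under this group has exactly six elements, all with the same conjugacy class, so the fiber has at least, hence exactly, six elements, which is the six-to-one claim.

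For the injectivity of $\tilde\tau_2$, observe that conjugacy forces agreement of every $\sigma_i^{(2)}$, so the conjugacy locus in $\bbA^2$ is contained in the locus $Z$ cut out (after the same two saturations) by the three equations $\sigma_j^{(2)}(a)=\sigma_j^{(2)}(b)$ for $j=1,2,3$ alone. I will compute $Z$ and verify that it coincides with the six-to-one curve $V(I)$ obtained above; that is, the first three period-two invariants already determine the conjugacy class within this family. Granting this, if $\tilde\tau_2([g_a])=\tilde\tau_2([g_b])$ then $(a,b)\in Z=V(I)$, so $[g_a]=[g_b]$, and $\tilde\tau_2$ is injective on $\varphi(\bbA^1)$.

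The main obstacle is computational rather than conceptual: the invariants are rational functions of degree $12$ in $a$, so the Gr\"obner-basis eliminations (both for the six-to-one count and for the identification $Z=V(I)$) are the heavy step and must be carried out symbolically. The one point needing care beyond raw computation is confirming that equality of the ten invariants is genuinely equivalent to conjugacy, with no spurious solutions, which is precisely the role played by the explicit conjugating M\"obius maps in the second step.
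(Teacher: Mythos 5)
Your proposal follows essentially the same route as the paper: because the fixed-point multiplier invariants of $g_a$ are constant in $a$, one must pass to the period-two invariants; one saturates the ideal encoding $\sigma_i^{(2)}(a)=\sigma_i^{(2)}(b)$ by $(a^2-1)$ and $(b^2-1)$ to find the six-element fiber, and then exhibits explicit conjugating M\"obius transformations to show all six parameter values yield the same conjugacy class. The paper streamlines the elimination by first observing that the ideal $(\sigma_1^{(2)},\ldots,\sigma_{10}^{(2)})$ is already generated by $(\sigma_1^{(2)},\sigma_2^{(2)},\sigma_3^{(2)})$, which lets it work with three polynomials throughout; you instead carry all ten and separately verify afterwards that the first three suffice. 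That is a workable variant, just computationally heavier.

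Two concrete slips in your account of the parameter action need fixing. The map $\phi(a)=\tfrac{3-a}{1+a}$ is an involution, not of order three: a direct calculation gives $\phi(\phi(a))=a$. It also fixes $1$ and swaps $-1\leftrightarrow\infty$, the opposite of what you wrote; the map that swaps $1\leftrightarrow\infty$ and fixes $-1$ is $a\mapsto\tfrac{a+3}{a-1}$ (the one the paper uses), and that too is an involution. The group of order six arises because $a\mapsto -a$ and $a\mapsto\tfrac{a+3}{a-1}$ are two involutions whose composition has order three, generating a copy of $S_3$ permuting $\{1,-1,\infty\}$, with generic orbit of size six. Your final picture is therefore right, but the intermediate statements about $\phi$ are not. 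You should also actually exhibit the conjugating matrix realizing the fractional-linear parameter change — the paper uses $\begin{pmatrix}-1&1\\1&1\end{pmatrix}$ for $b=\tfrac{a+3}{a-1}$ — rather than only asserting one can be solved for, since that verification is what rules out spurious components and makes the fiber count genuine.
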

\begin{proof}
    The ideal generated by $(\sigma_1^{(2)},\ldots, \sigma_{10}^{(2)})$ is, in fact, generated by $(\sigma_1^{(2)},\sigma_2^{(2)}, \sigma_3^{(2)})$, so we just need to focus on these three invariants. Thinking of the invariants as functions of $a$, we need to find all $b$ so that
    \begin{equation*}
        \left(\sigma_1^{(2)}(a), \sigma_2^{(2)}(a), \sigma_3^{(2)}(a)\right) = \left(\sigma_1^{(2)}(b), \sigma_2^{(2)}(b), \sigma_3^{(2)}(b)\right).
    \end{equation*}
    Taking the ideal generated by these equations and saturating by the ideals $(a^2-1)$ and $(b^2-1)$, we have the principal ideal generated by the polynomial
    \begin{equation*}
        (-a + b)(a + b)(ab - a - b - 3)(ab - a + b + 3)(ab + a - b + 3)(ab + a + b - 3).
    \end{equation*}
    We check that the six parameter values
    \begin{equation*}
        b \in \left\{ \pm a, \pm \frac{a+3}{a-1}, \pm \frac{a-3}{a+1}\right\}
    \end{equation*}
    produce functions that are all conjugate.

    If $b=-a$, $g_a$ is, in fact, conjugate to $g_{b}$ via the M\"obius transform $\alpha(z) = iz$
    \begin{equation*}
        g_a^{\alpha} = \frac{1}{i}\cdot\frac{a(iz)^2-1}{(iz)^3-a(iz)}=\frac{1}{i}\cdot\frac{-az^2-1}{-iz^3-aiz}=\frac{-az^2-1}{z^3+az} = g_{-a}.
    \end{equation*}
    If $b=\frac{a+3}{a-1}$, then $g_a$ is conjugate to $g_b$ via the M\"obius transform $\alpha=\begin{pmatrix}-1&1\\1&1\end{pmatrix}$. This calculation is straightforward, but omitted.

    If $b=\frac{a-3}{a+1}=-\frac{-a+3}{-a-1}$, then $g_{-a}$ is conjugate to $g_b$ since $\frac{-a+3}{-a-1}=\frac{(-a)+3}{(-a)-1}$. Thus, this choice of $b$ is also conjugate to $a$.

    If $b=\frac{-a-3}{a-1}=-\frac{a+3}{a-1}$, then $g_b$ is conjugate to $g_{\frac{a+3}{a-1}}$ and $g_a$.

    If $b=\frac{-a+3}{a+1}=\frac{a-3}{-a-1}=-\frac{a-3}{a+1}$, $g_b$ is conjugate to $g_{\frac{a-3}{a+1}}$ and $g_a$ as well.

    The six parameter values that produce the same triple $(\sigma_1^{(2)}, \sigma_2^{(2)}, \sigma_3^{(2)})$ are all conjugate, so $\varphi$ is six-to-one and $\tilde{\tau}_2$ is injective on the image $\varphi(\bbA^1)$.
\end{proof}

\begin{prop}\label{prop_D2_2}
    The curve given by the component  $g_a(z) = \frac{az^2-1}{z^3-az}$ of $\A_3(D_2)$ is a smooth irreducible curve of genus $0$.
\end{prop}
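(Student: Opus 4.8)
The plan is to follow the strategy already used for Propositions~\ref{prop_c3} and~\ref{prop_D2_1}: transport the geometric question into an affine space where it becomes a finite polynomial computation. By Lemma~\ref{lem_D2_2} the map $\tilde{\tau}_2$ is injective on $\varphi(\bbA^1)$, so the component of $\A_3(D_2)$ traced out by the family $g_a$ is in bijection with its image $C \colonequals \overline{\tilde{\tau}_2(\varphi(\bbA^1))} \subseteq \bbA^3$. Moreover $\tilde{\tau}_2\circ\varphi$ is the morphism $a \mapsto \bigl(\sigma_1^{(2)}(a),\sigma_2^{(2)}(a),\sigma_3^{(2)}(a)\bigr)$, which is regular on the irreducible variety $\bbA^1\setminus\{\pm 1\}$; hence, by the argument of Lemma~\ref{lem_k_param_image}, its image and the closure $C$ are irreducible. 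Since the $\sigma_i^{(2)}$ are nonconstant in $a$ and $\tilde{\tau}_2\circ\varphi$ has finite fibers (it is six-to-one), $C$ is an irreducible curve.

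To obtain explicit equations, I would write each invariant from the display preceding Lemma~\ref{lem_D2_2} as $\sigma_i^{(2)}(a) = N_i(a)/D_i(a)$, where $D_1(a) = (a^2-1)^2$ and $D_2(a)=D_3(a)=(a^2-1)^4$, and form the ideal
\[
I = \bigl(N_1(a) - D_1(a)s_1,\ N_2(a) - D_2(a)s_2,\ N_3(a) - D_3(a)s_3\bigr) \subseteq \Q[a,s_1,s_2,s_3].
\]
Saturating $I$ by the ideal $(a^2-1)$ removes the contribution of the degenerate parameters $a=\pm 1$, and then eliminating $a$ via a Gr\"obner basis computation with an appropriate monomial order yields the defining ideal of $C$ inside $\Q[s_1,s_2,s_3]$ (one expects, as in Proposition~\ref{prop_D2_1}, a low-degree relation together with one further quadratic or cubic relation).

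Finally, a direct computation in Sage or Magma on the curve $C$ so produced confirms that $C$ is irreducible over $\Q$, nonsingular as an affine curve, and of geometric genus $0$, which is exactly the assertion of the proposition. The substance of the proof is in setting up the correct ideal and the correct saturation; the smoothness and genus facts themselves drop out of the Gr\"obner-basis and normalization routines and are not illuminating to grind through by hand. The point that requires care is verifying that the saturation genuinely isolates the curve swept out by $g_a$, so that no spurious component survives the elimination, and that the phrase ``genus zero'' is understood as the geometric genus of the smooth projective model; one can additionally remark that, having genus $0$ and visibly possessing $\Q$-rational points (the images of rational values of $a$), the curve $C$ is in fact isomorphic to $\bbP^1$ over $\Q$.
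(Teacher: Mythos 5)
Your proposal is correct and takes essentially the same approach as the paper: set up the ideal in $\Q[a,\sigma_1,\sigma_2,\sigma_3]$ from the period-$2$ multiplier invariants, saturate by $(a^2-1)$, eliminate $a$, and verify smoothness, irreducibility, and genus $0$ computationally. The paper records the resulting explicit relations, namely $916\sigma_1 - 40\sigma_2 + \sigma_3 - 16056 = 0$ and $1600\sigma_2^2 - 80\sigma_2\sigma_3 + \sigma_3^2 + 859456\sigma_2 - 105392\sigma_3 - 136334016 = 0$, which match the ``low-degree relation plus one further quadratic'' shape you anticipated.
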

\begin{proof}
    By Lemma \ref{lem_D2_2} we can parameterize the curve with the coordinates $(\sigma_1^{(2)}, \sigma_2^{(2)}, \sigma_3^{(2)})$ in $\bbA^3$. In terms of the parameter $a$, this gives the equations
    \begin{align*}
        &(2a^6 + 36a^4 + 18a^2 + 72)-(a^4 - 2a^2 + 1)\sigma_1=0,\\
        &(a^{12} + 76a^{10} + 514a^8 + 1228a^6 + 9a^4 + 2268)-(a^8 - 4a^6 + 6a^4 - 4a^2 + 1)\sigma_2=0,\\
        &(40a^{12} + 1208a^{10} + 7304a^8 + 32528a^6 + 30744a^4 + 51192a^2 + 40824)\\
        &-(a^8 - 4a^6 + 6a^4 - 4a^2 + 1)\sigma_3=0.
    \end{align*}
    Saturating with respect to $(a^2-1)$ and eliminating $a$ yields the relations
    \begin{align*}
        0&=916\sigma_1 - 40\sigma_2 + \sigma_3 - 16056\\
        0&=1600\sigma_2^2 - 80\sigma_2\sigma_3 + \sigma_3^2 + 859456\sigma_2 - 105392\sigma_3 - 136334016.
    \end{align*}
    These relations define a smooth, irreducible curve of genus $0$ in $\bbA^3$.
\end{proof}

\begin{code}
The code we ran is attached below:
\begin{lstlisting}[language=Python]
#Sigma Invariants
R.<a> = QQ[]
P.<x> = R[]
P = FractionField(P)
A = AffineSpace(P,1)
f = DynamicalSystem_affine((a*x^2-1)/(x^3-(a*x)))
F = f.homogenize(1)
F.sigma_invariants(2)

#Checking redundancy of sigma invariants
R.<a>=QQ[]
A=AffineSpace(R)
g2=a^12 + 76*a^10 + 514*a^8 + 1228*a^6 + 9*a^4 + 2268
g3=40*a^12 + 1208*a^10 + 7304*a^8 + 32528*a^6 + 30744*a^4 + 51192*a^2 + 40824
I=R.ideal(g2,g3)

g4=636*a^12 + 11232*a^10 + 85806*a^8 + 335448*a^6 + 785376*a^4 + 927288*a^2 + 459270
g5=(5080*a^12 + 74700*a^10 + 624024*a^8 + 2354184*a^6 + 6805944*a^4 + 7637004*a^2 + 3306744)
g6=(21286*a^12 + 365112*a^10 + 2597184*a^8 + 11914776*a^6 + 25286094*a^4 + 32122656*a^2 + 14880348)
g7=(45720*a^12 + 1030968*a^10 + 6945912*a^8 + 29498256*a^6 + 47711592*a^4 + 63772920*a^2 + 38263752)
g8=(51516*a^12 + 979776*a^10 + 11396457*a^8 + 12045996*a^6 + 86093442*a^4 + 57395628*a^2 + 43046721)
g9=(29160*a^12 + 13122*a^10 - 7085880*a^8 + 40389516*a^6 + 86093442*a^2)
g0=(6561*a^12 - 236196*a^10 + 3188646*a^8 - 19131876*a^6 + 43046721*a^4)
print(g4 in I)
print(g5 in I)
print(g6 in I)
print(g7 in I)
print(g8 in I)
print(g9 in I)
print(g0 in I)

#Irreducible components
A.<a,b>=AffineSpace(QQ,2)
R=A.coordinate_ring()
I=R.ideal([(2*a^6 + 36*a^4 + 18*a^2 + 72)*(b^4 - 2*b^2 + 1)-(2*b^6 + 36*b^4 + 18*b^2 + 72)*(a^4 - 2*a^2 + 1),
          (a^12 + 76*a^10 + 514*a^8 + 1228*a^6 + 9*a^4 + 2268)*(b^8 - 4*b^6 + 6*b^4 - 4*b^2 + 1)-(b^12 + 76*b^10 + 514*b^8 + 1228*b^6 + 9*b^4 + 2268)*(a^8 - 4*a^6 + 6*a^4 - 4*a^2 + 1),
          (40*a^12 + 1208*a^10 + 7304*a^8 + 32528*a^6 + 30744*a^4 + 51192*a^2 + 40824)*(b^8 - 4*b^6 + 6*b^4 - 4*b^2 + 1)-(40*b^12 + 1208*b^10 + 7304*b^8 + 32528*b^6 + 30744*b^4 + 51192*b^2 + 40824)*(a^8 - 4*a^6 + 6*a^4 - 4*a^2 + 1)])
A.<a,b>=AffineSpace(QQ,2)
R=A.coordinate_ring()
I=R.ideal([(2*a^6 + 36*a^4 + 18*a^2 + 72)*(b^4 - 2*b^2 + 1)-(2*b^6 + 36*b^4 + 18*b^2 + 72)*(a^4 - 2*a^2 + 1),
          (a^12 + 76*a^10 + 514*a^8 + 1228*a^6 + 9*a^4 + 2268)*(b^8 - 4*b^6 + 6*b^4 - 4*b^2 + 1)-(b^12 + 76*b^10 + 514*b^8 + 1228*b^6 + 9*b^4 + 2268)*(a^8 - 4*a^6 + 6*a^4 - 4*a^2 + 1),
          (40*a^12 + 1208*a^10 + 7304*a^8 + 32528*a^6 + 30744*a^4 + 51192*a^2 + 40824)*(b^8 - 4*b^6 + 6*b^4 - 4*b^2 + 1)-(40*b^12 + 1208*b^10 + 7304*b^8 + 32528*b^6 + 30744*b^4 + 51192*b^2 + 40824)*(a^8 - 4*a^6 + 6*a^4 - 4*a^2 + 1)])
Ja=R.ideal([a^2-1])
Jb=R.ideal([b^2-1])
I=I.saturation(Ja)[0]
I=I.saturation(Jb)[0]
I.gens()[0].factor()


#Checking conjugacy between a and (a+3)/(a-1)
R.<a,b,c,d,k,t>=QQ[]
P.<x,y>=ProjectiveSpace(FractionField(R),1)
f=DynamicalSystem_projective([k*x^2*y - y^3, x^3-k*x*y^2])
m=matrix(FractionField(R),2,[-1,1,1,1])
f.conjugate(m)

#Curve information
R.<a,s1,s2,s3>=QQ[]
A=AffineSpace(R)
g1=(2*a^6 + 36*a^4 + 18*a^2 + 72)-(a^4 - 2*a^2 + 1)*s1
g2=(a^12 + 76*a^10 + 514*a^8 + 1228*a^6 + 9*a^4 + 2268)-(a^8 - 4*a^6 + 6*a^4 - 4*a^2 + 1)*s2
g3=(40*a^12 + 1208*a^10 + 7304*a^8 + 32528*a^6 + 30744*a^4 + 51192*a^2 + 40824)-(a^8 - 4*a^6 + 6*a^4 - 4*a^2 + 1)*s3

I=R.ideal(g1,g2,g3)
J=R.ideal(a^2-1)
I=I.saturation(J)[0]
print(I.elimination_ideal(a))
A.<s1,s2,s3>=AffineSpace(QQ,3)
phi=R.hom([0,s1,s2,s3],A.coordinate_ring())
C=A.curve([phi(F) for F in I.elimination_ideal(a).gens()])
print(C.genus())
print(C.is_smooth())
CP=C.projective_closure()
CP.degree()
\end{lstlisting}
\end{code}

\subsection{$\A_3(C_2)$}
    We start with the family $f_{a,b}(z) = \frac{z^3+az}{bz^2+1}$.
    \begin{lemma}
        The map
        \begin{align*}
            \varphi:\bbA^2\setminus \{ab=1\} &\to \M_3\\
            (a,b) &\mapsto [f_{a,b}]
        \end{align*}
        is two-to-one. The map
        \begin{align*}
            \tau_1: \varphi(\bbA^2\setminus \{ab=1\}) &\to \bbA^3\\
            [f_{a,b}] &\mapsto (\sigma_1, \sigma_2, \sigma_3)
        \end{align*}
        is injective.
    \end{lemma}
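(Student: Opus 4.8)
The plan is to mirror the proofs of Proposition~\ref{prop_c3} and Lemma~\ref{lem_D2_2}: compute the fixed-point multiplier invariants of $f_{a,b}$ as rational functions of $(a,b)$, exhibit the conjugacy responsible for the two-to-one behaviour, and then run a Gr\"obner basis computation to verify that these are the only coincidences. First I would record the fixed points: $0$ and $\infty$ are fixed, with multipliers $a$ and $b$ respectively (for $\infty$, conjugate by $z\mapsto 1/z$ and differentiate at $0$), while the remaining two fixed points are the roots of $(1-b)z^{2}+(a-1)$, which are negatives of one another. Since $f_{a,b}$ is an odd function, its dehomogenized derivative is even, so these last two fixed points share a single multiplier $\mu=\mu(a,b)$. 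Taking elementary symmetric functions of the multiset $\{a,b,\mu,\mu\}$ then expresses $\sigma_{1},\sigma_{2},\sigma_{3}$ (and $\sigma_{4}$) as explicit rational functions of $a$ and $b$, each visibly symmetric under $a\leftrightarrow b$. As in Proposition~\ref{prop_c3}, the index relation among fixed-point multiplier invariants lets us work with only $\sigma_{1},\sigma_{2},\sigma_{3}$.

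Next I would produce the conjugacy. Conjugating by the M\"obius transform $\alpha(z)=1/z$, one computes
\begin{equation*}
    f_{a,b}^{\alpha}(z)=\frac{1}{f_{a,b}(1/z)}=\frac{b z^{-2}+1}{z^{-3}+a z^{-1}}=\frac{z^{3}+bz}{az^{2}+1}=f_{b,a}(z),
\end{equation*}
so $\varphi(a,b)=\varphi(b,a)$ for every $(a,b)$ with $ab\neq 1$. Hence the fiber of $\varphi$ over a generic point of its image already has at least two elements, the two being distinct exactly when $a\neq b$.

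To rule out further identifications I would argue as follows. If $[f_{a,b}]=[f_{a',b'}]$, then, because multiplier invariants are conjugation invariants, $\sigma_{i}(a,b)=\sigma_{i}(a',b')$ for $i=1,2,3$. Clearing denominators yields three polynomials in $\Q[a,b,a',b']$; I would form the ideal they generate, saturate by $(ab-1)$ and by $(a'b'-1)$ to discard the degenerate locus (where $b=1/a$ the numerator $z(z^{2}+a)$ and denominator $(z^{2}+a)/a$ share a factor, so $f_{a,b}$ drops degree), and then compute the irreducible components of the resulting scheme. The expectation---consistent with the multiset description $\{a,b,\mu,\mu\}$, in which the repeated entry is generically forced to be $\mu$---is that the zero locus is exactly $V(a-a',\,b-b')\cup V(a-b',\,b-a')$, i.e.\ $(a',b')\in\{(a,b),(b,a)\}$. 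Combined with the conjugacy $f_{a,b}^{\alpha}=f_{b,a}$, this gives both conclusions: the fiber of $\varphi$ over any point of the image is precisely $\{(a,b),(b,a)\}$, so $\varphi$ is two-to-one; and if $\tau_{1}([f_{a,b}])=\tau_{1}([f_{a',b'}])$ then $\sigma_{1},\sigma_{2},\sigma_{3}$ agree, forcing $(a',b')\in\{(a,b),(b,a)\}$ and hence $[f_{a,b}]=[f_{a',b'}]$, so $\tau_{1}$ is injective on $\varphi(\bbA^{2}\setminus\{ab=1\})$.

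The conceptual content here is light; the step requiring the most care is the Gr\"obner basis computation used to rule out further identifications. The unsaturated ideal carries spurious components supported on $ab=1$ and $a'b'=1$, so the saturations must actually be carried out and checked, and one must confirm that the saturated ideal has exactly the two expected two-dimensional components, with no additional lower-dimensional or embedded pieces that would alter the generic fiber count. A minor bookkeeping point, as in Proposition~\ref{prop_c3}, is the justification for reducing from four fixed-point invariants to three via the index relation.
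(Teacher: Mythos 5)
Your proposal follows essentially the same route as the paper: compute $\sigma_1,\sigma_2,\sigma_3$ as rational functions of $(a,b)$, equate them with $\sigma_i(c,d)$, saturate the resulting ideal by $(ab-1)$ and $(cd-1)$, and read off that the only components are $(c,d)=(a,b)$ and $(c,d)=(b,a)$, with the latter realized by the conjugacy $f_{a,b}^{1/z}=f_{b,a}$. Your additional observation that $f_{a,b}$ is odd, so the two nonzero finite fixed points share a multiplier $\mu$ and the multiplier multiset is $\{a,b,\mu,\mu\}$, is a nice conceptual gloss explaining the $a\leftrightarrow b$ symmetry, but it does not change the argument.
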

    \begin{proof}
        We calculate the fixed point multiplier invariants as functions of $(a,b)$ as
        \begin{align*}
            \sigma_1(a,b)&=\frac{a^2b + ab^2 - 2ab + 3a + 3b - 6}{ab - 1}\\
            \sigma_2(a,b)&=\frac{a^3b^3 - 2a^3b^2 - 2a^2b^3 + 4a^3b + 7a^2b^2 + 4ab^3 - 8a^2b - 8ab^2 + 7ab - 6a - 6b + 9}{(ab-1)^2}\\
            \sigma_3(a,b)&=\frac{1}{(ab-1)^2}\Big(-2a^3b^3 + 5a^3b^2 + 5a^2b^3 - 4a^3b - 12a^2b^2 - 4ab^3 + 4a^3\\
            &+ 14a^2b + 14ab^2 + 4b^3 - 12a^2 - 18ab - 12b^2 + 9a + 9b\Big),
        \end{align*}
        where we have omitted $\sigma_4$ as usual due to its dependence from the standard index relation. To determine the degree of $\varphi$, we consider the equations
        \begin{equation*}
            (\sigma_3(a,b), \sigma_2(a,b), \sigma_3(a,b)) =
            (\sigma_1(c,d), \sigma_2(c,d), \sigma_3(c,d)).
        \end{equation*}
        We saturate the resulting ideal with respect to $(ab-1)$ and $(cd-1)$, the parameter values where the family is degenerate, and find the irreducible components of the resulting variety in $\bbA^3$ as
        \begin{equation*}
            (c,d) =(a,b) \quad \text{or} \quad (c,d) = (b,a).
        \end{equation*}
        The maps $f_{a,b}$ and $f_{b,a}$ are conjugate by $\alpha(z) = \frac{1}{z}$.

        In no other situation are the fixed point multiplier invariants equal, so $\tau_1$ is injective on the image of $\varphi$.
    \end{proof}
\begin{code}
The code we ran is attached below:
\begin{lstlisting}[language=Python]
#find the invariants
R.<a,b,c,d> = QQ[]
P.<z> = R[]
P = FractionField(P)
A = AffineSpace(P,1)
f = DynamicalSystem_affine((z^3+a*z)/(b*z^2+1))
g = DynamicalSystem_affine((z^3+c*z)/(d*z^2+1))
F = f.homogenize(1)
G = g.homogenize(1)
sF=F.sigma_invariants(1)
sG=G.sigma_invariants(1)

#create the ideal and saturate
I=R.ideal([sF[i].numerator()*sG[i].denominator()-sG[i].numerator()*sF[i].denominator() for i in range(len(sF))])
Ja=R.ideal(a*b-1)
Jb=R.ideal(c*d-1)
I=I.saturation(Ja)[0]
I=I.saturation(Jb)[0]

#get the irreducible components
A=AffineSpace(R)
X=A.subscheme(I)
IC=X.irreducible_components()

#check conjugation
F=f.homogenize(1)
m=matrix(QQ,2,2,[0,1,1,0])
g=F.conjugate(m)
F,g
\end{lstlisting}
\end{code}

\begin{prop}\label{prop_a3_c2_1_geo}
    The component of $\A_3(C_2)$ given by $f_{a,b} = \frac{z^3+az}{bz^2+1}$ is an irreducible rational surface defined by
    \begin{align*}
        0&=36\sigma_1^5 - 12\sigma_1^4\sigma_2 + \sigma_1^3\sigma_2^2 - 2\sigma_1^2\sigma_2^3 - 12\sigma_1^4\sigma_3 + 8\sigma_1^3\sigma_2\sigma_3 - \sigma_1^2\sigma_2^2\sigma_3 + 4\sigma_1^3\sigma_3^2 - 60\sigma_1^4 - 194\sigma_1^3\sigma_2\\
        &+ 64\sigma_1^2\sigma_2^2 - 4\sigma_1\sigma_2^3 + 8\sigma_2^4 + 4\sigma_1^3\sigma_3 + 60\sigma_1^2\sigma_2\sigma_3 - 36\sigma_1\sigma_2^2\sigma_3 + 4\sigma_2^3\sigma_3 - 18\sigma_1\sigma_2\sigma_3^2 + \sigma_1^3 + 318\sigma_1^2\sigma_2\\
        &+ 180\sigma_1\sigma_2^2 - 56\sigma_2^3 + 261\sigma_1^2\sigma_3 - 108\sigma_1\sigma_2\sigma_3 - 36\sigma_2^2\sigma_3 - 81\sigma_1\sigma_3^2 + 54\sigma_2\sigma_3^2 + 27\sigma_3^3 + 684\sigma_1^2\\
        &- 576\sigma_1\sigma_2 - 144\sigma_2^2 - 648\sigma_1\sigma_3 + 108\sigma_3^2 - 2160\sigma_1 + 864\sigma_2 + 432\sigma_3 + 1728
    \end{align*}
    whose projective closure is parameterized by the map
    \begin{align*}
        \phi: \bbP^2 &\to \overline{S} \subset\bbP^3\\
        (x:y:z) &\mapsto (\phi_1:\phi_2:\phi_3:\phi_4)
    \end{align*}
    for
    \begin{align*}
        \phi_1 &= \frac{1}{12}x^5y + 6x^5z + 2x^4yz + 24x^4z^2 + 18x^3yz^2 + 72x^2yz^3 + 288x^2z^4 + 108xyz^4 - 864xz^5\\
        \phi_2&=\frac{13}{24}x^5y + 9x^5z + \frac{37}{4}x^4yz - 27x^4z^2 + 51x^3yz^2 + 216x^3z^3 + 90x^2yz^3 - 504x^2z^4 + 54xyz^4\\
        &- 432xz^5 + 324yz^5 + 1296z^6\\
        \phi_3&=x^5y + \frac{23}{2}x^4yz + 36x^3yz^2 + 36x^2yz^3 - 648yz^5\\
        \phi_4&=x^5z + \frac{23}{2}x^4z^2 + 36x^3z^3 + 36x^2z^4 - 648z^6.
    \end{align*}
    This surface is singular with singular locus given by the conjugacy classes described by
    \begin{equation*}
        \{(a,b) : a=b\} \cup \{(a,b) : a+b=-6\} \cup \{(a,b) : b=\frac{3}{a+2}\}.
    \end{equation*}
\end{prop}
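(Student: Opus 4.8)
\emph{Strategy and setup.} The argument reduces to an elimination for the defining equation, a direct birationality check for $\phi$, and a Jacobian computation for the singular locus; the single conceptual input is that $f_{a,b}$ and $f_{b,a}$ are conjugate. Since $f_{a,b}(z)=\frac{z^3+az}{bz^2+1}$ is a $2$-parameter family in $\Rat_3$, Lemma~\ref{lem_k_param_image} shows its image in $\M_3$, and hence the surface $S:=\overline{\tau_1(\varphi(\bbA^2\setminus\{ab=1\}))}\subseteq\bbA^3$, is irreducible; by the preceding lemma the map $(a,b)\mapsto(\sigma_1,\sigma_2,\sigma_3)$ is generically two-to-one onto its image, so $S$ has dimension $2$, and an irreducible closed codimension-one subvariety of $\bbA^3$ is a hypersurface $V(G)$ with $G$ irreducible. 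Conjugating by $z\mapsto 1/z$ gives $f_{a,b}^{\,\alpha}=f_{b,a}$, so each $\sigma_i(a,b)$ is symmetric in $a,b$, hence a rational function of $s=a+b$ and $p=ab$; the induced map $\psi\colon(s,p)\mapsto(\sigma_1,\sigma_2,\sigma_3)$ factors $\tau_1\circ\varphi$ through the quotient $\bbA^2_{(a,b)}\to\bbA^2_{(s,p)}$, and because the generic fibre of $\tau_1\circ\varphi$ is precisely the orbit $\{(a,b),(b,a)\}$, the map $\psi$ is birational onto $S$. In particular $S$ is rational, with $(s,p)$ a birational chart.

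\emph{Defining equation and explicit parameterization.} To produce $G$ I would form the ideal generated by $(ab-1)^{m_i}\sigma_i-N_i(a,b)$ for $i=1,2,3$ (with $N_i,m_i$ read off from the displayed formulas), saturate by $(ab-1)$ to discard the degenerate locus, and eliminate $a,b$ by a Gr\"obner basis in an elimination term order; the resulting principal ideal is generated by the stated degree-five polynomial in $\sigma_1,\sigma_2,\sigma_3$. For $\phi$ I would rewrite $\sigma_1,\sigma_2,\sigma_3$ as rational functions of $(s,p)$, clear to a common denominator, and homogenize to a rational map $\bbP^2\dashrightarrow\bbP^3$; then verify that, in the coordinates $(x:y:z)$ of the statement, this is $\phi$ by checking that $\phi$ pulls the homogenization of $G$ back to $0$ and is generically injective (equivalently, that $s,p$ can be recovered rationally from $(\sigma_1:\sigma_2:\sigma_3:1)$ along $S$). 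This identifies the projective closure $\overline S\subset\bbP^3$ with the image of $\phi$.

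\emph{Singular locus and the main obstacle.} Finally I would apply the Jacobian criterion to $G$ (equivalently, locate where $d\psi$ drops rank on $\bbA^2_{(s,p)}$, and check the image near the boundary $\{p=1\}$ of the domain of $\psi$), decompose the singular subscheme into irreducible curves, and pull each back along $(a,b)\mapsto(s,p)$ to recognize the three curves $\{a=b\}$, $\{a+b=-6\}$, and $\{b=3/(a+2)\}$; substituting into $\sigma_i(a,b)$ then describes the corresponding conjugacy classes. For $\{a=b\}$ observe that $f_{a,a}(z)=\frac{z^3+az}{az^2+1}$ is exactly the $f_a$-component of $\A_3(D_2)$ from Proposition~\ref{prop_A3D2}, so this curve is the intersection of the two components of $\A_3(C_2)$ established in Proposition~\ref{prop_A3C2}. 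The conceptual content is light; the real work—and the step I expect to be the chief obstacle—is the computer-algebra side: extracting exactly the displayed degree-five equation from the elimination, constructing and verifying the degree-six parameterization $\phi$ (in particular that its image is the entire projective closure, with no spurious component at infinity), and pulling precisely those three curves out of the singular ideal.
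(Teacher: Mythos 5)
Your proposal is correct and follows the same overall workflow as the paper's proof (eliminate $a,b$ after saturating by $(ab-1)$ to get the hypersurface equation; compute the singular subscheme and pull its components back to parameter space), but you add a genuine conceptual improvement that the paper does not spell out. The paper simply reports that ``IsRational(S)'' and ``ParametrizeProjectiveHypersurface'' were run in Magma; you instead observe that conjugating by $z\mapsto 1/z$ sends $f_{a,b}\mapsto f_{b,a}$, so each $\sigma_i$ is a symmetric function of $(a,b)$, i.e.\ a rational function of $(s,p)=(a+b,ab)$, and the resulting $\psi\colon\bbA^2_{(s,p)}\to S$ is generically one-to-one (because the fibers of $\tau_1\circ\varphi$ are exactly the orbits $\{(a,b),(b,a)\}$), hence birational. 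This gives a hands-on reason for rationality and naturally produces a parameterization, whereas the paper's $\phi$ is a black-box Magma output. The one thing your write-up glosses over is the matching step: your $(s,p)$-derived parameterization and the paper's $\phi$ are both birational onto $\overline S$ but need not differ by a $\PGL_3$ change of source coordinates (only by a Cremona transformation of $\bbP^2$), so the clean way to finish — which you do mention — is to drop the matching and directly verify that the stated $\phi$ pulls back the homogenized $G$ to zero and has a rational inverse along $\overline S$. On the singular locus your plan matches the paper's: they find three $\sigma$-space components $S_1,S_2,S_3$, pull each back via elimination and saturation by $(ab-1)$, discard $S_1$ (which corresponds only to the degenerate $a=b=1$), and identify $S_2\leftrightarrow(a+b+6)(a-b)^2=0$ and $S_3\leftrightarrow(ab+2b-3)^2(ab+2a-3)^2=0$; your observation that $\{a=b\}$ recovers the $f_a$-component of $\A_3(D_2)$ and that the two factors of $S_3$ are swapped by $z\mapsto 1/z$ reproduces the paper's remarks. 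One small point to keep in mind when executing the singular-locus step: be sure to saturate the pulled-back ideals by $(ab-1)$, so that the spurious singular point $(4,6,4)$ (your ``boundary'' at $p=1$) is discarded rather than reported as a conjugacy class.
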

\begin{proof}
    To obtain the surface equations, take the fixed point multiplier invariants and consider the ideal generated by
    \begin{align*}
        &(a^2b + ab^2 - 2ab + 3a + 3b - 6)-(ab - 1)\sigma_1,\\
        &(a^3b^3 - 2a^3b^2 - 2a^2b^3 + 4a^3b + 7a^2b^2 + 4ab^3 - 8a^2b - 8ab^2 + 7ab - 6a - 6b + 9)-(ab - 1)^2\sigma_2,\\
        &(-2a^3b^3 + 5a^3b^2 + 5a^2b^3 - 4a^3b - 12a^2b^2 - 4ab^3 + 4a^3 + 14a^2b + 14ab^2 + 4b^3 - 12a^2 - 18ab\\
        &- 12b^2 + 9a + 9b)-(ab - 1)^2\sigma_3.
    \end{align*}
    We saturate by the ideal $(ab-1)$ to avoid the parameters where the family is degenerate and compute the elimination ideal to eliminate the variables $a$ and $b$. This results in the single equation in $(\sigma_1,\sigma_2,\sigma_3)$ in the statement.

    The parameterization was computed in Magma and is easily checked by substituting $(\sigma_1,\sigma_2,\sigma_3) = (\phi_1/\phi_4, \phi_2/\phi_4, \phi_3/\phi_4)$ into the surface equation.

    The singular locus is also computed in Magma and its irreducible components computed in Sage are given by
    \begin{align*}
        S_1: & \begin{cases}
                \sigma_3 = 4\\
                \sigma_2 = 6\\
                \sigma_1 = 4
                \end{cases}\\
        S_2:& \begin{cases}
            \sigma_1 - 2\sigma_2 - \sigma_3 + 12=0\\
            4\sigma_2^2 + 4\sigma_2\sigma_3 + \sigma_3^2 - 60\sigma_2 - 28\sigma_3 + 216=0
            \end{cases}\\
        S_3:& \begin{cases}
          \sigma_2^2 - 3\sigma_1\sigma_3 - 24\sigma_1 + 12\sigma_2 + 36=0\\
          6\sigma_1^2 + \sigma_1\sigma_2 + 15\sigma_1 - 18\sigma_2 - 9\sigma_3 - 36=0.
          \end{cases}
    \end{align*}

    To analyze the components, we proceed similarly through elimination of variables. Starting with the ideal defined by the equations of the component along with the defining equations of the fixed point multiplier invariants we eliminate $(\sigma_1,\sigma_2,\sigma_3)$ to get an ideal in $a$ and $b$. Then we saturate by $(ab-1)$ to avoid the degenerate elements.
    The component $S_1$ corresponds to the degenerate case $a=b=1$.

    The component $S_2$ results in the components
    \begin{equation*}
        (a+b+6)(b-a)^2 = 0.
    \end{equation*}
    When $a=b$, this is the component of $\A_3(D_2)$ from Proposition \ref{prop_D2_1}.

    The component $S_3$ results in the components
    \begin{equation*}
        (ab + 2b - 3)^2(ab + 2a - 3)^2=0.
    \end{equation*}
    These correspond to $a = \frac{3}{b+2}$ and $b = \frac{3}{a+2}$. Since $f_{a,b}$ and $f_{b,a}$ are conjugate by $\alpha(z) = \frac{1}{z}$, this is a single component in the moduli space.
\end{proof}

\begin{code}
The code used is
\begin{lstlisting}[language=Python]
R.<a,b,s1,s2,s3>=PolynomialRing(QQ)
A=AffineSpace(R)
f1=(a^2*b + a*b^2 - 2*a*b + 3*a + 3*b - 6)-(a*b - 1)*s1
f2=(a^3*b^3 - 2*a^3*b^2 - 2*a^2*b^3 + 4*a^3*b + 7*a^2*b^2 + 4*a*b^3 - 8*a^2*b - 8*a*b^2 + 7*a*b - 6*a - 6*b + 9)-(a^2*b^2 - 2*a*b + 1)*s2
f3=(-2*a^3*b^3 + 5*a^3*b^2 + 5*a^2*b^3 - 4*a^3*b - 12*a^2*b^2 - 4*a*b^3 + 4*a^3 + 14*a^2*b + 14*a*b^2 + 4*b^3 - 12*a^2 - 18*a*b - 12*b^2 + 9*a + 9*b)-(a^2*b^2 - 2*a*b + 1)*s3
#f4=(a^3*b^3 - 4*a^3*b^2 - 4*a^2*b^3 + 4*a^3*b + 14*a^2*b^2 + 4*a*b^3 - 12*a^2*b - 12*a*b^2 + 9*a*b)-(a^2*b^2 - 2*a*b + 1)*s4
I=R.ideal(f1,f2,f3)
Ja=R.ideal(a*b-1)
I=I.saturation(Ja)[0]
I=I.elimination_ideal([a,b]) #since the bad point is a single point, this doesn't remove the degeneracy!
I.gens()
print(I.gens()[0])
print(I.gens()[0].homogenize('h'))


#to magma for the surface analysis
P3<s1,s2,s3,h>:=ProjectiveSpace(Rationals(),3);
S:= Surface(P3, 36*s1^5 - 12*s1^4*s2 + s1^3*s2^2 - 2*s1^2*s2^3 - 12*s1^4*s3 + 8*s1^3*s2*s3 - s1^2*s2^2*s3 + 4*s1^3*s3^2 - 60*s1^4*h - 194*s1^3*s2*h + 64*s1^2*s2^2*h - 4*s1*s2^3*h + 8*s2^4*h + 4*s1^3*s3*h + 60*s1^2*s2*s3*h - 36*s1*s2^2*s3*h + 4*s2^3*s3*h - 18*s1*s2*s3^2*h + s1^3*h^2 + 318*s1^2*s2*h^2 + 180*s1*s2^2*h^2 - 56*s2^3*h^2 + 261*s1^2*s3*h^2 - 108*s1*s2*s3*h^2 - 36*s2^2*s3*h^2 - 81*s1*s3^2*h^2 + 54*s2*s3^2*h^2 + 27*s3^3*h^2 + 684*s1^2*h^3 - 576*s1*s2*h^3 - 144*s2^2*h^3 - 648*s1*s3*h^3 + 108*s3^2*h^3 - 2160*s1*h^4 + 864*s2*h^4 + 432*s3*h^4 + 1728*h^5);
IsRational(S);
IsReduced(S);
IsIrreducible(S);
IsSingular(S);
GeometricGenusOfDesingularization(S);
P2<x,y,z>:=ProjectiveSpace(Rationals(),2);
ParametrizeProjectiveHypersurface(S,P2);
D:=SingularSubscheme(S);



h=1
r.<s1,s2,s3>=QQ[]
g1=-30*s1^4 - 97*s1^3*s2 + 32*s1^2*s2^2 - 2*s1*s2^3 + 4*s2^4 + 2*s1^3*s3 + 30*s1^2*s2*s3 - 18*s1*s2^2*s3 + 2*s2^3*s3 - 9*s1*s2*s3^2 + s1^3*h + 318*s1^2*s2*h + 180*s1*s2^2*h - 56*s2^3*h + 261*s1^2*s3*h - 108*s1*s2*s3*h - 36*s2^2*s3*h - 81*s1*s3^2*h + 54*s2*s3^2*h + 27*s3^3*h + 1026*s1^2*h^2 - 864*s1*s2*h^2 - 216*s2^2*h^2 - 972*s1*s3*h^2 + 162*s3^2*h^2 - 4320*s1*h^3 + 1728*s2*h^3 + 864*s3*h^3 + 4320*h^4
g2=-12*s1^4 + 8*s1^3*s2 - s1^2*s2^2 + 8*s1^3*s3 + 4*s1^3*h + 60*s1^2*s2*h - 36*s1*s2^2*h + 4*s2^3*h - 36*s1*s2*s3*h + 261*s1^2*h^2 - 108*s1*s2*h^2 -36*s2^2*h^2 - 162*s1*s3*h^2 + 108*s2*s3*h^2 + 81*s3^2*h^2 - 648*s1*h^3 + 216*s3*h^3 + 432*h^4
g3=-6*s1^4 + s1^3*s2 - 3*s1^2*s2^2 + 4*s1^3*s3 - s1^2*s2*s3 - 97*s1^3*h +64*s1^2*s2*h - 6*s1*s2^2*h + 16*s2^3*h + 30*s1^2*s3*h - 36*s1*s2*s3*h + 6*s2^2*s3*h - 9*s1*s3^2*h + 159*s1^2*h^2 + 180*s1*s2*h^2 - 84*s2^2*h^2 - 54*s1*s3*h^2 - 36*s2*s3*h^2 + 27*s3^2*h^2 - 288*s1*h^3 - 144*s2*h^3 + 432*h^4
g4=180*s1^4 - 48*s1^3*s2 + 3*s1^2*s2^2 - 4*s1*s2^3 - 48*s1^3*s3 + 24*s1^2*s2*s3 - 2*s1*s2^2*s3 + 12*s1^2*s3^2 - 240*s1^3*h - 582*s1^2*s2*h + 128*s1*s2^2*h - 4*s2^3*h + 12*s1^2*s3*h + 120*s1*s2*s3*h - 36*s2^2*s3*h - 18*s2*s3^2*h + 3*s1^2*h^2 + 636*s1*s2*h^2 + 180*s2^2*h^2 + 522*s1*s3*h^2 - 108*s2*s3*h^2 - 81*s3^2*h^2 + 1368*s1*h^3 - 576*s2*h^3 - 648*s3*h^3 - 2160*h^4
g5=36*s1^5 - 12*s1^4*s2 + s1^3*s2^2 - 2*s1^2*s2^3 - 12*s1^4*s3 + 8*s1^3*s2*s3 - s1^2*s2^2*s3 + 4*s1^3*s3^2 - 60*s1^4*h - 194*s1^3*s2*h + 64*s1^2*s2^2*h - 4*s1*s2^3*h + 8*s2^4*h + 4*s1^3*s3*h + 60*s1^2*s2*s3*h - 36*s1*s2^2*s3*h + 4*s2^3*s3*h - 18*s1*s2*s3^2*h + s1^3*h^2 + 318*s1^2*s2*h^2 + 180*s1*s2^2*h^2 - 56*s2^3*h^2 + 261*s1^2*s3*h^2 - 108*s1*s2*s3*h^2 - 36*s2^2*s3*h^2 - 81*s1*s3^2*h^2 + 54*s2*s3^2*h^2 + 27*s3^3*h^2 + 684*s1^2*h^3 - 576*s1*s2*h^3 - 144*s2^2*h^3 - 648*s1*s3*h^3 + 108*s3^2*h^3 - 2160*s1*h^4 + 864*s2*h^4 + 432*s3*h^4 + 1728*h^5

Ig=r.ideal([g1,g2,g3,g4,g5])

A=AffineSpace(r)
D=A.subscheme(Ig)
D.irreducible_components()


#component 1
R.<a,b>=PolynomialRing(QQbar)
A=AffineSpace(R)
s1=4
s2=6
s3=4
f1=(a^2*b + a*b^2 - 2*a*b + 3*a + 3*b - 6)-(a*b - 1)*s1
f2=(a^3*b^3 - 2*a^3*b^2 - 2*a^2*b^3 + 4*a^3*b + 7*a^2*b^2 + 4*a*b^3 - 8*a^2*b - 8*a*b^2 + 7*a*b - 6*a - 6*b + 9)-(a^2*b^2 - 2*a*b + 1)*s2
f3=(-2*a^3*b^3 + 5*a^3*b^2 + 5*a^2*b^3 - 4*a^3*b - 12*a^2*b^2 - 4*a*b^3 + 4*a^3 + 14*a^2*b + 14*a*b^2 + 4*b^3 - 12*a^2 - 18*a*b - 12*b^2 + 9*a + 9*b)-(a^2*b^2 - 2*a*b + 1)*s3
X=A.subscheme([f1,f2,f3])
X.rational_points()

#component 2
R.<a,b,s1,s2,s3>=QQ[]
A=AffineSpace(R)
f1=(a^2*b + a*b^2 - 2*a*b + 3*a + 3*b - 6)-(a*b - 1)*s1
f2=(a^3*b^3 - 2*a^3*b^2 - 2*a^2*b^3 + 4*a^3*b + 7*a^2*b^2 + 4*a*b^3 - 8*a^2*b - 8*a*b^2 + 7*a*b - 6*a - 6*b + 9)-(a^2*b^2 - 2*a*b + 1)*s2
f3=(-2*a^3*b^3 + 5*a^3*b^2 + 5*a^2*b^3 - 4*a^3*b - 12*a^2*b^2 - 4*a*b^3 + 4*a^3 + 14*a^2*b + 14*a*b^2 + 4*b^3 - 12*a^2 - 18*a*b - 12*b^2 + 9*a + 9*b)-(a^2*b^2 - 2*a*b + 1)*s3
h1=s2^2 - 3*s1*s3 - 24*s1 + 12*s2 + 36
h2=6*s1^2 + s1*s2 + 15*s1 - 18*s2 - 9*s3 - 36
h1=s1 - 2*s2 - s3 + 12
h2=4*s2^2 + 4*s2*s3 + s3^2 - 60*s2 - 28*s3 + 216
I=R.ideal([h1,h2] +[f1,f2,f3])
J=I.elimination_ideal([s1,s2,s3])
J2=J.saturation(R.ideal(a*b-1))[0]
J2.gens()[0].factor()

#componenet 3
R.<a,b,s1,s2,s3>=QQ[]
A=AffineSpace(R)
f1=(a^2*b + a*b^2 - 2*a*b + 3*a + 3*b - 6)-(a*b - 1)*s1
f2=(a^3*b^3 - 2*a^3*b^2 - 2*a^2*b^3 + 4*a^3*b + 7*a^2*b^2 + 4*a*b^3 - 8*a^2*b - 8*a*b^2 + 7*a*b - 6*a - 6*b + 9)-(a^2*b^2 - 2*a*b + 1)*s2
f3=(-2*a^3*b^3 + 5*a^3*b^2 + 5*a^2*b^3 - 4*a^3*b - 12*a^2*b^2 - 4*a*b^3 + 4*a^3 + 14*a^2*b + 14*a*b^2 + 4*b^3 - 12*a^2 - 18*a*b - 12*b^2 + 9*a + 9*b)-(a^2*b^2 - 2*a*b + 1)*s3
h1=s2^2 - 3*s1*s3 - 24*s1 + 12*s2 + 36
h2=6*s1^2 + s1*s2 + 15*s1 - 18*s2 - 9*s3 - 36
I=R.ideal([h1,h2] +[f1,f2,f3])
J=I.elimination_ideal([s1,s2,s3])
J2=J.saturation(R.ideal(a*b-1))[0]
J2.gens()[0].factor()


#check parameterization
R.<a,b,s1,s2,s3,x,y,z>=PolynomialRing(QQ)
A=AffineSpace(R)
f1=(a^2*b + a*b^2 - 2*a*b + 3*a + 3*b - 6)-(a*b - 1)*s1
f2=(a^3*b^3 - 2*a^3*b^2 - 2*a^2*b^3 + 4*a^3*b + 7*a^2*b^2 + 4*a*b^3 - 8*a^2*b - 8*a*b^2 + 7*a*b - 6*a - 6*b + 9)-(a^2*b^2 - 2*a*b + 1)*s2
f3=(-2*a^3*b^3 + 5*a^3*b^2 + 5*a^2*b^3 - 4*a^3*b - 12*a^2*b^2 - 4*a*b^3 + 4*a^3 + 14*a^2*b + 14*a*b^2 + 4*b^3 - 12*a^2 - 18*a*b - 12*b^2 + 9*a + 9*b)-(a^2*b^2 - 2*a*b + 1)*s3
#f4=(a^3*b^3 - 4*a^3*b^2 - 4*a^2*b^3 + 4*a^3*b + 14*a^2*b^2 + 4*a*b^3 - 12*a^2*b - 12*a*b^2 + 9*a*b)-(a^2*b^2 - 2*a*b + 1)*s4
I=R.ideal(f1,f2,f3)
print(I.elimination_ideal([a,b]))
Ja=R.ideal(a*b-1)
I=I.saturation(Ja)[0]
I=I.elimination_ideal([a,b])
I.gens()
print(I.gens()[0])
print(I.gens()[0].homogenize('h'))
t1 = 1/12*x^5*y + 6*x^5*z + 2*x^4*y*z + 24*x^4*z^2 + 18*x^3*y*z^2 + 72*x^2*y*z^3 + 288*x^2*z^4 + 108*x*y*z^4 - 864*x*z^5
t2=13/24*x^5*y + 9*x^5*z + 37/4*x^4*y*z - 27*x^4*z^2 + 51*x^3*y*z^2 + 216*x^3*z^3 + 90*x^2*y*z^3 - 504*x^2*z^4 + 54*x*y*z^4 - 432*x*z^5 + 324*y*z^5 + 1296*z^6
t3=x^5*y + 23/2*x^4*y*z + 36*x^3*y*z^2 + 36*x^2*y*z^3 - 648*y*z^5
t4=x^5*z + 23/2*x^4*z^2 + 36*x^3*z^3 + 36*x^2*z^4 - 648*z^6
I.gen(0)(a,b,t1/t4,t2/t4,t3/t4,x,y,z)

\end{lstlisting}
\end{code}

Now we move to the next family with a $C_2$ symmetry, $g_{a,b}(z)=\frac{az^2+1}{z^3+bz}$.
\begin{lemma}
    For the family $g_{a,b}(z) = \frac{az^2+1}{z^3+bz}$, the image of the map $\tau_1: [g_{a,b}] \mapsto (\sigma_1,\sigma_2,\sigma_3)$ is a curve given by
    \begin{align*}
        0&=\sigma_3 - 2\sigma_2 - \sigma_1 + 12\\
        0&=4\sigma_2^2 + 4\sigma_2\sigma_1 + \sigma_1^2 - 60\sigma_2 - 28\sigma_1 + 216.
    \end{align*}
\end{lemma}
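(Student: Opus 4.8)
The plan is to follow the same recipe used for the $f_{a,b}$ component in Proposition~\ref{prop_a3_c2_1_geo} (and in Proposition~\ref{prop_c3}): present the claimed curve as the Zariski closure of the image of the explicit parameterization $(a,b)\mapsto[g_{a,b}]$ by eliminating $a$ and $b$ against the defining equations of the fixed-point multiplier invariants. First I would compute $\sigma_1,\sigma_2,\sigma_3,\sigma_4$ for $g_{a,b}(z)=\frac{az^2+1}{z^3+bz}$ as rational functions of $(a,b)$; concretely this means solving $g_{a,b}(z)=z$ for the four fixed points, evaluating the derivative at each, and taking elementary symmetric functions, which Sage does directly. As usual I would discard $\sigma_4$, since it is determined by $\sigma_1,\sigma_2,\sigma_3$ through the standard index relation (Hutz--Tepper \cite{Hutz10}, Fujimura--Nishizawa \cite{Fujimura}).

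Next, in the ring $\Q[a,b,\sigma_1,\sigma_2,\sigma_3]$ I would form the ideal generated by $\sigma_i\cdot(\text{denominator of }\sigma_i(a,b))-(\text{numerator of }\sigma_i(a,b))$ for $i=1,2,3$, saturate by $(ab-1)$ to remove the degenerate locus where $g_{a,b}$ drops degree, and compute the elimination ideal removing $a$ and $b$. The variety of this elimination ideal is precisely the Zariski closure of the image of $(a,b)\mapsto(\sigma_1,\sigma_2,\sigma_3)$, so it suffices to check that the elimination ideal is generated by the two displayed polynomials (a Gr\"obner basis computation) and that the resulting set is genuinely a curve. For the latter: by Lemma~\ref{lem_k_param_image} the image is irreducible, and the first relation expresses $\sigma_3$ linearly in $\sigma_1,\sigma_2$ while the second is a conic in $(\sigma_1,\sigma_2)$; the substitution $u=\sigma_1+2\sigma_2$ rewrites it as $u^2-30u+2\sigma_1+216=0$, a rational parabola, so the common zero set is one-dimensional and irreducible over $\Q$. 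It is also worth recording (as a sanity check, and because it reappears) that this curve is the one attached in Proposition~\ref{prop_D2_1} to the $f_a$ component of $\A_3(D_2)$, with $\sigma_1$ and $\sigma_3$ interchanged.

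The conceptual point — and the reason this lemma is stated separately — is the dimension drop: $g_{a,b}$ is an honest two-parameter family of pairwise non-conjugate maps (the map to $\M_3$ is four-to-one by Theorem~\ref{thm_A3_geometry}), yet $\tau_1$ collapses it to a curve. So part of the content is that the fixed-point multipliers alone cannot separate conjugacy classes here, and I would confirm this honestly by checking that the Jacobian of $(a,b)\mapsto(\sigma_1,\sigma_2,\sigma_3)$ has generic rank $1$, equivalently that there is a one-parameter family of $(a,b)$ with identical fixed-point multipliers. This is exactly why the finer analysis of this component must pass to the period-$2$ multiplier invariants in Lemma~\ref{lem_a3_c2_2_geo} and Proposition~\ref{prop_a3_c2_2_geo}. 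I expect the only real obstacle to be the elimination/Gr\"obner step, which is purely mechanical; the rest is bookkeeping plus recognizing and recording the rank drop.
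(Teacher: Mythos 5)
Your proposal follows exactly the same recipe as the paper: compute the fixed-point multiplier invariants of $g_{a,b}$ as rational functions of $(a,b)$, form the ideal of relations $\sigma_i\cdot(\text{denominator})-(\text{numerator})$ in $\Q[a,b,\sigma_1,\sigma_2,\sigma_3]$, saturate by $(ab-1)$, and eliminate $a,b$; the extra remarks (parabola parameterization, Jacobian rank drop that motivates the period-two analysis in Lemma~\ref{lem_a3_c2_2_geo}) are also all sound.

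However, your ``sanity check'' against Proposition~\ref{prop_D2_1} is not a coincidence to record---it is a typo in the lemma you are proving to catch. Since $f_a$ is precisely the $b=a$ slice of $g_{a,b}$, the $\tau_1$-image curve of the $\A_3(D_2)$ component from Proposition~\ref{prop_D2_1} is contained in the $\tau_1$-image of the $g_{a,b}$ family; both are irreducible curves, so they must coincide, which forces the two sets of equations to be literally identical, not merely related by the substitution $\sigma_1\leftrightarrow\sigma_3$. They are in fact not the same variety: at $(a,b)=(2,2)$ one computes $(\sigma_1,\sigma_2,\sigma_3)=(28/3,\,166/9,\,-140/9)$, which satisfies $\sigma_1-2\sigma_2-\sigma_3+12=0$ and the quadratic of Proposition~\ref{prop_D2_1}, but \emph{not} $\sigma_3-2\sigma_2-\sigma_1+12=0$. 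So the correct equations for this lemma are $\sigma_1-2\sigma_2-\sigma_3+12=0$ and $4\sigma_2^2+4\sigma_2\sigma_3+\sigma_3^2-60\sigma_2-28\sigma_3+216=0$, identical to Proposition~\ref{prop_D2_1}. The source of the swap is the variable ordering \texttt{R.<a,b,s3,s2,s1>=QQ[]} in the accompanying Sage snippet, which binds \texttt{s3} to the first sigma invariant and \texttt{s1} to the third. Be prepared: when you run the Gr\"obner/elimination step you propose, the output will not match the two displayed polynomials verbatim, and you should interpret that as confirming the relabeling rather than as a failure of the method.
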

\begin{proof}
    The fixed point multiplier invariants give the equations
    \begin{align*}
        0&=(4a^2 - 4ab + 4b^2 + 12) -\sigma_1(ab - 1)\\
        0&=(4a^4 - 12a^3b + 22a^2b^2 - 12ab^3 + 4b^4 + 28a^2 - 52ab + 28b^2 + 54)  - \sigma_2(a^2b^2 - 2ab + 1)\\
        0&=(-8a^4 + 28a^3b - 36a^2b^2 + 28ab^3 - 8b^4 - 60a^2 + 96ab - 60b^2 - 108)- \sigma_3(a^2b^2 - 2ab + 1).
    \end{align*}
    Saturating with respect to $(ab-1)$ to avoid degeneracy and eliminating $a$ and $b$ give the stated equations.
\end{proof}
\begin{code}
\begin{lstlisting}
R.<a,b,s3,s2,s1>=QQ[]
P.<x,y>=ProjectiveSpace(R,1)
f=DynamicalSystem([a*x^2*y+y^3, x^3 + b*x*y^2])
SI=f.sigma_invariants(1)
I=R.ideal([SI[i].numerator() - SI[i].denominator()*R.gen(2+i) for i in range(len(SI)-1)])
I.elimination_ideal([a])
Ja=R.ideal(a*b-1)
I=I.saturation(Ja)[0]
EI=I.elimination_ideal([a,b])
EI
\end{lstlisting}
\end{code}

To study the geometry of this family, we need to use the multiplier invariants of the periodic points of period $2$.

\begin{lemma}\label{lem_a3_c2_2_geo}
    Define $g_{a,b}(z) = \frac{az^2+1}{z^3+bz}$. The map
        \begin{align*}
            \varphi:\bbA^2\setminus \{ab=1\} &\to \M_3\\
            (a,b) &\mapsto [g_{a,b}]
        \end{align*}
        is four-to-one. The map
        \begin{align*}
            \tau_1: \varphi(\bbA^2\setminus \{ab=1\}) &\to \bbA^5\\
            [g_{a,b}] &\mapsto (\sigma_1^{(1)}, \sigma_2^{(1)}, \sigma_3^{(1)}, \sigma_1^{(2)}, \sigma_2^{(2)})
        \end{align*}
        is injective.
\end{lemma}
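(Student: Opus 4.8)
The plan follows the template already used for Lemma~\ref{lem_D2_2} and for the first $\A_3(C_2)$ component: produce the relevant multiplier invariants as explicit rational functions of the parameters, solve the ``equal invariants'' system by a Gr\"obner basis and elimination computation, and then exhibit M\"obius transformations realizing each solution as an honest conjugacy. First I would compute, as rational functions of $(a,b)$, the three fixed point multiplier invariants $\sigma_1^{(1)}, \sigma_2^{(1)}, \sigma_3^{(1)}$ of $g_{a,b}$ (these appear already in the preceding lemma, where it was observed that they alone cut out only a curve and hence cannot separate conjugacy classes) together with the first two $2$-periodic multiplier invariants $\sigma_1^{(2)}, \sigma_2^{(2)}$.

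To establish both assertions at once, I would introduce a second parameter pair $(c,d)$, form the ideal $I \subseteq \Q[a,b,c,d]$ generated by the numerators of the five differences $\sigma_i^{(1)}(a,b) - \sigma_i^{(1)}(c,d)$ and $\sigma_j^{(2)}(a,b) - \sigma_j^{(2)}(c,d)$ after clearing denominators, and saturate $I$ by $(ab-1)$ and $(cd-1)$ to discard the degenerate loci where the family does not consist of degree~$3$ maps. Then I would compute the irreducible components of the resulting subvariety of $\bbA^4$. I expect exactly four components, each the graph of an explicit assignment $(c,d) = \big(r_k(a,b),\, s_k(a,b)\big)$ for $k = 1,\dots,4$, one of them being $(c,d) = (a,b)$ and the others given by rational-function relations reflecting the symmetries of the family (for instance $(c,d) = (-a,-b)$, realized by the conjugation $\alpha(z) = iz$, which sends $g_{a,b}$ to $g_{-a,-b}$).

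For each of the four relations I would then produce an explicit $\alpha \in \PGL_2$ with $g_{a,b}^{\alpha} = g_{c,d}$ up to the usual common scalar in homogeneous coordinates, verifying the identity by direct substitution; composing the obvious scalings $\alpha(z) = tz$ with transformations of the form $\alpha(z) = 1/z$ should account for all of them. Granting these conjugacies, the parameter pairs producing a common value of the five invariants all lie in a single $\PGL_2$-orbit, so $\varphi$ is generically four-to-one, and since no two conjugacy classes outside such an orbit share all five invariants, $\tau_1$ is injective on $\varphi(\bbA^2 \setminus \{ab = 1\})$.

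The main obstacle is the symbolic computation: the $2$-periodic multiplier invariants of a degree~$3$ map are built from the $9 = 3^2$ formal period-$2$ points, so $\sigma_1^{(2)}$ and $\sigma_2^{(2)}$ are large rational functions of $(a,b)$, and the ideal $I$—after clearing denominators and saturating—has high-degree generators, which makes the irreducible decomposition of $V(I)$ the delicate step; as in the earlier cases this is carried out in a computer algebra system. A secondary point requiring care is the bookkeeping of degenerate overlaps: one must check that the four components are pairwise distinct on a dense open set and identify any special parameter values (e.g.\ where two of the $r_k, s_k$ coincide or land in the excluded locus $ab = 1$), so that the four-to-one statement is understood generically, exactly as in Lemma~\ref{lem_D2_2}.
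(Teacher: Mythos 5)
Your proposal is correct and essentially mirrors the paper's method; the one substantive variation is in how the count of four is produced. You propose computing the irreducible decomposition of the five-equation variety inside $\bbA^4_{a,b,c,d}$ after saturating by $(ab-1)$ and $(cd-1)$, and reading off the components as graphs $(c,d) = (r_k(a,b),\,s_k(a,b))$. The paper instead regards the same system as an ideal in $K[a,b]$ with $K = \overline{\Q}(c,d)$; this makes the resulting variety zero-dimensional, and the degree of its projective closure (which Singular computes to be $4$) immediately gives the generic fiber size. That function-field shortcut is computationally lighter than a full four-variable decomposition (the $\sigma^{(2)}$ invariants have rather large numerators), and the four fibers $(a,b) \in \{(c,d),\,(d,c),\,(-c,-d),\,(-d,-c)\}$ are then simply written down and verified to be conjugate via $z \mapsto 1/z$ (swap) and $z \mapsto iz$ (negation), exactly the symmetries you anticipate. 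Your decomposition route would yield the same answer plus slightly more information (explicit equations of each component), at higher cost; both organize the final argument identically, and you correctly flag the same genericity caveat the paper does, so there is no gap.
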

\begin{proof}
    To compute the degree of $\varphi$, we consider the ideal generated by
    \begin{equation*}
        \left(\sigma_1(a,b), \sigma_2(a,b), \sigma_3(a,b), \sigma_1^{(2)}(a,b), \sigma_2^{(2)}(a,b)\right) = \left(\sigma_1(c,d), \sigma_2(c,d), \sigma_3(c,d), \sigma_1^{(2)}(c,d), \sigma_2^{(2)}(c,d)\right)
    \end{equation*}
    as an ideal in $K[a,b]$ where $K$ is the function field $\overline{\Q}(c,d)$. This forms a zero dimensional variety and Singular (via Sage) computes the degree of its projective closure as $4$. For (almost) every choice of $(c,d)$, we have the four pairs $(a,b) \in \{(c,d), (d,c), (-c,-d), (-d,-c)\}$. The functions $g_{a,b}(z)$ and $g_{b,a}(z)$ are conjugate via $\alpha_1(z) = \frac{1}{z}$, and $g_{a,b}(z)$ and $g_{-a,-b}(z)$ are conjugate via $\alpha_2(-z) = iz$. So the four points with the same $\bbA^5$ coordinates all are from the same conjugacy class.
\end{proof}
\begin{code}
\begin{lstlisting}
SS.<c,d>=PolynomialRing(QQ)
S.<a,b>=PolynomialRing(FractionField(SS))
g1=(4*a^2 - 4*a*b + 4*b^2 + 12)*(c*d -1) - (4*c^2 - 4*c*d + 4*d^2 + 12)*(a*b -1)
g2=(4*a^4 - 12*a^3*b + 22*a^2*b^2 - 12*a*b^3 + 4*b^4 + 28*a^2 - 52*a*b + 28*b^2 + 54)*(c^2*d^2 - 2*c*d + 1) - (4*c^4 - 12*c^3*d + 22*c^2*d^2 - 12*c*d^3 + 4*d^4 + 28*c^2 - 52*c*d + 28*d^2 + 54)*(a^2*b^2 - 2*a*b + 1)
g3=(-8*a^4 + 28*a^3*b - 36*a^2*b^2 + 28*a*b^3 - 8*b^4 - 60*a^2 + 96*a*b - 60*b^2 - 108)*(c^2*d^2 - 2*c*d + 1) - (-8*c^4 + 28*c^3*d - 36*c^2*d^2 + 28*c*d^3 - 8*d^4 - 60*c^2 + 96*c*d - 60*d^2 - 108)*(a^2*b^2 - 2*a*b + 1)
f1=(2*a^3*b^3 + 16*a^4 + 4*a^2*b^2 + 16*b^4 + 18*a*b + 72)*(c^2*d^2 - 2*c*d + 1) - (2*c^3*d^3 + 16*c^4 + 4*c^2*d^2 + 16*d^4 + 18*c*d + 72)*(a^2*b^2 - 2*a*b + 1)
f2=(a^6*b^6 + 32*a^7*b^3 + 12*a^5*b^5 + 32*a^3*b^7 + 96*a^8 + 16*a^6*b^2 + 290*a^4*b^4 + 16*a^2*b^6 + 96*b^8 + 512*a^5*b + 204*a^3*b^3 + 512*a*b^5 + 80*a^4 - 151*a^2*b^2 + 80*b^4 + 2268)*(c^4*d^4 - 4*c^3*d^3 + 6*c^2*d^2 - 4*c*d + 1)-(c^6*d^6 + 32*c^7*d^3 + 12*c^5*d^5 + 32*c^3*d^7 + 96*c^8 + 16*c^6*d^2 + 290*c^4*d^4 + 16*c^2*d^6 + 96*d^8 + 512*c^5*d + 204*c^3*d^3 + 512*c*d^5 + 80*c^4 - 151*c^2*d^2 + 80*d^4 + 2268)*(a^4*b^4 - 4*a^3*b^3 + 6*a^2*b^2 - 4*a*b + 1)
I=S.ideal([g1,g2,g3,f1,f2])
A=AffineSpace(S)
X=A.subscheme(I)
Z=X.projective_closure()
Z.degree()
\end{lstlisting}
\end{code}

\begin{prop}\label{prop_a3_c2_2_geo}
     The component of $\A_3(C_2)$ given by $g_{a,b}(z) = \frac{az^2+1}{z^3+bz}$ is an irreducible surface defined by
    {\tiny\begin{align*}
        0&=\sigma_1 - 2\sigma_2 - \sigma_3 + 12\\
        0&=4\sigma_2^2 + 4\sigma_2\sigma_3 + \sigma_3^2 - 60\sigma_2 - 28\sigma_3 + 216\\
        0&=1632\sigma_2\sigma_3^5 + 792\sigma_3^6 + 3648\sigma_2\sigma_3^4\sigma_1^{(2)} + 1760\sigma_3^5\sigma_1^{(2)} + 1248\sigma_2\sigma_3^3(\sigma_1^{(2)})^2 + 596\sigma_3^4(\sigma_1^{(2)})^2 - 1152\sigma_2\sigma_3^2(\sigma_1^{(2)})^3 - 544\sigma_3^3(\sigma_1^{(2)})^3\\
        &- 216\sigma_2\sigma_3(\sigma_1^{(2)})^4 - 98\sigma_3^2(\sigma_1^{(2)})^4 + 96\sigma_2(\sigma_1^{(2)})^5 + 40\sigma_3(\sigma_1^{(2)})^5 + (\sigma_1^{(2)})^6 - 414816\sigma_2\sigma_3^4 - 195424\sigma_3^5 - 602880\sigma_2\sigma_3^3\sigma_1^{(2)}\\
        &- 284512\sigma_3^4\sigma_1^{(2)} - 111648\sigma_2\sigma_3^2(\sigma_1^{(2)})^2 - 53760\sigma_3^3(\sigma_1^{(2)})^2 + 54720\sigma_2\sigma_3(\sigma_1^{(2)})^3 + 28048\sigma_3^2(\sigma_1^{(2)})^3 + 1176\sigma_2(\sigma_1^{(2)})^4 + 1480\sigma_3(\sigma_1^{(2)})^4\\
        &- 624(\sigma_1^{(2)})^5 + 5248\sigma_2\sigma_3^3\sigma_2^{(2)} + 2512\sigma_3^4\sigma_2^{(2)} + 6784\sigma_2\sigma_3^2\sigma_1^{(2)}\sigma_2^{(2)} + 3200\sigma_3^3\sigma_1^{(2)}\sigma_2^{(2)} - 128\sigma_2\sigma_3(\sigma_1^{(2)})^2\sigma_2^{(2)} - 64\sigma_3^2(\sigma_1^{(2)})^2\sigma_2^{(2)}\\
        &- 768\sigma_2(\sigma_1^{(2)})^3\sigma_2^{(2)} - 320\sigma_3(\sigma_1^{(2)})^3\sigma_2^{(2)} - 12(\sigma_1^{(2)})^4\sigma_2^{(2)} + 18436608\sigma_2\sigma_3^3 + 9945984\sigma_3^4 + 16035200\sigma_2\sigma_3^2\sigma_1^{(2)} + 9804928\sigma_3^3\sigma_1^{(2)}\\
        &+ 1465472\sigma_2\sigma_3(\sigma_1^{(2)})^2 + 1204512\sigma_3^2(\sigma_1^{(2)})^2 - 313536\sigma_2(\sigma_1^{(2)})^3 - 434624\sigma_3(\sigma_1^{(2)})^3 - 6720(\sigma_1^{(2)})^4 - 515968\sigma_2\sigma_3^2\sigma_2^{(2)}\\
        &- 247040\sigma_3^3\sigma_2^{(2)} - 306944\sigma_2\sigma_3\sigma_1^{(2)}\sigma_2^{(2)} - 158400\sigma_3^2\sigma_1^{(2)}\sigma_2^{(2)} + 9984\sigma_2(\sigma_1^{(2)})^2\sigma_2^{(2)}+ 4736\sigma_3(\sigma_1^{(2)})^2\sigma_2^{(2)} + 4992(\sigma_1^{(2)})^3\sigma_2^{(2)}\\
        &+ 3968\sigma_2\sigma_3(\sigma_2^{(2)})^2 + 1824\sigma_3^2(\sigma_2^{(2)})^2 + 1536\sigma_2\sigma_1^{(2)}(\sigma_2^{(2)})^2 + 640\sigma_3\sigma_1^{(2)}(\sigma_2^{(2)})^2 + 48(\sigma_1^{(2)})^2(\sigma_2^{(2)})^2 - 244431360\sigma_2\sigma_3^2\\
        &- 201129984\sigma_3^3 - 120701952\sigma_2\sigma_3\sigma_1^{(2)}
        - 138880512\sigma_3^2\sigma_1^{(2)} - 5380992\sigma_2(\sigma_1^{(2)})^2 - 10551168\sigma_3(\sigma_1^{(2)})^2 + 1890944(\sigma_1^{(2)})^3\\
        &+ 7087104\sigma_2\sigma_3\sigma_2^{(2)} + 5656320\sigma_3^2\sigma_2^{(2)} + 1651968\sigma_2\sigma_1^{(2)}\sigma_2^{(2)} + 2407680\sigma_3\sigma_1^{(2)}\sigma_2^{(2)} - 59520(\sigma_1^{(2)})^2\sigma_2^{(2)} - 58752\sigma_2(\sigma_2^{(2)})^2\\
        &- 42624\sigma_3(\sigma_2^{(2)})^2 - 9984\sigma_1^{(2)}(\sigma_2^{(2)})^2 - 64(\sigma_2^{(2)})^3 + 1187592192\sigma_2\sigma_3 + 1880381952\sigma_3^2 + 265006080\sigma_2\sigma_1^{(2)} + 812934144\sigma_3\sigma_1^{(2)}\\
        &+ 32237568(\sigma_1^{(2)})^2 - 20653056\sigma_2\sigma_2^{(2)} - 49351680\sigma_3\sigma_2^{(2)} - 9967104\sigma_1^{(2)}\sigma_2^{(2)} + 345600(\sigma_2^{(2)})^2 - 1903564800\sigma_2 - 7759079424\sigma_3\\
        &- 1591031808\sigma_1^{(2)} + 123669504\sigma_2^{(2)} + 11418402816
    \end{align*}
    \begin{align*}
        0&=1224\sigma_3^7 + 3792\sigma_3^6\sigma_1^{(2)} - 46800\sigma_2\sigma_3^4(\sigma_1^{(2)})^2 - 20564\sigma_3^5(\sigma_1^{(2)})^2 - 79680\sigma_2\sigma_3^3(\sigma_1^{(2)})^3 - 40856\sigma_3^4(\sigma_1^{(2)})^3 + 1680\sigma_2\sigma_3^2(\sigma_1^{(2)})^4\\
        &- 374\sigma_3^3(\sigma_1^{(2)})^4 + 27600\sigma_2\sigma_3(\sigma_1^{(2)})^5 + 13988\sigma_3^2(\sigma_1^{(2)})^5 - 5580\sigma_2(\sigma_1^{(2)})^6 - 2665\sigma_3(\sigma_1^{(2)})^6 - 22(\sigma_1^{(2)})^7 + 670152\sigma_3^6\\
        &+ 26777472\sigma_2\sigma_3^4\sigma_1^{(2)} + 14457920\sigma_3^5\sigma_1^{(2)} + 27403200\sigma_2\sigma_3^3(\sigma_1^{(2)})^2 + 13581308\sigma_3^4(\sigma_1^{(2)})^2 - 17123904\sigma_2\sigma_3^2(\sigma_1^{(2)})^3 - 8575280\sigma_3^3(\sigma_1^{(2)})^3\\
        &- 7057920\sigma_2\sigma_3(\sigma_1^{(2)})^4 - 3318902\sigma_3^2(\sigma_1^{(2)})^4 + 3338448\sigma_2(\sigma_1^{(2)})^5 + 1331704\sigma_3(\sigma_1^{(2)})^5 + 78251(\sigma_1^{(2)})^6 + 120768\sigma_2\sigma_3^4\sigma_2^{(2)}\\
        &+ 66096\sigma_3^5\sigma_2^{(2)} + 204416\sigma_2\sigma_3^3\sigma_1^{(2)}\sigma_2^{(2)} + 114464\sigma_3^4\sigma_1^{(2)}\sigma_2^{(2)} - 142720\sigma_2\sigma_3^2(\sigma_1^{(2)})^2\sigma_2^{(2)} - 67136\sigma_3^3(\sigma_1^{(2)})^2\sigma_2^{(2)} - 179968\sigma_2\sigma_3(\sigma_1^{(2)})^3\sigma_2^{(2)}\\
        &- 93248\sigma_3^2(\sigma_1^{(2)})^3\sigma_2^{(2)} + 58512\sigma_2(\sigma_1^{(2)})^4\sigma_2^{(2)} + 28460\sigma_3(\sigma_1^{(2)})^4\sigma_2^{(2)} + 264(\sigma_1^{(2)})^5\sigma_2^{(2)} - 8650305696\sigma_2\sigma_3^4 - 4002683936\sigma_3^5\\
        &- 14959662720\sigma_2\sigma_3^3\sigma_1^{(2)} - 6856974368\sigma_3^4\sigma_1^{(2)} - 3459567584\sigma_2\sigma_3^2(\sigma_1^{(2)})^2 - 1619872288\sigma_3^3(\sigma_1^{(2)})^2 + 1631518720\sigma_2\sigma_3(\sigma_1^{(2)})^3\\
        &+ 743089904\sigma_3^2(\sigma_1^{(2)})^3 + 44906280\sigma_2(\sigma_1^{(2)})^4 + 48051640\sigma_3(\sigma_1^{(2)})^4 - 22136400(\sigma_1^{(2)})^5 + 53976320\sigma_2\sigma_3^3\sigma_2^{(2)} + 28303472\sigma_3^4\sigma_2^{(2)}\\
        &+ 128916096\sigma_2\sigma_3^2\sigma_1^{(2)}\sigma_2^{(2)} + 63843648\sigma_3^3\sigma_1^{(2)}\sigma_2^{(2)} + 5531008\sigma_2\sigma_3(\sigma_1^{(2)})^2\sigma_2^{(2)} + 2665024\sigma_3^2(\sigma_1^{(2)})^2\sigma_2^{(2)} - 27134208\sigma_2(\sigma_1^{(2)})^3\sigma_2^{(2)}\\
        &- 11018304\sigma_3(\sigma_1^{(2)})^3\sigma_2^{(2)} - 884100(\sigma_1^{(2)})^4\sigma_2^{(2)} + 352512\sigma_2\sigma_3^2(\sigma_2^{(2)})^2 + 184416\sigma_3^3(\sigma_2^{(2)})^2 + 278272\sigma_2\sigma_3\sigma_1^{(2)}(\sigma_2^{(2)})^2\\
        &+ 149184\sigma_3^2\sigma_1^{(2)}(\sigma_2^{(2)})^2
        - 200256\sigma_2(\sigma_1^{(2)})^2(\sigma_2^{(2)})^2 - 99760\sigma_3(\sigma_1^{(2)})^2(\sigma_2^{(2)})^2 - 1056(\sigma_1^{(2)})^3(\sigma_2^{(2)})^2 + 516653715456\sigma_2\sigma_3^3\\
        &+ 263589240192\sigma_3^4 + 484361825920\sigma_2\sigma_3^2\sigma_1^{(2)} + 278148291968\sigma_3^3\sigma_1^{(2)} + 48513480448\sigma_2\sigma_3(\sigma_1^{(2)})^2 + 38458271328\sigma_3^2(\sigma_1^{(2)})^2\\
        &- 10359552576\sigma_2(\sigma_1^{(2)})^3 - 13327529920\sigma_3(\sigma_1^{(2)})^3 - 254963456(\sigma_1^{(2)})^4 - 14139669632\sigma_2\sigma_3^2\sigma_2^{(2)} - 6383331328\sigma_3^3\sigma_2^{(2)}\\
        &- 9563209216\sigma_2\sigma_3\sigma_1^{(2)}\sigma_2^{(2)} - 4488508992\sigma_3^2\sigma_1^{(2)}\sigma_2^{(2)} + 294902016\sigma_2(\sigma_1^{(2)})^2\sigma_2^{(2)} + 92512768\sigma_3(\sigma_1^{(2)})^2\sigma_2^{(2)} + 179583360(\sigma_1^{(2)})^3\sigma_2^{(2)}\\
        &+ 98552320\sigma_2\sigma_3(\sigma_2^{(2)})^2 + 46508640\sigma_3^2(\sigma_2^{(2)})^2 + 55121664\sigma_2\sigma_1^{(2)}(\sigma_2^{(2)})^2 + 22765952\sigma_3\sigma_1^{(2)}(\sigma_2^{(2)})^2 + 3316752(\sigma_1^{(2)})^2(\sigma_2^{(2)})^2\\
        &+ 221952\sigma_2(\sigma_2^{(2)})^3 + 114240\sigma_3(\sigma_2^{(2)})^3 + 1408\sigma_1^{(2)}(\sigma_2^{(2)})^3 - 7578572244480\sigma_2\sigma_3^2 - 5917873162752\sigma_3^3 - 3908513912832\sigma_2\sigma_3\sigma_1^{(2)}\\
        &- 4290578293248\sigma_3^2\sigma_1^{(2)} - 184908562560\sigma_2(\sigma_1^{(2)})^2 - 351150788736\sigma_3(\sigma_1^{(2)})^2 + 62583639424(\sigma_1^{(2)})^3 + 225002594304\sigma_2\sigma_3\sigma_2^{(2)}\\
        &+ 166476388608\sigma_3^2\sigma_2^{(2)} + 55674839808\sigma_2\sigma_1^{(2)}\sigma_2^{(2)} + 76642562304\sigma_3\sigma_1^{(2)}\sigma_2^{(2)} - 1751409024(\sigma_1^{(2)})^2\sigma_2^{(2)} - 1933117056\sigma_2(\sigma_2^{(2)})^2\\
        &- 1197765504\sigma_3(\sigma_2^{(2)})^2 - 364151040\sigma_1^{(2)}(\sigma_2^{(2)})^2 - 4129472(\sigma_2^{(2)})^3 + 38553253060608\sigma_2\sigma_3 + 58922313076224\sigma_3^2\\
        &+ 8872947753984\sigma_2\sigma_1^{(2)} + 26426257744896\sigma_3\sigma_1^{(2)} + 1107358166016(\sigma_1^{(2)})^2 - 687110999040\sigma_2\sigma_2^{(2)} - 1576478785536\sigma_3\sigma_2^{(2)}\\
        &- 336460829184\sigma_1^{(2)}\sigma_2^{(2)} + 11296544256(\sigma_2^{(2)})^2 - 63399291660288\sigma_2 - 252407226636288\sigma_3 - 53281131595776\sigma_1^{(2)}\\
        &+ 4111788303360\sigma_2^{(2)} + 380265217671168.
    \end{align*}}
    This surface is singular with singular locus given by the conjugacy classes described by
    \begin{equation*}
        \{(a,b) : a^2 - ab + b^2 + 3 = 0\} \cup \{(a,b) : a=-b\}.
    \end{equation*}
\end{prop}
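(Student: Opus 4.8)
The plan is to follow the template already used for the other $C_2$ component in Proposition \ref{prop_a3_c2_1_geo}, but now carrying along the period-two invariants as forced by Lemma \ref{lem_a3_c2_2_geo}. By that lemma the map $\tau_1\colon\varphi(\bbA^2\setminus\{ab=1\})\to\bbA^5$, $[g_{a,b}]\mapsto(\sigma_1^{(1)},\sigma_2^{(1)},\sigma_3^{(1)},\sigma_1^{(2)},\sigma_2^{(2)})$ is injective, so the closure $S$ of the image of the composite $\tau_1\circ\varphi\colon\bbA^2\setminus\{ab=1\}\to\bbA^5$ is a faithful (birational) model of the $g_{a,b}$ component of $\A_3(C_2)$; it therefore suffices to determine $S$ and its singularities inside $\bbA^5$. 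First I would record the five invariants as explicit rational functions of $a$ and $b$ --- the period-one ones already appear in the lemma preceding \ref{lem_a3_c2_2_geo}, and $\sigma_1^{(2)},\sigma_2^{(2)}$ are computed the same way from the formal $2$-periodic points.

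To extract the defining equations of $S$, introduce indeterminates for the five invariants, form in $\Q[a,b,\sigma_1^{(1)},\sigma_2^{(1)},\sigma_3^{(1)},\sigma_1^{(2)},\sigma_2^{(2)}]$ the ideal generated by $\bigl(\text{numerator of }\sigma\bigr)-\sigma\cdot\bigl(\text{denominator of }\sigma\bigr)$ for each of the five invariants, saturate by $(ab-1)$ to remove the degenerate parameters, and eliminate $a$ and $b$. This produces exactly the displayed system: the two low-degree relations $\sigma_1^{(1)}-2\sigma_2^{(1)}-\sigma_3^{(1)}+12=0$ and $4(\sigma_2^{(1)})^2+4\sigma_2^{(1)}\sigma_3^{(1)}+(\sigma_3^{(1)})^2-60\sigma_2^{(1)}-28\sigma_3^{(1)}+216=0$, which are the image curve already identified under the period-one invariants, together with the two large polynomials involving $\sigma_1^{(2)}$ and $\sigma_2^{(2)}$. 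Irreducibility comes for free: by Lemma \ref{lem_k_param_image} the image in $\M_3$ of the two-parameter family $g_{a,b}$ is irreducible, hence so is its model $S$; alternatively one verifies with a Gr\"obner basis that the elimination ideal is prime.

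For the singular locus, apply the Jacobian criterion to $S\subset\bbA^5$ (conveniently in Magma); the singular subscheme is nonempty, so $S$ is singular, and it comes out with an explicit ideal in the invariant coordinates. To transport this back to parameter space, adjoin the singular-subscheme equations to the parametrization ideal of the previous step, eliminate the five invariant coordinates, and saturate by $(ab-1)$; factoring the resulting ideal of $\Q[a,b]$ gives the two pieces $a^2-ab+b^2+3=0$ and $a+b=0$, which are the claimed conjugacy classes. It is worth recording the conceptual picture behind this: $\tau_1\circ\varphi$ is four-to-one with deck group $\{\mathrm{id},(a,b)\mapsto(b,a),(a,b)\mapsto(-a,-b),(a,b)\mapsto(-b,-a)\}\cong\Z/2\times\Z/2$, which is generated by the two pseudoreflections $(a,b)\mapsto(b,a)$ and $(a,b)\mapsto(-b,-a)$, so the naive quotient $\bbA^2/(\Z/2\times\Z/2)$ is smooth by Chevalley--Shephard--Todd; the singularities of $S$ therefore measure the failure of the injective map $\tau_1\circ\varphi$ to be an immersion, and in particular (in contrast to the other $C_2$ component) the $\A_3(D_2)$ sub-family $a=b$ is \emph{not} part of the singular locus here.

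The main obstacle is computational scale rather than conceptual difficulty: the elimination producing the two large defining polynomials, and the Jacobian-then-elimination chain producing the singular locus, involve polynomials far too big to manipulate by hand, so the proof rests on carrying out these saturations and Gr\"obner basis computations in Magma and Sage and then checking the output --- in particular verifying directly that the five invariants satisfy the listed equations and that the saturations neither drop a component of the singular locus nor introduce a spurious one.
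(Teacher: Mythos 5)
Your proposal is correct and is essentially the paper's proof: record the five invariants as rational functions of $(a,b)$, form the defining ideal, saturate by $(ab-1)$, eliminate $a,b$ to obtain the four surface equations, then compute the singular subscheme and transport it back to parameter space; the paper only differs mechanically in that it first decomposes the singular subscheme into irreducible components in $\bbA^5$, projects each to equations in $(\sigma_1,\sigma_2,\sigma_3)$, and pulls each back to $(a,b)$ separately, yielding the factored powers $(a^2-ab+b^2+3)^3$, $(a+b)^6$, $(a+b)^8$, whereas your one-shot elimination produces the same locus up to radical. One caution on your conceptual aside: the Chevalley--Shephard--Todd observation that $\bbA^2/G$ is smooth because the deck group $\Z/2\times\Z/2$ is generated by the reflections $(a,b)\mapsto(b,a)$ and $(a,b)\mapsto(-b,-a)$ is correct and is a nice supplement to the paper, but the clause \emph{``in particular\ldots $a=b$ is not part of the singular locus here''} is a non-sequitur, since the other $C_2$ component has a $\Z/2$ reflection deck group with smooth quotient as well and there $a=b$ \emph{is} singular; which reflection fixed loci land in the singular locus of $S$ is an output of the computation, not a consequence of quotient smoothness. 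You also omit the identification that the paper closes with and that is the structural point of the singular locus: the $a=-b$ component is itself a $D_2$ family, namely the $g_a$ component of $\A_3(D_2)$ from Proposition \ref{prop_D2_2}, so that both $D_2$ subfamilies live in the $g_{a,b}$ surface and it is the one at $a=-b$, not the shared one at $a=b$, that becomes singular.
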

\begin{proof}
    We compute the fixed point multiplier invariants and the first two multiplier invariants for the points of period 2:
    \begin{align*}
        \sigma_1 &= \frac{4a^2 - 4ab + 4b^2 + 12}{ab -1}\\
        \sigma_2 &= \frac{4a^4 - 12a^3b + 22a^2b^2 - 12ab^3 + 4b^4 + 28a^2 - 52ab + 28b^2 + 54}{a^2b^2 - 2ab + 1}\\
        \sigma_3 &= \frac{-8a^4 + 28a^3b - 36a^2b^2 + 28ab^3 - 8b^4 - 60a^2 + 96ab - 60b^2 - 108}{a^2b^2 - 2ab + 1}\\
        \sigma_1^{(2)} &= \frac{2a^3b^3 + 16a^4 + 4a^2b^2 + 16b^4 + 18ab + 72}{a^2b^2 - 2ab + 1}\\
        \sigma_2^{(2)} &= \frac{1}{(ab-1)^4}(a^6b^6 + 32a^7b^3 + 12a^5b^5 + 32a^3b^7 + 96a^8 + 16a^6b^2 + 290a^4b^4 + 16a^2b^6 + 96b^8\\
        &+ 512a^5b + 204a^3b^3 + 512ab^5 + 80a^4 - 151a^2b^2 + 80b^4 + 2268).
    \end{align*}
    We clear denominators to create the associated ideal. We saturate by the ideal $(ab-1)$ to avoid degeneracy and eliminate the variables $a$ and $b$. This results in the surface defined by the four equations in the statement.

    In Magma the irreducible components of the singular locus are calculated. For each component, we eliminate variables to have equations in $(\sigma_1,\sigma_2,\sigma_3)$ to get
    \begin{align*}
        S_1&: \begin{cases}
            0=\sigma_2^2 - 24\sigma_2 - 4\sigma_3 + 108\\
            0=\sigma_2\sigma_3 + 36\sigma_2 + 6\sigma_3 - 216\\
            0=\sigma_3^2 - 108\sigma_2 - 36\sigma_3 + 648\\
            0=\sigma_1 - 2\sigma_2 - \sigma_3 + 12
        \end{cases}\\
        S_2&: \begin{cases}
            0=\sigma_2^2 - 3048\sigma_2 - 1372\sigma_3 + 13500\\
            0=\sigma_2\sigma_3 + 6408\sigma_2 + 2886\sigma_3 - 28512\\
            0=\sigma_3^2 - 13500\sigma_2 - 6084\sigma_3 + 60264\\
            0=\sigma_1 - 2\sigma_2 - \sigma_3 + 12
        \end{cases}\\
        S_3&: \begin{cases}
            0=\sigma_3^3 + 549\sigma_3^2 - 3037500\sigma_2 - 1333908\sigma_3 + 14819112\\
            0=\sigma_2^2 - 7/30\sigma_3^2 + 102\sigma_2 + 238/5\sigma_3 - 2808/5\\
            0=\sigma_2\sigma_3 + 29/60\sigma_3^2 - 117\sigma_2 - 273/5\sigma_3 + 3078/5\\
            0=\sigma_1 - 2\sigma_2 - \sigma_3 + 12.
        \end{cases}
    \end{align*}
    To determine the pairs $(a,b)$ for each of these components, we add in the equations defining the invariants in terms of $a$ and $b$ and eliminate $\sigma_1,\sigma_2,\sigma_3$. This results in the defining equations
    \begin{align*}
        S_1&: (a^2 - ab + b^2 + 3)^3=0 \\
        S_2&: (a + b)^6=0\\
        S_3&: (a + b)^8=0.
    \end{align*}
    Note that the component(s) with $a=-b$ is the component of $\A_3(D_2)$ from Proposition \ref{prop_D2_2}.
\end{proof}

\begin{code}
\begin{lstlisting}
R.<a,b,s3,s2,s1>=QQ[]
P.<x,y>=ProjectiveSpace(R,1)
f=DynamicalSystem([a*x^2*y+y^3, x^3 + b*x*y^2])
f.sigma_invariants(1)
f.sigma_invariants(2) #18 minutes


S.<a,b,s1,s2,s3,s5,s6>=PolynomialRing(QQ)
g1=4*a^2 - 4*a*b + 4*b^2 + 12 - s1*(a*b -1)
g2=4*a^4 - 12*a^3*b + 22*a^2*b^2 - 12*a*b^3 + 4*b^4 + 28*a^2 - 52*a*b + 28*b^2 + 54 - s2*(a^2*b^2 - 2*a*b + 1)
g3=-8*a^4 + 28*a^3*b - 36*a^2*b^2 + 28*a*b^3 - 8*b^4 - 60*a^2 + 96*a*b - 60*b^2 - 108 - s3*(a^2*b^2 - 2*a*b + 1)
f1=(2*a^3*b^3 + 16*a^4 + 4*a^2*b^2 + 16*b^4 + 18*a*b + 72) - s5*(a^2*b^2 - 2*a*b + 1)
f2=(a^6*b^6 + 32*a^7*b^3 + 12*a^5*b^5 + 32*a^3*b^7 + 96*a^8 + 16*a^6*b^2 + 290*a^4*b^4 + 16*a^2*b^6 + 96*b^8 + 512*a^5*b + 204*a^3*b^3 + 512*a*b^5 + 80*a^4 - 151*a^2*b^2 + 80*b^4 + 2268)-s6*(a^4*b^4 - 4*a^3*b^3 + 6*a^2*b^2 - 4*a*b + 1)
I=S.ideal([g1,g2,g3,f1,f2])
I=I.saturation(S.ideal(a*b-1))[0]
EI=I.elimination_ideal([a,b])

A.<s1,s2,s3,s5,s6>=AffineSpace(QQ,5)
phi=S.hom([0,0,s1,s2,s3,s5,s6],A.coordinate_ring())
C=A.subscheme([phi(t) for t in EI.gens()])
C.dimension()

for t in EI.gens():
    print(phi(t).homogenize('h'))

# magma:
P3<s1,s2,s3,s5,s6,h>:=ProjectiveSpace(Rationals(),5);
g1:=s1 - 2*s2 - s3 + 12*h;
g2:=4*s2^2 + 4*s2*s3 + s3^2 - 60*s2*h - 28*s3*h + 216*h^2;
g3:=1632*s2*s3^5 + 792*s3^6 + 3648*s2*s3^4*s5 + 1760*s3^5*s5 + 1248*s2*s3^3*s5^2 + 596*s3^4*s5^2 - 1152*s2*s3^2*s5^3 - 544*s3^3*s5^3 - 216*s2*s3*s5^4 - 98*s3^2*s5^4 + 96*s2*s5^5 + 40*s3*s5^5 + s5^6 - 414816*s2*s3^4*h - 195424*s3^5*h - 602880*s2*s3^3*s5*h - 284512*s3^4*s5*h - 111648*s2*s3^2*s5^2*h - 53760*s3^3*s5^2*h + 54720*s2*s3*s5^3*h + 28048*s3^2*s5^3*h + 1176*s2*s5^4*h + 1480*s3*s5^4*h - 624*s5^5*h + 5248*s2*s3^3*s6*h + 2512*s3^4*s6*h + 6784*s2*s3^2*s5*s6*h + 3200*s3^3*s5*s6*h - 128*s2*s3*s5^2*s6*h - 64*s3^2*s5^2*s6*h - 768*s2*s5^3*s6*h - 320*s3*s5^3*s6*h - 12*s5^4*s6*h + 18436608*s2*s3^3*h^2 + 9945984*s3^4*h^2 + 16035200*s2*s3^2*s5*h^2 + 9804928*s3^3*s5*h^2 + 1465472*s2*s3*s5^2*h^2 + 1204512*s3^2*s5^2*h^2 - 313536*s2*s5^3*h^2 - 434624*s3*s5^3*h^2 - 6720*s5^4*h^2 - 515968*s2*s3^2*s6*h^2 - 247040*s3^3*s6*h^2 - 306944*s2*s3*s5*s6*h^2 - 158400*s3^2*s5*s6*h^2 + 9984*s2*s5^2*s6*h^2 + 4736*s3*s5^2*s6*h^2 + 4992*s5^3*s6*h^2 + 3968*s2*s3*s6^2*h^2 + 1824*s3^2*s6^2*h^2 + 1536*s2*s5*s6^2*h^2 + 640*s3*s5*s6^2*h^2 + 48*s5^2*s6^2*h^2 - 244431360*s2*s3^2*h^3 - 201129984*s3^3*h^3 - 120701952*s2*s3*s5*h^3 - 138880512*s3^2*s5*h^3 - 5380992*s2*s5^2*h^3 - 10551168*s3*s5^2*h^3 + 1890944*s5^3*h^3 + 7087104*s2*s3*s6*h^3 + 5656320*s3^2*s6*h^3 + 1651968*s2*s5*s6*h^3 + 2407680*s3*s5*s6*h^3 - 59520*s5^2*s6*h^3 - 58752*s2*s6^2*h^3 - 42624*s3*s6^2*h^3 - 9984*s5*s6^2*h^3 - 64*s6^3*h^3 + 1187592192*s2*s3*h^4 + 1880381952*s3^2*h^4 + 265006080*s2*s5*h^4 + 812934144*s3*s5*h^4 + 32237568*s5^2*h^4 - 20653056*s2*s6*h^4 - 49351680*s3*s6*h^4 - 9967104*s5*s6*h^4 + 345600*s6^2*h^4 - 1903564800*s2*h^5 - 7759079424*s3*h^5 - 1591031808*s5*h^5 + 123669504*s6*h^5 + 11418402816*h^6;
g4:=1224*s3^7 + 3792*s3^6*s5 - 46800*s2*s3^4*s5^2 - 20564*s3^5*s5^2 - 79680*s2*s3^3*s5^3 - 40856*s3^4*s5^3 + 1680*s2*s3^2*s5^4 - 374*s3^3*s5^4 + 27600*s2*s3*s5^5 + 13988*s3^2*s5^5 - 5580*s2*s5^6 - 2665*s3*s5^6 - 22*s5^7 + 670152*s3^6*h + 26777472*s2*s3^4*s5*h + 14457920*s3^5*s5*h + 27403200*s2*s3^3*s5^2*h + 13581308*s3^4*s5^2*h - 17123904*s2*s3^2*s5^3*h - 8575280*s3^3*s5^3*h - 7057920*s2*s3*s5^4*h - 3318902*s3^2*s5^4*h + 3338448*s2*s5^5*h + 1331704*s3*s5^5*h + 78251*s5^6*h + 120768*s2*s3^4*s6*h + 66096*s3^5*s6*h + 204416*s2*s3^3*s5*s6*h + 114464*s3^4*s5*s6*h - 142720*s2*s3^2*s5^2*s6*h - 67136*s3^3*s5^2*s6*h - 179968*s2*s3*s5^3*s6*h - 93248*s3^2*s5^3*s6*h + 58512*s2*s5^4*s6*h + 28460*s3*s5^4*s6*h + 264*s5^5*s6*h - 8650305696*s2*s3^4*h^2 - 4002683936*s3^5*h^2 - 14959662720*s2*s3^3*s5*h^2 - 6856974368*s3^4*s5*h^2 - 3459567584*s2*s3^2*s5^2*h^2 - 1619872288*s3^3*s5^2*h^2 + 1631518720*s2*s3*s5^3*h^2 + 743089904*s3^2*s5^3*h^2 + 44906280*s2*s5^4*h^2 + 48051640*s3*s5^4*h^2 - 22136400*s5^5*h^2 + 53976320*s2*s3^3*s6*h^2 + 28303472*s3^4*s6*h^2 + 128916096*s2*s3^2*s5*s6*h^2 + 63843648*s3^3*s5*s6*h^2 + 5531008*s2*s3*s5^2*s6*h^2 + 2665024*s3^2*s5^2*s6*h^2 - 27134208*s2*s5^3*s6*h^2 - 11018304*s3*s5^3*s6*h^2 - 884100*s5^4*s6*h^2 + 352512*s2*s3^2*s6^2*h^2 + 184416*s3^3*s6^2*h^2 + 278272*s2*s3*s5*s6^2*h^2 + 149184*s3^2*s5*s6^2*h^2 - 200256*s2*s5^2*s6^2*h^2 - 99760*s3*s5^2*s6^2*h^2 - 1056*s5^3*s6^2*h^2 + 516653715456*s2*s3^3*h^3 + 263589240192*s3^4*h^3 + 484361825920*s2*s3^2*s5*h^3 + 278148291968*s3^3*s5*h^3 + 48513480448*s2*s3*s5^2*h^3 + 38458271328*s3^2*s5^2*h^3 - 10359552576*s2*s5^3*h^3 - 13327529920*s3*s5^3*h^3 - 254963456*s5^4*h^3 - 14139669632*s2*s3^2*s6*h^3 - 6383331328*s3^3*s6*h^3 - 9563209216*s2*s3*s5*s6*h^3 - 4488508992*s3^2*s5*s6*h^3 + 294902016*s2*s5^2*s6*h^3 + 92512768*s3*s5^2*s6*h^3 + 179583360*s5^3*s6*h^3 + 98552320*s2*s3*s6^2*h^3 + 46508640*s3^2*s6^2*h^3 + 55121664*s2*s5*s6^2*h^3 + 22765952*s3*s5*s6^2*h^3 + 3316752*s5^2*s6^2*h^3 + 221952*s2*s6^3*h^3 + 114240*s3*s6^3*h^3 + 1408*s5*s6^3*h^3 - 7578572244480*s2*s3^2*h^4 - 5917873162752*s3^3*h^4 - 3908513912832*s2*s3*s5*h^4 - 4290578293248*s3^2*s5*h^4 - 184908562560*s2*s5^2*h^4 - 351150788736*s3*s5^2*h^4 + 62583639424*s5^3*h^4 + 225002594304*s2*s3*s6*h^4 + 166476388608*s3^2*s6*h^4 + 55674839808*s2*s5*s6*h^4 + 76642562304*s3*s5*s6*h^4 - 1751409024*s5^2*s6*h^4 - 1933117056*s2*s6^2*h^4 - 1197765504*s3*s6^2*h^4 - 364151040*s5*s6^2*h^4 - 4129472*s6^3*h^4 + 38553253060608*s2*s3*h^5 + 58922313076224*s3^2*h^5 + 8872947753984*s2*s5*h^5 + 26426257744896*s3*s5*h^5 + 1107358166016*s5^2*h^5 - 687110999040*s2*s6*h^5 - 1576478785536*s3*s6*h^5 - 336460829184*s5*s6*h^5 + 11296544256*s6^2*h^5 - 63399291660288*s2*h^6 - 252407226636288*s3*h^6 - 53281131595776*s5*h^6 + 4111788303360*s6*h^6 + 380265217671168*h^7;
S:= Surface(P3, [g1,g2,g3,g4]);
D:=SingularSubscheme(S);
IC:=IrreducibleComponents(D);
for i in [1,2,3,4,5] do
  print EliminationIdeal(Ideal(IC[i]),{s1,s2,s3,h});
end for;


IsRational(S); False
IsReduced(S); True
IsIrreducible(S); True
IsSingular(S); True



#irreducible components
R.<a,b,s1,s2,s3>=QQ[]
A=AffineSpace(R)
g1=-a*b*s1 + 4*a^2 - 4*a*b + 4*b^2 + s1 + 12
g2=-a^2*b^2*s2 + 4*a^4 - 12*a^3*b + 22*a^2*b^2 - 12*a*b^3 + 4*b^4 + 2*a*b*s2 + 28*a^2 - 52*a*b + 28*b^2 - s2 + 54
g3 =-8*a^4 + 28*a^3*b - 36*a^2*b^2 + 28*a*b^3 - 8*b^4 - 60*a^2 + 96*a*b - 60*b^2 - 108 - s3*(a^2*b^2 - 2*a*b + 1)
h=1
h1= s2^2 - 24*s2*h - 4*s3*h + 108*h^2
h2= s2*s3 + 36*s2*h + 6*s3*h - 216*h^2
h3= s3^2 - 108*s2*h - 36*s3*h + 648*h^2
h4= s1 - 2*s2 - s3 + 12*h
I=R.ideal([h1,h2,h3,h4] +[g1,g2,g3])
J=I.elimination_ideal([s1,s2,s3])
J2=J.saturation(R.ideal(a*b-1))[0]
for G in J2.gens():
    print(G.factor())

#irreducible components
R.<a,b,s1,s2,s3>=QQ[]
A=AffineSpace(R)
g1=-a*b*s1 + 4*a^2 - 4*a*b + 4*b^2 + s1 + 12
g2=-a^2*b^2*s2 + 4*a^4 - 12*a^3*b + 22*a^2*b^2 - 12*a*b^3 + 4*b^4 + 2*a*b*s2 + 28*a^2 - 52*a*b + 28*b^2 - s2 + 54
g3 =-8*a^4 + 28*a^3*b - 36*a^2*b^2 + 28*a*b^3 - 8*b^4 - 60*a^2 + 96*a*b - 60*b^2 - 108 - s3*(a^2*b^2 - 2*a*b + 1)
h=1
h1= s2^2 - 3048*s2*h - 1372*s3*h + 13500*h^2
h2=s2*s3 + 6408*s2*h + 2886*s3*h - 28512*h^2
h3=s3^2 - 13500*s2*h - 6084*s3*h + 60264*h^2
h4=s1 - 2*s2 - s3 + 12*h
I=R.ideal([h1,h2,h3,h4] +[g1,g2,g3])
J=I.elimination_ideal([s1,s2,s3])
J2=J.saturation(R.ideal(a*b-1))[0]
for G in J2.gens():
    print(G.factor())


#irreducible components
R.<a,b,s1,s2,s3>=QQ[]
A=AffineSpace(R)
g1=-a*b*s1 + 4*a^2 - 4*a*b + 4*b^2 + s1 + 12
g2=-a^2*b^2*s2 + 4*a^4 - 12*a^3*b + 22*a^2*b^2 - 12*a*b^3 + 4*b^4 + 2*a*b*s2 + 28*a^2 - 52*a*b + 28*b^2 - s2 + 54
g3 =-8*a^4 + 28*a^3*b - 36*a^2*b^2 + 28*a*b^3 - 8*b^4 - 60*a^2 + 96*a*b - 60*b^2 - 108 - s3*(a^2*b^2 - 2*a*b + 1)
h=1
h1=s3^3 + 549*s3^2*h - 3037500*s2*h^2 - 1333908*s3*h^2 + 14819112*h^3
h2=s2^2 - 7/30*s3^2 + 102*s2*h + 238/5*s3*h - 2808/5*h^2
h3=s2*s3 + 29/60*s3^2 - 117*s2*h - 273/5*s3*h + 3078/5*h^2
h4=s1 - 2*s2 - s3 + 12*h
I=R.ideal([h1,h2,h3,h4] +[g1,g2,g3])
J=I.elimination_ideal([s1,s2,s3])
J2=J.saturation(R.ideal(a*b-1))[0]
for G in J2.gens():
    print(G.factor())

\end{lstlisting}
\end{code}

\section{Automorphism loci in $\M_4$} \label{sect_M4}
    As for $\A_3$ we utilize Miasnikov, Stout, and Williams \cite{Miasnikov} to determine the possible components of $\A_4$ and their dimensions. Unlike in $\M_3$, in $\M_4$ there are no automorphism groups other than cyclic and dihedral ones.
    \begin{lemma} \label{lem_containment_M4}
        When $\deg(f)=4$, $\Aut(f)$ must either be $C_2$, $C_3$, $C_4$, $C_5$, $D_3$, or $D_5$. The dimensions of $\A_4(\Gamma)$ for these groups are given by
        \begin{enumerate}
            \item $\dim \A_4(C_2)=3,
            \dim \A_4(C_3)=2$, $\dim\A_4(C_4)=1$, and $\dim\A_4(C_5)=0$.
            \item $\dim \A_4(D_3)=1$, and $\dim \A_4(D_5)=0$.
        \end{enumerate}
        Moreover,
        \begin{enumerate}
        \setcounter{enumi}{3}
            \item $\A_4(C_4)\subset \A_4(C_2)$.
            \item $\A_4(D_3)\subset \A_4(C_3)$.
            \item $\A_4(D_5)\subset \A_4(C_5)$.
        \end{enumerate}
    \end{lemma}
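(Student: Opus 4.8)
The plan is to follow verbatim the strategy used for Lemma \ref{lem_containment}, which in turn rests on the computations of Miasnikov, Stout, and Williams \cite{Miasnikov}. First I would invoke Section 2 of MSW: there, for every finite subgroup $\Gamma \subset \PGL_2$ and every degree $d$, the dimension of the locus $\A_d(\Gamma)$ inside $\M_d$ is computed, with the convention that a negative predicted value signals that the locus is empty. Specializing their formulas to $d = 4$ should yield precisely $\dim \A_4(C_2) = 3$, $\dim \A_4(C_3) = 2$, $\dim \A_4(C_4) = 1$, $\dim \A_4(C_5) = 0$, $\dim \A_4(D_3) = 1$, and $\dim \A_4(D_5) = 0$, while every other finite subgroup of $\PGL_2$—namely $\fA_4$, $S_4$, $\fA_5$, the cyclic groups $C_n$ with $n \geq 6$, and the dihedral groups $D_n$ with $n \notin \{3,5\}$—gives an empty automorphism locus in degree $4$. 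Since all isomorphic finite subgroups of $\PGL_2$ are conjugate \cite[Lemma 2.3]{Miasnikov}, it is enough to treat one representative per isomorphism class, so this is a finite check. This establishes both the list of groups that can occur as $\Aut(f)$ when $\deg(f) = 4$ and the asserted dimensions.

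For the three containments I would use the same elementary observation recorded in the proof of Lemma \ref{lem_containment}: if $G$ is a subgroup of $H$, then any $f$ whose automorphism group contains (the relevant representation of) $H$ certainly has automorphism group containing $G$, so $A_d(H) \subseteq A_d(G)$ in $\Rat_d$ and hence $\A_d(H) \subseteq \A_d(G)$ in $\M_d$. Applying this to the subgroup inclusions $C_2 \hookrightarrow C_4$, $C_3 \hookrightarrow D_3$, and $C_5 \hookrightarrow D_5$ gives $\A_4(C_4) \subseteq \A_4(C_2)$, $\A_4(D_3) \subseteq \A_4(C_3)$, and $\A_4(D_5) \subseteq \A_4(C_5)$; the inclusions are in fact proper since the dimensions computed above strictly differ ($1 < 3$, $1 < 2$, and $0 < 1$ respectively).

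The step that deserves the most care—and what I would call the main obstacle, such as it is—is confirming the \emph{exhaustiveness} of the list, i.e., that the polyhedral groups $\fA_4$, $S_4$, $\fA_5$ and the cyclic/dihedral groups not appearing really do produce empty loci in degree $4$. This is part of the MSW dimension count, but as a sanity check for the cyclic and dihedral cases one can argue directly from Silverman's normal forms \cite[Proposition 7.3]{Silverman12}: a degree $4$ endomorphism with a $C_m$ symmetry generated by $z \mapsto \zeta z$ has a normal form of the shape $z\psi(z^m)$, and matching degrees forces $m \leq 5$, leaving only $m \in \{2,3,4,5\}$; the dihedral constraints then follow by incorporating the extra involution $z \mapsto 1/z$. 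This is exactly parallel to the degree $3$ analysis carried out in Section \ref{sect_M3}, so no genuinely new difficulty arises.
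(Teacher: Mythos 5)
Your proposal follows essentially the same strategy as the paper: cite Section~2 of Miasnikov--Stout--Williams for which groups occur and their dimensions in degree~$4$, then use the elementary observation that $G \subseteq H$ implies $\A_d(H) \subseteq \A_d(G)$ to get the containments. The extra sanity check via Silverman's normal forms is a nice addition, but not needed.

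One small slip worth correcting: in your parenthetical remark justifying that the inclusions are \emph{proper}, you write ``$0 < 1$'' for the case $\A_4(D_5) \subseteq \A_4(C_5)$. But $\dim \A_4(C_5) = 0$, not $1$, so both sides have dimension $0$, and in fact $\A_4(D_5) = \A_4(C_5)$ (this equality is exactly Proposition~\ref{prop_A4_C5} in the paper, since the single conjugacy class $\frac{1}{z^4}$ has full automorphism group $D_5$). This does not harm the proof, since the lemma's $\subset$ here only asserts containment, not proper containment, but the claim of properness should be dropped for that third case.
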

    \begin{proof}
        Which groups occur and the dimensions are calculated in Section 2 of MSW \cite{Miasnikov}. The second half follows from the observation that if $G$ is a subgroup of $H$, then $\A_d(H)\subseteq \A_d(G)$.
    \end{proof}

    The analysis in $\M_4$ for functions whose automorphism group contains cyclic and dihedral groups is more or less the same as in $\M_3$ -- in particular, we made use of the fact that for a function $f$, if $\Aut(f) \supseteq C_n$ then the equivalence class of $f$ in $\M_4$ can be written as $f = z\cdot\psi(z^n)$, where $\psi$ is a rational function \cite[Proposition 7.3]{Silverman12}.

\subsection{$\A_4(C_5)$ and $\A_4(D_5)$}

    \begin{prop}\label{prop_A4_C5}
        The loci $\A_4(C_5)$ and $\A_4(D_5)$ both are the single conjugacy class given by $f(z) = \frac{1}{z^4}$.
    \end{prop}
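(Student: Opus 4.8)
The plan is to follow the template of Corollary~\ref{cor_A3C4}, the degree~$3$ analogue $\A_3(C_4)=\A_3(D_4)$. Fixing the standard generator $z\mapsto\zeta_5 z$ of $C_5\subset\PGL_2$ (legitimate since, in $\PGL_2$, isomorphic finite subgroups are conjugate), Silverman's classification of maps with a cyclic automorphism \cite[Proposition~7.3]{Silverman12} shows that a degree~$4$ map $f$ with this $C_5$ in its automorphism group has the form $z\cdot\psi(z^5)$. Concretely, writing $f=N/D$ in lowest terms, the relation $f^{\alpha}=f$ for $\alpha(z)=\zeta_5 z$ is $f(\zeta_5 z)=\zeta_5 f(z)$, which by uniqueness of coprime representations forces $N(\zeta_5 z)=\lambda\zeta_5 N(z)$ and $D(\zeta_5 z)=\lambda D(z)$ for a common scalar $\lambda$; hence $D$ is supported on exponents lying in a single residue class modulo $5$ and $N$ on the adjacent class. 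Since $\deg f = 4 < 5$, each such class meets $\{0,1,2,3,4\}$ in at most one integer, so $N$ and $D$ are monomials, and the only choice with $\gcd(N,D)=1$ and $\deg f = 4$ is $f(z)=\tfrac{1}{az^4}$ with $a\in\overline{\Q}\setminus\{0\}$ (the alternative $z^4/z^3$ is not reduced and collapses to degree~$1$).

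Next I would show, exactly as in the degree~$3$ case, that this one-parameter family is a single conjugacy class, i.e.\ a family of twists: conjugating $\varphi(z)=\tfrac1{z^4}$ by $\alpha(z)=pz$ gives $\varphi^{\alpha}(z)=\tfrac{1}{p^{5}z^4}$, so choosing $p$ with $p^{5}=a$ (possible over $\overline{\Q}$) identifies $\varphi^{\alpha}$ with $f_a(z)=\tfrac1{az^4}$. Therefore $\A_4(C_5)$ is the single conjugacy class $\bigl[\tfrac1{z^4}\bigr]$.

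For $\A_4(D_5)$, Lemma~\ref{lem_containment_M4} gives $\A_4(D_5)\subseteq\A_4(C_5)=\{[\tfrac1{z^4}]\}$, so it remains only to exhibit a $D_5$ symmetry of $\tfrac1{z^4}$. The involution $\sigma(z)=\tfrac1z$ satisfies $(\tfrac1{z^4})^{\sigma}=\tfrac1{z^4}$ by a one-line computation, and conjugating the rotation $z\mapsto\zeta_5 z$ by $\sigma$ inverts it, so $\sigma$ together with the rotation generates a dihedral group of order $10$. Thus $D_5\subseteq\Aut(\tfrac1{z^4})$, which combined with the containment forces $\A_4(D_5)=\A_4(C_5)=\{[\tfrac1{z^4}]\}$; moreover the list of possible automorphism groups in Lemma~\ref{lem_containment_M4} shows $\Aut(\tfrac1{z^4})$ is exactly $D_5$.

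I do not anticipate a genuine obstacle here: the argument is pure finite-subgroup and normal-form bookkeeping, parallel to the degree~$3$ computation. The only steps requiring a little care are ruling out the non-reduced monomial form in the first step and checking that the extra symmetry of $\tfrac1{z^4}$ generates at least all of $D_5$, both of which are immediate.
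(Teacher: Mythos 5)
Your proof is correct and follows essentially the same route as the paper: invoke Silverman's normal form $z\cdot\psi(z^5)$, observe that degree~$4$ forces the shape $\tfrac{1}{az^4}$, show the parameter is killed by conjugation with a scaling $z\mapsto pz$ where $p^5=a$, and then note $1/z$ gives the extra $D_5$ symmetry. The paper simply compresses the twist computation and the $D_5$ check into the assertion "$\Aut(f)=D_5$," which you spell out explicitly.
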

    \begin{proof}
        The only form $\psi$ can take such that $z\cdot\psi(z^5)$ is degree four is when $\psi(z^5) = \frac{a}{bz^5}$, where $ab \neq 0$. We can divide through by $a$ to write $z\cdot\psi(z^5) = \frac{1}{cz^4}$; and since this is conjugate to $f(z) = \frac{1}{z^4}$, we know that $\A_4(C_5)$ is just a point in $\M_4$. Furthermore, $\Aut(f) = D_5$ so that $\A_4(C_5) = \A_4(D_5)$.
    \end{proof}

\subsection{$\A_4(C_4)$}
    \begin{prop}\label{prop_A4_C4}
        The locus $\A_4(C_4)$ is an irreducible curve in $\M_4$ given by the 1-parameter family $f_k(z) = \frac{z^4 + 1}{kz^3}$ for $k \neq 0$.
    \end{prop}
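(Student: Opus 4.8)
The plan is to follow the template already used for $\A_3(C_3)$ in Proposition \ref{prop_A3C3}. Since all subgroups of $\PGL_2$ isomorphic to $C_4$ are conjugate (\cite[Lemma 2.3]{Miasnikov}), I may fix the generator of the $C_4$ to be the M\"obius transform $\alpha(z) = iz$. By Silverman's normal form \cite[Proposition 7.3]{Silverman12}, a degree $4$ map $f$ with this $C_4$ symmetry has the shape $f(z) = z\,\psi(z^4)$ for a rational function $\psi$. Writing $\psi = p/q$ with $p,q$ coprime and examining which choices make $z\,\psi(z^4)$ have degree exactly $4$ — the key subtlety being that a single factor of $z$ cancels precisely when $q(0)=0$ — one finds that, after dividing numerator and denominator by their nonzero leading coefficients, the only possibilities are
\begin{equation*}
    f_1(z) = \frac{z^4 + a}{b z^3}\ (a,b \neq 0) \qquad\text{or}\qquad f_2(z) = \frac{c z}{z^4 + d}\ (c,d \neq 0).
\end{equation*}
These two two-parameter families constitute $A_4(C_4) \subset \Rat_4$.

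I would then reduce each family to one parameter by conjugating with a diagonal M\"obius transform $\alpha(z) = \lambda z$: this replaces $a$ by $a\lambda^{-4}$ in $f_1$, so taking $\lambda^4 = a$ normalizes $f_1$ to $f_k(z) = \frac{z^4 + 1}{k z^3}$ with $k = b$, and taking $\lambda^4 = d$ normalizes $f_2$ to $\frac{k' z}{z^4 + 1}$. An explicit computation with $\alpha(z) = 1/z$ gives $f_k^{\alpha}(z) = \frac{k z}{z^4 + 1}$, so the two one-parameter families cut out the same family of conjugacy classes, and every degree $4$ map with a $C_4$ symmetry is conjugate to some $f_k$ with $k \neq 0$. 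Conversely $f_k(iz) = i\,f_k(z)$, hence $f_k^{\alpha} = f_k$ for $\alpha(z) = iz$, so $C_4 \subseteq \Aut(f_k)$ and $[f_k] \in \A_4(C_4)$ for every $k \neq 0$.

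Finally, irreducibility follows from Lemma \ref{lem_k_param_image}: the family $f_k$ induces a morphism from an open subset of $\bbA^1$ into $\M_4$ whose image and its closure are irreducible, and by the previous step this image is exactly $\A_4(C_4)$. Since $\dim \A_4(C_4) = 1$ by Lemma \ref{lem_containment_M4}, the image is an irreducible curve, as claimed. The only step that requires genuine care is the first one — the finite case analysis over $\psi$ classifying the degree $4$ maps with a $C_4$ symmetry and discarding degenerate specializations — while the remaining steps are short explicit conjugation computations; the finer geometric features of this curve (injectivity of the parameterization, smoothness, genus, and its singular point) are established separately in Section \ref{sect_geom_A4}.
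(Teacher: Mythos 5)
Your proposal is correct and follows essentially the same route as the paper: starting from Silverman's normal form $f(z)=z\,\psi(z^4)$, isolating the two families $\frac{z^4+a}{bz^3}$ and $\frac{cz}{z^4+d}$, normalizing each to a single parameter by a diagonal conjugation, and identifying the two one-parameter families via $z\mapsto 1/z$. The added remarks — spelling out the cancellation-of-$z$ case analysis for which $\psi$ give degree exactly $4$, verifying the converse that $C_4\subseteq\Aut(f_k)$, and explicitly citing Lemma \ref{lem_k_param_image} for irreducibility — are small expansions of steps the paper leaves implicit rather than a different method.
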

    \begin{proof}
        Functions of the form $f(z) = z\cdot\psi(z^4)$ have degree 4 only when
        \begin{equation*}
            f_1(z) = \frac{az^4+b}{cz^3} \quad \text{or} \quad
            f_2(z) = \frac{bz}{cz^4+d},
        \end{equation*}
        where all the coefficients must be nonzero (or else we have a drop in degree due to cancellation). Thus, in both cases we can divide through by the coefficient of $z^4$ to have two-parameter families. Then in the first case, we can conjugate $f_1(z) = \frac{z^4 + k_1}{k_2z^3}$ via the matrix $\alpha = \begin{bmatrix}\sqrt[4]{k_1}&0\\0&1\end{bmatrix}$ to get a one-parameter family $f_1'(z) = \frac{z^4 + 1}{kz^3}$, which is what we should expect since the dimension of this locus is 1 by Lemma \ref{lem_containment_M4}. In the second case, we can  conjugate $f_2(z) = \frac{k_1z}{z^4 + k_2}$ by the matrix $\beta = \begin{bmatrix}\sqrt[4]{k_2}&0\\0&1\end{bmatrix}$ to get a one-parameter family $f_2'(z) =  \frac{kz}{z^4 + 1}$. These two one-parameter families are conjugate via $\begin{bmatrix}0&1\\1&0\end{bmatrix}$, so they are the same family in the moduli space.
    \end{proof}

\subsection{$\A_4(C_3)$ and $\A_4(D_3)$}
    \begin{prop}\label{prop_A4_C3}
        The locus $\A_4(C_3)$ is given by the family $f_{k_1,k_2}(z)=\frac{z^4+k_1z}{k_2z^3+1}$.
    \end{prop}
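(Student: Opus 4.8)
The plan is to follow the two-step recipe used throughout Section~\ref{sect_M3}: first extract a normal form in $\Rat_4$ from Silverman's classification of maps with a cyclic automorphism, then kill the remaining free parameters with a diagonal conjugation.

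First I would apply Silverman \cite[Proposition 7.3]{Silverman12}. Since every copy of $C_3$ in $\PGL_2$ is conjugate to the standard one \cite[Lemma 2.3]{Miasnikov}, after conjugating we may assume the $C_3$ is generated by $z\mapsto\zeta z$ with $\zeta^3=1$, $\zeta\neq 1$, so that $f(z)=z\,\psi(z^3)$ for a rational function $\psi$. The $C_3$-equivariance forces the numerator and denominator of $f$, in lowest terms, each to be supported on a single residue class of exponents modulo $3$; running through the exponents $0,1,2,3,4$ available in degree $4$, the only way to obtain a genuine degree-$4$ map is
\[
  f(z)=\frac{\alpha z^4+\beta z}{\gamma z^3+\delta},\qquad \alpha,\delta\neq 0 .
\]
Here $\alpha\neq 0$ keeps the numerator degree $4$, and $\delta\neq 0$ prevents a factor of $z$ from cancelling (which would drop the degree to $3$, as in the analogous reductions of $\A_3(C_3)$). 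In contrast to $\A_3(C_3)$ and $\A_4(C_4)$, where two superficially different polynomial shapes arise and must be matched, here this is the only shape that survives, so no separate ``these families coincide'' step is needed: conjugating by $z\mapsto 1/z$ merely exchanges the roles of $\gamma$ and $\delta$ within the same shape.

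Next I would normalize. Conjugating $f$ by $\alpha_\lambda(z)=\lambda z$ turns it into $\frac{\alpha\lambda^3 z^4+\beta z}{\gamma\lambda^3 z^3+\delta}$; dividing numerator and denominator by $\delta$ and choosing $\lambda$ over $\Qbar$ with $\lambda^3=\delta/\alpha$ brings $f$ into the form $\frac{z^4+k_1 z}{k_2 z^3+1}$ with $k_1=\beta/\delta$, $k_2=\gamma/\alpha$. This shows every element of $A_4(C_3)$ is conjugate to some $f_{k_1,k_2}$. For the reverse inclusion, a one-line computation gives $f_{k_1,k_2}(\zeta z)=\zeta f_{k_1,k_2}(z)$, so $f_{k_1,k_2}\in A_4(C_3)$ whenever it actually defines a degree-$4$ map; the latter fails precisely when $k_1 k_2=1$, since then the common factor $z^3+k_1$ divides both $z^4+k_1 z$ and $k_2 z^3+1$. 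Hence the family $f_{k_1,k_2}$ over $\{k_1k_2\neq 1\}$ parameterizes exactly $A_4(C_3)$; Lemma~\ref{lem_k_param_image} makes its image in $\M_4$ irreducible, and Lemma~\ref{lem_containment_M4} gives it dimension $2$, so $\A_4(C_3)$ is an irreducible surface.

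I do not expect a real obstacle: the argument is essentially bookkeeping built on the cited normal form. The only step demanding care is the degeneracy analysis --- tracking exactly which parameter pairs $(k_1,k_2)$ yield maps in $\Rat_4$ rather than lower-degree maps --- together with verifying that the residue-class case check in degree $4$ is exhaustive, i.e.\ that allowing the denominator (rather than the numerator) to carry the top degree forces a cancellation back down to degree $3$.
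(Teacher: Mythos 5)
Your argument follows the same two-step recipe the paper uses: extract the shape $\frac{\alpha z^4 + \beta z}{\gamma z^3 + \delta}$ with $\alpha,\delta\neq 0$ from Silverman's $z\psi(z^3)$ normal form, then kill two parameters with a scalar diagonal conjugation. The paper does exactly this (it first divides through by the leading coefficient $a$ and then conjugates by $\mathrm{diag}(\sqrt[3]{k_3},1)$; you roll both into a single choice of $\lambda$ with $\lambda^3=\delta/\alpha$), so the core is the same computation. You add two points the paper's proof leaves implicit and which do improve it: the explicit check that $f_{k_1,k_2}$ actually has $C_3\subseteq\Aut$ (the ``only if'' half of the parameterization) and the identification of the degeneracy locus $k_1k_2=1$ via the common factor $z^3+k_1$; the paper records the latter only in the theorem statement, not the proof. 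One small imprecision: conjugation by $z\mapsto 1/z$ sends $\frac{\alpha z^4+\beta z}{\gamma z^3+\delta}$ to $\frac{\delta z^4+\gamma z}{\beta z^3+\alpha}$, i.e.\ it swaps $\alpha\leftrightarrow\delta$ and $\beta\leftrightarrow\gamma$, not just ``$\gamma$ and $\delta$''; the conclusion you draw (no second family to reconcile) is still correct.
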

    \begin{proof}
        By Lemma \ref{lem_containment_M4} the locus $\A_4(C_3)$ is dimension 2 and functions of the form $f(z) = z\cdot\psi(z^3)$ have degree 4 only when
        \begin{equation*}
            f(z) = \frac{az^4+bz}{cz^3+d},
        \end{equation*}
        where $a\neq 0$ and $d\neq 0$. Thus, dividing through by $a$, we get the $3$-parameter family
        \begin{equation*}
            f(z) = \frac{z^4+k_1z}{k_2z^3+k_3},
        \end{equation*}
        where $k_3\neq 0$. Conjugating via the matrix $\alpha=\begin{bmatrix}\sqrt[3]{k_3}&0\\0&1\end{bmatrix}$, we get \begin{equation*}
            f^\alpha(z)=\frac{(\sqrt[3]{k_3}z)^4+k_1\sqrt[3]{k_3}z}{k_2(\sqrt[3]{k_3}z)^3+k_3}\cdot\frac{1}{\sqrt[3]{k_3}}=\frac{k_3z^4+k_1z}{k_2k_3z^3+k_3}=\frac{z^4+k_1/k_3z}{k_2z^3+1}.
        \end{equation*}
        Renaming $k_1=k_1/k_3$ and $k_2=k_2$, we see that $f^{\alpha}(z)=\frac{z^4+k_1z}{k_2z^3+1}$. Thus, every degree $4$ rational map with a $C_3$ automorphism is conjugate to a map of the form \begin{equation*}
            f_{k_1,k_2}(z)=\frac{z^4+k_1z}{k_2z^3+1}.
        \end{equation*}
    \end{proof}

    Recall that $D_3$ is generated by $\alpha_1(z)=\zeta_3z$ and $\alpha_2(z)=1/z$, where $\zeta_3$ is a primitive third root of unity. Maps with automorphism group containing $\zeta_3z$ are described in Proposition \ref{prop_A4_C3}, so we can start with that family and see which members additionally have $\alpha_2$ as automorphism.
    \begin{prop}\label{prop_A4_D3}
        The locus $\A_4(D_3)$ is an irreducible curve in $\M_4$ given by the family $f_k(z) = \frac{z^4 + kz}{kz^3+1}$.
    \end{prop}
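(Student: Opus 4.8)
The plan is to follow the strategy announced just above the statement: among the degree-$4$ maps carrying the $C_3$ generator $\alpha_1(z)=\zeta_3 z$, cut out those additionally fixed by the second generator $\alpha_2(z)=1/z$. Since any two subgroups of $\PGL_2$ isomorphic to $D_3$ are conjugate, I may assume any $f\in A_4(D_3)$ has $\langle\alpha_1,\alpha_2\rangle$ inside its automorphism group. First I would record that the identity $f^{\alpha_1}=f$, i.e.\ $f(\zeta_3 z)=\zeta_3 f(z)$, forces $f$ to be \emph{literally} of the form
\[
    f(z)=\frac{z^4+k_1z}{k_2z^3+k_3},\qquad k_3\neq 0 :
\]
indeed $g(z):=f(z)/z$ is invariant under $z\mapsto\zeta_3 z$, hence a rational function of $z^3$, and requiring $\deg f=4$ with no cancellation pins down the shape exactly as in the proof of Proposition \ref{prop_A4_C3} (one checks $k_3=0$ forces a common factor $z$, and $k_1k_2=k_3$ forces numerator and denominator to be proportional, in either case dropping the degree). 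The point of doing this directly, rather than invoking the $2$-parameter normal form of Proposition \ref{prop_A4_C3}, is that no normalizing conjugation intervenes, so the $\alpha_2$-condition can now be imposed on $f$ itself.

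Next I would impose the second generator. A one-line computation gives
\[
    f^{\alpha_2}(z)=\frac{1}{f(1/z)}=\frac{k_3z^4+k_2z}{k_1z^3+1},
\]
and clearing denominators in $f^{\alpha_2}=f$ and comparing coefficients of $z^7$, $z^4$, $z$ yields $k_1=k_2k_3$ together with $k_2^2+k_3^2=k_1^2+1$; substituting the first relation into the second gives $(k_2^2-1)(k_3^2-1)=0$. Then I would run the case analysis. If $k_2=\pm1$, then $k_1k_2=k_3$, which is precisely the degenerate locus from the previous paragraph (the map collapses to $\pm z$), so these branches contribute nothing. If $k_3=1$, then $k_1=k_2=:k$ and $f=f_k(z)=\frac{z^4+kz}{kz^3+1}$, where $k\neq\pm1$ is exactly the condition $k_1k_2\neq k_3$ that keeps $\deg f=4$. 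If $k_3=-1$, then $k_1=-k_2$ and $f(z)=\frac{z^4-k_2z}{k_2z^3-1}$; conjugating by $z\mapsto-z$ sends this to $f_{k_2}$. Hence every $f\in A_4(D_3)$ is conjugate to some member $f_k$, $k\neq\pm1$, of the stated family, and conversely each such $f_k$ visibly has $\langle\alpha_1,\alpha_2\rangle\cong D_3$ in its automorphism group, so $\A_4(D_3)$ equals the image of this one-parameter family in $\M_4$.

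Finally, by Lemma \ref{lem_k_param_image} that image is irreducible, and by Lemma \ref{lem_containment_M4} the locus $\A_4(D_3)$ is one-dimensional; so it is an irreducible curve, as claimed. I do not expect a genuine obstacle here: the computation of $f^{\alpha_2}$ and the coefficient matching are routine, and the only point requiring care is the bookkeeping of degeneracies—checking that the $k_2=\pm1$ branches are honest degree drops rather than extra components of the curve, and that the $k_3=-1$ branch conjugates back into the $k_3=1$ branch rather than producing a second component. (Quantifying how many $k$ give conjugate maps is deferred to the later geometric analysis.)
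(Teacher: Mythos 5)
Your proof is correct, and it deliberately takes a more conservative route than the paper does. The paper's proof starts from the normalized two-parameter family $f_{k_1,k_2}(z)=\frac{z^4+k_1z}{k_2z^3+1}$ of Proposition \ref{prop_A4_C3}, computes $f_{k_1,k_2}^{\alpha_2}=\frac{z^4+k_2z}{k_1z^3+1}$, and concludes $k_1=k_2$ in two lines. You instead impose $\alpha_2$ on the unnormalized three-parameter form $\frac{z^4+k_1z}{k_2z^3+k_3}$, for the reason you state explicitly: the diagonal conjugation $\mathrm{diag}(\sqrt[3]{k_3},1)$ used to drop to two parameters commutes with $\alpha_1$ but not with $\alpha_2$, so a map with $\langle\alpha_1,\alpha_2\rangle\subseteq\Aut(f)$ may normalize to an $f_{k_1,k_2}$ whose automorphism group contains $\alpha_1$ together with some other involution $\beta(z)=b/z$, not $\alpha_2$ itself. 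That is a genuine subtlety the paper passes over silently. The paper's conclusion happens to survive it — imposing $f_{k_1,k_2}^{\beta}=f_{k_1,k_2}$ for an arbitrary $\beta(z)=b/z$ also forces $k_1=k_2$ (together with $b^3=1$) — but your approach sidesteps the need to make that observation: deriving $k_1=k_2k_3$ and $(k_2^2-1)(k_3^2-1)=0$ directly, you learn that $k_3=\pm 1$, so the normalizing matrix can be taken to have sixth-root-of-unity entries, which do preserve $\alpha_2$. Your treatment of the branches is correct: $k_2=\pm1$ gives $k_1k_2=k_3$ and hence a common factor $z^3\pm k_3$ (so the map collapses to $\pm z$), while $k_3=-1$ is conjugate to $k_3=1$ via $z\mapsto -z$. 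The appeal to Lemma \ref{lem_k_param_image} for irreducibility and Lemma \ref{lem_containment_M4} for the dimension matches what the paper uses. The tradeoff is a somewhat longer case analysis in exchange for not having to justify separately why checking the specific representative $\alpha_2$ on the normalized family suffices.
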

    \begin{proof}
        Let $f_{k_1,k_2}(z) =  \frac{z^4 + k_1z}{k_2z^3+1}$. We compute
        \begin{equation*}
            f_{k_1,k_2}^{\alpha_2}(z) =  \frac{z^4 + k_2z}{k_1z^3+1}.
        \end{equation*}
        So for $f_{k_1,k_2}(z) = f_{k_1,k_2}^{\alpha_2}(z)$ we must have $k_1=k_2$.
    \end{proof}

\subsection{$\A_4(C_2)$}
\begin{prop}\label{prop_A4_C2}
    The locus $\A_4(C_2)$ is given by the 3-parameter family $ f_{k_1,k_2,k_3}(z)=\frac{z^4+k_1z^2+1}{k_2z^3+k_3z}$.
\end{prop}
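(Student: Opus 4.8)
The plan is to use the same normal-form reduction carried out in Propositions \ref{prop_A4_C4} and \ref{prop_A4_C3}. Since all order-$2$ subgroups of $\PGL_2$ are conjugate, it suffices to parameterize, up to conjugacy, the degree $4$ maps fixed by the single automorphism $\alpha(z) = -z$; equivalently, the odd degree $4$ rational functions, those with $f(-z) = -f(z)$ (from $f^\alpha = f$ with $\alpha^{-1} = \alpha$). By Lemma \ref{lem_containment_M4} this locus is $3$-dimensional, so we expect a $3$-parameter family.

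First I would determine the possible shapes of an odd rational function. Writing $f = N/D$ in lowest terms, the relation $N(-z)D(z) = -N(z)D(-z)$ together with $\gcd(N,D) = 1$ forces $D(-z) = \pm D(z)$ and correspondingly $N(-z) = \mp N(z)$, so either $N$ is even and $D$ is odd, or $N$ is odd and $D$ is even. The two alternatives are exchanged by conjugation by $z \mapsto 1/z$, which replaces $f$ by $D/N$, so it is enough to treat the first: $N(z) = az^4 + bz^2 + c$ and $D(z) = dz^3 + ez$. Requiring $\deg f = 4$ gives $a \neq 0$, and one checks that $c = 0$ would introduce a common factor of $z$ and drop the degree below $4$, so $c \neq 0$ as well.

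Next I would normalize. Dividing $N$ and $D$ by $a$ brings $f$ to $\frac{z^4 + b'z^2 + c'}{d'z^3 + e'z}$ with $c' \neq 0$, and conjugating by $\begin{bmatrix} \lambda & 0 \\ 0 & 1 \end{bmatrix}$, i.e.\ $f \mapsto \lambda^{-1}f(\lambda z)$, rescales the constant term of the numerator to $c'\lambda^{-4}$; choosing $\lambda$ with $\lambda^4 = c'$ produces exactly $f_{k_1,k_2,k_3}(z) = \frac{z^4 + k_1 z^2 + 1}{k_2 z^3 + k_3 z}$, with the degenerate sub-cases $d' = 0$ or $e' = 0$ absorbed by allowing $k_2 = 0$ or $k_3 = 0$. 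Since $\alpha(z) = -z$ clearly fixes each $f_{k_1,k_2,k_3}$, this family lies in $A_4(C_2)$, and combined with the reduction above its image in $\M_4$ is all of $\A_4(C_2)$.

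I do not anticipate a real obstacle: in contrast with $\A_3(C_2)$, there is only one normal form here, the two shapes of odd map being patently $\PGL_2$-conjugate, so no Gr\"obner-basis comparison of distinct families is required. The only care needed is bookkeeping the non-degeneracy condition: the numerator and denominator of $f_{k_1,k_2,k_3}$ acquire a common root precisely when $k_2^2 + k_3^2 = k_1 k_2 k_3$ (substituting $z^2 = -k_3/k_2$ into the numerator, and separately handling $k_2 = 0$), which is the open condition recorded in Theorem \ref{theorem_2}, and excising it still leaves a $3$-dimensional family in agreement with Lemma \ref{lem_containment_M4}.
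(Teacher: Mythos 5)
Your proposal is correct and follows essentially the same route as the paper's proof: both reduce to the generator $\alpha(z)=-z$, identify the two shapes of ``odd'' degree-$4$ rational function (even numerator/odd denominator and vice versa), observe they are conjugate via $z \mapsto 1/z$, and then normalize by dividing by the leading coefficient and conjugating by a diagonal matrix to absorb the constant term. The only cosmetic difference is that you derive the two allowed shapes directly from the functional equation $N(-z)D(z)=-N(z)D(-z)$ and coprimality, whereas the paper simply invokes Silverman's normal form $z\,\psi(z^2)$; your addition of the explicit nondegeneracy locus $k_2^2+k_3^2=k_1k_2k_3$ is correct and is recorded in the paper in Theorem \ref{theorem_2} rather than in this proposition's proof.
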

\begin{proof}
    By Lemma \ref{lem_containment_M4} we know the automorphism loci of $C_2$ in the moduli space has dimension $3$. Furthermore, we know that a degree $4$ map $f$ has a $C_2$ automorphism if and only if it is of the form
        \begin{equation*}
            f_1(z) = \frac{az^4+bz^2+c}{dz^3+ez} \quad \text{or} \quad
            f_2(z) = \frac{az^3+bz}{cz^4+dz^2+e}.
        \end{equation*}
    First observe that for the family $f_2(z)$, we can conjugate by the matrix $\beta=\begin{bmatrix}0&1\\1&0\end{bmatrix}$ to obtain
    \begin{equation*}
        f^{\beta}_2(z) = \frac{c+dz^2+ez^4}{az+bz^3}.
    \end{equation*}
    Thus, the families $f_1$ and $f_2$ are the same in moduli space, so we consider only $f_1(z)$. For $f_1(z)$ to be degree $4$, we need $a\neq 0$ and $c\neq 0$. Dividing through by $a$, we get     \begin{equation*}
        f_1(z)=\frac{z^4+k_1z^2+k_2}{k_3z^3+k_4z},
    \end{equation*}
    where $k_2\neq 0$. Conjugating via the matrix $\alpha=\begin{bmatrix}\sqrt[4]{k_2}&0\\0&1\end{bmatrix}$, we get
    \begin{equation*}
        f_1^{\alpha}(z)=\frac{(\sqrt[4]{k_2})^4z^4+k_1(\sqrt[4]{k_2})^2z^2+k_2}{k_3(\sqrt[4]{k_2})^3z^3+k_4\sqrt[4]{k_2}z}\cdot\frac{1}{\sqrt[4]{k_2}}
        =\frac{z^4+\frac{k_1}{\sqrt[4]{k_2}^2}z^2+1}{k_3z^3+\frac{k_4}{\sqrt[4]{k_2}^2}z}.
    \end{equation*}
    Renaming $k_1=\frac{k_1}{\sqrt[4]{k_2}^2}$, and $k_2=k_3$, and $k_3=\frac{k_4}{\sqrt[4]{k_2}^2}$, we see that every $f_1$ map is conjugate to a map of the form
    \begin{equation*}
        f_{k_1,k_2,k_3}(z)=\frac{z^4+k_1z^2+1}{k_2z^3+k_3z}.
    \end{equation*}
\end{proof}

\section{Geometry of certain automorphism loci in $\M_4$} \label{sect_geom_A4}
In this section, we examine the geometry of automorphism loci $\mathcal{A}_4(\Gamma)\subset \M_3$. Similar to the methods in Section \ref{sect_geom_A3}, we study the embedding via multiplier invariants
    \begin{align*}
        \tau_n:\M_4 &\to \bbA^k\\
         [f] &\mapsto (\sigma^{(n)}_1,\ldots, \sigma^{(n)}_{4^n+1})
    \end{align*}
    and the geometry of the image of this embedding.
\subsection{$\A_4(C_4)$} We start with the family $f_k(z)=\frac{z^4+1}{kz^3}$ for $k\neq 0$.
\begin{lemma} \label{geometry of C_4}
    The maps
    \begin{align*}
            \varphi:\bbA^1\setminus\{0\} &\to \M_4\\
            k &\mapsto [f_{k}]
        \end{align*}
        and
        \begin{align*}
            \tau_1: \varphi(\bbA^1\setminus\{0\}) &\to \bbA^4\\
            [f_{k}] &\mapsto (\sigma_1^{(1)}, \sigma_2^{(1)}, \sigma_3^{(1)}, \sigma_4^{(1)})
        \end{align*}
        are injective.
\end{lemma}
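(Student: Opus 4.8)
The plan is to mimic the proof of Proposition~\ref{prop_c3}. I would compute the fixed-point multiplier invariants of $f_k$ as explicit rational functions of $k$ and then show that the resulting map $k \mapsto \bigl(\sigma_1^{(1)}(k),\dots,\sigma_4^{(1)}(k)\bigr)$ is injective on $\bbA^1\setminus\{0\}$. This is precisely the statement that $\tau_1\circ\varphi$ is injective, and it immediately yields both assertions of the lemma: $\varphi$ is injective because a composition $\tau_1\circ\varphi$ is, and $\tau_1$ is injective on $\varphi(\bbA^1\setminus\{0\})$ because $\tau_1([f_k])=\tau_1([f_{k'}])$ then forces $k=k'$, hence $[f_k]=[f_{k'}]$.

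First I would locate the fixed points. Writing $\tilde f_k(z)=\tfrac{z}{k}+\tfrac{1}{kz^{3}}$, the equation $\tilde f_k(z)=z$ reduces to $(k-1)z^{4}=1$, so there are four finite fixed points, all with $z^{-4}=k-1$. Since $\tilde f_k'(z)=\tfrac1k-\tfrac{3}{kz^{4}}$, \emph{all four} of these fixed points share the common multiplier $\lambda(k):=\tfrac{4-3k}{k}$; moreover $\infty$ is fixed, and conjugating $f_k$ by $z\mapsto 1/z$ gives $w\mapsto\tfrac{kw}{1+w^{4}}$, so the multiplier at $\infty$ is $k$. Hence the multiset of fixed-point multipliers of $f_k$ is
\begin{equation*}
    \bigl\{\,k,\ \lambda(k),\ \lambda(k),\ \lambda(k),\ \lambda(k)\,\bigr\},
\end{equation*}
and each $\sigma_i^{(1)}(k)$ is the $i$-th elementary symmetric function of this multiset, an explicit rational function of $k$ whose denominator is a power of $k$ (for instance $\sigma_1^{(1)}=k+4\lambda(k)$ and $\sigma_5^{(1)}=k\,\lambda(k)^{4}$). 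As a check, $\tfrac{1}{1-k}+\tfrac{4}{1-\lambda(k)}=1$, the index identity.

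For the injectivity I would argue as follows. Suppose $\sigma_i^{(1)}(k)=\sigma_i^{(1)}(k')$ for $1\le i\le 4$, with $k,k'\ne 0$. There is a universal polynomial relation among the fixed-point multiplier invariants of a degree-$4$ map (the index relation of Hutz--Tepper \cite{Hutz10} / Fujimura--Nishizawa \cite[Theorem 1]{Fujimura}), which is linear in $\sigma_5^{(1)}$ and hence expresses $\sigma_5^{(1)}$ as a polynomial in $\sigma_1^{(1)},\dots,\sigma_4^{(1)}$; therefore $\sigma_5^{(1)}(k)=\sigma_5^{(1)}(k')$ too, so the full multiplier multisets of $f_k$ and $f_{k'}$ coincide:
\begin{equation*}
    \bigl\{\,k,\ \lambda(k)^{\times 4}\,\bigr\}=\bigl\{\,k',\ \lambda(k')^{\times 4}\,\bigr\}.
\end{equation*}
If $k\ne\lambda(k)$ then $k$ is the unique element of multiplicity one in the left-hand multiset, and similarly for $k'$, so $k=k'$. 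The only exceptional parameters are the roots of $k=\lambda(k)$, i.e.\ $k^{2}+3k-4=0$, namely $k\in\{1,-4\}$; there all five multipliers are equal, and equality of the multisets forces $k'=\lambda(k')$ and then $k'=k$ as well. In every case $k=k'$. Equivalently, and more in keeping with the surrounding computations, one clears denominators in $\sigma_i^{(1)}(k)-\sigma_i^{(1)}(k')$ for $i=1,\dots,4$, saturates the resulting ideal by $(kk')$ to remove the degenerate locus, and checks with a Gr\"obner basis computation that the remaining ideal is $(k-k')$.

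There is no serious obstacle here; the only point needing a little care is the behaviour at the degenerate parameters $k\in\{1,-4\}$ (and, relatedly, $k=1$, where the fixed-point scheme collapses onto $\infty$), which the multiset argument above handles directly and which the symbolic saturation handles automatically. The real content is the multiplier computation of the second step, made short by the coincidence of the four finite multipliers.
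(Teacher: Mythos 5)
Your proof is correct. Both you and the paper reduce to showing that $\tau_1\circ\varphi$ is injective and then note that this immediately gives both claims, but the heart of the argument is genuinely different. The paper clears denominators in $\sigma_i^{(1)}(k_1)=\sigma_i^{(1)}(k_2)$ for $1\le i\le 4$ and computes a lexicographic Gr\"obner basis; its two generators factor as $(k_1-k_2)(k_1+k_2+8)$ and $(k_1-k_2)(k_2+4)^2$, from which $k_1=k_2$ is read off after checking the spurious branch $k_2=-4$. You replace that opaque elimination with a structural observation the paper never makes explicit: all four finite fixed points of $f_k$ share the single multiplier $\lambda(k)=(4-3k)/k$, so the fixed-point multiplier multiset is the rigid collection $\{k,\lambda(k)^{\times 4}\}$. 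After using the linear index relation to recover $\sigma_5^{(1)}$ and hence the full multiset from $\sigma_1^{(1)},\dots,\sigma_4^{(1)}$, comparing two such multisets forces $k=k'$ at once, and the only delicate parameters $k\in\{1,-4\}$ (where $k=\lambda(k)$, the multiset collapses to $\{k^{\times 5}\}$, and for $k=1$ the fixed-point scheme is supported entirely at $\infty$) are handled by the same comparison. What your route buys is transparency: it explains \emph{why} the Gr\"obner generators factor so cleanly and why the one-to-one claim holds, and the mechanism (a large automorphism group forcing the finite fixed points to share a multiplier) is a reusable heuristic elsewhere in the paper. The computational fallback you mention at the end is precisely the paper's proof.
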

\begin{proof}
We show the composition $\tau_1 \circ \varphi$ is injective, and thus, both maps are injective.

Consider the ideal generated by
\begin{equation*}
    (\sigma_1(k_1),\sigma_2(k_1),\sigma_3(k_1),\sigma_4(k_1))=(\sigma_1(k_2),\sigma_2(k_2),\sigma_3(k_2),\sigma_4(k_2).
\end{equation*}
Computing its lexicographic Groebner basis, we get two generators:
\begin{align*}
    &k_1^2 + 8k_1 - k_2^2 - 8k_2,\\
    &k_1k_2^2 + 8k_1k_2 + 16k_1 - k_2^3 - 8k_2^2 - 16k_2.
\end{align*}
These factor as
\begin{align*}
    &(k_1 - k_2)(k_1 + k_2 + 8),\\
    &(k_1 - k_2)(k_2 + 4)^2.
\end{align*}
The first says that for $f_{k_1}$ and $f_{k_2}$ to have the same fixed point multiplier invariants, we must have $k_1=k_2$ or $k_2=-4$. However, if $k_2=-4$, then the second generator tells us that $k_1 =-4$ and we are still in the case $k_1=k_2$.
Thus, the map $\varphi$ and the map $\tau_1$ are one-to-one.
\begin{code}
\begin{lstlisting}
R.<k1,k2>=PolynomialRing(QQ, order='lex')
P.<x,y>=ProjectiveSpace(R,1)
f1=DynamicalSystem([x^4 + y^4, k1*x^3*y])
f2=DynamicalSystem([x^4 + y^4, k2*x^3*y])
s1=f1.sigma_invariants(1)
s2=f2.sigma_invariants(1)
I=R.ideal([s1[i].numerator()*s2[i].denominator()-s2[i].numerator()*s1[i].denominator() for i in range(4)])
I.groebner_basis()
\end{lstlisting}
\end{code}
\end{proof}

\begin{prop} \label{prop_A4_C4_geo}
    The curve in $\bbA^4$ given by the image of $\tau_1$ on the family $f_k(z)=\frac{z^4+1}{kz^3}$ of $\A_4(C_4)$ is given by the system of equations
    \begin{align*}
        0&=36\sigma_1^3 + 3\sigma_1^2\sigma_2 + 222\sigma_1^2 - 96\sigma_1\sigma_2 - 8\sigma_2^2 + 240\sigma_1 - 560\sigma_2 - 800\\
        0&=-12\sigma_1^2 - \sigma_1\sigma_2 - 14\sigma_1 + 32\sigma_2 + 6\sigma_3 + 40\\
        0&=144\sigma_1^2 - \sigma_2^2 + 288\sigma_1 - 448\sigma_2 + 36\sigma_4 - 640.
    \end{align*}
    It is a singular irreducible curve of genus $0$ and the singularity corresponds to the rational map $$f=\frac{z^4+1}{-4z^3}.$$
\end{prop}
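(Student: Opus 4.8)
The plan is to follow the template used for Proposition~\ref{prop_c3} and the companion families in Section~\ref{sect_geom_A3}. First I would compute the fixed point multiplier invariants $\sigma_1(k),\dots,\sigma_4(k)$ of $f_k(z)=\frac{z^4+1}{kz^3}$ as explicit rational functions of $k$ (in degree $4$ there are five fixed-point invariants $\sigma_1,\dots,\sigma_5$, and $\sigma_5$ is discarded because it is determined by the first four through the standard index relation, cf.\ Hutz--Tepper \cite{Hutz10} and Fujimura--Nishizawa \cite{Fujimura}). Clearing denominators turns the relations $\sigma_i=\sigma_i(k)$ into polynomials in $\Q[k,\sigma_1,\sigma_2,\sigma_3,\sigma_4]$; call the ideal they generate $I$. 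I would then saturate $I$ with respect to $(k)$ to discard the degenerate value $k=0$, and eliminate $k$ from the saturated ideal. The elimination ideal is the ideal of the image curve $C\subseteq\bbA^4$, and one verifies by direct computation that it is generated exactly by the three displayed polynomials (the inclusion is the output of the elimination; that these three already cut out a one-dimensional scheme, rather than something larger, is a quick dimension check).

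For irreducibility and the genus, I would invoke Lemma~\ref{geometry of C_4}: the composite $\tau_1\circ\varphi\colon\bbA^1\setminus\{0\}\to\bbA^4$ is injective, hence a birational morphism onto its image $C$. Therefore $C$ is irreducible and its normalization is $\bbP^1$, so $C$ is a rational curve of geometric genus $0$. (Alternatively the genus can be read off directly from a Magma/Sage computation on the curve, as was done in the $\M_3$ cases.)

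The remaining point is the singularity. Because $C$ carries a bijective rational parametrization, every singular point of $C$ is unibranch, and the corresponding parameter value $k_0$ is one at which the parametrization fails to be an immersion, i.e.\ $\sigma_1'(k_0)=\sigma_2'(k_0)=\sigma_3'(k_0)=\sigma_4'(k_0)=0$ (at a smooth point the normalization map is a local isomorphism and so has nonzero differential). Solving this system --- equivalently, computing the singular subscheme of $C$ and pulling it back along the parametrization --- should produce the single value $k_0=-4$; note that the factor $(k_2+4)^2$ appearing in the Gr\"obner basis in Lemma~\ref{geometry of C_4} is precisely the shadow of this ramification. A Jacobian-rank check on the three defining equations at the point $(\sigma_1(-4),\sigma_2(-4),\sigma_3(-4),\sigma_4(-4))$ then confirms that $C$ is singular there and smooth everywhere else, so the unique singular point of $C$ is the class of $f_{-4}(z)=\frac{z^4+1}{-4z^3}$.

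I expect the only real obstacle to be bookkeeping: carrying out the saturation and elimination over $\Q$ and certifying that the output ideal is exactly the stated one, and likewise confirming that the singular subscheme is a single reduced point. Conceptually nothing new is needed beyond the arguments already used in Proposition~\ref{prop_c3} and Lemma~\ref{geometry of C_4}.
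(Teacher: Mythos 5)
Your computational backbone (form the ideal from the cleared multiplier invariants, saturate by $(k)$, eliminate $k$) is exactly the paper's, and the identification of $\sigma_5$ as redundant via the index relation is also how the paper treats it.  Where you genuinely depart is in the last two steps.  The paper simply reports a Sage computation certifying that the three elimination polynomials define a singular irreducible genus-$0$ curve whose lone singular point is $(-20,160,-640,1280)$, and then matches that point to $k=-4$ by solving the parametrization.  You instead pull irreducibility and rationality directly out of Lemma~\ref{geometry of C_4} (the parametrization is an injective morphism, hence birational onto its image, hence the curve is irreducible with normalization $\bbP^1$) and locate the singularity by finding where the parametrization ramifies, i.e.\ where all four $\sigma_i'(k)$ vanish simultaneously, then confirming via the Jacobian rank.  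This is a cleaner, more self-contained route: it explains \emph{why} the curve is a singular rational curve and \emph{why} the singularity must sit over $k=-4$, rather than simply observing it from machine output, and it ties the singular point to the factor $(k_2+4)^2$ already seen in Lemma~\ref{geometry of C_4}.  The one point to tighten: the elimination ideal cuts out the Zariski closure of the image, not a priori the image itself, so ``bijective parametrization'' needs the small remark that the extension $\bbP^1\to\overline C$ sends $k=0$ and $k=\infty$ to points at infinity (since $\sigma_1(k)=k-12+16/k$ blows up at both ends), so the injective map from $\bbA^1\setminus\{0\}$ does in fact hit every affine point of $C$ and the unibranch/ramification criterion applies throughout.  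With that caveat the argument is complete and correct.
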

\begin{proof}
    The fixed point multiplier invariants give the equations
    \begin{align*}
        0&=(k^2 - 12k + 16)-k\sigma_1\\
        0&=(-12k^3 + 70k^2 - 144k + 96)-k^2\sigma_2\\
        0&=(54k^4 - 252k^3 + 528k^2 - 576k + 256)-k^3\sigma_3\\
        0&=(-108k^5 + 513k^4 - 1008k^3 + 1120k^2 - 768k + 256)-k^4\sigma_4.
    \end{align*}
    Saturating by the ideal $(k)$ and looking at the generators gives the stated equations. Using Sage, we can determine that these relations define a singular, irreducible curve of genus $0$ in $\bbA^3$. The only point of singularity is $(-20,160,-640,1280)$ and it corresponds to the rational map in the statement.
\begin{code}
\begin{lstlisting}
R.<k,s1,s2,s3,s4>=PolynomialRing(QQ)
S=[s1,s2,s3,s4]
P.<x,y>=ProjectiveSpace(R,1)
f=DynamicalSystem([x^4 + y^4, k*x^3*y])
s1=f.sigma_invariants(1)
I=R.ideal([s1[i].numerator()-s1[i].denominator()*S[i] for i in range(4)])
J=R.ideal(k)
I=I.saturation(J)[0]
I.elimination_ideal(k)

R.<s1,s2,s3,s4>=QQ[]
f1=s2*s3 - 6*s1*s4 + 150*s1 - 100*s2 + 50*s3 - 200
f2=30*s1*s3 + 3*s3^2 - 8*s2*s4 - 600*s1 + 400*s2 - 240*s3 - 160*s4 + 800
f3=45*s2^2 + 12*s3^2 - 32*s2*s4 - 2400*s1 + 1600*s2 - 960*s3 - 340*s4 + 3200
f4=135*s1*s2 - 3*s3^2 + 8*s2*s4 - 750*s1 + 320*s2 - 570*s3 - 320*s4 + 1000
f5=1620*s1^2 + 3*s3^2 - 8*s2*s4 + 2640*s1 - 4640*s2 - 240*s3 + 320*s4 - 6400
f6=6*s3^3 + 3*s3^2*s4 - 96*s1*s4^2 - 8*s2*s4^2 - 1035*s3^2 + 1200*s1*s4 - 440*s2*s4 + 1200*s3*s4 + 320*s4^2 + 126000*s1 - 84000*s2 + 42000*s3 - 1600*s4 - 168000
I=R.ideal([f1,f2,f3,f4,f5,f6])
gfan=I.groebner_fan()
for G in gfan:
    if len(G) <= 3:
        print(G)
G1=[s1^3 + 1/12*s1^2*s2 + 37/6*s1^2 - 8/3*s1*s2 - 2/9*s2^2 + 20/3*s1 - 140/9*s2 - 200/9, -2*s1^2 - 1/6*s1*s2 - 7/3*s1 + 16/3*s2 + s3 + 20/3, 4*s1^2 - 1/36*s2^2 + 8*s1 - 112/9*s2 + s4 - 160/9]
G1 = [G1[0]*36, G1[1]*6, G1[2]*36]
A=AffineSpace(R)
X=A.curve(G1)
X.genus()
X.is_irreducible()
X.singular_points()


#map associated to singularity
R.<k>=PolynomialRing(QQ)
S=[-20,160,-640,1280]
P.<x,y>=ProjectiveSpace(R,1)
f=DynamicalSystem([x^4 + y^4, k*x^3*y])
s1=f.sigma_invariants(1)
I=R.ideal([s1[i].numerator()-s1[i].denominator()*S[i] for i in range(4)])
I
\end{lstlisting}
\end{code}
\end{proof}
\subsection{$\A_4(D_3)$} We now move on to the family $f_k(z)=\frac{z^4+kz}{kz^3+1}$ with $k\neq \pm 1$.
\begin{lemma} \label{lem_A4_D3_geo}
    The maps
    \begin{align*}
        \varphi:\bbA^1 &\to \M_4\\
        k &\mapsto [f_{k}]
    \end{align*}
    and
    \begin{align*}
        \tau_1: \varphi(\bbA^1) &\to \bbA^4\\
        [f_{k}] &\mapsto (\sigma_1^{(1)}, \sigma_2^{(1)}, \sigma_3^{(1)}, \sigma_4^{(1)})
    \end{align*}
    are injective.
\end{lemma}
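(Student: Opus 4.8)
The plan is to imitate the proof of Lemma~\ref{geometry of C_4} step for step. First, I would observe that it suffices to show the composition $\tau_1 \circ \varphi$ is injective. Indeed, if the tuple of fixed-point multiplier invariants $\bigl(\sigma_1^{(1)}(k), \sigma_2^{(1)}(k), \sigma_3^{(1)}(k), \sigma_4^{(1)}(k)\bigr)$ determines the parameter $k$ uniquely (as $k$ ranges over $\bbA^1$ minus the degenerate values $k = \pm 1$), then any two parameters with $[f_{k_1}] = [f_{k_2}]$ must have identical multiplier invariants and hence satisfy $k_1 = k_2$, so $\varphi$ is injective; moreover each point of $\varphi(\bbA^1)$ is the conjugacy class of a single $f_k$, so $\tau_1$ is injective on the image as well.

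Second, I would compute the four fixed-point multiplier invariants $\sigma_i^{(1)}(k)$ of $f_k(z) = \frac{z^4 + kz}{kz^3+1}$ as explicit rational functions of $k$, exactly as in the code accompanying Lemma~\ref{geometry of C_4}. Clearing denominators in the equations $\sigma_i^{(1)}(k_1) = \sigma_i^{(1)}(k_2)$ for $1 \le i \le 4$ yields an ideal $I \subseteq \Q[k_1, k_2]$. Since the family degenerates exactly at $k = \pm 1$ (where the numerator and denominator of $f_k$ acquire a common factor), I would saturate $I$ by $(k_1^2 - 1)$ and by $(k_2^2 - 1)$ to strip off the spurious component coming from the simultaneous vanishing of the denominators of the $\sigma_i^{(1)}$, and then compute a lexicographic Gr\"obner basis of the saturated ideal.

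By analogy with the $C_4$ computation, I expect the Gr\"obner basis to display $k_1 - k_2$ as a factor of each generator, possibly accompanied by one or two extraneous factors (a linear factor $k_2 - c$, or perhaps a squared factor); as in Lemma~\ref{geometry of C_4}, one would then verify that any such candidate solution is forced back onto the diagonal $k_1 = k_2$ by the remaining generators. Should the saturated ideal turn out to be exactly $(k_1 - k_2)$, the injectivity of $\tau_1 \circ \varphi$, and therefore of both $\varphi$ and $\tau_1$, is immediate.

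The main obstacle is computational rather than conceptual: in degree $4$ the multiplier-invariant numerators and denominators are considerably bulkier than those of the $C_4$ family, so the saturation and Gr\"obner basis steps are heavier, and one must take care that saturating by $(k_1^2 - 1)$ and $(k_2^2 - 1)$ removes the entire degenerate locus while leaving every genuine diagonal solution intact. There is no deeper difficulty: once the saturated ideal is seen to cut out only $\{k_1 = k_2\}$, the lemma follows, and this injectivity is precisely what is needed to parameterize the $\A_4(D_3)$ curve by its fixed-point multiplier invariants in the proposition that follows.
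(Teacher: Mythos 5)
Your proposal is correct and matches the paper's approach: reduce to showing $\tau_1\circ\varphi$ is injective, set up the ideal from equating the four fixed-point multiplier invariants at $k_1$ and $k_2$, compute a lexicographic Gr\"obner basis, and deduce $k_1 = k_2$. The only difference is cosmetic: the paper notes that the resulting ideal is literally identical to the one already analyzed in Lemma~\ref{geometry of C_4} (generators $(k_1-k_2)(k_1+k_2+8)$ and $(k_1-k_2)(k_2+4)^2$) and appeals to that computation directly, while you would redo the factorization; also, the paper does not need the saturation by $(k_1^2-1)$ and $(k_2^2-1)$ that you anticipate, since the cleared-denominator ideal already carves out only the diagonal, though saturating would do no harm.
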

\begin{proof}
We show the composition $\tau_1 \circ \varphi$ is injective, and thus, both maps are injective.

We want to compute the ideal generated by
\begin{equation*}
    (\sigma_1(k_1),\sigma_2(k_2),\sigma_3(k_1),\sigma_4(k_1))=(\sigma_1(k_2),\sigma_2(k_2),\sigma_3(k_2),\sigma_4(k_2)).
\end{equation*}
This gives the ideal generated by
\begin{align*}
    &k_1^2 + 8k_1 - k_2^2 - 8k_2,\\
    &k_1k_2^2 + 8k_1k_2 + 16k_1 - k_2^3 - 8k_2^2 - 16k_2.
\end{align*}
This ideal is the same as the ideal considered in the proof of Lemma \ref{geometry of C_4}, so the result follows at once.
\begin{code}
\begin{lstlisting}
R.<k1,k2>=PolynomialRing(QQ, order='lex')
P.<x,y>=ProjectiveSpace(R,1)
f1=DynamicalSystem([x^4 + k1*x*y^3, k1*x^3*y+y^4])
f2=DynamicalSystem([x^4 + k2*x*y^3, k2*x^3*y+y^4])
s1=f1.sigma_invariants(1)
s2=f2.sigma_invariants(1)
I=R.ideal([s1[i].numerator()*s2[i].denominator()-s2[i].numerator()*s1[i].denominator() for i in range(4)])
I.groebner_basis()
\end{lstlisting}
\end{code}
\end{proof}

\begin{prop} \label{prop_A4_D3_geo}
    The curve given by the image of $\tau_1$ on the family $f_k(z)=\frac{z^4+kz}{kz^3+1}$ of $\A_4(D_3)$ is defined by
    \begin{align*}
        0&=\sigma_1^4 - 14\sigma_1^3 - 7\sigma_1^2\sigma_2 + 67\sigma_1^2 + 44\sigma_1\sigma_2 + 12\sigma_2^2 - 360\sigma_1 - 160\sigma_2 + 1200\\
        0&=\sigma_1^3 + 2\sigma_1^2 - 4\sigma_1\sigma_2 - 21\sigma_1 - 2\sigma_2 + 9\sigma_3 + 60\\
        0&=-2\sigma_1^3 + 5\sigma_1^2 + 8\sigma_1\sigma_2 - 48\sigma_1 - 32\sigma_2 + 9\sigma_4 + 240.
    \end{align*}
    It is a singular irreducible curve of genus $0$, and the singularity $(-20,160,-640,1280)$ corresponds to the rational map $$f=\frac{z^4-4z}{-4z^3+1}.$$
\end{prop}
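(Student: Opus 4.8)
The plan is to follow the template of Proposition~\ref{prop_A4_C4_geo}: compute the fixed-point multiplier invariants as rational functions of $k$ and then eliminate $k$. The five fixed points of $f_k(z)=\frac{z^4+kz}{kz^3+1}$ are $0$, $\infty$, and the three cube roots of unity; the automorphism $\alpha_2(z)=1/z$ swaps $0\leftrightarrow\infty$ while $\alpha_1(z)=\zeta_3 z$ cyclically permutes $1,\zeta_3,\zeta_3^2$, so by conjugation-invariance of multipliers the multiplier multiset is $\{k,k,b,b,b\}$ with $b=\frac{4-2k}{k+1}$, and each $\sigma_i(k)=e_i(k,k,b,b,b)$ is an explicit rational function of $k$. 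Clearing denominators gives an ideal $I\subseteq\Q[k,\sigma_1,\sigma_2,\sigma_3,\sigma_4]$; saturating by $(k^2-1)$ to delete the degenerate parameters $k=\pm1$ and eliminating $k$ via a lexicographic Gr\"obner basis computation in Sage should return exactly the three stated relations, the first being a plane quartic in $(\sigma_1,\sigma_2)$ and the other two expressing $\sigma_3,\sigma_4$ as polynomials in $\sigma_1,\sigma_2$.

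Irreducibility and rationality are then immediate. By Lemma~\ref{lem_A4_D3_geo} the composition $\tau_1\circ\varphi\colon \bbA^1\setminus\{\pm1\}\to\bbA^4$ is injective, so the curve $C$ in the statement is the Zariski closure of the image of a nonconstant morphism from an open subset of $\bbA^1$; by Lemma~\ref{lem_k_param_image} this closure is irreducible, and since it is dominated by $\bbP^1$ it has geometric genus $0$. Moreover, because $\sigma_3$ and $\sigma_4$ are polynomials in $\sigma_1,\sigma_2$ along $C$, the projection of $C$ onto the first two coordinates is an isomorphism onto the plane quartic cut out by the first relation, so smoothness of $C$ at a point is equivalent to smoothness of that quartic at the corresponding point.

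For the singular point, the key computation is that the parametrization $k\mapsto(\sigma_1(k),\ldots,\sigma_4(k))$ fails to be an immersion exactly at $k=-4$. Writing the five multipliers as $-4+\epsilon_j$ near $k=-4$, the sum $\sum_j\epsilon_j$ has vanishing linear term in $(k+4)$, precisely because $2\tfrac{d}{dk}(k)+3\tfrac{d}{dk}(b)=0$ at $k=-4$ (equivalently $k=b$ there, i.e.\ $k^2+3k-4=0$ with the root $k=1$ degenerate); since the linear part of $\sigma_i=e_i$ is a multiple of $\sum_j\epsilon_j$, this forces $\sigma_i'(-4)=0$ for every $i$. On the other hand $\sigma_1'(k)=\tfrac{2(k+4)(k-2)}{(k+1)^2}$, and at $k=2$ one has $\sigma_2'(2)\neq0$, so the Jacobian of the parametrization is nonzero at every other parameter value. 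An injective parametrization with vanishing derivative produces a singular point of the image, so $C$ is singular at $\tau_1(\varphi(-4))=(-20,160,-640,1280)$, which corresponds to $f_{-4}(z)=\frac{z^4-4z}{-4z^3+1}$, the map all of whose fixed points have multiplier $-4$. A Magma computation of the singular subscheme confirms that this is the unique singular point, including on the projective closure.

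The main obstacle is not conceptual but computational: carrying out the elimination cleanly enough to recognize the three listed generators, and certifying that the projective closure acquires no extra singularity at infinity. Both are handled by the accompanying Sage/Magma computations, exactly as in Propositions~\ref{prop_A4_C4_geo} and \ref{prop_a3_c2_2_geo}.
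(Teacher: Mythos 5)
Your proof is correct and uses the same elimination-based strategy as the paper: express the fixed-point multipliers as rational functions of $k$, clear denominators, saturate off the degenerate locus, and eliminate $k$ to obtain the three relations. Where you go further is in replacing two of the paper's computer checks with conceptual arguments. First, you organize the multiplier calculation via the $D_3$ action, observing that the multiplier multiset must be $\{k,k,b,b,b\}$ with $b=\frac{4-2k}{k+1}$, because $z\mapsto 1/z$ forces the multipliers at $0$ and $\infty$ to agree and $z\mapsto\zeta_3 z$ forces those at the three finite fixed points to agree; this reproduces the paper's numerator data (e.g.\ $\sigma_1(k+1)=2k^2-4k+12$) without a black-box call to \texttt{sigma\_invariants}. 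Second, you explain \emph{why} $k=-4$ yields the singular point: at $k=-4$ all five multipliers coincide (each equals $-4$), so by symmetry the partials of each $e_i$ at that diagonal point are equal, and $\sum_j\lambda_j'(-4)=2+3b'(-4)=0$, hence $\sigma_i'(-4)=0$ for every $i$; combined with the injectivity from Lemma~\ref{lem_A4_D3_geo}, the parametrization fails to be an immersion exactly there, which forces a singularity of the image in characteristic zero. This is valid and more illuminating than the paper's bare computational claim. One imprecision to fix: the condition $2+3b'(k)=0$ (roots $k=-4,2$) and the condition $k=b$ (roots $k=-4,1$) are \emph{not} equivalent; your argument needs both simultaneously, which happens only at $k=-4$, so the parenthetical ``equivalently'' should be replaced by something like ``and also $k=b$ there, which supplies the symmetry''.
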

\begin{proof}
The fixed point multiplier invariants give the equations
\begin{align*}
        0&=(2k^2 - 4k + 12)-(k + 1)\sigma_1\\
        0&=(k^4 - 10k^3 + 25k^2 - 24k + 48)-(k^2 + 2k + 1)\sigma_2\\
        0&=(-6k^5 + 24k^4 - 62k^3 + 60k^2 + 64)-(k^3 + 3k^2 + 3k + 1)\sigma_3\\
        0&=(12k^5 - 52k^4 + 96k^3 - 144k^2 + 128k)-(k^3 + 3k^2 + 3k + 1)\sigma_4.
\end{align*}
Eliminating $k$ gives the stated equations involving only the fixed point multiplier invariants. Choosing an appropriate monomial ordering, we arrive at the stated relations. These relations define a singular, irreducible curve of genus $0$ in $\bbA^3$. The only point of singularity is $(-20,160,-640,1280)$. This choice of fixed point multiplier invariants corresponds to the rational map in the statement.

\begin{code}
\begin{lstlisting}
R.<k,s1,s2,s3,s4>=PolynomialRing(QQ)
S=[s1,s2,s3,s4]
P.<x,y>=ProjectiveSpace(R,1)
f=DynamicalSystem([x^4 + k*x*y^3, k*x^3*y+y^4])
s1=f.sigma_invariants(1)
I=R.ideal([s1[i].numerator()-s1[i].denominator()*S[i] for i in range(4)])
J=R.ideal(k-1)
I=I.saturation(J)[0]
J=I.elimination_ideal(k)

R.<s1,s2,s3,s4>=QQ[]
f1,f2,f3,f4,f5,f6=J.gens()
I=R.ideal([f1,f2,f3,f4,f5,f6])
gfan=I.groebner_fan()
for G in gfan:
    if len(G) <= 3:
        print(G)

G1=[s1^4 - 14*s1^3 - 7*s1^2*s2 + 67*s1^2 + 44*s1*s2 + 12*s2^2 - 360*s1 - 160*s2 + 1200, 1/9*s1^3 + 2/9*s1^2 - 4/9*s1*s2 - 7/3*s1 - 2/9*s2 + s3 + 20/3, -2/9*s1^3 + 5/9*s1^2 + 8/9*s1*s2 - 16/3*s1 - 32/9*s2 + s4 + 80/3]
G1 = [G1[0], G1[1]*9, G1[2]*9]
A=AffineSpace(R)
X=A.curve(G1)
print(X.genus())
print(X.is_irreducible())
print(X.singular_points())

#map associated to singularity
R.<k>=PolynomialRing(QQ)
S=[-20,160,-640,1280]
P.<x,y>=ProjectiveSpace(R,1)
f=DynamicalSystem([x^4 + y^4, k*x^3*y])
s1=f.sigma_invariants(1)
I=R.ideal([s1[i].numerator()-s1[i].denominator()*S[i] for i in range(4)])
I
\end{lstlisting}
\end{code}
\end{proof}

\subsection{$\A_4(C_3)$}
Now we move on to study the family $f_{k_1,k_2}(z)=\frac{z^4+k_1z}{k_2z^3+1}$, with $k_1k_2\neq 1$.
\begin{lemma} \label{lem_A4_C3}
    The map \begin{align*}
            \varphi:\bbA^2\setminus\{k_1k_2=1\} &\to \M_4\\
            (k_1,k_2) &\mapsto [f_{k_1,k_2}]
        \end{align*}
        is two-to-one. The map \begin{align*}
            \tau_1: \varphi(\bbA^2\setminus\{k_1k_2=1\}) &\to \bbA^3\\
            [f_{k}] &\mapsto (\sigma_1^{(1)}, \sigma_2^{(1)}, \sigma_3^{(1)})
        \end{align*} is injective.
\end{lemma}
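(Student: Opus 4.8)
The plan is to follow the template used for the analogous degree-$3$ family $\frac{z^3+az}{bz^2+1}$ in the lemma preceding Proposition~\ref{prop_a3_c2_1_geo}. Since $f_{k_1,k_2}$ has degree $4$ it has five fixed points, but $\sigma_4^{(1)}$ and $\sigma_5^{(1)}$ are forced by the index relations, and we will see the first three already suffice. First I would compute $\sigma_1^{(1)},\sigma_2^{(1)},\sigma_3^{(1)}$ as rational functions of $(k_1,k_2)$, each with denominator a power of $k_1k_2-1$.

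Next, introducing a second pair of parameters $(\ell_1,\ell_2)$, I would form the ideal generated by the numerators of $\sigma_i^{(1)}(k_1,k_2)-\sigma_i^{(1)}(\ell_1,\ell_2)$ for $i=1,2,3$ (after cross-multiplying to clear denominators), saturate by $(k_1k_2-1)$ and $(\ell_1\ell_2-1)$ to discard the degenerate locus, and compute the irreducible components of the resulting subscheme of $\bbA^4$. I expect exactly two components, $(\ell_1,\ell_2)=(k_1,k_2)$ and $(\ell_1,\ell_2)=(k_2,k_1)$, mirroring the degree-$3$ situation. The transposition component has an a priori geometric explanation: conjugating $f_{k_1,k_2}$ by $\alpha(z)=1/z$ yields $\frac{z^4+k_2z}{k_1z^3+1}=f_{k_2,k_1}$, which is exactly the computation already appearing in the proof of Proposition~\ref{prop_A4_D3}. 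Hence any two parameter pairs giving the same triple $(\sigma_1^{(1)},\sigma_2^{(1)},\sigma_3^{(1)})$ represent the same $\PGL_2$-conjugacy class.

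Combining these: the fiber of $\varphi$ over $[f_{k_1,k_2}]$ is $\{(k_1,k_2),(k_2,k_1)\}$, which collapses to a single point precisely on the diagonal $k_1=k_2$ (the locus $\A_4(D_3)$ of Proposition~\ref{prop_A4_D3}), so $\varphi$ is two-to-one; and since distinct points of $\varphi(\bbA^2\setminus\{k_1k_2=1\})$ are separated by $(\sigma_1^{(1)},\sigma_2^{(1)},\sigma_3^{(1)})$, the map $\tau_1$ is injective on the image. The main obstacle I anticipate is the elimination and irreducible-component computation in four variables with the bulkier degree-$4$ multiplier invariants: one must verify that nothing spurious survives the two saturations and that the defining relations for the $\sigma_i^{(1)}$ correctly account for all five fixed points with the right multiplicities (the naive derivative formula can fail at a fixed point that is simultaneously a critical point). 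Conceptually, however, this is routine, and the symmetry is transparent from the $z\mapsto 1/z$ involution, exactly as in degree $3$.
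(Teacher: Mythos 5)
Your proposal is essentially correct and follows the same strategy as the paper: compare $(\sigma_1^{(1)},\sigma_2^{(1)},\sigma_3^{(1)})$ evaluated at two parameter pairs, saturate away the degenerate loci, and explain the surviving symmetry by the conjugation $z\mapsto 1/z$ sending $f_{k_1,k_2}$ to $f_{k_2,k_1}$ (a computation already done in Proposition~\ref{prop_A4_D3}). The only notable difference is computational: you propose to find the irreducible components of the incidence scheme in $\bbA^4$ — exactly the template from the unlabeled lemma before Proposition~\ref{prop_a3_c2_1_geo} — whereas the paper's own proof works over the function field $\overline{\Q}(t_1,t_2)$, treating $(k_1,k_2)$ as unknowns, and computes the degree of the projective closure of the resulting zero-dimensional scheme, finding $2$ and then identifying the two solutions as the two ordered pairs. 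Both variants establish the same conclusion; yours gives the component decomposition directly (and hence 2-to-1 everywhere, collapsing on the diagonal), while the paper's gives the degree more cheaply but only says ``for (almost) every'' fiber.

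One small inaccuracy worth flagging, though it does not affect the argument: you write that both $\sigma_4^{(1)}$ and $\sigma_5^{(1)}$ ``are forced by the index relations.'' In fact only $\sigma_5^{(1)}$ is determined by the standard index relation on $\M_4$; that $\sigma_4^{(1)}$ is a function of the first three is a special coincidence of this particular family, which the paper explicitly remarks on in the paragraph following the proof. Since your argument verifies directly that the first three invariants separate the orbits, nothing is lost — but the attribution to ``index relations'' is wrong for $\sigma_4^{(1)}$.
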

\begin{proof}
To compute the degree of $\varphi$, we consider the ideal generated by
\begin{equation*}
    (\sigma_1(k_1,k_2),\sigma_2(k_1,k_2),\sigma_3(k_1,k_2))=(\sigma_1(t_1,t_2),\sigma_2(t_1,t_2),\sigma_3(t_1,t_2))
\end{equation*}
as an ideal in $K[k_1,k_2]$, where $K$ is the function field $\bar{\Q}(t_1,t_2)$. This forms a zero dimensional variety, and Singular (via Sage) computes the degree of its projective closure as $2$. For (almost) every choice of $(t_1,t_2)$ we have two pairs $(k_1,k_2)\in \{(t_1,t_2),(t_2,t_1)\}$ and one can easily verify that $f_{k_1,k_2}$ and $f_{k_2,k_1}$ are conjugate via $\alpha(z)=\frac{1}{z}$.

\begin{code}
\begin{lstlisting}
R.<k1,k2,t1,t2>=QQ[]
P.<x,y>=ProjectiveSpace(R,1)
f1=DynamicalSystem([x^4 + k1*x*y^3, k2*x^3*y+y^4])
f2=DynamicalSystem([x^4 + t1*x*y^3, t2*x^3*y+y^4])
s1=f1.sigma_invariants(1)
s2=f2.sigma_invariants(1)
A=AffineSpace(R)
I=R.ideal([s1[i].numerator()*s2[i].denominator()-s2[i].numerator()*s1[i].denominator() for i in range(3)])
J1=R.ideal([t1*t2-1])
J2=R.ideal([k1*k2-1])
I=I.saturation(J1)[0]
I=I.saturation(J2)[0]

S.<t1,t2>=QQ[]
A.<k1,k2>=AffineSpace(FractionField(S),2)
F=(2*k1^3 - 3*k1^2*k2 - 3*k1*k2^2 + 2*k2^3 - 6*k1^2*t1 - 3*k1*k2*t1 - 6*k2^2*t1 + 6*k1*t1^2 + 6*k2*t1^2 - 2*t1^3 - 6*k1^2*t2 - 3*k1*k2*t2 - 6*k2^2*t2 + 3*k1*t1*t2 + 3*k2*t1*t2 + 3*t1^2*t2 + 6*k1*t2^2 + 6*k2*t2^2 + 3*t1*t2^2 - 2*t2^3 - 54*k1*k2 + 54*t1*t2 + 72*k1 + 72*k2 - 72*t1 - 72*t2, 2*k1^2*t1*t2 + k1*k2*t1*t2 + 2*k2^2*t1*t2 - k1*t1^2*t2 - k2*t1^2*t2 - t1^3*t2 - k1*t1*t2^2 - k2*t1*t2^2 + t1^2*t2^2 - t1*t2^3 + 6*k1*t1*t2 + 6*k2*t1*t2 - 6*t1^2*t2 - 6*t1*t2^2 - 2*k1^2 - k1*k2 - 2*k2^2 - 8*k1*t1 - 8*k2*t1 + 10*t1^2 - 8*k1*t2 - 8*k2*t2 + 17*t1*t2 + 10*t2^2 + 12*k1 + 12*k2 - 12*t1 - 12*t2, 5*k1*k2^3 - 2*k2^4 + 6*k1^2*k2*t1 + 3*k1*k2^2*t1 + 6*k2^3*t1 - 8*k1*k2*t1^2 - 6*k2^2*t1^2 + 2*k2*t1^3 + 6*k1^2*k2*t2 + 3*k1*k2^2*t2 + 6*k2^3*t2 - 4*k1*k2*t1*t2 - 3*k2^2*t1*t2 + k1*t1^2*t2 - 2*k2*t1^2*t2 - t1^3*t2 - 8*k1*k2*t2^2 - 6*k2^2*t2^2 + k1*t1*t2^2 - 2*k2*t1*t2^2 + 2*t1^2*t2^2 + 2*k2*t2^3 - t1*t2^3 + 2*k1^2*k2 + 56*k1*k2^2 - 20*k1*k2*t1 - 20*k1*k2*t2 + 14*k1*t1*t2 - 40*k2*t1*t2 + 4*t1^2*t2 + 4*t1*t2^2 - 14*k1^2 - 107*k1*k2 - 86*k2^2 + 8*k1*t1 + 80*k2*t1 + 6*t1^2 + 8*k1*t2 + 80*k2*t2 + 19*t1*t2 + 6*t2^2 + 68*k1 + 68*k2 - 68*t1 - 68*t2, k1^2*k2^2 + 2*k1^2*k2*t1 + 2*k1*k2^2*t1 - 2*k1*k2*t1^2 + 2*k1^2*k2*t2 + 2*k1*k2^2*t2 - k1*k2*t1*t2 - k1*t1^2*t2 - k2*t1^2*t2 + t1^3*t2 - 2*k1*k2*t2^2 - k1*t1*t2^2 - k2*t1*t2^2 - 2*t1^2*t2^2 + t1*t2^3 + 10*k1^2*k2 + 10*k1*k2^2 + 8*k1*k2*t1 + 8*k1*k2*t2 - 14*k1*t1*t2 - 14*k2*t1*t2 - 4*t1^2*t2 - 4*t1*t2^2 - 6*k1^2 + k1*k2 - 6*k2^2 + 8*k1*t1 + 8*k2*t1 - 2*t1^2 + 8*k1*t2 + 8*k2*t2 - 17*t1*t2 - 2*t2^2 - 44*k1 - 44*k2 + 44*t1 + 44*t2, 3*k1*k2*t1^2*t2 - 3*k1*t1^3*t2 - 3*k2*t1^3*t2 + 3*t1^4*t2 + 3*k1*k2*t1*t2^2 + 2*k1*t1^2*t2^2 + 2*k2*t1^2*t2^2 - 2*t1^3*t2^2 - 3*k1*t1*t2^3 - 3*k2*t1*t2^3 - 2*t1^2*t2^3 + 3*t1*t2^4 + 42*k1*k2*t1*t2 - 12*k1*t1^2*t2 - 12*k2*t1^2*t2 + 12*t1^3*t2 - 12*k1*t1*t2^2 - 12*k2*t1*t2^2 - 18*t1^2*t2^2 + 12*t1*t2^3 - 6*k1^2*t1 + 21*k1*k2*t1 - 6*k2^2*t1 + 24*k1*t1^2 + 24*k2*t1^2 - 18*t1^3 - 6*k1^2*t2 + 21*k1*k2*t2 - 6*k2^2*t2 - 52*k1*t1*t2 - 52*k2*t1*t2 + 13*t1^2*t2 + 24*k1*t2^2 + 24*k2*t2^2 + 13*t1*t2^2 - 18*t2^3 + 12*k1^2 - 90*k1*k2 + 12*k2^2 - 12*k1*t1 - 12*k2*t1 - 12*k1*t2 - 12*k2*t2 + 114*t1*t2 + 56*k1 + 56*k2 - 56*t1 - 56*t2, 3*k1*k2^2*t1*t2 - 6*k2^3*t1*t2 + 3*k2^2*t1^2*t2 - 3*k1*t1^3*t2 + 3*t1^4*t2 + 3*k2^2*t1*t2^2 + 2*k1*t1^2*t2^2 - k2*t1^2*t2^2 - 2*t1^3*t2^2 - 3*k1*t1*t2^3 - 2*t1^2*t2^3 + 3*t1*t2^4 + 24*k1*k2*t1*t2 - 18*k2^2*t1*t2 - 12*k1*t1^2*t2 + 6*k2*t1^2*t2 + 12*t1^3*t2 - 12*k1*t1*t2^2 + 6*k2*t1*t2^2 - 18*t1^2*t2^2 + 12*t1*t2^3 - 3*k1*k2^2 + 6*k2^3 - 6*k1^2*t1 - 3*k1*k2*t1 + 18*k2^2*t1 + 24*k1*t1^2 - 6*k2*t1^2 - 18*t1^3 - 6*k1^2*t2 - 3*k1*k2*t2 + 18*k2^2*t2 - 4*k1*t1*t2 - 55*k2*t1*t2 + 19*t1^2*t2 + 24*k1*t2^2 - 6*k2*t2^2 + 19*t1*t2^2 - 18*t2^3 + 12*k1^2 - 18*k1*k2 - 24*k2^2 - 12*k1*t1 + 24*k2*t1 - 12*k1*t2 + 24*k2*t2 + 6*t1*t2 + 8*k1 + 8*k2 - 8*t1 - 8*t2, 30*k2^5 + 42*k2^4*t1 + 234*k1^2*k2*t1^2 + 102*k1*k2^2*t1^2 - 306*k2^3*t1^2 - 102*k1*k2*t1^3 + 366*k2^2*t1^3 - 132*k2*t1^4 + 42*k2^4*t2 - 285*k2^3*t1*t2 + 243*k2^2*t1^2*t2 + 234*k1^2*k2*t2^2 + 102*k1*k2^2*t2^2 - 306*k2^3*t2^2 + 243*k2^2*t1*t2^2 - 416*k1*t1^2*t2^2 - 119*k2*t1^2*t2^2 - 52*t1^3*t2^2 - 102*k1*k2*t2^3 + 366*k2^2*t2^3 - 52*t1^2*t2^3 - 132*k2*t2^4 + 444*k2^4 + 1896*k1^2*k2*t1 + 48*k1*k2^2*t1 - 1332*k2^3*t1 + 1356*k1*k2*t1^2 + 1332*k2^2*t1^2 - 444*k2*t1^3 + 1896*k1^2*k2*t2 + 48*k1*k2^2*t2 - 1332*k2^3*t2 + 180*k1*k2*t1*t2 + 552*k2^2*t1*t2 - 2112*k1*t1^2*t2 + 72*k2*t1^2*t2 - 696*t1^3*t2 + 1356*k1*k2*t2^2 + 1332*k2^2*t2^2 - 2112*k1*t1*t2^2 + 72*k2*t1*t2^2 - 1416*t1^2*t2^2 - 444*k2*t2^3 - 696*t1*t2^3 + 7209*k1^2*k2 - 2628*k1*k2^2 - 822*k2^3 - 204*k1^2*t1 + 10362*k1*k2*t1 + 3300*k2^2*t1 - 1056*k1*t1^2 - 3738*k2*t1^2 + 1260*t1^3 - 204*k1^2*t2 + 10362*k1*k2*t2 + 3300*k2^2*t2 - 6680*k1*t1*t2 - 3029*k2*t1*t2 - 6949*t1^2*t2 - 1056*k1*t2^2 - 3738*k2*t2^2 - 6949*t1*t2^2 + 1260*t2^3 - 3096*k1^2 + 22026*k1*k2 + 8940*k2^2 - 1560*k1*t1 - 13596*k2*t1 + 4656*t1^2 - 1560*k1*t2 - 13596*k2*t2 - 6870*t1*t2 + 4656*t2^2 - 42488*k1 - 42488*k2 + 42488*t1 + 42488*t2, 9*k1*t1^3*t2^2 + 9*k2*t1^3*t2^2 - 9*t1^4*t2^2 - 12*k1*k2*t1*t2^3 - 11*k1*t1^2*t2^3 - 11*k2*t1^2*t2^3 + 2*t1^3*t2^3 + 12*k1*t1*t2^4 + 12*k2*t1*t2^4 + 11*t1^2*t2^4 - 12*t1*t2^5 + 54*k1*t1^3*t2 + 54*k2*t1^3*t2 - 54*t1^4*t2 - 168*k1*k2*t1*t2^2 - 30*k1*t1^2*t2^2 - 30*k2*t1^2*t2^2 - 24*t1^3*t2^2 + 102*k1*t1*t2^3 + 102*k2*t1*t2^3 + 96*t1^2*t2^3 - 102*t1*t2^4 + 9*k1^2*k2*t1 + 9*k1*k2^2*t1 + 72*k1*k2*t1^2 + 9*k1^2*k2*t2 + 9*k1*k2^2*t2 - 732*k1*k2*t1*t2 + 84*k1*t1^2*t2 + 84*k2*t1^2*t2 - 165*t1^3*t2 + 24*k1^2*t2^2 - 12*k1*k2*t2^2 + 24*k2^2*t2^2 + 388*k1*t1*t2^2 + 388*k2*t1*t2^2 + 218*t1^2*t2^2 - 96*k1*t2^3 - 96*k2*t2^3 - 289*t1*t2^3 + 72*t2^4 - 18*k1^2*k2 - 18*k1*k2^2 + 108*k1^2*t1 - 252*k1*k2*t1 + 108*k2^2*t1 - 432*k1*t1^2 - 432*k2*t1^2 + 324*t1^3 + 60*k1^2*t2 + 108*k1*k2*t2 + 60*k2^2*t2 + 672*k1*t1*t2 + 672*k2*t1*t2 - 30*t1^2*t2 - 384*k1*t2^2 - 384*k2*t2^2 - 486*t1*t2^2 + 324*t2^3 - 192*k1^2 + 996*k1*k2 - 192*k2^2 + 192*k1*t1 + 192*k2*t1 - 32*k1*t2 - 32*k2*t2 - 1156*t1*t2 + 224*t2^2 - 528*k1 - 528*k2 + 528*t1 + 528*t2, 27*k2^3*t1^2*t2 - 27*k2^2*t1^3*t2 + 27*k2^3*t1*t2^2 - 18*k2^2*t1^2*t2^2 - 9*k2*t1^3*t2^2 + 12*k1*k2*t1*t2^3 - 27*k2^2*t1*t2^3 + 20*k1*t1^2*t2^3 + 11*k2*t1^2*t2^3 + 16*t1^3*t2^3 - 12*k1*t1*t2^4 - 12*k2*t1*t2^4 - 20*t1^2*t2^4 + 12*t1*t2^5 + 378*k2^3*t1*t2 - 162*k2^2*t1^2*t2 - 216*k2*t1^3*t2 + 168*k1*k2*t1*t2^2 - 162*k2^2*t1*t2^2 + 120*k1*t1^2*t2^2 + 66*k2*t1^2*t2^2 + 96*t1^3*t2^2 - 48*k1*t1*t2^3 - 264*k2*t1*t2^3 - 24*t1^2*t2^3 + 48*t1*t2^4 - 63*k1^2*k2*t1 + 72*k1*k2^2*t1 - 54*k2^3*t1 - 72*k1*k2*t1^2 + 54*k2*t1^3 - 63*k1^2*k2*t2 + 72*k1*k2^2*t2 - 54*k2^3*t2 + 732*k1*k2*t1*t2 + 684*k2^2*t1*t2 + 60*k1*t1^2*t2 - 651*k2*t1^2*t2 + 21*t1^3*t2 - 24*k1^2*t2^2 + 12*k1*k2*t2^2 - 24*k2^2*t2^2 - 244*k1*t1*t2^2 - 955*k2*t1*t2^2 + 178*t1^2*t2^2 + 96*k1*t2^3 + 150*k2*t2^3 + 145*t1*t2^3 - 72*t2^4 + 126*k1^2*k2 - 144*k1*k2^2 - 324*k2^3 + 630*k1*k2*t1 - 1404*k2^2*t1 + 1728*k2*t1^2 + 48*k1^2*t2 + 270*k1*k2*t2 - 1356*k2^2*t2 - 2040*k1*t1*t2 + 1362*k2*t1*t2 - 492*t1^2*t2 - 48*k1*t2^2 + 1680*k2*t2^2 - 36*t1*t2^2 - 24*k1^2 - 1752*k1*k2 + 2496*k2^2 + 600*k1*t1 - 1920*k2*t1 - 576*t1^2 + 824*k1*t2 - 1696*k2*t2 + 2848*t1*t2 - 800*t2^2 + 672*k1 + 672*k2 - 672*t1 - 672*t2, 54*k2^4*t1*t2 - 54*k2^2*t1^3*t2 + 9*k2^2*t1^2*t2^2 - 9*k2*t1^3*t2^2 + 21*k1*k2*t1*t2^3 - 54*k2^2*t1*t2^3 + 35*k1*t1^2*t2^3 + 26*k2*t1^2*t2^3 + 28*t1^3*t2^3 - 21*k1*t1*t2^4 - 21*k2*t1*t2^4 - 35*t1^2*t2^4 + 21*t1*t2^5 + 864*k2^3*t1*t2 - 486*k2^2*t1^2*t2 - 378*k2*t1^3*t2 + 294*k1*k2*t1*t2^2 - 486*k2^2*t1*t2^2 + 210*k1*t1^2*t2^2 + 318*k2*t1^2*t2^2 + 168*t1^3*t2^2 - 84*k1*t1*t2^3 - 462*k2*t1*t2^3 - 42*t1^2*t2^3 + 84*t1*t2^4 - 54*k2^4 - 9*k1^2*k2*t1 + 126*k1*k2^2*t1 - 270*k2^3*t1 - 126*k1*k2*t1^2 + 270*k2^2*t1^2 + 54*k2*t1^3 - 9*k1^2*k2*t2 + 126*k1*k2^2*t2 - 270*k2^3*t2 + 1281*k1*k2*t1*t2 + 1899*k2^2*t1*t2 + 105*k1*t1^2*t2 - 1605*k2*t1^2*t2 - 24*t1^3*t2 - 42*k1^2*t2^2 + 21*k1*k2*t2^2 + 228*k2^2*t2^2 - 427*k1*t1*t2^2 - 2137*k2*t1*t2^2 + 406*t1^2*t2^2 + 168*k1*t2^3 + 222*k2*t2^3 + 193*t1*t2^3 - 126*t2^4 + 18*k1^2*k2 - 252*k1*k2^2 - 324*k2^3 + 1710*k1*k2*t1 - 3024*k2^2*t1 + 3348*k2*t1^2 + 84*k1^2*t2 + 1080*k1*k2*t2 - 2940*k2^2*t2 - 3570*k1*t1*t2 + 3072*k2*t1*t2 - 1590*t1^2*t2 - 84*k1*t2^2 + 3264*k2*t2^2 - 792*t1*t2^2 - 42*k1^2 - 4281*k1*k2 + 4638*k2^2 + 240*k1*t1 - 4440*k2*t1 - 198*t1^2 + 632*k1*t2 - 4048*k2*t2 + 8089*t1*t2 - 590*t2^2 + 2796*k1 + 2796*k2 - 2796*t1 - 2796*t2, 45*k2^2*t1^3*t2^2 - 45*k2*t1^4*t2^2 - 120*k2^3*t1*t2^3 + 5*k2^2*t1^2*t2^3 + 70*k2*t1^3*t2^3 + 45*t1^4*t2^3 + 120*k2^2*t1*t2^4 - 40*k1*t1^2*t2^4 - 45*k2*t1^2*t2^4 - 75*t1^3*t2^4 + 40*t1^2*t2^5 + 270*k2^2*t1^3*t2 - 270*k2*t1^4*t2 - 1680*k2^3*t1*t2^2 + 690*k2^2*t1^2*t2^2 + 720*k2*t1^3*t2^2 + 270*t1^4*t2^2 - 240*k1*k2*t1*t2^3 + 990*k2^2*t1*t2^3 - 560*k1*t1^2*t2^3 - 560*k2*t1^2*t2^3 - 430*t1^3*t2^3 + 690*k2*t1*t2^4 + 110*t1^2*t2^4 + 18*k2^4*t1 - 189*k1^2*k2*t1^2 + 198*k1*k2^2*t1^2 - 54*k2^3*t1^2 - 198*k1*k2*t1^3 + 54*k2^2*t1^3 - 18*k2*t1^4 + 18*k2^4*t2 - 7320*k2^3*t1*t2 + 4107*k2^2*t1^2*t2 + 2790*k2*t1^3*t2 + 405*t1^4*t2 + 51*k1^2*k2*t2^2 - 162*k1*k2^2*t2^2 + 186*k2^3*t2^2 - 3360*k1*k2*t1*t2^2 + 1067*k2^2*t1*t2^2 - 2224*k1*t1^2*t2^2 - 246*k2*t1^2*t2^2 - 818*t1^3*t2^2 - 198*k1*k2*t2^3 + 54*k2^2*t2^3 + 320*k1*t1*t2^3 + 6510*k2*t1*t2^3 - 818*t1^2*t2^3 - 258*k2*t2^4 + 85*t1*t2^4 - 36*k2^4 + 900*k1^2*k2*t1 - 1440*k1*k2^2*t1 + 648*k2^3*t1 + 306*k1*k2*t1^2 - 2268*k2^2*t1^2 + 1656*k2*t1^3 + 420*k1^2*k2*t2 - 720*k1*k2^2*t2 + 2088*k2^3*t2 - 14244*k1*k2*t1*t2 - 11610*k2^2*t1*t2 - 2232*k1*t1^2*t2 + 11286*k2*t1^2*t2 - 630*t1^3*t2 - 3054*k1*k2*t2^2 + 3972*k2^2*t2^2 + 8008*k1*t1*t2^2 + 14086*k2*t1*t2^2 - 1762*t1^2*t2^2 - 6024*k2*t2^3 + 650*t1*t2^3 - 1182*k1^2*k2 + 2124*k1*k2^2 + 6252*k2^3 - 396*k1^2*t1 - 16542*k1*k2*t1 + 30060*k2^2*t1 + 3096*k1*t1^2 - 33612*k2*t1^2 - 2700*t1^3 - 396*k1^2*t2 - 5982*k1*k2*t2 + 18860*k2^2*t2 + 38744*k1*t1*t2 - 20526*k2*t1*t2 + 12988*t1^2*t2 + 536*k1*t2^2 - 24972*k2*t2^2 - 6212*t1*t2^2 - 140*t2^3 + 792*k1^2 + 43512*k1*k2 - 46416*k2^2 - 7992*k1*t1 + 39216*k2*t1 + 7200*t1^2 - 13112*k1*t2 + 34096*k2*t2 - 69616*t1*t2 + 12320*t2^2 - 24544*k1 - 24544*k2 + 24544*t1 + 24544*t2)

X=A.subscheme(F)
X.dimension()
X.projective_closure().degree()
\end{lstlisting}
\end{code}
\end{proof}

It is interesting to note that $\sigma_4^{(1)}$ is determined uniquely by $(\sigma_1^{(1)},\sigma_2^{(1)},\sigma_3^{(1)})$. This is not typical for elements of $\M_4$. We do have the standard linear relationship between $(\sigma_1^{(1)},\sigma_2^{(1)},\sigma_3^{(1)})$ and $\sigma_5^{(1)}$ for all maps in $\M_4$, but the dependence of $\sigma_4^{(1)}$ is special to this family.

\begin{prop} \label{prop_A4_C3_geo}
    The image of the family $f_{k_1,k_2}(z)=\frac{z^4+k_1z}{k_2z^3+1}$ of $\A_4(C_3)$ under $\tau_1$ is the surface given by
    \begin{align*}
        0&=972\sigma_1^5 - 324\sigma_1^4\sigma_2 - 135\sigma_1^3\sigma_2^2 - 54\sigma_1^2\sigma_2^3 + 324\sigma_1^4\sigma_3 + 162\sigma_1^3\sigma_2\sigma_3 - 9\sigma_1^2\sigma_2^2\sigma_3 + 27\sigma_1^3\sigma_3^2 + 648\sigma_1^4\\
        &- 5724\sigma_1^3\sigma_2 + 1404\sigma_1^2\sigma_2^2 + 480\sigma_1\sigma_2^3 + 192\sigma_2^4 + 540\sigma_1^3\sigma_3 - 1224\sigma_1^2\sigma_2\sigma_3 - 648\sigma_1\sigma_2^2\sigma_3 + 32\sigma_2^3\sigma_3\\
        &- 108\sigma_1\sigma_2\sigma_3^2 + 1188\sigma_1^3 - 3672\sigma_1^2\sigma_2 + 7776\sigma_1\sigma_2^2 - 896\sigma_2^3 + 5364\sigma_1^2\sigma_3 - 3024\sigma_1\sigma_2\sigma_3 - 384\sigma_2^2\sigma_3\\
        &+ 1404\sigma_1\sigma_3^2 + 648\sigma_2\sigma_3^2 + 108\sigma_3^3 + 2160\sigma_1^2 - 5760\sigma_1\sigma_2 + 3840\sigma_2^2 + 4320\sigma_1\sigma_3 - 10560\sigma_2\sigma_3\\
        &+ 2160\sigma_3^2 - 33600\sigma_1 + 28800\sigma_2 - 17600\sigma_3 + 32000.
    \end{align*}
This surface is reduced, irreducible, and singular with singular locus given by the conjugacy classes described by
\begin{align}
 (k_1,k_2) \in \{(0,4/3)\} &\cup \{(-4,-4)\} \cup \{(9/4,-4)\} \notag\\
 &\bigcup
 \left\{\left(\frac{3888t + 46656}{9t^3 + 216t^2 + 1944t}, \frac{t^3 + 18t^2}{9t^2 + 216t + 1944}\right) : t\neq 0\in \Q\right\} \label{curve_1}\\
 &\bigcup
 \left\{\left(\frac{9t - 36}{-3t + 24}, -\frac{8-t}{3}\right) : t \in \Q \setminus \{8\}\right\} \label{curve_2}\\
 &\bigcup
 \left\{\left(\frac{-16}{2t-6}, \frac{-t-3}{2}\right) : t\in \Q \setminus\{3\}\right\} \label{curve_3}
\end{align}
\end{prop}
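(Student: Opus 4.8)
The plan is to follow the template used in Propositions \ref{prop_c3}, \ref{prop_D2_1}, and especially \ref{prop_a3_c2_1_geo}: present the image as an elimination ideal and then read off the geometry by Gr\"obner basis computations in Sage and Magma. First I would compute the fixed point multiplier invariants $\sigma_1,\sigma_2,\sigma_3$ of $f_{k_1,k_2}(z)=\frac{z^4+k_1z}{k_2z^3+1}$ as rational functions of $(k_1,k_2)$; the common denominator is a power of $k_1k_2-1$. Clearing denominators yields three polynomials in $\Q[k_1,k_2,\sigma_1,\sigma_2,\sigma_3]$; let $I$ be the ideal they generate. I would saturate $I$ by $(k_1k_2-1)$ to discard the degenerate locus and then compute $I\cap\Q[\sigma_1,\sigma_2,\sigma_3]$. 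By Lemma \ref{lem_A4_C3} the map $\tau_1$ is injective on the (two-to-one) image of $\varphi$, so that image is two-dimensional and its Zariski closure is exactly the hypersurface cut out by the single generator produced; that this generator is the displayed quintic is then checked directly by substituting $(\sigma_1,\sigma_2,\sigma_3)=(\sigma_1(k_1,k_2),\sigma_2(k_1,k_2),\sigma_3(k_1,k_2))$ and verifying it vanishes identically.

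Reducedness and irreducibility are essentially automatic: $f_{k_1,k_2}$ is a $2$-parameter family in the sense of Lemma \ref{lem_k_param_image}, so its image in $\M_4$ is irreducible, and $(k_1,k_2)\mapsto(\sigma_1,\sigma_2,\sigma_3)$ is a morphism on the complement of $\{k_1k_2=1\}$, whence its image and the closure are irreducible; equivalently, one factors the quintic over $\Q$ to see it is squarefree and irreducible, and both \texttt{IsReduced} and \texttt{IsIrreducible} are confirmed in Magma. Singularity of the surface, together with its \texttt{SingularSubscheme}, is likewise computed in Magma.

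The bulk of the work is the description of the singular locus. I would decompose the singular subscheme into irreducible components (in Sage/Magma); each component is a subvariety of $\bbA^3_{(\sigma_1,\sigma_2,\sigma_3)}$ described by an ideal of the kind appearing in Proposition \ref{prop_a3_c2_2_geo}. To pass to the $(k_1,k_2)$-description I adjoin the three parametrization polynomials to the ideal of each component, eliminate $\sigma_1,\sigma_2,\sigma_3$, and saturate by $(k_1k_2-1)$. The zero-dimensional components give the three conjugacy classes $(0,4/3)$, $(-4,-4)$, $(9/4,-4)$; note $(-4,-4)$ lies on the diagonal $k_1=k_2$, i.e.\ on the $\A_4(D_3)$ locus, and is precisely the singular point of that curve found in Proposition \ref{prop_A4_D3_geo}. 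The positive-dimensional components correspond to three rational (genus zero) curves in the $(k_1,k_2)$-plane; two of them turn out to be conics and the third a rational curve of higher degree, and each admits an explicit rational parametrization --- obtained, e.g., via Magma's conic/rational-curve parametrization routines and then cleaned up --- giving the formulas \eqref{curve_1}--\eqref{curve_3}, where in each case the excluded parameter value is the one sending $(k_1,k_2)$ to $\{k_1k_2=1\}$ or to infinity. Finally, since $\varphi$ is two-to-one with $f_{k_1,k_2}\sim f_{k_2,k_1}$ via $\alpha(z)=\tfrac1z$ (Lemma \ref{lem_A4_C3}), each $\sigma$-space curve of the singular locus is the common image of a curve in the $(k_1,k_2)$-plane and its $k_1\leftrightarrow k_2$ swap, so listing one representative per swap-orbit records all the relevant conjugacy classes.

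The hard part will be this singular-locus analysis and its translation into the clean formulas above: (i) the elimination and saturation steps involve large polynomials, so one must be careful that the saturation by $(k_1k_2-1)$ is complete and that no spurious component coming from the degenerate family survives the elimination; (ii) each irreducible component of the singular subscheme must be matched to a genuinely irreducible, genus-zero curve in the $(k_1,k_2)$-plane, and a tidy parametrization extracted; and (iii) one must verify completeness --- that the union of the three points and three curves is exactly the singular locus and that none of the three points is already contained in one of the three curves. Everything outside this step is routine, if lengthy, computer algebra.
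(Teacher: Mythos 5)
Your proposal matches the paper's method step for step: compute the fixed-point multiplier invariants as rational functions of $(k_1,k_2)$, clear denominators, saturate by $(k_1k_2-1)$, eliminate $(k_1,k_2)$ to obtain the quintic hypersurface, check reducedness/irreducibility/singularity in Magma, decompose the singular subscheme into components $S_1,S_2,S_3$ in $\sigma$-space, and for each component adjoin the invariant-defining equations, eliminate the $\sigma_i$, and saturate again to read off the $(k_1,k_2)$-parametrizations. The one cosmetic divergence is that the paper lists both members of the swap-conjugate pair $k_1k_2+3k_2-4=0$ and $k_1k_2+3k_1-4=0$ as \eqref{curve_2} and \eqref{curve_3} rather than recording one representative per orbit, but this is presentation, not substance.
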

\begin{proof}
As usual, we first compute the fixed point multiplier invariants using only the first three:
\begin{align*}
\sigma_1&=\frac{1}{k_1k_2-1} (k_1^2k_2 + k_1k_2^2 - 6k_1k_2 + 8k_1 + 8k_2 - 12)\\
 \sigma_2&=\frac{1}{(k_1k_2-1)^2}(k_1^3k_2^3 - 6k_1^3k_2^2 - 6k_1^2k_2^3 + 9k_1^3k_2 + 28k_1^2k_2^2 + 9k_1k_2^3 - 42k_1^2k_2 - 42k_1k_2^2 + 18k_1^2\\
 &\hspace*{1in} + 85k_1k_2 + 18k_2^2 - 60k_1 - 60k_2 + 48)\\
 \sigma_3&=\frac{1}{(k_1k_2-1)^3}(-6k_1^4k_2^4 + 21k_1^4k_2^3 + 21k_1^3k_2^4 - 36k_1^4k_2^2 - 80k_1^3k_2^3 - 36k_1^2k_2^4 + 27k_1^4k_2 + 135k_1^3k_2^2\\
 &\hspace*{1in} + 135k_1^2k_2^3 + 27k_1k_2^4 - 90k_1^3k_2 - 210k_1^2k_2^2 - 90k_1k_2^3 + 153k_1^2k_2 + 153k_1k_2^2\\
 &\hspace*{1in} - 36k_1^2 - 180k_1k_2 - 36k_2^2 + 96k_1 + 96k_2 - 64).
\end{align*}
We look at the ideal in $\Q[\sigma_1, \sigma_2, \sigma_3]$ generated by the defining equations of the invariants. We saturate by the ideal $(k_1k_2-1)$ to avoid degeneracy and eliminate the variables $k_1$ and $k_2$. This results in the surface defined by the equation in the statement.

Using Magma, we see that the surface is reduced, irreducible, and singular. Since this is a hypersurface, we compute the singular locus as the points on the surface that also vanish on the partial derivatives of the defining equation. This variety has irreducible components
\begin{align*}
   S_1:&\begin{cases}
    0=\sigma_3=\sigma_2\\
    0=3\sigma_1 - 4
    \end{cases}\\
S_2:&\begin{cases}
    0=15\sigma_1 + 6\sigma_2 + \sigma_3 - 20\\
    0=36\sigma_2^2 + 12\sigma_2\sigma_3 + \sigma_3^2 - 320\sigma_ + 80\sigma_3
\end{cases}\\
S_3:&\begin{cases}
    0=15\sigma_1\sigma_2 + 8\sigma_2^2 - 18\sigma_1\sigma_3 - 150\sigma_1 + 80\sigma_2 - 90\sigma_3 + 200\\
    0=90\sigma_1^2 - 4\sigma_2^2 + 9\sigma_1\sigma_3 + 180\sigma_1 - 280\sigma_2 - 400\\
    0=90\sigma_2^3 + 128\sigma_2^2\sigma_3 - 288\sigma_1\sigma_3^2 + 1800\sigma_2^2 - 4950\sigma_1\sigma_3 + 1280\sigma_2\sigma_3 - 1215\sigma_3^2\\
    \phantom{0= }- 45000\sigma_1 + 30000\sigma_2 - 14800\sigma_3 + 60000.
 \end{cases}
\end{align*}

The general procedure of analyzing the components is similar to the one used in Section $3$: we start with the ideal defined by the equations of the component along with the defining equations of the fixed point multiplier invariants, eliminate $(\sigma_1,\sigma_2,\sigma_3)$, and finally saturate by $(k_1k_2-1)$. The first component $S_1$ corresponds to the case $k_1=0$ and $k_2=4/3$.

The component $S_2$ results in a rational curve given by
\begin{align*}
    0&=(k_1 + k_2 + 8)(9k_1^3k_2 - 14k_1^2k_2^2 + 9k_1k_2^3 + 12k_1^2k_2 + 12k_1k_2^2 + 4k_1k_2 - 48k_1 - 48k_2 + 64)\\
    0&=(k_2 + 4)^2(9k_1^3k_2 - 14k_1^2k_2^2 + 9k_1k_2^3 + 12k_1^2k_2 + 12k_1k_2^2 + 4k_1k_2 - 48k_1 - 48k_2 + 64).
\end{align*}
The common factor gives the rational curve \eqref{curve_1} parameterized in the statement. If $k_2=-4$, then either $k_1=-4$ or $k_1 = 4/9$ give the other two singular points.

The component $S_3$ results in a variety with the two irreducible components
\begin{align*}
    0=k_1k_2 + 3k_2 - 4 \quad \text{and} \quad k_1k_2 + 3k_1 - 4=0.
\end{align*}
These are conjugate by swapping $(k_1,k_2) \to (k_2,k_1)$ via $z \mapsto \frac{1}{z}$. This gives the two curves \eqref{curve_2} and \eqref{curve_3} and finishes the parameterization given in the statement.
\begin{code}
\begin{lstlisting}
R.<k1,k2,s1,s2,s3>=PolynomialRing(QQ)
S=[s1,s2,s3]
P.<x,y>=ProjectiveSpace(R,1)
f=DynamicalSystem([x^4 + k1*x*y^3, k2*x^3*y+y^4])
s1=f.sigma_invariants(1)
I=R.ideal([s1[i].numerator()-s1[i].denominator()*S[i] for i in range(3)])
J=R.ideal(k1*k2-1)
I=I.saturation(J)[0]
J=I.elimination_ideal([k1,k2])
print(len(J.gens()))

#singular subscheme
R.<s1,s2,s3>=PolynomialRing(QQ,3)
A=AffineSpace(R)
F=972*s1^5 - 324*s1^4*s2 - 135*s1^3*s2^2 - 54*s1^2*s2^3 + 324*s1^4*s3 + 162*s1^3*s2*s3 - 9*s1^2*s2^2*s3 + 27*s1^3*s3^2 + 648*s1^4 - 5724*s1^3*s2 + 1404*s1^2*s2^2 + 480*s1*s2^3 + 192*s2^4 + 540*s1^3*s3 - 1224*s1^2*s2*s3 - 648*s1*s2^2*s3 + 32*s2^3*s3 - 108*s1*s2*s3^2 + 1188*s1^3 - 3672*s1^2*s2 + 7776*s1*s2^2 - 896*s2^3 + 5364*s1^2*s3 - 3024*s1*s2*s3 - 384*s2^2*s3 + 1404*s1*s3^2 + 648*s2*s3^2 + 108*s3^3 + 2160*s1^2 - 5760*s1*s2 + 3840*s2^2 + 4320*s1*s3 - 10560*s2*s3 + 2160*s3^2 - 33600*s1 + 28800*s2 - 17600*s3 + 32000
X=A.subscheme(F)
f1 = F.derivative(s1)
f2 = F.derivative(s2)
f3 = F.derivative(s3)
Y=A.subscheme([F,f1,f2,f3])
Y.dimension()


#singular components
R.<k1,k2,s1,s2,s3>=QQ[]
P.<x,y>=ProjectiveSpace(R,1)
f=DynamicalSystem([x^4 + k1*x*y^3, k2*x^3*y+y^4])
sig=f.sigma_invariants(1)
S=[s1,s2,s3]

#S1
f1=s3
f2=s2
f3=3*s1 - 4
I=R.ideal([sig[i].numerator()-sig[i].denominator()*S[i] for i in range(3)] + [f1,f2,f3])
J=R.ideal(k1*k2-1)
I=I.saturation(J)[0]
J=I.elimination_ideal([s1,s2,s3])
A.<k1,k2>=AffineSpace(QQ,2)
X=A.subscheme([9*k1*k2 - 12*k1 - 12*k2 + 16, 9*k1^2 + 9*k2^2 - 16, 3*k2^3 - 4*k2^2])
X.rational_points()

#S2
R.<k1,k2,s1,s2,s3>=QQ[]
P.<x,y>=ProjectiveSpace(R,1)
f=DynamicalSystem([x^4 + k1*x*y^3, k2*x^3*y+y^4])
sig=f.sigma_invariants(1)
S=[s1,s2,s3]

f1=15*s1 + 6*s2 + s3 - 20
f2=36*s2^2 + 12*s2*s3 + s3^2 - 320*s2 + 80*s3
I=R.ideal([sig[i].numerator()-sig[i].denominator()*S[i] for i in range(3)] + [f1,f2])
J=R.ideal(k1*k2-1)
I=I.saturation(J)[0]
J=I.elimination_ideal([s1,s2,s3])

A.<k1,k2>=AffineSpace(QQ,2);
C=A.curve(gcd([9*k1^4*k2 - 5*k1^3*k2^2 - 5*k1^2*k2^3 + 9*k1*k2^4 + 84*k1^3*k2 - 88*k1^2*k2^2 + 84*k1*k2^3 + 100*k1^2*k2 + 100*k1*k2^2 - 48*k1^2 - 64*k1*k2 - 48*k2^2 - 320*k1 - 320*k2 + 512, 9*k1^3*k2^3 - 14*k1^2*k2^4 + 9*k1*k2^5 + 72*k1^3*k2^2 - 100*k1^2*k2^3 + 84*k1*k2^4 + 144*k1^3*k2 - 128*k1^2*k2^2 + 244*k1*k2^3 + 192*k1^2*k2 + 176*k1*k2^2 - 48*k2^3 - 320*k1*k2 - 320*k2^2 - 768*k1 - 256*k2 + 1024]))
C.rational_parameterization()

#S3
R.<k1,k2,s1,s2,s3>=QQ[]
P.<x,y>=ProjectiveSpace(R,1)
f=DynamicalSystem([x^4 + k1*x*y^3, k2*x^3*y+y^4])
sig=f.sigma_invariants(1)
S=[s1,s2,s3]

f1=15*s1*s2 + 8*s2^2 - 18*s1*s3 - 150*s1 + 80*s2 - 90*s3 + 200
f2=90*s1^2 - 4*s2^2 + 9*s1*s3 + 180*s1 - 280*s2 - 400
f3=90*s2^3 + 128*s2^2*s3 - 288*s1*s3^2 + 1800*s2^2 - 4950*s1*s3 + 1280*s2*s3 - 1215*s3^2 - 45000*s1 + 30000*s2 - 14800*s3 + 60000
I=R.ideal([sig[i].numerator()-sig[i].denominator()*S[i] for i in range(3)] + [f1,f2,f3])
J=R.ideal(k1*k2-1)
I=I.saturation(J)[0]
J=I.elimination_ideal([s1,s2,s3])
print(J)
A.<k1,k2>=AffineSpace(QQ,2)
X=A.subscheme([k1^4*k2^4 + 6*k1^4*k2^3 + 6*k1^3*k2^4 + 9*k1^4*k2^2 + 20*k1^3*k2^3 + 9*k1^2*k2^4 - 18*k1^3*k2^2 - 18*k1^2*k2^3 - 72*k1^3*k2 - 111*k1^2*k2^2 - 72*k1*k2^3 + 72*k1^2*k2 + 72*k1*k2^2 + 144*k1^2 + 320*k1*k2 + 144*k2^2 - 384*k1 - 384*k2 + 256])
X.irreducible_components()

Y=A.curve(k1*k2 + 3*k2 - 4)
Y.rational_parameterization()

Y=A.curve(k1*k2 + 3*k1 - 4)
Y.rational_parameterization()
\end{lstlisting}
\end{code}
\end{proof}

\subsection{$\A_4(C_2)$} We now move on to the family $f_{k_1,k_2,k_3}(z)=\frac{z^4+k_1z^2+1}{k_2z^3+k_3z}$.

This family has infinitely many conjugacy classes that have the same fixed point multiplier invariants. Computing the multiplier invariants for points of period two was computationally infeasible, but computing the multiplier invariants for the points of formal period two was possible. We recall the defintion of dynatomic polynomials and formal periodic points. For a rational map $f(z) = \frac{F(z)}{G(z)}$ define $\Phi_1(f) = F(z) - zG(z)$ and $\Phi_n(f) = \Phi_1(f^n)$ for $n > 1$. The roots of $\Phi_n(f)$ are the points with period $n$. We define the \emph{$n$th dynatomic polynomial} as $\Phi^{\ast}_n = \prod_{d \mid n} \Phi_d(f)^{\mu(n/d)}$ where $\mu$ is the M\"obius function. Its roots are the points of \emph{formal period $n$} and contain among them the points of minimal period $n$. Similarly, for strictly preperiodic points, we define the \emph{generalized $(m,n)$-dynatomic polynomial} as $\Phi^{\ast}_{m,n}(f) = \frac{\Phi^{\ast}_n(f^m)}{\Phi^{\ast}_{n-1}(f^m)}$. Its roots are the points of \emph{formal period $(m,n)$} and contain among them the points of minimal period $(m,n)$. See Silverman \cite[\S4.1]{Silverman10} for dynatomic polynomials and Hutz \cite{Hutz12} for generalized dynatomic polynomials. We can also construct invariants from the multipliers of the formal periodic points of period $n$ and notate them as $\sigma_i^{(n)\ast}$.

Utilizing just the $\sigma_{1}^{(2)\ast}$ did result in a finite-to-one map, but there were a number of spurious values appearing. Additionally including $\sigma_2^{(2)\ast}$ resulted in the correct mapping. As the computation is quite time and memory consuming, we record the two higher multiplier invariants here.
\begin{align*}
    \sigma_1^{(2)\ast} &= \frac{-1}{(k_2^2 + k_3^2-k_1k_2k_3)^2} \Big((8k_3^2 - 12)k_2^4 + (-8k_1k_3^3 + 40k_3^2 - 24k_1k_3 + (40k_1^2 - 96))k_2^3\\
    &+ (-8k_1k_3^3 + (-4k_1^2 + 120)k_3^2 + (-8k_1^3 - 64k_1)k_3 + (-8k_1^4 + 80k_1^2 - 192))k_2^2 + (-24k_3^4\\
    &+ (-8k_1^3 + 24k_1)k_3^3 + (8k_1^2 + 160)k_3^2 + (-16k_1^3 + 64k_1)k_3)k_2 + ((8k_1^2 - 60)k_3^4 - 32k_1k_3^3\\
    &+ (16k_1^2 - 64)k_3^2)\Big)\\
    \sigma_2^{(2)\ast} &=\frac{1}{(k_2^2 + k_3^2-k_1k_2k_3)^4} \Big((24k_3^4 - 136k_3^2 - 126)k_2^8 + (-48k_1k_3^5 + 320k_3^4 - 56k_1k_3^3\\
    &+ (320k_1^2 - 600)k_3^2 - 440k_1k_3 + (168k_1^2 + 288))k_2^7 + ((24k_1^2 - 16)k_3^6 - 384k_1k_3^5 + (200k_1^2\\
    &+ 1216)k_3^4 + (-384k_1^3 - 408k_1)k_3^3 + (-64k_1^4 + 2452k_1^2 - 2120)k_3^2 + (-664k_1^3 - 960k_1)k_3\\
    &+ (360k_1^4 - 2704k_1^2 + 5568))k_2^6 + (16k_1k_3^7 + (64k_1^2 - 32)k_3^6 + (-72k_1^3 - 1120k_1)k_3^5\\
    &+ (64k_1^4 + 664k_1^2 + 2712)k_3^4 + (64k_1^5 - 1168k_1^3 - 2680k_1)k_3^3 + (-232k_1^4 + 8312k_1^2 - 9120)k_3^2 \\
    &+ (-16k_1^5 - 528k_1^3 + 832k_1)k_3 + (-160k_1^6 + 2656k_1^4 - 12288k_1^2 + 16896))k_2^5 + (-24k_3^8\\
    &+ 160k_1k_3^7 + (64k_1^4 + 32k_1^2 - 296)k_3^6 + (-360k_1^3 - 112k_1)k_3^5 + (242k_1^4 - 1120k_1^2 + 6652)k_3^4 \\
    &+ (-232k_1^5 - 624k_1^3 - 7392k_1)k_3^3 + (40k_1^6 - 976k_1^4 + 10512k_1^2 - 21312)k_3^2 + (32k_1^7 - 1216k_1^5 \\
    &+ 4992k_1^3 - 2560k_1)k_3 + (16k_1^8 - 448k_1^6 + 3872k_1^4 - 13568k_1^2 + 16896))k_2^4 + (-96k_3^8 + (-64k_1^3 \\
    &+ 280k_1)k_3^7 + (64k_1^4 + 312k_1^2 - 2632)k_3^6 + (72k_1^5 - 920k_1^3 + 6008k_1)k_3^5 + (64k_1^6 + 952k_1^4\\
    &- 4200k_1^2 + 13152)k_3^4 + (64k_1^7 - 544k_1^5 - 608k_1^3 + 1920k_1)k_3^3 + (192k_1^6 + 832k_1^4\\
    &- 3072k_1^2 - 13312)k_3^2 + (192k_1^7 - 2368k_1^5 + 9728k_1^3 - 13312k_1)k_3)k_2^3 + (144k_3^8\\
    &+ (128k_1^3 - 600k_1)k_3^7 + (24k_1^6 - 280k_1^4 + 532k_1^2 - 7720)k_3^6 + (-128k_1^5 - 952k_1^3 + 6016k_1)k_3^5\\
    &+ (16k_1^6 + 152k_1^4 - 240k_1^2 + 5184)k_3^4 + (-192k_1^5 - 4864k_1^3 + 22528k_1)k_3^3 + (-160k_1^6 + 2112k_1^4\\
    &- 9216k_1^2 + 13312)k_3^2)k_2^2 + ((-192k_1^2 + 1224)k_3^8 + (-48k_1^5 + 680k_1^3 - 712k_1)k_3^7 + (320k_1^4\\
    &- 824k_1^2 - 6112)k_3^6 + (-160k_1^5 + 1776k_1^3 - 6080k_1)k_3^5 + (736k_1^4 + 512k_1^2 - 13824)k_3^4 + (-64k_1^5\\
    &+ 512k_1^3 - 1024k_1)k_3^3)k_2 + ((24k_1^4 - 472k_1^2 + 1554)k_3^8 + (-256k_1^3 + 1696k_1)k_3^7\\
    &+ (80k_1^4 - 1040k_1^2 + 3392)k_3^6 + (-640k_1^3 + 2560k_1)k_3^5 + (32k_1^4 - 256k_1^2 + 512)k_3^4)\Big).
\end{align*}

The first two fixed point multiplier invariants are given by
\begin{align*}
    \sigma_1^{(1)} &= \frac{1}{k_2^2 + k_3^2-k_1k_2k_3} \Big(-k_1k_2^2k_3 - 4k_1^2k_2 + k_2^3 + 4k_1k_2k_3 - 3k_2k_3^2 - 12k_2^2 + 4k_3^2 + 16k_2\Big)\\
    \sigma_2^{(1)} &=\frac{1}{(k_2^2 + k_3^2-k_1k_2k_3)^2} \Big(4k_1^3k_2^3k_3 - 4k_1^2k_2^3k_3^2 + 4k_1k_2^3k_3^3 + 4k_1^4k_2^2 - 4k_1^2k_2^4 - 12k_1^3k_2^2k_3\\
    &+ 16k_1k_2^4k_3 + 18k_1^2k_2^2k_3^2 - 4k_2^4k_3^2 - 12k_1k_2^2k_3^3 + 28k_1^2k_2^3 - 12k_2^5 + 8k_1^3k_2k_3 - 68k_1k_2^3k_3\\
    &- 20k_1^2k_2k_3^2 + 20k_2^3k_3^2 + 4k_1k_2k_3^3 - 40k_1^2k_2^2 + 70k_2^4 + 96k_1k_2^2k_3 - 8k_1^2k_3^2 - 28k_2^2k_3^2\\
    &+ 16k_1k_3^3 - 2k_3^4 - 144k_2^3 - 32k_1k_2k_3 - 16k_2k_3^2 + 96k_2^2 + 32k_3^2\Big).
\end{align*}

\begin{prop} \label{lem_A4_C2}
 The map \begin{align*}
        \varphi:\bbA^3\setminus\{k_2^2 + k_3^2=k_1k_2k_3\} &\to \M_4\\
        (k_1,k_2,k_3) &\mapsto [f_{k_1,k_2,k_3}]
    \end{align*}
    is generically two-to-one. The map \begin{align*}
        \tau: \varphi(\bbA^2\setminus\{k_2^2 + k_3^2=k_1k_2k_3\}) &\to \bbA^4\\
        [f_{k}] &\mapsto (\sigma_1^{(1)}, \sigma_2^{(1)},\sigma_1^{(2)\ast},\sigma_2^{(2)\ast})
    \end{align*} is injective.
\end{prop}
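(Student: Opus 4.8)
The plan is to follow the same route as Lemma~\ref{lem_a3_c2_2_geo} and Lemma~\ref{lem_A4_C3}. Since this family has infinitely many conjugacy classes sharing the same fixed-point multiplier invariants, the fixed-point data alone will not separate conjugacy classes, so I would work with the four invariants $\sigma_1^{(1)},\sigma_2^{(1)},\sigma_1^{(2)\ast},\sigma_2^{(2)\ast}$ recorded above. First I would introduce a second triple of parameters $(t_1,t_2,t_3)$ and, in $\overline{\Q}[k_1,k_2,k_3,t_1,t_2,t_3]$, form the ideal generated after clearing denominators by
\[
\sigma_1^{(1)}(k)-\sigma_1^{(1)}(t),\quad \sigma_2^{(1)}(k)-\sigma_2^{(1)}(t),\quad \sigma_1^{(2)\ast}(k)-\sigma_1^{(2)\ast}(t),\quad \sigma_2^{(2)\ast}(k)-\sigma_2^{(2)\ast}(t),
\]
saturate it by the two degeneracy ideals $(k_2^2+k_3^2-k_1k_2k_3)$ and $(t_2^2+t_3^2-t_1t_2t_3)$, and base change to $K=\overline{\Q}(t_1,t_2,t_3)$. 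Then I would verify in Singular (via Sage) that the resulting ideal in $K[k_1,k_2,k_3]$ is zero-dimensional and that its projective closure has degree~$2$, which shows that a generic parameter shares its $4$-tuple of invariants with exactly one other nondegenerate triple.

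Next I would identify the two solutions explicitly as $(k_1,k_2,k_3)=(t_1,t_2,t_3)$ and $(k_1,k_2,k_3)=(-t_1,t_2,-t_3)$, noting that the twisted triple is nondegenerate exactly when the original one is, since $k_2^2+k_3^2-k_1k_2k_3$ is unchanged by $(k_1,k_2,k_3)\mapsto(-k_1,k_2,-k_3)$. A direct computation with $\alpha(z)=iz$ gives
\[
f_{k_1,k_2,k_3}^{\alpha}(z)=\frac{z^4-k_1z^2+1}{k_2z^3-k_3z}=f_{-k_1,k_2,-k_3}(z),
\]
so the two parameters in the generic fiber represent the same class in $\M_4$. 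Hence $\varphi$ is generically two-to-one, identifying precisely $(k_1,k_2,k_3)$ with $(-k_1,k_2,-k_3)$, and since no further parameter value produces the same $4$-tuple of invariants, $\tau$ is injective on $\varphi(\bbA^3\setminus\{k_2^2+k_3^2=k_1k_2k_3\})$.

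The hard part is the first computation: the invariants $\sigma_1^{(2)\ast}$ and $\sigma_2^{(2)\ast}$ are bulky (degree~$8$ in $k_2$ with polynomial coefficients in $k_1,k_3$), so the saturation and elimination over the three-variable function field $K$ are the main obstacle. It is essential to use the \emph{formal}-period-two multiplier invariants rather than the honest period-two ones, since the latter are computationally infeasible here; one should also record that keeping only $\sigma_1^{(1)},\sigma_2^{(1)},\sigma_1^{(2)\ast}$ still yields a finite-to-one map but introduces spurious components in the fiber scheme, which is why all four invariants are retained. The known symmetry $(k_1,k_2,k_3)\mapsto(-k_1,k_2,-k_3)$ can be used to organize the expected answer and to sanity-check the Gröbner basis output; the degree-$2$ computation over $K$ records the generic behavior, and injectivity of $\tau$ on the image follows because the only coincidences of invariants are the conjugacy-induced ones just exhibited.
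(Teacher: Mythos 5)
Your overall strategy is the natural one, and the second half of your argument---the explicit twist $f_{k_1,k_2,k_3}^{\alpha}=f_{-k_1,k_2,-k_3}$ for $\alpha(z)=iz$, the observation that $k_2^2+k_3^2-k_1k_2k_3$ is invariant under $(k_1,k_2,k_3)\mapsto(-k_1,k_2,-k_3)$, and the deduction of injectivity of $\tau$---is correct and agrees with the paper. However, the first step of your plan is precisely the step the authors report they could \emph{not} carry out: the paper says explicitly that ``trying to compute the degree generically as in the previous families was not feasible with our hardware resources.'' So the generic Gr\"obner/elimination computation over $K=\overline{\Q}(t_1,t_2,t_3)$ that you propose as the crux is a known obstruction, not a routine verification, and your proposal does not offer a workaround.

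The paper's actual proof takes a different route at exactly this point. Instead of working over the function field, it specializes to one concretely chosen point of the image, namely
\[
(\sigma_1^{(1)},\sigma_2^{(1)},\sigma_1^{(2)\ast},\sigma_2^{(2)\ast})=\Bigl(-\tfrac{22}{7},\,-\tfrac{82}{49},\,\tfrac{2164}{49},\,\tfrac{769442}{2401}\Bigr),
\]
forms the corresponding system in $(k_1,k_2,k_3)$, saturates by $(k_2^2+k_3^2-k_1k_2k_3)$, and obtains the zero-dimensional ideal $I=(k_2-2,\,3k_1-k_3,\,k_3^2-9)$, whose projective closure has degree $2$ and whose two multiplicity-one points are $(1,2,3)$ and $(-1,2,-3)$. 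To promote this from one fiber to the generic degree, the paper invokes upper-semicontinuity of fiber dimension (citing Milne, Section~10) together with the fact that the number of points in a nonsingular fiber of the dominant morphism $\tau\circ\varphi$ realizes the degree; the chosen fiber is nonsingular of dimension zero with two points, so the degree is two. This specialization-plus-semicontinuity argument is what makes the proof go through when the generic computation is out of reach. If you want your proposal to stand as written, you either need to actually exhibit that the generic elimination terminates (which the authors could not do), or you should replace the generic step with an argument of the paper's flavor: produce one good fiber and justify why its cardinality equals the generic degree.
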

\begin{proof}
    Trying to compute the degree generically as in the previous families was not feasible with our hardware resources, so we instead specialize to a particular choice of invariants and show that the degree is invariant under perturbation.

    First note that the domain of $\varphi$ is irreducible and the composition $\tau \circ \varphi$ is a morphism, so the image is also irreducible. From Milne \cite[Section 10]{Milne} the dimension of fibers in this situation can only go up in a closed set, and the number of points in a specific fiber is at most the degree of the morphism and is equal to the degree for nonsingular fibers. We choose a nonsingular fiber where the fiber dimension is zero and has two points in it showing that the degree is two.

    Choosing
    \begin{equation*}
        (\sigma_1^{(1)}, \sigma_2^{(1)},\sigma_1^{(2)\ast},\sigma_2^{(2)\ast})
        = \left(\frac{-22}{7}, -\frac{82}{49}, \frac{2164}{49}, \frac{769442}{2401} \right)
    \end{equation*}
    generates a system of equations in $(k_1,k_2,k_3)$. We take the associated ideal and saturate with respect to $(-k_1k_2k_3 + k_2^2 + k_3^2)$ to remove any degenerate solutions from the system. The resulting ideal is given by
    \begin{equation*}
        I = (k_2 - 2, 3k_1 - k_3, k_3^2 - 9).
    \end{equation*}
    This has the two solution $(1,2,3)$ and $(-1,2,-3)$, which are conjugate via $z \mapsto iz$. Call $X$ the variety associated to $I$. The projective closure of $X$ is dimension $0$ and degree $2$, so there should be two points when counted with multiplicity. The two given solutions are both solutions of multiplicity $1$, so they are all the solutions to the system.

    In general, we always get the two conjugate solutions $(k_1,k_2,k_3)$ and $(-k_1,k_2,-k_3)$, so $\varphi$ is two-to-one. Since these two solutions are conjugate $\tau$ is injective.
\end{proof}

\begin{code}
\begin{lstlisting}
R.<k1,k2,k3,t> = QQ[]
P.<z> = R[]
A = AffineSpace(P)
S=A.coordinate_ring()
f = DynamicalSystem_affine((z^4+k1*z^2+1)/(k2*z^3+k3*z))
F=f.homogenize(1)
f2=f.nth_iterate_map(2)
phi2=S(f.dynatomic_polynomial(2))
df = f2[0].derivative(z)
dF = df.denominator()*t - df.numerator()


default(parisize, 42000000000)
F=(k2^4*t - k2^2)*z^30 + (2*k2^5*k3*t + 6*k1*k2^4*t + k1*k2^4 + 2*k2^3*k3*t - 3*k2^3*k3 - 5*k1*k2^2 - 3*k2*k3)*z^28 + (k2^6*k3^2*t + 8*k1*k2^5*k3*t - k1*k2^5*k3 + 15*k1^2*k2^4*t + 8*k2^4*k3^2*t + 3*k1^2*k2^4 + 3*k2^6 + 12*k1*k2^3*k3*t - 4*k1*k2^3*k3 + 6*k2^4*t + k2^2*k3^2*t - 9*k1^2*k2^2 - 15*k2^2*k3^2 - 19*k1*k2*k3 - 3*k2^2)*z^26 + (2*k1*k2^6*k3^2*t + 12*k1^2*k2^5*k3*t + 6*k2^5*k3^3*t - k1^2*k2^5*k3 + k2^7*k3 + 20*k1^3*k2^4*t + 32*k1*k2^4*k3^2*t + 2*k1^3*k2^4 + 3*k1*k2^6 - 7*k1*k2^4*k3^2 + 30*k1^2*k2^3*k3*t + 8*k2^5*k3*t + 12*k2^3*k3^3*t + 13*k1^2*k2^3*k3 + 21*k2^5*k3 + 30*k1*k2^4*t + 6*k1*k2^2*k3^2*t - 5*k1^3*k2^2 + k1*k2^4 - 50*k1*k2^2*k3^2 + 12*k2^3*k3*t - 51*k1^2*k2*k3 - 3*k2^3*k3 - 21*k2*k3^3 - 6*k1*k2^2 - 17*k2*k3)*z^24 + (k1^2*k2^6*k3^2*t + 8*k1^3*k2^5*k3*t + 12*k1*k2^5*k3^3*t + k1^3*k2^5*k3 - k1*k2^7*k3 + 15*k1^4*k2^4*t + 48*k1^2*k2^4*k3^2*t + 2*k2^6*k3^2*t + 15*k2^4*k3^4*t - 2*k1^4*k2^4 - 3*k1^2*k2^6 - 11*k1^2*k2^4*k3^2 + 9*k2^6*k3^2 + 40*k1^3*k2^3*k3*t + 24*k1*k2^5*k3*t + 48*k1*k2^3*k3^3*t + 32*k1^3*k2^3*k3 + 34*k1*k2^5*k3 - 18*k1*k2^3*k3^3 + 60*k1^2*k2^4*t + 15*k1^2*k2^2*k3^2*t + 32*k2^4*k3^2*t + 8*k2^2*k3^4*t + 5*k1^4*k2^2 - 4*k1^2*k2^4 - 3*k2^6 - 49*k1^2*k2^2*k3^2 + 54*k2^4*k3^2 + 60*k1*k2^3*k3*t - 75*k1^3*k2*k3 + 25*k1*k2^3*k3 - 84*k1*k2*k3^3 + 15*k2^4*t + 6*k2^2*k3^2*t + 15*k1^2*k2^2 - 39*k2^2*k3^2 - 9*k3^4 - 90*k1*k2*k3 + 3*k2^2)*z^22 + (2*k1^4*k2^5*k3*t + 6*k1^2*k2^5*k3^3*t + k1^4*k2^5*k3 + 6*k1^5*k2^4*t + 32*k1^3*k2^4*k3^2*t + 2*k1*k2^6*k3^2*t + 30*k1*k2^4*k3^4*t - 3*k1^5*k2^4 - 3*k1^3*k2^6 - k1^3*k2^4*k3^2 - 5*k1*k2^6*k3^2 + 30*k1^4*k2^3*k3*t + 24*k1^2*k2^5*k3*t + 72*k1^2*k2^3*k3^3*t + 12*k2^5*k3^3*t + 20*k2^3*k3^5*t + 23*k1^4*k2^3*k3 + 9*k1^2*k2^5*k3 - 3*k2^7*k3 - 34*k1^2*k2^3*k3^3 + 33*k2^5*k3^3 + 60*k1^3*k2^4*t + 20*k1^3*k2^2*k3^2*t + 96*k1*k2^4*k3^2*t + 32*k1*k2^2*k3^4*t + 9*k1^5*k2^2 - 18*k1^3*k2^4 - 18*k1*k2^6 + 4*k1^3*k2^2*k3^2 + 101*k1*k2^4*k3^2 - 22*k1*k2^2*k3^4 + 120*k1^2*k2^3*k3*t + 12*k2^5*k3*t + 48*k2^3*k3^3*t + 2*k2*k3^5*t - 65*k1^4*k2*k3 + 82*k1^2*k2^3*k3 + 3*k2^5*k3 - 125*k1^2*k2*k3^3 + 66*k2^3*k3^3 + 60*k1*k2^4*t + 30*k1*k2^2*k3^2*t + 60*k1^3*k2^2 - 6*k1*k2^4 - 61*k1*k2^2*k3^2 - 39*k1*k3^4 + 30*k2^3*k3*t - 195*k1^2*k2*k3 + 18*k2^3*k3 - 69*k2*k3^3 + 45*k1*k2^2 - 39*k2*k3)*z^20\
+ (k1^6*k2^4*t + 8*k1^4*k2^4*k3^2*t + 15*k1^2*k2^4*k3^4*t - k1^6*k2^4 + 3*k1^4*k2^4*k3^2 + 12*k1^5*k2^3*k3*t + 8*k1^3*k2^5*k3*t + 48*k1^3*k2^3*k3^3*t + 12*k1*k2^5*k3^3*t + 40*k1*k2^3*k3^5*t + 4*k1^5*k2^3*k3 - 4*k1^3*k2^5*k3 - 14*k1^3*k2^3*k3^3 - 9*k1*k2^5*k3^3 + 30*k1^4*k2^4*t + 15*k1^4*k2^2*k3^2*t + 96*k1^2*k2^4*k3^2*t + k2^6*k3^2*t + 48*k1^2*k2^2*k3^4*t + 30*k2^4*k3^4*t + 15*k2^2*k3^6*t + 5*k1^6*k2^2 - 20*k1^4*k2^4 - 15*k1^2*k2^6 + 31*k1^4*k2^2*k3^2 + 60*k1^2*k2^4*k3^2 - 19*k2^6*k3^2 - 46*k1^2*k2^2*k3^4 + 65*k2^4*k3^4 + 120*k1^3*k2^3*k3*t + 24*k1*k2^5*k3*t + 144*k1*k2^3*k3^3*t + 8*k1*k2*k3^5*t - 33*k1^5*k2*k3 + 66*k1^3*k2^3*k3 - 49*k1*k2^5*k3 - 80*k1^3*k2*k3^3 + 124*k1*k2^3*k3^3 - 13*k1*k2*k3^5 + 90*k1^2*k2^4*t + 60*k1^2*k2^2*k3^2*t + 48*k2^4*k3^2*t + 32*k2^2*k3^4*t + 75*k1^4*k2^2 - 30*k1^2*k2^4 - 15*k2^6 + 50*k1^2*k2^2*k3^2 + 42*k2^4*k3^2 - 66*k1^2*k3^4 + 39*k2^2*k3^4 + 120*k1*k2^3*k3*t - 220*k1^3*k2*k3 + 92*k1*k2^3*k3 - 193*k1*k2*k3^3 + 20*k2^4*t + 15*k2^2*k3^2*t + 150*k1^2*k2^2 - 6*k2^2*k3^2 - 33*k3^4 - 165*k1*k2*k3 + 25*k2^2)*z^18 + (2*k1^6*k2^3*k3*t + 12*k1^4*k2^3*k3^3*t + 20*k1^2*k2^3*k3^5*t - k1^6*k2^3*k3 + 2*k1^4*k2^3*k3^3 + 6*k1^5*k2^4*t + 6*k1^5*k2^2*k3^2*t + 32*k1^3*k2^4*k3^2*t + 32*k1^3*k2^2*k3^4*t + 30*k1*k2^4*k3^4*t + 30*k1*k2^2*k3^6*t + k1^7*k2^2 - 7*k1^5*k2^4 + 14*k1^5*k2^2*k3^2 + 13*k1^3*k2^4*k3^2 - 26*k1^3*k2^2*k3^4 - 5*k1*k2^4*k3^4 + 60*k1^4*k2^3*k3*t + 12*k1^2*k2^5*k3*t + 144*k1^2*k2^3*k3^3*t + 6*k2^5*k3^3*t + 12*k1^2*k2*k3^5*t + 40*k2^3*k3^5*t + 6*k2*k3^7*t - 9*k1^6*k2*k3 + k1^4*k2^3*k3 - 50*k1^2*k2^5*k3 - 15*k1^4*k2*k3^3 + 90*k1^2*k2^3*k3^3 - 51*k2^5*k3^3 - 29*k1^2*k2*k3^5 + 75*k2^3*k3^5 + 60*k1^3*k2^4*t + 60*k1^3*k2^2*k3^2*t + 96*k1*k2^4*k3^2*t + 96*k1*k2^2*k3^4*t + 42*k1^5*k2^2 - 42*k1^3*k2^4 - 21*k1*k2^6 + 126*k1^3*k2^2*k3^2 - 17*k1*k2^4*k3^2 - 54*k1^3*k3^4 + 61*k1*k2^2*k3^4 - 3*k1*k3^6 + 180*k1^2*k2^3*k3*t + 8*k2^5*k3*t + 72*k2^3*k3^3*t + 8*k2*k3^5*t - 135*k1^4*k2*k3 + 96*k1^2*k2^3*k3 - 57*k2^5*k3 - 162*k1^2*k2*k3^3 + 78*k2^3*k3^3 + 9*k2*k3^5 + 60*k1*k2^4*t + 60*k1*k2^2*k3^2*t + 210*k1^3*k2^2 - 14*k1*k2^4 + 112*k1*k2^2*k3^2 - 108*k1*k3^4 + 40*k2^3*k3*t - 270*k1^2*k2*k3 + 42*k2^3*k3 - 66*k2*k3^3 + 140*k1*k2^2 - 45*k2*k3)*z^16\
+ (k1^6*k2^2*k3^2*t + 8*k1^4*k2^2*k3^4*t + 15*k1^2*k2^2*k3^6*t + k1^6*k2^2*k3^2 - 2*k1^4*k2^2*k3^4 + 12*k1^5*k2^3*k3*t + 48*k1^3*k2^3*k3^3*t + 8*k1^3*k2*k3^5*t + 40*k1*k2^3*k3^5*t + 12*k1*k2*k3^7*t - k1^7*k2*k3 - 11*k1^5*k2^3*k3 + 4*k1^5*k2*k3^3 + 32*k1^3*k2^3*k3^3 - 19*k1^3*k2*k3^5 + 5*k1*k2^3*k3^5 + 15*k1^4*k2^4*t + 30*k1^4*k2^2*k3^2*t + 48*k1^2*k2^4*k3^2*t + 96*k1^2*k2^2*k3^4*t + 15*k2^4*k3^4*t + 30*k2^2*k3^6*t + k3^8*t + 9*k1^6*k2^2 - 18*k1^4*k2^4 + 53*k1^4*k2^2*k3^2 - 49*k1^2*k2^4*k3^2 - 21*k1^4*k3^4 + 45*k1^2*k2^2*k3^4 - 75*k2^4*k3^4 - 7*k1^2*k3^6 + 51*k2^2*k3^6 + 120*k1^3*k2^3*k3*t + 8*k1*k2^5*k3*t + 144*k1*k2^3*k3^3*t + 24*k1*k2*k3^5*t - 42*k1^5*k2*k3 - 12*k1^3*k2^3*k3 - 84*k1*k2^5*k3 - 18*k1^3*k2*k3^3 + 62*k1*k2^3*k3^3 + 2*k1*k2*k3^5 + 60*k1^2*k2^4*t + 90*k1^2*k2^2*k3^2*t + 32*k2^4*k3^2*t + 48*k2^2*k3^4*t + 135*k1^4*k2^2 - 36*k1^2*k2^4 - 9*k2^6 + 228*k1^2*k2^2*k3^2 - 78*k2^4*k3^2 - 126*k1^2*k3^4 + 57*k2^2*k3^4 + 120*k1*k2^3*k3*t - 210*k1^3*k2*k3 + 86*k1*k2^3*k3 - 76*k1*k2*k3^3 + 15*k2^4*t + 20*k2^2*k3^2*t + 270*k1^2*k2^2 + 66*k2^2*k3^2 - 42*k3^4 - 140*k1*k2*k3 + 45*k2^2)*z^14 + (2*k1^4*k2*k3^5*t + 6*k1^2*k2*k3^7*t + k1^6*k2*k3^3 - 3*k1^4*k2*k3^5 + 6*k1^5*k2^2*k3^2*t + 32*k1^3*k2^2*k3^4*t + 30*k1*k2^2*k3^6*t + 2*k1*k3^8*t - k1^5*k2^2*k3^2 - 3*k1^5*k3^4 + 23*k1^3*k2^2*k3^4 - 5*k1^3*k3^6 + 9*k1*k2^2*k3^6 + 30*k1^4*k2^3*k3*t + 72*k1^2*k2^3*k3^3*t + 24*k1^2*k2*k3^5*t + 20*k2^3*k3^5*t + 12*k2*k3^7*t - 5*k1^6*k2*k3 - 34*k1^4*k2^3*k3 + 23*k1^4*k2*k3^3 + 4*k1^2*k2^3*k3^3 - 3*k1^2*k2*k3^5 - 65*k2^3*k3^5 + 19*k2*k3^7 + 20*k1^3*k2^4*t + 60*k1^3*k2^2*k3^2*t + 32*k1*k2^4*k3^2*t + 96*k1*k2^2*k3^4*t + 33*k1^5*k2^2 - 22*k1^3*k2^4 + 96*k1^3*k2^2*k3^2 - 125*k1*k2^4*k3^2 - 60*k1^3*k3^4 + 68*k1*k2^2*k3^4 - 5*k1*k3^6 + 120*k1^2*k2^3*k3*t + 2*k2^5*k3*t + 48*k2^3*k3^3*t + 12*k2*k3^5*t - 75*k1^4*k2*k3 - 2*k1^2*k2^3*k3 - 39*k2^5*k3 + 48*k1^2*k2*k3^3 - 42*k2^3*k3^3 + 15*k2*k3^5 + 30*k1*k2^4*t + 60*k1*k2^2*k3^2*t + 220*k1^3*k2^2 - 11*k1*k2^4 + 202*k1*k2^2*k3^2 - 90*k1*k3^4 + 30*k2^3*k3*t - 150*k1^2*k2*k3 + 33*k2^3*k3 + 6*k2*k3^3 + 165*k1*k2^2 - 25*k2*k3)*z^12 + (k1^2*k3^8*t - k1^4*k3^6 + 8*k1^3*k2*k3^5*t + 12*k1*k2*k3^7*t + 3*k1^5*k2*k3^3 + 4*k1^3*k2*k3^5 + 5*k1*k2*k3^7 + 15*k1^4*k2^2*k3^2*t + 48*k1^2*k2^2*k3^4*t + 15*k2^2*k3^6*t + 2*k3^8*t - 14*k1^4*k2^2*k3^2 - 9*k1^4*k3^4 + 31*k1^2*k2^2*k3^4 - 6*k1^2*k3^6 - 33*k2^2*k3^6 + 3*k3^8 + 40*k1^3*k2^3*k3*t + 48*k1*k2^3*k3^3*t + 24*k1*k2*k3^5*t - 9*k1^5*k2*k3 - 46*k1^3*k2^3*k3 + 60*k1^3*k2*k3^3 - 80*k1*k2^3*k3^3 + 19*k1*k2*k3^5 + 15*k1^2*k2^4*t + 60*k1^2*k2^2*k3^2*t + 8*k2^4*k3^2*t + 32*k2^2*k3^4*t + 65*k1^4*k2^2 - 13*k1^2*k2^4 + 110*k1^2*k2^2*k3^2 - 66*k2^4*k3^2 - 54*k1^2*k3^4 - 3*k2^2*k3^4 + 60*k1*k2^3*k3*t - 60*k1^3*k2*k3 + 16*k1*k2^3*k3 + 90*k1*k2*k3^3 + 6*k2^4*t + 15*k2^2*k3^2*t + 195*k1^2*k2^2 + 69*k2^2*k3^2 - 18*k3^4 - 45*k1*k2*k3 + 39*k2^2)*z^10\
+ (2*k1*k3^8*t - k1^3*k3^6 + k1*k3^8 + 12*k1^2*k2*k3^5*t + 6*k2*k3^7*t + 2*k1^4*k2*k3^3 + 14*k1^2*k2*k3^5 - 9*k2*k3^7 + 20*k1^3*k2^2*k3^2*t + 32*k1*k2^2*k3^4*t - 26*k1^3*k2^2*k3^2 - 6*k1^3*k3^4 - 15*k1*k2^2*k3^4 - k1*k3^6 + 30*k1^2*k2^3*k3*t + 12*k2^3*k3^3*t + 8*k2*k3^5*t - 5*k1^4*k2*k3 - 29*k1^2*k2^3*k3 + 82*k1^2*k2*k3^3 - 54*k2^3*k3^3 + 3*k2*k3^5 + 6*k1*k2^4*t + 30*k1*k2^2*k3^2*t + 75*k1^3*k2^2 - 3*k1*k2^4 + 74*k1*k2^2*k3^2 - 12*k1*k3^4 + 12*k2^3*k3*t - 15*k1^2*k2*k3 + 9*k2^3*k3 + 39*k2*k3^3 + 90*k1*k2^2 - 3*k2*k3)*z^8 + (k3^8*t + k1^2*k3^6 - k3^8 + 8*k1*k2*k3^5*t - 2*k1^3*k2*k3^3 + 4*k1*k2*k3^5 + 15*k1^2*k2^2*k3^2*t + 8*k2^2*k3^4*t - 19*k1^2*k2^2*k3^2 + 6*k1^2*k3^4 - 21*k2^2*k3^4 + 12*k1*k2^3*k3*t + 5*k1^3*k2*k3 - 7*k1*k2^3*k3 + 56*k1*k2*k3^3 + k2^4*t + 6*k2^2*k3^2*t + 51*k1^2*k2^2 + 21*k2^2*k3^2 + 3*k3^4 + 6*k1*k2*k3 + 17*k2^2)*z^6 + (k1*k3^6 + 2*k2*k3^5*t - 3*k1^2*k2*k3^3 - 3*k2*k3^5 + 6*k1*k2^2*k3^2*t - 5*k1*k2^2*k3^2 + 9*k1*k3^4 + 2*k2^3*k3*t + 9*k1^2*k2*k3 + 15*k2*k3^3 + 19*k1*k2^2 + 3*k2*k3)*z^4 + (-k1*k2*k3^3 + k2^2*k3^2*t + 3*k3^4 + 5*k1*k2*k3 + 3*k2^2)*z^2 + k2*k3
G=(k2 + 1)*z^12 + (k2^2*k3 + 2*k1*k2 + k2*k3 + 3*k1 + k3)*z^10 + (k1*k2^2*k3 + k1^2*k2 - k2^3 + 2*k1*k2*k3 + 2*k2*k3^2 + 3*k1^2 - k2^2 + 2*k1*k3 + k3^2 + 2*k2 + 3)*z^8 + (k1^2*k2*k3 + 2*k1*k2*k3^2 + k1^3 - k1*k2^2 + k1^2*k3 - 2*k2^2*k3 + 2*k1*k3^2 + k3^3 + 2*k1*k2 + 6*k1 + 2*k3)*z^6 + (k1^2*k3^2 + k1*k3^3 - k2*k3^2 + 3*k1^2 - k2^2 + 2*k1*k3 + k3^2 + k2 + 3)*z^4 + (k1*k3^2 - k2*k3 + 3*k1 + k3)*z^2 + 1
r=polresultant(F,G,z)
s21=polcoeff(r,11,t)/polcoeff(r,12,t)

86+ hours, 21+Gb

sig21=-((8*k3^2 - 12)*k2^4 + (-8*k1*k3^3 + 40*k3^2 - 24*k1*k3 + (40*k1^2 - 96))*k2^3 + (-8*k1*k3^3 + (-4*k1^2 + 120)*k3^2 + (-8*k1^3 - 64*k1)*k3 + (-8*k1^4 + 80*k1^2 - 192))*k2^2 + (-24*k3^4 + (-8*k1^3 + 24*k1)*k3^3 + (8*k1^2 + 160)*k3^2 + (-16*k1^3 + 64*k1)*k3)*k2 + ((8*k1^2 - 60)*k3^4 - 32*k1*k3^3 + (16*k1^2 - 64)*k3^2))/(k2^4 - 2*k1*k3*k2^3 + (k1^2 + 2)*k3^2*k2^2 - 2*k1*k3^3*k2 + k3^4)

sig22 = ((24*k3^4 - 136*k3^2 - 126)*k2^8 + (-48*k1*k3^5 + 320*k3^4 - 56*k1*k3^3 + (320*k1^2 - 600)*k3^2 - 440*k1*k3 + (168*k1^2 + 288))*k2^7 + ((24*k1^2 - 16)*k3^6 - 384*k1*k3^5 + (200*k1^2 + 1216)*k3^4+ (-384*k1^3 - 408*k1)*k3^3 + (-64*k1^4 + 2452*k1^2 - 2120)*k3^2 + (-664*k1^3 - 960*k1)*k3 + (360*k1^4 - 2704*k1^2 + 5568))*k2^6 + (16*k1*k3^7 + (64*k1^2 - 32)*k3^6 + (-72*k1^3 - 1120*k1)*k3^5 + (64*k1^4 + 664*k1^2 + 2712)*k3^4 + (64*k1^5 - 1168*k1^3 - 2680*k1)*k3^3 + (-232*k1^4 + 8312*k1^2 - 9120)*k3^2 + (-16*k1^5 - 528*k1^3 + 832*k1)*k3 + (-160*k1^6 + 2656*k1^4 - 12288*k1^2 + 16896))*k2^5 + (-24*k3^8 + 160*k1*k3^7 + (64*k1^4 + 32*k1^2 - 296)*k3^6 + (-360*k1^3 - 112*k1)*k3^5 + (242*k1^4 - 1120*k1^2 + 6652)*k3^4 + (-232*k1^5 - 624*k1^3 - 7392*k1)*k3^3 + (40*k1^6 - 976*k1^4 + 10512*k1^2 - 21312)*k3^2 + (32*k1^7 - 1216*k1^5 + 4992*k1^3 - 2560*k1)*k3 + (16*k1^8 - 448*k1^6 + 3872*k1^4 - 13568*k1^2 + 16896))*k2^4 + (-96*k3^8 + (-64*k1^3 + 280*k1)*k3^7 + (64*k1^4 + 312*k1^2 - 2632)*k3^6 + (72*k1^5 - 920*k1^3 + 6008*k1)*k3^5 + (64*k1^6 + 952*k1^4 - 4200*k1^2 + 13152)*k3^4 + (64*k1^7 - 544*k1^5 - 608*k1^3 + 1920*k1)*k3^3 + (192*k1^6 + 832*k1^4 - 3072*k1^2 - 13312)*k3^2 + (192*k1^7 - 2368*k1^5 + 9728*k1^3 - 13312*k1)*k3)*k2^3 + (144*k3^8 + (128*k1^3 - 600*k1)*k3^7 + (24*k1^6 - 280*k1^4 + 532*k1^2 - 7720)*k3^6 + (-128*k1^5 - 952*k1^3 + 6016*k1)*k3^5 + (16*k1^6 + 152*k1^4 - 240*k1^2 + 5184)*k3^4 + (-192*k1^5 - 4864*k1^3 + 22528*k1)*k3^3 + (-160*k1^6 + 2112*k1^4 - 9216*k1^2 + 13312)*k3^2)*k2^2 + ((-192*k1^2 + 1224)*k3^8 + (-48*k1^5 + 680*k1^3 - 712*k1)*k3^7 + (320*k1^4 - 824*k1^2 - 6112)*k3^6 + (-160*k1^5 + 1776*k1^3 - 6080*k1)*k3^5 + (736*k1^4 + 512*k1^2 - 13824)*k3^4 + (-64*k1^5 + 512*k1^3 - 1024*k1)*k3^3)*k2 + ((24*k1^4 - 472*k1^2 + 1554)*k3^8 + (-256*k1^3 + 1696*k1)*k3^7 + (80*k1^4 - 1040*k1^2 + 3392)*k3^6 + (-640*k1^3 + 2560*k1)*k3^5 + (32*k1^4 - 256*k1^2 + 512)*k3^4))/(k2^8 - 4*k1*k3*k2^7 + (6*k1^2 + 4)*k3^2*k2^6 + (-4*k1^3 - 12*k1)*k3^3*k2^5 + (k1^4 + 12*k1^2 + 6)*k3^4*k2^4 + (-4*k1^3 - 12*k1)*k3^5*k2^3 + (6*k1^2 + 4)*k3^6*k2^2 - 4*k1*k3^7*k2 + k3^8)
\end{lstlisting}
\end{code}

\begin{code}
\begin{lstlisting}
#checking that these are the right 4 sigmas and 2-to-1
K=QQ
P.<x,y>=ProjectiveSpace(K,1)
a=1
b=2
c=3

f1=DynamicalSystem([x^4 + a*x^2*y^2+y^4,b*x^3*y+c*x*y^3])
R.<k1,k2,k3>=K[]
S=f1.sigma_invariants(1)[0:-1] + f1.sigma_invariants(2,formal=True)[0:2]
P.<x,y>=ProjectiveSpace(R,1)
f1=DynamicalSystem([x^4 + k1*x^2*y^2+y^4,k2*x^3*y+k3*x*y^3])
s1=f1.sigma_invariants(1)[:-1]
sig21 = -((8*k3^2 - 12)*k2^4 + (-8*k1*k3^3 + 40*k3^2 - 24*k1*k3 + (40*k1^2 - 96))*k2^3 + (-8*k1*k3^3 + (-4*k1^2 + 120)*k3^2 + (-8*k1^3 - 64*k1)*k3 + (-8*k1^4 + 80*k1^2 - 192))*k2^2 + (-24*k3^4 + (-8*k1^3 + 24*k1)*k3^3 + (8*k1^2 + 160)*k3^2 + (-16*k1^3 + 64*k1)*k3)*k2 + ((8*k1^2 - 60)*k3^4 - 32*k1*k3^3 + (16*k1^2 - 64)*k3^2))/(k2^4 - 2*k1*k3*k2^3 + (k1^2 + 2)*k3^2*k2^2 - 2*k1*k3^3*k2 + k3^4)
sig22 = ((24*k3^4 - 136*k3^2 - 126)*k2^8 + (-48*k1*k3^5 + 320*k3^4 - 56*k1*k3^3 + (320*k1^2 - 600)*k3^2 - 440*k1*k3 + (168*k1^2 + 288))*k2^7 + ((24*k1^2 - 16)*k3^6 - 384*k1*k3^5 + (200*k1^2 + 1216)*k3^4+ (-384*k1^3 - 408*k1)*k3^3 + (-64*k1^4 + 2452*k1^2 - 2120)*k3^2 + (-664*k1^3 - 960*k1)*k3 + (360*k1^4 - 2704*k1^2 + 5568))*k2^6 + (16*k1*k3^7 + (64*k1^2 - 32)*k3^6 + (-72*k1^3 - 1120*k1)*k3^5 + (64*k1^4 + 664*k1^2 + 2712)*k3^4 + (64*k1^5 - 1168*k1^3 - 2680*k1)*k3^3 + (-232*k1^4 + 8312*k1^2 - 9120)*k3^2 + (-16*k1^5 - 528*k1^3 + 832*k1)*k3 + (-160*k1^6 + 2656*k1^4 - 12288*k1^2 + 16896))*k2^5 + (-24*k3^8 + 160*k1*k3^7 + (64*k1^4 + 32*k1^2 - 296)*k3^6 + (-360*k1^3 - 112*k1)*k3^5 + (242*k1^4 - 1120*k1^2 + 6652)*k3^4 + (-232*k1^5 - 624*k1^3 - 7392*k1)*k3^3 + (40*k1^6 - 976*k1^4 + 10512*k1^2 - 21312)*k3^2 + (32*k1^7 - 1216*k1^5 + 4992*k1^3 - 2560*k1)*k3 + (16*k1^8 - 448*k1^6 + 3872*k1^4 - 13568*k1^2 + 16896))*k2^4 + (-96*k3^8 + (-64*k1^3 + 280*k1)*k3^7 + (64*k1^4 + 312*k1^2 - 2632)*k3^6 + (72*k1^5 - 920*k1^3 + 6008*k1)*k3^5 + (64*k1^6 + 952*k1^4 - 4200*k1^2 + 13152)*k3^4 + (64*k1^7 - 544*k1^5 - 608*k1^3 + 1920*k1)*k3^3 + (192*k1^6 + 832*k1^4 - 3072*k1^2 - 13312)*k3^2 + (192*k1^7 - 2368*k1^5 + 9728*k1^3 - 13312*k1)*k3)*k2^3 + (144*k3^8 + (128*k1^3 - 600*k1)*k3^7 + (24*k1^6 - 280*k1^4 + 532*k1^2 - 7720)*k3^6 + (-128*k1^5 - 952*k1^3 + 6016*k1)*k3^5 + (16*k1^6 + 152*k1^4 - 240*k1^2 + 5184)*k3^4 + (-192*k1^5 - 4864*k1^3 + 22528*k1)*k3^3 + (-160*k1^6 + 2112*k1^4 - 9216*k1^2 + 13312)*k3^2)*k2^2 + ((-192*k1^2 + 1224)*k3^8 + (-48*k1^5 + 680*k1^3 - 712*k1)*k3^7 + (320*k1^4 - 824*k1^2 - 6112)*k3^6 + (-160*k1^5 + 1776*k1^3 - 6080*k1)*k3^5 + (736*k1^4 + 512*k1^2 - 13824)*k3^4 + (-64*k1^5 + 512*k1^3 - 1024*k1)*k3^3)*k2 + ((24*k1^4 - 472*k1^2 + 1554)*k3^8 + (-256*k1^3 + 1696*k1)*k3^7 + (80*k1^4 - 1040*k1^2 + 3392)*k3^6 + (-640*k1^3 + 2560*k1)*k3^5 + (32*k1^4 - 256*k1^2 + 512)*k3^4))/(k2^8 - 4*k1*k3*k2^7 + (6*k1^2 + 4)*k3^2*k2^6 + (-4*k1^3 - 12*k1)*k3^3*k2^5 + (k1^4 + 12*k1^2 + 6)*k3^4*k2^4 + (-4*k1^3 - 12*k1)*k3^5*k2^3 + (6*k1^2 + 4)*k3^6*k2^2 - 4*k1*k3^7*k2 + k3^8)
s1 += [sig21,sig22]
A=AffineSpace(R)
L= [s1[i].numerator()-S[i]*(s1[i].denominator()) for i in [0,1,4,5]]
I=R.ideal(L)
J1=R.ideal([-k1*k2*k3 + k2^2 + k3^2])
I1=I.saturation(J1)[0]
I1

A=AffineSpace(R)
X=A.subscheme(I1)
Q1,Q2=X.rational_points()
Q1.multiplicity(), Q2.multiplicity()
X.projective_closure().degree()
\end{lstlisting}
\end{code}

Computing the equation of the dimension three hypersurface for this family via elimination was beyond the capabilities of our hardware.

\section{Rational Preperiodic Structures in $\M_3$} \label{sect_M3_preperiodic}
Having completed the description of the loci $\A_3$ and $\A_4$ and some of their geometric properties, we now turn to arithmetic dynamical properties of the families that make up the automorphism loci. Specifically, we look at the possible structures of $\Q$-rational preperiodic points. The main motivation is the far-reaching and open conjecture of Morton and Silverman that the number of rational preperiodic points should be bounded independently of the particular map chosen.
\begin{conj}[Morton-Silverman \cite{Silverman7}]
    Let $f:\bbP^N \to \bbP^N$ be a morphism of degree $d \geq 2$ defined over a number field $K$ of degree $D$. Then the number of $K$-rational preperiodic points for $f$ is bounded by a constant $C$ depending only on $N$, $d$, and $D$.
\end{conj}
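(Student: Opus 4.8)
I should state at the outset that this is the Morton--Silverman uniform boundedness conjecture, which is open in every nontrivial case (for quadratic polynomials over $\Q$ it specializes to Poonen's conjectured bound of $9$ preperiodic points, still unproven). I do not expect an unconditional argument; the plan below is the conditional route that currently looks most viable, adapting the Caporaso--Harris--Mazur strategy for curves to the dynamical parameter spaces and reducing the conjecture to Diophantine conjectures of Lang--Vojta type. First I would reduce to bounding periods: a preperiodic point lands on a cycle after at most $C_1(N,d)$ iterations, and once the minimal period $n$ of a $K$-rational periodic point is bounded, the sizes of the rational preimage trees are bounded as well; so it suffices to produce $n_0(N,d,D)$ with the property that no degree-$d$ morphism of $\bbP^N$ over a degree-$D$ number field admits a $K$-rational point of exact period $n > n_0$.

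Next I would package period-$n$ points as rational points on auxiliary varieties over the moduli space. Let $\M_d^N$ denote the parameter space of degree-$d$ endomorphisms of $\bbP^N$ and let $Y_1(n) \subset \M_d^N \times \bbP^N$ parameterize pairs $(f,P)$ with $P$ of formal period $n$ for $f$; the projection $Y_1(n) \to \M_d^N$ is generically finite. A map over $K$ with a $K$-rational point of period $n$ produces a $K$-point of $Y_1(n)$. The goal then becomes: for $n$ large, all $K$-points of $Y_1(n)$ lie over a proper closed exceptional subvariety of $\M_d^N$, and that exceptional locus is treated inductively by dimension, fibering as in the Caporaso--Harris--Mazur argument.

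The geometric input needed is that $Y_1(n)$ becomes ``hyperbolic'' as $n\to\infty$: in the curve case $N=1$ this is the statement that the gonality of the dynatomic curves, and more precisely the positivity and general-type properties that resist bounded-degree rational points under all the relevant projections, grow without bound --- carried out for quadratic polynomials by Doyle--Poonen. The program is to extend such growth estimates to all $d$ and all $N$. Granting the Bombieri--Lang/Lang--Vojta conjecture --- uniform boundedness of bounded-degree rational points on varieties of general type --- one then obtains the required $n_0(N,d,D)$.

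The main obstacle is twofold. The serious conceptual difficulty is that the conclusion is only conditional, resting on Lang--Vojta, which is itself far out of reach; an unconditional proof would require a genuinely new mechanism, since the naive local method --- injectivity of reduction modulo primes of good reduction, along the lines of Morton--Silverman and Zieve --- only bounds periods in terms of the primes of bad reduction, which can be arbitrarily large, and therefore cannot yield a bound depending on $D$ alone. The concrete technical difficulty is supplying the geometric growth statement for the dynatomic parameter varieties in dimension $N \geq 2$, where even the geometry of $\M_d^N$ is poorly understood. I would not expect to overcome the first obstacle; the realistic contribution would be progress on the second, or on new low-$(N,d,D)$ cases.
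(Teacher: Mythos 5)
This statement is the Morton--Silverman uniform boundedness conjecture, and the paper does not prove it: it is quoted verbatim as a conjecture (with citation to Morton--Silverman) purely to motivate the paper's program, so there is no proof of record for your proposal to be compared against. You correctly recognize that the statement is open in every nontrivial case and that no unconditional argument is available; your sketch of the conditional route --- reducing to a bound on minimal periods, packaging period-$n$ points as rational points on dynatomic/moduli-type parameter varieties, invoking gonality or general-type growth in the spirit of Doyle--Poonen, and then appealing to Lang--Vojta via a Caporaso--Harris--Mazur fibration argument --- is an accurate summary of the strategy known in the literature, and your observation that good-reduction arguments alone cannot give a bound depending only on $D$ is also correct. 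Just be aware that what the paper actually does with this conjecture is much more modest and in a different direction: following Poonen and Manes, it fixes explicit one-, two-, and three-parameter families (the automorphism loci $A_3(\Gamma)$ and $A_4(\Gamma)$), assumes a computationally supported bound on the minimal period of a $\Q$-rational periodic point, and then classifies the possible $\Q$-rational preperiodic graphs by finding all rational points on the resulting dynatomic and preimage curves. So your proposal is a reasonable account of how one might attack the general conjecture, but it is not a proof, should not be presented as one, and does not reflect the conditional, family-by-family methodology the paper itself employs.
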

The best known results typically make some kind of restriction of $f$, such as good reduction at certain primes. We are more interested in results such as Poonen \cite{Poonen} or Manes \cite{Manes2} that restrict to special families. There are a number of results in this area, but we focus primarily on these two as they are closest in type to our results for the families in $\A_3$ and $\A_4$. Poonen completely classified all possible $\Q$-rational preperiodic graph structures for the family $f_c(z) = z^2+c$ assuming that there are no rational periodic points with minimal period at least $4$. This has been generalized to quadratic fields in \cite{DFK}. Manes did the same for a family of quadratic rational maps with $C_2$ automorphisms assuming there were no $\Q$-rational periodic points with minimal period at least $5$. In particular, her family was the automorphism locus $\A_2$. We proceed along the same lines as follows:
\begin{enumerate}
    \item Provide computational evidence of an upper bound on the minimal period of a $\Q$-rational preperiodic point.
    \item Analyze all possible rational preperiodic structures assuming that bound on the minimal period.
\end{enumerate}
In the case of the families with dimension in moduli space at most $1$, we are able to complete the classification of rational preperiodic graph structures, with the exception of $\A_4(D_3)$, where the classification has possibly finitely many exceptional parameters. These results make heavy use of techniques for finding rational points on curves. For the families of dimension 2 and 3, the difficulty in finding all rational points on surfaces and dimension 3 varieties is the impediment to completing those classifications. In lieu of a full classification, we examine the existence of periodic points and take a computational census of the possible rational preperiodic graph structures. As techniques in these areas improve, it would be good to return to this topic and complete those classifications.

We end this introduction with a helpful lemma for points of curves and some helpful references.
\begin{lemma}
\label{bounds on the non-singular rational points}
    Let $C$ be a projective curve defined over $\Q$. Suppose that there is a birational map defined over $\Q$ between $C$ and a smooth projective hyperelliptic curve $X$ over $\Q$. Then the nonsingular rational points on $C$ is bounded above by $|X(\Q)|$.
\end{lemma}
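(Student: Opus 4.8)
The plan is to deduce the bound from two standard facts about curves: a rational map from a smooth curve into a projective variety automatically extends to a morphism, and a birational map between smooth projective curves is an isomorphism. Write $\phi\colon C\dashrightarrow X$ for the given birational map and $\psi\colon X\dashrightarrow C$ for its inverse, both defined over $\Q$. Since $C$ carries a birational map to the integral curve $X$, it is itself integral, and its nonsingular locus $C^{\mathrm{sm}}$ is a dense open subscheme; moreover, because $\Q$ is perfect, "nonsingular point of $C$" means exactly "point of $C^{\mathrm{sm}}$", and the local ring at such a point is a discrete valuation ring.

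First I would extend $\phi$ to a morphism on all of $C^{\mathrm{sm}}$. Applying the valuative criterion of properness to the projective (hence proper) variety $X$ at each point of $C^{\mathrm{sm}}$, whose local ring is a DVR, shows that $\phi$ is defined everywhere on $C^{\mathrm{sm}}$; the resulting morphism $\phi\colon C^{\mathrm{sm}}\to X$ is defined over $\Q$ since $C$, $X$, and the original rational map are. Symmetrically, because $X$ is a smooth projective curve and $C$ is projective, the inverse $\psi$ extends to a morphism $\psi\colon X\to C$ defined over $\Q$.

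Next I would show $\phi$ is injective on $C^{\mathrm{sm}}$. The composite $\psi\circ\phi\colon C^{\mathrm{sm}}\to C$ agrees with the open immersion $C^{\mathrm{sm}}\hookrightarrow C$ on the dense open subscheme where $\phi$ and $\psi$ were mutually inverse rational maps; two morphisms from a reduced scheme to a separated scheme that coincide on a dense subset coincide, so $\psi\circ\phi$ is the identity on $C^{\mathrm{sm}}$. Hence $\psi$ is a left inverse to $\phi|_{C^{\mathrm{sm}}}$, which is therefore injective. Since a morphism defined over $\Q$ carries $\Q$-points to $\Q$-points, $\phi$ restricts to an injection of $C^{\mathrm{sm}}(\Q)$ into $X(\Q)$, giving $|C^{\mathrm{sm}}(\Q)|\le |X(\Q)|$ as claimed. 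I expect the only delicate point is purely bookkeeping at the singular points — confirming that the extension and agreement arguments really do cover every nonsingular point, rather than just a dense open set of them — and I would note that the hyperelliptic hypothesis is never used in the proof; it matters only afterward, because it is for hyperelliptic $X$ that one has effective tools (Chabauty–Coleman, or Faltings' theorem in higher genus) to actually bound or compute $|X(\Q)|$.
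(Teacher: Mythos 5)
Your argument is correct and in substance the same as the paper's: both rest on identifying $X$ with the smooth projective model of $C$ (the paper cites Fulton for this, you reconstruct it via the valuative criterion) and observing that $\Q$-rational nonsingular points of $C$ lift uniquely to $\Q$-rational points of that model. Your version is, if anything, more careful, spelling out the extension of $\phi$ to a morphism on $C^{\mathrm{sm}}$ and the injectivity step that the paper leaves implicit behind its appeal to the surjection $\overline{C}\to C$; your closing remark that the hyperelliptic hypothesis is never used in the argument is also accurate and matches the paper's proof, which only needs $X$ smooth and projective.
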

\begin{proof}
    Since both $C$ and $X$ are curves, and $C$ is birationally equivalent to $X$, we know that there is a $\Q$-rational isomorphism between $X$ and $\overline{C}$, the smooth projective model of $C$ \cite[Theorem 3 in Section 7.5]{Fulton}. In particular, $|\overline{C}(\Q)|=|X(\Q)|$. Furthermore, we have a surjective map from $\Bar{C}$ to $C$, so the number of non-singular rational points on $C$ is bounded above by $|\overline{C}(\Q)|=|X(\Q)|$. The only other rational points on $C$ are the singular points that could come from non-rational points on $\overline{X}$.
\end{proof}
For references on curve quotienting and blow-ups, see Lorenzini \cite{Lorenzini} and Liu \cite{Liu}.

\subsection{The Single Conjugacy Classes}
The cases $\A_3(C_4)$, $\A_3(D_4)$, and $\A_3(\fA_4)$ consist of single conjugacy classes. Using the algorithm from Hutz \cite{Hutz12}, we compute each such structure.

\begin{theorem}\label{prop_rational_single}
    We have the following rational preperiodic structures.
    \begin{itemize}
        \item For $\A_3(C_4) = \A_3(D_4)$, we represent the conjugacy class as $f(z) = \frac{1}{z^3}$. This function has rational periodic structure given by
        \begin{equation*}
            \xygraph{
!{<0cm,0cm>;<1cm,0cm>:<0cm,1cm>::}
!{(0,0) }*+{\underset{0}{\bullet}}="a"
!{(1.5,0) }*+{\underset{\infty}{\bullet}}="b"
"a":@/^1pc/"b"
"b":@/^1pc/"a"
}
\quad
    \xygraph{
!{<0cm,0cm>;<1cm,0cm>:<0cm,1cm>::}
!{(1,0) }*+{\underset{1}{\bullet}}="a"
"a":@(rd,ru)"a"
}
\quad
\xygraph{
!{<0cm,0cm>;<1cm,0cm>:<0cm,1cm>::}
!{(1,0) }*+{\underset{-1}{\bullet}}="a"
"a":@(rd,ru)"a"
}
        \end{equation*}
        \item For $\A_3(\fA_4)$, we represent the conjugacy class as $f(z) = \frac{z^3-3}{-3z^2}$. This function has rational periodic structure given by
                \begin{equation*}
            \xygraph{
!{<0cm,0cm>;<1cm,0cm>:<0cm,2cm>::}
!{(0,0) }*+{\underset{0}{\bullet}}="a"
!{(1,0) }*+{\underset{\infty}{\bullet}}="b"
"a":"b"
"b":@(rd,ru)"b"
}
        \end{equation*}
    \end{itemize}
\end{theorem}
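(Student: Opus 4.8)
The plan is to treat the two single conjugacy classes separately, in each case using the explicit $\Q$-rational representative given in the statement together with the effective algorithm of Hutz \cite{Hutz12} for computing the (finite, by Northcott) set $\PrePer(f,\Q)$. Conjugation by an element of $\PGL_2(\Q)$ leaves the rational preperiodic graph unchanged, so the choice of representative within each class is harmless, and it suffices to analyze $f(z)=\tfrac1{z^3}$ and $f(z)=\tfrac{z^3-3}{-3z^2}$ directly.

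For $\A_3(C_4)=\A_3(D_4)$ take $f(z)=\tfrac1{z^3}$, that is $[x:y]\mapsto[y^3:x^3]$. First I would read off the low-period structure: the fixed points satisfy $x^4=y^4$, whose only $\Q$-rational solutions are $z=\pm1$, while $[0:1]\mapsto[1:0]\mapsto[0:1]$ shows that $\{0,\infty\}$ is a $2$-cycle; one checks moreover that the only $\Q$-rational preimage of $1$, of $-1$, of $0$, or of $\infty$ is already among these points (for instance $f^{-1}(1)$ requires $z^3=1$), so there are no strictly preperiodic tails. To certify that $\{0,\infty,1,-1\}$ with the stated edges is the \emph{entire} rational preperiodic graph, note that $[y^3:x^3]$ is a degree-$3$ morphism modulo every prime, so $f$ has everywhere good reduction; then any $\Q$-rational preperiodic point reduces to a preperiodic point of $\bar f$ over $\F_p$, and Hutz's algorithm --- which bounds the possible minimal periods of $\Q$-rational periodic points using good reduction at a few small primes, recovers the rational periodic points as rational roots of the relevant dynatomic polynomials, and then traces back all $\Q$-rational preimages --- produces exactly the claimed list. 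The class $\A_3(\fA_4)$, with $f(z)=\tfrac{z^3-3}{-3z^2}$, i.e.\ $[x:y]\mapsto[x^3-3y^3:-3x^2y]$, is handled identically: the only prime of bad reduction is $p=3$, one checks that $\infty=[1:0]$ is the unique $\Q$-rational fixed point (the finite fixed points satisfy $4z^3=3$) and that $[0:1]\mapsto[1:0]$ while $0$ admits no $\Q$-rational preimage (that would need $z^3=3$), and the good-reduction input to Hutz's algorithm certifies that the displayed graph $0\to\infty$ with a self-loop at $\infty$ is all of $\PrePer(f,\Q)$.

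I do not anticipate a genuine obstacle: the content is a finite, directly verifiable computation once the representatives are fixed. The only step needing care is justifying that the periodic-point search is exhaustive --- that neither map admits a $\Q$-rational cycle of large minimal period --- and this is exactly what the good-reduction bounds fed into Hutz's algorithm provide, so it is enough to run the computation with enough primes of good reduction that the period constraints force the cycle structure down to the small graphs claimed. (That the automorphism groups at these representatives are $D_4$ and $\fA_4$ respectively is already recorded in Corollary \ref{cor_A3C4} and Proposition \ref{M3A4}.)
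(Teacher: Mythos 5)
Your proposal is correct and takes essentially the same approach as the paper: the paper's proof is the one-line appeal to direct computation via the algorithm of Hutz as implemented in Sage, which is exactly what you invoke. The hand-checks of the low-period structure, preimage computations, and good-reduction remarks that you add are accurate and make the computation transparent, but they are embellishments on the same method rather than a different argument.
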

\begin{proof}
    Direct computation via the algorithm of Hutz \cite{Hutz12} as implemented in Sage.
\end{proof}

\subsection{$\A_3(C_3)$}
We saw in Proposition \ref{prop_A3C3} that the family $f_a(z) = \frac{z^3 + a}{az^2}$ with $a \neq 0$ gives $\A_3(C_3)$.

We first examine an upper bound on the minimal period of a $\Q$-rational periodic point. We make use of Lemma \ref{lem_irreducible_curves}.
\begin{lemma}\label{lem_irreducible_curves}
If $F\in \Q[x,y]$ is irreducible over $\Q$ but is reducible over some extension field $K$ of $\Q$, and all the components are defined over $K$, then every rational point on the curve defined by $F=0$ is singular.
\end{lemma}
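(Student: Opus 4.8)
The plan is a Galois-descent argument. Working over $\overline{\Q}$, write $F = c\prod_{i=1}^{m} G_i^{e_i}$ with the $G_i \in \overline{\Q}[x,y]$ distinct absolutely irreducible polynomials; since $F$ is reducible over $K\subseteq\overline{\Q}$ we have $m\ge 2$. The absolute Galois group $G_\Q = \Gal(\overline{\Q}/\Q)$ acts on $\overline{\Q}[x,y]$ coefficientwise and fixes $F\in\Q[x,y]$, hence it permutes the factors $\{G_i\}$ (up to nonzero scalars) and is constant on exponents along each orbit. I would then observe that the product of the $G_i$ over a single $G_\Q$-orbit, raised to the exponent common to that orbit, is $G_\Q$-invariant, so it lies in $\Q[x,y]$ and divides $F$. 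Irreducibility of $F$ over $\Q$ forces there to be exactly one orbit and each $e_i = 1$; equivalently, $F$ is squarefree over $\overline{\Q}$ and $G_\Q$ acts \emph{transitively} on $\{G_1,\dots,G_m\}$. Thus $V(F)$ is a reduced plane curve whose $m\ge 2$ irreducible components $C_i = \{G_i = 0\}$ are permuted transitively by Galois.

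Next I would take an arbitrary $\Q$-rational point $P=(a,b)$ with $F(P)=0$. From $\prod_i G_i(P)=0$ we get $G_{i_0}(P)=0$ for some $i_0$. Since $a,b\in\Q$, for every $\sigma\in G_\Q$ we have $(\sigma G_{i_0})(P) = \sigma\!\big(G_{i_0}(a,b)\big) = 0$, and by transitivity the Galois conjugates of $G_{i_0}$ exhaust $G_1,\dots,G_m$ up to scalars. Hence $G_i(P)=0$ for every $i$, so $P$ lies on at least two distinct components $C_{i_0}$ and $C_{i_1}$.

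It then remains only to see that a point on two distinct components of a plane curve is singular. Writing $F = G_{i_0}G_{i_1}R$ with $R = \prod_{j\ne i_0,i_1}G_j$ (with $R$ a nonzero constant if $m=2$), the product rule expresses each of $\partial F/\partial x$ and $\partial F/\partial y$ as a sum of terms every one of which is divisible by $G_{i_0}$ or by $G_{i_1}$; since both vanish at $P$ we obtain $F(P)=F_x(P)=F_y(P)=0$, so $P$ is a singular point of $\{F=0\}$. These partials lie in $\Q[x,y]$, so there is no ambiguity about the base field. The only genuinely delicate step is the descent in the first paragraph — extracting both transitivity of the Galois action and squarefreeness of $F$ over $\overline{\Q}$ from uniqueness of factorization in $\overline{\Q}[x,y]$ together with $\Q$-irreducibility of $F$; the remaining steps are formal.
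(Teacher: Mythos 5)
Your proof is correct and follows essentially the same Galois-transitivity argument as the paper: conjugates of one absolutely irreducible factor exhaust all of them, so a $\Q$-rational point must lie on every component, and a point on two distinct components of a plane curve is singular. Your version is slightly more careful than the paper's — you work over $\overline{\Q}$ rather than with $\Gal(K/\Q)$ (sidestepping the implicit assumption that $K/\Q$ is Galois), you explicitly rule out repeated factors, and you spell out the product-rule computation that justifies the final singularity claim, which the paper simply asserts — but these are refinements of the same proof rather than a genuinely different route.
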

\begin{proof}
Suppose $F$ factors into $F=g_1g_2\dots g_n$ over $K$. Consider the set $S=\{g_1^{\sigma}:\sigma\in \Gal(K/\Q)\}$. Observe first that $g_1^{\sigma}$ has to be another component of $F$, since $F^{\sigma}=g_1^{\sigma}g_2^{\sigma}\dots g_n^{\sigma}=F$. Then the polynomial $g=\prod_{h\in S}h$ must be invariant under the Galois action and is, therefore, defined over $\Q$. Since $g\mid F$ and $F$ is irreducible, we know that $g=F$ (up to scaling). In particular, for every $g_i$ and $g_j$, we can find a $\sigma\in \Gal(K/\Q)$ such that $g_i^\sigma=g_j$. If $P$ is a rational solution to $F=0$, it is a rational solution to some $g_i=0$. But $g_j(P)=g_i^\sigma(\sigma(P))=\sigma(g_i(P))=0$, so $P$ is in fact a root of all the components of $F$ and, thus, must be in the intersection. It then follows that $P$ must be a singular point on the curve defined by $F=0$.
\end{proof}

\begin{prop}\label{prop_A3_C3_periodic}
    Let $f_a(z) = \frac{z^3+a}{az^2}$.
    \begin{enumerate}
        \item The point $\infty$ is fixed for all choices of parameter $a$. There is a second $\Q$-rational fixed point for the parameters $a =\frac{1}{1-t^3}$ for $t \in \Q \setminus \{0, 1\}$. These are the only occurring $\Q$-rational fixed points.

        \item There are no rational parameters $a$ where $f_a(z)$ has a $\Q$-rational periodic point with minimal period $2$ or $3$.
    \end{enumerate}
\end{prop}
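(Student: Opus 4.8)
The statement has two parts about the family $f_a(z) = \frac{z^3+a}{az^2}$. For part (1), the claim that $\infty$ is always fixed is immediate: $f_a(\infty) = \infty$ since the numerator and denominator both have degree $3$ and leading coefficients $1$ and $a$... wait, actually one must be slightly careful — in homogeneous coordinates $f_a(x:y) = (x^3+ay^3 : ax^2y)$, so $f_a(1:0) = (1:0) = \infty$. For the other fixed points, I would set up the fixed-point equation $f_a(z) = z$, i.e. $z^3 + a = az^3$, which rearranges to $a(1-z^3) = \ldots$ hold on: $z^3 + a = az^2 \cdot z = az^3$, so $a(z^3 - 1) = z^3$, giving $z^3 = \frac{a}{a-1}$ — hmm, let me instead parametrize: the affine fixed points satisfy $z^3(a-1) = a$, so there is a rational affine fixed point exactly when $\frac{a}{a-1}$ is a rational cube, say $\frac{a}{a-1} = t^3$, which solves to $a = \frac{t^3}{t^3-1} = \frac{1}{1 - t^{-3}}$; after the substitution $t \mapsto 1/t$ (allowed since $t \ne 0$) this is $a = \frac{1}{1-t^3}$, matching the statement, with $t \ne 0, 1$ to keep $a$ finite and nonzero and to keep $z \ne 1$ from colliding with degenerate behavior. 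I would double-check the edge cases where $a = 1$ (family degenerates or the cubic loses a root) are genuinely excluded. That a rational affine fixed point forces $z \in \Q$ with $z^3 = \frac{a}{a-1}$ gives exactly one such $z$ per valid $a$, so these are all the rational fixed points.

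\textbf{Part (2): periods 2 and 3.} The plan is to use the $C_3$-symmetry to cut down the computation, then reduce to finding rational points on explicit curves. A $\Q$-rational periodic point of minimal period $n$ for $f_a$ is a root of the $n$th dynatomic polynomial $\Phi_n^*(f_a)(z) \in \Q(a)[z]$; clearing denominators, the pair $(a, z)$ lies on an affine plane curve $C_n \subset \bbA^2_{a,z}$. For $n=2$ and $n=3$ I would compute $\Phi_n^*$ explicitly (these are manageable: $\Phi_2^*$ has $z$-degree $3^2 - 3 = 6$ and $\Phi_3^*$ has $z$-degree $3^3 - 3 = 24$), factor the resulting curve $C_n$ over $\Q$, and analyze each component. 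The goal is to show that no component of $C_n$ has a rational point with $a \ne 0$ and $z$ giving a genuine minimal-period-$n$ orbit.

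\textbf{The key tool and the main obstacle.} The $C_3$ automorphism $\alpha(z) = \zeta_3 z$ acts on the set of period-$n$ points, so $\Phi_n^*$ has a natural $\Z/3$-action; this is where Lemma~\ref{lem_irreducible_curves} becomes the workhorse. If a component of $C_n$ is irreducible over $\Q$ but splits over $\Q(\zeta_3)$ into the Galois-conjugate pieces swapped by the $C_3$-action (equivalently, the component as a curve in $z$ has its period-$n$ roots permuted nontrivially by $\zeta_3$), then every rational point on it is singular — and one then checks by hand that the finitely many singular points either have $a = 0$, or are lower-period points that sneaked in, or fail to be on the curve at all. For components that survive this argument (i.e. are geometrically irreducible over $\Q$), I would compute the genus and either invoke Faltings/Hindry–Silverman-type finiteness followed by a direct search, or — more likely what actually happens here — show the component is itself a curve on which one can directly enumerate rational points (genus $0$ with no rational points, or genus $1$ with rank $0$, etc.). I expect the genuine obstacle to be the period-$3$ curve: it is a degree-$24$ (in $z$) affine curve and, after removing the obvious $C_3$-conjugate factors via Lemma~\ref{lem_irreducible_curves}, one is left with a possibly higher-genus component whose rational points must be pinned down — this is where a careful choice of model (passing to the quotient by $\langle \zeta_3 \rangle$, which identifies period-$3$ orbits) to get a lower-degree curve, followed by a genus computation and a Chabauty or Mordell–Weil-sieve-type argument (or a clean direct verification if the genus and rank cooperate), will be needed. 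Part (2)'s conclusion is that every such attempt produces only the excluded loci $a = 0$ or lower-period collisions, so no admissible rational point of exact period $2$ or $3$ exists.
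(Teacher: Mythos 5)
Your outline of Part (1) matches the paper's argument: linearity of the first dynatomic polynomial in $a$, a single rational affine fixed point per admissible $a$, and the clean parameterization. Your overall strategy for Part (2) — compute $\Phi_n^\ast$, factor the dynatomic curve, use Lemma~\ref{lem_irreducible_curves} where a $\Q$-irreducible component becomes reducible over an extension, and attack geometrically irreducible components with genus/rank computations on a quotient — is also the right toolkit and is what the paper does. But your prediction of which period needs which tool is exactly reversed from what actually happens, and the mechanism you imagine for applying Lemma~\ref{lem_irreducible_curves} to the period-$2$ curve would not fire. The second dynatomic curve $\Phi^\ast_2(f_a) = (a+1)z^6 + (-a^3+a^2+2a)z^3 + a^2 = 0$ is geometrically irreducible (genus $3$), so there are no ``$C_3$-conjugate factors'' to remove: the $\zeta_3$-action permutes the six period-$2$ points, but it does not split the curve over $\Q(\zeta_3)$ since the degree-$2$ polynomial in $w = z^3$ has non-square discriminant $a^4(a+1)(a-3)$. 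The paper instead quotients by an order-$2$ automorphism (found by Magma; this is the dynamical involution, not $z\mapsto\zeta_3 z$) to a genus-$1$ curve whose Weierstrass model has rank $0$ and torsion $\Z/3\Z$; pulling back the three torsion points kills everything except $a=0$. Conversely, period $3$ is the \emph{easy} case, not the hard one: $\Phi^\ast_3$ factors over $\Q$ into two components $X_1$ and $X_2$, each $\Q$-irreducible but splitting over $\Q(\sqrt{-3}) = \Q(\zeta_3)$, so Lemma~\ref{lem_irreducible_curves} applies directly to each component; the only singular point on either is $(a,z) = (0,0)$, which is degenerate. No Chabauty, genus bound, or Mordell--Weil sieve is needed for $n=3$. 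So your plan would get there, but only after discovering that the labor is distributed opposite to how you anticipated, and that the period-$2$ case needs a quotient argument rather than a Galois-descent one.
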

\begin{proof}
    Clearly $\infty$ is fixed, so to look for additional fixed points, we examine the first dynatomic polynomial
    \begin{equation*}
       \Phi^{\ast}_1(f_a) = (1-a)z^3 + a,
    \end{equation*}
    and the associated dynatomic curve $\Phi^{\ast}_1(f_a) = 0$. We want values of $a$ where this curve admits rational points. Linearity in $a$ allows us to quickly solve when $z = \frac{1}{t} \in \Q$,
    \begin{equation*}
        a = \frac{z^3}{z^3-1} = \frac{1}{1-t^3}.
    \end{equation*}
    Each finite fixed point $z \in \Q$ determines a unique $a$, so we can never have more than two rational fixed points.

    To determine rational $2$-cycles, we compute the second dynatomic polynomial
    \begin{equation*}
        \Phi^{\ast}_{2}(f_a)=(a + 1)z^6 + (-a^3 + a^2 + 2a)z^3 + a^2.
    \end{equation*}
    The curve defined by $\Phi^{\ast}_2(f_a) = 0$, called the second dynatomic curve, has genus $3$; so by Faltings' theorem, it only has finitely many rational points. Computing with Magma we see that its projective closure has an order two automorphism group and the quotient curve $C$ in $\bbP^4$ has defining equations
    \begin{align*}
        &-16x_0x_1 + 65x_1x_2 - 49x_3^2=0\\                                                                               &-588800x_0^2 - 108160x_0x_2 + 12675x_2^2 + 129654x_1x_3=0
    \end{align*}
    with projection map $\phi:\overline{V(\Phi^{\ast}_2(f_a))} \to C$.
    The curve $C$ is genus one and has rational point $(0:1:0:0)$. We compute a Weierstrass model as
    \begin{equation*}
        E: y^2 - \frac{134217728}{28588707}y = x^3 - \frac{40462027902156800}{7355827511386641}.
    \end{equation*}
    The curve $E$ is rank $0$ with torsion subgroup isomorphic $\Z/3\Z$. In particular, $C$ has at most $3$ rational points. We find them through a point search as
    \begin{equation*}
        \left\{\left(\frac{13}{160}:0:1:0\right), \left(-\frac{195}{736}:0:1:0\right), \left(0:1:0:0\right)\right\}.
    \end{equation*}
    Every rational point of the second dynatomic curve must project to one of these three points under $\phi$, so we compute all possible inverse images using Sage.
    \begin{align*}
        \phi^{-1}\left(\frac{13}{160}:0:1:0\right) &= \{(0:0:1),(0:1:0)\}\\
        \phi^{-1}\left(-\frac{195}{736}:0:1:0\right) &= \{(0:0:1),(0:1:0)\}\\
        \phi^{-1}\left(0:1:0:0\right) &= \{(0:0:1),(0:1:0),(1:0:0)\}.
    \end{align*}
    In particular the $\Q$-rational points on the second dynatomic curve are
    \begin{equation*}
        \{(0:0:1),(0:1:0),(1:0:0)\}.
    \end{equation*}
    only one of these is affine and it corresponds to $a=0$ which is degenerate. Hence, there is no $\Q$-rational value of $a$ where $f_a$ has a 2-cycle with $\Q$-rational points.

    Now we look at the rational $3$-cycles. We want to determine if there are any rational points on the third dynatomic curve defined by the vanishing of
    \begin{align*}
        \Phi^{\ast}_{3,f_a}(a,z)=&(a^2 + a + 1)z^{24} + (a^6 + a^5 + a^4 + 6a^3 + 7a^2 + 8a)z^{21} \\&+ (a^{10} + a^9 + a^8 + 2a^7 + 3a^6 + 4a^5 + 15a^4 + 21a^3 + 28a^2)z^{18} \\&+ (a^{11} + 2a^{10} + 3a^9 + 2a^7 + 5a^6 + 20a^5 + 35a^4 + 56a^3)z^{15} \\&+ (a^{12} + 2a^{11} + 4a^{10} - 2a^9 - 2a^8 + 15a^6 + 35a^5 + 70a^4)z^{12} \\&+ (a^{12} + 3a^{11} - a^{10} - 3a^9 - 5a^8 + 6a^7 + 21a^6 + 56a^5)z^9 \\&+ (a^{12} - a^{10} - 4a^9 + a^8 + 7a^7 + 28a^6)z^6 + (-a^{10} + a^8 + 8a^7)z^3 + a^8.
    \end{align*}
    This curve is reducible over $\Q$, and it has the following components:
    \begin{align*}
        X_1:&a^2z^6 + az^6 + z^6 + a^2z^3 + 2az^3 + a^2=0\\
    X_2:&a^8z^{12} + a^4z^{15} + a^9z^9 + z^{18} + 2a^5z^{12} + a^{10}z^6 + 6az^{15}\\ &+ 15a^2z^{12} - 2a^7z^6 + 20a^3z^9 - a^8z^3 + 15a^4z^6 + 6a^5z^3 + a^6=0.
    \end{align*}
    Both components are irreducible over $\Q$ but reducible over some extension field of $\Q$. For curves satisfying this condition, we have Lemma \ref{lem_irreducible_curves}. Thus, any $\Q$-rational points on the curve $X_1$ must be in the intersection of the two components. In particular, any $\Q$-rational points must be singular. The only singular point on $X_1$ is $(0,0)$. We can similarly look at the other component $X_2$. It turns out that $X_2$ is reducible over the extension field $\Q(\sqrt{-3})$ and both components are defined over this extension. Thus, we can use the same argument and compute the singular points for $X_2$. The only singular point is $(0,0)$. Thus, the only $\Q$-rational point on the dynatomic curve is $(0,0)$, but we cannot have $a=0$ since $a=0$ gives us a zero in the denominator for the rational map $f_a(z)$. Therefore, no member of the family has rational $3$-cycles.
\end{proof}
\begin{code}
The code we ran are attached below:
\begin{lstlisting}[language=Python]
#Second dynatomic polynomial
R.<a> = QQ[]
P.<z> = R[]
P = FractionField(P)
A = AffineSpace(P,1)
g = DynamicalSystem_affine((z^3+a)/(a*z^2))
g.dynatomic_polynomial(2)


#Second dynatomic curve: Magma
K := Rationals();
P<z,a,h> := ProjectiveSpace(K, 2);
Phi2 := Curve(P, (a + h)*z^6 + (-a^3 + a^2*h + 2*a*h^2)*z^3*h + a^2*h^5);
G := AutomorphismGroup(Phi2);
C,prj := CurveQuotient(G);
P:=C![0,1,0,0];
E,psi:=EllipticCurve(C,P);
Rank(E);
TorsionSubgroup(E);

RationalPoints(Phi2:Bound:=1000);
RationalPoints(C:Bound:=1000);

#In sage, compute the preimages of the rational points under the map prj
R.<z,a,h>=QQ[]
f1=9/64*z^6*a^2 - 9/16*z^6*a*h - 9/64*z^6*h^2 + 27/64*z^3*a^2*h^3 - 9/16*z^3*a*h^4 - 27/64*z^3*h^5 - 27/64*a*h^7
f2=z^8
f3=45/26*z^6*a^2 - 9/65*z^6*a*h - 45/26*z^6*h^2 - 207/130*z^3*a^2*h^3 - 9/65*z^3*a*h^4 + 207/130*z^3*h^5 + 207/130*a*h^7
f4=-3/2*z^7*a - 3/2*z^7*h + 3/2*z^4*a^2*h^2 - 3/2*z^4*a*h^3 - 3/2*z^4*h^4 - 3/2*z*a*h^6

L=[[13/160,0,1,0],[-195/736,0,1,0],[0,1,0,0]]
for Q in L:
    I=R.ideal([f1*Q[1]-f2*Q[0], f1*Q[2] - f3*Q[0], f1*Q[3] - f4*Q[0], f2*Q[2]-Q[1]*f3, f2*Q[3] - Q[1]*f3, f3*Q[3]-f3*Q[2]])
    P=ProjectiveSpace(R)
    X=P.subscheme(I.gens())
    print(X.rational_points())

#Third dynatomic polynomial
R.<a> = QQ[]
P.<z> = R[]
P = FractionField(P)
A = AffineSpace(P,1)
g = DynamicalSystem_affine((z^3+a)/(a*z^2))
G = g.homogenize(1)
g.dynatomic_polynomial(3)

#Irreducible components
A.<a,z>=AffineSpace(QQ,2)
R=A.coordinate_ring()
I=R.ideal((a^2 + a + 1)*z^24 + (a^6 + a^5 + a^4 + 6*a^3 + 7*a^2 + 8*a)*z^21 + (a^10 + a^9 + a^8 + 2*a^7 + 3*a^6 + 4*a^5 + 15*a^4 + 21*a^3 + 28*a^2)*z^18 + (a^11 + 2*a^10 + 3*a^9 + 2*a^7 + 5*a^6 + 20*a^5 + 35*a^4 + 56*a^3)*z^15 + (a^12 + 2*a^11 + 4*a^10 - 2*a^9 - 2*a^8 + 15*a^6 + 35*a^5 + 70*a^4)*z^12 + (a^12 + 3*a^11 - a^10 - 3*a^9 - 5*a^8 + 6*a^7 + 21*a^6 + 56*a^5)*z^9 + (a^12 - a^10 - 4*a^9 + a^8 + 7*a^7 + 28*a^6)*z^6 + (-a^10 + a^8 + 8*a^7)*z^3 + a^8)
S=A.subscheme(I)
S.irreducible_components()

#Field of geometric reducibility and singular points
A<z,a>:=AffineSpace(Rationals(),2);
X1:=Curve(A,a^2*z^6 + a*z^6 + z^6 + a^2*z^3 + 2*a*z^3 + a^2);
K:=FieldOfGeometricIrreducibility(X1);
K;
IrreducibleComponents(ChangeRing(X1,K));
SingularPoints(X1);
X2:=Curve(A,a^8*z^12 + a^4*z^15 + a^9*z^9 + z^18 + 2*a^5*z^12 + a^10*z^6 + 6*a*z^15 + 15*a^2*z^12 - 2*a^7*z^6 + 20*a^3*z^9 - a^8*z^3 + 15*a^4*z^6 + 6*a^5*z^3 + a^6);
K:=FieldOfGeometricIrreducibility(X2);
K;
IrreducibleComponents(ChangeRing(X2,K));
SingularPoints(X2);

#examining the components
X1,X2 = S.irreducible_components()
F=X1.defining_polynomials()[0]
for b in QQ.range_by_height(3):
    if b != 0:
        K2.<v>=NumberField(F.specialization({z:b}))
        A.<z> = AffineSpace(K2,1)
        g = DynamicalSystem_affine((z^3+v)/(v*z^2)).homogenize(1)
        print(len(g.periodic_points(3)))

F=X2.defining_polynomials()[0]
for b in QQ.range_by_height(3):
    if b != 0:
        K2.<v>=NumberField(F.specialization({z:b}))
        A.<z> = AffineSpace(K2,1)
        g = DynamicalSystem_affine((z^3+v)/(v*z^2)).homogenize(1)
        print(len(g.periodic_points(3)))
\end{lstlisting}
\end{code}

\begin{remark}
    The two components $X_1$ and $X_2$ appear to be the values of $a$ where there is one $3$-cycle with points defined over the field of definition and where all nine periodic points of period three are defined over the field of definition, respectively.
\end{remark}

A search for rational preperiodic structures with the parameter up to height $10,000$ using the algorithm from \cite{Hutz12} as implemented in Sage yields no parameters where $f_a$ has a $\Q$-rational periodic point with minimal period at least $4$. So we make the following conjecture.
\begin{conj}\label{conj_rational_A3_C3}
    There is no $a \in \Q$ so that $f_a(z) =  \frac{z^3 + a}{az^2}$ has a $\Q$-rational periodic point with minimal period at least $4$.
\end{conj}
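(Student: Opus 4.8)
The plan is to extend the dynatomic-curve method of Proposition~\ref{prop_A3_C3_periodic} to every period $n \geq 4$. Fix $n$ and let $D_n$ be the affine curve $\Phi^\ast_n(f_a) = 0$ in the $(a,z)$-plane. Since $\infty$ is a fixed point of $f_a$ for all $a$ and $f_a(0) = \infty$, a $\Q$-rational periodic point of minimal period $n$ is exactly an affine $\Q$-point $(a,z)$ of $D_n$ with $a \neq 0$ and $z \neq 0$; the goal is to show there is none. First one factors $\Phi^\ast_n(f_a)$ over $\Q$. A component that becomes reducible over a number field $K$ with all its factors defined over $K$ contributes only singular $\Q$-points, by Lemma~\ref{lem_irreducible_curves}, leaving a finite explicit list to inspect. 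The remaining, geometrically irreducible components must be shown to have genus $\geq 2$ (handling by hand any exceptional rational or elliptic component, since one would signal a positive-dimensional family of $a$ carrying a period-$n$ point), and then their $\Q$-points must be found effectively --- by descent, Chabauty--Coleman, the Mordell--Weil sieve, or by quotienting to curves of genus $\leq 1$ and running elliptic Chabauty, as in the $n=3$ case.

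The $C_3$ symmetry should be exploited throughout to lower the genus. Because $f_a(z) = z\,\psi(z^3)$, we have $\Phi^\ast_n(f_a) \in \Q[a, z^3]$, so the substitution $w = z^3$ produces a curve $\widetilde D_n$ of one-third the $z$-degree; equivalently $\widetilde D_n = D_n / \langle \alpha \rangle$ for the automorphism $\alpha\colon (a,z) \mapsto (a, \zeta_3 z)$ induced by the $C_3$-automorphism of $f_a$. A $\Q$-point $(a,z)$ of $D_n$ maps to a $\Q$-point $(a, z^3)$ of $\widetilde D_n$ whose $w$-coordinate is a rational cube, and conversely such points lift. So the workflow is: determine all $\Q$-points on the $\Q$-irreducible components of $\widetilde D_n$, discard those with $a = 0$ or with $w$ not a nonzero cube, and verify nothing survives. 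The phenomenon already seen for $n=3$ --- components of the dynatomic curve becoming geometrically reducible over $\Q(\zeta_3) = \Q(\sqrt{-3})$, so that Lemma~\ref{lem_irreducible_curves} kills their rational points --- should recur and be traceable to the fact that $\alpha$ is defined over $\Q(\zeta_3)$; recognizing this a priori saves work, and one may iterate by hunting for further automorphisms of $\widetilde D_n$.

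The main obstacle is that all of this must be carried out uniformly in $n$, and no mechanism for that is known: each fixed $n$ is a terminating (if heavy) computation, but a genuine proof of the conjecture would need either a uniform bound on the period of a $\Q$-rational periodic point of $f_a$ as $a$ ranges over $\Q$ --- which is a case of the Morton--Silverman conjecture, and hence out of reach --- or structural input special to this locus (for instance a height estimate bounding the rational points of small canonical height uniformly in $a$ and $n$, or a rigidity argument using that this is the $\Q$-rational $C_3$-automorphism locus). Lacking such an input, the realistic outcome is a verification of the conjecture for all $n$ up to some explicit bound $N$ by the procedure above, strengthening the computational evidence already recorded, with the general statement left open --- which is why it is stated as a conjecture rather than a theorem.
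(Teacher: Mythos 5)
You have correctly diagnosed that this is an unproved conjecture, and your closing assessment of \emph{why} it is stated as a conjecture is exactly right: any proof would require a uniform bound on the period of $\Q$-rational periodic points in this family, which would be a case of the Morton--Silverman uniform boundedness conjecture and is out of reach. The paper itself does not prove the statement; it supports the conjecture only by running the $\Q$-rational preperiodic point algorithm of Hutz for all $a$ of height at most $10{,}000$ and observing that no parameter produces a point of minimal period $\geq 4$.

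That said, your per-$n$ program is methodologically sound and is in fact \emph{stronger} per period than what the paper records: a finite search over bounded-height $a$ cannot rule out, for a single fixed $n$, the existence of some large-height $a$ with a period-$n$ $\Q$-point, whereas your dynatomic-curve analysis (factor $\Phi^\ast_n(f_a)$, use the $C_3$-quotient $w = z^3$ to cut the degree by a factor of three, dispatch components that split over $\Q(\zeta_3)$ via Lemma~\ref{lem_irreducible_curves}, and run Chabauty-type methods on the survivors) would settle period $n$ for \emph{all} $a \in \Q$ at once. You have also correctly noticed that $\Phi^\ast_n(f_a) \in \Q[a, z^3]$ because the set of formal $n$-periodic points is stable under the $C_3$-automorphism $z \mapsto \zeta_3 z$, and that the appearance of $\Q(\sqrt{-3}) = \Q(\zeta_3)$ as the splitting field in the $n=3$ case reflects that this is the field of definition of the automorphism; this is a useful structural observation that the paper does not make explicit. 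One small caution: $\Phi^\ast_n$ cuts out points of \emph{formal} period $n$, which may include points of smaller minimal period, but this only makes your conclusion (emptiness of $D_n(\Q)$ away from $a = 0$, $z=0$, or $z = \infty$) stronger than needed, so it does not affect the argument. Carrying this out for $n = 4$ (and perhaps $n=5$) would genuinely improve the evidence the paper gives; the general conjecture remains open, as you say.
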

\begin{code}
\begin{lstlisting}
#code for periodic search
#one parameter families
def make_dict(f, P=[2,3,5,7,11]):
    D = dict()
    c = f.base_ring().gens()[0]
    for p in P:
        Dp = dict()
        for a in GF(p):
            try:
                fa = f.specialization({c:a}).change_ring(GF(p))
            except ValueError:
                continue
            if fa.is_morphism():
                Dp.update({a:tuple(fa.possible_periods())})
        D.update({p:Dp})
    return D

@fork
def period_search(f, start,end, period_bound, period_dict,prime_bound):
    c = f.base_ring().gens()[0]
    for a in QQ.range_by_height(start,end):
        bad = False
        try:
            fa = f.specialization({c:a})
        except ValueError:
            continue
        if fa.is_morphism():
            bad_primes = fa.primes_of_bad_reduction()
            pos_per=set()
            first_prime = True
            for p in primes(2,prime_bound):
                if p not in bad_primes:
                    ap = a%p
                    period_dict[p][ap]
                    if first_prime:
                        pos_per = set(period_dict[p][ap])
                        first_prime=False
                    else:
                        pos_per = pos_per.intersection(period_dict[p][ap])
                    if max(list(pos_per)) < period_bound:
                        break

                    #if period not in period_dict[p][ap]:
                    #    bad = True
                    #    break
            #if not bad:
            if max(list(pos_per)) >= period_bound:
                print("potential:",a)

%%time
R.<c> = QQ[]
prime_bound = 100 #29s for 100
P.<x,y> = ProjectiveSpace(R,1)
f = DynamicalSystem_projective([x^2 + c*y^2, y^2])
f = DynamicalSystem_projective([x^3 + c*y^3, c*x^2*y])
period_dict = make_dict(f, P=[p for p in primes(prime_bound+1)])
#1000 2h50m ~6Gb
for i in range(0,100):
    print(i)
    period_search(f,i*100,(i+1)*100,3,period_dict,prime_bound)
\end{lstlisting}
\end{code}
Assuming this conjecture, we are able to classify all possible $\Q$-rational preperiodic structures.

One of the curves appearing in the proof of Theorem \ref{thm_M3_C3_preperiodic} is a non-hyperelliptic genus 3 with trivial automorphism group. The standard implementations do not yield a sharp point estimate, so we treat computing its rational points in Lemma \ref{lem_M3_C3_genus_3_points}.\footnote{Thanks to Michael Stoll for detailed help with the computation and Andrew Sutherland for access to preliminary data on the analytic rank for Lemma \ref{lem_M3_C3_genus_3_points}.}
\begin{lemma}\label{lem_M3_C3_genus_3_points}
    Assuming the (weak) Birch-Swinnerton-Dyer conjecture, the curve $C \subseteq \bbP^2$ defined by
    \begin{equation*}
        C:x^2y^2 + xy^3 - x^3z - x^2yz - xy^2z + yz^3=0
    \end{equation*}
    has exactly the following six points as $\Q$-rational points.
    \begin{equation*}
        \{(1 : 1 : 1), (1 : 1 : -2), (0 : 1 : 0), (0 : 0 : 1), (1 : 0 : 0), (-1 : 1 : 0)\}.
    \end{equation*}
\end{lemma}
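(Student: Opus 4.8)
I would first record that $C$ is a smooth plane quartic, hence a non-hyperelliptic curve of genus $3$, and check directly that each of the six listed points lies on $C$; it then remains to show there are no others. The plan is a Chabauty-type argument carried out on the Jacobian $J=\operatorname{Jac}(C)$, and the first task is to control the Mordell--Weil rank $r=\operatorname{rank} J(\Q)$.

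A rigorous descent bound on $r$ for a genus-$3$ non-hyperelliptic Jacobian is beyond what we want to attempt unconditionally, so instead I would compute the analytic rank of $J$ numerically to high precision (this is exactly the ``preliminary data'' referenced in the footnote) and invoke the weak Birch--Swinnerton-Dyer conjecture to conclude that $r$ equals this analytic rank. With $r$ in hand, choose one of the six known points as a base point, embed $C\hookrightarrow J$, and fix a prime $p$ of good reduction.

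If $r\le g-1=2$, I would run classical Chabauty--Coleman: compute a basis of regular differentials on $C$, use the $p$-adic logarithm on $J(\Q)\otimes\Q_p$ to single out the subspace of differentials annihilating the known points, integrate these over each residue disk of $C(\F_p)$, and bound the zeros in each disk by a Newton-polygon argument; a disk containing exactly one known rational point and admitting the bound $1$ then contains no others. If instead $r=g=3$, classical Chabauty fails and one replaces this by quadratic Chabauty, building the locally analytic function on $C(\Q_p)$ out of $p$-adic heights whose zero locus contains $C(\Q)$ and evaluating it disk by disk. In either case the explicit Coleman integration on this plane quartic is carried out with Tuitman's algorithm. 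The output is a finite, explicitly enumerated superset of $C(\Q)$.

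The main obstacle is the last step: because $r$ is as large as $g-1$ (or $g$), the $p$-adic bound typically does not match the six known points exactly and leaves a handful of extra candidates. These \emph{ghost} points I would eliminate with a Mordell--Weil sieve, comparing the image of $C(\Q)$ in $J(\Q)/NJ(\Q)$ with its image in $\prod_{\ell} J(\F_\ell)/N$ over a suitably chosen set of auxiliary primes $\ell$ and modulus $N$, until only the six genuine points survive. The argument is conditional only through its use of weak BSD to make the rank rigorous; everything downstream is an unconditional, if delicate, $p$-adic and combinatorial computation, and the fact that $C$ has trivial automorphism group is what forces us to work directly on the genus-$3$ curve rather than descending to an elliptic quotient.
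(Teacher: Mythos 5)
Your plan matches the paper's in outline: both treat $C$ as a genus-$3$ plane quartic, use Sutherland's analytic rank calculation together with (weak) BSD to pin down $\operatorname{rank}J(\Q)$, and then run Chabauty--Coleman over $\Q_2$. However, two of your anticipated complications do not in fact arise, and the reason is worth noting. The paper first shows, unconditionally, that the subgroup of $J(\Q)$ generated by differences of the six known points has rank \emph{exactly} $1$: reduction modulo $5$ and $7$ kills torsion, and the kernel of the natural map $\bigoplus_i \Z P_i \to \prod_{p\in S}\operatorname{Pic}(C_{\F_p})$ (computed by LLL over $S=\{2,5,7,11,13\}$) supplies four independent relations, while the image surjects onto $\Z/5\Z$, forcing the rank to be $\ge 1$. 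Combined with Sutherland's analytic rank $1$ and BSD, this gives $\operatorname{rank}J(\Q)=1$, not $2$ or $3$ as your case split contemplates. Consequently there is no need for quadratic Chabauty. More importantly, with rank $1\le g-1$, the classical Chabauty computation at $p=2$ is already sharp: the annihilating differentials have nonvanishing constant term on each residue disk (and vanishing linear term where a single point is expected), so the Newton-polygon bound gives at most two rational points in the one disk containing $(-1:1:0)$ and $(1:1:-2)$, and at most one in each of the remaining four — precisely matching the six known points. No Mordell--Weil sieve is needed. So while your framework is sound, you left the rank open and consequently over-provisioned for ghost points; the decisive step the paper supplies is the explicit rank-$1$ certificate from the known points, which is what makes the $p=2$ Chabauty argument close the gap exactly.
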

\begin{proof}
    We first show that the differences of the six known points form a rank 1 subgroup of the Mordell-Weil group of the Jacobian of $C$, denoted $J$. This is adapted directly from Michael Stoll's Magma code for computing $\Q$-rational 6-cycles \cite{Stoll08}. We know that prime-to-$p$ torsion in $J(\Q)$ injects into $J(\F_p)$ for primes of good reduction. Magma computes
    \begin{equation*}
        \#J(\F_5) = 3 \cdot 79 \qquad \#J(\F_7) = 7 \cdot 83.
    \end{equation*}
    We conclude $J(\Q)$ has trivial torsion. To show the rank assertion, we use the homomorphism
    \begin{equation*}
        \Phi_S: \bigoplus_{i=0}^5 \Z P_i \to \Pic_C \to \prod_{p \in S}\Pic_{C/\F_p},
    \end{equation*}
    where $S$ is the set of primes of good reduction. We take $S= \{2,5,7,11,13\}$ and compute that the kernel of $\Phi_S$ is a subgroup of rank $5$ in $\Z^6 = \oplus\Z P_i$. We apply LLL and find that there are (at least) four relations among the points; this gives an upper bound of the rank as $1$. However, looking at the image of $\Phi_S$, we see that the degree $0$ subgroup of $\Z^6$ surjects onto $\Z/5\Z$; and since there is no torsion, the rank must be at least $1$. Hence, the rank is exactly $1$.

    This is just the rank of the subgroup supported on the known points, so is not a conclusive rank calculation. However, Andrew Sutherland in private communication has calculated the analytic rank as $1$ assuming that the $L$-function lies in the (polynomial) Selberg class, which is implied by the Hasse-Weil conjecture. Assuming BSD (and Hasse-Weil), this is a conclusive rank calculation.

    The curve $C$ is genus $3$ and we assume that the rank of $J(\Q)$ is $1$, so we can apply methods of Chabauty to show that these six points are the only $\Q$-rational points on $C$ \cite{BPS}.\footnote{Thanks to Michael Stoll for sharing the details of this computation.} Recall that there is a pairing
    \begin{align*}
        \Omega^1_J(\Q_p) \times J(\Q_P) &\to \Q_p\\
        (\omega, Q) \mapsto \int_0^Q \omega
    \end{align*}
    that induces a perfect $\Q_p$-bilinear pairing
    \begin{equation*}
        \Omega^1_J(\Q_p) \times J(\Q_p)^1 \otimes_{\Z_p} \Q_p \to \Q_p,
    \end{equation*}
    where $J(\Q_p)^1$ denotes the kernel of reduction. If $G \subset J(\Q_p)$ is a subgroup of rank less than $\dim(J) = 3$, then there is a nonzero differential $\omega$ that kills $G$ under this pairing. We apply this with $p=2$ and $G$ the subgroup generated by the known rational points. Fix a basis of regular differentials
    \begin{equation*}
        w_0 = \frac{x(zdx - xdz)}{F_y} \quad
        w_1 = \frac{y(zdx - xdz)}{F_y} \quad
        w_2 = \frac{z(zdx - xdz)}{F_y},
    \end{equation*}
    where $F(x,y,z)$ is the defining polynomial of $C$ and $F_y$ the partial derivative with respect to $y$; see, for example, \cite[Corollary to Theorem 1 p. 634]{BK}. For a given point $P$, find a uniformizing parameter of $C$ at $P$ that is also a uniformizer at $P$ modulo $2$. We find a basis of differentials in terms of the uniformizer that annihilate the known rational points. Looking at the degree of vanishing, we can determine whether one or two rational points lie above each of the points modulo $2$.

    The points $(-1:1:0)$ and $(1:1:-2)$ are in the same residue class. We calculate the basis of annihilating differentials as
    \begin{align*}
        &1 + t + t^2 + t^4 + t^5 + t^8 + t^{10} + t^{11} + t^{13} + t^{15} + t^{16} + O(t^{20})\\
        &1 + t + t^3 + t^4 + t^5 + t^8 + t^9 + t^{11} + t^{12} + t^{16} + t^{17} + O(t^{20})
    \end{align*}
    Since there is a non-vanishing constant term, there are at most two rational points in this residue class modulo $2$ \cite[Proposition 6.3]{Stoll06}.

    For the remain four points, we get the following four bases of annihilating differentials.
    \begin{align*}
        &\begin{cases}
        1 + t + t^3 + t^4 + t^5 + t^6 + t^{13} + t^{14} + t^{17} + t^{19} + O(t^{20})\\
        t + t^3 + t^6 + t^7 + t^8 + t^9 + t^{10} + t^{11} + t^{12} + t^{13} + t^{18} + O(t^{20})
        \end{cases}\\
    &\begin{cases}
    t^3 + t^5 + t^6 + t^9 + t^{10} + t^{12} + t^{15} + t^{18} +  O(t^{20})\\
    1 + t^7 + t^8 + t^{11} + t^{12} + t^{13} + t^{16} + t^{19} + O(t^{20})
    \end{cases}\\
    &\begin{cases}
    t + t^3 + t^5 + t^7 + t^8 + t^9 + t^{11} + t^{13} + t^{18} + t^{19} + O(t^{20})\\
    1 + t^2 + t^3 + t^4 + t^6 + t^7 + t^{10} + t^{11} + t^{12} + t^{15} + t^{17} + t^{19} +  O(t^{20})
    \end{cases}\\
    &\begin{cases}
    1 + t^2 + t^3 + t^6 + t^7 + t^9 + t^{10} + t^{12} + t^{15} + t^{16} + t^{17} + t^{18} + O(t^{20})\\
    t + t^2 + t^5 + t^7 + t^8 + t^{13} + t^{14} + t^{15} + t^{18} + t^{19} + O(t^{20})
    \end{cases}
    \end{align*}
    For each point there is an element such that the constant term is nonzero and the linear term is zero. By a standard Newton polygon argument; see \cite[IV.4]{Koblitz}, this implies that the corresponding logarithm has at most one zero on the residue disk of the point, so there is at most one rational point in the disk.

    Consequently, there are at most six rational points on the curve.
\end{proof}

\begin{code}
\begin{lstlisting}
# Magma
//We get the differentials from the adjoint Curve of degree d-3=1
P<x,y,z>:=ProjectiveSpace(Rationals(),2);
C:=Curve(P,[x^2*y^2 + x*y^3 - x^3*z - x^2*y*z - x*y^2*z + y*z^3]);
Adjoints(C,1);

// We first compute the class groups of the reduction mod 2,5,7,11,13.
Pr3<x,y,z> := ProjectiveSpace(Rationals(), 2);
C:=-y^2*x^2 - y*x^3 + y^3*z + y^2*x*z + y*x^2*z - x*z^3;
ptsC:=Points(Curve(Pr3,C):Bound:=1000);
primes := [2, 5, 7, 11, 13];
Cred := [BaseChange(Curve(Pr3,C), Bang(Rationals(), GF(p))) : p in primes];
Clgps := [<G, m> where G, m := ClassGroup(Cr) : Cr in Cred];
// Check that there is no rational torsion
Clnums := [&*[i : i in Invariants(pair[1]) | i ne 0] : pair in Clgps];
 [Factorization(Clnums[2]), Factorization(Clnums[3])];
// Set up the homomorphism Phi_S
// Get integral coordinates of P_0, ..., P_5
ptseqs := [Eltseq(ptsC[i+1]) : i in [0..5]];
ptseqs[2] := [2*a : a in ptseqs[2]]; ptseqs;
ptseqs := [ChangeUniverse(s, Integers()) : s in ptseqs];
prodG, incls, projs := DirectProduct([pair[1] : pair in Clgps]);
Z6 := FreeAbelianGroup(6);
homs := [hom<Z6 -> Clgps[i,1] | [Divisor(Place(Cred[i]!pt)) @@
Clgps[i,2] : pt in ptseqs]>  : i in [1..#Clgps]];
PhiS := hom<Z6 -> prodG | [&+[incls[i](homs[i](g)) : i in [1..#incls]] : g in OrderedGenerators(Z6)]>;
Invariants(Image(PhiS));
// Since we know there is no torsion, this implies that the rank of
// our subgroup of J(Q) is at least 1
Ker := Kernel(PhiS);
// Find small relations
L := Lattice(Matrix([Eltseq(Z6!k) : k in OrderedGenerators(Ker)]));
BL := Basis(LLL(L)); BL;
// We see 4 small elements in the kernel.
// Check that they correspond to principal divisors.
places := [Place(ptsC[i+1]) : i in [0..5]];
divisors := [&+[BL[i,j]*places[j] : j in [1..#places]] : i in [1..4]];
assert forall{d : d in divisors | IsPrincipal(d)};

//The following code is by Michael Stoll
// Give a conditional proof that the plane quartic curve
//  C : x^2 y^2 + x y^3 - x^3 z - x^2 y z - x y^2 z + y z^3 = 0
// has exactly six rational points.
// The result assumes that the Mordell-Weil rank of C's Jaobian is 1.

// Set up the projective plane over Q
Pr2<x,y,z> := ProjectiveSpace(Rationals(), 2);

// and define the curve.
pol := x^2*y^2 + x*y^3 - x^3*z - x^2*y*z - x*y^2*z + y*z^3;
C := Curve(Pr2, pol);

// These are the known rational points:
pts := [[1,0,0], [0,1,0], [0,0,1], [1,1,1], [-1,1,0], [1,1,-2]];
ptsC := ChangeUniverse(pts, C);
// their differences all lie in a rank-1 subgroup.

// We want to prove that these are all the rational points
// assuming that the Mordell-Weil rank is 1.
// (Then J(Q) = Z; there is no torsion.)

// We work 2-adically.
// The points (-1:1:0) and (1:1:-2) are in the same
// residue disk mod 2, so their difference is in
// the kernel of reduction.

// We fix a basis of the regular differentials:
// (omx, omy, omz) = (x, y, z)*((z dx - x dz)/F_y(x,y,z))
// where F = pol is the defining polynomial of C
// and F_y denotes the partial derivative w.r.t. y.

// Set up the power series ring over Q.
Pws<t> := LaurentSeriesAlgebra(Rationals());

function ombas(trip)
  xs, ys, zs := Explode(trip);
  // xs, ys, zs are coordinates as power series in a uniformizer t
  // returns [omxs, omys, omzs] s.t. omx = omxs(t) dt etc.
  om0 := (zs*Derivative(xs) - xs*Derivative(zs))
           / Evaluate(Derivative(pol, 2), [xs, ys, zs]);
  return [xs*om0, ys*om0, zs*om0];
end function;

// For a given point P, find a uniformizer
// (we can take a function like  x/y - x(P)/y(P) )
// and return (xs, ys, zs) expressing the coordinates in terms of it.
function uniformizer(P)
  // pick a coordinate that is = 1
  // (possible for our rational points)
  i := 1; while P[i] ne 1 do i +:= 1; end while;
  // find non-vanishing partial derivative at P
  j := 1; while j eq i or Evaluate(Derivative(pol, j), Eltseq(P)) eq 0 do j +:= 1; end while;
  // we set i-th coordinate = 1, k-th coordinate - P[k] = uniformizer
  // where {i,j,k} = {1,2,3}
  k := 6 - i - j;
  result := [Pws | 1, 1, 1];
  result[k] := P[k] + t;
  // solve for the remaining coordinate
  PPws := PolynomialRing(Pws);
  subs := ChangeUniverse(result, PPws);
  subs[j] := PPws.1;
  rts := Roots(Evaluate(pol, subs));
  // There may be additional roots;
  // we need the one that is regular and takes the correct value at 0.
  rts := [r[1] : r in rts | Valuation(r[1]) ge 0 and Coefficient(r[1], 0) eq P[j]];
  assert #rts eq 1; // there should be exactly one such root
  result[j] := rts[1];
  return result;
end function;

// Set up Q_2.
Q2 := pAdicField(2);

// ptsC[6] is the point with t = -2 in the residue class of ptsC[5]
triples := [uniformizer(P) : P in pts[1..5]];
assert pts[6] eq [Evaluate(ser, Q2!-2) : ser in triples[5]];

// Integrate the differentials from ptsC[5] to ptsC[6].
omspt5 := ombas(triples[5]);
logspt5 := [Pws| Integral(om) : om in omspt5]; // formal integrals
// the precision (O(t^20) for the series, O(2^20) for 2-adics) is sufficient.
values := [Evaluate(log, Q2!-2) : log in logspt5];

// Find basis of differentials that annihilate the MW group.
kermat := KernelMatrix(Matrix([[v] : v in values]));

// Find the image in differentials over F_2.
Z2 := Integers(Q2);
kermat := ChangeRing(kermat, Z2);
F2, red := ResidueClassField(Z2);
kermatmod2 := ChangeRing(kermat, F2);
// mod 2, the differentials correspond to the lines x = 0 and y+z = 0.
PwsF2<T> := LaurentSeriesAlgebra(F2);
omspt5mod2 := ChangeUniverse(omspt5, PwsF2);
annpt5 := [&+[kermatmod2[i,j]*omspt5mod2[j] : j in [1..3]] : i in [1..2]];

// Check that one of the annihilating differentials has non-vanishing
// constant term. This implies that there are at most two rational points
// in the same residue class mod 2 as ptsC[5].
// Since we know two (ptsC[5] and ptsC[6]), there are then exactly two.
assert exists{om : om in annpt5 | Coefficient(om, 0) ne 0};

// Now show that in all other residue classes, there is at most
// one rational point.

// First we check that the known rational points surject onto C(F_2).
CF2 := BaseChange(C, Bang(Rationals(), F2));
assert Set(Points(CF2)) eq {CF2| pt : pt in pts};

// Now for each point ptsC[i] with 1 <= i <= 4,
// consider the annihilating differentials mod 2 in terms of
// a local uniformizer
// and show that there is always one such that the constant
// term is nonzero and the linear term is zero.
// This implies that the corresponding logarithm has at most
// one zero on the residue disk of the point, so there is at
// most one rational point in the disk (and we know one, of course).
for i := 1 to 4 do
  // get basis of differentials in terms of local uniformizer
  oms := ombas(triples[i]);
  // consider them mod 2
  oms := ChangeUniverse(oms, PwsF2);
  // take appropriate linear combinations to get annihilating differentials
  ann := [&+[kermatmod2[i,j]*oms[j] : j in [1..3]] : i in [1..2]];
  Append(~ann, &+ann); // the third nonzero differential is the sum
  assert exists{om : om in ann | Coefficient(om, 0) ne 0 and Coefficient(om, 1) eq 0};
  print ann;
end for;

// This finishes the proof.

// It remains to verify that the Mordell-Weil rank is really 1.

\end{lstlisting}
\end{code}
\begin{theorem}\label{thm_M3_C3_preperiodic}
    Let $f \in \A_3(C_3)$. Assuming the (weak) Birch-Swinnerton-Dyer conjecture, if $f_a(z) = \frac{z^3+a}{az^2}$ does not have a $\Q$-rational periodic point of period at least $4$, then the $\Q$-rational preperiodic structure for $a \in \Q$ is one of the following;
    \begin{align*}
        G_1 &:=
            \xygraph{
        !{<0cm,0cm>;<1cm,0cm>:<0cm,1cm>::}
        !{(0,0) }*+{\underset{0}{\bullet}}="a"
        !{(1,0) }*+{\underset{\infty}{\bullet}}="b"
        !{(3,0) }*+{\bullet}="c"
        !{(2,0) }*+{\bullet}="d"
        !{(2,1) }*+{\bullet}="e"
        "a":"b"
        "b":@(rd,ru)"b"
        "c":@(rd,ru)"c"
        "d":"c"
        "e":"c"
        }, \quad \text{\parbox[t]{9cm}{$a = \frac{1}{1-t^3}, \; t = \frac{1}{xy - xz} \text{ for } (x:y:z) \in \{n(1:0:1) : n \geq 2 \text{ for addition of points on } y^2-y = x^3-1 \}$}}\\
        G_2 &:= \xygraph{
        !{<0cm,0cm>;<1cm,0cm>:<0cm,1cm>::}
        !{(0,0) }*+{\underset{0}{\bullet}}="a"
        !{(1,0) }*+{\underset{\infty}{\bullet}}="b"
        !{(2,0) }*+{\bullet}="c"
        "a":"b"
        "b":@(rd,ru)"b"
        "c":@(rd,ru)"c"
        }, \quad a = \frac{1}{1-t^3},\; t \in \Q\setminus\{0,1\} \text{ and not $G_1$}\\
        G_3 &:=
            \xygraph{
        !{<0cm,0cm>;<1cm,0cm>:<0cm,1cm>::}
        !{(0,0) }*+{\bullet}="a"
        !{(1,0) }*+{\underset{0}{\bullet}}="b"
        !{(2,0) }*+{\underset{\infty}{\bullet}}="c"
        "a":"b"
        "b":"c"
        "c":@(rd,ru)"c"
        }, \quad a = t^3, \; t \in \Q \setminus\{0\}\\
        G_4 &:=
            \xygraph{
        !{<0cm,0cm>;<1cm,0cm>:<0cm,1cm>::}
        !{(0,0) }*+{\underset{0}{\bullet}}="b"
        !{(1,0) }*+{\underset{\infty}{\bullet}}="c"
        "b":"c"
        "c":@(rd,ru)"c"
        }, \quad \text{all other parameters $a$.}
    \end{align*}
\end{theorem}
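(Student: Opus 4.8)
The plan is to describe, for every admissible $a\in\Q$, the entire $\Q$-rational preperiodic forest of $f_a(z)=\frac{z^3+a}{az^2}$ by first determining the rational periodic points and then growing their rational backward orbits one level at a time; since each backward orbit is automatically finite, this is a finite computation at each step, and the only real work is deciding, at each stage, whether the relevant preimage equation has rational solutions. By the standing hypothesis every rational periodic point has minimal period at most $3$, and Proposition~\ref{prop_A3_C3_periodic} shows there are no rational $2$- or $3$-cycles, so the rational periodic points are precisely the rational fixed points: $\infty$ for all $a$, together with a second fixed point exactly when $a=\frac{1}{1-t^3}$ for some $t\in\Q\setminus\{0,1\}$, in which case the second fixed point is $w=1/t$ (as in the proof of Proposition~\ref{prop_A3_C3_periodic}). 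Thus there are only two possible shapes for the periodic part, and the theorem reduces to attaching the correct backward tree to $\infty$ and, when present, to $w$.

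First I would treat the backward orbit of $\infty$. Solving $f_a(z)=\infty$ gives the single finite preimage $z=0$, and solving $f_a(z)=0$ gives $z^3=-a$, which has a rational root exactly when $a$ is a rational cube, say $a=t^3$, the root being $z=-t$. One then notes that the two special conditions on $a$ are mutually exclusive for admissible parameters: $\frac{1}{1-s^3}=t^3$ would give $t^3-(st)^3=1$, which has no rational solution outside the excluded degenerate ones, i.e.\ this is Fermat's Last Theorem for exponent $3$; similarly $\frac{1}{1-t^3}$ is never a rational cube for admissible $t$, so when a second fixed point exists the point $0$ has no rational preimage and the $\infty$-component is just $0\to\infty$. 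This accounts for the $\infty$-part of $G_1$ and $G_2$. When $a=t^3$ there is no second fixed point, and it remains to show that $v=-t$ has no rational preimage: solving $f_{t^3}(z)=-t$ yields the equation $z^3+t^4z^2+t^3=0$, an irreducible plane curve of geometric genus $3$, and the claim is that its only $\Q$-rational points are the degenerate ones (those with $t=0$ or lying over $\infty$). After a birational identification this is exactly the statement of Lemma~\ref{lem_M3_C3_genus_3_points}, conditional on weak Birch--Swinnerton-Dyer. Hence $a=t^3$ forces the graph $G_3$, and every remaining $a$ --- neither a cube nor of the form $\frac{1}{1-t^3}$ --- has only the periodic point $\infty$ with its single preimage $0$, giving $G_4$.

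It then remains to analyze the backward tree of $w=1/t$ when $a=\frac{1}{1-t^3}$. Factoring the known root $w$ out of the equation $f_a(z)=w$ leaves a quadratic in $z$ whose discriminant, after simplification using $1-a=-t^3a$, equals $\frac{t(4-3t^3)}{(1-t^3)^2}$; hence $w$ has two distinct, nonzero, genuinely new rational preimages $d\neq e$ precisely when $t(4-3t^3)$ is a rational square, and none otherwise. The latter case is exactly $G_2$. In the former case I would re-parametrize the condition $t(4-3t^3)=\square$: clearing denominators and completing a square identifies it with the set of $\Q$-rational points of the elliptic curve $E\colon y^2-y=x^3-1$, from which a standard Mordell--Weil computation produces the parametrization $t=\frac{1}{xy-xz}$, $(x:y:z)\in\{n(1:0:1):n\ge 2\}$ (the values $n=0,\pm1$ being the degenerate ones) and shows that $E$ has positive rank, so this family is infinite. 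To conclude that the graph is exactly $G_1$ --- a cherry $d,e\to w$ with no further rational vertices --- one checks that neither $d$ nor $e$ acquires a rational preimage; solving $f_a(z)=d$ under the elliptic parametrization of $t$ gives a further curve whose only rational points are degenerate, handled by the same rational-points-on-curves toolkit (Faltings or Chabauty, or Lemma~\ref{lem_irreducible_curves} where the curve becomes geometrically reducible). Assembling the four cases, which are pairwise disjoint by the Fermat-type observations and by complementarity of the square condition, yields precisely $G_1,\dots,G_4$.

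The main obstacle is the control of rational points on the curves governing the second level of preimages, above all the non-hyperelliptic genus $3$ curve of Lemma~\ref{lem_M3_C3_genus_3_points}: it has no useful automorphisms and its off-the-shelf point bounds are not sharp, so a Chabauty-type argument together with the (conditional, via the Hasse--Weil/Selberg-class hypothesis) rank-one computation is required --- this is exactly why the theorem is stated under weak BSD. By contrast the remaining curves are routine: the lines and conics are solved directly, and the elliptic curve $E$ needs only a standard descent for its Mordell--Weil group.
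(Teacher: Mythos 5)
Your high-level strategy — growing the backward trees of the rational fixed points one level at a time and controlling each level by a rational-points-on-curves argument — matches the paper, but you have misattributed the hard step, and as a result both the claim that invokes Lemma~\ref{lem_M3_C3_genus_3_points} and the claim about where BSD enters are wrong.

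For the case $a=t^3$, you assert that the curve $z^3+t^4z^2+t^3=0$ (second rational preimages of $0$) is, ``after a birational identification, exactly the statement of Lemma~\ref{lem_M3_C3_genus_3_points}, conditional on weak Birch--Swinnerton-Dyer.'' That cannot be: the paper reduces $z^3+t^4z^2+t^3=0$ via the birational change $(u,v)=(z/t,\,z^2/t\cdot(\text{etc.}))$ to the nonsingular \emph{Picard} curve $v^3=u^4+u$, which has a visible order-$3$ automorphism and whose Jacobian has Mordell--Weil rank $0$; Chabauty--Coleman then settles it \emph{unconditionally}. By contrast, the plane quartic of Lemma~\ref{lem_M3_C3_genus_3_points} has trivial automorphism group and (conditionally) rank $1$ Jacobian, so the two cannot be birational, nor does that case need BSD. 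The step that actually requires BSD is the one you dismiss in a sentence: ruling out a further rational preimage of $d$ or $e$ when $a=\tfrac{1}{1-t^3}$. There the relevant curve is a component of $f_a^2(z)=w$ — a genus~$13$ curve which, after quotienting by the $C_3$ symmetry $(z,t)\mapsto(\zeta_3z,\zeta_3t)$ and by a further $\mathbb Z/3$, becomes the genus~$3$ plane quartic of Lemma~\ref{lem_M3_C3_genus_3_points}. None of the tools you name in that sentence (``Faltings or Chabauty, or Lemma~\ref{lem_irreducible_curves}'') handles it as stated: Lemma~\ref{lem_irreducible_curves} does not apply because the quartic is geometrically irreducible, Faltings is ineffective, and a Chabauty argument is only available after the rank bound is established — which is precisely where the BSD (and Hasse--Weil) hypotheses come in. Also note that parametrizing by the rational points of a rank-$1$ elliptic curve, as you suggest, would produce an infinite family of curves rather than the single curve in $(z,t)$ that the paper analyzes, so that framing makes the step harder rather than easier.

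Your other deviations are fine: replacing the paper's explicit rank-$0$ elliptic curve computation for the mutual-exclusion step with a citation of Euler's resolution of Fermat for exponent $3$ is a valid simplification, and recovering the curve $y^2-y=x^3-1$ from the discriminant $t(4-3t^3)=\square$ (via $X=1/t$, $S=s/t^2$) is correct and equivalent to the paper's parametrization of preimages of the second fixed point. But the misidentification of the genus-$3$ curve is a genuine error, not merely a stylistic difference: it attaches the conditional hypothesis to a case that is in fact unconditional and hides the case that really needs it.
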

\begin{proof}
    Proposition \ref{prop_A3_C3_periodic} describes the periodic points, so we need only consider the strictly preperiodic points.

    The point at infinity denoted $\infty$ is fixed for $f_a$ for every $a$. Its only non-periodic preimage is $0$. So we always have at least the preperiodic structure
    \begin{equation*}
        \xygraph{
    !{<0cm,0cm>;<1cm,0cm>:<0cm,1cm>::}
    !{(0,0) }*+{\underset{0}{\bullet}}="a"
    !{(1,0) }*+{\underset{\infty}{\bullet}}="b"
    "a":"b"
    "b":@(rd,ru)"b"
    }
    \end{equation*}

We can also look for preimages of $0$. Solutions to $z^3 + a = 0$ for rational $a$ must look like $a = l^3$ for some $l \in \Q$. Of the three possible preimages of $0$, only one can ever be rational. This describes parameter values whose preperiodic structures contain graphs $G_2$ and $G_3$. 

To have both a second fixed point and a second non-periodic preimage of $\infty$ (i.e., the union of $G_2$ and $G_3$), we need $a = \frac{1}{1-t^3}$ and $a = l^3$ for $t,l \in \Q$. Setting these two equal, the problem comes down to finding rational points on the (affine) curve
\begin{equation}\label{eq3}
    l^3(1-t^3)-1=0.
\end{equation}
This curve has genus 1 and a rational point at $(t,l) = (0,1)$, which allows us to find an isomorphism to an elliptic curve using Magma.
\begin{code}
\begin{lstlisting}[language=Python]
K := Rationals();
A<t,l> := AffineSpace(K,2);
C := ProjectiveClosure(Curve(A, l^3*(1-t^3)-1));
p := C![0,1,1];
E, psi := EllipticCurve(C,p);
Rank(E);
TorsionSubgroup(E);
\end{lstlisting}
\end{code}
We get the Weierstrass model $y^2 -9y = x^3 -27$, which has rank 0 and whose torsion subgroup is isomorphic to $\Z/3\Z$. The rational points of the projective closure are $(t:l:r) = (0:1:0), (0:1:1), (1:0:0)$. The two points $(0:1:0)$ and $(1:0:0)$ are singular. When we blow them up they both correspond to one rational point. Since we only have three rational points on the elliptic curve, there are only three on our original curve.
\begin{code}
\begin{lstlisting}[language=Python]
// We search for the rational points using
RationalPoints(C:Bound:=1000);

// We evaluate the singular points with
A<l,t>:=AffineSpace(Rationals(),2);
Y:=Curve(A,l^3*(1-t^3)-1);
Y:=ProjectiveClosure(Y);
Points(Y:Bound:=1000);
sing:=SingularPoints(Y);
P1:=sing[1];
P2:=sing[2];
P1;
Q1:=Places(P1);
Q1;
Degree(Q1[1]);
Degree(Q1[2]);
P2;
Q2:=Places(P2);
Q2;
Degree(Q2[1]);
Degree(Q2[2]);
\end{lstlisting}
\end{code}
Only one of these three rational points is not at infinity. This point corresponds to $a = 1$ and gives $\Phi^{\ast}_1(f)(z) = 1$ which is never 0. Thus, we can never have both a second fixed point and a preimage of 0. Note that this eliminates the union of $G_1$ and $G_3$ as well.

We can also look at when the preimage of 0 itself has a preimage. We already know $a = t^3$ for some $t \in \Q$, so we need $\frac{z^3 + t^3}{t^3z^2} = -t$. This determines the curve
$$z^3 + z^2t^4 + t^3 = 0,$$
which is a singular genus 3 non-hyperelliptic curve. It has a $C_3$ automorphism given by $(z,t) \mapsto (\omega z, \omega t)$, where $\omega$ is a cube root of unity. Quotienting out by this action in Magma gives us all of $\bbP^1$, so a different analysis is necessary. We can also perform the change of coordinates $(z,t) \mapsto (u,v) = (\frac{z}{t}, z^2t)$ to get $v^3 + u + 1 = 0$ - this is exactly $\bbP^1$ as expected.

The first birational transformation we perform is sending the coordinates $(z,t)$ to $(x,y) = (\frac{t}{z}, \frac{t^2}{z})$, which gives us $y^3 = -x^5-x^2$. The simple change $y \mapsto -y$ allows us to write the nicer version
\begin{equation*}
    y^3 = x^5+x^2.
\end{equation*}
We see that this is now a superelliptic curve, but in a singular model so still not desirable to work with. Another birational transformation $(x,y) \mapsto (u,v) = (\frac{1}{x}, \frac{y}{x^2})$ gives
$$v^3 = u^4 + u,$$
a nonsingular Picard curve.
Magma computes that this curve has rank 0, and since this number is strictly less than 3 (its genus), we can use the Chabauty-Coleman \cite{Chabauty,Coleman} method to find all its rational points. We can check that $p=2$ is a prime of good reduction (define the curve in Magma over $\F_2$ and check if it is singular). Code written by Jan Tuitman and Jennifer Balakrishnan \cite{BT-coleman} verifies that, in fact, there can be at most three rational points on the curve.
\begin{code}
\begin{lstlisting}[language=Python]
load "coleman.m";
f:=x^4+x;
p:=5;
N:=30;
data:=coleman_data(y^3-f,p,N);
Qpoints:=Q_points(data,1000);
L,v:=effective_chabauty(data:bound:=1000,e:=50);
\end{lstlisting}
\end{code}
Thus, there are only three points on our original curve, which Magma finds to be $(0 : 1 : 0), (0 : 0 : 1), (1 : 0 : 0)$. The only affine point is $(0,0)$, so  $z = 0, t= 0$ is the only solution. However, these values send the expression $\frac{z^3 + t^3}{t^3z^2}$ to infinity, not 0, so it is not a valid solution. Thus, the rational preimage of 0 cannot have a rational preimage.

We next see if the second fixed point can have non-periodic rational preimages. To simplify the equations instead of the parameterization $a = \frac{1}{1-t^3}$, $t \neq 0,1$, for the existence of a second fixed point, we replace $t$ with $\frac{1}{t}$ to have $a= \frac{t^3}{t^3-1}$, $t \neq 0,1$ with (affine) fixed point $t$.
We know that
\begin{equation*}
    f_a(z) = t,
\end{equation*}
which produces equation
\begin{equation}\label{eq5}
    \frac{1}{t^3-1} (z^3(t^3-1) + -t^4z^2 + t^3)=0.
\end{equation}
The variety defined by equation \eqref{eq5} has two irreducible components: the case where the preimage is the fixed point itself, $z=t$, and the genus 1 curve defined by
\begin{equation*}
    C:z^2t^3 - z^2 - zt - t^2=0.
\end{equation*}
Using the point $(0:1:0)$ as the point at infinity, we get the model
\begin{equation}
    y^2 - y = x^3 - 1.
\end{equation}
It is rank 1 with generator $(1:0:1)$ with trivial torsion. We have a mapping
\begin{align*}
    \psi&:E \to C\\
    \psi(x,y,z) &= (xz,xy - xz, yz - z^2).
\end{align*}
So every rational point on the elliptic curve corresponds to a rational point on the curve $C$. The image of $(1:0:1)$ gives $t=1$, which is the degenerate case, but the other rational points correspond to parameters $a$ where the additional fixed point has a non-periodic rational preimage. Furthermore, since the curve $C$ defining the pair $(z,t)$ is degree $2$ in $z$, if there is one rational preimage of the fixed point, then there is typically two rational preimages of the fixed point. The exceptions are obtained by taking the discrminant of the defining equation of $C$ in $\Q[t][z]$. This discriminant is
\begin{equation*}
    -27\cdot t^2\left(t^8 + \frac{14}{27}t^4 + \frac{1}{9}\right).
\end{equation*}
So the only rational $t$ value where there could be a single preimage is $t=0$, which is degenerate. So we have structure $G_1$.

\begin{code}
\begin{lstlisting}
R.<z,t>=QQ[]
A=AffineSpace(R)
I=R.ideal([z^3*(t^3 - 1) + (-t^4*z^2 + t^3)])
X=A.subscheme(I)
X.irreducible_components()

K := Rationals();
P<z,t,h> := ProjectiveSpace(K,2);
f := z^2*t^3 - z^2*h^3 - z*t*h^3 - t^2*h^3;
D := Curve(P, [f]);
E,psi:=EllipticCurve(D,D![0,1,0]);
IsInvertible(psi);

#generating parameters
r.<x,y,z>=QQ[]
PP=ProjectiveSpace(r)
E = EllipticCurve([0, 0, -1, 0, -1])
P=E(1,0)
f(x,y,z) = (x*z,x*y - x*z,y*z - z^2)
for n in range(1,6):
    print(n*P)
    X,Y,Z=list(n*P)
    print(PP(list(f(X,Y,Z))))

#discriminant
R.<t>=QQ[]
S.<z>=R[]
f=z^3*t^3-z^2-z*t-t^2
f.discriminant().factor()
\end{lstlisting}
\end{code}

Now we check if structure $G_1$ can be extended by the finite fixed point having a second rational preimage. We continue to utilize the parameterization $a = \frac{t^3}{t^3-1}$, so we are looking for a rational point on a component of the curve defined by $f^2_a(z) = t$
\begin{align*}
    ((t^{12} &- 4t^9 + 6t^6 - 4t^3 + 1)z^9 + (4t^{12} - 9t^9 + 9t^6 - 3t^3)z^6 + (3t^{12} - 6t^9 + 3t^6)z^3 + t^{12} - t^9)\\
    &=t((t^{12} - 2t^9 + t^6)z^8 + (2t^{12} - 2t^9)z^5 + t^{12}z^2).
\end{align*}
After saturation by the ideal $(t,z)$, the degenerate case, this curve has three irreducible components. Two of these components correspond to fixed points and preperiodic points with period (1,1), respectively,
\begin{align*}
    z&=t\\
    t^3z^2 -z^2 -tz - t^2&=0.
\end{align*}
The third component represents the preperiodic points with period $(2,1)$ and is an irreducible genus 13 curve defined by
\begin{equation*}
    z^6t^9 - 3z^6t^6 - z^5t^7 - z^4t^8 + 2z^3t^9 + 3z^6t^3 + z^5t^4 - 4z^3t^6 - z^2t^7 + t^9 - z^6 + 2z^3t^3 - t^6=0.
\end{equation*}
Utilizing Magma, we quotient by the automorphism $(z,t) \mapsto (\zeta_3 z, \zeta_3 t)$, where $\zeta_3$ is a primitive third root of unity. This results in the genus 3 curve defined by
\begin{equation*}
    -x^3y + y^4 + 5x^3 - 2x^2y + xy^2 - 19y^3 + 11x^2 - 9xy + 136y^2 + 23x - 433y + 517=0.
\end{equation*}
Simplifying the (projective closure) equation with a change of variables defined by the $\SL_2(\Z)$ element $m=\begin{pmatrix} 1& -1&  1\\ 0 & 5& -4\\ 0 & 1 &-1\end{pmatrix}$, we get the reduced equation
\begin{equation*}
    x^2y^2 + xy^3 - x^3 - x^2y - xy^2 + y=0.
\end{equation*}
Lemma \ref{lem_M3_C3_genus_3_points} calculates the $\Q$-rational points of the projective closure of this curve as
\begin{equation*}
    \{(1 : 1 : 1), (-1/2 : -1/2 : 1), (0 : 1 : 0), (0 : 0 : 1), (1 : 0 : 0), (-1 : 1 : 0)\}.
\end{equation*}
These give the six points on the nonreduced curve
\begin{equation*}
    \{(1: 1: 0), (1: -13/2: -3/2), (1: -4: -1), (-1: 5: 1), (1: 0: 0), (-2: 5: 1)\}.
\end{equation*}
On the original curve, we find the four points
\begin{equation*}
    \{ (0 : 1 : 0), (0 : 0 : 1), (0 : 1 : 1), (1 : 0 : 0) \}.
\end{equation*}
Computing the inverse image of the quotient map of the six points on the nonreduced curve, we find only the four rational points on the original curve already known. So these are the only four rational points on the original curve. These are all either points at infinity or degenerate cases, so there are no non-periodic second rational preimages of the finite fixed point.

\begin{code}
\begin{lstlisting}
#Check that we get a rank 2 subgroup from the 6 known rational points and
#and that the jacobian as no torsion

// adapted from Stoll rational 6 cycle calculatioins:
// The next task is to prove Lemma 4.
// We first compute the class groups of the reduction mod 3,5,7,11,13.
Pr3<x,y,z> := ProjectiveSpace(Rationals(), 2);
Xpr3:=-y^2*x^2 - y*x^3 + y^3*z + y^2*x*z + y*x^2*z - x*z^3;
ptsXpr3:=Points(Curve(Pr3,Xpr3):Bound:=1000);

// These are points no. 8,7,3,5,0,4,9,2,6,1, in this order.
perm := [1, 2, 3, 4, 5, 6]; // ptsXpr11[perm[i+1]] = P_i

primes := [5, 7, 11, 13];
Xred := [BaseChange(Curve(Pr3,Xpr3), Bang(Rationals(), GF(p))) : p in primes];
Clgps := [<G, m> where G, m := ClassGroup(Xr) : Xr in Xred];
// Check that there is no rational torsion
Clnums := [&*[i : i in Invariants(pair[1]) | i ne 0] : pair in Clgps];
[Factorization(Clnums[1]), Factorization(Clnums[2])];
// Set up the homomorphism Phi_S
// Get integral coordinates of P_0, ..., P_5
ptseqs := [Eltseq(ptsXpr3[perm[i+1]]) : i in [0..5]];
ptseqs[2] := [2*a : a in ptseqs[2]]; ptseqs;
ptseqs := [ChangeUniverse(s, Integers()) : s in ptseqs];
prodG, incls, projs := DirectProduct([pair[1] : pair in Clgps]);
Z10 := FreeAbelianGroup(6);
homs := [hom<Z10 -> Clgps[i,1] | [Divisor(Place(Xred[i]!pt)) @@ Clgps[i,2] : pt in ptseqs]>  : i in [1..#Clgps]];
PhiS := hom<Z10 -> prodG | [&+[incls[i](homs[i](g)) : i in [1..#incls]] : g in OrderedGenerators(Z10)]>;
Invariants(Image(PhiS));
// Since we know there is no torsion, this implies that the rank of
// our subgroup of J(Q) is at least 3 (or the rank of the Picard group
// of C at least 4).
Ker := Kernel(PhiS);
// Find small relations
L := Lattice(Matrix([Eltseq(Z10!k) : k in OrderedGenerators(Ker)]));
BL := Basis(LLL(L)); BL;
// We see 4 small elements in the kernel.
// Check that they correspond to principal divisors.
places := [Place(ptsXpr3[i+1]) : i in [0..5]];
divisors := [&+[BL[i,j]*places[j] : j in [1..#places]] : i in [1..4]];
assert forall{d : d in divisors | IsPrincipal(d)};



load "coleman.m";
Q:= -x^5 - x^3*y + x^2*y^2 - x*y^2 + y^3 + x*y;
data:=coleman_data(Q,7,15);
Qpoints:=Q_points(data,1000);
L,v:=effective_chabauty(data:Qpoints:=Qpoints,e:=30);


load "coleman.m";
Q:=-y^2*x^2 - y*x^3 + y^3 + y^2*x + y*x^2 - x;
data:=coleman_data(Q,2,10);
Qpoints:=Q_points(data,1000);
L,v:=effective_chabauty(data:Qpoints:=Qpoints,e:=30);


A2<x,y> := AffineSpace(Rationals(),2);
f := -x^5 - x^3*y + x^2*y^2 - x*y^2 + y^3 + x*y;
C := Curve(A2,[f]);
D:=ProjectiveClosure(C);
Points(D:Bound:=10000);
\end{lstlisting}
\end{code}

\begin{code}
\begin{lstlisting}
#(2,1) points
R.<t>=QQ[]
A.<z>=AffineSpace(R,1)
a=t^3/(t^3-1)
f=DynamicalSystem_affine((z^3+a)/(a*z^2), domain=A)
F2=f.nth_iterate_map(2)
f2= F2[0].numerator() - t*F2[0].denominator()

from sage.rings.polynomial.flatten import FlatteningMorphism
phi=FlatteningMorphism(f2.parent())
R = phi.codomain()

A=AffineSpace(R)
I=R.ideal([phi(f2)])
X=A.subscheme(I)
J=I.saturation(R.ideal(R.gens()))[0]
X=A.subscheme(J)
X.irreducible_components()

K := Rationals();
P<z,t,h> := ProjectiveSpace(K,2);
f := z^6*t^9 - 3*z^6*t^6*h^3 - z^5*t^7*h^3 - z^4*t^8*h^3 + 2*z^3*t^9*h^3 + 3*z^6*t^3*h^6 + z^5*t^4*h^6 - 4*z^3*t^6*h^6 - z^2*t^7*h^6 + t^9*h^6 - z^6*h^9 + 2*z^3*t^3*h^9 - t^6*h^9;
D := Curve(P, [f]);
Genus(D);
RationalPoints(D:Bound:=1000);

K:=CyclotomicField(3);
P2<z,t,h> := ProjectiveSpace(K,2);
f := z^6*t^9 - 3*z^6*t^6*h^3 - z^5*t^7*h^3 - z^4*t^8*h^3 + 2*z^3*t^9*h^3 + 3*z^6*t^3*h^6 + z^5*t^4*h^6 - 4*z^3*t^6*h^6 - z^2*t^7*h^6 + t^9*h^6 - z^6*h^9 + 2*z^3*t^3*h^9 - t^6*h^9;
C := Curve(P2,[f]);
phi := iso<C->C|[K.1*z,K.1*t,h],[K.1^2*z,K.1^2*t,h]>;
// we will take the quotient by phi
G := AutomorphismGroup(C,[phi]);
CG,prj := CurveQuotient(G);

P2<z,t,h> := ProjectiveSpace(K,2);
f := z^6*t^9 - 3*z^6*t^6*h^3 - z^5*t^7*h^3 - z^4*t^8*h^3 + 2*z^3*t^9*h^3 + 3*z^6*t^3*h^6 + z^5*t^4*h^6 - 4*z^3*t^6*h^6 - z^2*t^7*h^6 + t^9*h^6 - z^6*h^9 + 2*z^3*t^3*h^9 - t^6*h^9;
C := Curve(P2,[f]);
phi := iso<C->C|[K.1*z,K.1*t,h],[K.1^2*z,K.1^2*t,h]>;
// we will take the quotient by phi
G := AutomorphismGroup(C,[phi]);
CG,prj := CurveQuotient(G);

P2<x,y,z> := ProjectiveSpace(Rationals(),2);
f := -x^3*y + y^4 + 5*x^3*z - 2*x^2*y*z + x*y^2*z - 19*y^3*z + 11*x^2*z^2 - 9*x*y*z^2 + 136*y^2*z^2 + 23*x*z^3 - 433*y*z^3 + 517*z^4;
C := Curve(P2,[f]);
D,m:=ReducePlaneCurve(C);


m=matrix(QQ,3,3,[1,-1,1,0,5,-4,0,1,-1])
L=[[1,1,1],[-1/2,-1/2,1],[0,0,1],[0,1,0],[1,0,0],[-1,1,0]]
for p in L:
    print((m*vector(p)))

K := Rationals();
P<z,t,h> := ProjectiveSpace(K,2);
f := z^6*t^9 - 3*z^6*t^6*h^3 - z^5*t^7*h^3 - z^4*t^8*h^3 + 2*z^3*t^9*h^3 + 3*z^6*t^3*h^6 + z^5*t^4*h^6 - 4*z^3*t^6*h^6 - z^2*t^7*h^6 + t^9*h^6 - z^6*h^9 + 2*z^3*t^3*h^9 - t^6*h^9;
D := Curve(P, [f]);
Genus(D);
RationalPoints(D:Bound:=1000);
E:=SingularPoints(D);
for i := 1 to #E do
    pl := Places(E[i]);
    print pl;
    for j := 1 to #pl do;
        print Degree(pl[j]);
    end for;
end for;
//E[1] blows up to 3 nonrational
//E[2] blows up to one rational (and one nonrational)
//E[3] blows up to 2 rational (and 2 nonrational)


##Finding the inverse images
K:=CyclotomicField(3);
P2<z,t,h> := ProjectiveSpace(K,2);
f := z^6*t^9 - 3*z^6*t^6*h^3 - z^5*t^7*h^3 - z^4*t^8*h^3 + 2*z^3*t^9*h^3 + 3*z^6*t^3*h^6 + z^5*t^4*h^6 - 4*z^3*t^6*h^6 - z^2*t^7*h^6 + t^9*h^6 - z^6*h^9 + 2*z^3*t^3*h^9 - t^6*h^9;
C := Curve(P2,[f]);
phi := iso<C->C|[K.1*z,K.1*t,h],[K.1^2*z,K.1^2*t,h]>;
// we will take the quotient by phi
G := AutomorphismGroup(C,[phi]);
CG,prj := CurveQuotient(G);
prj;

R.<z,t,h>=QQ[]
P=ProjectiveSpace(R)
f1=t^20*h^3 - 8*t^17*h^6 + 43*t^14*h^9 - 54*t^11*h^12 + 28*t^8*h^15
f2=-z^5*t^18 + 5*z^4*t^19 + z^3*t^20 - 15*z^5*t^15*h^3 - 48*z^4*t^16*h^3 + 2*z^3*t^17*h^3 + 5*z*t^19*h^3 - 4*t^20*h^3 + 58*z^5*t^12*h^6 + 211*z^4*t^13*h^6 - 17*z^3*t^14*h^6 - 13*z^2*t^15*h^6 - 36*z*t^16*h^6 + 38*t^17*h^6 - 96*z^5*t^9*h^9 - 399*z^4*t^10*h^9 + 92*z^3*t^11*h^9 + 17*z^2*t^12*h^9 + 168*z*t^13*h^9 - 196*t^14*h^9 + 105*z^5*t^6*h^12 + 402*z^4*t^7*h^12 - 126*z^3*t^8*h^12 - 47*z^2*t^9*h^12 - 220*z*t^10*h^12 + 252*t^11*h^12 - 73*z^5*t^3*h^15 - 207*z^4*t^4*h^15 + 116*z^3*t^5*h^15 + 53*z^2*t^6*h^15 + 159*z*t^7*h^15 - 164*t^8*h^15 + 22*z^5*h^18 + 36*z^4*t*h^18 - 34*z^3*t^2*h^18 - 22*z^2*t^3*h^18 - 36*z*t^4*h^18 + 34*t^5*h^18
f3=z^4*t^19 - 4*z^5*t^15*h^3 - 9*z^4*t^16*h^3 + 2*z^3*t^17*h^3 + z*t^19*h^3 - t^20*h^3 + 14*z^5*t^12*h^6 + 40*z^4*t^13*h^6 - 11*z^3*t^14*h^6 - 2*z^2*t^15*h^6 - 7*z*t^16*h^6 + 9*t^17*h^6 - 24*z^5*t^9*h^9 - 76*z^4*t^10*h^9 + 33*z^3*t^11*h^9 + 34*z*t^13*h^9 - 46*t^14*h^9 + 28*z^5*t^6*h^12 + 77*z^4*t^7*h^12 - 39*z^3*t^8*h^12 - 8*z^2*t^9*h^12 - 43*z*t^10*h^12 + 59*t^11*h^12 - 20*z^5*t^3*h^15 - 39*z^4*t^4*h^15 + 31*z^3*t^5*h^15 + 12*z^2*t^6*h^15 + 31*z*t^7*h^15 - 39*t^8*h^15 + 6*z^5*h^18 + 6*z^4*t*h^18 - 8*z^3*t^2*h^18 - 6*z^2*t^3*h^18 - 6*z*t^4*h^18 + 8*t^5*h^18
F=(f1,f2,f3)
for Q in [[1,1,0],[1,-13/2,-3/2],[1,-4,-1],[-1,5,1],[1,0,0],[-2,5,1]]:
    I=R.ideal([F[0]*Q[1]-F[1]*Q[0], F[0]*Q[2]-F[2]*Q[0], F[1]*Q[2]-F[2]*Q[1]])
    X=P.subscheme(I)
    print(X.rational_points())

\end{lstlisting}
\end{code}

There are no possible ways to extend the structures $G_1$, $G_2$, and $G_3$ rationally, so these are the only possible structures of $\Q$-rational preperiodic points for $f_a(z)$ for $a \in \Q$.
\end{proof}

\subsection{$\A_3(D_2)$ First Component}
There are two families with a $D_2$ symmetry. We first look at the family $f_a(z)=\frac{az^2+1}{z^3+az}$. Note that $a = \pm1$ are degenerate cases.
\begin{prop}\label{prop_A3_D2_1}
    The following describes $\Q$-rational periodic points for $f_a(z) =\frac{az^2+1}{z^3+az}$ for $a \in \Q$
    \begin{enumerate}
        \item For every $a$, $f_a(z)$ has the rational periodic points
        \begin{equation*}
           \xygraph{
            !{<0cm,0cm>;<1cm,0cm>:<0cm,1cm>::}
            !{(0,0) }*+{\underset{0}{\bullet}}="a"
            !{(1.5,0) }*+{\underset{\infty}{\bullet}}="b"
            !{(2.5,0) }*+{\underset{1}{\bullet}}="c"
            !{(3.5,0) }*+{\underset{-1}{\bullet}}="d"
            "a":@/^1pc/"b"
            "b":@/^1pc/"a"
            "c":@(rd,ru)"c"
            "d":@(rd,ru)"d"
            }
        \end{equation*}
        \item The points $\pm 1$ are the only $\Q$-rational fixed points for all $a \in \Q \setminus \{\pm1\}$.
        \item If there are any additional rational periodic points with period $2$, then all six points of period $2$ are $\Q$-rational.
        \item There is no $a \in \Q \setminus \{\pm 1\}$ so that $f_a(z)$ has a $\Q$-rational $3$-cycle.
    \end{enumerate}
\end{prop}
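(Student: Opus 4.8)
\emph{Parts (1) and (2).} These are direct dynatomic calculations. One checks $f_a(0)=\infty$ and $f_a(\infty)=0$ (comparing the top-degree terms of numerator and denominator), so $\{0,\infty\}$ is always a $2$-cycle, and $f_a(\pm 1)=\pm 1$ whenever $a\neq -1$. For the fixed points, the first dynatomic polynomial is $\Phi^{\ast}_1(f_a)=(z^3+az)z-(az^2+1)=z^4-1$, which is independent of $a$; hence the affine fixed points are exactly the fourth roots of unity for every $a$, of which only $\pm 1$ are $\Q$-rational, and $\infty$ is not fixed since it lies in the $2$-cycle. This gives (1) and (2).

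\emph{Part (3).} I would argue via the $D_2$-symmetry rather than by expanding $\Phi^{\ast}_2(f_a)$. Since $\alpha_1\colon z\mapsto -z$ and $\alpha_2\colon z\mapsto 1/z$ both fix $f_a$, the group $D_2=\langle\alpha_1,\alpha_2\rangle$ permutes the $3^2-3=6$ formal period-$2$ points, one of whose two-cycles is $\{0,\infty\}$ (a single $D_2$-orbit of size $2$). The action on the remaining four period-$2$ points is free: a nontrivial stabilizer would force some period-$2$ point $z_0$ to satisfy $z_0=-z_0$, $z_0=1/z_0$, or $z_0=-1/z_0$, i.e.\ $z_0\in\{0,\pm 1,\pm i\}$, but $0$ lies in the other cycle and $\pm 1,\pm i$ are fixed points (roots of $z^4-1$). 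So the four extra period-$2$ points form a single free orbit $\{z_0,-z_0,z_0^{-1},-z_0^{-1}\}$, and since each of $-z_0$, $z_0^{-1}$, $-z_0^{-1}$ lies in $\Q$ if and only if $z_0$ does, a single $\Q$-rational extra period-$2$ point forces all six to be $\Q$-rational. (Equivalently, $\Phi^{\ast}_2(f_a)$ carries the quartic factor $z^4-c(a)z^2+1$ for the extra points, which has a $\Q$-rational root only when it splits completely over $\Q$.)

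\emph{Part (4).} Following the template used for $\A_3(C_3)$ in Proposition \ref{prop_A3_C3_periodic}, compute the third dynatomic polynomial $\Phi^{\ast}_3(f_a)(a,z)$, of degree $3^3-3=24$ in $z$ (all $24$ formal period-$3$ points are affine, since $\infty$ has period $2$). The freeness argument from (3) applies verbatim to show $D_2$ acts freely on the $8$ three-cycles, which therefore split into two $D_2$-orbits of four three-cycles; correspondingly I expect $\Phi^{\ast}_3(f_a)$ to factor over $\Q$ (or at worst over $\Q(i)$) into components whose roots are two such three-cycle orbits. For each $\Q$-irreducible component $X$: if $X$ is geometrically reducible with all geometric components defined over a common number field, then Lemma \ref{lem_irreducible_curves} forces every $\Q$-rational point of $X$ to be singular, and one enumerates the finitely many singular points and checks each is degenerate ($a\in\{0,\pm 1\}$) or lies at infinity; if instead $X$ is geometrically irreducible, compute its genus and, when it is $\geq 2$, find all $\Q$-rational points either via a quotient map to an elliptic or hyperelliptic curve (using Lemma \ref{bounds on the non-singular rational points} to pass from a singular plane model to the smooth model) or via a Chabauty--Coleman computation as in Lemma \ref{lem_M3_C3_genus_3_points}. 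A structural simplification I would lean on: because $f_a(-z)=-f_a(z)$ and $f_a(1/z)=1/f_a(z)$, the map $f_a$ descends along $z\mapsto z^2$ and then along $w\mapsto w+w^{-1}$ to degree-$3$ maps, and — since $D_2$ acts freely on $3$-cycles — a $\Q$-rational $3$-cycle of $f_a$ descends to a $\Q$-rational $3$-cycle of the twice-descended map, so it is enough to rule out $3$-cycles for that map on a curve of much smaller degree.

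\emph{Main obstacle.} The bottleneck is exactly the rational-points computation in part (4) for any geometrically irreducible component of the third dynatomic curve. If the components become reducible over a small extension, Lemma \ref{lem_irreducible_curves} disposes of them cheaply; but if a component is geometrically irreducible of genus $\geq 2$ with no convenient quotient, one is forced into a Chabauty-type argument (as in Lemma \ref{lem_M3_C3_genus_3_points}) whose success depends on the Mordell--Weil rank of the relevant Jacobian being small enough. The descent along $z\mapsto z^2$ and $w\mapsto w+1/w$ is the main tool I would use to keep the genus and rank within reach.
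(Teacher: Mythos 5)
Parts (1) and (2) are correct and essentially match the paper: you get $\Phi^{\ast}_1(f_a)=z^4-1$ independent of $a$, so the affine fixed points are exactly $\pm 1,\pm i$ for all $a$, and $\{0,\infty\}$ is always a $2$-cycle. Part (3) reaches the same conclusion as the paper but in a cleaner, more conceptual form: the paper computes $\Phi^{\ast}_2(f_a)$, finds the quartic factor $z^4+2az^2+1$, and invokes the $D_2$-action to argue that one rational root forces all four; your argument needs no explicit factorization at all. One caveat you should address: a fixed point whose multiplier is $-1$ is \emph{also} a formal period-$2$ point, so ``$\pm 1,\pm i$ are fixed points'' does not by itself exclude them from the period-$2$ orbit. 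You should check that $\lambda_{\pm 1}=\tfrac{a-3}{a+1}$ and $\lambda_{\pm i}=\tfrac{a+3}{a-1}$ equal $-1$ only at the degenerate values $a=\pm 1$, which the hypothesis excludes. Once that is in place, the freeness of the $D_2$-action on the four extra formal period-$2$ points is rigorous.

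Part (4), as you yourself flag, is a plan and not a proof; this is the genuine gap. The paper's actual argument is a specific chain of quotients and rational-point computations: it quotients the genus-$27$ dynatomic curve by the cyclic $C_3$-action coming from the dynamical cycle (setting $t=z+f_a(z)+f_a^2(z)$), then by $t\mapsto -t$ (the image of $z\mapsto -z$), and lands on a genus-$3$ hyperelliptic curve with simplified model $y^2 = x^8 - 6x^6 + 19x^4 - 30x^2 + 9$; it then notices this covers, via $x\mapsto x^2$, a genus-$1$ curve whose Jacobian has rank $0$ and torsion of order $6$, from which all $\Q$-points are enumerated and seen to correspond to degenerate $a\in\{0,\pm 1\}$. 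Two things to note about your proposed alternative. First, the paper's first quotient uses the $C_3$ \emph{cycle} symmetry, which your plan does not invoke at all, and which does substantial genus-reduction work. Second, your proposed descent of $f_a$ to a degree-$3$ map $h$ along $z\mapsto z^2+z^{-2}$ is valid as a reduction (a $\Q$-rational $3$-cycle of $f_a$ pushes forward to one of $h$), but $\Phi^{\ast}_3(h)$ still has degree $3^3-3=24$ in the dynamical variable; any saving is only in the genus, which you would have to compute. Until the quotient curve is actually written down, its genus and hyperellipticity verified, and its $\Q$-points determined (for the paper, via the rank-$0$ elliptic quotient), part (4) is not established.
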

\begin{proof}
The first dynatomic polynomial is $\Phi^{\ast}_{1}(f_a)=-z^4+1$. In particular, every member of the family has two rational fixed points $1$ and $-1$ and two complex fixed points $\pm i$.
\begin{code}
\begin{lstlisting}[language=Python]
R.<a> = QQ[]
P.<z> = R[]
P = FractionField(P)
A = AffineSpace(P,1)
g = DynamicalSystem_affine((a*z^2+1)/(z^3+a*z))
g.dynatomic_polynomial(1)
\end{lstlisting}
\end{code}
The second dynatomic polynomial is given by
\begin{equation*}
    \Phi^{\ast}_{2}(f_a)=(a^2 - 1)z^5 + (2a^3 - 2a)z^3 + (a^2 - 1)z.
\end{equation*}
The associated curve is reducible over $\Q$ and the irreducible components are
\begin{align}
    &z=0 \notag\\
    &a-1=0 \notag\\
    &a+1=0 \notag\\
    &z^4 + 2az^2 + 1=0. \label{6.10 eq4}
\end{align}
Note that $a=\pm 1$ are degenerate cases. The points $0$ and $\infty$ form a 2-cycle. From the last component \eqref{6.10 eq4}, for every $z$, we can find $a=\frac{-z^4-1}{2z^2}$ such that $z$ is periodic with period $2$. Furthermore, we know if $z$ is a rational point of period $2$, then $1/z$, $-z$, and $-1/z$ are all points of period $2$ since the maps $z \mapsto \pm 1/z$ and $z \mapsto -z$ are automorphism of $f_a(z)$. Since $f_a$ is a degree $3$ map, we know that there are at most three rational $2$-cycles and six points of period $2$. Thus, for every member of the family, either we have only $1$ rational $2$-cycle ($0$ and $\infty$) or all $2$-cycles are rational.
\begin{code}
\begin{lstlisting}[language=Python]
R.<a> = QQ[]
P.<z> = R[]
P = FractionField(P)
A = AffineSpace(P,1)
g = DynamicalSystem_affine((a*z^2+1)/(z^3+a*z))
g.dynatomic_polynomial(2)

A.<a,z>=AffineSpace(QQ,2)
R=A.coordinate_ring()
I=R.ideal([(a^2 - 1)*z^5 + (2*a^3 - 2*a)*z^3 + (a^2 - 1)*z])
A.subscheme(I).irreducible_components()
\end{lstlisting}
\end{code}

Now we look at rational $3$-cycles. The third dynatomic polynomial is calculated as
\begin{align*}
    \Phi^{\ast}_{3}(f_a)=&z^{24} + (a^5 + 2a^3 + 9a)z^{22} + (a^8 + 9a^6 + 14a^4 + 41a^2 + 1)z^{20} \\
    &+ (7a^9 + 28a^7 + 55a^5 + 118a^3 + 12a)z^{18} + (15a^{10} + 54a^8 + 118a^6 + 248a^4 + 59a^2 + 1)z^{16} \\
    &+ (12a^{11} + 73a^9 + 140a^7 + 376a^5 + 180a^3 + 11a)z^{14} \\
    &+ (3a^{12} + 52a^{10} + 115a^8 + 334a^6 + 361a^4 + 58a^2 + 1)z^{12} \\
    &+ (12a^{11} + 73a^9 + 140a^7 + 376a^5 + 180a^3 + 11a)z^{10}\\
    &+ (15a^{10} + 54a^8 + 118a^6 + 248a^4 + 59a^2 + 1)z^8 + (7a^9 + 28a^7 + 55a^5 + 118a^3 + 12a)z^6\\
    &+ (a^8 + 9a^6 + 14a^4 + 41a^2 + 1)z^4 + (a^5 + 2a^3 + 9a)z^2 + 1.
\end{align*}
The associated dynatomic curve has genus $27$ and is difficult to work with directly. First observe that if $(a,z)$ is a point on $\Phi^{\ast}_3(f_a) = 0$, then $(a,f_a(z))$ and $(a,f_a^2(z))$ are also on the curve. Thus, we can quotient this curve by a $C_3$ symmetry by setting $t=z+f_a(z)+f_a^2(z)$. We need to find the curve $X$ obtained through this quotient. We know that finding this curve $X$ is equivalent to finding the minimal polynomial of $t=z+f_a(z)+f_a^2(z)\in K$, where $K$ is the extension field of $\Q(a)$ defined by the third dynatomic polynomial.

The curve $X$ defined by this minimal polynomial is given by
\begin{equation*}
    X:z^8 + (a^5 + 4a^3 - a)z^6 + (a^8 + 2a^6 - 4a^4 + 8a^2 - 1)z^4 + (3a^7 +4a^5 - 23a^3 + 20a)z^2 + 9a^4 - 24a^2 + 16=0.
\end{equation*}
Observe that we can quotient out by another $C_2$ action by identifying $x=z^2$. This gives us the curve
\begin{equation*}
    Y:x^4 + (a^5 + 4a^3 - a)x^3 + (a^8 + 2a^6 - 4a^4 + 8a^2 - 1)x^2 + (3a^7 +4a^5 - 23a^3 + 20a)x + 9a^4 - 24a^2 + 16=0.
\end{equation*}
This curve has genus $3$ and is hyperelliptic with simplified model
\begin{equation*}
    Y':y^2 = x^8 - 6x^6 + 19x^4 - 30x^2 + 9.
\end{equation*}
Observe, that this curve covers a genus 1 curve by setting $x=x^2$,
\begin{equation*}
    E:y^2 = x^4 - 6x^3 + 19x^2 - 30x + 9.
\end{equation*}
This curve $E$ has smooth model as an elliptic curve
\begin{equation*}
    E': y^2 - \frac{10}{3}xy - \frac{32}{9}y = x^3 + \frac{4}{3}x^2.
\end{equation*}
The curve $E'$ has rank $0$ and the torsion subgroup has order $6$. Therefore, there are at most six rational points on $E$. A search using height bound $1000$ on the projective closure of $E$ finds five rational points:
\begin{equation*}
    (3 : 3 : 1), (3 : -3 : 1), (0 : 1 : 0), (0 : -3 : 1), (0 : 3 : 1).
\end{equation*}
This list is complete since $(0:1:0)$ is a singular point and blows up to $2$ rational points. Observe that affine rational points on $E$ can be lifted to affine rational points on $Y'$ only if the $x$-coordinate is a square. Thus, the only affine rational points on $Y'$ are $(0,-3)$ and $(0,3)$. We also know that since $Y'$ is of the form $y^2=f(x)$, where $f(x)$ has even degree with leading coefficient a square, $Y'$ has two points at infinity, both of which are rational. Thus, on $Y'$, there are only four rational points. Then $Y$ has at most four non-singular rational points. Searching using a height bound $1000$ on the projective closure of $Y$, we find four rational points:
\begin{equation*}
    (0:1:0), (1:0:0), (-1:1:1), (1:-1: 1),
\end{equation*}
where the first coordinate is $a$, the second $x$, and the third a homogenizing variable. All these points are singular points. When we blow them up, $(0:1:0)$ blows up to be two rational points, $(1:0:0)$ blows up to be two rational points, $(-1:1:1)$ becomes a single point that is not rational, and $(1:-1:1)$ also becomes a point that is not rational. Thus, we have found all four rational points in the smooth projective model of $Y$. We only lift the affine rational point $(-1,1)$ since $(1,-1)$ has $x=-1$ and we replaced $x^2$ with $x$ to get from $X$ to $Y$. This point has $a=1$, which is degenerate. Thus, there is no choice of $a\in \Q$ for which $f_a(z)$ has a $\Q$-rational $3$-cycle.
\begin{code}
\begin{lstlisting}[language=Python]
R.<a> = QQ[]
P.<z> = R[]
P = FractionField(P)
A = AffineSpace(P,1)
g = DynamicalSystem_affine((a*z^2+1)/(z^3+a*z))
g.dynatomic_polynomial(3)

#Quotienting by identifying (a,z) with (a,f(z)) and (a,f^2(z)) (run in magma)
Fc<a> := FunctionField(Rationals());
R<z> := PolynomialRing(Fc);
Phi3 :=z^24 + (a^5 + 2*a^3 + 9*a)*z^22 + (a^8 + 9*a^6 + 14*a^4 + 41*a^2 + 1)*z^20 + (7*a^9 + 28*a^7 + 55*a^5 + 118*a^3 + 12*a)*z^18 + (15*a^10 + 54*a^8 + 118*a^6 + 248*a^4 + 59*a^2 + 1)*z^16 + (12*a^11 + 73*a^9 + 140*a^7 + 376*a^5 + 180*a^3 + 11*a)*z^14 + (3*a^12 + 52*a^10 + 115*a^8 + 334*a^6 + 361*a^4 + 58*a^2 + 1)*z^12 + (12*a^11 + 73*a^9 + 140*a^7 + 376*a^5 + 180*a^3 + 11*a)*z^10 + (15*a^10 + 54*a^8 + 118*a^6 + 248*a^4 + 59*a^2 + 1)*z^8 + (7*a^9 + 28*a^7 + 55*a^5 + 118*a^3 + 12*a)*z^6 + (a^8 + 9*a^6 + 14*a^4 + 41*a^2 + 1)*z^4 + (a^5 + 2*a^3 + 9*a)*z^2 + 1;
Fcz<z> := ext<Fc | Phi3>;
t := z+((a*z^2+1)/(z^3+a*z))+((a^3*z^7 + a^4*z^5 + z^9 + 3*a*z^7 + 5*a^2*z^5 + 3*a^3*z^3 + a*z^3 + a^2*z)/(a^2*z^8 + 3*a^3*z^6 + a^4*z^4 + a*z^6 + 5*a^2*z^4 + a^3*z^2 + 3*a*z^2 + 1));
MinimalPolynomial(t);

#Getting the simplified model for the hyperelliptic curve (run in magma)
A<a,x>:=AffineSpace(Rationals(),2);
Y:=Curve(A,x^4 + (a^5 + 4*a^3 - a)*x^3 + (a^8 + 2*a^6 - 4*a^4 + 8*a^2 - 1)*x^2 + (3*a^7 +4*a^5 - 23*a^3 + 20*a)*x + 9*a^4 - 24*a^2 + 16);
b,X:=IsHyperelliptic(Y);
X;
YY:=SimplifiedModel(X);
YY;

#Rational Points on elliptic curve (run in magma)
A<x,y>:=AffineSpace(Rationals(),2);
E:=ProjectiveClosure(Curve(A,y^2 - (x^4 - 6*x^3 + 19*x^2 - 30*x + 9)));
P:=E![3,3,1];
EE:=EllipticCurve(E,P);
RankBounds(EE);
TorsionSubgroup(EE);
Points(E:Bound:=1000);
SingularPoints(E);
Degree(Places(SingularPoints(E)[1])[1]);
Degree(Places(SingularPoints(E)[1])[2]);

#Rational Points on hyperelliptic curve (run in magma)
A<a,x>:=AffineSpace(Rationals(),2);
Y:=Curve(A,x^4 + (a^5 + 4*a^3 - a)*x^3 + (a^8 + 2*a^6 - 4*a^4 + 8*a^2 - 1)*x^2 + (3*a^7 +4*a^5 - 23*a^3 + 20*a)*x + 9*a^4 - 24*a^2 + 16);
Y:=ProjectiveClosure(Y);
Points(Y:Bound:=1000);
sing:=SingularPoints(Y);
P1:=sing[1];
P2:=sing[2];
P3:=sing[3];
P4:=sing[4];
Q1:=Places(P1);
Q1;
Degree(Q1[1]);
Q2:=Places(P2);
Q2;
Degree(Q2[1]);
Q3:=Places(P3);
Q3;
Degree(Q3[1]);
Degree(Q3[2]);
Q4:=Places(P4);
Q4;
Degree(Q4[1]);
Degree(Q4[2]);
\end{lstlisting}
\end{code}
\end{proof}

A search for rational preperiodic structures with the parameter up to height $10,000$ using the algorithm from \cite{Hutz12} as implemented in Sage yields no parameters when $f_a(z)$ has a $\Q$-rational periodic point with minimal period at least $4$.

\begin{conj} \label{conj_A3_D2_1}
    There are no $a \in \Q$ such that $f_a(z) = \frac{az^2+1}{z^3+az}$ has a $\Q$-rational periodic point of minimal period at least $4$.
\end{conj}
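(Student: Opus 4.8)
Throughout, write $f_a(z)=\frac{az^2+1}{z^3+az}$, $a\notin\{0,\pm1\}$. The assertion is at present backed only by the search to height $10^4$ described above, and a complete proof seems out of reach; what follows is the natural line of attack together with an explanation of where it stalls. The plan is to treat each minimal period $n\ge 4$ in turn and show that the $n$-th dynatomic curve $Y_n$ in the $(a,z)$-plane, defined by $\Phi^{\ast}_n(f_a)(a,z)=0$, has no affine rational point with $a\notin\{0,\pm1\}$. As in the period-$3$ analysis of Proposition \ref{prop_A3_D2_1}, the essential first move is to quotient $Y_n$ by all the symmetry it carries before looking for its rational points. Two commuting group actions are available: the cyclic action $(a,z)\mapsto(a,f_a(z))$ of order $n$ (translation along a cycle), and the Klein four-group $\Aut(f_a)=\{z\mapsto z,\,-z,\,1/z,\,-1/z\}$, which permutes the periodic points of every period; together they act through a group of order $4n$. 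One quotients first by the order-$n$ action — concretely, compute the minimal polynomial over $\Q(a)$ of the symmetric function $t=\sum_{i=0}^{n-1}f_a^i(z)$, exactly as was done for $n=3$ — and then by the induced $C_2\times C_2$, typically by successively setting $x=z^2$ and performing a further hyperelliptic-type substitution.

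When $n$ is small the resulting quotient curve has genus $0$, $1$, or $2$, or is hyperelliptic and dominates an elliptic curve of rank $0$; in each such case its rational points can be pinned down by the tools already used in the paper — rational parametrization, Mordell--Weil rank and torsion computations, Chabauty--Coleman, or Faltings combined with a point search and Lemma \ref{bounds on the non-singular rational points} — and one then checks that every rational point pulls back to $a\in\{0,\pm1\}$, i.e.\ to a degenerate parameter. This is exactly the shape of the argument that disposed of period $3$, and the same mechanism should handle the next one or two values of $n$ (the period-$4$ quotient is expected to have genus around $5$, the period-$5$ quotient around $12$).

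The obstruction is uniformity in $n$. The genus of $Y_n$ grows roughly like $3^n$ (for $n=3$ it is $27$), and although dividing by the order-$4n$ symmetry group of $(Y_n,f_a)$ cuts this down, the quotient genus still tends to infinity with $n$; beyond the first few periods one leaves the range in which rational points on a curve can be provably enumerated, and there is no ambient good-reduction bound to fall back on because the parameter $a$ itself varies — for a \emph{fixed} $a$ one can exclude long periods via the congruences on periods under good reduction (this is what \texttt{possible\_periods} does in the search code), but the set of available small good primes depends on $a$. A genuinely uniform proof would therefore need a new ingredient: either an a priori bound on the height of an $a$ admitting an $n$-cycle, reducing the problem to a finite check, or a relative Chabauty argument over the $a$-line. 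Granting the conjecture, the classification of the $\Q$-rational preperiodic graph structures for $f_a$ then follows by the same strictly-preperiodic analysis — preimages of $0$, $\infty$, and of the fixed points $\pm1$, giving conics, elliptic curves, and a few higher-genus curves handled by Chabauty or Faltings, together with the checks that these loci do or do not intersect and cannot be extended further — that was used for $\A_3(C_3)$ in Theorem \ref{thm_M3_C3_preperiodic}, now fed by Proposition \ref{prop_A3_D2_1}.
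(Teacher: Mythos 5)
The statement you are asked about is a \emph{conjecture}, not a theorem: the paper offers no proof, only the computational evidence of a parameter search up to height $10{,}000$ using the rational-preperiodic-points algorithm of \cite{Hutz12}. Your response correctly recognizes this, declines to overclaim, and gives a sound sketch of what a proof would have to look like (quotient each dynatomic curve $Y_n$ by the $C_n$ cycle action and then by the Klein four-group $\Aut(f_a)\cong D_2$, and enumerate rational points on the quotient by the same toolkit---parametrization, Mordell--Weil rank/torsion, Chabauty--Coleman, Faltings plus a point search---that the paper applies to the period-$2$ and period-$3$ dynatomic curves in Proposition \ref{prop_A3_D2_1}), together with an accurate diagnosis of why the strategy fails to be uniform in $n$: the quotient genus still tends to infinity, and there is no $a$-independent good-reduction bound on possible periods because the set of usable primes depends on $a$. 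This matches the paper's own stance.

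Two minor quibbles. First, you exclude $a=0$, but only $a=\pm 1$ are degenerate for this family; $f_0(z)=1/z^3$ is a valid member of the family (indeed the point $\A_3(D_4)$), so the correct restriction is $a\neq\pm 1$. Second, your assertion that the period-$4$ and period-$5$ quotient curves have ``genus around $5$'' and ``around $12$'' is presented as expected rather than computed; since the period-$3$ quotient (after the $C_3$ and $z\mapsto z^2$ reductions) already landed at genus $3$, it would be worth actually computing these before claiming they are in Chabauty range. Neither point affects the substance: you are right that this is unproved, that the paper's evidence is purely computational, and that the natural attack stalls for want of a uniform bound.
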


\begin{theorem}\label{thm_A3_D2_1_preperiodic}
    Assuming Conjecture \ref{conj_A3_D2_1}, the possible rational preperiodic structures for $f_a(z) = \frac{az^2+1}{z^3+az}$ for $a \in \Q$ are the following.
    \begin{align*}
G_1:&\xygraph{
!{<0cm,0cm>;<1cm,0cm>:<0cm,1cm>::}
!{(0,0) }*+{\underset{1}{\bullet}}="a"
!{(1,0) }*+{\underset{-1}{\bullet}}="b"
!{(2,0) }*+{\underset{0}{\bullet}}="c"
!{(3.5,0) }*+{\underset{\infty}{\bullet}}="d"
"a":@(rd,ru)"a"
"b":@(rd,ru)"b"
"c":@/^1pc/"d"
"d":@/^1pc/"c"
}, \qquad a \text{ not in one of the families $G_2$, $G_3$, or $G_4$} \\
G_2:&\xygraph{
!{<0cm,0cm>;<1cm,0cm>:<0cm,1cm>::}
!{(1,0) }*+{\underset{1}{\bullet}}="a"
!{(0,1) }*+{\bullet}="a1"
!{(0,0) }*+{\bullet}="a2"
!{(3,0) }*+{\underset{-1}{\bullet}}="b"
!{(2,1) }*+{\bullet}="b1"
!{(2,0) }*+{\bullet}="b2"
!{(4,0) }*+{\underset{0}{\bullet}}="c"
!{(5.5,0) }*+{\underset{\infty}{\bullet}}="d"
"a1":"a"
"a2":"a"
"a":@(rd,ru)"a"
"b1":"b"
"b2":"b"
"b":@(rd,ru)"b"
"c":@/^1pc/"d"
"d":@/^1pc/"c"
}, \quad a = \frac{t^2+t+1}{t}, \quad t \in \Q \setminus \{\pm 1, 0\}\\
G_3:&\xygraph{
!{<0cm,0cm>;<1cm,0cm>:<0cm,1cm>::}
!{(0,0) }*+{\underset{1}{\bullet}}="a"
!{(1,0) }*+{\underset{-1}{\bullet}}="b"
!{(2,0) }*+{\underset{0}{\bullet}}="c"
!{(3.5,0) }*+{\underset{\infty}{\bullet}}="d"
!{(4,0) }*+{\bullet}="e"
!{(5.5,0) }*+{\bullet}="f"
!{(6,0) }*+{\bullet}="g"
!{(7.5,0) }*+{\bullet}="h"
"a":@(rd,ru)"a"
"b":@(rd,ru)"b"
"c":@/^1pc/"d"
"d":@/^1pc/"c"
"e":@/^1pc/"f"
"f":@/^1pc/"e"
"g":@/^1pc/"h"
"h":@/^1pc/"g"
}, \quad a=\frac{-t^4-1}{2t^2}, \quad t \in \Q \setminus \{\pm 1, 0\}\\
G_4:&\xygraph{
!{<0cm,0cm>;<1cm,0cm>:<0cm,1cm>::}
!{(0,0) }*+{\underset{1}{\bullet}}="a"
!{(1,0) }*+{\underset{-1}{\bullet}}="b"
!{(3,0) }*+{\underset{0}{\bullet}}="c"
!{(4.5,0) }*+{\underset{\infty}{\bullet}}="d"
!{(2,0.5) }*+{\bullet}="c1"
!{(2,-0.5) }*+{\bullet}="c2"
!{(5.5,-0.5) }*+{\bullet}="d1"
!{(5.5,0.5) }*+{\bullet}="d2"
"a":@(rd,ru)"a"
"b":@(rd,ru)"b"
"c":@/^1pc/"d"
"d":@/^1pc/"c"
"c1":"c"
"c2":"c"
"d1":"d"
"d2":"d"
},\quad a=-t^2, \quad t \in \Q \setminus \{\pm 1,0\}.
\end{align*}
\end{theorem}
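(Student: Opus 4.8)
The plan is to start from the periodic-point picture supplied by Proposition~\ref{prop_A3_D2_1} and Conjecture~\ref{conj_A3_D2_1}, then determine the rational preimages of each periodic point by factoring the relevant preimage equations, and finally rule out any overlaps or extensions of the resulting families by a Diophantine analysis of a short list of curves. Throughout $a\in\Q\setminus\{\pm1\}$. By the conjecture and Proposition~\ref{prop_A3_D2_1} every $f_a$ has periodic points exactly $\{1,-1\}$ (fixed) and the $2$-cycle $\{0,\infty\}$, together with two further $2$-cycles precisely when all six period-$2$ points are rational; in that case a rational period-$2$ point $z=t$ is a root of the last factor $z^4+2az^2+1$ of $\Phi^{\ast}_2(f_a)$, forcing $a=\frac{-t^4-1}{2t^2}$ with $t\neq0,\pm1$, and a direct check gives $f_a(t)=-t$, so the extra cycles are $\{t,-t\}$ and $\{1/t,-1/t\}$. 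This is family $G_3$.

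Next I would compute the non-periodic rational preimages of each periodic point. Solving $f_a(z)=\infty$ gives $z(z^2+a)=0$ and $f_a(z)=0$ gives $az^2+1=0$; since for $a\neq0$ the number $-1/a$ is a rational square if and only if $-a$ is, both of these contribute non-periodic rational preimages exactly when $a=-t^2$ with $t\neq0,\pm1$, namely $\pm t$ over $\infty$ and $\pm1/t$ over $0$ — family $G_4$ (the degenerate value $a=0$ gives the map $1/z^3$ of $\A_3(C_4)$, which is $G_1$). Solving $f_a(z)=1$ factors as $(z-1)(z^2+(1-a)z+1)=0$ and $f_a(z)=-1$ as $(z+1)(z^2+(a-1)z+1)=0$; the two quadratics share the discriminant $(a-3)(a+1)$ and each has root product $1$, so $1$ and $-1$ acquire two rational non-periodic preimages simultaneously, precisely when a preimage point $z=t$ of $1$ is rational, i.e.\ $t^2+(1-a)t+1=0$, i.e.\ $a=\frac{t^2+t+1}{t}$ with $t\neq0,\pm1$; then the preimages of $1$ are $\{t,1/t\}$ and those of $-1$ are $\{-t,-1/t\}$ (the order-$2$ automorphism $z\mapsto-z$ of $f_a$, defined over $\Q$, carrying one pair onto the other). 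This is family $G_2$. Because the automorphism group $D_2=\{z,-z,1/z,-1/z\}$ is defined over $\Q$, every optional rational preimage of a periodic point lies in one of the orbits just listed, so the four families $G_1,\dots,G_4$ exhaust the possibilities for the preperiodic forest provided no two of the optional features coexist and no preimage tail has depth greater than one.

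To close the argument I would prove (i) that no non-degenerate $a$ lies in two of $G_2,G_3,G_4$, and (ii) that none of the depth-one preimage tails can be extended. For (i), each pairwise intersection imposes two of the square conditions above; after eliminating $a$ one is left with a genus-one curve — $u^2=(t^2-1)(t^2+3)$ for $G_3\cap G_4$, a similar quartic for $G_2\cap G_4$, and, substituting $a=\frac{t^2+t+1}{t}$ into $z^4+2az^2+1=0$ and extracting $u^2=z^4+6z^2+1$, for $G_2\cap G_3$ — and I would dispose of each exactly as in the proof of Proposition~\ref{prop_A3_D2_1}: pass to a Weierstrass model, verify rank $0$, read off the torsion, and check that the finitely many rational points all correspond to $t\in\{0,\pm1\}$. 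For (ii), extending $G_4$ amounts to finding a rational preimage of one of $\pm t,\pm1/t$ under $f_{-t^2}$; by the $D_2$-symmetry it suffices to study a single curve, e.g.\ $tz^3+t^2z^2-t^3z-1=0$ coming from $f_{-t^2}(z)=t$, and similarly extending $G_2$ reduces to the rational points of $f_a(z)=t$ with $a=\frac{t^2+t+1}{t}$, and extending $G_3$ to preimages of $\pm t,\pm1/t$ under $f_{(-t^4-1)/(2t^2)}$. In each case I expect a curve of genus at least one whose only rational points are degenerate, handled by the same elliptic-curve toolkit or, where the genus exceeds one, by Faltings together with an explicit point search, after quotienting by an evident $z\mapsto-z$ or $z\mapsto1/z$ symmetry when that lowers the genus, as was done for the third dynatomic curve earlier in this section.

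The main obstacle will be step (ii), and more generally any of these auxiliary curves that turns out to have genus at least two: there one must either run a Chabauty--Coleman computation or find an auxiliary quotient down to a curve whose Mordell--Weil group can be determined unconditionally, whereas the genus-one inputs to step (i) are routine once the Weierstrass models are written down. It will also be important to confirm that the enumeration of ``optional features'' — a tail above $\{0,\infty\}$, tails above $\pm1$, and the pair of extra $2$-cycles — is exhaustive and that each admissible tail has depth exactly one; both facts fall out once the preimage curves in (ii) are shown to carry no new rational points. As with the other families in this section, the classification is conditional only through Conjecture~\ref{conj_A3_D2_1} (and, should a genus-$\geq2$ curve require it, through whatever rank hypothesis that computation needs).
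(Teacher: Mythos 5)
Your proposal follows essentially the same route as the paper: classify the periodic points via Proposition~\ref{prop_A3_D2_1} and the conjecture, parameterize the families $G_2,G_3,G_4$ by solving the first-preimage equations (which are linear in $a$), then rule out deeper tails and pairwise coexistence by showing each auxiliary curve is birational to a rank-zero elliptic curve whose finitely many rational points are all degenerate. Your observation that the two quadratics $z^2\pm(a-1)z+1$ share the discriminant $(a-3)(a+1)$ is a neat compression of the paper's remark that $z\mapsto -z$ intertwines the preimages of $1$ and $-1$, and your choice to compute preimages of the $2$-cycle points directly under $f_{(-t^4-1)/(2t^2)}$ is equivalent to the paper's use of the $(1,2)$-generalized dynatomic polynomial with the same parameterization substituted in.

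One slip worth flagging: the curve you assign to $G_3\cap G_4$, namely $u^2=(t^2-1)(t^2+3)$, does not come from equating $a=\frac{-t^4-1}{2t^2}$ with $a=-k^2$; that intersection is the quartic $t^4-2k^2t^2+1=0$ (as the paper uses). The curve $u^2=(t^2-1)(t^2+3)$ instead arises from the \emph{alternate} parameterization $a=\frac{t^2+3}{1-t^2}$ of $G_2$ intersected with the $G_4$ condition, so you have mislabeled which pair it belongs to. Your $G_2\cap G_3$ curve $u^2=z^4+6z^2+1$ is, after clearing the square factor $(z^2-1)^2$ from $(a-3)(a+1)$, correct. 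These relabelings would not derail the argument, since the method (Weierstrass model, rank $0$, torsion) is identical; and since, as the paper carries out, every auxiliary curve here in fact reduces to genus one after the evident $C_2$-quotient, none of the Chabauty or Faltings machinery you held in reserve turns out to be needed.
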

\begin{proof}
    Proposition \ref{prop_A3_D2_1} combined with Conjecture \ref{conj_A3_D2_1} classifies the $\Q$-rational periodic structure. For preperiodic structures, we start by looking for non-periodic rational preimages of the fixed points. Recall from Proposition \ref{prop_A3_D2_1} that for every $a\in \Q$, $f_a(z)$ has exactly two $\Q$-rational fixed points $\pm 1$. Observe that if $f_a(z)=1$, then $f_a(-z)=-1$, so it suffices to consider the preimages of $1$. To have $f_a(z)=1$, we must have $(az^2+1)-(z^3+az)=0$. This defines a reducible curve over $\Q$, and the irreducible components are
    \begin{equation*}
        z-1=0 \quad \text{ and } \quad az - z^2 - z - 1=0.
    \end{equation*}
    We only need to consider the second component. This component is linear in $a$ so we solve as
    \begin{equation*}
        a = \frac{t^2+t+1}{t} \quad t \not\in \{0, \pm 1\}.
    \end{equation*}
    We exclude $t\in \{0, -1\}$ for degenerate $a$ values and $t=1$ is the fixed point as its own preimage.
    Thus, for every $t\in \Q\setminus \{\pm 1,0\}$, we can find an $a$ such that $1$ has a non-periodic $\Q$-rational preimage under $f_a(z)$. Since $z \mapsto \frac{1}{z}$ is an automorphism of $f_a(z)$, if $f_a(z)=1$, then $f_a(1/z)=1$. Furthermore, since this irreducible component is quadratic, it can have at most two rational points. Thus, the two rational points must be $z$ and $1/z$.
\begin{code}
\begin{lstlisting}[language=Python]
R.<a,z>=QQ[]
A=AffineSpace(R)
I=R.ideal((a*z^2+1)-(z^3+a*z))
S=A.subscheme(I)
S.irreducible_components()
\end{lstlisting}
\end{code}

    Now we look for a second non-periodic rational preimage of $1$, i.e., a $z\in \Q$ with $f_a^2(z)=1$. We need to find (rational) solutions to
    \begin{align*}
        &(a^3z^7 + a^4z^5 + z^9 + 3az^7 + 5a^2z^5 + 3a^3z^3 + az^3 + a^2z\\
        &-(a^2z^8 + 3a^3z^6 + a^4z^4 + az^6 + 5a^2z^4 + a^3z^2 + 3az^2 + 1)=0.
    \end{align*}
    This defines a reducible curve over $\Q$ with irreducible components
    \begin{align*}
        &z-1=0 \quad \text{(fixed points),}\\
        &az - z^2 - z - 1=0 \quad \text{(points satisfying $f_a(z)=1$), and}\\
        &a^2z^5 + a^3z^3 - a^2z^4 - az^5 - z^6 - a^2z^3 - 2az^4 - a^2z^2 + az^3 + a^2z - 2az^2 - z^3 - az - 1=0.
    \end{align*}
    We need only consider the third component. It defines a genus $3$ curve, but we can quotient by a $C_2$ symmetry by identifying $(a,z)$ with $(a,1/z)$. If we set $t=z+1/z$, we get the curve after this quotienting as
    \begin{equation*}
        X:t^3 + (-a^2 + a)t^2 + (a^2 + 2a - 3)t - a^3 + 3a^2 - 3a + 1=0.
    \end{equation*}
    This is a genus 1 curve and is birationally equivalent to the elliptic curve
    \begin{equation*}
        X':y^2 - 2xy + 2y = x^3 - 3x^2 + 2x
    \end{equation*}
    via the map on projective closures
    \begin{align*}
        x=&2at^4 + 2a^2t^2h - 8at^3h - 8a^2th^2 +4at^2h^2 - 8t^3h^2 + 16ath^3 + 26t^2h^3 -8th^4\\
        y=&2at^4 + 4t^4h - 2a^3h^2 + 2a^2th^2 -26at^2h^2 \\
        &- 14t^3h^2 - 26a^2h^3 + 36ath^3 -6t^2h^3 + 58ah^4 + 90th^4 - 30h^5\\
        z=&-t^5 + 2t^4h.
    \end{align*}
    We compute that $X'$ has rank $0$ and its torsion subgroup has order $6$. Thus, there are at most six non-singular rational points on $X$. Searching using height bound $1000$ on the projective closure produces
    \begin{equation*}
        (a,t,h) \in \{(-1: -2: 1), (3: 2: 1), (0: 1: 0), (1: 0: 0), (1: 0: 1), (-1: 2:1)\},
    \end{equation*}
    where $h$ is a homogenizing variable.
    The curve $X$ has singular points $(1:0:1)$ and $(-1:2:1)$. We blow up the singular points and in the blow up, $(1:0:1)$ becomes two points, both of which are rational points and $(-1:2:1)$ becomes one closed point which is not rational. Thus, the above list is complete since we have found six rational points. Now we need to lift these rational points back to the original curve. Recall that $t=z+1/z$, so we can solve for $z$. Only $t=\pm 2$ produces rational points, but $t=\pm 2$ lifts back to $z=\pm 1$, which are fixed points. Thus, there is no $a \in \mathbb{Q}$ such that $f_a^2(z)=1$ and $f_a(z) \neq 1$ for some $z\in \Q$.
\begin{code}
\begin{lstlisting}[language=Python]
R.<a,z>=PolynomialRing(QQ)
f=((a*z^2+1)/(z^3+a*z))
g=((a*f^2+1)/(f^3+a*f))
g

R.<a,z>=QQ[]
A=AffineSpace(R)
I=R.ideal((a^3*z^7 + a^4*z^5 + z^9 + 3*a*z^7 + 5*a^2*z^5 + 3*a^3*z^3 + a*z^3 + a^2*z)-(a^2*z^8 + 3*a^3*z^6 + a^4*z^4 + a*z^6 + 5*a^2*z^4 + a^3*z^2 + 3*a*z^2 + 1))
S=A.subscheme(I)
S.irreducible_components()

R.<a,z>=QQ[]
A=AffineSpace(R)
I=R.ideal(a^2*z^5 + a^3*z^3 - a^2*z^4 - a*z^5 - z^6 - a^2*z^3 - 2*a*z^4 - a^2*z^2 + a*z^3 + a^2*z - 2*a*z^2 - z^3 - a*z - 1)
C=A.curve(I.gens())
C.genus()

#Quotienting by C2 (run in magma)
Fc<a>:=FunctionField(Rationals());
R<z>:=PolynomialRing(Fc);
Phi3:=a^2*z^5 + a^3*z^3 - a^2*z^4 - a*z^5 - z^6 - a^2*z^3 - 2*a*z^4 - a^2*z^2 + a*z^3 + a^2*z - 2*a*z^2 - z^3 - a*z - 1;
Fcz<z>:=ext<Fc | Phi3>;
t:=z+1/z;
MinimalPolynomial(t);

#Rational Points (run in magma)
A<a,z>:=AffineSpace(Rationals(),2);
C:=ProjectiveClosure(Curve(A,z^3 + (-a^2 + a)*z^2 + (a^2 + 2*a - 3)*z - a^3 + 3*a^2 - 3*a + 1));
Points(C:Bound:=1000);
sing:=SingularPoints(C);
sing;
P1:=sing[1];
P1;
P2:=sing[2];
P2;
Q1:=Places(P1);
Degree(Q1[1]);
Degree(Q1[2]);
Q2:=Places(P2);
Degree(Q2[1]);
P:=C![1,0,1];
E,psi:=EllipticCurve(C,P);
E;
Rank(E);
TorsionSubgroup(E);
RationalPoints(E:Bound:=1000);
\end{lstlisting}
\end{code}

    Now we look at non-periodic preimages of the $2$-cycle comprised of $0$ and $\infty$. First consider if there is an $a$ such that $f_a(z)=0$. It suffices to look at $f_a(z)=0$ since if $f_a(z)=\infty$, we have $f_a(1/z)=0$ due to the automorphism $z \mapsto 1/z$. Solving $az^2+1=0$, we get $z=\pm\sqrt{\frac{-1}{a}}$. Thus, as long as $a=-k^2$ for some $k\in \Q$, we have $f_a(z)=0$ for two $\Q$-rational $z$ values.

    Now we consider non-periodic second preimages: $f^2_a(z)=0$ for some $z$ such that $f_a(z)\neq 0$. We need to find rational solutions to the equation
    \begin{equation*}
        a^3z^7 + a^4z^5 + z^9 + 3az^7 + 5a^2z^5 + 3a^3z^3 + az^3 + a^2z=0.
    \end{equation*}
    This equation defines a reducible curve over $\Q$. The irreducible components are
    \begin{align}
        &z=0 \notag\\
        &z^2 + a=0 \notag\\
        &a^3z^4 + z^6 + 2az^4 + 3a^2z^2 + a=0. \label{eq_2}
    \end{align}
    The first two components correspond to $f_a(z)$ is the point at infinity, so we only need to study the third component. It has genus $3$, but observe that it covers a genus $1$ curve:
    \begin{equation*}
        X:a^3x^2 + x^3 + 2ax^2 + 3a^2x + a=0.
    \end{equation*}
    This curve is birationally equivalent to the elliptic curve
    \begin{equation*}
        E:y^2 + 8xy + 2y = x^3 - 12x^2 - 4x
    \end{equation*}
    via the map (of projective closures)
    \begin{align*}
        x&=2a^2x^4 - 4a^2x^3h + 4ax^4h - 6a^2x^2h^2 + 8ax^3h^2 + 8x^4h^2 - 12ax^2h^3 + 4x^3h^3 - 16axh^4\\
        &- 10x^2h^4 - 8xh^5 - 2h^6\\
         y&=-4a^2x^4 - 8ax^4h + 36a^2x^2h^2 - 32ax^3h^2 - 12x^4h^2 + 8ax^2h^3 - 32x^3h^3 + 96axh^4 + 36x^2h^4\\
         &+ 32xh^5 + 8h^6\\
         z&=-x^5h - 3x^4h^2 - 2x^3h^3 + 2x^2h^4 + 3xh^5 + h^6,
    \end{align*}
    where $h$ and $z$ are the homogenizing variables of $X$ and $E$, respectively.
\begin{code}
\begin{lstlisting}
r.<a,x,t>=QQ[]
X=2*a^2*x^4 - 4*a^2*x^3*t + 4*a*x^4*t - 6*a^2*x^2*t^2 + 8*a*x^3*t^2 + 8*x^4*t^2 - 12*a*x^2*t^3 + 4*x^3*t^3 - 16*a*x*t^4 - 10*x^2*t^4 - 8*x*t^5 - 2*t^6
Y=-4*a^2*x^4 - 8*a*x^4*t + 36*a^2*x^2*t^2 - 32*a*x^3*t^2 - 12*x^4*t^2 + 8*a*x^2*t^3 - 32*x^3*t^3 + 96*a*x*t^4 + 36*x^2*t^4 + 32*x*t^5 + 8*t^6
Z=-x^5*t - 3*x^4*t^2 - 2*x^3*t^3 + 2*x^2*t^4 + 3*x*t^5 + t^6
F=a^3*x^2 + x^3*t^2 + 2*a*x^2*t^2 + 3*a^2*x*t^2 + a*t^4
G=Y^2*Z + 8*X*Y*Z + 2*Y*Z^2 - (X^3 - 12*X^2*Z - 4*X*Z^2)
G.quo_rem(F)[1]
\end{lstlisting}
\end{code}
    The curve $E$ has rank $0$ and its torsion subgroup has order $6$. Searching with a height bound $1000$ on the projective closure, we find five rational points on $X$:
    \begin{equation*}
        (a,x,h) \in \{(0: 1: 0), (0: 0: 1), (1: 0: 0), (-1: 1: 1), (1: -1: 1)\}.
    \end{equation*}
    Among these five points, four of them are singular points:
    \begin{equation*}
        (1: -1: 1), (-1: 1: 1), (0: 1: 0), (1: 0: 0).
    \end{equation*}
    The first point blows up to be two rational points, the second blows up to be two rational points, and the third blows up to be one rational point. The last one blows up to a closed point, but that point is not rational. Thus, we have found all six points on the projective smooth model of $X$, and the list must be complete. Recall that we cannot have $a=1$ or $-1$, and we only look at affine rational points. Thus, the only rational point on the projective closure of $X$ that we care about is $(0:0:1)$. It lifts to $(0:0:1)$ on the third irreducible component in equation \eqref{eq_2}, but $z=0$ is a periodic point. Therefore, we can conclude that there is no $a \in \mathbb{Q}$ such that $f_a^2(z)=0$ and $f_a(z)\neq 0$.
\begin{code}
\begin{lstlisting}[language=Python]
R.<a,z>=QQ[]
A=AffineSpace(R)
I=R.ideal((a^3*z^7 + a^4*z^5 + z^9 + 3*a*z^7 + 5*a^2*z^5 + 3*a^3*z^3 + a*z^3 + a^2*z))
S=A.subscheme(I)
S.irreducible_components()

#Rational points (run in magma)
A<a,x>:=AffineSpace(Rationals(),2);
C:=ProjectiveClosure(Curve(A,a^3*x^2 + x^3 + 2*a*x^2 + 3*a^2*x + a));
Points(C:Bound:=1000);
sing := SingularPoints(C);
sing;
p1 := sing[1];
p2 := sing[2];
p3 := sing[3];
p4 := sing[4];
Q1:=Places(p1);
Q1;
Degree(Q1[1]);
Degree(Q1[2]);
Q2:=Places(p2);
Q2;
Degree(Q2[1]);
Degree(Q2[2]);
Q3:=Places(p3);
Q3;
Degree(Q3[1]);
Q4:=Places(p4);
Q4;
Degree(Q4[1]);
P:=C![1,-1,1];
E,psi:=EllipticCurve(C,P);
E;
Rank(E);
TorsionSubgroup(E);
RationalPoints(E:Bound:=1000);
\end{lstlisting}
\end{code}

    Now we look for non-periodic preimages of one of the other possible rational 2-cycles. In particular, if we can find an $a$ such that $f_a(z)$ enters into a 2-cycle that is not the $0-\infty$ cycle. Recall from component \eqref{6.10 eq4} of the second dynatomic curve in the proof of Proposition 6.10 that we need $a=\frac{-t^4 -1}{2t^2}$ for $t \in \Q \setminus \{\pm 1, 0\}$ to have extra 2-cycles. Furthermore, we can compute the $(1,2)$ generalized dynatomic polynomial characterizing points with formal period (1,2), (see Hutz \cite[Section 3]{Hutz12}) as
    \begin{equation*}
        \Phi^{\ast}_{(1,2)}(f_a)=(x^2 + a)(ax^2 + 1)(x^8 + (2a^3 + 2a)x^6 + (a^4 + 6a^2 - 1)x^4 + (2a^3 + 2a)x^2 + 1).
    \end{equation*}
    When we set $a=\frac{-t^4 -1}{2t^2}$, the irreducible components of $\Phi^{\ast}_{(1,2)}(f_a)$ become
    \begin{align*}
        &xt^4 - 2x^2t + 2t^3 - x=0\\
        &xt^4 + 2x^2t - 2t^3 - x=0\\
        &2x^2t^3 - xt^4 + x - 2t=0\\
        &2x^2t^3 + xt^4 - x - 2t=0\\
        &2x^2t^2 - t^4 - 1=0\\
        &x^2t^4 + x^2 - 2t^2=0.
    \end{align*}
    The first four are quadratic in $x$ and all have discriminant $t^8 + 14t^4 + 1$. So rational points occur when there are rational points on the curve
    \begin{equation*}
        C:k^2 = t^8 + 14t^4 + 1.
    \end{equation*}
    This covers the curve
    \begin{equation*}
        E:y^2 = x^4 + 14x^2 + 1,
    \end{equation*}
    via the substitution $y=k, x=t^2$.
    The curve $E$ is birational via
    \begin{align*}
        (x,y,z) \mapsto (2x^2z + 2yz^2 + 14z^3,4x^3 + 4xyz + 28xz^2, z^3)
    \end{align*}
    to the rank 0 elliptic curve $y^2 = x^3 - 28x^2 + 192x$ with torsion subgroup isomorphic to $\Z/2\Z \times \Z/4\Z$.
    A point search gives the eight points
    \begin{equation*}
        (0 : 1 : 0), (0 : 0 : 1), (8 : 16 : 1), (8 : -16 : 1), (12 : 0 : 1), (16 : 0 : 1), (24 : 48 : 1), (24 : -48 : 1).
    \end{equation*}
    The curve $E$ has the seven rational points
    \begin{equation*}
        (-1 : 4 : 1), (0 : -1 : 1), (0 : 1 : 0), (1 : -4 : 1), (-1 : -4 : 1), (1 : 4 : 1), (0 : 1 : 1),
    \end{equation*}
    where $(0:1:0)$ is singular and blows-up to two rational points. On the original curve $C$, we find the same seven rational points
    \begin{equation*}
        (-1 : 4 : 1), (0 : -1 : 1), (0 : 1 : 0), (1 : -4 : 1), (-1 : -4 : 1), (1 : 4 : 1), (0 : 1 : 1).
    \end{equation*}
    These all have $t \in \{\pm 1,0\}$ so correspond to degenerate cases.
    The last two components are both birational to the same rank 0 elliptic curve, $y^2 + 2xy = x^3 - 4x^2 + 2x$, with torsion subgroup isomorphic to $\Z/2\Z \times \Z/2\Z$, so they have at most four rational points. In both cases, we have the rational points $(x,t) = (\pm 1, \pm 1)$. Since $t=\pm 1$ are degenerate cases, these components do not have any rational points. Thus, we cannot have $f_a(z)$ entering a 2-cycle that is not the $0-\infty$ cycle.
\begin{code}
\begin{lstlisting}[language=Python]
R.<a>=QQ[]
P.<x,y>=ProjectiveSpace(FractionField(R),1)
f=DynamicalSystem_projective([a*x^2*y + y^3, x^3 + a*x*y^2])
f.dynatomic_polynomial(2).factor(proof=False)
A=(-x^4 -1)/(2*x^2)
(x^4 + 2*A*x^2 + 1)
print(f.dynatomic_polynomial([1,2]).factor(proof=False))
R.<x,t>=QQ[]
A2=AffineSpace(R)
X1=A2.curve(1024*t^8*x^8 + (-256*t^14 - 1792*t^10 - 1792*t^6 - 256*t^2)*x^6 + (64*t^16 + 1792*t^12 + 2432*t^8 + 1792*t^4 + 64)*x^4 + (-256*t^14 - 1792*t^10 - 1792*t^6 - 256*t^2)*x^2 + 1024*t^8)
X2=A2.curve(2*t^2*x^2 - t^4 - 1)
X3=A2.curve((-t^4 - 1)*x^2 + 2*t^2)
for Z in [X1,X2,X3]:
    for Y in Z.irreducible_components():
        print(Y.defining_polynomials())
        F=Y.defining_polynomials()[0]
        print(F.discriminant(x))

P<t,k,h>:=ProjectiveSpace(Rationals(),2);
C:=Curve(P,k^2*h^6 - (t^8+14*t^4*h^4+h^8));
Points(C:Bound:=1000);
SingularPoints(C);
P:=C![0,1,0];
Q:=Places(P);
Q;
Degree(Q[1]);
Degree(Q[2]);

P<x,y,z>:=ProjectiveSpace(Rationals(),2);
C:=Curve(P,y^2*z^2 - (x^4+14*x^2*z^2+z^4));
Points(C:Bound:=1000);
P:=C![0,1,0];
E,psi:=EllipticCurve(C,P);
E;
Rank(E);
TorsionSubgroup(E);
Points(E:Bound:=1000);

#########
X2.projective_closure().rational_points(bound=100)

A<x,t>:=AffineSpace(Rationals(),2);
C:=ProjectiveClosure(Curve(A,(2*t^2*x^2 - t^4 - 1)));
Points(C:Bound:=1000);
P:=C![1,1,1];
E,psi:=EllipticCurve(C,P);
E;
Rank(E);
TorsionSubgroup(E);
RationalPoints(E:Bound:=1000);

############
X3.projective_closure().rational_points(bound=100)

A<x,t>:=AffineSpace(Rationals(),2);
C:=ProjectiveClosure(Curve(A,(-t^4 - 1)*x^2 + 2*t^2));
Points(C:Bound:=1000);
P:=C![1,1,1];
E,psi:=EllipticCurve(C,P);
E;
Rank(E);
TorsionSubgroup(E);
RationalPoints(E:Bound:=1000);
\end{lstlisting}
\end{code}

    Now that we have identified each of the separate possible components of the rational preperiodic structure, we need to consider which of these components can occur simultaneously.
    First we ask if $0$ can have a non-periodic rational preimage at the same time there are the additional 2-cycles. Recall that we need $a=-k^2$ for some $k\in \Q$ to have $f_a(z)=0$ and we need $a=\frac{-z^4-1}{2z^2}$ to have extra 2-cycles. Thus, we are looking for rational points on the curve
    \begin{equation*}
        X:z^4+1-2k^2z^2=0.
    \end{equation*}
    This is a genus 1 curve that is birationally equivalent to the elliptic curve
    \begin{equation*}
        E:y^2+2xy=x^3-4x^2+2x
    \end{equation*}
    via the map (of projective closures with $h$ and $z$ the homogenizing variables of $X$ and $E$, respectively,)
    \begin{align*}
        x&=2kz^2 + 2z^3 - 2kzh - 4z^2h + 4zh^2 - 2h^3\\
        y&=-4kz^2 - 4z^3 + 4z^2h - 4zh^2\\
        z&=z^3 - 3z^2h + 3zh^2 - h^3.
    \end{align*}
    This elliptic curve $E$ has rank $0$ with torsion subgroup of order $4$. Searching with a height bound $1000$ on the projective closure, we find five rational points on $X$:
    \begin{equation*}
        (k,z,h) \in \{(1: 1: 1), (1: 0: 0), (-1: 1: 1), (1: -1: 1), (-1: -1: 1)\},
    \end{equation*}
    where $h$ is a homogenizing variable.
    Only one of these points is singular, so we know this is a complete list of rational points. The affine rational points force $z=\pm 1$, which are fixed points. Thus, we cannot have $f_a(z)=0$ and extra 2-cycles at the same time.
\begin{code}
\begin{lstlisting}[language=Python]
#Rational points (run in magma)
A<k,z>:=AffineSpace(Rationals(),2);
C:=ProjectiveClosure(Curve(A,z^4+1-2*k^2*z^2));
Points(C:Bound:=1000);
SingularPoints(C);
P:=C![1,1,1];
E,psi:=EllipticCurve(C,P);
E;
Rank(E);
TorsionSubgroup(E);
RationalPoints(E:Bound:=1000);
SingularPoints(E);
\end{lstlisting}
\end{code}

    Now we check if both the fixed points and the $0-\infty$ 2-cycle can have non-periodic rational preimages at the same time. Recall that we need $a=-k^2$ for some $k\in \Q$ to have $f_a(z_1)=0$ and we need $a=\frac{t^2 + t + 1}{t}$ for some $t\neq 0,\pm 1 \in \Q$ to have $f_a(z_2)=1$. Thus, we are interested in the following curve
    \begin{equation*}
        X:t^2+t+1 + k^2t=0.
    \end{equation*}
    This is a nonsingular curve of genus $1$ that is birational to the elliptic curve
    \begin{equation*}
        E:y^2 = x^3 -4x^2 + 16x,
    \end{equation*}
    via the map
    \begin{align*}
        x&=4kt\\\
        y&=8t^2 + 8th + 8h^2\\
        z&=-kh,
    \end{align*}
    where $k$ and $z$ are the homogenizing variables of $X$ and $E$, respectively.
    This curve $E$ has rank $0$ and a torsion subgroup of order $4$. Searching using height bound $1000$ on the projective closure, we find four rational points on $X$:
    \begin{equation*}
        (k,t,h) \in \{(0 : 1 : 0), (1 : 0 : 0), (1 : -1 : 1), (-1 : -1 : 1)\}.
    \end{equation*}
    The affine points all have $t$ in the excluded set $\{0,\pm 1\}$. Thus, we can conclude that there is no $a$ such that $f_a(z_1)=0$ and $f_a(z_2)=1$.
\begin{code}
\begin{lstlisting}[language=Python]
#Rational Points (run in magma)
A<k,t>:=AffineSpace(Rationals(),2);
C:=ProjectiveClosure(Curve(A,t^2+t+1 + k^2*t));
Points(C:Bound:=1000);
sing := SingularPoints(C);
sing;
P:=C![0,1,0];
E,psi:=EllipticCurve(C,P);
E;
Rank(E);
TorsionSubgroup(E);
RationalPoints(E:Bound:=1000);
\end{lstlisting}
\end{code}

    Next we check if we can have additional 2-cycles as well as non-periodic preimages of the fixed points. Recall that we need $a=\frac{t^2+3}{-t^2+1}$
    to have $f(z)=1$ and we need $a=\frac{k^4+1}{2k^2}$ to have extra $2$-cycles. Thus, we need to find rational points on the curve    \begin{equation*}
        X:(t^2+3)(2k^2)=(k^4+1)(-t^2+1).
    \end{equation*}
    This is a genus 1 curve that is birationally equivalent to the elliptic curve
    \begin{equation*}
        E:y^2 = x^3 - 6x^2 - 4x + 24
    \end{equation*}
    via the map
    \begin{align*}
        x&=2tk^3 + 2tkh^2 - 2kh^3\\
        y&=4tk^2h + 12k^2h^2 + 4th^3 - 4h^4\\
        z&=-k^3h,
    \end{align*}
    where $h$ and $z$ are the homogenizing variables of $X$ and $E$, respectively.
    This curve $E$ has rank $0$ and the torsion subgroup has order $4$. Searching using height bound $1000$ on the projective closure, we find four rational points on the curve $X$:
    \begin{equation*}
        (t,k,h) \in \{(0:1:0),(-1: 0:1), (1: 0: 0), (1: 0: 1)\}.
    \end{equation*}
    Among these four points, two of them are singular: $(0: 1: 0), (1: 0: 0)$. The point $(0:1:0)$ blows up to be two rational points, and the point $(1:0:0)$ blows up to be a closed point that is not rational. Thus, this search produces all the rational points on $X$. Considering only the affine points, we are left with $(-1:0:1)$ and $(1:0:1)$, but $k=0$ corresponds to $a$ not being defined. Thus, there is no $a$ such that we can have extra 2-cycles and $f_a(z)=1$.
\begin{code}
\begin{lstlisting}[language=Python]
#Rational Points (run in magma)
A<t,k>:=AffineSpace(Rationals(),2);
C:=ProjectiveClosure(Curve(A,(t^2+3)*2*k^2-(k^4+1)*(-t^2+1)));
Points(C:Bound:=1000);
sing := SingularPoints(C);
sing;
P1:=sing[1];
P2:=sing[2];
Q1:=Places(P1);
Q1;
Degree(Q1[1]);
Degree(Q1[2]);
Q2:=Places(P2);
Q2;
Degree(Q2[1]);
P:=C![-1,0,1];
E,psi:=EllipticCurve(C,P);
E;
Rank(E);
TorsionSubgroup(E);
RationalPoints(E:Bound:=1000);
\end{lstlisting}
\end{code}

    We have exhausted all possibilities for rational preperiodic structures, leaving only those enumerated in the statement.
\end{proof}

\subsection{$\A_3(D_2)$ Second Component}
Now we look at the family with $D_2$ symmetry $g_a(z)=\frac{az^2-1}{z^3-az}$. We first consider the possible rational periodic points.
\begin{prop} \label{prop_A3_D2_2}
    Let $g_a(z) =\frac{az^2-1}{z^3-az}$ for $a \neq \pm 1$.
    \begin{enumerate}
        \item For every $t \in \Q\setminus \{0, \pm 1\}$, the value $a= \frac{t^4+1}{2t^2}$ satisfies $g_a(z)$ has the four fixed points $\left\{ \pm t, \pm \frac{1}{t} \right\}$. For no other values of $a \in \Q$ does $g_a(z)$ have a $\Q$-rational fixed point.
        \item For every $a \in \Q \setminus {\pm 1}$, the function $g_a(z)$ has exactly two 2-cycles with $\Q$-rational points: swapping $0$ and $\infty$ and swapping $1$ and $-1$.
        \item There is no $a \in \Q$ so that $g_a(z)$ has a 3-cycle with $\Q$-rational points.
    \end{enumerate}
\end{prop}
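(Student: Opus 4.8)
The plan is to follow the template already used for the first $D_2$ component (Proposition~\ref{prop_A3_D2_1}): for each period $n\in\{1,2,3\}$ I would compute the $n$th dynatomic polynomial $\Phi^{\ast}_n(g_a)\in\Q[a][z]$, factor it over $\Q$, and then handle each irreducible factor either by reading off a rational parameterization (when the factor is linear in $a$, which lets one solve for $a$) or by reducing the associated dynatomic curve to a curve of low genus using the automorphisms $z\mapsto -z$ and $z\mapsto 1/z$ that generate the $D_2$ symmetry of $g_a$, together with the $C_3$ coming from the dynamics on a $3$-cycle. For geometrically reducible components one can also invoke Lemma~\ref{lem_irreducible_curves} to conclude that the only $\Q$-rational points are singular.

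For part (1), the fixed-point condition $az^2-1=z(z^3-az)$ gives $\Phi^{\ast}_1(g_a)=z^4-2az^2+1$ (note $\infty$ is \emph{not} fixed: $g_a(\infty)=0$). This is even in $z$ and linear in $a$, so any $\Q$-rational fixed point $z$ forces $a=\tfrac{z^4+1}{2z^2}$; conversely, when $a=\tfrac{t^4+1}{2t^2}$ the four roots of $z^4-2az^2+1$ are closed under $z\mapsto -z$ and $z\mapsto 1/z$, hence are exactly $\pm t,\pm 1/t$. The excluded values $t\in\{0,\pm1\}$ are exactly those where $g_a$ degenerates or $a=1$, and since $z$ determines $a$ there are no other rational fixed points. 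For part (2), one checks immediately that $0\leftrightarrow\infty$ and $1\leftrightarrow -1$ are $2$-cycles for every $a$, so after removing the factors corresponding to these cycles and the degenerate lines $a=\pm1$ from $\Phi^{\ast}_2(g_a)$, what remains is (up to a $z$-free factor) a single polynomial $q(a,z)$ of degree $2$ in $z$ describing the one other possible $2$-cycle; a rational $2$-cycle beyond the two known ones corresponds to a rational point on $q=0$ with $z\notin\{0,\pm1\}$. Using $z\mapsto 1/z$ to quotient, I expect $q=0$ to reduce to a curve of genus at most $1$ covering an explicit elliptic curve of Mordell--Weil rank $0$, whose finitely many rational points I would enumerate (torsion plus a bounded point search) and verify all give degenerate $a$; this yields exactly two rational $2$-cycles.

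For part (3), I would compute $\Phi^{\ast}_3(g_a)$, whose dynatomic curve has large genus (as in the first $D_2$ component). The key is to quotient: on each $3$-cycle $g_a$ permutes the three points cyclically, so setting $t=z+g_a(z)+g_a^2(z)$ and computing the minimal polynomial of $t$ over the field $\Q(a)$ defined by $\Phi^{\ast}_3(g_a)$ produces a quotient curve $X$; then quotienting $X$ further by the surviving involution from the $D_2$ symmetry (identifying $z$ with $-z$ or with $1/z$) should give a hyperelliptic curve $Y$. Putting $Y$ in simplified model, checking it covers an elliptic curve $E$ obtained by a further $x\mapsto x^2$ substitution, computing $\operatorname{rank} E=0$, enumerating $E(\Q)$, and then lifting back up through $Y\to X$ and the quotient maps (keeping track, via Lemma~\ref{bounds on the non-singular rational points}, of singular points that might split) should show every $\Q$-rational point of the $3$rd dynatomic curve has $a\in\{\pm1\}$ or is otherwise degenerate, so no $g_a$ admits a rational $3$-cycle.

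The main obstacle will be part (3): everything hinges on the two successive quotients actually landing on a curve whose rational points can be determined \emph{unconditionally} — ideally a hyperelliptic curve covering a rank-$0$ elliptic curve, but if the resulting genus-$\le 3$ curve has positive-rank Jacobian one would be forced into a Chabauty--Coleman computation (as in Lemma~\ref{lem_M3_C3_genus_3_points}), which may need a separate treatment. A secondary worry is in part (2): one must confirm the ``extra $2$-cycle'' curve has no non-degenerate rational points; if its associated elliptic curve had positive rank, the clean statement ``exactly two rational $2$-cycles'' would fail and the $2$-cycle analysis would need to be redone more carefully, in the spirit of the all-six-points phenomenon encountered for the first component.
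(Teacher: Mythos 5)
Your overall plan is sound and follows essentially the same template as the paper, but two of your predictions about what the intermediate computations will look like do not match what actually happens, so it is worth flagging the differences. For part (2), you anticipate a residual factor $q(a,z)$ of degree $2$ in $z$ that would encode a third potential $2$-cycle and would need an elliptic-curve analysis; in fact the second dynatomic polynomial factors as $\Phi^{\ast}_2(g_a) = (1-a^2)\,z(z-1)(z+1)(z^2+1)$, so the periodic points of period $2$ are completely independent of $a$ (the only $a$-dependence sits in the $z$-free leading coefficient). The ``extra'' factor is just $z^2+1$, with no rational roots and no curve to analyze, so part (2) is immediate once you compute $\Phi^{\ast}_2$ — your worry about a positive-rank elliptic curve never arises because there is no elliptic curve at all. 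For part (3), your chain of quotients ($C_3$ by $t = z + g_a(z) + g_a^2(z)$, then the $D_2$ involutions, then a degree-$2$ cover via $x \mapsto x^2$) is exactly what the paper does — genus $31 \to 11 \to 5 \to 2$ — but the terminal object is a genus-$2$ hyperelliptic curve whose Jacobian has rank $0$ and torsion $\Z/6\Z \times \Z/6\Z$, handled by the rank-$0$ Chabauty method, rather than the ``hyperelliptic covering an elliptic curve of rank $0$'' picture you predicted. Your hedge about needing Chabauty--Coleman for a positive-rank Jacobian is reasonable caution, but it turns out not to bite here. Your part (1) argument, including the observation that $\infty$ is not fixed but lies in the $0 \leftrightarrow \infty$ $2$-cycle, is correct and matches the paper.
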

\begin{proof}
    The first dynatomic polynomial is
    \begin{equation*}
        \Phi^{\ast}_{1}(g_a) =-z^4 + 2az^2 - 1.
    \end{equation*}
    This polynomial is linear in $a$, so it has a zero when $a=\frac{z^4+1}{2z^2}$. Thus, for every $z\in \Q$, we can find an $a$ such that $z$ is a fixed point. Once $z$ is a fixed point, we know from the automorphism group that $-z$, $\frac{1}{z}$ and $-\frac{1}{z}$ are all fixed points. Furthermore, for every $a$, the first dynatomic polynomial has at most four zeros, so $z,-z,\frac{1}{z}$ and $-\frac{1}{z}$ are all the fixed points.

    Now we look for 2-cycles with $\Q$-rational points. The second dynatomic polynomial is given by
    \begin{equation*}
        \Phi^{\ast}_{2}(g_a)=(-a^2 + 1)z^5 + (a^2 - 1)z = (1-a^2)z(z-1)(z+1)(z^2+1).
    \end{equation*}
    Since $a = \pm 1$ are degenerate cases, we see that the roots of $\Phi^{\ast}_2(g_a)$ do not depend on $a$ and every member of the family $g_a$ has exactly two 2-cycles with $\Q$-rational points: swapping $0$ and $\infty$ and swapping $1$ and $-1$.

Now we look for 3-cycles with $\Q$-rational points. The third dynatomic polynomial is
\begin{align*}
    \Phi^{\ast}_{3}(g_a)=&-z^{24}+ (a^5 + 4a^3 + 7a)z^{22} + (-a^8 - 13a^6 - 24a^4 - 29a^2 + 1)z^{20}\\
    &+ (7a^9 + 48a^7 + 91a^5 + 76a^3 - 2a)z^{18} + (-11a^{10} - 102a^8 - 214a^6 - 168a^4 + a^2 - 1)z^{16}\\\
    &+ (6a^{11} + 103a^9 + 348a^7 + 304a^5 + 30a^3 + a)z^{14} \\&+ (-a^{12} - 44a^{10} - 289a^8 - 470a^6 - 111a^4 - 10a^2 + 1)z^{12}\\ &+ (6a^{11} + 103a^9 + 348a^7 + 304a^5 + 30a^3 + a)z^{10}\\
    &+ (-11a^{10} - 102a^8 - 214a^6 - 168a^4 + a^2 - 1)z^8\\ &+ (7a^9 + 48a^7 + 91a^5 + 76a^3 - 2a)z^6 + (-a^8 - 13a^6 - 24a^4 - 29a^2 + 1)z^4\\
    &+ (a^5 + 4a^3 + 7a)z^2 - 1.
\end{align*}
The dynatomic curve $\Phi^{\ast}_{3}(g_a)=0$ has genus $31$. We can quotient by a $C_3$ symmetry by identifying $(a,z)$ with $(a,g_a(z))$ and $(a,g_a^2(z))$. Setting $t=z+g_a(z)+g_a^2(z)$, this quotienting produces the following curve:
\begin{align*}
    X:t^8 &+ (-a^5 - 6a^3 - 13a)t^6 + (a^8 + 14a^6 + 46a^4 + 56a^2 + 1)t^4\\
    &+(-4a^9 - 33a^7 - 78a^5 - 69a^3 + 4a)t^2 + 4a^8 + 12a^6 + 25a^4 +24a^2 + 16=0.
\end{align*}
This curve has genus $11$. We should be able to quotient by another $C_3$ symmetry by identifying $(a,z)$ with $(a,\frac{1}{z})$. However, in this curve we have used $t=z+g_a(z)+g_a^2(z)$ so we need to identify $(a,t)$ with $(a,t')$, where $t'=\frac{1}{z}+\frac{1}{g_a(z)}+\frac{1}{g_a^2(z)}$.
We can find the minimal polynomial of $t+t'$, which defines the curve
\begin{equation*}
    Y':u^4 + (-a^5 - 2a^4 - 6a^3 - 10a^2 - 13a - 8)u^2 + 4a^6 + 12a^5 + 25a^4 + 36a^3 + 34a^2 + 24a + 9=0.
\end{equation*}
This curve has genus $5$, but we can identify $u^2=x$ and get
\begin{equation*}
    Y:x^2 + (-a^5 + 2a^4 - 6a^3 + 10a^2 - 13a + 8)x + 4a^6 - 12a^5 + 25a^4 -36a^3 + 34a^2 - 24a + 9=0.
\end{equation*}
The new curve $Y$ has genus $2$ so is hyperelliptic. It is birational to the curve
\begin{equation*}
    H:y^2 = 4x^6 - 12x^5 + 25x^4 - 30x^3 + 25x^2 -12x + 4
\end{equation*}
via tha map
\begin{align*}
    x&=1/2a^3 + 3/2a^2 + 3/2a + 1/2\\
    y&=1/2xa^4 + 2xa^3 + 3xa^2 + 2xa + 1/2x - 1/4a^9 - 3/2a^8 - 5a^7 - 25/2a^6 - 49/2a^5\\
    &- 73/2a^4 - 39a^3 - 55/2a^2 - 45/4a - 2\\
    z&=a^2 + 2a + 1,
\end{align*}
where $z$ is the homogenizing variable of $H$.
The curve $H$ has genus $2$ and its Jacobian has rank 0 with torsion subgroup isomorphic to $\Z/6\Z \times \Z/6\Z$. Using Chabauty's method for rank 0 Jacobians as implemented in Magma yields the six rational points on (the weighted projective closure of) $H$ as
\begin{equation*}
   (1 : -2 : 0), (1 : 2 : 0), (1 : 2 : 1), (1 : -2 : 1), (0 : 2 : 1), (0 : -2 : 1) .
\end{equation*}
A point search up to height bound 1000 yields the points on (the projective closure of) $Y$:
\begin{equation*}
    (z,a,h) \in \{(4 : 1 : 0), (0 : -1 : 1), (4 : 1 : 1), (1 : 0 : 0), (36 : 1 : 1)\}.
\end{equation*}
The point $(0:-1:1)$ is a singular point which blows up to two rational points, so these are all the rational points on $Y$. The affine points all have $a = \pm 1$, which is the degenerate case for this family. So there are no points that corresponds to a rational $3$-cycle.
\end{proof}
\begin{code}
\begin{lstlisting}[language=Python]
#First dynatomic polynomial
R.<a> = QQ[]
P.<z> = R[]
P = FractionField(P)
A = AffineSpace(P,1)
g = DynamicalSystem_affine((a*z^2-1)/(z^3-a*z))
g.dynatomic_polynomial(1)

#Second dynatomic polynomial and its irreducible components
R.<a> = QQ[]
P.<z> = R[]
P = FractionField(P)
A = AffineSpace(P,1)
g = DynamicalSystem_affine((a*z^2-1)/(z^3-a*z))
g.dynatomic_polynomial(2)

A.<a,z>=AffineSpace(QQ,2)
R=A.coordinate_ring()
I=R.ideal((-a^2 + 1)*z^5 + (a^2 - 1)*z)
S=A.subscheme(I.gens())
S.irreducible_components()

#Third dynatomic polynomial
R.<a> = QQ[]
P.<z> = R[]
P = FractionField(P)
A = AffineSpace(P,1)
g = DynamicalSystem_affine((a*z^2-1)/(z^3-a*z))
g.dynatomic_polynomial(3)

#Quotienting by C_3 (run in Magma)
Fc<a>:=FunctionField(Rationals());
R<z>:=PolynomialRing(Fc);
Phi3:=-z^24 + (a^5 + 4*a^3 + 7*a)*z^22 + (-a^8 - 13*a^6 - 24*a^4 - 29*a^2 + 1)*z^20 + (7*a^9 + 48*a^7 + 91*a^5 + 76*a^3 - 2*a)*z^18 + (-11*a^10 - 102*a^8 - 214*a^6 - 168*a^4 + a^2 - 1)*z^16 + (6*a^11 + 103*a^9 + 348*a^7 + 304*a^5 + 30*a^3 + a)*z^14 + (-a^12 - 44*a^10 - 289*a^8 - 470*a^6 - 111*a^4 - 10*a^2 + 1)*z^12 + (6*a^11 + 103*a^9 + 348*a^7 + 304*a^5 + 30*a^3 + a)*z^10 + (-11*a^10 - 102*a^8 - 214*a^6 - 168*a^4 + a^2 - 1)*z^8 + (7*a^9 + 48*a^7 + 91*a^5 + 76*a^3 - 2*a)*z^6 + (-a^8 - 13*a^6 - 24*a^4 - 29*a^2 + 1)*z^4 + (a^5 + 4*a^3 + 7*a)*z^2 - 1;
Fcz<z>:=ext<Fc | Phi3>;
t:=z+((a*z^2-1)/(z^3-a*z))+((a^3*z^7 - a^4*z^5 - z^9 + 3*a*z^7 - 5*a^2*z^5 + 3*a^3*z^3 + a*z^3 - a^2*z)/(-a^2*z^8 + 3*a^3*z^6 - a^4*z^4 + a*z^6 - 5*a^2*z^4 + a^3*z^2 + 3*a*z^2 - 1));
t2:=1/z + ((z^3-a*z)/(a*z^2-1)) + ((-a^2*z^8 + 3*a^3*z^6 - a^4*z^4 + a*z^6 - 5*a^2*z^4 + a^3*z^2 + 3*a*z^2 - 1)/(a^3*z^7 - a^4*z^5 - z^9 + 3*a*z^7 - 5*a^2*z^5 + 3*a^3*z^3 + a*z^3 - a^2*z));
MinimalPolynomial(t+t2);

#Genus of curve
A<z,a>:=AffineSpace(Rationals(),2);
Y:=Curve(A,z^4 + (-a^5 - 2*a^4 - 6*a^3 - 10*a^2 - 13*a - 8)*z^2 + 4*a^6 + 12*a^5 + 25*a^4 + 36*a^3 + 34*a^2 + 24*a + 9);
Genus(Y);

#Simplified Model (run in Magma)
A<z,a>:=AffineSpace(Rationals(),2);
Y:=Curve(A,z^2 + (-a^5 - 2*a^4 - 6*a^3 - 10*a^2 - 13*a - 8)*z + 4*a^6 + 12*a^5 + 25*a^4 + 36*a^3 + 34*a^2 + 24*a + 9);
b,YY,psi:=IsHyperelliptic(Y);
YYY,psi2:=SimplifiedModel(YY);
YYY;
Expand(psi*psi2);
#change of coordinate in Sage
(4*(1/2*x)^6 - 12*(1/2*x)^5 + 25*(1/2*x)^4 - 30*(1/2*x)^3 + 25*(1/2*x)^2 - 12*(1/2*x) + 4)*(16)

#Rational points on H
A<x,y>:=AffineSpace(Rationals(),2);
Y:=Curve(A,y^2-(4*x^6 + (-12)*x^5 + 25*x^4 + (-30)*x^3 + 25*x^2 + (-12)*x + 4));
b,YY:=IsHyperelliptic(Y);
YY,psi:=SimplifiedModel(YY);
psi;
YY;
J:=Jacobian(YY);
RankBounds(J);
TorsionSubgroup(J);
Chabauty0(J);

A<z,a>:=AffineSpace(Rationals(),2);
Y:=Curve(A,z^2 + (-a^5 - 2*a^4 - 6*a^3 - 10*a^2 - 13*a - 8)*z + 4*a^6 + 12*a^5 + 25*a^4 + 36*a^3 + 34*a^2 + 24*a + 9);
YP:=ProjectiveClosure(Y);
Points(YP:Bound:=1000);
SingularPoints(Y);
Multiplicity(Y![0,-1]);

\end{lstlisting}
\end{code}

A search for rational preperiodic structures with the parameter up to height $10,000$ using the algorithm from \cite{Hutz12} as implemented in Sage yields no parameters where $g_a$ has a $\Q$-rational periodic point with minimal period at least $4$.

\begin{conj} \label{conj_A3_D2_2}
    There are no $a \in \Q$ such that $g_a(z) = \frac{az^2-1}{z^3-az}$ has a $\Q$-rational periodic point of minimal period at least $4$.
\end{conj}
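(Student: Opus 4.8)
This statement is a conjecture, not a theorem, and the plan is correspondingly twofold: record the computational evidence that supports it (already summarised just above) and indicate how a genuine proof for each fixed period $n \geq 4$ would be structured, so that the obstruction to an unconditional proof is visible.

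The evidence is produced by exactly the period search used for Conjectures~\ref{conj_rational_A3_C3} and~\ref{conj_A3_D2_1}: for each $a \in \Q$ of bounded height, reduce $g_a$ modulo several primes of good reduction, intersect the resulting sets of admissible cycle lengths (via the algorithm of Hutz \cite{Hutz12} as implemented in Sage), and flag any parameter for which a cycle of length $\geq 4$ survives all the reductions. Carrying this out to height bound $10{,}000$ produces no such parameter; this is not a proof, but it is the same form of justification used for the two companion conjectures of this section.

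To attack a single fixed period $n \geq 4$ rigorously one would imitate the period-$3$ argument of Proposition~\ref{prop_A3_D2_2}. The dynatomic curve $\Phi^{\ast}_n(g_a) = 0$ has large genus ($31$ already for $n = 3$, and its degree in $z$ is $\sum_{k \mid n}\mu(n/k)\,(3^k+1)$, which equals $72$ for $n = 4$), but it carries an abundant automorphism group: the order-$n$ orbit action $(a,z) \mapsto (a, g_a(z))$ together with the $D_2$ automorphism group $\{1,\ z \mapsto -z,\ z \mapsto 1/z,\ z \mapsto -1/z\}$ of $g_a$ itself. One would quotient successively --- set $t = z + g_a(z) + \cdots + g_a^{\,n-1}(z)$ to kill the cyclic action, then symmetrise under $z \mapsto 1/z$ by passing to a suitable symmetric function of the orbit, then under $z \mapsto -z$ via a square substitution --- hoping to land on a hyperelliptic curve of genus $\leq 2$, a rank-$0$ elliptic quotient, or a genus-$3$ curve accessible to the Chabauty--Coleman techniques of Lemma~\ref{lem_M3_C3_genus_3_points}. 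One would then determine all rational points of the quotient and check that each one lifts back only to the degenerate values $a = \pm 1$ or to the already-known low-period orbits.

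The main obstacle, and the reason this is left as a conjecture, is twofold and mirrors the situation for Conjectures~\ref{conj_rational_A3_C3} and~\ref{conj_A3_D2_1}. First, there is no a priori bound on the minimal period of a $\Q$-rational periodic point for a positive-dimensional family such as $g_a$ --- producing such a bound is precisely the uniformity predicted by Morton--Silverman and not currently available --- so a complete analysis of periods $4$ and $5$ alone would not settle the conjecture without an additional good-reduction or height argument excluding all larger periods. Second, even for the single period $n = 4$ it is not guaranteed that the full available symmetry (of order $4n$) drops the genus into the range where the rational points of the quotient are provably computable: the period-$3$ case already required a delicate genus-$2$ Chabauty computation after a cascade of quotients, and the period-$4$ dynatomic curve is considerably larger. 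Overcoming either difficulty would be a substantial undertaking, so the best statement presently available is the computational one.
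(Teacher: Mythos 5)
You correctly recognize that this is a conjecture, not a theorem, and that the paper's only justification is the computational period search up to height bound $10{,}000$ using the algorithm of Hutz as implemented in Sage; your description of that search (reducing $g_a$ modulo primes of good reduction and intersecting the sets of admissible cycle lengths) accurately matches the code the paper uses for all three companion conjectures. Your additional sketch of how one might attack a single fixed period $n$ by quotienting the dynatomic curve through the cyclic orbit action and the $D_2$ automorphisms, in the spirit of the period-$3$ analysis in Proposition~\ref{prop_A3_D2_2}, and your identification of the two obstacles (no a priori uniform bound on periods for a positive-dimensional family, and no guarantee the genus drops into a Chabauty-tractable range) are both sound and go slightly beyond what the paper says, but they are consistent with it.
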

Assuming Conjecture \ref{conj_A3_D2_2}, we classify all rational preperiodic structures.
\begin{theorem} \label{thm_A3_D2_2_preperiodic}
    Assuming Conjecture \ref{conj_A3_D2_2}, the possible rational preperiodic structures for $g_a(z) = \frac{az^2-1}{z^3-az}$ for $a \in \Q$ are the following.
    \begin{align*}
G_1&=\xygraph{
!{<0cm,0cm>;<1cm,0cm>:<0cm,1cm>::}
!{(0,0) }*+{\underset{1}{\bullet}}="a"
!{(1.5,0) }*+{\underset{-1}{\bullet}}="b"
!{(2.5,0) }*+{\underset{0}{\bullet}}="c"
!{(4,0) }*+{\underset{\infty}{\bullet}}="d"
"a":@/^1pc/"b"
"b":@/^1pc/"a"
"c":@/^1pc/"d"
"d":@/^1pc/"c"
}, \quad \text{for $t$ not in the following cases}\\
G_2&=\xygraph{
!{<0cm,0cm>;<1cm,0cm>:<0cm,1cm>::}
!{(0,0) }*+{\underset{1}{\bullet}}="a"
!{(1.5,0) }*+{\underset{-1}{\bullet}}="b"
!{(2.5,0) }*+{\underset{0}{\bullet}}="c"
!{(4,0) }*+{\underset{\infty}{\bullet}}="d"
!{(5,0) }*+{\underset{t}{\bullet}}="e"
!{(6,0) }*+{\underset{-t}{\bullet}}="f"
!{(7,0) }*+{\underset{1/t}{\bullet}}="g"
!{(8,0) }*+{\underset{-1/t}{\bullet}}="h"
"a":@/^1pc/"b"
"b":@/^1pc/"a"
"c":@/^1pc/"d"
"d":@/^1pc/"c"
"e":@(rd,ru)"e"
"f":@(rd,ru)"f"
"g":@(rd,ru)"g"
"h":@(rd,ru)"h"
}, \quad a = \frac{t^4+1}{2t^2} \text{ for } t \in \Q\setminus \{\pm 1, 0\}\\
G_{3a}&=\xygraph{
!{<0cm,0cm>;<1cm,0cm>:<0cm,1cm>::}
!{(0,0) }*+{\underset{1}{\bullet}}="a"
!{(1.5,0) }*+{\underset{-1}{\bullet}}="b"
!{(3,0) }*+{\underset{0}{\bullet}}="c"
!{(4.5,0) }*+{\underset{\infty}{\bullet}}="d"
!{(2,-0.5) }*+{\bullet}="c1"
!{(2,0.5) }*+{\bullet}="c2"
!{(5.5,-0.5) }*+{\bullet}="d1"
!{(5.5,0.5) }*+{\bullet}="d2"
"a":@/^1pc/"b"
"b":@/^1pc/"a"
"c":@/^1pc/"d"
"d":@/^1pc/"c"
"c1":"c"
"c2":"c"
"d1":"d"
"d2":"d"
}, \quad a = t^2, \text{ for } t \in \Q \setminus \{0\}\\
G_{3b}&=\xygraph{
!{<0cm,0cm>;<1cm,0cm>:<0cm,1cm>::}
!{(1,0) }*+{\underset{1}{\bullet}}="a"
!{(2.5,0) }*+{\underset{-1}{\bullet}}="b"
!{(4.5,0) }*+{\underset{0}{\bullet}}="c"
!{(6,0) }*+{\underset{\infty}{\bullet}}="d"
!{(0,-0.5) }*+{\bullet}="c1"
!{(0,0.5) }*+{\bullet}="c2"
!{(3.5,-0.5) }*+{\bullet}="d1"
!{(3.5,0.5) }*+{\bullet}="d2"
"a":@/^1pc/"b"
"b":@/^1pc/"a"
"c":@/^1pc/"d"
"d":@/^1pc/"c"
"c1":"a"
"c2":"a"
"d1":"b"
"d2":"b"
}, \quad a=\frac{3t^2 + 3t + 3}{2t^2 + 5t + 2}, t \in \Q\setminus \{-2,-1/2\}.
\end{align*}
\end{theorem}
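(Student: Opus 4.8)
The plan is to follow the template of Theorems~\ref{thm_M3_C3_preperiodic} and~\ref{thm_A3_D2_1_preperiodic}: first pin down the periodic points, then classify the non-periodic rational preimages of each periodic orbit, and finally decide which of these preimage configurations can occur simultaneously. By Proposition~\ref{prop_A3_D2_2} together with Conjecture~\ref{conj_A3_D2_2}, the $\Q$-rational \emph{periodic} structure of $g_a$ is already known: for every $a\in\Q\setminus\{\pm 1\}$ it consists of exactly the two $2$-cycles $\{0,\infty\}$ and $\{1,-1\}$, with the sole extra possibility that when $a=\frac{t^4+1}{2t^2}$ for some $t\in\Q\setminus\{0,\pm 1\}$ the four points $\{\pm t,\pm 1/t\}$ are additionally fixed. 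Throughout the preimage analysis I would exploit the two automorphisms $z\mapsto -z$ and $z\mapsto 1/z$ of $g_a$: one checks $g_a(-z)=-g_a(z)$ and $g_a(1/z)=1/g_a(z)$, so a rational preimage of $0$ is matched with one of $\infty$ (via $z\mapsto 1/z$), a rational preimage of $1$ with one of $-1$ (via $z\mapsto -z$), and the preimages of $1$ are interchanged in pairs $\{w,1/w\}$ by $z\mapsto 1/z$. This halves each computation.

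For the extra fixed points, setting $g_a(z)=t$ and dividing out the obvious root $z=t$ leaves a quadratic in $z$ whose discriminant, after substituting $a=\frac{t^4+1}{2t^2}$, is exactly $t^8+14t^4+1$; rational preimages therefore require a rational point on the curve $C:k^2=t^8+14t^4+1$, which was shown in the proof of Theorem~\ref{thm_A3_D2_1_preperiodic} to have only rational points with $t\in\{0,\pm 1\}$. Hence in configuration $G_2$ none of $\pm t,\pm 1/t$ acquires a non-periodic rational preimage, the cases $-t,1/t,-1/t$ following from $t$ by the symmetries. For the $\{0,\infty\}$ cycle, the non-periodic preimages of $0$ are $z=\pm 1/\sqrt a$, rational precisely when $a$ is a rational square, $a=t^2$; by symmetry this is also exactly when $\pm t$ map to $\infty$, giving $G_{3a}$. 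For the $\{1,-1\}$ cycle, $g_a(z)=1$ gives $z^3-az^2-az+1=0$, which always has the periodic root $z=-1$; dividing it out leaves $z^2-(a+1)z+1=0$, so a non-periodic rational preimage of $1$ (hence automatically a second one, $1/w$, together with the two preimages $-w,-1/w$ of $-1$) exists precisely when $(a-1)(a+3)$ is a rational square. Parameterizing this conic through a degenerate point yields $a=\frac{3t^2+3t+3}{2t^2+5t+2}$ and configuration $G_{3b}$.

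It then remains to check two things: that no tail can be extended, and that no two of the conditions (A) $a=\frac{t^4+1}{2t^2}$, (B) $a=t^2$, (C) $(a-1)(a+3)$ a square can hold simultaneously for non-degenerate $a$. For the first, in each of $G_{3a}$ and $G_{3b}$ I would ask whether a newly found preimage point itself has a rational preimage; clearing denominators and stripping off the evident periodic and degenerate components (invoking Lemma~\ref{lem_irreducible_curves} wherever the remaining component is geometrically reducible) reduces each question to the rational points of one further curve of genus $1$ or a hyperelliptic curve of genus $2$, which a Magma rank computation (and, for genus $2$, Chabauty for rank-$0$ Jacobians) should show has only rational points with $a\in\{\pm 1\}$ or already accounted for. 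For the second, intersecting the conditions and simplifying gives: (A)$\cap$(B) reduces to $s^4-1=\square$, a model of $y^2=x^4-1$ with only the degenerate rational points; (A)$\cap$(C) reduces to $t^4+6t^2+1=\square$ and (B)$\cap$(C) to $t^4+2t^2-3=\square$, each a genus-$1$ curve to be shown by Magma to have rank $0$ and only degenerate rational points. Since this intersection analysis simultaneously shows that each of $G_2,G_{3a},G_{3b}$ has no further features beyond the ones drawn, and $G_1$ results when none of (A),(B),(C) holds, the four configurations in the statement are exhausted.

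The main obstacle is the reliability of the genus-$1$ and genus-$2$ point counts. Several of the curves arising in the ``extend a tail'' and ``simultaneous occurrence'' steps are not obviously of rank zero, so the argument rests on the Magma rank-bound, torsion, and Chabauty-for-rank-$0$ computations all succeeding, and on correctly isolating and discarding the reducible and degenerate components (via Lemma~\ref{lem_irreducible_curves}) before one is left with an irreducible curve of small enough genus to handle.
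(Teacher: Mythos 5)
Your proposal follows the same overall template as the paper's proof of this theorem: classify periodic points (via Proposition \ref{prop_A3_D2_2} and the conjecture), determine which parameter families produce non-periodic rational preimages of each orbit, verify that none of these tails can be extended a second step, and check that no two of the three parameter conditions can hold simultaneously. The specific curves you identify are the same ones the paper works with --- you both reduce the extra-fixed-point preimage question to $t^8+14t^4+1 = \square$, the preimage of $0$ to $a = t^2$, and the preimage of $1$ to the conic $z^2-(a+1)z+1=0$ --- and you correctly exploit the two $D_2$ automorphisms to pair preimages of $0$ with preimages of $\infty$, preimages of $1$ with preimages of $-1$, and to pair preimages of $1$ among themselves.

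The one place you diverge usefully is the intersection analysis. The paper substitutes the rational parameterization $a=\frac{3v^2+3v+3}{2v^2+5v+2}$ into the other conditions, which produces genus-one curves in two auxiliary parameters $(t,v)$ and requires finding a rational point to pass to Weierstrass form. You instead encode condition (C) as a square condition on the discriminant $(a-1)(a+3)$, so that (A)$\cap$(C) and (B)$\cap$(C) collapse directly to the hyperelliptic quartics $t^4+6t^2+1=\square$ and $t^4+2t^2-3=\square$ in one variable, and (A)$\cap$(B) collapses through the discriminant of the quadratic-in-$t^2$ to the classical $u^4-1=\square$, which needs no Magma at all. This is a genuine, if modest, streamlining: each check becomes a self-contained one-variable quartic rather than a plane curve needing a model reduction.

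One caveat on the ``extend a tail'' step: after clearing the denominators and stripping the evident linear and periodic factors from $g_a^2(z)=0$ or $g_a^2(z)=1$, the remaining component is a genus-$3$ curve in both cases. The paper brings these within reach of rank computations by passing to a double cover ($z^2\mapsto z$) in the first case and by quotienting by the $z\mapsto 1/z$ automorphism in the second, landing on genus-$1$ curves. Your proposal anticipates ending up at genus $1$ or $2$ but does not name the cover-or-quotient step that gets you there, and Lemma \ref{lem_irreducible_curves} is not the relevant tool for these two components (they are geometrically irreducible). That extra intermediate step should be made explicit, but it is a matter of detail rather than a gap in the strategy.
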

\begin{proof}
We start with non-periodic preimages of fixed points. Recall from Proposition \ref{prop_A3_D2_2} that we have (four) fixed points when $a=\frac{t^4+1}{2t^2}$ for some $t \in \Q\setminus \{\pm 1, 0\}$. We want to know if these points can have rational preimages. Consider the equation $g_a(x) = t$ and substitute $a=\frac{t^4+1}{2t^2}$. The resulting equation factors as
\begin{equation*}
    -(x - t)(2t^3x^2 + (t^4 - 1)x - 2t).
\end{equation*}
Since $t$ is the fixed point we need only consider the second factor component. Its vanishing defines a genus 3 curve but is quadratic in $x$, so we have rational solutions when the discriminant, with respect to $x$, is a square. This gives the curve
\begin{equation*}
    t^8 + 14t^4 + 1 = k^2.
\end{equation*}
Replacing $u = t^2$, this becomes
\begin{equation*}
    u^4 + 14u^2 + 1 = k^2.
\end{equation*}
Using the point $(1:0:0)$ as the point at infinity, this is an elliptic curve with model $y^2 = x^3 - 28x^2 + 192x$. This curve has rank 0 and torsion isomorphism to $\Z/2\Z \times \Z/4\Z$. So the original curve has at most eight rational points. A point search up to height 1000 yields the seven points (on the projective closure)
\begin{equation*}
    (x,t) \in \{(\pm 4 : \pm 1 : 1), (\pm 1 : 0 : 1), (1 : 0 : 0)\}.
\end{equation*}
The point $(1:0:0)$ is singular and blows up to two rational points, so we have found all eight $\Q$-rational points. The affine points all have $t$ values that are degenerate, so there are no $a \in \Q$ such that $g_a(z)$ has a rational fixed point with a non-periodic rational preimage.

Now we look for non-periodic rational preimages of the points of period $2$. Recall that every member of the family has exactly two $2$-cycles $\{0,\infty\}$ and $\{1,-1\}$. We want to know if we can have $g_a(z_1)=0$ or $g_a(z_2)=1$. It suffices to look for non-periodic rational preimages of $0$ and $1$ since preimages of $-1$ and $\infty$ are then obtained from the same automorphisms that produce four rational fixed points when there is one. We first look at $g_a(z_1)=0$. We need to find rational solutions to the equation $az^2-1=0.$ This equation defines a genus $0$ curve with rational parameterization
\begin{equation*}
    t\mapsto (t^2,1/t)=(a,z).
\end{equation*}
Thus, for every $t\in \Q\setminus \{0\}$, $g_a(1/t)=0$, where $a=t^2$.

Now we look at $g_a(z_2)=1$. We need to find rational solutions to the equation $(az^2-1)-(z^3-az)=0.$ This equation defines a reducible curve over $\Q$ and the irreducible components are
\begin{align*}
    &z+1=0\\
    &az - z^2 + z - 1=0.
\end{align*}
This first component corresponds to a periodic point, so we only look at the second component. It defines a genus $0$ curve with rational parameterization:
\begin{equation*}
    t\mapsto \left(\frac{3t^2 + 3t + 3}{2t^2 + 5t + 2}, \frac{t + 2}{2t + 1}\right).
\end{equation*}
Thus, for every $t\in \Q$, we can find $a=\frac{3t^2 + 3t + 3}{2t^2 + 5t + 2}$ such that $1$ has a non-periodic rational preimage under $g_a$.

Now we look for non-periodic rational second preimages of $0$; i.e., determine if we can have $g_a^2(z_1)=0$ and $g_a(z_1)\neq 0$ or $\infty$. We need to find rational solutions to the equation
\begin{equation*}
    a^3z^7 - a^4z^5 - z^9 + 3az^7 - 5a^2z^5 + 3a^3z^3 + az^3 - a^2z=0.
\end{equation*}
This equation defines a reducible curve and the irreducible components are
\begin{align*}
    &z=0\\
    &z^2 - a=0\\
    &a^3z^4 - z^6 + 2az^4 - 3a^2z^2 + a=0.
\end{align*}
The first component corresponds to periodic points. The second component corresponds to $z$ such that $g_a(z)=\infty$. Thus, we can just look at the third component. Via the replacement $z^2 \mapsto z$, it covers an elliptic curve:
\begin{equation*}
    X:a^3z^2 - z^3 + 2az^2 - 3a^2z + a=0.
\end{equation*}
The elliptic curve $X$ is birationally equivalent to
\begin{equation*}
    E:y^2 - 8xy - 2y = x^3 - 12x^2 - 4x,
\end{equation*}
via the map
\begin{align*}
    x&=2a^2z^4 + 4a^2z^3h + 4az^4h - 6a^2z^2h^2 -
    8az^3h^2 + 8z^4h^2 - 12az^2h^3\\
    &- 4z^3h^3 +
    16azh^4 - 10z^2h^4 + 8zh^5 - 2h^6\\
    y&=4a^2z^4 + 8az^4h - 36a^2z^2h^2 - 32az^3h^2 +
    12z^4h^2 - 8az^2h^3 - 32z^3h^3\\
    &+ 96azh^4 -
    36z^2h^4 + 32zh^5 - 8h^6\\
    z&=z^5h - 3z^4h^2 + 2z^3h^3 + 2z^2h^4 - 3zh^5 + h^6,
\end{align*}
where $h$ and $z$ are the homogenizing variables of $X$ and $E$, respectively.
This elliptic curve $E$ has rank $0$ and its torsion subgroup has order $6$. Searching using height bound $1000$, we find five rational points on the projective closure of $X$:
\begin{equation*}
    (a,z,h) \in \{1: 1: 1), (0: 1: 0), (0: 0: 1), (1: 0: 0), (-1: -1: 1)\},
\end{equation*}
where $h$ is a homogenizing variable.
Among these five points, $(-1: -1: 1)$, $(1: 1:1)$, $(0: 1: 0)$, and $(1: 0: 0)$  are singular points. The point $(-1:-1:1)$ blows up to be two rational points, point $(1: 1:1)$ blows up to be two rational points, point $(0:1:0)$ blows up to be one rational point, and point $(1:0:0)$ blows up to be a closed point that is not rational. Thus, our search has found all the rational points on $X$. The only affine rational point such that $a\neq \pm 1$ is $(0:0:1)$. But this corresponds to $z=0$, which is the periodic point. Thus, there is no $a \in \Q$ such that $g_a^2(z_1)=0$ and $g_a(z_1)\neq 0$ or $\infty$ for rational $z_1$.

Now we look for non-periodic rational second preimages of $1$; i.e., rational $z_2$ so that $g^2_a(z_2)=1$ and $g_a(z_2)\neq 1$ or $-1$. We need to find rational solutions to the equation
\begin{multline*}
    a^3z^7 - a^4z^5 - z^9 + 3az^7 - 5a^2z^5 + 3a^3z^3 + az^3 - a^2z\\
    -(-a^2z^8 + 3a^3z^6 - a^4z^4 + az^6 - 5a^2z^4 + a^3z^2 + 3az^2 - 1)=0.
\end{multline*}
This equation defines a reducible curve over $\Q$ and the irreducible components are
\begin{align*}
    &z-1=0\\
    &az + z^2 + z + 1=0\\
    &a^2z^5 - a^3z^3 - a^2z^4 + az^5 - z^6 - a^2z^3 + 2az^4 - a^2z^2 - az^3 + a^2z + 2az^2 - z^3 + az - 1=0.
\end{align*}
The first component corresponds to $z=1$ and the second component to $g_a(z)=-1$. Thus, we focus on the third component. It defines a genus $3$ curve. We can quotient by a $C_2$ action by identifying $(a,z)$ with $(a,1/z)$. This gives curve
\begin{equation*}
    X:t^3 + (-a^2 - a)t^2 + (a^2 - 2a - 3)t + a^3 + 3a^2 + 3a + 1=0,
\end{equation*}
where $t = z + \frac{1}{z}$.
This is a genus 1 curve birational to the elliptic curve
\begin{equation*}
    E:y^2 - 2xy + 2y = x^3 - 3x^2 + 2x,
\end{equation*}
via the map
\begin{align*}
    x &= 2at^4 - 2a^2t^2h - 8at^3h + 8a^2th^2 +
    4at^2h^2 + 8t^3h^2 + 16ath^3 - 26t^2h^3 +
    8th^4\\
    y&=2at^4 - 4t^4h - 2a^3h^2 - 2a^2th^2 -
    26at^2h^2 + 14t^3h^2 + 26a^2h^3 + 36ath^3 +
    6t^2h^3\\
    &+ 58ah^4 - 90th^4 + 30h^5\\
    z&=t^5 - 2t^4h,
\end{align*}
where $h$ and $z$ are the homogenizing variables of $X$ and $E$, respectively.
The curve $E$ has rank $0$ and its torsion subgroup has six elements. A search for rational points on the projective closure of $X$ using height bound $1000$ finds
\begin{equation*}
    (a,t,h) \in \{(1: -2: 1), (1: 2: 1), (0: 1: 0), (-1: 0: 1), (1: 0: 0), (-3:2:1)\},
\end{equation*}
where $h$ is the homogenizing variable.
Among these points, $(-1: 0: 1)$ and $(1: 2: 1)$ are singular points. The first one blows up to be two rational points, and the second one blows up to be a closed point that is not rational. Therefore, we have found all the rational points on $X$. Observe that the only affine rational point where $a\neq \pm 1$ is $(-3:2:1)$. Thus, we need to solve $t=z+1/z=2$. The only solution is $z=1$. Thus, there is no $a$ such that $g_a^2(z_2)=1$ and $g_a(z_2)\neq 1$ or $-1$ for rational $z_2$.

Next we check if both pairs of 2-cycles can have preperiodic tails at the same time; i.e., if we can have $g_a(z_1)=0$ and $g_a(z_2)=1$. Recall that we need $a=t_1^2$ to have $g_a(z_1)=0$ and $a=\frac{3t_2^2+3t_2+3}{2t_2^2+5t_2+2}$ to have $g_a(z_2)=1$. Thus, we need to find rational solutions to
\begin{equation*}
    C:(3t_2^2 + 3t_2 + 3)-t_1^2(2t_2^2 + 5t_2 + 2)=0.
\end{equation*}
This defines a genus $1$ curve that is birationally equivalent to the elliptic curve
\begin{equation*}
    E:y^2 = x^3 + 180x^2 + 116646x + 279936
\end{equation*}
via the map
\begin{align*}
    x &= 54t_2h\\
    y &= 108t_1t_2 + 216t_1h\\
    z &= -t_2h - 1/2h^2,
\end{align*}
where $h$ and $z$ are the homogenizing variables of $C$ and $E$, respectively.
The curve $E$ has rank $0$ and its torsion subgroup has order $4$. Searching for rational points on the projective closure of $X$ using height bound $1000$ produces
\begin{equation*}
    (t_1,t_2,h) \in \{(0: 1: 0), (1: 0: 0), (1: 1: 1), (-1: 1: 1)\},
\end{equation*}
where $h$ is the homogenizing variable.
Among these points, $(0: 1: 0)$ and $(1: 0: 0)$ are singular points. The first one blows up to be two rational points, where as the second one blows up to be only one closed point that is not rational. Thus, we have found all rational points on $X$. Checking whether these points produce valid members of the family, we see that $t_2=1$ so that $a=t_2^2=1$, which produces degeneracy. Thus, we cannot have $g_a(z_1)=0$ and $g_a(z_2)=1$ at the same time for rational $z_1$ and $z_2$.

Now we need to ask if we can have rational fixed points at the same time as preperiodic tails for a 2-cycle. Recall that we have rational fixed points if $a=\frac{t^4+1}{2t^2}$ and we have $g_a(z)=0$ if $a=u^2$ and we have $g_a(z)=1$ if $a=\frac{3v^2+3v+3}{2v^2+5v+2}$. We first study the curve
\begin{equation*}
    X:t^4+1-2u^2t^2=0.
\end{equation*}
This defines a genus 1 curve birational to the elliptic curve
\begin{equation*}
  E:y^2 + 2xy = x^3 - 4x^2 + 2x
\end{equation*}
via the map
\begin{align*}
    x&=2t^3 + 2t^2u - 4t^2h - 2tuh + 4th^2 - 2h^3\\
    y&=-4t^3 - 4t^2u + 4t^2h - 4th^2\\
    z&=t^3 - 3t^2h + 3th^2 - h^3
\end{align*}
where $h$ and $z$ are the homogenizing variables of $X$ and $E$, respectively.
The curve $E$ has rank $0$ and $4$ rational torsion points. Searching up to height $1000$ on the projective closure of $X$, we find
\begin{equation*}
    (t,u,h) \in \{(1 : 1 : 1), (0 : 1 : 0), (-1 : 1 : 1), (1 : -1 : 1), (-1 : -1 : 1)\}.
\end{equation*}
The point $(0:1:0)$ is a singular point and does not blow up to a rational point. Thus, we have found all rational points on $X$. Note that all the affine points have $t \in\{0, \pm 1\}$ so correspond to either periodic $t=0$ or to degenerate $t = \pm 1$. Thus, we cannot have a rational preperiodic tail for $0$ and rational fixed points at the same time. Now we study the curve
\begin{align*}
    X:&(t^4+1)(2v^2 + 5v + 2)-(3v^2 + 3v + 3)(2t^2)=0
\end{align*}
corresponding to a rational preperiodic tail for $1$ and rational fixed points at the same time.
This curve is genus 1 and is  birational to the elliptic curve
\begin{equation*}
    E: y^2 = x^3 + 9x^2 -54x -216
\end{equation*}
via the map
\begin{align*}
    x&=9t^2vh + 18t^2h^2 - 15vh^3 - 12h^4\\
    y&=27t^3v + 54t^3h - 81tvh^2\\
    z&=-vh^3 + h^4,
\end{align*}
where $h$ and $z$ are the homogenizing variables of $X$ and $E$, respectively.
The curve $E$ has rank $0$, and $4$ rational torsion points. Searching using height bound $1000$ on the projective closure of $X'$ produces:
\begin{equation*}
    (t,z,h) \in \{(1 : 1 : 1), (0 : 1 : 0), (0 : -2 : 1), (0 : -1/2 : 1), (1 : 0 : 0), (-1 : 1 : 1) \}.
\end{equation*}
The singular points are
\begin{equation*}
    \{(-1 : 1 : 1), (1 : 1 : 1), (0 : 1 : 0), (1 : 0 : 0)\}.
\end{equation*}
The points $\{(-1 : 1 : 1), (1 : 1 : 1), (0 : 1 : 0)\}$ blow up to closed points that are not rational. The point $(1:0:0)$ blows up to two rational points. Thus, we have found all the rational points on $X$. The only affine rational points have $t\in \{0, \pm 1\}$ which are exclude in this case. Therefore, we cannot have a rational preperiodic tail of $1$ and rational fixed points at the same time.
\end{proof}

\begin{code}
\begin{lstlisting}[language=Python]
#Preimages of fixed points
r.<t>=QQ[]
P.<x,y>=ProjectiveSpace(r,1)
a=(t^4+1)
b=(2*t^2)
f=DynamicalSystem([(a*x^2*y-b*y^3),(b*x^3-a*x*y^2)])
F=f.dehomogenize(1)
(F[0].numerator() - F[0].denominator()*t).factor()

A<x,t>:=AffineSpace(Rationals(),2);
C:=ProjectiveClosure(Curve(A,t^4 + 14*t^2 + 1 - x^2));
P:=C![1,0,0];
E:=EllipticCurve(C,P);
E;
RankBounds(E);
TorsionSubgroup(E);
Points(C:Bound:=1000);
Q1:=Places(C![1,0,0]);
Q1;
Degree(Q1[1]);
Degree(Q1[2]);

#Preimages of 0 or 1
R.<a,z>=QQ[]
A=AffineSpace(R)
I=R.ideal(a*z^2-1)
C=A.curve(I.gens())
C.genus()
C.rational_parameterization()

R.<a,z>=QQ[]
A=AffineSpace(R)
I=R.ideal((a*z^2-1)-(z^3-a*z))
S=A.subscheme(I)
S.irreducible_components()

#Preimage of preimages of 0
R.<a,z>=QQ[]
A=AffineSpace(R)
I=R.ideal((a^3*z^7 - a^4*z^5 - z^9 + 3*a*z^7 - 5*a^2*z^5 + 3*a^3*z^3 + a*z^3 - a^2*z))
S=A.subscheme(I)
S.irreducible_components()

#rational points (run in magma)
A<a,z>:=AffineSpace(Rationals(),2);
C:=ProjectiveClosure(Curve(A,a^3*z^2 - z^3 + 2*a*z^2 - 3*a^2*z + a));
Points(C:Bound:=1000);
P:=C![1,1,1];
E:=EllipticCurve(C,P);
E;
RankBounds(E);
TorsionSubgroup(E);
sing:=SingularPoints(C);
sing;
P1:=sing[1];
P2:=sing[2];
P3:=sing[3];
P4:=sing[4];
Q1:=Places(P1);
Q1;
Degree(Q1[1]);
Degree(Q1[2]);
Q2:=Places(P2);
Q2;
Degree(Q2[1]);
Degree(Q2[2]);
Q3:=Places(P3);
Q3;
Degree(Q3[1]);
Q4:=Places(P4);
Q4;
Degree(Q4[1]);

#Preimage of preimages of 1
R.<a,z>=QQ[]
A=AffineSpace(R)
I=R.ideal((a^3*z^7 - a^4*z^5 - z^9 + 3*a*z^7 - 5*a^2*z^5 + 3*a^3*z^3 + a*z^3 - a^2*z)-(-a^2*z^8 + 3*a^3*z^6 - a^4*z^4 + a*z^6 - 5*a^2*z^4 + a^3*z^2 + 3*a*z^2 - 1))
S=A.subscheme(I)
S.irreducible_components()
#Checking the second component
R.<a,z>=QQ[]
A=AffineSpace(R)
I=R.ideal((a*z^2-1)+(z^3-a*z))
S=A.subscheme(I)
S.irreducible_components()

#Rational Points (run in magma)
Fc<a>:=FunctionField(Rationals());
R<z>:=PolynomialRing(Fc);
Phi3:=a^2*z^5 - a^3*z^3 - a^2*z^4 + a*z^5 - z^6 - a^2*z^3 + 2*a*z^4 - a^2*z^2 - a*z^3 + a^2*z + 2*a*z^2 - z^3 + a*z - 1;
Fcz<z>:=ext<Fc | Phi3>;
t:=z+1/z;
MinimalPolynomial(t);

A<a,z>:=AffineSpace(Rationals(),2);
C:=ProjectiveClosure(Curve(A,z^3 + (-a^2 - a)*z^2 + (a^2 - 2*a - 3)*z + a^3 + 3*a^2 + 3*a + 1));
Points(C:Bound:=1000);
sing:=SingularPoints(C);
sing;
P1:=sing[1];
P2:=sing[2];
Q1:=Places(P1);
Q1;
Degree(Q1[1]);
Degree(Q1[2]);
Q2:=Places(P2);
Degree(Q2[1]);
P:=C![-1,0,1];
E:=EllipticCurve(C,P);
E;
RankBounds(E);
TorsionSubgroup(E);

#Preimages of 0 and 1 at the same time (run in magma)
A<t1,t2>:=AffineSpace(Rationals(),2);
C:=ProjectiveClosure(Curve(A,(3*t2^2 + 3*t2 + 3)-t1^2*(2*t2^2 + 5*t2 + 2)));
Points(C:Bound:=1000);
P:=C![1,0,0];
E:=EllipticCurve(C,P);
E;
RankBounds(E);
TorsionSubgroup(E);
sing:=SingularPoints(C);
sing;
P1:=sing[1];
P2:=sing[2];
Q1:=Places(P1);
Q1;
Degree(Q1[1]);
Q2:=Places(P2);
Q2;
Degree(Q2[1]);
Degree(Q2[2]);

#Preimages of 0 and extra fixed points
A<t,u>:=AffineSpace(Rationals(),2);
C:=ProjectiveClosure(Curve(A,t^4+1-2*u^2*t^2));
Points(C:Bound:=1000);
P:=C![1,1,1];
E:=EllipticCurve(C,P);
E;
RankBounds(E);
TorsionSubgroup(E);
sing:=SingularPoints(C);
sing;
P1:=sing[1];
Q1:=Places(P1);
Q1;
Degree(Q1[1]);

#Preimages of 1 and extra fixed points
A<t,v>:=AffineSpace(Rationals(),2);
C:=ProjectiveClosure(Curve(A,(t^4+1)*(2*v^2 + 5*v + 2)-(3*v^2 + 3*v + 3)*(2*t^2)));
Points(C:Bound:=1000);
P:=C![1,0,0];
E:=EllipticCurve(C,P);
E;
RankBounds(E);
TorsionSubgroup(E);
sing:=SingularPoints(C);
sing;
P1:=sing[1];
P2:=sing[2];
P3:=sing[3];
P4:=sing[4];
Q1:=Places(P1);
Q1;
Degree(Q1[1]);
Q2:=Places(P2);
Q2;
Degree(Q2[1]);
Q3:=Places(P3);
Q3;
Degree(Q3[1]);
Degree(Q3[2]);
Q4:=Places(P4);
Q4;
Degree(Q4[1]);
Degree(Q4[2]);
\end{lstlisting}
\end{code}

\subsection{$C_2$ First Component} \label{sect_A3_C2_2_preperiodic}
We consider the {\it two}-parameter family $f_{a,b}(z) = \frac{z^3 + az}{bz^2 + 1}$.

We start by examining the fixed points and 2-periodic points.
\begin{prop} \label{prop_A3_C2_2_periodic}
    For $f_{a,b}(z) = \frac{z^3 + az}{bz^2 + 1}$, we have the following periodic points.
    \begin{enumerate}
        \item The points $0$ and $\infty$ are always fixed. There are two additional $\Q$-rational fixed points for pairs $(a,b)$ parameterized by
            \begin{equation*}
                (a,t) = \left(a, 1+\frac{a-1}{t^2}\right) \quad t \neq 0,\; t^2 \neq -a.
            \end{equation*}
            The two additional $\Q$-two rational fixed points are given by $z = \pm t$.

        \item The pairs $(a,b)$ for which $f_{a,b}$ has a 2-cycle with $\Q$-rational points is parameterized by the union of two surfaces.
        \begin{align*}
            S_1&: (a,t) = \left(a, -1 + \frac{-1-a}{t^2}\right) \quad t \neq 0,\; t^2 \neq -a \\
            S_2&: (a,t) = \left(a, -\frac{1}{t^2} -a - t^2\right) \quad t \neq 0,\; t^2 \neq -a, \; t^2 \neq - \frac{1}{a}.
        \end{align*}
        For $S_1$ there is one 2-cycle with points $z = \pm t$. For $S_2$ there are two 2-cycles with points given by the pairs \begin{equation*}
            z \in \left\{t, -\frac{1}{t}\right\} \quad z \in \left\{ -t, \frac{1}{t}\right\}.
        \end{equation*}
        These two surfaces intersect on the line $a+b=-2$ and on this line there is one $2$-cycle with multiplicity $3$ and points $z = \pm 1$.
    \end{enumerate}
\end{prop}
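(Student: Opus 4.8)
The plan is to reduce everything to the first two dynatomic polynomials of $f_{a,b}(z)=\frac{z^3+az}{bz^2+1}$ and to exploit the $C_2$-symmetry $z\mapsto -z$, under which $f_{a,b}(-z)=-f_{a,b}(z)$, so that periodic points occur in symmetric orbits. First, for part (1), I would compute the fixed-point polynomial
\[
  \Phi^{\ast}_1(f_{a,b})(z)=(z^3+az)-z(bz^2+1)=z\big((1-b)z^2-(1-a)\big).
\]
Since the numerator of $f_{a,b}$ has degree strictly larger than the denominator, $\infty$ is fixed; the factor $z$ gives the fixed point $0$; and the remaining fixed points satisfy $z^2=\frac{1-a}{1-b}$. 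This has a $\Q$-rational solution $z=\pm t$ exactly when $\frac{1-a}{1-b}=t^2$, which I solve for the parameter to get $b=1+\frac{a-1}{t^2}$. A direct computation shows $ab=1$ (the degenerate locus excluded from the family) is equivalent to $t^2=-a$, which is the stated exclusion, and $t\neq 0$ is needed for $b$ to be defined.

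For part (2), I would compute $\Phi^{\ast}_2(f_{a,b})(z)$ and show that, up to a nonzero factor in $\Q(a,b)$, it equals
\[
  \big((1+b)z^2+(a+1)\big)\big(z^4+(a+b)z^2+1\big),
\]
the degrees $2+4=6$ matching $d^2-d$. The two factors correspond to the two combinatorial types of a $\Q$-rational $2$-cycle: solving $f_{a,b}(t)=-t$ gives $t\big((1+b)t^2+(a+1)\big)=0$, so the antipodal cycles $\{t,-t\}$ are exactly the roots of the quadratic factor; and since the quartic factor is palindromic, its roots form a set $\{t,-t,1/t,-1/t\}$ splitting into the two reciprocal-antipodal cycles $\{t,-1/t\}$ and $\{-t,1/t\}$, obtained by solving $f_{a,b}(t)=-1/t$, which gives $t^4+(a+b)t^2+1=0$. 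Solving each factor for $b$ in terms of $(a,t)$ yields $b=-1-\frac{1+a}{t^2}$ (this is $S_1$) and $b=-\frac{1}{t^2}-a-t^2$ (this is $S_2$). I then record that each such root genuinely has minimal period $2$ rather than being fixed, by substituting into the fixed-point equation $(1-b)t^2=1-a$ and noting it fails off the excluded loci (and that $0,\infty$ are never roots of either factor for non-degenerate parameters). Finally I compute the degeneracy condition $ab=1$ in each parameterization: it factors as $t^2+a$ for $S_1$ and as $(at^2+1)(t^2+a)$ for $S_2$, giving precisely the exclusions in the statement.

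For the intersection assertion, I would impose membership in both families: equating the $S_1$ and $S_2$ expressions for $b$ (or, more cleanly, comparing $a+b$) forces $(a+1)(t^2-1)=0$, hence, away from the degenerate value $a=-1$, $t^2=1$, i.e.\ $b=-2-a$; conversely the line $a+b=-2$ visibly lies in both (there $z^4+(a+b)z^2+1=(z^2-1)^2$ with rational roots $\pm1$, and $-\frac{a+1}{b+1}=1$). On the line $b=-2-a$ the factorization above collapses to $\Phi^{\ast}_2(f_{a,b})(z)$ being proportional to $(a+1)(z^2-1)^3$, so the unique $\Q$-rational $2$-cycle is $\{1,-1\}$ and it occurs with multiplicity $3$, exactly as claimed.

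The individual computations are elementary algebra over $\Q(a,t)$, so the real work — and the only place a serious error could hide — is verifying the explicit factorization of $\Phi^{\ast}_2(f_{a,b})$ and correctly matching each irreducible factor with the type of cycle it parameterizes; once that factorization and dictionary are in hand, the parameterizations $S_1$, $S_2$, the degeneracy exclusions, and the description of the intersection locus all follow by bookkeeping.
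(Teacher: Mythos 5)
Your proposal follows the paper's approach closely: compute $\Phi^{\ast}_1$ and $\Phi^{\ast}_2$, factor, solve each factor for $b$ in terms of $(a,t)$, and match the two irreducible factors of $\Phi^{\ast}_2$ to the two combinatorial types of $2$-cycle via the order-two automorphism $z\mapsto -z$. The dictionary you set up — the quadratic factor $(1+b)z^2+(a+1)$ gives antipodal cycles $\{t,-t\}$, the palindromic quartic $z^4+(a+b)z^2+1$ gives reciprocal-antipodal cycles $\{t,-1/t\}$ and $\{-t,1/t\}$ — is exactly right and matches what the paper verifies by factoring the dynatomic polynomial after substituting the parameterizations. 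One small arithmetic slip: equating the two expressions for $a+b$ with a common $t$ gives $(t^2-1)(t^2+a)=0$, not $(a+1)(t^2-1)=0$ as you wrote; the factor being discarded as degenerate is $t^2+a$ (which is $ab=1$ on $S_1$), not $a+1$. The conclusion $t^2=1$, hence $a+b=-2$, survives either way, so the slip is harmless, but the stated factorization should be corrected. Everything else, including the collapse of $\Phi^{\ast}_2$ to $-(a+1)(z^2-1)^3$ on $a+b=-2$ and the resulting multiplicity-three cycle $\{1,-1\}$, is correct and agrees with the paper.
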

\begin{proof}
    We first look at the fixed points. We compute the first dynatomic polynomial as
    \begin{equation*}
        \Phi^{\ast}_1(f_{a,b}) = -bz^3 + z^3 + az - z.
    \end{equation*}
    After removing the component $z=0$, we set $z=t$ and solve for $b$ in terms of $a$. This is a rational surface with parameterization given in the statement. To see that there are exactly two fixed points, we substitute the parameterizations of $a$ and $b$ into $\Phi^{\ast}_1(f_{a,b})$ to get
    \begin{equation*}
        \Phi^{\ast}_1(f_{a,b}) = z(a - 1)(t - z)(t + z).
    \end{equation*}
    From this we see the two new $\Q$-rational fixed points are $z = \pm t$.
\begin{code}
\begin{lstlisting}
r.<a,b>=QQ[]
P.<x,y>=ProjectiveSpace(r,1)
f=DynamicalSystem([x^3 + a*x*y^2, b*x^2*y + y^3])
phi=f.dehomogenize(1).dynatomic_polynomial(1)
s.<a,b,x>=QQ[]
phis=s(phi)
phis.factor()

R.<a,t>=QQ[]
P.<x,y>=ProjectiveSpace(R,1)
b=1+(a-1)/t^2
f=DynamicalSystem([x^3 + a*x*y^2, b*x^2*y + y^3])
f.resultant().factor()

f.dynatomic_polynomial(1)*t^2
R.<a,t,x,y>=QQ[]
((-a + 1)*x^3*y + (a*t^2 - t^2)*x*y^3).factor()
\end{lstlisting}
\end{code}

    Now we consider the second periodic points via the second dynatomic polynomial
    \begin{equation*}
        \Phi^{\ast}_2(f_{a,b}) = (bz^2 + z^2 + a + 1)(z^4 + az^2 + bz^2 + 1).
    \end{equation*}
    Each component gives the rational surface parameterized by the given parameterization obtained by setting $z=t$ and solving for $b$ in terms of $a$. The points of period two are obtained by factoring the dynatomic polynomial after substituting in the parameterization of $a$ and $b$. Their intersection contains two curves: $a+b=-2$ and $ab=1$, but the second curve is the degenerate pairs $(a,b)$.

\begin{code}
\begin{lstlisting}
r.<a,b>=QQ[]
P.<x,y>=ProjectiveSpace(r,1)
f=DynamicalSystem([x^3 + a*x*y^2, b*x^2*y + y^3])
phi=f.dehomogenize(1).dynatomic_polynomial(2)
s.<a,b,x>=QQ[]
phis=s(phi)
phis.factor()

R.<a,t>=QQ[]
P.<x,y>=ProjectiveSpace(R,1)
b=-1+(-1-a)/t^2
f=DynamicalSystem([x^3 + a*x*y^2, b*x^2*y + y^3])
f.resultant().factor()

R.<a,t>=QQ[]
P.<x,y>=ProjectiveSpace(R,1)
b=-1/t^2 - a - t^2
f=DynamicalSystem([x^3 + a*x*y^2, b*x^2*y + y^3])
f.resultant().factor()


# 2-cycle points
R.<a,t>=QQ[]
P.<x,y>=ProjectiveSpace(R,1)
b=-1/t^2 - a - t^2
f=DynamicalSystem([x^3 + a*x*y^2, b*x^2*y + y^3])
phi=f.dehomogenize(1).dynatomic_polynomial(2)
s.<a,t,x>=QQ[]
phis=(-t^8 - a*t^6 + t^6 - t^4)*x^6 + (t^10 + a*t^8 - t^8 + a*t^6 + 3*t^6 + a*t^4 - t^4 + t^2)*x^4 + (-a*t^8 - 2*t^8 - a*t^6 + t^6 - a*t^4 - 2*t^4)*x^2 + a*t^6 + t^6
phis.factor()
f(P(t))

#intersection
A.<a,b,x>=AffineSpace(QQ,3)
X=A.subscheme([(b*x^2 + x^2 + a + 1), (x^4 + a*x^2 + b*x^2 + 1)])
X.irreducible_components()

r.<a>=QQ[]
P.<x,y>=ProjectiveSpace(r,1)
f=DynamicalSystem([x^3 + a*x*y^2, (-2-a)*x^2*y + y^3])
phi=f.dehomogenize(1).dynatomic_polynomial(2)
s.<a,x>=QQ[]
phis=s(phi)
phis.factor()

\end{lstlisting}
\end{code}
\end{proof}

        Periodic points with higher periods and strictly preperiodic points are difficult to study for this family mainly because computational tools for rational points on surfaces is much less well developed than for curves. Consequently, we content ourselves with a census of $\Q$-rational preperiodic structures for parameters $a$ and $b$ in $\Q$ with small height. By no means do we think this census is exhaustive; rather, it gives a sense of the diversity of possibilities when there are only $C_2$ symmetries.

\begin{center}
\middlexcolumn
\offinterlineskip
\begin{tabularx}{0.98\textwidth}{%
  | *{4}{>{\centering\arraybackslash}X|}
}
\hline
\heading{(0,1)} &  \heading{(0,-1)} &  \heading{(0,0)} & \heading{(0,-2)}\\
\pre{\xygraph{
!{<0cm,0cm>;<1cm,0cm>:<0cm,1cm>::}
!{(0,0) }*+{\bullet}="a"
!{(1,0) }*+{\bullet}="b"
"a":@(rd,ru)"a"
"b":@(rd,ru)"b"
}}&
\pre{\xygraph{
!{<0cm,0cm>;<1cm,0cm>:<0cm,1cm>::}
!{(0,0) }*+{\bullet}="a"
!{(0,1) }*+{\bullet}="b"
!{(1,0.5) }*+{\bullet}="c"
!{(2,0.5) }*+{\bullet}="d"
"a":"c"
"b":"c"
"c":@(rd,ru)"c"
"d":@(rd,ru)"d"
}}&
\pre{\xygraph{
!{<0cm,0cm>;<1cm,0cm>:<0cm,1cm>::}
!{(0,0) }*+{\bullet}="a"
!{(1,0) }*+{\bullet}="b"
!{(0,1) }*+{\bullet}="c"
!{(1,1) }*+{\bullet}="d"
"a":@(rd,ru)"a"
"b":@(rd,ru)"b"
"c":@(rd,ru)"c"
"d":@(rd,ru)"d"
}}&
\pre{\xygraph{
!{<0cm,0cm>;<1cm,0cm>:<0cm,1cm>::}
!{(0,0) }*+{\bullet}="a"
!{(1.5,0) }*+{\bullet}="b"
!{(0,1) }*+{\bullet}="c"
!{(1,1) }*+{\bullet}="d"
"a":@/^1pc/"b"
"b":@/^1pc/"a"
"c":@(rd,ru)"c"
"d":@(rd,ru)"d"
}}
\\
\hline
\end{tabularx}

\begin{tabularx}{0.98\textwidth}{%
  | *{3}{>{\centering\arraybackslash}X|}
}
\hline
\heading{(0,-3)} &  \heading{(0,-5/4)} &  \heading{(0,-9/16)} \\
\pre{\xygraph{
!{<0cm,0cm>;<1cm,0cm>:<0cm,1cm>::}
!{(0,0) }*+{\bullet}="a"
!{(1,0) }*+{\bullet}="b"
!{(2,0) }*+{\bullet}="c"
!{(3,0) }*+{\bullet}="d"
!{(0,1) }*+{\bullet}="e"
!{(1,1) }*+{\bullet}="f"
"a":"b"
"b":@(rd,ru)"b"
"c":"d"
"d":@(rd,ru)"d"
"e":@(rd,ru)"e"
"f":@(rd,ru)"f"
}}&
\pre{\xygraph{
!{<0cm,0cm>;<1cm,0cm>:<0cm,1cm>::}
!{(0,0.5) }*+{\bullet}="a"
!{(1.5,0.5) }*+{\bullet}="b"
!{(2,0) }*+{\bullet}="c"
!{(3,0) }*+{\bullet}="d"
!{(2,1) }*+{\bullet}="e"
!{(3,1) }*+{\bullet}="f"
"a":@/^1pc/"b"
"b":@/^1pc/"a"
"c":@(rd,ru)"c"
"d":@(rd,ru)"d"
"e":@(rd,ru)"e"
"f":@(rd,ru)"f"
}}&
\pre{\xygraph{
!{<0cm,0cm>;<1cm,0cm>:<0cm,1cm>::}
!{(0,0) }*+{\bullet}="a"
!{(0,1) }*+{\bullet}="b"
!{(1,0.5) }*+{\bullet}="c"
!{(2,0) }*+{\bullet}="d"
!{(3,0) }*+{\bullet}="e"
!{(2,1) }*+{\bullet}="f"
"a":"c"
"b":"c"
"c":@(rd,ru)"c"
"d":@(rd,ru)"d"
"e":@(rd,ru)"e"
"f":@(rd,ru)"f"
}}\\
\hline
\end{tabularx}

\begin{tabularx}{0.98\textwidth}{%
  | *{3}{>{\centering\arraybackslash}X|}
}
\hline
\heading{(1,-9)} & \heading{(-1,-1/4)} & \heading{(-1,-5/2)}\\
\pre{\xygraph{
!{<0cm,0cm>;<1cm,0cm>:<0cm,1cm>::}
!{(0,0) }*+{\bullet}="a"
!{(0,1) }*+{\bullet}="b"
!{(1,0.5) }*+{\bullet}="c"
!{(2,0.5) }*+{\bullet}="d"
!{(3.5,0.5) }*+{\bullet}="e"
!{(4,0.5) }*+{\bullet}="f"
"a":"c"
"b":"c"
"c":@(rd,ru)"c"
"d":@/^1pc/"e"
"e":@/^1pc/"d"
"f":@(rd,ru)"f"
}}&
\pre{\xygraph{
!{<0cm,0cm>;<1cm,0cm>:<0cm,1cm>::}
!{(0,0) }*+{\bullet}="a"
!{(0,1) }*+{\bullet}="b"
!{(1,0.5) }*+{\bullet}="c"
!{(2,0) }*+{\bullet}="d"
!{(2,1) }*+{\bullet}="e"
!{(3,0.5) }*+{\bullet}="f"
"a":"c"
"b":"c"
"c":@(rd,ru)"c"
"d":"f"
"e":"f"
"f":@(rd,ru)"f"
}}&
\pre{\xygraph{
!{<0cm,0cm>;<1cm,0cm>:<0cm,1cm>::}
!{(0,0) }*+{\bullet}="a"
!{(0,1) }*+{\bullet}="b"
!{(1,0) }*+{\bullet}="c"
!{(1,1) }*+{\bullet}="d"
!{(2,0.5) }*+{\bullet}="e"
!{(3,0.5) }*+{\bullet}="f"
"a":"c"
"b":"d"
"c":"e"
"d":"e"
"e":@(rd,ru)"e"
"f":@(rd,ru)"f"
}}\\
\hline
\end{tabularx}

\begin{tabularx}{0.98\textwidth}{%
  | *{3}{>{\centering\arraybackslash}X|}
}
\hline
\heading{(-1/2,-15/4)} & \heading{(2,-7/4)} & \heading{(-7/4,-11/8)}\\
\pre{\xygraph{
!{<0cm,0cm>;<1cm,0cm>:<0cm,1cm>::}
!{(0,0.5) }*+{\bullet}="a"
!{(1.5,0.5) }*+{\bullet}="b"
!{(2,0.5)}*+{\bullet}="c"
!{(3.5,0.5) }*+{\bullet}="d"
!{(4,0) }*+{\bullet}="e"
!{(4,1) }*+{\bullet}="f"
"a":@/^1pc/"b"
"b":@/^1pc/"a"
"c":@/^1pc/"d"
"d":@/^1pc/"c"
"e":@(rd,ru)"e"
"f":@(rd,ru)"f"
}}&
\pre{\xygraph{
!{<0cm,0cm>;<1cm,0cm>:<0cm,1cm>::}
!{(0,0.5) }*+{\bullet}="a"
!{(1,0.5) }*+{\bullet}="b"
!{(2.5,0.5) }*+{\bullet}="c"
!{(3.5,0.5) }*+{\bullet}="d"
!{(4,0) }*+{\bullet}="e"
!{(4,1) }*+{\bullet}="f"
"a":"b"
"b":@/^1pc/"c"
"c":@/^1pc/"b"
"d":"c"
"e":@(rd,ru)"e"
"f":@(rd,ru)"f"
}}&
\pre{\xygraph{
!{<0cm,0cm>;<1cm,0cm>:<0cm,1cm>::}
!{(0,0) }*+{\bullet}="a"
!{(1,0) }*+{\bullet}="b"
!{(0,1) }*+{\bullet}="c"
!{(1,1) }*+{\bullet}="d"
!{(2,0.5) }*+{\bullet}="e"
!{(3,0.5) }*+{\bullet}="f"
"a":"b"
"b":"d"
"c":"d"
"a":"c"
"e":@(rd,ru)"e"
"f":@(rd,ru)"f"
}}\\
\hline
\end{tabularx}

\begin{tabularx}{0.98\textwidth}{%
  | *{3}{>{\centering\arraybackslash}X|}
}
\hline
\heading{(0,-14/3)} & \heading{(-1,-13/4)} & \heading{(1/2,-7)}\\
\pre{\xygraph{
!{<0cm,0cm>;<1cm,0cm>:<0cm,1cm>::}
!{(1,0.5) }*+{\underset{0}{\bullet}}="a"
!{(2.5,0.5) }*+{\underset{\infty}{\bullet}}="b"
!{(3.5,0) }*+{\bullet}="c"
!{(3.5,1) }*+{\bullet}="d"
!{(0,0) }*+{\bullet}="e"
!{(0,1) }*+{\bullet}="f"
!{(4,0) }*+{\bullet}="g"
!{(4,1) }*+{\bullet}="h"
"e":"a"
"f":"a"
"a":@/^1pc/"b"
"b":@/^1pc/"a"
"c":"b"
"d":"b"
"g":@(rd,ru)"g"
"h":@(rd,ru)"h"
}}&
\pre{\xygraph{
!{<0cm,0cm>;<1cm,0cm>:<0cm,1cm>::}
!{(0,0) }*+{\bullet}="a"
!{(0,1) }*+{\bullet}="b"
!{(1,0.5) }*+{\bullet}="c"
!{(2,0) }*+{\bullet}="d"
!{(3.5,0) }*+{\bullet}="e"
!{(2,1) }*+{\bullet}="f"
!{(3.5,1) }*+{\bullet}="g"
!{(4,0.5) }*+{\bullet}="h"
"a":"c"
"b":"c"
"c":@(rd,ru)"c"
"d":@/^1pc/"e"
"e":@/^1pc/"d"
"f":@/^1pc/"g"
"g":@/^1pc/"f"
"h":@(rd,ru)"h"
}}&
\pre{\xygraph{
!{<0cm,0cm>;<1cm,0cm>:<0cm,1cm>::}
!{(0,0) }*+{\bullet}="a"
!{(1,0) }*+{\bullet}="b"
!{(2,0.5) }*+{\bullet}="c"
!{(3.5,0.5) }*+{\bullet}="d"
!{(0,1) }*+{\bullet}="e"
!{(1,1) }*+{\bullet}="f"
!{(4,0) }*+{\bullet}="g"
!{(4,1) }*+{\bullet}="h"
"a":"b"
"b":@(rd,ru)"b"
"c":@/^1pc/"d"
"d":@/^1pc/"c"
"e":"f"
"f":@(rd,ru)"f"
"g":@(rd,ru)"g"
"h":@(rd,ru)"h"
}}\\
\hline
\end{tabularx}

\begin{tabularx}{0.98\textwidth}{%
  | *{3}{>{\centering\arraybackslash}X|}
}
\hline
\heading{(-3,-5/4)}& \heading{(-2/3,-11/4)}& \heading{(-3/2,-1/9)}\\
\pre{\xygraph{
!{<0cm,0cm>;<1cm,0cm>:<0cm,1cm>::}
!{(0,0) }*+{\bullet}="a"
!{(1.5,0) }*+{\bullet}="b"
!{(0,1) }*+{\bullet}="c"
!{(1.5,1) }*+{\bullet}="d"
!{(2,0) }*+{\bullet}="e"
!{(3,0) }*+{\bullet}="f"
!{(2,1) }*+{\bullet}="g"
!{(3,1) }*+{\bullet}="h"
"a":@/^1pc/"b"
"b":@/^1pc/"a"
"c":@/^1pc/"d"
"d":@/^1pc/"c"
"e":@(rd,ru)"e"
"f":@(rd,ru)"f"
"g":@(rd,ru)"g"
"h":@(rd,ru)"h"
}} &
\pre{\xygraph{
!{<0cm,0cm>;<1cm,0cm>:<0cm,1cm>::}
!{(0,0) }*+{\bullet}="a"
!{(1,0.5) }*+{\bullet}="b"
!{(0,1) }*+{\bullet}="c"
!{(2,0) }*+{\bullet}="d"
!{(3,0.5) }*+{\bullet}="e"
!{(2,1) }*+{\bullet}="f"
!{(4,0) }*+{\bullet}="g"
!{(4,1) }*+{\bullet}="h"
"a":"b"
"b":@(rd,ru)"b"
"c":"b"
"d":"e"
"e":@(rd,ru)"e"
"f":"e"
"g":@(rd,ru)"g"
"h":@(rd,ru)"h"
}}&
\pre{\xygraph{
!{<0cm,0cm>;<1cm,0cm>:<0cm,1cm>::}
!{(0,0) }*+{\bullet}="a"
!{(1,0.5) }*+{\bullet}="b"
!{(0,1) }*+{\bullet}="c"
!{(2.25,1) }*+{\bullet}="d"
!{(3.75,1) }*+{\bullet}="e"
!{(2,0) }*+{\bullet}="f"
!{(3,0) }*+{\bullet}="g"
!{(4,0) }*+{\bullet}="h"
"a":"b"
"b":@(rd,ru)"b"
"c":"b"
"d":@/^1pc/"e"
"e":@/^1pc/"d"
"f":@(rd,ru)"f"
"g":@(rd,ru)"g"
"h":@(rd,ru)"h"
}}\\
\hline
\end{tabularx}

\begin{tabularx}{0.98\textwidth}{%
  | *{3}{>{\centering\arraybackslash}X|}
}
\hline
\heading{(-1/4,-11/5)} & \heading{(11/4,-16)}& \heading{(-1,-25/9)}\\
\pre{\xygraph{
!{<0cm,0cm>;<1cm,0cm>:<0cm,1cm>::}
!{(0,0) }*+{\bullet}="a"
!{(1,0.5) }*+{\bullet}="b"
!{(0,1) }*+{\bullet}="c"
!{(2,0) }*+{\bullet}="d"
!{(3,0) }*+{\bullet}="e"
!{(2,1) }*+{\bullet}="f"
!{(3,1) }*+{\bullet}="g"
!{(4,0.5) }*+{\bullet}="h"
"a":"b"
"b":@(rd,ru)"b"
"c":"b"
"d":"e"
"e":@(rd,ru)"e"
"f":"g"
"g":@(rd,ru)"g"
"h":@(rd,ru)"h"
}}&
\pre{\xygraph{
!{<0cm,0cm>;<1cm,0cm>:<0cm,1cm>::}
!{(0,0) }*+{\bullet}="a"
!{(0,1) }*+{\bullet}="b"
!{(1,0) }*+{\bullet}="c"
!{(1,1) }*+{\bullet}="d"
!{(2,0.5) }*+{\bullet}="e"
!{(3,1) }*+{\bullet}="f"
!{(4.5,1) }*+{\bullet}="g"
!{(3.5,0) }*+{\bullet}="h"
"a":"c"
"b":"d"
"c":"e"
"d":"e"
"e":@(rd,ru)"e"
"f":@/^1pc/"g"
"g":@/^1pc/"f"
"h":@(rd,ru)"h"
}}&
\pre{\xygraph{
!{<0cm,0cm>;<1cm,0cm>:<0cm,1cm>::}
!{(0,0) }*+{\bullet}="a"
!{(0,1) }*+{\bullet}="b"
!{(1,0) }*+{\bullet}="c"
!{(1,1) }*+{\bullet}="d"
!{(2,0.5) }*+{\bullet}="e"
!{(3,0) }*+{\bullet}="g"
!{(3,1) }*+{\bullet}="h"
!{(4,0.5) }*+{\bullet}="f"
"a":"c"
"b":"d"
"c":"e"
"d":"e"
"e":@(rd,ru)"e"
"g":"f"
"h":"f"
"f":@(rd,ru)"f"
}}\\
\hline
\end{tabularx}

\begin{tabularx}{0.98\textwidth}{%
  | *{3}{>{\centering\arraybackslash}X|}
}
\hline
\heading{(-16/9,-21/4)} & \heading{(-17/5,-17/20)}& \heading{(-9/25,-25/16)}\\
\pre{\xygraph{
!{<0cm,0cm>;<1cm,0cm>:<0cm,1cm>::}
!{(2,0.5) }*+{\bullet}="a"
!{(1,0) }*+{\bullet}="c"
!{(1,1) }*+{\bullet}="d"
!{(0,0) }*+{\bullet}="e"
!{(0,1) }*+{\bullet}="g"
!{(3,0) }*+{\bullet}="h"
!{(4,0) }*+{\bullet}="f"
!{(3,1) }*+{\bullet}="i"
"a":@(rd,ru)"a"
"c":"a"
"d":"a"
"e":"c"
"g":"d"
"h":@(rd,ru)"h"
"f":@(rd,ru)"f"
"i":@(rd,ru)"i"
}}&
\pre{\xygraph{
!{<0cm,0cm>;<1cm,0cm>:<0cm,1cm>::}
!{(0,0) }*+{\bullet}="a"
!{(1.5,0) }*+{\bullet}="b"
!{(0,1) }*+{\bullet}="c"
!{(1.5,1) }*+{\bullet}="d"
!{(2,0) }*+{\bullet}="e"
!{(3.5,0) }*+{\bullet}="f"
!{(2.25,1) }*+{\bullet}="g"
!{(3.25,1) }*+{\bullet}="h"
"a":@/^1pc/"b"
"b":@/^1pc/"a"
"c":@/^1pc/"d"
"d":@/^1pc/"c"
"e":@/^1pc/"f"
"f":@/^1pc/"e"
"g":@(rd,ru)"g"
"h":@(rd,ru)"h"
}}&
\pre{\xygraph{
!{<0cm,0cm>;<1cm,0cm>:<0cm,1cm>::}
!{(0,0) }*+{\bullet}="a"
!{(0,1) }*+{\bullet}="b"
!{(2,0.5) }*+{\bullet}="e"
!{(1,0) }*+{\bullet}="f"
!{(1,1) }*+{\bullet}="g"
!{(4,0.5) }*+{\bullet}="h"
!{(3,1) }*+{\bullet}="j"
!{(3,0) }*+{\bullet}="k"
"a":@/^1pc/"b"
"b":@/^1pc/"a"
"e":@(rd,ru)"e"
"f":"e"
"g":"e"
"h":@(rd,ru)"h"
"j":"h"
"k":"h"
}}\\
\hline
\end{tabularx}

\begin{tabularx}{0.98\textwidth}{%
  | *{2}{>{\centering\arraybackslash}X|}
}
\hline
\heading{(-2,-13/12)} & \heading{(-3,-9/16)}\\
\pre{\xygraph{
!{<0cm,0cm>;<1cm,0cm>:<0cm,1cm>::}
!{(0,0) }*+{\bullet}="a"
!{(1,0) }*+{\bullet}="b"
!{(2,0.5) }*+{\bullet}="c"
!{(1,1) }*+{\bullet}="d"
!{(3,0) }*+{\bullet}="e"
!{(4,0) }*+{\bullet}="f"
!{(5,0.5) }*+{\bullet}="g"
!{(4,1) }*+{\bullet}="h"
!{(6,0) }*+{\bullet}="i"
!{(6,1) }*+{\bullet}="j"
"a":"b"
"b":"c"
"c":@(rd,ru)"c"
"d":"c"
"e":"f"
"f":"g"
"g":@(rd,ru)"g"
"h":"g"
"i":@(rd,ru)"i"
"j":@(rd,ru)"j"
}}&
\pre{\xygraph{
!{<0cm,0cm>;<1cm,0cm>:<0cm,1cm>::}
!{(0,0) }*+{\bullet}="a"
!{(1,0.5) }*+{\bullet}="b"
!{(0,1) }*+{\bullet}="c"
!{(2,0) }*+{\bullet}="d"
!{(3,0.5) }*+{\bullet}="e"
!{(2,1) }*+{\bullet}="f"
!{(4,0) }*+{\bullet}="g"
!{(5,0.5) }*+{\bullet}="h"
!{(4,1) }*+{\bullet}="i"
!{(6,0.5) }*+{\bullet}="j"
"a":"b"
"b":@(rd,ru)"b"
"c":"b"
"d":"e"
"e":@(rd,ru)"e"
"f":"e"
"g":"h"
"h":@(rd,ru)"h"
"i":"h"
"j":@(rd,ru)"j"
}}\\
\hline
\end{tabularx}

\begin{tabularx}{0.98\textwidth}{%
  | *{2}{>{\centering\arraybackslash}X|}
}
\hline
\heading{(-5/2,-7/4)} & \heading{(4/5,-9/4)}\\
\pre{\xygraph{
!{<0cm,0cm>;<1cm,0cm>:<0cm,1cm>::}
!{(0,0) }*+{\bullet}="a"
!{(0,1) }*+{\bullet}="b"
!{(1,0.5) }*+{\bullet}="c"
!{(2.5,0.5) }*+{\bullet}="d"
!{(3,0) }*+{\bullet}="e"
!{(3,1) }*+{\bullet}="f"
!{(4,0.5) }*+{\bullet}="g"
!{(5.5,0.5) }*+{\bullet}="h"
!{(6,0) }*+{\bullet}="i"
!{(6,1) }*+{\bullet}="j"
"a":"c"
"b":"c"
"c":@/^1pc/"d"
"d":@/^1pc/"c"
"e":"g"
"f":"g"
"g":@/^1pc/"h"
"h":@/^1pc/"g"
"i":@(rd,ru)"i"
"j":@(rd,ru)"j"
}}&
\pre{\xygraph{
!{<0cm,0cm>;<1cm,0cm>:<0cm,1cm>::}
!{(0,0) }*+{\bullet}="a"
!{(0,1) }*+{\bullet}="b"
!{(1,0.5) }*+{\bullet}="c"
!{(2.5,0.5) }*+{\bullet}="d"
!{(3.5,0) }*+{\bullet}="e"
!{(3.5,1) }*+{\bullet}="f"
!{(4,0) }*+{\bullet}="g"
!{(4,1) }*+{\bullet}="h"
!{(5,0.5) }*+{\bullet}="i"
!{(6,0.5) }*+{\bullet}="j"
"a":"c"
"b":"c"
"c":@/^1pc/"d"
"d":@/^1pc/"c"
"e":"d"
"f":"d"
"g":"i"
"h":"i"
"i":@(rd,ru)"i"
"j":@(rd,ru)"j"
}}\\
\hline
\end{tabularx}

\begin{tabularx}{0.98\textwidth}{%
  | *{2}{>{\centering\arraybackslash}X|}
}
\hline
\heading{(-5/4,-16/9)} & \heading{(7/8,-17/2)}\\
\pre{\xygraph{
!{<0cm,0cm>;<1cm,0cm>:<0cm,1cm>::}
!{(0,1.5) }*+{\bullet}="a"
!{(1,1.5) }*+{\bullet}="b"
!{(1,2.5) }*+{\bullet}="c"
!{(2,2) }*+{\bullet}="d"
!{(3,1.5) }*+{\bullet}="e"
!{(4,1.5) }*+{\bullet}="f"
!{(4,2.5) }*+{\bullet}="g"
!{(5,2) }*+{\bullet}="h"
!{(1,0) }*+{\bullet}="i"
!{(1,1) }*+{\bullet}="j"
!{(2,0.5) }*+{\bullet}="k"
!{(3,0.5) }*+{\bullet}="l"
"a":"b"
"b":"d"
"c":"d"
"d":@(rd,ru)"d"
"e":"f"
"f":"h"
"g":"h"
"h":@(rd,ru)"h"
"i":"k"
"j":"k"
"k":@(rd,ru)"k"
"l":@(rd,ru)"l"
}}&
\pre{\xygraph{
!{<0cm,0cm>;<1cm,0cm>:<0cm,1cm>::}
!{(2,0.5) }*+{\bullet}="a"
!{(3.5,0.5) }*+{\bullet}="b"
!{(1,0) }*+{\bullet}="c"
!{(1,1) }*+{\bullet}="d"
!{(0,0) }*+{\bullet}="e"
!{(4.5,0) }*+{\bullet}="g"
!{(4.5,1) }*+{\bullet}="h"
!{(5.5,0) }*+{\bullet}="f"
!{(6,0) }*+{\bullet}="i"
!{(6,1) }*+{\bullet}="j"
"a":@/^1pc/"b"
"b":@/^1pc/"a"
"c":"a"
"d":"a"
"e":"c"
"g":"b"
"h":"b"
"f":"g"
"i":@(rd,ru)"i"
"j":@(rd,ru)"j"
}}\\
\hline
\end{tabularx}

\begin{tabularx}{0.98\textwidth}{%
  | *{1}{>{\centering\arraybackslash}X|}
}
\hline
\heading{(-19/3,-25/9)}\\
\pre{\xygraph{
!{<0cm,0cm>;<1cm,0cm>:<0cm,1cm>::}
!{(1,0.5) }*+{\bullet}="a"
!{(2.5,0.5) }*+{\bullet}="b"
!{(0,0) }*+{\bullet}="c"
!{(0,1) }*+{\bullet}="d"
!{(4,0.5) }*+{\bullet}="e"
!{(5.5,0.5) }*+{\bullet}="f"
!{(3,0) }*+{\bullet}="g"
!{(3,1) }*+{\bullet}="h"
!{(7,0.5) }*+{\bullet}="i"
!{(6,1) }*+{\bullet}="j"
!{(6,0) }*+{\bullet}="k"
!{(8,0.5) }*+{\bullet}="l"
"a":@/^1pc/"b"
"b":@/^1pc/"a"
"c":"a"
"d":"a"
"e":@/^1pc/"f"
"f":@/^1pc/"e"
"g":"e"
"h":"e"
"i":@(rd,ru)"i"
"j":"i"
"k":"i"
"l":@(rd,ru)"l"
}}\\
\hline
\end{tabularx}
\end{center}

\begin{code}
\begin{lstlisting}[language=python]
gr=[[0,1],[0,-1],[0,0],[0,-2],[0,-3],[0,-5/4],[0,-9/16],[1,-9], [-1,-1/4],[-1,-5/2],[-1/2,-15/4],[2,-7/4],[-7/4,-11/8],[0,-14/3], [-1,-13/4],[1/2,-7],[-3,-5/4],[-2/3,-11/4],[-3/2,-1/9],[-1/4,-11/5], [11/4,-16],[-1,-25/9],[-16/9,-21/4],[-17/5,-17/20],[-9/25,-25/16], [-2,-13/12],[-3,-9/16],[-5/2,-7/4],[4/5,-9/4],[-5/4,-16/9],[7/8,-17/2], [-19/3,-25/9]]

set_verbose(None)
P.<x,y>=ProjectiveSpace(QQ,1)
L=[]
for A,B in gr:
    f=DynamicalSystem([x^3 + A*x*y^2, B*x^2*y + y^3])
    if f.is_morphism():
        G = f.rational_preperiodic_graph()
        found = False
        for a,b,g in L:
            if g.is_isomorphic(G):
                found=True
                break
        if not found:
            print(A,B,G, len(L)+1)
            L.append((A,B,G))
len(L)
\end{lstlisting}
\end{code}

\subsection{$C_2$ Second Component}\label{sect_A3_C2_1_preperiodic}
We move to the {\it two}-parameter family $g_{a,b}(z) = \frac{az^2 + 1}{z^3 + bz}$.

We start by examining the fixed points and 2-periodic points.
\begin{prop}\label{prop_A3_C2_1_periodic}
    For $g_{a,b}(z) = \frac{az^2 + 1}{z^3 + bz}$, we have the following periodic points.
    \begin{enumerate}
        \item The pairs $(a,b)$ for which $g_{a,b}$ has a $\Q$-rational fixed point are parameterized by
            \begin{align*}
                (a,b) = \left( a, -t^2 + a + \frac{1}{t^2}\right),
            \end{align*}
            for $t \neq 0$, $t^2 \neq a$, and $t^2 \neq -\frac{1}{a}$.
            There are exactly two $\Q$-rational fixed points given by
            \begin{equation*}
                z = \pm t
            \end{equation*}

        \item For every (non-degenerate) pair $(a,b)$, $g_{a,b}$ has the 2-cycle $0$--$\infty$. The pairs $(a,b)$ for which there are additional 2-cycles with $\Q$-rational points are parameterized by \begin{align*}
                (a,b) = \left( a, -t^2 - a - \frac{1}{t^2}\right),
            \end{align*}
            for $t \neq 0$, $t^2 \neq -a$, and $t^2 \neq -\frac{1}{a}$.
            There are exactly two $2$-cycles with $\Q$-rational points given by
            \begin{equation*}
                \{\pm t\} \quad \text{and} \quad \left\{\pm \frac{1}{t}\right\}.
            \end{equation*}
    \end{enumerate}
\end{prop}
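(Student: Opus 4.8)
The plan is to run the same dynatomic-polynomial computation used for the first $C_2$ component in Proposition~\ref{prop_A3_C2_2_periodic}. Write $g_{a,b}(z)=\frac{az^2+1}{z^3+bz}$, so in homogeneous coordinates $(x:y)\mapsto(ax^2y+y^3:\,x^3+bxy^2)$; in particular $(0:1)\mapsto(1:0)\mapsto(0:1)$, which is why the orbit $0$--$\infty$ is a $2$-cycle (not a fixed point) for every nondegenerate pair, accounting for the first sentence of part (2). For part (1), I would compute the first dynatomic polynomial $\Phi^{\ast}_1(g_{a,b})$, i.e.\ the numerator of $g_{a,b}(z)-z$, which is $\Phi^{\ast}_1(g_{a,b})=-z^4+(a-b)z^2+1$: even in $z$, of degree $4$ (so there are four finite fixed points and none at $\infty$), and linear in $b$ with coefficient $-z^2$. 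Since $\Phi^{\ast}_1(0)=1\neq0$, any $\Q$-rational fixed point $t$ has $t\neq0$ and $-t^4+(a-b)t^2+1=0$, which solves uniquely for $b=a-t^2+\tfrac{1}{t^2}$; conversely this value of $b$ makes $t$ a fixed point. The conditions $t\neq0$, $t^2\neq a$, $t^2\neq-\tfrac1a$ are precisely those keeping $g_{a,b}$ of degree $3$ (the last two both reduce to the degeneracy locus $ab=1$). Substituting $b=a-t^2+\tfrac1{t^2}$ back into $\Phi^{\ast}_1$ gives the factorization $-(z^2-t^2)(z^2+\tfrac{1}{t^2})$, and since $z^2+\tfrac1{t^2}$ has no $\Q$-rational root, the only $\Q$-rational fixed points are $z=\pm t$, giving exactly two.

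For part (2), I would compute the second dynatomic polynomial $\Phi^{\ast}_2(g_{a,b})$, of degree $6$ as a divisor on $\bbP^1$; here $0$ and $\infty$ are two of the six formal period-$2$ points, so as a polynomial in $z$ it has the form $z\cdot q(z)$ with $q$ of degree $4$, the factor $z$ recording the ever-present $0$--$\infty$ cycle. The remaining factor $q$ (possibly reducible, but in that case its factors yield the same surface after the symmetry below) is again linear in $b$; solving $q(t)=0$ for $b$ gives $b=-a-t^2-\tfrac{1}{t^2}$, and conversely any $\Q$-rational period-$2$ point $t\notin\{0,\infty\}$ forces $(a,b)$ onto this surface. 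The count then uses two symmetries: $g_{a,b}(-z)=-g_{a,b}(z)$ (the $C_2$ automorphism $z\mapsto-z$), which forces period-$2$ points to come in $\pm$ pairs, and the invariance of the expression $-a-t^2-\tfrac1{t^2}$ under $t\leftrightarrow\tfrac1t$, which shows $1/t$ is a second period-$2$ point whenever $t$ is. Substituting $b=-a-t^2-\tfrac1{t^2}$ back into $\Phi^{\ast}_2$ and factoring then identifies the four non-$\{0,\infty\}$ period-$2$ points as exactly $\pm t,\pm\tfrac1t$, arranged into the two $\Q$-rational cycles $\{\pm t\}$ and $\{\pm\tfrac1t\}$.

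The work here is essentially mechanical: there is no higher-genus curve or hard Diophantine input, only dynatomic polynomial computations, factorizations over $\Q(a,b)$, and substitutions of the sort already carried out for the $f_{a,b}$ family. The main thing that requires care (and the only real obstacle) is the bookkeeping in part (2): correctly separating $\Phi^{\ast}_2(g_{a,b})$ into the $0$--$\infty$ factor, the genuinely new component, and any component forced by the degeneracy locus $ab=1$, and then verifying after back-substitution that the leftover factors split completely over $\Q$ into the claimed points, so that the ``exactly two'' statement is sharp. All of this is a straightforward Gröbner basis / factorization check, and the parameterizing surfaces are visibly rational from the explicit formulas for $b$.
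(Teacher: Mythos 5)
Your proposal matches the paper's argument step for step: compute $\Phi^{\ast}_1$, observe it is linear in $b$ with the leading $z^2$ coefficient, solve for $b$ at a fixed point $z=t$, back-substitute and factor to get $\pm t$ together with the nonrational factor $z^2t^2+1$; then compute $\Phi^{\ast}_2 = z(ab-1)(z^4+(a+b)z^2+1)$, pull off the $z$ factor (the $0$--$\infty$ cycle) and the constant-in-$z$ degeneracy factor $(ab-1)$, solve the quartic for $b$ at a period-$2$ point, and back-substitute to find the points $\pm t, \pm\tfrac{1}{t}$ forming the two cycles $\{\pm t\}$ and $\{\pm\tfrac{1}{t}\}$. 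The paper's proof is exactly this computation; your additional remarks (the $\pm$ symmetry from $z\mapsto -z$, the $t\leftrightarrow\tfrac{1}{t}$ invariance, the observation that $ab=1$ pulls back to $t^2\in\{a,-\tfrac1a\}$) are correct and give helpful context but are not steps the paper spells out. Your one hedge --- that $q$ might be reducible and require care --- does not materialize: the $ab-1$ factor is constant in $z$, and $z^4+(a+b)z^2+1$ is irreducible over $\Q(a,b)$, so there is only one genuine surface to parameterize.
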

\begin{proof}
    We first look at the fixed points. The fixed points are given by the equation
    \begin{equation*}
        az^2 + 1 = z^4 + bz^2.
    \end{equation*}
    Let $z=t$ be a fixed point, we can solve for $b$ in terms of $a$ and $t$ as
    \begin{equation*}
        b = -t^2 + a + \frac{1}{t^2}.
    \end{equation*}
    This gives the stated parameterization.
    The omitted $t$ values are those where the map is degenerate. With this parameterization, the first dynatomic polynomial factors as
    \begin{equation*}
        \Phi^{\ast}_1(g_{a,b}) = (z-t)(z+t)(z^2t^2 + 1)
    \end{equation*}
    so there are two $\Q$-rational fixed points $\pm t$.

\begin{code}
\begin{lstlisting}
r.<a,b>=QQ[]
P.<x,y>=ProjectiveSpace(r,1)
f=DynamicalSystem([a*x^2*y + y^3, x^3 + b*x*y^2])
f.dynatomic_polynomial(1)

r.<a,t>=QQ[]
P.<x,y>=ProjectiveSpace(r,1)
b = -t^2 + a + 1/t^2
f=DynamicalSystem([a*x^2*y + y^3, x^3 + b*x*y^2])
f.resultant().factor()

r.<z,t>=QQ[]
F=r(-t^2*z^4 + ((t^4 - 1))*z^2 + t^2)
F.factor()
\end{lstlisting}
\end{code}

    Now we consider the second periodic points via the second dynatomic polynomial
    \begin{equation*}
        \Phi^{\ast}_2(g_{a,b}) = z(ab - 1)(z^4 + (a+b)z^2 + 1).
    \end{equation*}
    We see the periodic $2$-cycle $0$--$\infty$ occurs for every choice of $(a,b)$. Taking the last factor and setting $z=t$ as a periodic point, we solve for $b$ in terms of $t$ to get
    \begin{equation*}
        b = -t^2 - a - \frac{1}{t^2}.
    \end{equation*}
    Then additional $2$-cycles with $\Q$-rationl points are given by the stated parameterization. The omitted $t$ values are those where the map is degenerate. With this parameterization the second dynatomic polynomial factors as
    \begin{equation*}
        z(t - z)(t + z)(-tz + 1)(tz + 1)(t^2 + a)(at^2 + 1).
    \end{equation*}
    So the additional $\Q$-rational points of period $2$ are $\pm t$ and $\pm \frac{1}{t}$.

\begin{code}
\begin{lstlisting}
r.<a,b>=QQ[]
P.<x,y>=ProjectiveSpace(r,1)
f=DynamicalSystem([a*x^2*y + y^3, x^3 + b*x*y^2])
f.dynatomic_polynomial(2)
r.<a,b,x,y>=QQ[]
F=(a*b - 1)*x^5*y + (a^2*b + a*b^2 - a - b)*x^3*y^3 + (a*b - 1)*x*y^5
F.factor()

r.<a,t>=QQ[]
P.<x,y>=ProjectiveSpace(r,1)
b=-t^2-a-1/t^2
f=DynamicalSystem([a*x^2*y + y^3, x^3 + b*x*y^2])
f.resultant().factor()

f.dynatomic_polynomial(2)
r.<a,t,x,y>=QQ[]
F=(t^2*(-a*t^4 - a^2*t^2 - t^2 - a))*x^5*y + ((a*t^8 + a^2*t^6 + t^6 + 2*a*t^4 + a^2*t^2 + t^2 + a))*x^3*y^3 + (t^2*(-a*t^4 - a^2*t^2 - t^2 - a))*x*y^5
F.factor()
\end{lstlisting}
\end{code}
\end{proof}

Similar to the first two-parameter family, periodic points with higher periods and (strictly) preperiodic points are difficult to study for this family. Again, we content ourselves with a census of $\Q$-rational preperiodic structures for parameters $a$ and $b$ with small height.

\begin{center}
Table: Preperiodic Graphs for Pairs $(a,b)$

\smallskip

\middlexcolumn
\offinterlineskip
\begin{tabularx}{0.98\textwidth}{%
  | *{3}{>{\centering\arraybackslash}X|}
}
\hline
\heading{$(0,0)$} &  \heading{$(0,-1)$} &  \heading{$(0,-2)$} \\
\pre{\xygraph{
!{<0cm,0cm>;<1cm,0cm>:<0cm,1cm>::}
!{(0,0) }*+{\underset{0}{\bullet}}="a"
!{(1.5,0) }*+{\underset{\infty}{\bullet}}="b"
!{(2.5,0) }*+{\bullet}="c"
!{(3.5,0) }*+{\bullet}="d"
"a":@/^1pc/"b"
"b":@/^1pc/"a"
"c":@(rd,ru)"c"
"d":@(rd,ru)"d"
}}&
\pre{\xygraph{
!{<0cm,0cm>;<1cm,0cm>:<0cm,1cm>::}
!{(0,0.5) }*+{\underset{0}{\bullet}}="a"
!{(1.5,0.5) }*+{\underset{\infty}{\bullet}}="b"
!{(2.5,1) }*+{\bullet}="c"
!{(2.5,0) }*+{\bullet}="d"
"a":@/^1pc/"b"
"b":@/^1pc/"a"
"c":"b"
"d":"b"
}}&
\pre{\xygraph{
!{<0cm,0cm>;<1cm,0cm>:<0cm,1cm>::}
!{(0,0) }*+{\underset{0}{\bullet}}="a"
!{(1.5,0) }*+{\underset{\infty}{\bullet}}="b"
!{(2.5,0) }*+{\bullet}="c"
!{(4.0,0) }*+{\bullet}="d"
"a":@/^1pc/"b"
"b":@/^1pc/"a"
"c":@/^1pc/"d"
"d":@/^1pc/"c"
}}\\\hline
\end{tabularx}

\begin{tabularx}{0.98\textwidth}{%
  | *{3}{>{\centering\arraybackslash}X|}
}
\hline
\heading{$(-1,-1/4)$} &  \heading{$(-1,-5/2)$} & \heading{$(-1,11/4)$}\\
\pre{\xygraph{
!{<0cm,0cm>;<1cm,0cm>:<0cm,1cm>::}
!{(1,0.5) }*+{\underset{0}{\bullet}}="a"
!{(2.5,0.5) }*+{\underset{\infty}{\bullet}}="b"
!{(3.5,0) }*+{\bullet}="c"
!{(3.5,1) }*+{\bullet}="d"
!{(0,0) }*+{\bullet}="e"
!{(0,1) }*+{\bullet}="f"
"e":"a"
"f":"a"
"a":@/^1pc/"b"
"b":@/^1pc/"a"
"c":"b"
"d":"b"
}}&
\pre{\xygraph{
!{<0cm,0cm>;<1cm,0cm>:<0cm,1cm>::}
!{(2,0.5) }*+{\underset{0}{\bullet}}="a"
!{(3.5,0.5) }*+{\underset{\infty}{\bullet}}="b"
!{(1,0) }*+{\bullet}="c"
!{(1,1) }*+{\bullet}="d"
!{(0,0) }*+{\bullet}="e"
!{(0,1) }*+{\bullet}="f"
"e":"c"
"f":"d"
"a":@/^1pc/"b"
"b":@/^1pc/"a"
"c":"a"
"d":"a"
}}
&
\pre{\xygraph{
!{<0cm,0cm>;<1cm,0cm>:<0cm,1cm>::}
!{(1,0.5) }*+{\underset{0}{\bullet}}="a"
!{(2.5,0.5) }*+{\underset{\infty}{\bullet}}="b"
!{(3,0.5) }*+{\bullet}="c"
!{(4,0.5) }*+{\bullet}="d"
!{(0,0) }*+{\bullet}="e"
!{(0,1) }*+{\bullet}="f"
"e":"a"
"f":"a"
"a":@/^1pc/"b"
"b":@/^1pc/"a"
"c":@(rd,ru)"c"
"d":@(rd,ru)"d"
}}
\\
\hline
\end{tabularx}

\begin{tabularx}{0.98\textwidth}{%
  | *{2}{>{\centering\arraybackslash}X|}
}
\hline
\heading{$(-1/2,-15/4)$} &  \heading{$(2, -7/4)$} \\
\pre{\xygraph{
!{<0cm,0cm>;<1cm,0cm>:<0cm,1cm>::}
!{(0,0) }*+{\underset{0}{\bullet}}="a"
!{(1.5,0) }*+{\underset{\infty}{\bullet}}="b"
!{(2,0) }*+{\bullet}="c"
!{(3.5,0) }*+{\bullet}="d"
!{(4,0) }*+{\bullet}="g"
!{(5.5,0) }*+{\bullet}="h"
"a":@/^1pc/"b"
"b":@/^1pc/"a"
"c":@/^1pc/"d"
"d":@/^1pc/"c"
"g":@/^1pc/"h"
"h":@/^1pc/"g"
}} &
\pre{\xygraph{
!{<0cm,0cm>;<1cm,0cm>:<0cm,1cm>::}
!{(0,0) }*+{\underset{0}{\bullet}}="a"
!{(1.5,0) }*+{\underset{\infty}{\bullet}}="b"
!{(2,0) }*+{\bullet}="c"
!{(3,0) }*+{\bullet}="d"
!{(4,0) }*+{\bullet}="e"
!{(5,0) }*+{\bullet}="f"
!{(0,-0.5)}*+{}="g"
"a":@/^1pc/"b"
"b":@/^1pc/"a"
"c":"d"
"d":@(rd,ru)"d"
"e":"f"
"f":@(rd,ru)"f"
}}\\
\hline
\end{tabularx}

\begin{tabularx}{0.98\textwidth}{%
  | *{3}{>{\centering\arraybackslash}X|}
}
\hline
\heading{$(2,-4)$} & \heading{$(8/5,-13/10)$} & \heading{$(7/8, -23/8)$}\\
\pre{\xygraph{
!{<0cm,0cm>;<1cm,0cm>:<0cm,1cm>::}
!{(0,0.5) }*+{\underset{0}{\bullet}}="a"
!{(1.5,0.5) }*+{\underset{\infty}{\bullet}}="b"
!{(2.5,0) }*+{\bullet}="c"
!{(2.5,1) }*+{\bullet}="d"
!{(3,0.5) }*+{\bullet}="e"
!{(4.5,0.5) }*+{\bullet}="f"
"a":@/^1pc/"b"
"b":@/^1pc/"a"
"c":"b"
"d":"b"
"e":@/^1pc/"f"
"f":@/^1pc/"e"
}}&
\pre{\xygraph{
!{<0cm,0cm>;<1cm,0cm>:<0cm,1cm>::}
!{(0,0.5) }*+{\underset{0}{\bullet}}="a"
!{(1.5,0.5) }*+{\underset{\infty}{\bullet}}="b"
!{(2.5,0) }*+{\bullet}="c"
!{(2.5,1) }*+{\bullet}="d"
!{(3.5,0) }*+{\bullet}="e"
!{(3.5,1) }*+{\bullet}="f"
"a":@/^1pc/"b"
"b":@/^1pc/"a"
"c":"d"
"d":"f"
"f":"e"
"c":"e"
}} &
\pre{\xygraph{
!{<0cm,0cm>;<1cm,0cm>:<0cm,1cm>::}
!{(0,0.5) }*+{\underset{0}{\bullet}}="a"
!{(1.5,0.5) }*+{\underset{\infty}{\bullet}}="b"
!{(2,0.5) }*+{\bullet}="c"
!{(3.5,0.5) }*+{\bullet}="d"
!{(4,0) }*+{\bullet}="e"
!{(4,1) }*+{\bullet}="f"
"a":@/^1pc/"b"
"b":@/^1pc/"a"
"c":@/^1pc/"d"
"d":@/^1pc/"c"
"e":@(rd,ru)"e"
"f":@(rd,ru)"f"
}}\\
\hline
\end{tabularx}

\begin{tabularx}{0.98\textwidth}{%
  | *{2}{>{\centering\arraybackslash}X|}
}
\hline
\heading{$(-1, -13/4)$} & \heading{$(1/2,-5/2)$}\\
\pre{\xygraph{
!{<0cm,0cm>;<1cm,0cm>:<0cm,1cm>::}
!{(1,0.5) }*+{\underset{0}{\bullet}}="a"
!{(2.5,0.5) }*+{\underset{\infty}{\bullet}}="b"
!{(0,0) }*+{\bullet}="e"
!{(0,1) }*+{\bullet}="f"
!{(3,0.5) }*+{\bullet}="c"
!{(4.5,0.5) }*+{\bullet}="d"
!{(5,0.5) }*+{\bullet}="g"
!{(6.5,0.5) }*+{\bullet}="h"
"e":"a"
"f":"a"
"a":@/^1pc/"b"
"b":@/^1pc/"a"
"c":@/^1pc/"d"
"d":@/^1pc/"c"
"g":@/^1pc/"h"
"h":@/^1pc/"g"
}} &
\pre{\xygraph{
!{<0cm,0cm>;<1cm,0cm>:<0cm,1cm>::}
!{(1,0.5) }*+{\underset{0}{\bullet}}="a"
!{(2.5,0.5) }*+{\underset{\infty}{\bullet}}="b"
!{(3.5,0) }*+{\bullet}="c"
!{(3.5,1) }*+{\bullet}="d"
!{(0,0) }*+{\bullet}="e"
!{(0,1) }*+{\bullet}="f"
!{(4.5,0.5) }*+{\bullet}="g"
!{(6,0.5) }*+{\bullet}="h"
"e":"a"
"f":"a"
"a":@/^1pc/"b"
"b":@/^1pc/"a"
"c":"b"
"d":"b"
"g":@/^1pc/"h"
"h":@/^1pc/"g"
}}\\
\hline
\end{tabularx}

\begin{tabularx}{0.98\textwidth}{%
  | *{2}{>{\centering\arraybackslash}X|}
}
\hline
\heading{$(-1/4,-1/4)$} & \heading{$(-3/2,-3/2)$}\\
\pre{\xygraph{
!{<0cm,0cm>;<1cm,0cm>:<0cm,1cm>::}
!{(1,0.5) }*+{\underset{0}{\bullet}}="a"
!{(2.5,0.5) }*+{\underset{\infty}{\bullet}}="b"
!{(3.5,0) }*+{\bullet}="c"
!{(3.5,1) }*+{\bullet}="d"
!{(0,0) }*+{\bullet}="e"
!{(0,1) }*+{\bullet}="f"
!{(4.5,0.5) }*+{\bullet}="g"
!{(6,0.5) }*+{\bullet}="h"
"e":"a"
"f":"a"
"a":@/^1pc/"b"
"b":@/^1pc/"a"
"c":"b"
"d":"b"
"g":@(rd,ru)"g"
"h":@(rd,ru)"h"
}} &
\pre{\xygraph{
!{<0cm,0cm>;<1cm,0cm>:<0cm,1cm>::}
!{(0,0.5) }*+{\underset{0}{\bullet}}="a"
!{(1.5,0.5) }*+{\underset{\infty}{\bullet}}="b"
!{(2.5,0) }*+{\bullet}="e"
!{(3.5,0.5) }*+{\bullet}="f"
!{(2.5,1) }*+{\bullet}="g"
!{(4.5,0) }*+{\bullet}="h"
!{(4.5,1) }*+{\bullet}="i"
!{(5.5,0.5) }*+{\bullet}="j"
"a":@/^1pc/"b"
"b":@/^1pc/"a"
"e":"f"
"g":"f"
"f":@(rd,ru)"f"
"h":"j"
"i":"j"
"j":@(rd,ru)"j"
}}\\
\hline
\end{tabularx}

\begin{tabularx}{0.98\textwidth}{%
  | *{2}{>{\centering\arraybackslash}X|}
}
\hline
\heading{$(-9/4, -16)$} & \heading{$(-1,-77/45)$} \\
\pre{\xygraph{
!{<0cm,0cm>;<1cm,0cm>:<0cm,1cm>::}
!{(1,0.5) }*+{\underset{0}{\bullet}}="a"
!{(2.5,0.5) }*+{\underset{\infty}{\bullet}}="b"
!{(3.5,0) }*+{\bullet}="c"
!{(3.5,1) }*+{\bullet}="d"
!{(0,0) }*+{\bullet}="e"
!{(0,1) }*+{\bullet}="f"
!{(4.5,0) }*+{\bullet}="g"
!{(4.5,1) }*+{\bullet}="h"
"e":"a"
"f":"a"
"a":@/^1pc/"b"
"b":@/^1pc/"a"
"c":"b"
"d":"b"
"g":"c"
"h":"d"
}} &
\pre{\xygraph{
!{<0cm,0cm>;<1cm,0cm>:<0cm,1cm>::}
!{(3,0.5) }*+{\underset{0}{\bullet}}="a"
!{(4.5,0.5) }*+{\underset{\infty}{\bullet}}="b"
!{(1,0) }*+{\bullet}="c"
!{(1,1) }*+{\bullet}="d"
!{(0,0) }*+{\bullet}="e"
!{(0,1) }*+{\bullet}="f"
!{(2,0) }*+{\bullet}="g"
!{(2,1) }*+{\bullet}="h"
"e":"c"
"f":"d"
"a":@/^1pc/"b"
"b":@/^1pc/"a"
"c":"g"
"d":"h"
"g":"a"
"h":"a"
}}\\
\hline
\end{tabularx}

\begin{tabularx}{0.98\textwidth}{%
  | *{2}{>{\centering\arraybackslash}X|}
}
\hline
\heading{$(-17/8,-17/8)$} & \heading{$(3/2,-9/4)$}\\
\pre{\xygraph{
!{<0cm,0cm>;<1cm,0cm>:<0cm,1cm>::}
!{(0,0.5) }*+{\underset{0}{\bullet}}="a"
!{(1.5,0.5) }*+{\underset{\infty}{\bullet}}="b"
!{(2,0.5) }*+{\bullet}="c"
!{(3.5,0.5) }*+{\bullet}="d"
!{(4,0.5) }*+{\bullet}="e"
!{(5.5,0.5) }*+{\bullet}="f"
!{(6,0) }*+{\bullet}="g"
!{(6,1) }*+{\bullet}="h"
"a":@/^1pc/"b"
"b":@/^1pc/"a"
"c":@/^1pc/"d"
"d":@/^1pc/"c"
"e":@/^1pc/"f"
"f":@/^1pc/"e"
"g":@(rd,ru)"g"
"h":@(rd,ru)"h"
}}&
\pre{\xygraph{
!{<0cm,0cm>;<1cm,0cm>:<0cm,1cm>::}
!{(0,0.5) }*+{\underset{0}{\bullet}}="a"
!{(1.5,0.5) }*+{\underset{\infty}{\bullet}}="b"
!{(2.5,0) }*+{\bullet}="c"
!{(2.5,1) }*+{\bullet}="d"
!{(3.5,0) }*+{\bullet}="e"
!{(4.5,0.5) }*+{\bullet}="f"
!{(3.5,1) }*+{\bullet}="g"
!{(5.5,0) }*+{\bullet}="h"
!{(5.5,1) }*+{\bullet}="i"
!{(6.5,0.5) }*+{\bullet}="j"
"a":@/^1pc/"b"
"b":@/^1pc/"a"
"c":"b"
"d":"b"
"e":"f"
"g":"f"
"f":@(rd,ru)"f"
"h":"j"
"i":"j"
"j":@(rd,ru)"j"
}}
\end{tabularx}

\begin{tabularx}{0.98\textwidth}{%
  | *{2}{>{\centering\arraybackslash}X|}
}
\hline
\heading{$(-5/2,-7/4)$} & \heading{$(-13/5, -13/15)$}\\
\pre{\xygraph{
!{<0cm,0cm>;<1cm,0cm>:<0cm,1cm>::}
!{(1,0.5) }*+{\underset{0}{\bullet}}="a"
!{(2.5,0.5) }*+{\underset{\infty}{\bullet}}="b"
!{(3.5,0) }*+{\bullet}="c"
!{(3.5,1) }*+{\bullet}="d"
!{(0,0) }*+{\bullet}="e"
!{(0,1) }*+{\bullet}="f"
!{(4.0,0.5) }*+{\bullet}="g"
!{(5.5,0.5) }*+{\bullet}="h"
!{(6,0.5) }*+{\bullet}="i"
!{(7.5,0.5) }*+{\bullet}="j"
"e":"a"
"f":"a"
"a":@/^1pc/"b"
"b":@/^1pc/"a"
"c":"b"
"d":"b"
"g":@/^1pc/"h"
"h":@/^1pc/"g"
"i":@/^1pc/"j"
"j":@/^1pc/"i"
}} &
\pre{\xygraph{
!{<0cm,0cm>;<1cm,0cm>:<0cm,1cm>::}
!{(4,0.5) }*+{\underset{0}{\bullet}}="a"
!{(5.5,0.5) }*+{\underset{\infty}{\bullet}}="b"
!{(1,1) }*+{\bullet}="c"
!{(2,1) }*+{\bullet}="d"
!{(1,0) }*+{\bullet}="e"
!{(2,0) }*+{\bullet}="f"
!{(0,0) }*+{\bullet}="g"
!{(0,1) }*+{\bullet}="h"
!{(3,0) }*+{\bullet}="i"
!{(3,1) }*+{\bullet}="j"
"a":@/^1pc/"b"
"b":@/^1pc/"a"
"c":"d"
"c":"e"
"d":"f"
"e":"f"
"g":"e"
"h":"e"
"i":"d"
"j":"d"
}}\\
\hline
\end{tabularx}

\begin{tabularx}{0.98\textwidth}{%
  | *{2}{>{\centering\arraybackslash}X|}
}
\hline
\heading{$(-9/4, -23/8)$} & \heading{$(-19/3, -25/9)$}\\
\pre{\xygraph{
!{<0cm,0cm>;<1cm,0cm>:<0cm,1cm>::}
!{(2.5,0.5) }*+{\underset{0}{\bullet}}="a"
!{(4,0.5) }*+{\underset{\infty}{\bullet}}="b"
!{(1.2,0) }*+{\bullet}="c"
!{(1.2,1) }*+{\bullet}="d"
!{(1.2,-1) }*+{\bullet}="e"
!{(0.2,-1) }*+{\bullet}="f"
!{(0.2,0) }*+{\bullet}="g"
!{(0.2,1) }*+{\bullet}="h"
!{(0.2,2) }*+{\bullet}="i"
!{(1.2,2) }*+{\bullet}="j"
"a":@/^1pc/"b"
"b":@/^1pc/"a"
"c":"a"
"d":"a"
"e":"c"
"f":"c"
"g":"c"
"h":"d"
"i":"d"
"j":"d"
}}&
\pre{\xygraph{
!{<0cm,0cm>;<1cm,0cm>:<0cm,1cm>::}
!{(4,0.5) }*+{\underset{0}{\bullet}}="a"
!{(5.5,0.5) }*+{\underset{\infty}{\bullet}}="b"
!{(1,0.5) }*+{\bullet}="c"
!{(2.5,0.5) }*+{\bullet}="d"
!{(0,0) }*+{\bullet}="e"
!{(0,1) }*+{\bullet}="f"
!{(3.5,0) }*+{\bullet}="g"
!{(3.5,1) }*+{\bullet}="h"
!{(6.5,0) }*+{\bullet}="i"
!{(6.5,1) }*+{\bullet}="j"
!{(4,-1) }*+{\bullet}="k"
!{(5.5,-1) }*+{\bullet}="l"
"a":@/^1pc/"b"
"b":@/^1pc/"a"
"i":"b"
"j":"b"
"c":@/^1pc/"d"
"d":@/^1pc/"c"
"e":"c"
"f":"c"
"g":"d"
"h":"d"
"k":@/^1pc/"l"
"l":@/^1pc/"k"
}}\\
\hline
\end{tabularx}

\end{center}

\begin{code}
\begin{lstlisting}[language=python]
gr=[[0,0],[0,-1],[0,-2],[-1,-1/4],[-1,-5/2],[-1,11/4],[-1/2,-15/4],[2,-7/4], [2,-4],[8/5,-13/10],[7/8,-23/8],[-1,-13/4],[1/2,-5/2],[-1/4,-1/4], [-3/2,-3/2],[-9/4,-16],[-1,-77/45],[-17/8,-17/8],[3/2,-9/4],[-5/2,-7/4], [-13/5,-13/5],[-9/4,-23/8],[-19/3,-25/9]]

set_verbose(None)
P.<x,y>=ProjectiveSpace(QQ,1)
L=[]
for A,B in gr:
    f=DynamicalSystem([A*x^2*y + y^3, x^3 +B*x*y^2])
    if f.is_morphism():
        G = f.rational_preperiodic_graph()
        found = False
        for a,b,g in L:
            if g.is_isomorphic(G):
                found=True
                break
        if not found:
            print(A,B,G, len(L)+1)
            L.append((A,B,G))
len(L)
\end{lstlisting}
\end{code}

\section{Rational Preperiodic Structures in $\M_4$} \label{sect_M4_preperiodic}
    In this section, we examine the $\Q$-rational preperiodic point structures of the families covering $\A_4$ given in Section \ref{sect_M4}.

We start with the dimension $0$ family.

\subsection{The zero dimension loci $\A_4(C_5)$ and $\A_4(D_5)$}
    Recall from Proposition \ref{prop_A4_C5} that $\A_4(D_5) = \A_4(C_5)$ are given by the single conjugacy class $f(z)  = \frac{1}{z^4}$.
    \begin{theorem}\label{M4_rational_single}
        For $\A_4(D_5) = \A_4(C_5)$ the single conjugacy class $f(z)  = \frac{1}{z^4}$ has $\Q$-rational preperiodic structure given by
        \begin{equation*}
            \xygraph{
    !{<0cm,0cm>;<1cm,0cm>:<0cm,1cm>::}
    !{(1,0) }*+{\underset{0}{\bullet}}="a"
    !{(2.5,0) }*+{\underset{\infty}{\bullet}}="b"
    !{(3.5,0) }*+{\underset{-1}{\bullet}}="c"
    !{(4.5,0) }*+{\underset{1}{\bullet}}="d"
    "a":@/^1pc/"b"
    "b":@/^1pc/"a"
    "c":"d"
    "d":@(rd,ru)"d"
    }
        \end{equation*}
    \end{theorem}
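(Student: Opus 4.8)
The plan is to carry out the same direct computation used for the single conjugacy classes in $\M_3$ (Theorem~\ref{prop_rational_single}): apply the algorithm of Hutz~\cite{Hutz12} for computing all $\Q$-rational preperiodic points of a morphism of $\bbP^1$, as implemented in Sage, to the representative $f(z) = \frac{1}{z^4}$ of the conjugacy class $\A_4(C_5) = \A_4(D_5)$. Since this is a single explicit rational map with tiny coefficients, the algorithm terminates at once and returns the finite set of $\Q$-rational preperiodic points together with the edge data $Q \mapsto f(Q)$, from which the graph in the statement is read directly.

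To make the result transparent independently of the software, I would accompany this with a short by-hand verification. Writing $f$ homogeneously as $(x:y) \mapsto (y^4 : x^4)$ shows $f$ interchanges $0 = (0:1)$ and $\infty = (1:0)$, so these form a $2$-cycle. For the finite affine periodic points one has the closed form $f^n(z) = z^{(-4)^n}$, so a finite periodic point of period $n$ satisfies $z^{(-4)^n - 1} = 1$; because $(-4)^n - 1$ is odd for every $n \ge 1$, the only $\Q$-rational solution is $z = 1$, which is already a fixed point. Hence the $\Q$-rational periodic points are exactly $\{0,\infty\}$ (a $2$-cycle) together with the fixed point $\{1\}$.

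It then remains to enumerate the strictly preperiodic rational points by pulling back. Solving $f(z) = 1$ gives $z^4 = 1$ with rational roots $\pm 1$, so $-1$ is the unique new rational preimage of the fixed point; solving $f(z) = -1$ gives $z^4 = -1$, which has no rational root; and $f(z) \in \{0,\infty\}$ forces $z \in \{\infty, 0\}$, already periodic. Since $-1$ in turn has no rational preimage (that would again require a rational fourth root of $-1$), the tree of rational preimages is exhausted, and the resulting $\Q$-rational preperiodic forest is precisely the graph displayed in the theorem.

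There is no genuine obstacle here: unlike the positive-dimensional families of Sections~\ref{sect_M3_preperiodic}--\ref{sect_M4_preperiodic}, no dynatomic curve or surface needs to be analyzed. The only points requiring care are handling the point at infinity correctly in the homogeneous model and confirming that no higher-period rational periodic point is overlooked, both of which are settled immediately by the formula $f^n(z) = z^{(-4)^n}$.
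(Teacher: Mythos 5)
Your proposal is correct and, in its first paragraph, takes exactly the paper's route: the paper's entire proof is a one-line appeal to the algorithm of Hutz as implemented in Sage, applied to the explicit map $f(z)=\frac{1}{z^4}$. The by-hand verification you append is a genuine complement rather than a different proof strategy, and it is sound: the closed form $f^n(z)=z^{(-4)^n}$ correctly reduces finite periodicity to $z^{(-4)^n-1}=1$ with $(-4)^n-1$ odd, forcing $z=1$; the homogeneous model $(x:y)\mapsto(y^4:x^4)$ correctly exhibits $\{0,\infty\}$ as a $2$-cycle with no further preimages; and pulling back $1$ yields only $\pm1$, with $-1$ having no rational fourth root so the preimage tree terminates. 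This makes the statement checkable without software, which is a modest but real improvement in transparency over the paper's bare citation to a computation.
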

    \begin{proof}
        Direct computation with the algorithm of Hutz \cite{Hutz12} as implemented in Sage.
    \end{proof}

\subsection{The dimension one family $\A_4(C_4)$}
The automorphism locus of $\A_4(C_4)$ is covered by the family $f_k(z) = \frac{z^4 + 1}{kz^3}$. We start with the periodic points.
\begin{prop}\label{prop_A4_C4_periodic}
    We have the following periodic points for $f_k(z) = \frac{z^4+1}{kz^3}$.
    \begin{enumerate}
        \item The point $\infty$ is a fixed point for every $k$. When $k= 1 + t^4$ for $t \in \Q\setminus \{0\}$, we have exactly two additional $\Q$-rational fixed points, $\pm \frac{1}{t}$.

        \item There are two 2-cycles with $\Q$-rational points for each of the following two families of parameters
        \begin{align*}
            k &= t^2 + t^{-2} \quad \text{ for } t \in \Q \setminus \{0,\pm 1\}\\
            k &= -(t^2 + t^{-2}) \quad \text{ for } t \in \Q \setminus \{0, \pm 1\}
        \end{align*}
        and one $2$-cycle for the family
        \begin{equation*}
            k = -(1+t^4)\quad \text{ for } t \in \Q \setminus \{0\}.
        \end{equation*}
        Furthermore, no $\Q$-rational value of $a$ appears in more than one of the given parameterizations. In particular, there are at most two 2-cycles with $\Q$-rational points.

        \item The only $\Q$-rational fixed point that occurs in conjunction with a 2-cycle with $\Q$-rational points is the fixed point $\infty$, which occurs for every parameter value.
    \end{enumerate}
\end{prop}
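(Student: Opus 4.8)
The plan is to follow the dynatomic-polynomial template used throughout this section: compute $\Phi^{\ast}_1(f_k)$ and $\Phi^{\ast}_2(f_k)$, factor them over $\Q[k][z]$, parameterize the loci in the $k$-line where rational periodic points occur, and then intersect those loci.

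For part (1), I would compute $\Phi^{\ast}_1(f_k) = (1-k)z^4 + 1$. Since $f_k = F/G$ with $\deg F = \deg G + 1$, the point $\infty$ is fixed for every $k$ with no computation. A finite rational fixed point is a rational root of $(1-k)z^4+1$; this is linear in $k$, so setting $z = 1/t$ solves to $k = 1 + t^4$, and for such $k$ the polynomial is $1 - t^4 z^4 = (1 - t^2 z^2)(1 + t^2 z^2)$, with only the rational roots $z = \pm 1/t$. The excluded value $t = 0$ is exactly the one producing a drop in degree.

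For part (2), the main computation is $\Phi^{\ast}_2(f_k)$, of $z$-degree $d^2 - d = 12$. I expect it to factor, up to a factor depending only on $k$, as the product of three quartics
\[
(z^4 - kz^2 + 1)(z^4 + kz^2 + 1)\big((1+k)z^4 + 1\big),
\]
which I would confirm by a symbolic factorization. Each factor is interpreted by a direct substitution: if $z^4 - kz^2 + 1 = 0$ then $f_k(z) = 1/z$ and $f_k(1/z) = z$, so $\{z,1/z\}$ is a $2$-cycle, and solving for $k$ with $z = t$ gives $k = t^2 + t^{-2}$; since the factor is then $(z^2 - t^2)(z^2 - t^{-2})$, its rational roots $\pm t, \pm 1/t$ form exactly the two $2$-cycles $\{t,1/t\}$ and $\{-t,-1/t\}$. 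The same computation applied to $z^4 + kz^2 + 1$ gives the $2$-cycles $\{z,-1/z\}$ and $k = -(t^2 + t^{-2})$, and applied to $(1+k)z^4 + 1$ gives the single $2$-cycle $\{z,-z\}$ and $k = -(1+t^4)$; the non-degeneracy conditions (for instance $t \neq \pm 1$, which would give $k = \pm 2$ and collapse a $2$-cycle to a fixed point) fall out of these expressions. As any rational $2$-cycle produces a rational root of $\Phi^{\ast}_2(f_k)$, these three families are precisely the parameters with a rational $2$-cycle. For the bound of two such cycles, I would show the three families are pairwise disjoint over $\Q$: since $t^2 + t^{-2} \geq 2$ for real $t \neq 0$, sign considerations rule out every overlap except $-(t^2 + t^{-2}) = -(1 + s^4)$, which rearranges via $t^2 + t^{-2} = (t - t^{-1})^2 + 2$ to $(t - t^{-1})^2 + 1 = s^4$, i.e.\ a rational point on $y^2 = x^4 - 1$, and that curve has only the degenerate rational points. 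With disjointness in hand, a rational $2$-cycle puts $k$ in exactly one family, and the other two quartic factors then have no rational root (by a sign check together with the observation that a rational root of one of them would, via its own formula for $k$, return $k$ to another family), so the count of rational $2$-cycles is exactly the one recorded.

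For part (3), a finite rational fixed point requires $k = 1 + t^4 > 0$, whereas a rational $2$-cycle requires $k$ in one of the three families, of which only $k = s^2 + s^{-2}$ can be positive. Equating $1 + t^4 = s^2 + s^{-2} = (s - s^{-1})^2 + 2$ gives $t^4 - 1 = (s - s^{-1})^2$, again a rational point on $y^2 = x^4 - 1$; the only rational points there force $t = \pm 1$, hence $k = 2$, which is degenerate. Therefore no non-degenerate parameter carries both a finite rational fixed point and a rational $2$-cycle, so the only rational fixed point that co-occurs with a rational $2$-cycle is $\infty$, fixed for all $k$. The one genuinely non-formal ingredient is the rational-point count on the genus-one curve $y^2 = x^4 - 1$, which appears in both parts (2) and (3); I would handle it exactly as elsewhere in the section, by exhibiting the curve as rank zero with its torsion, and I expect this to be the only step requiring real care rather than routine symbolic manipulation.
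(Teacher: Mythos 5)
Your proposal is correct and recovers the same factorization of $\Phi^{\ast}_2(f_k)$, the same three parameterized families of parameters, and the same substitution interpretations that the paper uses, but it takes a noticeably leaner route through the disjointness and co-occurrence analysis. Where the paper treats each pairwise comparison of families (and the fixed-point family) as a separate Diophantine problem and hands a distinct genus-1 curve to Magma in each case — $uv^2+u^2+u+1=0$ for families 1 and 3, $v^2-u^2+u-1=0$ for families 2 and 3, and $x^2\pm(1+y^2)x+1=0$ for the fixed-point intersections — you first observe that over $\R$ the families are sign-separated ($t^2+t^{-2}\geq 2$, $-(t^2+t^{-2})\leq -2$, $-(1+t^4)<-1$, $1+t^4>1$), which disposes of all but two comparisons at no cost, and then both surviving comparisons fold, via $z\mapsto z-z^{-1}$, into a \emph{single} classical curve $y^2=x^4-1$, whose only affine rational points $(\pm 1,0)$ follow from Fermat's descent on $a^4-c^4=b^2$. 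The paper's approach is algorithmically uniform and slots directly into its Magma-driven template; yours is more economical, reduces the nontrivial arithmetic to one well-known curve appearing twice, and makes the "degenerate collapse at $k=\pm 2$" visible as the identification $(x,y)=(\pm 1,0)$. One point worth spelling out if you write this up formally: your reduction to $y^2=x^4-1$ is a map out of the original surface, not an isomorphism, so you should say explicitly that a $\Q$-rational solution $(t,s)$ to the overlap equation \emph{yields} a $\Q$-point $(x,y)$ on $y^2=x^4-1$ (necessity suffices for the contradiction), rather than leaving the direction implicit.
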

\begin{proof}
    Starting with rational fixed points, we can immediately see that $\infty$ is always a fixed point. The first dynatomic polynomial is $\Phi^{\ast}_{1}(f_k) = (1-k)z^4 + 1$. Since this is linear in $k$, there are additional rational fixed points when
    \begin{equation*}
        k = \frac{z^4 + 1}{z^4}.
    \end{equation*}
    In particular, only two of the four fixed points $\pm z, \pm iz$ are $\Q$-rational. Substituting $z = \frac{1}{t}$, we get $k = 1+t^4$.

    Next we look for $2$-cycles with $\Q$-rational points. The second dynatomic polynomial is
    \begin{align*}
        \Phi^{\ast}_{2}(f_k)
        &= (z^4 - kz^2 + 1)(z^4 + kz^2 + 1)((1+k)z^4 + 1).
    \end{align*}
    All three components are linear in $k$, so we get one-parameter families each of which has two 2-cycles with $\Q$-rational points.
    \begin{enumerate}
        \item The first factor vanishes when $k = \frac{z^4+1}{z^2}$, which for $z = \pm t$ or $z = \pm \frac{1}{t}$ becomes $k = t^2 + t^{-2}$. In this case, there are two 2-cycles with $\Q$-rational points $\{t, \frac{1}{t}\}$ and $\{-t, -\frac{1}{t}\}$ for $t \in \Q \setminus \{0\}$ except when $t = \pm 1$ where the two 2-cycles collapse to fixed points.
        \item The second factor vanishes when $k = \frac{z^4+1}{-z^2}$, which for $z = \pm t$ or $z = \pm \frac{1}{t}$ becomes $k = -(t^2 + t^{-2})$. In this case there are two 2-cycles with $\Q$-rational points $\{t, - \frac{1}{t}\}$ and $\{-t, \frac{1}{t}\}$ for $t \in \Q \setminus \{0\}$ excpet when $t = \pm 1$ where the two 2-cycles collapse to a single 2-cycle.
        \item The third factor vanishes when $k = \frac{1 + z^4}{-z^4}$, which for $z = \pm\frac{1}{t}$ becomes $k = -(1+t^4)$. In this case there is one 2-cycle with $\Q$-rational points $\{\frac{1}{t}, -\frac{1}{t}\}$ for $t \in \Q\setminus \{0\}$.
    \end{enumerate}
    We must also check if any of these three sets of 2-cycles can occur at the same time. We must find points on the pairwise intersections of the three curves.
    \begin{enumerate}
        \item For $k = t_1^2 + t_1^{-2} = -(t_2^2 + t_2^{-2})$ there are two components:
        \begin{align*}
            t_1^2 + t_2^2 &= 0\\
            t_1^2t_2^2 &= -1
        \end{align*}
        neither of which has a $\Q$-rational point.

        \item For $k = t_1^2 + t_1^{-2}$ and $k = -(1+t_3^4)$, we get the genus $5$ curve
        \begin{equation*}
            t_1^2t_3^4 + t_1^4 + t_1^2 + 1 =0.
        \end{equation*}
        Substituting $u=t_1^2$ and $v=t_3^2$, this covers the elliptic curve
        \begin{equation*}
            X:uv^2 + u^2 + u + 1=0.
        \end{equation*}
        The curve $X$ is birational to the elliptic curve
        \begin{equation*}
            E:y^2 = x^3-x^2+x,
        \end{equation*}
        via the map
        \begin{equation*}
            (u,v,w) \mapsto (z,y,-x)
        \end{equation*}
        where $w$ and $z$ are the homogenizing variables of $X$ and $E$, respectively.
        The curve $E$ is rank $0$ with torsion subgroup isomorphic to $\Z/4\Z$. The four torsion points are
        \begin{equation*}
            \{(0 : 1 : 0), (1 : 0 : 0), (-1 : 1 : 1), (-1 : -1 : 1)\}.
        \end{equation*}
        Only the two points $\{(0 : 1 : 0), (1 : 0 : 0)\}$ correspond to rational points on the original curve, and these points do not correspond to valid parameters.

        \item For $k = -(t_1^2 + t_1^{-2})$ and $k = -(1+t_2^4)$, we get the genus $5$ curve
        \begin{equation*}
            t_1^2t_2^4 - t_1^4 + t_1^2 - 1 =0.
        \end{equation*}
        Substituting $u=t_1^2$ and $v=t_3^2$, this covers the elliptic curve
        \begin{equation*}
            X:v^2 - u^2 + u - 1=0.
        \end{equation*}
        The curve $X$ is birational to the elliptic curve
        \begin{equation*}
            E:y^2 = x^3-x^2+x,
        \end{equation*}
        via the map
        \begin{equation*}
            (u,v,w) \mapsto (z,-y,x)
        \end{equation*}
        where $w$ and $z$ are the homogenizing variables of $X$ and $E$, respectively.
        The curve $E$ is rank $0$ with torsion subgroup isomorphic to $\Z/4\Z$. The four torsion points are
        \begin{equation*}
            \{(0 : 1 : 0), (1 : 0 : 0), (-1 : 1 : 1), (-1 : -1 : 1)\}.
        \end{equation*}
        Only the two points $\{(0 : 1 : 0), (1 : 0 : 0)\}$ correspond to rational points on the original curve, and these points do not correspond to valid parameters.
    \end{enumerate}

    Now we investigate whether we can get rational fixed points other than the point at infinity if we have rational 2-cycles. The first case is when the third factor of $\Phi^{\ast}_2(f_k)=0$ has a solution, so we want to see if $k = -(1+t^4) = 1 + l^4$ is possible for some rational $t$ and $l$. This gives the curve
    \begin{equation*}
        2 + l^4 + t^4 = 0,
    \end{equation*}
    which clearly has no solutions over $\Q$. Doing the same thing for the second component of $\Phi^{\ast}_2(f_k)=0$ yields
    \begin{align*}
        t^4 + (1+l^4)t^2  + 1&= 0.
    \end{align*}
    The substitution $x = t^2, y = l^2$ reduces this curve to
    \begin{equation*}
        X:x^2 + (1+y^2)x  + 1 = 0,
    \end{equation*}
    which is a genus 1 curve with the rational point $(x,y) = (-1,-1)$. We find that this curve is birational to the elliptic curve
    \begin{equation} \label{eq_E}
        E:y^2 = x^3 - 16x^2 + 96x - 192
    \end{equation}
    via the map
    \begin{align*}
        x&= 8xy - 8y^2\\
        y&=-8xy - 8y^2 - 8xz + 8yz - 16z^2\\
        z&=xy - y^2 + xz - yz
    \end{align*}
    where $z$ is the homogenizing variable for both curves.
    The curve $E$ is a rank 0 elliptic curve with torsion subgroup isomorphic to $\Z/4\Z$. Thus, the curve $X$ has four rational (projective) points, which we find to be $\{(0:1:0), (1:0:0), (-1:1:1),(-1:-1:1)\}$. These points are all nonsingular. The only affine points are $(-1,\pm 1)$. Both of these points have at least one negative coordinate so cannot lift to any points in $\Q$ since we covered by the squaring map. Because there are no rational points on the original curve, there cannot be any rational values of $k$ for which $f_k$ has rational fixed points and rational 2-cycles in this case.

    The first factor of $\Phi^{\ast}_2(f_k)=0$ is similar. The curve we get this time is
    \begin{equation*}
        X':t^4 - (1+l^4)t^2 + 1 = 0,
    \end{equation*}
    which differs from the previous case by a minus sign. We can use the same cover by $x$ and $y$ to get that this curve is birational to the same elliptic curve $E$ in equation \eqref{eq_E}. Again there are only four rational points on the curve $X'$:
    \begin{equation*}
        \{(1 : 1 : 1), (0 : 1 : 0), (1 : 0 : 0), (1 : -1 : 1)\}.
    \end{equation*}
    This time we see that one point has two positive coordinates, so the original curve has rational (affine) solutions $\{(1,1), (1,-1), (-1,1), (-1,-1)\}$. They all correspond to the value $k = 2$ ($t= \pm 1$), which is the case where the 2-cycles collapse to fixed points.
\begin{code}
\begin{lstlisting}
#fixed points
R.<k>=QQ[]
P.<x,y>=ProjectiveSpace(R,1)
f=DynamicalSystem([x^4 + y^4, k*x^3*y])
f.dynatomic_polynomial(1)

#2 periodic
R.<k>=QQ[]
P.<x,y>=ProjectiveSpace(R,1)
f=DynamicalSystem([x^4 + y^4, k*x^3*y])
f.dehomogenize(1).dynatomic_polynomial(2).factor()

#multiple 2-cycles
#1,2
s.<u,v>=QQ[]
F=u^2 + 1/u^2 + v^2 + 1/v^2
F=F.numerator()
A=AffineSpace(s)
C=A.curve(F)
for D in C.irreducible_components():
    D=A.curve(D.defining_polynomials()[0])
    print(D.genus())
    print(D)

#1,3
s.<u,v>=QQ[]
F=u^2 + 1/u^2  + (1+v^4)
F=F.numerator()
A=AffineSpace(s)
C=A.curve(F)
C

K := Rationals();
A<u,v> := AffineSpace(K,2);
C := ProjectiveClosure(Curve(A, u*v^2 + u^2+u+1));
p := C![0,1,0];
E, psi := EllipticCurve(C,p);
Rank(E);
TorsionSubgroup(E);
Points(C:Bound:=1000);

#2,3
s.<u,v>=QQ[]
F=u^2 + 1/u^2  - (1+v^4)
F=F.numerator()
A=AffineSpace(s)
C=A.curve(F)
C

K := Rationals();
A<u,v> := AffineSpace(K,2);
C := ProjectiveClosure(Curve(A, u*v^2 - u^2+u-1));
p := C![0,1,0];
E, psi := EllipticCurve(C,p);
Rank(E);
TorsionSubgroup(E);
Points(C:Bound:=1000);


#fixed + 2-cycles case 2
K := Rationals();
A<x,y> := AffineSpace(K, 2);
C := ProjectiveClosure(Curve(A, x^2 + (1+y^2)*x + 1));
p := C![-1,-1,1];
E, psi := EllipticCurve(C, p);
Rank(E);
TorsionSubgroup(E);
Points(C:Bound:=1000);

#fixed + 2-cycles case 3
K := Rationals();
A<x,y> := AffineSpace(K, 2);
C := ProjectiveClosure(Curve(A, x^2 - (1+y^2)*x + 1));
p := C![1,1,1];
E, psi := EllipticCurve(C, p);
Rank(E);
TorsionSubgroup(E);
Points(C:Bound:=1000);
\end{lstlisting}
\end{code}
\end{proof}

A search for rational preperiodic structures with the parameter up to height $10,000$ using the algorithm from \cite{Hutz12} as implemented in Sage yields no parameters where $f_k$ has a $\Q$-rational periodic point with minimal period at least $3$.

\begin{conj}\label{conj_M4_C4_periodic}
    There is no $k \in \Q$ such that $f_k(z) = \frac{z^4 + 1}{kz^3}$ has a $\Q$-rational periodic point of minimal period at least $3$.
\end{conj}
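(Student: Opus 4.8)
The plan is to translate the statement into a question about rational points on the \emph{dynatomic curves} of the family and then to attack each period $n\ge 3$ in turn. For $n\ge 3$ write $D_n\subset\bbA^2$ for the affine curve cut out by $\Phi_n^{\ast}(f_k)(k,z)=0$, where $\Phi_n^{\ast}$ is the $n$th dynatomic polynomial of $f_k(z)=\frac{z^4+1}{kz^3}$. A $\Q$-rational point of $D_n$ at which $z$ has exact period $n$ would be a counterexample, so the goal is to show that every $\Q$-rational point of $D_n$ is degenerate, i.e.\ either $k=0$ or $z$ has true minimal period $1$ or $2$ (the parameter values already classified in Proposition~\ref{prop_A4_C4_periodic}). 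Because $f_k$ is linear in the coordinate $1/k$, each equation $\Phi_n^{\ast}(f_k)=0$ can be solved for $k$ as a rational function of $z$, which keeps the plane models small.

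The crucial reduction is the $C_4$ symmetry. Since $f_k(z)=z\psi(z^4)$ with $\psi(w)=\frac{w+1}{kw}$, the automorphism $\alpha(z)=iz$ commutes with $f_k$ and so permutes the roots of $\Phi_n^{\ast}(f_k)$; for $n\ge 3$ no root lies at $0$ or $\infty$ (these form a $2$-cycle), so every $\alpha$-orbit of roots has size $4$ and $\Phi_n^{\ast}(f_k)(k,z)$ is a polynomial $\Psi_n(k,z^4)$ in $z^4$. Setting $t=z^4$ therefore produces a quotient curve $X_n\colon\Psi_n(k,t)=0$ whose degree in $t$ is a quarter of the $z$-degree of $\Phi_n^{\ast}$ (e.g.\ $15$ for $n=3$), which should lower the genus dramatically. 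I would then compute the genus of $X_3$, reduce it to a clean model, and determine its rational points by whichever tool applies: a rational parametrization if $X_3$ has genus $0$; the Mordell--Weil group if it is elliptic; a rank-$0$ Chabauty argument as in Proposition~\ref{prop_A3_D2_2}, or the Chabauty--Coleman routines \cite{Chabauty, Coleman, BT-coleman}, if it is (hyper)elliptic of small genus with rank-$0$ Jacobian; and finally pull the $\Q$-points of $X_3$ back to $D_3$, tracking singular points and their blow-ups as in the $\M_3$ sections, to check that only the degenerate possibilities survive. A useful alternative, if $\Psi_3(k,t)$ factors over $\Q$ into pieces that become reducible over a single number field with all their geometric components defined there, is Lemma~\ref{lem_irreducible_curves}: then every $\Q$-rational point of such a piece must be a singular point, and one only has to enumerate those.

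The main obstacle is uniformity in $n$. The $z$-degree of $\Phi_n^{\ast}(f_k)$ grows like $4^n$, so already $X_4$ has degree about $60$ in $t$ and hence large genus; its Jacobian need not have rank $0$ and the curve need not be hyperelliptic, which puts it beyond current rational-point technology -- indeed the analogous $A_4(D_3)$ analysis in Section~\ref{sect_M4_preperiodic} already bottoms out at genus-$6$ curves that are not fully resolved. A clean statement covering all $n$ at once would be tantamount to a uniform-boundedness result of Morton--Silverman type \cite{Silverman7} for this family, which is not available. The honest plan is therefore: settle $n=3$ (and if feasible $n=4$) by the curve analysis above, and for all larger $n$ fall back on the reduction-modulo-$p$ census already recorded -- for a prime $p$ of good reduction the period of $f_{k_0}$ at $k_0\in\Q$ divides one of the admissible periods of $f_{k_0}\bmod p$, and intersecting these constraints over several primes for every $k_0$ of bounded height yields no period $\ge 3$. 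Packaging the small-$n$ results with this computational evidence is precisely what justifies recording the statement as Conjecture~\ref{conj_M4_C4_periodic} rather than as a theorem.
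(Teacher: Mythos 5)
The paper does not prove this statement: it is recorded as Conjecture~\ref{conj_M4_C4_periodic} on the strength of a computational search over parameters of height up to $10^4$, and no rigorous argument is attempted for any period $\geq 3$. You correctly recognized this, and your closing paragraph gives exactly the right reason the statement is recorded as a conjecture rather than a theorem.

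Your plan would go beyond what the paper does for this family. The reduction $t = z^4$ by the $C_4$ automorphism $z \mapsto iz$ is valid and is not exploited in the paper for $\A_4(C_4)$, though it is entirely in the spirit of the quotients used for the $\M_3$ families (e.g.\ identifying $x = z^2$ and setting $t = z + 1/z$ in Propositions~\ref{prop_A3_D2_1} and~\ref{prop_A3_D2_2}); one can check on the settled case $n = 2$ that $\Phi_2^*(f_k) = \bigl(z^8 + (2-k^2)z^4 + 1\bigr)\bigl((1+k)z^4 + 1\bigr)$ is indeed a polynomial in $z^4$, even though its individual $\Q$-irreducible factors are not. Two caveats. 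First, $\{0,\infty\}$ is not a $2$-cycle of $f_k$: the point $\infty$ is fixed and $0$ is its strictly preperiodic preimage, as shown in graph $G_1$ of Theorem~\ref{theorem_A4_C4_preperiodic}. This does not hurt your argument, since all that matters is that neither $0$ nor $\infty$ has formal period $\geq 2$, which is still true. Second, the $C_4$ quotient alone lowers the $z$-degree of $\Phi_3^*$ from $4^3 - 4 = 60$ to $15$, still likely of genus too high for Mordell--Weil or Chabauty methods to be easy; where the paper rigorously handles period $3$ (Propositions~\ref{prop_A3_D2_1} and~\ref{prop_A3_D2_2}), it additionally quotients by the order-$3$ cycle-rotation $(k,z) \mapsto (k, f_k(z))$, which commutes with $\alpha$ because $\alpha \in \Aut(f_k)$. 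Combining both quotients would cut the degree to $5$, and that is the natural route to a theorem for $n = 3$. For $n \geq 4$ the degree grows like $4^n$, so no uniform treatment is in reach and the statement must remain a conjecture for those periods.
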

Assuming Conjecture \ref{conj_M4_C4_periodic}, we classify the $\Q$-rational preperiodic structures.

\begin{theorem} \label{theorem_A4_C4_preperiodic}
 Assuming Conjecture \ref{conj_M4_C4_periodic}, the possible $\Q$-rational preperiodic structures for $f_k(z) = \frac{z^4+1}{kz^3}$ for $k \in \Q$ are the following.
 \begin{align*}
G_1:&\xygraph{
!{<0cm,0cm>;<1cm,0cm>:<0cm,1cm>::}
!{(0,0) }*+{\underset{0}{\bullet}}="a"
!{(1,0) }*+{\underset{\infty}{\bullet}}="b"
"a":"b"
"b":@(rd,ru)"b"
}, \quad \text{all $k$ not in families $G_2$, $G_3$, or $G_4$}\\
G_2:&\xygraph{
!{<0cm,0cm>;<1cm,0cm>:<0cm,1cm>::}
!{(0,0) }*+{\underset{0}{\bullet}}="a"
!{(1,0) }*+{\underset{\infty}{\bullet}}="b"
!{(2,0) }*+{\bullet}="c"
!{(3,0) }*+{\bullet}="d"
"a":"b"
"b":@(rd,ru)"b"
"c":@(rd,ru)"c"
"d":@(rd,ru)"d"
},  \quad k = 1+t^4 \quad \text{ for } t \in \Q\setminus\{0\}\\
G_3:&\xygraph{
!{<0cm,0cm>;<1cm,0cm>:<0cm,1cm>::}
!{(0,0) }*+{\underset{0}{\bullet}}="a"
!{(1,0) }*+{\underset{\infty}{\bullet}}="b"
!{(2,0) }*+{\bullet}="c"
!{(3.5,0) }*+{\bullet}="d"
"a":"b"
"b":@(rd,ru)"b"
"c":@/^1pc/"d"
"d":@/^1pc/"c"
}, \quad k = -(1+t^4)\quad \text{ for } t \in \Q \setminus \{0\}\\
G_4:&\xygraph{
!{<0cm,0cm>;<1cm,0cm>:<0cm,1cm>::}
!{(0,0) }*+{\underset{0}{\bullet}}="f"
!{(1,0) }*+{\underset{\infty}{\bullet}}="a"
!{(2,0) }*+{\bullet}="b"
!{(3.5,0) }*+{\bullet}="c"
!{(4,0) }*+{\bullet}="d"
!{(5.5,0) }*+{\bullet}="e"
"f":"a"
"a":@(rd,ru)"a"
"b":@/^1pc/"c"
"c":@/^1pc/"b"
"d":@/^1pc/"e"
"e":@/^1pc/"d"
}, \quad k = \pm (t^2 + t^{-2}) \quad \text{ for } t \in \Q \setminus \{0, \pm 1\}.
\end{align*}
\end{theorem}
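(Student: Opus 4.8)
The plan is to follow the template of Theorems~\ref{thm_A3_D2_1_preperiodic} and~\ref{thm_A3_D2_2_preperiodic}: first determine the $\Q$-rational periodic points, then describe the strictly preperiodic (``tail'') points lying above them, then rule out the incompatible configurations. The periodic part is essentially in hand. Proposition~\ref{prop_A4_C4_periodic} records that the $\Q$-rational fixed points are $\infty$ for every $k$ together with the pair $\pm\frac1t$ exactly when $k=1+t^4$, and that $\Q$-rational $2$-cycles occur only in the three one-parameter families $k=\pm(t^2+t^{-2})$ and $k=-(1+t^4)$, with no two of the special periodic configurations occurring for the same $k$ (apart from the ever-present $\infty$); Conjecture~\ref{conj_M4_C4_periodic} removes periodic points of minimal period $\ge 3$. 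Thus the periodic subgraph is completely determined for each $k\in\Q$, and it remains to analyse rational preimages.

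First I would dispose of the tail over $\infty$: the full $f_k$-preimage of $\infty$ is $\{0,\infty\}$, and the full preimage of $0$ is the zero locus of $z^4+1$, which is empty over $\Q$, so $0$ is always the unique strictly preperiodic point over $\infty$, it cannot be extended, and it accounts for the edge $0\to\infty$ present in all four graphs. For the remaining targets, $z\mapsto -z$ is the order-two element of $\Aut(f_k)=C_4$ (equivalently $f_k$ is odd), so the rational preimages of $-w$ are the negatives of those of $w$, and it suffices to study preimages of $\frac1t$ and of $t$. Solving $f_k(z)=\frac1t$ with $k=1+t^4$ and cancelling the trivial root $z=\frac1t$ gives the plane curve
\begin{equation*}
    C_{+}:\qquad z^3-t^3z^2-t^2z-t=0,
\end{equation*}
and one checks that the analogous curves in the other special families (preimages of $t$ when $k=t^2+t^{-2}$, preimages of $\frac1t$ when $k=-(1+t^4)$, preimages of $t$ when $k=-(t^2+t^{-2})$) all reduce, via the substitutions $w=tz$ or $w=t/z$ and $t\mapsto\frac1t$, to $C_{+}$ up to the involution $t\mapsto -t$. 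So there is essentially a single auxiliary curve, and the whole theorem comes down to showing it has no $\Q$-rational point with nondegenerate parameter.

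The substitution $w=tz$ turns $C_{+}$ into $w^{3}=t^{4}(w^{2}+w+1)$, a curve of genus $3$ that is a $4$-to-$1$ cover of the $w$-line; composing with $t\mapsto t^{2}=s$ sends its rational points to rational points of the genus-one curve $s^{2}(w^{2}+w+1)=w^{3}$, which --- setting $y=s(w^{2}+w+1)/w$ and $x=w$ --- is birational to the elliptic curve
\begin{equation*}
    E:\qquad y^{2}=x^{3}+x^{2}+x .
\end{equation*}
The plan is then to verify in Magma that $E$ has rank $0$ and small torsion, enumerate $E(\Q)$, and check that each point pulls back only to $w=0$, to $w=\infty$, or to $t=\pm1$, i.e.\ to the degenerate parameters $t=0$ (degree drop), $t=\pm1$ (cycles collapse), or the point at infinity. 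Granting this, no member of any special family acquires a $\Q$-rational tail, so $f_{1+t^{4}}$ has graph $G_2$, $f_{-(1+t^{4})}$ has graph $G_3$, and $f_{\pm(t^{2}+t^{-2})}$ has graph $G_4$ (the excluded values $t\ne 0,\pm1$ ensuring the new periodic points are genuine, distinct, and disjoint from $\{0,\infty\}$), while for every other $k$ the only $\Q$-rational periodic point is $\infty$, its only strictly preperiodic preimage is $0$, and the graph is $G_1$. The main obstacle is precisely this rational-points computation on $C_{+}$ (equivalently, checking that the finitely many $\Q$-points of $E$ all come from degenerate parameters, since $C_{+}$ itself is genus $3$ and not directly amenable to Chabauty); with that in place the four cases assemble routinely.
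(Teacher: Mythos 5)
Your outline follows the same template the paper uses: identify the periodic structure from Proposition~\ref{prop_A4_C4_periodic} and Conjecture~\ref{conj_M4_C4_periodic}, observe that $0$ is the unique strictly preperiodic preimage of $\infty$ with no rational preimage of its own (since $z^4+1$ has no rational root), invoke the order-two automorphism $z\mapsto -z$ to halve the work, and then rule out tails over the special periodic points by a rational-points computation on an auxiliary curve covering a rank-zero elliptic curve. The curve $z^3-t^3z^2-t^2z-t=0$ and the map to $E:y^2=x^3+x^2+x$ are exactly the paper's, up to renaming the intermediate cover. Your idea to consolidate everything onto a single auxiliary curve is a genuine simplification: the paper treats the preimages of $\pm\tfrac1t$ in the $k=t^2+t^{-2}$ case as a \emph{separate} genus-$3$ curve, namely $z^3t^3+z^2-zt+t^2=0$, and quotients it by an involution that Magma chooses, getting the Weierstrass model $y^2=x^3+83521/688747536x$ --- a model with $j$-invariant $1728$, hence a \emph{different} elliptic curve from your $E$, though also rank $0$ with $\Z/2\Z$-torsion. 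Your implicit observation is that the second genus-$3$ curve is in fact birational to $C_+$: setting $w=z/t$, $s=1/t$ and then replacing $w$ by $-w$ transforms $z^3t^3+z^2-zt+t^2=0$ into $w^3=s^4(w^2+w+1)$, which is the same model that $u=zt$ produces from $C_+$. I have checked this identity, and it does buy you one fewer rank computation.

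The one concrete flaw in the proposal is in the enumeration. You write that, by the automorphism $z\mapsto -z$, it suffices to study preimages of $\tfrac1t$ and of $t$; but then your parenthetical list of ``analogous curves'' covers only preimages of $t$ in the $k=\pm(t^2+t^{-2})$ families and omits the preimages of $\tfrac1t$ there. These are precisely the cases that produce the curve $z^3t^3+z^2-zt+t^2=0$, which is the one genuinely different-looking curve and the one the paper spends the most effort on (its own quotient construction, its own elliptic curve). As written, the proof plan simply never mentions this curve. You are rescued by the fact that it reduces to $C_+$ after a birational change of coordinates, but since you haven't carried out that reduction and haven't listed the case, a reader cannot tell whether you noticed it. Fix the enumeration to include preimages of $\tfrac1t$ for $k=\pm(t^2+t^{-2})$, then exhibit the birational map $(z,t)\mapsto(-z/t,1/t)$ explicitly. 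With that addition, and with the deferred Magma verification that $E(\Q)=\{\mathcal{O},(0,0)\}$ (rank $0$, $\Z/2\Z$-torsion) pulls back only to $t=0$ or the points at infinity on $C_+$, the argument is complete.
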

\begin{proof}
    Starting with rational fixed points, we can immediately see that $\infty$ is always a fixed point and $0$ is its only non-periodic rational (first) preimage for all parameters $a$. Since the numerator of $f_k(z)$ is $z^4 + 1$, there are no rational preimages of 0 in $\Q$. However, we might have non-periodic rational preimages of the additional rational fixed points when they do appear. We examine each of the two rational fixed points $z = \pm \frac{1}{t}$ when $k = 1+t^4$ and $t \neq 0$.

    In the first case, we look at $z = \frac{1}{t}$. We know that $k = 1 + t^4$, so finding the preimages of this point amounts to solving the equation
    \begin{equation*}
        \frac{z^4 + 1}{(1+t^4)z^3} = \frac{1}{t},
    \end{equation*}
    which determines the curve
    \begin{equation*}
        (1-zt)(z^2t^3 - z^3 + zt^2 + t) = 0.
    \end{equation*}
    The first factor corresponds to the fixed point itself since $z = \frac{1}{t}$ is fixed. The second factor can be reduced to the form
    \begin{equation*}
        E:y^2 = x^3 + x^2 + x
    \end{equation*}
    using the transformation $x = zt, y = z^2$.
    The curve $E$ has rank 0 and torsion subgroup isomorphic to $\Z/2\Z$. Since one of the coordinates of the transformation was the square map, each point of $E$ has one or two preimages. We find the original curve has three rational projective points  $\{(0 : 1 : 0), (0 : 0 : 1), (1 : 0 : 0)\}$. The two points $(0:1:0)$ and $(1:0:0)$ are singular. The point $(0:1:0)$ does not blow-up to a rational point and $(1:0:0)$ blows-up to a single rational point. Thus, these points give us the two points which are the preimages of the torsion points of $E$. The only affine one of these is $(0,0)$, but it is not valid since $t=0$ corresponds to all fixed points being the point at infinity.

    Now we look at $z = -\frac{1}{t}$. In this case, we want to solve the equation
    \begin{equation*}
        \frac{z^4 + 1}{(1+t^4)z^3} = -\frac{1}{t},
    \end{equation*}
    which is the same as finding rational points on
    \begin{equation*}
        (1 + zt)(z^2t^3 + z^3 - zt^2 +t) = 0.
    \end{equation*}
    The same transformation as above can be used to get
    \begin{equation*}
        y^2 = -x^3 + x^2 - x,
    \end{equation*}
    which is also rank 0 with torsion subgroup isomorphic to $\Z/2\Z$. For the same reason, there are three projective rational points but only one corresponds to an affine point. In particular, we again see that $(0,0)$ is the only $\Q$-rational solution in $z$ and $t$, which is invalid. Thus, the extra fixed points never have non-periodic preimages.

    We now investigate the case of points in 2-cycles having $\Q$-rational preperiodic tails. The first case is when $k = -(1+t^4)$, in which case the points $z = \frac{1}{t}$ and $z = -\frac{1}{t}$ map to each other. Finding rational preimages of these points results in the same equations as solving for preimages of fixed points, which we know do not exist.

    The second case is $k = t^2 + t^{-2} = \frac{t^4 + 1}{t^2}$, which has 2-cycles $\{t,\frac{1}{t}\}$ and $\{-t,-\frac{1}{t}\}$. The cases of $f_k(z) = \pm t$ reduce to the curves
    \begin{equation*}
        (zt\pm 1)(z^2t^3 \mp z^3 \pm zt^2 + t)=0.
    \end{equation*}
    The first component is the other point in the 2-cycle, so we focus on the second component which is two genus 3 curves isomorphic under $z \mapsto -z$. These curves as the same as analyzed earlier in this proof and the only affine point has $t=0$, which is degenerate.

    Next we look at $f_k(z) = \frac{t^2(z^4+1)}{z^3(t^4+1)} = \pm \frac{1}{t}$, which give the curves
    \begin{equation*}
        (z\pm t)(z^3t^3 \pm z^2 - zt \pm t^2)=0.
    \end{equation*}
    The first component is the other point in the 2-cycle, so we focus on the second component.

    This curve is genus $3$ and we can quotient by the order $2$ automorphism
    \begin{equation*}
        (z,t,h) \mapsto (-z,-t,h)
    \end{equation*}
    to obtain a genus 1 curve. A point search of low height gives the two points
    \begin{equation*}
        \{(-17/25:0:1:0), (17/125:0:1:0)\}.
    \end{equation*}
    Using the first point, the curve is birational to the elliptic curve
    \begin{equation*}
        y^2 = x^3 + 83521/688747536x,
    \end{equation*}
    which is rank $0$ with torsion subgroup isomorphic to $\Z/2\Z$. So every rational point on the original curve must map to one of these two rational points on the quotient curve.
    Using the equations of the quotient map, we find the rational points in the inverse image of each point to get
    \begin{equation*}
        \{(0:0:1),(0:1:0),(1:0:0)\}.
    \end{equation*}
    The affine point has $t=0$, which is the degenerate case. Thus, there are no non-periodic rational preimages of the points in any 2-cycle for the given parameterization of $k$.

The final case is when $k = -(t^2 + t^{-2})$, but it reduces to finding rational points on the same curves as the previous case.

\begin{code}
\begin{lstlisting}
#preimages of 2-cycles
R.<t>=QQ[]
P.<x,y>=ProjectiveSpace(R,1)
a=(t^4+1)/(t^2)
f=DynamicalSystem([x^4 + y^4, a*x^3*y])
s.<x,t>=QQ[]
F=(t^2*(x^4+1) + t*(t^4+1)*x^3)
A=AffineSpace(s)
C=A.curve(F)
C.irreducible_components()

K := Rationals();
P<z,t,h> := ProjectiveSpace(K, 2);
C := Curve(P, z^2*t^3 + z^3*h^2 - z*t^2*h^2 + t*h^4);
phi := iso<C->C|[-z,-t,h],[-z,-t,h]>;
// we will take the quotient by phi
G := AutomorphismGroup(C,[phi]);
CG,prj := CurveQuotient(G);
Points(CG:Bound:=1000);
E:=EllipticCurve(CG,CG![0,0,1,0]);
Rank(E);
TorsionSubgroup(E);

R.<z,t,h>=QQ[]
P=ProjectiveSpace(R)
L = [[0, 0, 1, 0], [0,1,0,0]]
f0=t*h^4
f1=t^3*h^2
f2=-z*t^4 - z^2*t*h^2 - z*h^4
f3=-z*t^4 - z^2*t*h^2 + t^3*h^2
for Q in L:
    X=P.subscheme([z^2*t^3 + z^3*h^2 - z*t^2*h^2 + t*h^4,Q[0]*f1-f0*Q[1], Q[0]*f2-Q[2]*f0, Q[0]*f3-Q[3]*f0, Q[1]*f2-Q[2]*f1, Q[1]*f3-Q[3]*f1, Q[2]*f3-Q[3]*f2])
    print(X.rational_points())



#z=\pm 1/t
R.<t>=QQ[]
P.<x,y>=ProjectiveSpace(R,1)
a=(t^4+1)/(t^2)
f=DynamicalSystem([x^4 + y^4, a*x^3*y])
s.<x,t>=QQ[]
F=(t^3*(x^4+1) + (t^4+1)*x^3)
A=AffineSpace(s)
C=A.curve(F)
C.irreducible_components()

K := Rationals();
P<z,t,h> := ProjectiveSpace(K, 2);
C := Curve(P, z^3*t^3 + z^2*h^4 - z*t*h^4 + t^2*h^4);
G := AutomorphismGroup(C);
phi := Generators(G)[2];
G := AutomorphismGroup(C,[phi]);
CG,prj := CurveQuotient(G);
Points(CG:Bound:=1000);
E:=EllipticCurve(CG,CG![-17/25,0,1,0]);
Rank(E);
TorsionSubgroup(E);

R.<z,t,h>=QQ[]
P=ProjectiveSpace(R)
L = [[-17/25,0,1,0],[17/125,0,1,0]]
f0=-27/25*z^2*t^3*h - 9/25*z*t^4*h - 27/25*z*h^5 + 27/25*t*h^5
f1=t^3*h^3
f2=27/17*z^2*t^3*h - 45/17*z*t^4*h + 27/17*z*h^5 - 27/17*t*h^5
f3=3*z^2*t^4 + 3*z*t*h^4
for Q in L:
    X=P.subscheme([z^3*t^3 + z^2*h^4 - z*t*h^4 + t^2*h^4,Q[0]*f1-f0*Q[1], Q[0]*f2-Q[2]*f0, Q[0]*f3-Q[3]*f0, Q[1]*f2-Q[2]*f1, Q[1]*f3-Q[3]*f1, Q[2]*f3-Q[3]*f2])
    print(X.rational_points())
\end{lstlisting}
\end{code}
\end{proof}

\subsection{$\A_4(D_3)$}
The model for this family is $f_k(z) = \frac{z^4 + kz}{kz^3 + 1}$. We start by classifying periodic points.
\begin{prop} \label{prop_A4_D3_periodic}
    For the family $f_k(z) = \frac{z^4+kz}{kz^3+1}$, we have the following $\Q$-rational periodic points.
    \begin{enumerate}
        \item The points $0$, $1$, and $\infty$ are fixed points for every choice of $k \in \Q \setminus \{\pm 1\}$. There are no other rational fixed points.
        \item For $k = t^2 + t + 1 + t^{-1} + t^{-2}$ with $t \in \Q\setminus \{0, \pm 1\}$, $f_k(z)$ has a single $2$-cycle with $\Q$-rational points $\left\{ t, \frac{1}{t}\right\}$.
    \end{enumerate}
\end{prop}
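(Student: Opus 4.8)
The plan is to dispatch part (1) by a one-line fixed-point computation and part (2) by computing the second dynatomic polynomial and exploiting the $D_3$-symmetry. For part (1): one checks directly that $f_k(0)=0$, $f_k(1)=1$, and $f_k(\infty)=\infty$ (comparing leading terms). For the converse, the fixed-point equation $z^4+kz=z(kz^3+1)$ rearranges to $(1-k)z(z^3-1)=0$, so since $k\neq\pm1$ the first dynatomic polynomial is a nonzero constant times $z(z-1)(z^2+z+1)$; as $z^2+z+1$ is irreducible over $\Q$, there are no further $\Q$-rational fixed points.

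For part (2) I would compute $\Phi^\ast_2(f_k)$, which has $z$-degree $12$, and use the structural facts that $f_k(1/z)=1/f_k(z)$ and $f_k(\zeta_3 z)=\zeta_3 f_k(z)$, so $\alpha_2(z)=1/z$ and $\alpha_1(z)=\zeta_3 z$ are automorphisms. First, solving $f_k(z)=1/z$ gives $z^5-1=kz^2(z-1)$; removing the fixed point $z=1$ leaves the palindromic quartic
\[
Q(z,k)\colonequals z^4+z^3+(1-k)z^2+z+1=0,
\]
whose roots (for $k\neq-1$) are genuine period-$2$ points, so $Q\mid\Phi^\ast_2(f_k)$. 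Since $Q$ is linear in $k$, solving for $k$ and writing $z=t$ gives $k=t^2+t+1+t^{-1}+t^{-2}$, and because $Q$ is palindromic its roots pair as $\{t,1/t\}$ and $\{s,1/s\}$; the pair $\{t,1/t\}$ is a $2$-cycle since $f_k(t)=1/t$ forces $f_k(1/t)=1/f_k(t)=t$.

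It remains to see that these are the only $\Q$-rational period-$2$ points and that there is a single such $2$-cycle. The complementary degree-$8$ factor $R(z,k)$ is, up to a constant, $Q(\zeta_3 z,k)\,Q(\zeta_3^2 z,k)$, the $\langle\alpha_1\rangle$-orbit of $Q$; it is irreducible over $\Q$ but splits over $\Q(\zeta_3)$ into two quartics each defined over $\Q(\zeta_3)$, so Lemma~\ref{lem_irreducible_curves} forces every $\Q$-rational point of the plane curve $R=0$ to be singular, and one checks the finitely many singular points occur only at the degenerate values $k=\pm1$. For the ``single cycle'' claim, when $k=t^2+t+1+t^{-1}+t^{-2}$ the quartic $Q$ factors over $\Q$ as $(z^2-uz+1)(z^2+(1+u)z+1)$ with $u=t+t^{-1}$; the first factor supplies $\{t,1/t\}$, while the second has discriminant $(u+3)(u-1)=(t^2+3t+1)(t^2-t+1)/t^2$, a rational square only when $(t^2+3t+1)(t^2-t+1)$ is a square, which is a genus-one curve. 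A Magma computation (rank zero, with the torsion points accounted for) shows its only rational points give $t\in\{0,\pm1\}$, so for all admissible $t$ the second factor is $\Q$-irreducible and that cycle is never $\Q$-rational.

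The part I expect to be the main obstacle is the bookkeeping in part (2): pinning down the explicit factorization $\Phi^\ast_2(f_k)=c\,Q\,R$, verifying that $R$ is $\Q$-irreducible but becomes totally split over $\Q(\zeta_3)$ with all components defined there so that Lemma~\ref{lem_irreducible_curves} genuinely applies, and confirming that the singular locus of $R=0$ together with the rational points of $w^2=(t^2+3t+1)(t^2-t+1)$ reduce entirely to the excluded parameters. Everything else is a routine dynatomic/resultant computation of the kind already carried out for the $\A_3$ families.
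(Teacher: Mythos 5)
Your proof follows essentially the same route as the paper's: compute $\Phi_1^\ast$ for part (1), and for part (2) split $\Phi_2^\ast = (k+1)\,Q\,R$ into the palindromic quartic $Q$ and the degree-$8$ factor $R$, dispose of $R$ via Lemma~\ref{lem_irreducible_curves} and its $\Q(\omega)$-factorization, and reduce the companion quadratic factor of $Q$ to the same rank-zero genus-one curve $w^2 = t^4+2t^3-t^2+2t+1$ (the paper writes it as $\ell^2 = t^4+2t^3-t^2+2t+1$, which equals $(t^2+3t+1)(t^2-t+1)$). Your observation that $R(z,k)$ is a constant times $Q(\zeta_3 z,k)\,Q(\zeta_3^2 z,k)$ is a nice conceptual explanation for the $\Q(\zeta_3)$-splitting that the paper simply records, and it also makes clear that the paper's separate post-substitution analysis of the factors $z^2+zt+t^2$, $z^2t^2+zt+1$, and the residual degree-$4$ piece is subsumed by the $(z,k)$-plane argument for $R$; one minor correction is that the rational points on the genus-one curve force only $t=0$ rather than $t\in\{0,\pm1\}$, though all such values are excluded so the conclusion is unaffected.
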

\begin{proof}
    We start with rational fixed points and see that $\infty$ is always fixed. For additional $\Q$-rational fixed points, we consider the first dynatomic polynomial
    \begin{equation*}
        \Phi^{\ast}_1(f_k) = (1-k)z(z-1)(z^2 + z +1).
    \end{equation*}
    Note that $k = 1$ is degenerate since it would lead us to cancelling a factor of $z^3 + 1$ in the function, resulting in a function that is not degree 4. The roots of $z^2 + z + 1 = 0$ are cube roots of unity so are not rational; thus, the fixed points are $0$, $1$, and $\infty$ regardless of $k$.

    Looking for 2-cycles with $\Q$-rational points, we consider the second dynatomic polynomial
    \begin{multline}\label{eq6}
        \Phi^{\ast}_2(f_k) = (k+1)(z^4 + z^3 + (1-k)z^2 + z + 1)\\
        (z^8 - z^7 + kz^6 + (k+1)z^5 + (k^2 - 2k - 1)z^4 + (k+1)z^3 + kz^2 -z + 1.
    \end{multline}
    Looking at the curve $\Phi^{\ast}_2(f_k) = 0$, we do not consider the degenerate component $k=-1$. The degree 4 component produces a genus 0 curve since it is linear in $k$. Solving for $k$, we get
    \begin{equation*}
        k = t^2 + t + 1 + t^{-1} + t^{-2}
    \end{equation*}
    for the $2$-cycle consisting of $z \in \left\{t, \frac{1}{t} \right\}$. With this parameterization of $k$ values, we look for addition $2$-cycles with $\Q$-rational points by factoring the second dynatomic polynomial
    \begin{multline*}
        \Phi_2^{\ast}(f_k) = (t^2 + 1)(t^2 + t + 1)(t - z)(zt - 1)(z^2 + zt + t^2)(z^2t^2 + zt + 1)(z^2t + z(t^2 + t +1) + t)\\
        (z^4t^2 + z^3( -t^3 -t^2-t) + z^2(t^4 + 2t^3 + 2t^2 + 2t + 1) + z(-t^3 - t^2 -t) + t^2).
    \end{multline*}
    It remains to be seen that no choice of $t \in \Q$ produces an additional 2-cycle with $\Q$-rational points. We examine each component in turn.
    We apply the quadratic equation to the component
    \begin{equation*}
        z^2 + zt + t^2 = 0
    \end{equation*}
    to have
    \begin{equation*}
        z = \frac{-t \pm t\sqrt{- 3}}{2}
    \end{equation*}
    which is never rational. Similarly for the component
    \begin{equation*}
        z^2t^2 + zt + 1=0
    \end{equation*}
    we get
    \begin{equation*}
        z = \frac{-1 \pm \sqrt{-3}}{2t}
    \end{equation*}
    which is rational only for $t$ a multiple of $\sqrt{-3}$, in which case $k \not\in \Q$. For the third quadratic component we also apply the quadratic formular to have
    \begin{equation*}
         z= \frac{-(t^2+t+1) \pm \sqrt{t^4 + 2t^3 - t^2 + 2t + 1}}{2t^2}.
    \end{equation*}
    For $z$ to be rational we need the discriminant to be a square, so we examine the curve
    \begin{equation*}
        X:t^4 + 2t^3 - t^2 + 2t + 1 = \ell^2.
    \end{equation*}
    This is a genus 1 curve birational to an ellitpic curve with Weierstrass model
    \begin{equation*}
        E:y^2 + 2xy + 8y = x^3 + 4x^2
    \end{equation*}
    via the map
    \begin{align*}
        x&=2t^2h + 2th^2 + 2\ell h^2 - 2h^3\\
        y&=4t^3 + 4t^2h + 4t\ell h - 4th^2\\
        z&=h^3,
    \end{align*}
    where $h$ and $z$ are the homogenizing variables of $X$ and $E$, respectively.
    The curve $E$ has rank 0 and torsion subgroup isomorphic to $\Z/4\Z$. A rational point search on the projective closure of $X$ yields the three points
    \begin{equation*}
        \{(0:-1:1), (0:1:0), (0:1:1)\}.
    \end{equation*}
    The point $(0:1:0)$ is singular and blows-up to two rational points, so these are all of the rational points on $X$. These all have $t=0$ which is $k=1$, which is degenerate. The last component (degree four in $x$) is irreducible over $\Q$ but is reducible over $\Q(\omega)$ where $\omega$ is a cube root of unity. We can factor it as
    \begin{multline*}
        z^4t^2 + z^3( -t^3 -t^2-t) + z^2(t^4 + 2t^3 + 2t^2 + 2t + 1) + z(-t^3 - t^2 -t) + t^2 =\\
        (t^2z + wtz^2 + tz + (-w - 1)t + z)(t^2z + (-w - 1)tz^2 + tz + wt + z).
    \end{multline*}
    Since the curve factors over an extension of $\Q$, rational points on it must be singular (Lemma \ref{lem_irreducible_curves}). The singular points are $(t,z) \in \{(-1,-1), (0,0)\}$ but $t \in \{0,-1\} $ are degenerate cases. Therefore, there is only one 2-cycle with $\Q$-rational points.
    Note that $t=1$ has the $2$-cycle collapsing to the fixed point $z=1$.

    Now the degree $8$ component of $\Phi^{\ast}_2(f_k)$ in \eqref{eq6} is irreducible over $\Q$ but is reducible over $\Q(\omega)$ where $\omega$ is a cube root of unity. We can factor it as
    \begin{multline*}
        z^8 - z^7 + kz^6 + (k+1)z^5 + (k^2 - 2k - 1)z^4 + (k+1)z^3 + kz^2 -z + 1 =\\
        (kz^2 - \omega z^4 +(\omega +1)z^3 -z^2 -\omega z + \omega +1)(kz^2+(\omega+1)z^4 - \omega z^3 - z^2 + (\omega + 1)z - \omega).
    \end{multline*}
    Since the curve factors over an extension of $\Q$, rational points on it must be singular (Lemma \ref{lem_irreducible_curves}). The singular points are just $(k,x) \in \{(-1,1), (1,-1)\}$ but $k = \pm 1$ are degenerate cases, so there are no valid $\Q$-rational points on this component.

\begin{code}
\begin{lstlisting}
#fixed points
R.<k>=QQ[]
P.<x,y>=ProjectiveSpace(R,1)
f=DynamicalSystem([x^4 + k*x*y^3, k*x^3*y+ y^4])
f.dehomogenize(1).dynatomic_polynomial(1).factor()

#2 periodic
R.<k>=QQ[]
P.<x,y>=ProjectiveSpace(R,1)
f=DynamicalSystem([x^4 + k*x*y^3, k*x^3*y+ y^4])
f.dehomogenize(1).dynatomic_polynomial(2).factor()

#only one 2-cycle:
R.<t>=QQ[]
P.<x,y>=ProjectiveSpace(R,1)
k=t^2+t+1+1/t+1/t^2
f=DynamicalSystem([x^4 + k*x*y^3, k*x^3*y+ y^4])
f.dehomogenize(1).dynatomic_polynomial(2).factor()

R.<t>=QQ[]
P.<x,y>=ProjectiveSpace(R,1)
k=t^2+t+1+1/t+1/t^2
f=DynamicalSystem([x^4 + k*x*y^3, k*x^3*y+y^4])
f.dynatomic_polynomial(2)*t^8
r.<t,x,y>=QQ[]
F=(t^10 + t^9 + 2*t^8 + t^7 + t^6)*x^12 + (3*t^12 + 6*t^11 + 13*t^10 + 16*t^9 + 20*t^8 + 16*t^7 + 13*t^6 + 6*t^5 + 3*t^4)*x^9*y^3 + (-t^16 - 4*t^15 - 8*t^14 - 14*t^13 - 17*t^12 - 20*t^11 - 16*t^10 - 16*t^9 - 12*t^8 - 16*t^7 - 16*t^6 - 20*t^5 - 17*t^4 - 14*t^3 - 8*t^2 - 4*t - 1)*x^6*y^6 + (3*t^12 + 6*t^11 + 13*t^10 + 16*t^9 + 20*t^8 + 16*t^7 + 13*t^6 + 6*t^5 + 3*t^4)*x^3*y^9 + (t^10 + t^9 + 2*t^8 + t^7 + t^6)*y^12
F.factor()

A<t,l>:=AffineSpace(Rationals(),2);
C:=ProjectiveClosure(Curve(A,l^2 - ((t^2+t+1)^2 - 4*t*t)));
Genus(C);
Points(C:Bound:=1000);
E,psi:=EllipticCurve(C,C![0,1,0]);
TorsionSubgroup(E);
Rank(E);
sing:=SingularPoints(C);
P:=sing[1];
P;
Q:=Places(P);
Q;
Degree(Q[1]);
Degree(Q[2]);

K.<w>=CyclotomicField(3)
A.<t,z>=AffineSpace(K,2)
C=Curve((t^4*z^2 - t^3*z^3 + 2*t^3*z^2 - t^3*z + t^2*z^4 - t^2*z^3 + 2*t^2*z^2 - t^2*z + t^2 - t*z^3 + 2*t*z^2 - t*z + z^2))
C.irreducible_components()
C.singular_points()

A.<a,x>=AffineSpace(QQ,2)
X=A.curve([x^8 - x^7 + a*x^6 + (a + 1)*x^5 + (a^2 - 2*a - 1)*x^4 + (a + 1)*x^3 + a*x^2 - x + 1])
X.singular_points()


#3-periodic
R.<a>=QQ[]
P.<x,y>=ProjectiveSpace(R,1)
f=DynamicalSystem([x^4 + a*x*y^3, a*x^3*y+ y^4])
f.dehomogenize(1).dynatomic_polynomial(3).factor()

K.<w>=CyclotomicField(3)
A.<a,x>=AffineSpace(K,2)
X=A.curve([((a^2 + a + 1)*x^6 + (a^2 + 4*a + 1)*x^3 + a^2 + a + 1)])
X.irreducible_components()
X.singular_points()

A<a,x>:=AffineSpace(Rationals(),2);
F:=(x^54 + (a^5 + 2*a^4 + 2*a^2 + 13*a)*x^51 + (2*a^8 + 4*a^7 + 12*a^6 + 19*a^5 + 5*a^4 + 30*a^3 + 78*a^2 + 3*a)*x^48 + (a^12 + 20*a^9 + 33*a^8 + 72*a^7 + 102*a^6 + 60*a^5 + 189*a^4 + 296*a^3 + 42*a^2 + 1)*x^45 + (a^14 + 5*a^13 - 3*a^12 + 9*a^11 + 78*a^10 + 123*a^9 + 295*a^8 + 374*a^7 + 333*a^6 + 732*a^5 + 834*a^4 + 255*a^3 + 10*a^2 + 14*a)*x^42 + (3*a^15 + 10*a^14 - a^13 + 33*a^12 + 135*a^11 + 321*a^10 + 795*a^9 + 958*a^8 + 1217*a^7 + 1992*a^6 + 1929*a^5 + 969*a^4 + 117*a^3 + 85*a^2 + 5*a)*x^39 + (3*a^16 + 13*a^15 + 12*a^14 + 39*a^13 + 151*a^12 + 528*a^11 + 1251*a^10 + 1919*a^9 + 2997*a^8 + 3993*a^7 + 4010*a^6 + 2616*a^5 + 618*a^4 + 356*a^3 + 57*a^2 + 1)*x^36 + (a^17 + 11*a^16 + 18*a^15 + 38*a^14 + 130*a^13 + 417*a^12 + 1226*a^11 + 2698*a^10 + 4560*a^9 + 6475*a^8 + 7310*a^7 + 5220*a^6 + 2258*a^5 + 1129*a^4 + 294*a^3 + 28*a^2 + 11*a)*x^33 + (5*a^17 + 7*a^16 + 39*a^15 + 107*a^14 + 172*a^13 + 684*a^12 + 2011*a^11 + 4391*a^10 + 7968*a^9 + 9862*a^8 + 8681*a^7 + 5733*a^6 + 2633*a^5 + 1183*a^4 + 219*a^3 + 55*a^2 + 8*a)*x^30 + (a^18 + 21*a^16 + 46*a^15 + 84*a^14 + 318*a^13 + 710*a^12 + 2166*a^11 + 5688*a^10 + 9270*a^9 + 11094*a^8 + 9114*a^7 + 5600*a^6 + 3360*a^5 + 762*a^4 + 322*a^3 + 63*a^2 + 1)*x^27 + (5*a^17 + 7*a^16 + 39*a^15 + 107*a^14 + 172*a^13 + 684*a^12 + 2011*a^11 + 4391*a^10 + 7968*a^9 + 9862*a^8 + 8681*a^7 + 5733*a^6 + 2633*a^5 + 1183*a^4 + 219*a^3 + 55*a^2 + 8*a)*x^24 + (a^17 + 11*a^16 + 18*a^15 + 38*a^14 + 130*a^13 + 417*a^12 + 1226*a^11 + 2698*a^10 + 4560*a^9 + 6475*a^8 + 7310*a^7 + 5220*a^6 + 2258*a^5 + 1129*a^4 + 294*a^3 + 28*a^2 + 11*a)*x^21 + (3*a^16 + 13*a^15 + 12*a^14 + 39*a^13 + 151*a^12 + 528*a^11 + 1251*a^10 + 1919*a^9 + 2997*a^8 + 3993*a^7 + 4010*a^6 + 2616*a^5 + 618*a^4 + 356*a^3 + 57*a^2 + 1)*x^18 + (3*a^15 + 10*a^14 - a^13 + 33*a^12 + 135*a^11 + 321*a^10 + 795*a^9 + 958*a^8 + 1217*a^7 + 1992*a^6 + 1929*a^5 + 969*a^4 + 117*a^3 + 85*a^2 + 5*a)*x^15 + (a^14 + 5*a^13 - 3*a^12 + 9*a^11 + 78*a^10 + 123*a^9 + 295*a^8 + 374*a^7 + 333*a^6 + 732*a^5 + 834*a^4 + 255*a^3 + 10*a^2 + 14*a)*x^12 + (a^12 + 20*a^9 + 33*a^8 + 72*a^7 + 102*a^6 + 60*a^5 + 189*a^4 + 296*a^3 + 42*a^2 + 1)*x^9 + (2*a^8 + 4*a^7 + 12*a^6 + 19*a^5 + 5*a^4 + 30*a^3 + 78*a^2 + 3*a)*x^6 + (a^5 + 2*a^4 + 2*a^2 + 13*a)*x^3 + 1);
FF:=(x^18 + (a^5 + 2*a^4 + 2*a^2 + 13*a)*x^17 + (2*a^8 + 4*a^7 + 12*a^6 + 19*a^5 + 5*a^4 + 30*a^3 + 78*a^2 + 3*a)*x^16 + (a^12 + 20*a^9 + 33*a^8 + 72*a^7 + 102*a^6 + 60*a^5 + 189*a^4 + 296*a^3 + 42*a^2 + 1)*x^15 + (a^14 + 5*a^13 - 3*a^12 + 9*a^11 + 78*a^10 + 123*a^9 + 295*a^8 + 374*a^7 + 333*a^6 + 732*a^5 + 834*a^4 + 255*a^3 + 10*a^2 + 14*a)*x^14 + (3*a^15 + 10*a^14 - a^13 + 33*a^12 + 135*a^11 + 321*a^10 + 795*a^9 + 958*a^8 + 1217*a^7 + 1992*a^6 + 1929*a^5 + 969*a^4 + 117*a^3 + 85*a^2 + 5*a)*x^13 + (3*a^16 + 13*a^15 + 12*a^14 + 39*a^13 + 151*a^12 + 528*a^11 + 1251*a^10 + 1919*a^9 + 2997*a^8 + 3993*a^7 + 4010*a^6 + 2616*a^5 + 618*a^4 + 356*a^3 + 57*a^2 + 1)*x^12 + (a^17 + 11*a^16 + 18*a^15 + 38*a^14 + 130*a^13 + 417*a^12 + 1226*a^11 + 2698*a^10 + 4560*a^9 + 6475*a^8 + 7310*a^7 + 5220*a^6 + 2258*a^5 + 1129*a^4 + 294*a^3 + 28*a^2 + 11*a)*x^11 + (5*a^17 + 7*a^16 + 39*a^15 + 107*a^14 + 172*a^13 + 684*a^12 + 2011*a^11 + 4391*a^10 + 7968*a^9 + 9862*a^8 + 8681*a^7 + 5733*a^6 + 2633*a^5 + 1183*a^4 + 219*a^3 + 55*a^2 + 8*a)*x^10 + (a^18 + 21*a^16 + 46*a^15 + 84*a^14 + 318*a^13 + 710*a^12 + 2166*a^11 + 5688*a^10 + 9270*a^9 + 11094*a^8 + 9114*a^7 + 5600*a^6 + 3360*a^5 + 762*a^4 + 322*a^3 + 63*a^2 + 1)*x^9 + (5*a^17 + 7*a^16 + 39*a^15 + 107*a^14 + 172*a^13 + 684*a^12 + 2011*a^11 + 4391*a^10 + 7968*a^9 + 9862*a^8 + 8681*a^7 + 5733*a^6 + 2633*a^5 + 1183*a^4 + 219*a^3 + 55*a^2 + 8*a)*x^28 + (a^17 + 11*a^16 + 18*a^15 + 38*a^14 + 130*a^13 + 417*a^12 + 1226*a^11 + 2698*a^10 + 4560*a^9 + 6475*a^8 + 7310*a^7 + 5220*a^6 + 2258*a^5 + 1129*a^4 + 294*a^3 + 28*a^2 + 11*a)*x^7 + (3*a^16 + 13*a^15 + 12*a^14 + 39*a^13 + 151*a^12 + 528*a^11 + 1251*a^10 + 1919*a^9 + 2997*a^8 + 3993*a^7 + 4010*a^6 + 2616*a^5 + 618*a^4 + 356*a^3 + 57*a^2 + 1)*x^6 + (3*a^15 + 10*a^14 - a^13 + 33*a^12 + 135*a^11 + 321*a^10 + 795*a^9 + 958*a^8 + 1217*a^7 + 1992*a^6 + 1929*a^5 + 969*a^4 + 117*a^3 + 85*a^2 + 5*a)*x^5 + (a^14 + 5*a^13 - 3*a^12 + 9*a^11 + 78*a^10 + 123*a^9 + 295*a^8 + 374*a^7 + 333*a^6 + 732*a^5 + 834*a^4 + 255*a^3 + 10*a^2 + 14*a)*x^4 + (a^12 + 20*a^9 + 33*a^8 + 72*a^7 + 102*a^6 + 60*a^5 + 189*a^4 + 296*a^3 + 42*a^2 + 1)*x^3 + (2*a^8 + 4*a^7 + 12*a^6 + 19*a^5 + 5*a^4 + 30*a^3 + 78*a^2 + 3*a)*x^2 + (a^5 + 2*a^4 + 2*a^2 + 13*a)*x + 1);
X:=Curve(A,FF);
Genus(X);
\end{lstlisting}
\end{code}
\end{proof}

    Turning to periodic points of period $3$, we look at the vanishing of the third dynatomic polynomial $\Phi^{\ast}_3(f_k)$. It has a degree 6 and a degree 54 component. The degree 6 component is given by
    \begin{equation*}
        (k^2 + k + 1)z^6 + (k^2 + 4k + 1)z^3 + k^2 + k + 1=0.
    \end{equation*}
    This is irreducible of $\Q$, but reduced over $\Q(\omega)$, where $\omega$ is a cube root of unity. Since the curve factors over an extension of $\Q$, rational points on it must be singular (Lemma \ref{lem_irreducible_curves}). The singular points are $(k,x) \in \{(-1,1), (1,-1)\}$ but $k = \pm 1$ are degenerate cases, so there are no (valid) rational points on this component. The degree 54 component can be simplified by replacing $z^3$ with $z$ to have a degree $18$ equation in $z$. This gives a genus 23 curve which is still problematic computationally. The only points of small height on it corresponded to $k = \pm 1$, so are degenerate; however, we are not able to fully analyze this curve.

A search for rational preperiodic structures with the parameter up to height $10,000$ using the algorithm from \cite{Hutz12} as implemented in Sage yields no parameters $k \in \Q$ where $f_k$ has a $\Q$-rational periodic point with minimal period at least $3$.

\begin{conj}\label{conj_M4_D3_periodic}
    There is no $k \in \Q$ such that $f_k(z) = \frac{z^4 + kz}{kz^3 + 1}$ has a $\Q$-rational periodic point of minimal period at least $3$.
\end{conj}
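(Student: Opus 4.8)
The plan is to translate the statement into a finite list of rational-points problems on dynatomic modular curves and to exploit the $D_3$ symmetry of $f_k$, together with the dynamical cyclic symmetry on periodic orbits, to quotient those curves down to a range where Chabauty--Coleman or Mordell--Weil-based arguments apply, exactly as in Section~\ref{sect_M3_preperiodic}. Recall that $f_k$ carries the commuting automorphisms $z\mapsto\zeta_3 z$ and $z\mapsto 1/z$ generating its $D_3$ symmetry (with $f_k(\zeta_3 z)=\zeta_3 f_k(z)$ and $f_k(1/z)=1/f_k(z)$), and that for each $n$ the substitution $w:=z^3$ collapses the $C_3$-orbit structure so that $\Phi^\ast_n(f_k)=0$ descends to a curve in $(k,w)$ on which both $w\mapsto 1/w$ and the induced degree-$4$ map $\bar f_k(w)=w\bigl((w+k)/(kw+1)\bigr)^3$ act.

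Continuing the period-$3$ analysis begun after Proposition~\ref{prop_A4_D3_periodic}: the degree-$6$ component is already handled --- as a quadratic in $z^3$ its discriminant is $-3(k-1)^2(k+1)^2$, never a nonzero square, so it has no nondegenerate $\Q$-point (this also follows from geometric reducibility and Lemma~\ref{lem_irreducible_curves}). The crux is the remaining degree-$54$ component, a polynomial in $w=z^3$ giving the genus-$23$ plane curve $C$ in $(k,w)$. I would first quotient $C$ by the dynamical $C_3$-action $w\mapsto\bar f_k(w)$ --- i.e.\ pass to $t=w+\bar f_k(w)+\bar f_k^2(w)$ --- and then by the involution $w\mapsto 1/w$, arriving at a curve $X$ of genus roughly $3$ or $4$; one then computes the Mordell--Weil rank of its Jacobian and runs Chabauty--Coleman \cite{Chabauty,Coleman} using the code of \cite{BT-coleman}, the situation being modeled on Lemma~\ref{lem_M3_C3_genus_3_points} and Lemma~\ref{lem_D2_2}. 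A point search should produce only points lying over $k=\pm1$, which are degenerate, and lifting these back through the quotient maps --- as in the inverse-image computations in the proof of Theorem~\ref{thm_M3_C3_preperiodic} --- would show $C$ has no nondegenerate $\Q$-rational point.

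For minimal period $n\ge 4$ the same template applies: $\Phi^\ast_n(f_k)=0$ carries the $D_3$ and dynamical $C_n$ symmetries, the relevant symmetric quotient has genus $\ge 2$, and Faltings' theorem leaves only finitely many exceptional $k$ per period, which a point search should again reveal to be degenerate. To make the list of periods finite in the first place one would choose two rational primes $p,q$ of good reduction for $f_k$ and apply the standard reduction constraints on periods of periodic points (as in the treatments of $z^2+c$ and Manes's family \cite{Poonen,Manes2}), which should bound the admissible minimal period uniformly in $k$ outside a thin explicit exceptional set. The main obstacle is entirely one of size: even after using every available symmetry the period-$3$ curve $C$ has genus $23$, and its small-genus quotient requires a delicate Chabauty computation with a nontrivial Mordell--Weil lattice of the kind that needed outside expertise in Lemma~\ref{lem_M3_C3_genus_3_points}; and for $n\ge 4$ the quotiented dynatomic curves grow too fast to check without a prior period bound. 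This is exactly why the statement is left as Conjecture~\ref{conj_M4_D3_periodic}: a complete proof would need either sharper effective methods for rational points on higher-genus curves, or a structural argument --- perhaps from the orbit constraints imposed by the $D_3$ symmetry --- ruling out long $\Q$-rational cycles directly.
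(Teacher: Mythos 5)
This statement is a \emph{conjecture} in the paper, not a theorem; the paper offers no proof, only a computational search (no counterexamples for $k$ of height up to $10{,}000$) together with the partial period-$3$ analysis preceding the conjecture, in which the degree-$6$ component of $\Phi^{\ast}_3(f_k)$ is dispatched via Lemma~\ref{lem_irreducible_curves} and the degree-$54$ component, after $w=z^3$, yields a genus-$23$ curve the authors say they could not resolve. You frame your proposal as a plan rather than a proof, which is the right framing, and you correctly identify what is known and what is not.

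Two things you add are worth flagging. Your discriminant argument for the degree-$6$ component is a genuine simplification of the paper's: viewing it as a quadratic in $z^3$ (which it is, since $k^2+k+1\neq 0$ over $\Q$), the discriminant $-3(k-1)^2(k+1)^2$ is strictly negative for every non-degenerate $k\in\Q$, so there are no real, let alone rational, solutions --- one never needs geometric reducibility or singular-point bookkeeping. Your further quotient of the genus-$23$ curve, first by the induced dynamical $C_3$-action $w\mapsto \bar f_k(w)$ and then by the involution $w\mapsto 1/w$, is not attempted or mentioned in the paper and is in the same spirit as the successful reductions in Lemma~\ref{lem_D2_2} and Theorem~\ref{thm_M3_C3_preperiodic}; it is plausible but unverified --- the resulting genus, the Mordell--Weil rank of its Jacobian, and the lifting of singular points back through two quotient covers are all left open, and the authors' remark that the genus-$23$ curve was ``not amenable to any of the methods we tried'' at least suggests the difficulty will not disappear automatically.

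One step in your sketch does not go through as written. Reduction modulo two primes of good reduction bounds the admissible periods for each \emph{individual} $k$ (this is exactly what the paper's period-search code exploits), but it does not yield a bound uniform in $k$; a uniform bound here is a nontrivial piece of the Morton--Silverman conjecture, not something ``standard reduction constraints'' deliver, even allowing for an explicit exceptional set. Poonen~\cite{Poonen}, Manes~\cite{Manes2}, and the paper's Theorem~\ref{thm_A4_D3_preperiodic} all sidestep this by \emph{assuming} a period bound as a hypothesis rather than deriving it. As a result, your opening move --- ``translate the statement into a finite list of rational-points problems'' --- is not actually available: the list is not known to be finite, and a complete proof of the conjecture would first have to close that gap.
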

Assuming Conjecture \ref{conj_M4_D3_periodic}, we classify the $\Q$-rational preperiodic structures.

\begin{theorem}\label{thm_A4_D3_preperiodic}
 Assuming Conjecture \ref{conj_M4_D3_periodic}, the possible $\Q$-rational preperiodic structures for $f_k(z) = \frac{z^4+kz}{kz^3+1}$ for $k \in \Q$, with possibly finitely many exceptional values of the parameter $k$, are the following.
 \begin{align*}
G_1:&\xygraph{
!{<0cm,0cm>;<1cm,0cm>:<0cm,1cm>::}
!{(0,0) }*+{\underset{-1}{\bullet}}="a"
!{(1,0) }*+{\underset{1}{\bullet}}="b"
!{(2,0) }*+{\underset{0}{\bullet}}="c"
!{(3,0) }*+{\underset{\infty}{\bullet}}="d"
"a":"b"
"b":@(rd,ru)"b"
"c":@(rd,ru)"c"
"d":@(rd,ru)"d"
}, \quad \text{all $k$ not in $G_2$, $G_3$, $G_4$, or $G_5$}\\
G_2:&\xygraph{
!{<0cm,0cm>;<1cm,0cm>:<0cm,1cm>::}
!{(0,0) }*+{\underset{-1}{\bullet}}="a1"
!{(0,1) }*+{\bullet}="a2"
!{(1,1) }*+{\bullet}="a3"
!{(1,0) }*+{\underset{1}{\bullet}}="b"
!{(2,0) }*+{\underset{0}{\bullet}}="c"
!{(3,0) }*+{\underset{\infty}{\bullet}}="d"
"a1":"b"
"a2":"b"
"a3":"b"
"b":@(rd,ru)"b"
"c":@(rd,ru)"c"
"d":@(rd,ru)"d"
}, \quad \text{$k = t + t^{-1}$ for $t \in \Q \setminus \{0, \pm 1\}$}\\
G_3:&\xygraph{
!{<0cm,0cm>;<1cm,0cm>:<0cm,1cm>::}
!{(0,0) }*+{\underset{-1}{\bullet}}="b1"
!{(1,0) }*+{\underset{1}{\bullet}}="b"
!{(2,0) }*+{\bullet}="c1"
!{(3,0) }*+{\underset{0}{\bullet}}="c"
!{(4,0) }*+{\bullet}="d1"
!{(5,0) }*+{\underset{\infty}{\bullet}}="d"
"b1":"b"
"b":@(rd,ru)"b"
"c1":"c"
"c":@(rd,ru)"c"
"d1":"d"
"d":@(rd,ru)"d"
}, \quad \text{$k=t^3$ for $t \in \Q \setminus \{0, \pm 1\}$}\\
G_4:&\xygraph{
!{<0cm,0cm>;<1cm,0cm>:<0cm,1cm>::}
!{(0,0) }*+{\underset{-1}{\bullet}}="a"
!{(1,0) }*+{\underset{1}{\bullet}}="b"
!{(2,0) }*+{\underset{0}{\bullet}}="c"
!{(3,0) }*+{\underset{\infty}{\bullet}}="d"
!{(4,0) }*+{\bullet}="e"
!{(5.5,0) }*+{\bullet}="f"
"a":"b"
"b":@(rd,ru)"b"
"c":@(rd,ru)"c"
"d":@(rd,ru)"d"
"e":@/^1pc/"f"
"f":@/^1pc/"e"
}, \quad \text{$k = t^2 + t + 1 + t^{-1} + t^{-2}$ for $t \in \Q\setminus \{0, \pm 1\}$}\\
G_5:&\xygraph{
!{<0cm,0cm>;<1cm,0cm>:<0cm,1cm>::}
!{(0,-0.5) }*+{\bullet}="a1"
!{(0,0.5) }*+{\bullet}="a2"
!{(1,0) }*+{\underset{-1}{\bullet}}="a"
!{(2,0) }*+{\underset{1}{\bullet}}="b"
!{(3,0) }*+{\underset{0}{\bullet}}="c"
!{(4,0) }*+{\underset{\infty}{\bullet}}="d"
"a1":"a"
"a2":"a"
"a":"b"
"b":@(rd,ru)"b"
"c":@(rd,ru)"c"
"d":@(rd,ru)"d"
}, \quad \text{for $k = \frac{-1-t^4}{t^3+t}$ for $t \in \Q \setminus{0, \pm 1}$}.
\end{align*}
\end{theorem}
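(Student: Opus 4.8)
The plan is to read off the $\Q$-rational periodic part from Proposition~\ref{prop_A4_D3_periodic} together with the assumed Conjecture~\ref{conj_M4_D3_periodic}: for every non-degenerate $k$ the rational fixed points are exactly $0$, $1$, $\infty$, and a single $2$-cycle $\{t,1/t\}$ is present precisely when $k = t^2+t+1+t^{-1}+t^{-2}$. Since $f_k(-1)=1$ for all $k$, the point $-1$ is always a non-periodic preimage of $1$, so $G_1$ is contained in every rational preperiodic graph and the real work is to enumerate the strictly preperiodic additions — non-periodic rational preimages of $1$, of the pair $0,\infty$, of $-1$, and of the $2$-cycle — along with their iterated preimages, and then to decide which can occur simultaneously.

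The first-level preimage analysis should each reduce to a curve of small genus. From $f_k(z)=1$ one gets $z^4+kz-kz^3-1=(z-1)(z+1)(z^2-kz+1)$, so the only new preimages of $1$ are the roots of $z^2-kz+1$, rational exactly when $k^2-4$ is a square; parameterizing $k=t+t^{-1}$ gives the preimages $t,1/t$ and the structure $G_2$, and since $f_k(z)-1$ has only four roots there are no further direct preimages of $1$. From $f_k(z)=0$ one gets $z(z^3+k)=0$, so $0$ has a new rational preimage iff $-k$ is a cube, $k=t^3$, with preimage $-t$; because $f_k(1/z)=1/f_k(z)$, the point $\infty$ then automatically acquires the preimage $-1/t$, yielding $G_3$. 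From $f_k(z)=-1$ one gets the palindromic quartic $z^4+kz^3+kz+1=0$, whose roots pair up as $z,1/z$; a rational root $z=t$ forces $k=-(t^4+1)/(t^3+t)$ and produces the preimages $t,1/t$ of $-1$, i.e. $G_5$. In each case I then check that the newly created vertices carry no further rational preimages: this means writing down the appropriate iterated preimage curves, exploiting the $z\mapsto 1/z$ symmetry and, where the relevant polynomial factors only over $\Q(\zeta_3)$, invoking Lemma~\ref{lem_irreducible_curves} to force rational points to be singular, and then reducing to elliptic or hyperelliptic curves whose rational points are all degenerate — the same mechanism used throughout Sections~\ref{sect_M3_preperiodic} and~\ref{sect_M4_preperiodic}.

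Next I would rule out coincidences among the four parameterizations: the conditions $k=t+t^{-1}$, $k=s^3$, $k=-(u^4+1)/(u^3+u)$, and $k=v^2+v+1+v^{-1}+v^{-2}$ taken pairwise define curves in the remaining variables, and I expect each to have only points with a degenerate value of $k$, provable by reduction to rank-$0$ elliptic curves (or Faltings when the genus is large). This forces the additions to $G_1$ to be mutually exclusive and, combined with the second-preimage checks, establishes that $G_1,\dots,G_5$ are the complete list — subject to one caveat.

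The main obstacle is the $2$-cycle appearing in $G_4$: substituting $k=(v^4+v^3+v^2+v+1)/v^2$ into $f_k(z)=v$ (respectively $f_k(z)=1/v$) and stripping off the obvious component $z=1/v$ (respectively $z=v$) leaves in each case a curve of genus $6$, and these are the two explicitly given genus $6$ curves mentioned in the introduction. For these the available toolbox — Chabauty--Coleman, curve quotienting, descent — does not obviously produce a provably complete list of rational points, so I can only conclude that the $2$-cycle has no rational preimages \emph{away from finitely many exceptional values of $k$}, which is exactly the proviso in the statement. All the other ingredients — the genus $0$ parameterizations above, the elliptic and hyperelliptic reductions for deeper preimages, and the exclusion of simultaneous occurrences — should go through cleanly with the usual Magma and Sage computations.
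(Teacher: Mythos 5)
Your broad outline—reading off the periodic skeleton from Proposition~\ref{prop_A4_D3_periodic} and the conjecture, then hunting for non-periodic preimages of $1$, of $0,\infty$, of $-1$, and of the $2$-cycle, parameterizing the genus-$0$ cases, invoking Lemma~\ref{lem_irreducible_curves} and the $z\mapsto 1/z$ symmetry, and reducing the deeper preimage curves to elliptic/hyperelliptic where possible—matches the paper's strategy, and your identification of the four auxiliary parameterizations $k=t+t^{-1}$, $k=t^3$, $k=-(t^4+1)/(t^3+t)$, and $k=v^2+v+1+v^{-1}+v^{-2}$ is correct.

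The gap is in where you locate the ``finitely many exceptional $k$'' clause. You attribute it entirely to preimages of the $2$-cycle in $G_4$ and claim that everything else, in particular ``the elliptic and hyperelliptic reductions for deeper preimages, and the exclusion of simultaneous occurrences,'' goes through cleanly. That is not what happens. The paper leaves \emph{several} cases unresolved, each potentially contributing exceptional parameters: (i) preimages of the additional preimages of $1$ when $k=t+t^{-1}$ reduce to a genus $6$ curve $tz^4+(-t^3-t)z^3+(t^2+1)z-t^2=0$; (ii) second preimages of $0$ (and $\infty$) when $k=t^3$ reduce to two genus $8$ curves that were not amenable to any method tried; (iii) simultaneous existence of a preimage of $0$ and a second preimage of $1$ gives a genus $6$ curve; (iv) the simultaneous existence of a $2$-cycle and a preimage of a fixed point gives a genus $4$ curve; (v) preimages of points in the $2$-cycle give a single genus $6$ curve (not two as you claim — the two equations $f_k(z)=v$ and $f_k(z)=1/v$ are identified by the automorphism $z\mapsto 1/z$, and the paper explicitly passes to just one of them). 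So your proposal substantially underestimates what the exception clause must absorb; a write-up along the lines you describe would either leave genuinely unproved claims (that the genus $8$ second-preimage curves and the genus $4$/$6$ coincidence curves have no valid rational points) or would need to expand the exception set you allow for. Apart from that misallocation, your proof skeleton and the specific factorizations you compute agree with the paper.
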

\begin{proof}
    We start with non-periodic rational preimages of the fixed points. We begin with $0$, which amounts to solving $z^4 + kz = 0$, giving us the point $z = -t$ as the preimage of $0$ when $k = t^3$. The other non-periodic preimages of $0$ are not rational. Similarly, preimages of $\infty$ come from solutions to $az^3 + 1 = 0$, which are parameterized by $k = t^3$ as well, but the preimage is $z = -\frac{1}{t}$. The other non-periodic preimages of $\infty$ are not rational. Thus, the parameter $k = t^3$ gives two additional rational points in the preperiodic structure.

    Preimages of 1 come from solutions to $f_k(z) = 1$, which gives the curve
    \begin{equation*}
        z^4 - kz^3 + kz - 1 = (z-1)(z+1)(z^2 - kz + 1) = 0.
    \end{equation*}
    The factor of $z - 1$ is expected since 1 is its own preimage. The second factor has $z=-1$ as a rational preimage of $1$ for all $k$. Finally, the last factor is genus $0$, so when $k = t + t^{-1}$ the point $z = t$ maps to 1. Furthermore, this image is invariant when we replace $t$ by $\frac{1}{t}$, so in fact there will be two rational points that map to 1 for these values of $k$. Note that $t = \pm 1$ has the three rational preimages collapsing to a single preimage of the fixed point.

    Now we check if we can have additional non-periodic preimages of $1$ at the same time as non-periodic preimages of $0$ and $\infty$.
    We need values of $k$ such that $k = t+ t^{-1} = \ell^3$ for $t,\ell \in \Q$. The curve
    \begin{equation*}
        X:t^2 - t\ell^3 + 1 = 0
    \end{equation*}
    is a genus 2 hyperelliptic curve with model
    \begin{equation*}
        C:y^2 + x^3y + 1 = 0,
    \end{equation*}
    obtained by setting $x=\ell$ and $y=-t$. The curve $C$ has a (isomorphic) simplified model
    \begin{equation*}
        C':y^2 = x^6 - 4
    \end{equation*}
    whose Jacobian has rank $1$ and torsion subgroup isomorphic to $\Z/3\Z$. Magma's Chabauty method tells us that there are only two rational points on this hyperelliptic curve. However, since the map from the original curve was not an isomorphism, we still have some work to do: we need to investigate the behavior at the singular points. Fortunately, the only singular point on the projective model is $(1:0:0)$, and an initial search on the projective closure of $X$ finds the rational points $(0:1:0)$ and $(1:0:0)$, which are all we expect to have. We need to look at the places of the divisor associated to this singular point. There is only one; because it is of degree $1$, we have not missed any rational points. Thus, the only rational points are at infinity, so the affine model has none and we cannot have additional $\Q$-rational priemges of $1$ at the same time as non-periodic $\Q$-rational preimages of the fixed points $0$ and $\infty$.

    Now we look at non-periodic second preimages of the fixed point $1$. First we consider preimages of $-1$. We are looking for rational points on the curve
    \begin{equation*}
        z^4 + kz^3 + kz + 1=0.
    \end{equation*}
    This equation is linear in $k$, so we can solve as $k = \frac{-1-t^4}{t^3+t}$ for $t \in \Q \setminus \{0, \pm 1\}$. With this expression for $k$ the preimages of $-1$ satisfy
    \begin{equation*}
        (z - 1/t)(z - t)(z^2 + \frac{2t}{t^2 + 1}z + 1)=0.
    \end{equation*}
    Apply the quadratic equation to the last factor, we see
    \begin{equation*}
        z = \frac{-t \pm \sqrt{-1}}{t^2+1}
    \end{equation*}
    so there are at most $2$ rational preimages. To check whether there are one or two preimages we compute the discriminant of the curve equation with respect to $z$ to see that $k= \pm 1$ are the only rational values where the two preimages collapse to a single rational preimage.  The values $k = \pm 1$ are degenerate so this does not occur.

    Now we consider rational preimages of the preimages of $-1$. We substitute $k = \frac{-1-t^4}{t^3+t}$ and examine $f(z) = -1$ which becomes
    \begin{equation*}
        (z - 1/t)(z - t)(z^2 + (2t/(t^2 + 1))z + 1)=0.
    \end{equation*}
    Taking the last component, we need to find rational points on the (genus 1) curve
    \begin{equation*}
        C:(t^2 + 1)z^2 + 2tz + t^2 + 1=0.
    \end{equation*}
    Over $\Q(i)$ it has the rational point $(-i:0:1)$, which produces the Weierstrass model
    \begin{equation*}
        E:y^2 = x^3 + 4x^2 - 64x - 256
    \end{equation*}
    via the map
    \begin{equation*}
        (x,y,z) = (8it,-32iz,h),
    \end{equation*}
    where $h$ and $z$ are the homogenizing variables of $C$ and $E$, respectively.
    The curve $E$ is rank $0$ with torsion subgroup isomorphic to $\Z/2\Z \times \Z/4\Z$. Taking the inverse image of the eight rational points on $E$, we get the $\Q(i)$-rational points on the curve $C$ are
    \begin{equation*}
        \{(0 : 1 : 0), (i : 0 : 1), (1 : 0 : 0),(0 : -i : 1), [(0 : i : 1), (-i : 0 : 1) \}.
    \end{equation*}
    The only $\Q$-rational value of $t$ is $0$, which is not a valid parameter. So there are no second $\Q$-rational preimages of $-1$.

    There remain a few cases that were unable to be fully resolved resulting in possibly finitely many exceptional $k$ values with larger graph structures. We go through those cases now.

    Additional rational preimages of $1$ occur for $k = t + t^{-1}$ and we consider $\Q$-rational preimages of those additional preimages. We are looking for rational points on the (genus 6) curve
    \begin{equation*}
        tz^4 + (-t^3 - t)z^3 + (t^2 + 1)z - t^2=0.
    \end{equation*}
    This curve is genus 6 are was not amenable to any of the methods we tried. It has no valid points of small height.

    Non-periodic $\Q$-rational second preimages of $0$ are defined by the curves
    \begin{equation*}
        z^4 + t^4z^3 + t^3z + t = 0
    \end{equation*}
    and
    \begin{equation*}
        z^8 - t^4z^7 + t^8z^6 + 2t^3z^5 + (-t^7 - t)z^4 + 2t^5z^3 + t^6z^2 - t^4z + t^2 = 0.
    \end{equation*}
    These curves are both genus 8 are were not amenable to any of the methods we tried. They have no valid points of small height.

    Non-periodic $\Q$-rational second preimages of $\infty$ are equivalent to non-periodic $\Q$-rational second preimages of $0$ under the automorphism $z \mapsto 1/z$.

    The existence of a non-periodic $\Q$-rational preimage of $0$ and a $\Q$-rational second preimage of $1$ corresponds to rational points on the curve
    \begin{equation*}
        t_1^3(t_2^3+t_2) - (-1-t_2^4) = 0.
    \end{equation*}
    This curve is genus 6 and was not amenable to any of the methods we tried. It has no valid points of small height.

    The exists of a $\Q$-rational 2-cycle and a non-periodic $\Q$-rational preimage of a fixed point corresponds to rational points on the curve
    \begin{equation*}
        t_2^2t_1^3 - (t_2^4 + t_2^3 + t_2^2 + t_2 + 1)=0.
    \end{equation*}
    This curve is genus 4 and was not amenable to any of the methods we tried. It has no valid points of small height.

    It remains to consider non-periodic preimages of points in the $2$-cycle. The 2-cycle $\left\{t, \frac{1}{t}\right\}$ occurs for $k =t^2 + t + 1 + t^{-1} + t^{-2}$ with $t \in \Q \setminus \{0, \pm 1\}$. By the automorphism $z \mapsto 1/z$, it is equivalent to look at the preimage of either point in the cycle. Looking at preimages of $t$, we need to find rational points on the genus $6$ curve
    \begin{equation*}
        tz^3 - t(t^3 + t^2 + t + 1)z^2 - (t^3 + t^2 + t + 1)z + t^3=0.
    \end{equation*}
    This curve was not amenable to any of the method we tried. It has no valid points of small height.

    All of these unresolved cases are curves of genus at least 4 so can have at most finitely many $\Q$-rational points, due to Faltings' Theorem. These possible rational points corresponds to the possibly finitely many exceptions in the statement.

\begin{code}
\begin{lstlisting}
#second preimages of 0
R.<t>=QQ[]
P.<x,y>=ProjectiveSpace(R,1)
k=t^3
f=DynamicalSystem([x^4+k*x*y^3, k*x^3*y+y^4])
F=f.nth_iterate_map(2).dehomogenize(1)[0]
(F.numerator()).factor()

A<x,t>:=AffineSpace(Rationals(),2);
C:=Curve(A,(x^4 + t^4*x^3 + t^3*x + t));
Genus(C);

A<x,t>:=AffineSpace(Rationals(),2);
C:=Curve(A,(x^8 + (-t^4)*x^7 + t^8*x^6 + 2*t^3*x^5 + (-t^7 - t)*x^4 + 2*t^5*x^3 + t^6*x^2 + (-t^4)*x + t^2));
Genus(C);

## These are both genus 8 curves and there doesn't seem to be anything accessible for them. Only points of small height are degenerate

#second preimages of infty
R.<t>=QQ[]
P.<x,y>=ProjectiveSpace(R,1)
k=t^3
f=DynamicalSystem([x^4+k*x*y^3, k*x^3*y+y^4])
F=f.nth_iterate_map(2).dehomogenize(1)[0]
(F.denominator()).factor()


A<x,t>:=AffineSpace(Rationals(),2);
C:=Curve(A,(t*x^4 + t^3*x^3 + t^4*x + 1));
Genus(C);
Points(ProjectiveClosure(C):Bound:=1000);

A<x,t>:=AffineSpace(Rationals(),2);
C:=Curve(A,(t^2*x^8 + (-t^4)*x^7 + t^6*x^6 + 2*t^5*x^5 + (-t^7 - t)*x^4 + 2*t^3*x^3 + t^8*x^2 + (-t^4)*x + 1));
Genus(C);
Points(ProjectiveClosure(C):Bound:=1000);

## These are both genus 8 curves and there doesn't seem to be anything accessible for them. Only points of small height are degenerate


#all 3 fixed points with preimages
K := Rationals();
A<t,l> := AffineSpace(K, 2);
C := ProjectiveClosure(Curve(A, t^2 - t*l^3 + 1));
boo, D, psi := IsHyperelliptic(C);
S, phi := SimplifiedModel(D);
J := Jacobian(S);
Jratpts := RationalPoints(J:Bound:=1000);
TorsionSubgroup(J);
RankBounds(J);Jrat
Order(Jratpts[5]);
infpt := Jratpts[5];
Chabauty(infpt);
RationalPoints(C: Bound:=1000);
singpt := SingularPoints(C)[1];
p := Places(singpt)[1];
Degree(p);

#discriminant of preimage of 1
R.<k>=QQ[]
S.<z>=R[]
f=z^2-k*z+1
f.discriminant().factor()

#preimages of preimages of fixed points
#preimages of -1
R.<k>=QQ[]
P.<x,y>=ProjectiveSpace(R,1)
f=DynamicalSystem([x^4 + k*x*y^3, k*x^3*y+ y^4])
F=f.nth_iterate_map(1).dehomogenize(1)[0]
F.numerator()+F.denominator()

R.<t>=QQ[]
k=(-1-t^4)/(t^3+t)
P.<x,y>=ProjectiveSpace(R,1)
f=DynamicalSystem([x^4 + k*x*y^3, k*x^3*y+ y^4])
F=f.nth_iterate_map(1).dehomogenize(1)[0]
(F.numerator()+F.denominator()).factor()


#preimages of other preimages
R.<t>=QQ[]
k=t+1/t
P.<x,y>=ProjectiveSpace(R,1)
f=DynamicalSystem([x^4 + k*x*y^3, k*x^3*y+ y^4])
F=f.nth_iterate_map(1).dehomogenize(1)[0]
(F.numerator()-t*F.denominator())*t
A.<t,x>=AffineSpace(QQ,2)
X=A.curve(t*x^4 + (-t^3 - t)*x^3 + (t^2 + 1)*x - t^2)

A<t,x>:=AffineSpace(Rationals(),2);
X:=Curve(A,t*x^4 + (-t^3 - t)*x^3 + (t^2 + 1)*x - t^2);
Genus(X);
Points(ProjectiveClosure(X):Bound:=1000);

#looking at second preimages of 1 seems equivalently hard...
R.<k>=QQ[]
P.<x,y>=ProjectiveSpace(R,1)
f=DynamicalSystem([x^4 + k*x*y^3, k*x^3*y+ y^4])
F=f.nth_iterate_map(2).dehomogenize(1)[0]
F.numerator()-F.denominator()
A.<k,x>=AffineSpace(QQ,2)
X=A.curve(x^16 + (-k^2)*x^15 + (k^4 + 4*k)*x^13 + (-k^4 - 3*k^3 - k)*x^12 + (k^5 + 3*k^3 + 6*k^2)*x^10 + (-3*k^4 - 4*k^3 - 3*k^2)*x^9 + (3*k^4 + 4*k^3 + 3*k^2)*x^7 + (-k^5 - 3*k^3 - 6*k^2)*x^6 + (k^4 + 3*k^3 + k)*x^4 + (-k^4 - 4*k)*x^3 + k^2*x - 1)
X.irreducible_components()

A<k,x>:=AffineSpace(Rationals(),2);
F:=(x^8 + (-k^2)*x^7 + k^2*x^6 + 2*k*x^5 + (-k^3 - k)*x^4 + 2*k*x^3 + k^2*x^2 + (-k^2)*x + 1);
X:=Curve(A,F);
IsAbsolutelyIrreducible(X); //true
RationalPoints(ProjectiveClosure(X):Bound:=1000);
Genus(X); //6

//If we assume rank <= 5, then this gives all 6 rational points.
load "coleman.m";
Q:= y^8 + (-x^2)*y^7 + x^2*y^6 + 2*x*y^5 + (-x^3 - x)*y^4 + 2*x*y^3 + x^2*y^2 + (-x^2)*y + 1;
data:=coleman_data(Q,5,15);
Qpoints:=Q_points(data,1000);



#second preimages of -1
R.<t>=QQ[]
k=(-1-t^4)/(t^3+t)
P.<x,y>=ProjectiveSpace(R,1)
f=DynamicalSystem([x^4 + k*x*y^3, k*x^3*y+ y^4])
F=f.nth_iterate_map(1).dehomogenize(1)[0]
((F.numerator()+F.denominator())*(t^3 + t)).factor()

R<z> := PolynomialRing(Integers());
f := z^2 +1;
K<i> := NumberField(f);
P<t,x,z> := ProjectiveSpace(K,2);
C := Curve(P, (t^2 + z^2)*x^2 + 2*t*x*z^2 + t^2*z^2 + z^4);
p := C![-i,0,1];
E, psi := EllipticCurve(C,p);
Rank(E);
TorsionSubgroup(E);


K.<i>=QuadraticField(-1)
k=(-1-t^4)/(t^3+t)
P.<t,x,z>=ProjectiveSpace(K,2)
R=P.coordinate_ring()
f0=8*i*t*x^3 + 8*i*t*x*z^2 + 8*i*x^2*z^2 + 8*x*z^3
f1=-32*i*t*x^2*z - 16*x^2*z^2 - 32*i*t*z^3 - 32*i*x*z^3 - 32*z^4
f2=x^3*z
F=[f0,f1,f2]
for Q in E.torsion_points():
    X=P.subscheme([(t^2 + z^2)*x^2 + 2*t*x*z^2 + t^2*z^2 + z^4,Q[0]*F[1] - Q[1]*F[0], Q[0]*F[2]-Q[2]*F[0], Q[1]*F[2]-Q[2]*F[1]])
    print(X.rational_points())

# seconds preimges of -1 and preimages of 0,infty
A<t1,t2>:=AffineSpace(Rationals(),2);
X:=ProjectiveClosure(Curve(A,t1^3*(t2^3+t2) - (-1-t2^4)));
Genus(X);
Points(X:Bound:=1000);

#preimages of the 2-cycle
R.<t>=QQ[]
k=t^2+t+1+1/t+1/t^2
P.<x,y>=ProjectiveSpace(R,1)
f=DynamicalSystem([x^4 + k*x*y^3, k*x^3*y+ y^4])
F=f.nth_iterate_map(1).dehomogenize(1)[0]
((F.numerator()-t*F.denominator())).factor()

P<t,x,z> := ProjectiveSpace(Rationals(),2);
C := Curve(P, -t^4*x^2 - t^3*x^2*z - t^3*x*z^2 - t^2*x^2*z^2 + t*x^3*z^2 + t^3*z^3 - t^2*x*z^3 - t*x^2*z^3 - t*x*z^4 - x*z^5);

load "coleman.m";
Q:= -y^4*x^2 - y^3*x^2 - y^3*x - y^2*x^2 + y*x^3 + y^3 - y^2*x - y*x^2 - y*x - x;
data:=coleman_data(Q,5,15);
Qpoints:=Q_points(data,1000);


# 2-cycle and preimages of fixed points

## fixed point 0
A<t1,t2>:=AffineSpace(Rationals(),2);
X:=ProjectiveClosure(Curve(A,t2^2*t1^3 - (t2^4 + t2^3 + t2^2 + t2 + 1)));
Genus(X);
Points(X:Bound:=1000);

## additional preimages of 1
A<t1,t2>:=AffineSpace(Rationals(),2);
X:=ProjectiveClosure(Curve(A,t2^2*(t1^2+1) - t1*(t2^4 + t2^3 + t2^2 + t2 + 1)));
Genus(X);
Points(X:Bound:=1000);
boo, D, psi := IsHyperelliptic(X);
S, phi := SimplifiedModel(D);
J := Jacobian(S);
#now what???

## preimages of -1
A<t1,t2>:=AffineSpace(Rationals(),2);
X:=ProjectiveClosure(Curve(A,t2^2*(-1-t1^4) - (t1^3+t1)*(t2^4 + t2^3 + t2^2 + t2 + 1)));
Genus(X);
Points(X:Bound:=1000);


\end{lstlisting}
\end{code}
\end{proof}

\subsection{The locus $\A_4(C_3)$} \label{sect_A4_C3_preperiodic}

This family is given by $f_{k_1,k_2}(z) = \frac{z^4 + k_1z}{k_2z^3+1}$. We first examine periodic points.

\begin{prop}
    For the family $f_{k_1,k_2}(z) = \frac{z^4 + k_1z}{k_2z^3+1}$, we have the following $\Q$-rational periodic points.
    \begin{enumerate}
        \item The points $0$ and $\infty$ are fixed for all pairs $(k_1,k_2)$.
        \item For $(u,v) \in \bbA^2(\Q)$ and
            \begin{align*}
                k_1 &= u^4v - u^3 + 1\\
                k_2 &= uv,
            \end{align*}
            $f_{k_1,k_2}(z)$ has one additional rational fixed point $z=u$.
    \end{enumerate}
\end{prop}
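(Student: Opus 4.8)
The plan is to compute the first dynatomic polynomial of $f_{k_1,k_2}(z)=\frac{z^4+k_1z}{k_2z^3+1}$ and factor out the trivial factors. Writing the map projectively as $(x:y)\mapsto(x^4+k_1xy^3:k_2x^3y+y^4)$, the fixed point locus is cut out by $x^4y+k_1xy^4 - x(k_2x^3y+y^4) = xy\bigl((1-k_2)x^3 + (k_1-1)y^3\bigr)$ (up to the overall factor). The factor $xy$ records that $z=0$ (that is, $(0:1)$) and $z=\infty$ (that is, $(1:0)$) are fixed for every choice of $(k_1,k_2)$, which gives part (1) immediately; note $k_2\neq 1$ by the non-degeneracy hypothesis $k_1k_2\neq 1$ is not quite enough here, but $k_2=1$ would only collapse a factor and does not affect the argument, so I will simply dehomogenize and work with $\Phi^{\ast}_1(f_{k_1,k_2})$ as produced by the dynatomic polynomial algorithm.

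Next I would observe that after removing the $z$ factor, the remaining finite fixed points satisfy $(1-k_2)z^3 = 1-k_1$, i.e. a single cubic whose coefficients are linear in $k_1$ and $k_2$. This means that prescribing a finite fixed point $z=u$ imposes one linear condition and still leaves a one-parameter choice, which is exactly what the two-parameter description in the statement reflects. To obtain the clean parameterization I would set $z=u$, introduce a free parameter $v$, and solve: from $k_2=uv$ we get $1-k_2 = 1-uv$, and the equation $(1-k_2)u^3 = 1-k_1$ becomes $k_1 = 1 - u^3(1-uv) = 1 - u^3 + u^4 v$, which is precisely the claimed formula. Conversely, substituting $k_1 = u^4v - u^3 + 1$ and $k_2 = uv$ back into the cubic and verifying that $z-u$ is a factor is a routine check; I would also confirm via the dynatomic polynomial that under this specialization $\Phi^{\ast}_1$ factors as $z$ times $(z-u)$ times an irreducible (or complex-root) quadratic, so that $u$ is the only additional $\Q$-rational fixed point generically. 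The remaining quadratic factor $(1-uv)z^2 + (1-uv)uz + (1-uv)u^2$ up to scaling is $z^2+uz+u^2$, whose roots $u\cdot\frac{-1\pm\sqrt{-3}}{2}$ are never rational, confirming there are no further rational fixed points.

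The only mild subtlety — and the single place requiring care — is the degeneracy bookkeeping: I must ensure the pairs $(k_1,k_2)$ arising from $(u,v)$ actually lie in the locus $k_1k_2\neq 1$ where $f_{k_1,k_2}$ is a genuine degree-$4$ map, and I should note which $(u,v)$ to exclude. Since $k_1k_2 = (u^4v-u^3+1)(uv)$, the condition $k_1k_2=1$ cuts out a proper subvariety of the $(u,v)$-plane, so for all but those exceptional $(u,v)$ we get a valid map with $u$ fixed; I would simply state this. Everything else is a direct substitution and factorization, so I expect no real obstacle — the proof is essentially the computation of $\Phi^{\ast}_1$ and a change of variables, and the hardest part is merely presenting the parameterization transparently rather than any genuine difficulty.
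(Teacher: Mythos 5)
Your proposal is correct and takes essentially the same route as the paper: compute $\Phi^{\ast}_1$, factor out the components corresponding to $0$ and $\infty$, and parameterize the remaining rational surface $(1-k_2)z^3 = 1 - k_1$. The only difference is cosmetic — the paper invokes Magma's \texttt{ParametrizeProjectiveHypersurface} as a black box, whereas you derive the parameterization $(k_1,k_2) = (u^4v - u^3 + 1,\ uv)$ directly from the linearity in $k_1,k_2$ and explicitly check that the cofactor $z^2 + uz + u^2$ of $(z-u)$ has no rational roots; both yield the same conclusion.
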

\begin{proof}
We look at the first dynatomic polynomial. The points $0$ and $\infty$ are factors for all choices of parameters $k_1$ and $k_2$. The other component is given by
\begin{equation*}
    (k_2 - 1)z^3 - k_1 + 1=0.
\end{equation*}
This forms a rational surface that has the given parameterization. Substituting the parameterization back into $\Phi_1^{\ast}(f_{k_1,k_2})$, we see there is one additional fixed point.

\begin{code}
\begin{lstlisting}
#fixed points
R.<k1,k2>=QQ[]
P.<x,y>=ProjectiveSpace(R,1)
f=DynamicalSystem([x^4 + k1*x*y^3,k2*x^3*y+y^4])
f.dehomogenize(1).dynatomic_polynomial(1).factor()

#Surface information (run in Magma)
P3<k1,k2,h,x>:=ProjectiveSpace(Rationals(),3);
S:=Surface(P3,((k2 - h)*x^3 +(h- k1)*h^3));
P2<u,v,w>:=ProjectiveSpace(Rationals(),2);
ParametrizeProjectiveHypersurface(S,P2);

R.<u,v,w>=QQ[]
P.<x,y>=ProjectiveSpace(R,1)
k1=(u^4*v - u^3*w^2 + w^5)/(w^5)
k2=(u*v*w^3)/(w^5)
f=DynamicalSystem([x^4 + k1*x*y^3,k2*x^3*y+y^4])
f.dehomogenize(1).dynatomic_polynomial(1).factor()

\end{lstlisting}
\end{code}
\end{proof}

    As with the degree $3$ families with multiple parameters, periodic points with higher periods and (strictly) preperiodic points were difficult to study. We content ourselves with a census of $\Q$-rational preperiodic structures for parameters $k_1$ and $k_2$ with small height.

\begin{center}
Table: Preperiodic Graphs for Pairs $(k_1,k_2)$

\smallskip

\middlexcolumn
\offinterlineskip
\begin{tabularx}{0.98\textwidth}{%
  | *{4}{>{\centering\arraybackslash}X|}
}
\hline
\heading{$(0,1/2)$} & \heading{$(0,1)$} & \heading{$(0,2)$} & \heading{$(0,0)$}\\
\pre{\xygraph{
!{<0cm,0cm>;<1cm,0cm>:<0cm,1cm>::}
!{(0,1) }*+{\underset{0}{\bullet}}="a"
!{(1,1) }*+{\underset{\infty}{\bullet}}="b"
"a":@(rd,ru)"a"
"b":@(rd,ru)"b"
}}&
\pre{\xygraph{
!{<0cm,0cm>;<1cm,0cm>:<0cm,1cm>::}
!{(0,0) }*+{\bullet}="c"
!{(0,1) }*+{\underset{0}{\bullet}}="a"
!{(1,0) }*+{\underset{\infty}{\bullet}}="b"
"a":@(rd,ru)"a"
"b":@(rd,ru)"b"
"c":"b"
}}&
\pre{\xygraph{
!{<0cm,0cm>;<1cm,0cm>:<0cm,1cm>::}
!{(1,0) }*+{\bullet}="c"
!{(0,1) }*+{\underset{0}{\bullet}}="a"
!{(1,1) }*+{\underset{\infty}{\bullet}}="b"
"a":@(rd,ru)"a"
"b":@(rd,ru)"b"
"c":@(rd,ru)"c"
}}&
\pre{\xygraph{
!{<0cm,0cm>;<1cm,0cm>:<0cm,1cm>::}
!{(0,0) }*+{\bullet}="c"
!{(1,0) }*+{\bullet}="d"
!{(0,1) }*+{\underset{0}{\bullet}}="a"
!{(1,1) }*+{\underset{\infty}{\bullet}}="b"
"a":@(rd,ru)"a"
"b":@(rd,ru)"b"
"d":@(rd,ru)"d"
"c":"d"
}}\\
\hline
\end{tabularx}

\begin{tabularx}{0.98\textwidth}{%
  | *{3}{>{\centering\arraybackslash}X|}
}
\hline
\heading{$(1,-1)$} & \heading{$(1,-3)$} & \heading{$(1/2,-7/4)$}\\
\pre{\xygraph{
!{<0cm,0cm>;<1cm,0cm>:<0cm,1cm>::}
!{(0,0) }*+{\bullet}="c"
!{(1,0) }*+{\underset{0}{\bullet}}="a"
!{(0,1) }*+{\bullet}="e"
!{(1,1) }*+{\underset{\infty}{\bullet}}="b"
"a":@(rd,ru)"a"
"b":@(rd,ru)"b"
"e":"b"
"c":"a"
}}&
\pre{\xygraph{
!{<0cm,0cm>;<1cm,0cm>:<0cm,1cm>::}
!{(0,1) }*+{\bullet}="c"
!{(1,1) }*+{\bullet}="d"
!{(2,1) }*+{\underset{0}{\bullet}}="a"
!{(2,0) }*+{\underset{\infty}{\bullet}}="b"
"c":"d"
"d":"a"
"a":@(rd,ru)"a"
"b":@(rd,ru)"b"
}}&
\pre{\xygraph{
!{<0cm,0cm>;<1cm,0cm>:<0cm,1cm>::}
!{(0,0) }*+{\underset{0}{\bullet}}="a"
!{(0,1) }*+{\underset{\infty}{\bullet}}="b"
!{(1.5,0.5) }*+{\bullet}="f"
!{(3,0.5) }*+{\bullet}="e"
"a":@(rd,ru)"a"
"b":@(rd,ru)"b"
"f":@/^1pc/"e"
"e":@/^1pc/"f"
}}
\\
\hline
\end{tabularx}

\begin{tabularx}{0.98\textwidth}{%
  | *{3}{>{\centering\arraybackslash}X|}
}
\hline
\heading{$(4/3,-8)$} & \heading{$(-5/12,-25/18)$} & \heading{$(5/2,5/2)$}\\
\pre{\xygraph{
!{<0cm,0cm>;<1cm,0cm>:<0cm,1cm>::}
!{(0,1) }*+{\bullet}="c"
!{(1,1) }*+{\underset{0}{\bullet}}="a"
!{(0,0) }*+{\bullet}="d"
!{(1,0) }*+{\underset{\infty}{\bullet}}="b"
!{(2.5,0.5) }*+{\bullet}="f"
"a":@(rd,ru)"a"
"c":"a"
"b":@(rd,ru)"b"
"d":"b"
"f":@(rd,ru)"f"
}}&
\pre{\xygraph{
!{<0cm,0cm>;<1cm,0cm>:<0cm,1cm>::}
!{(0,1) }*+{\underset{0}{\bullet}}="a"
!{(0,0) }*+{\underset{\infty}{\bullet}}="b"
!{(1,0.5) }*+{\bullet}="c"
!{(2,0.5) }*+{\bullet}="f"
!{(3.5,0.5) }*+{\bullet}="g"
"a":@(rd,ru)"a"
"b":@(rd,ru)"b"
"c":"f"
"f":@/^1pc/"g"
"g":@/^1pc/"f"
}}&
\pre{\xygraph{
!{<0cm,0cm>;<1cm,0cm>:<0cm,1cm>::}
!{(0,1) }*+{\underset{0}{\bullet}}="a"
!{(0,0) }*+{\underset{\infty}{\bullet}}="b"
!{(2.5,0) }*+{\bullet}="c"
!{(1.5,0) }*+{\bullet}="d"
!{(1.5,1) }*+{\bullet}="e"
!{(2.5,1) }*+{\bullet}="f"
"a":@(rd,ru)"a"
"b":@(rd,ru)"b"
"c":@(rd,ru)"c"
"d":"c"
"e":"c"
"f":"c"
}}
\\
\hline
\end{tabularx}

\begin{tabularx}{0.98\textwidth}{%
  | *{3}{>{\centering\arraybackslash}X|}
}
\hline
\heading{$(1/8,1/8)$} & \heading{$(11/4,11/4)$} & \heading{$(17/10,17/10)$}\\
\pre{\xygraph{
!{<0cm,0cm>;<1cm,0cm>:<0cm,1cm>::}
!{(0,1) }*+{\bullet}="c"
!{(1,1) }*+{\underset{0}{\bullet}}="a"
!{(0,0) }*+{\bullet}="d"
!{(1,0) }*+{\underset{\infty}{\bullet}}="b"
!{(3,1) }*+{\bullet}="f"
!{(2,1) }*+{\bullet}="g"
"a":@(rd,ru)"a"
"c":"a"
"b":@(rd,ru)"b"
"d":"b"
"g":"f"
"f":@(rd,ru)"f"
}}&
\pre{\xygraph{
!{<0cm,0cm>;<1cm,0cm>:<0cm,1cm>::}
!{(0,0) }*+{\underset{0}{\bullet}}="a"
!{(0,1) }*+{\underset{\infty}{\bullet}}="b"
!{(1.5,1) }*+{\bullet}="f"
!{(3,1) }*+{\bullet}="e"
!{(1.5,0) }*+{\bullet}="c"
!{(2.5,0) }*+{\bullet}="d"
"a":@(rd,ru)"a"
"b":@(rd,ru)"b"
"c":"d"
"d":@(rd,ru)"d"
"f":@/^1pc/"e"
"e":@/^1pc/"f"
}}&
\pre{\xygraph{
!{<0cm,0cm>;<1cm,0cm>:<0cm,1cm>::}
!{(0,1) }*+{\underset{0}{\bullet}}="a"
!{(0,0) }*+{\underset{\infty}{\bullet}}="b"
!{(1,0) }*+{\bullet}="c"
!{(1,1) }*+{\bullet}="d"
!{(2,0.5) }*+{\bullet}="e"
!{(3,0.5) }*+{\bullet}="f"
"a":@(rd,ru)"a"
"b":@(rd,ru)"b"
"c":"e"
"d":"e"
"e":"f"
"f":@(rd,ru)"f"
}}
\\
\hline
\end{tabularx}
\end{center}

\begin{code}
\begin{lstlisting}
gr=[[0,1/2], [0,1], [0,2], [0,0], [1,-1], [1,-3], [1/2,-7/4], [4/3,-8], [-5/12,-25/18], [5/2,5/2], [1/8,1/8], [11/4,11/4], [17/10,17/10]]

set_verbose(None)
P.<x,y>=ProjectiveSpace(QQ,1)
L=[]
for A,B in gr:
    f=DynamicalSystem([x^4 + A*x*y^3, B*x^3*y + y^4])
    if f.is_morphism():
        G = f.rational_preperiodic_graph()
        found = False
        for a,b,c,g in L:
            if g.is_isomorphic(G):
                found=True
                break
        if not found:
            print(A,B,G, len(L)+1)
            L.append((A,B,G))
len(L)
\end{lstlisting}
\end{code}

\subsection{The locus $\A_4(C_2)$} \label{sect_A4_C2_preperiodic}

This family is given by $f_{k_1,k_2,k_3}(z) = \frac{z^4+k_1z^2+1}{k_2z^3+k_3z}$. The point at infinity is always a fixed point with preimage $0$. Additional fixed points are given by the first dynatomic polynomial whose vanishing defines a rational hypersurface
\begin{equation*}
    \Phi^{\ast}_1(f) = (k_2-1)z^4 - k_1z^2 + k_3z^2 - 1.
\end{equation*}

\begin{code}
\begin{lstlisting}
r.<k1,k2,k3>=QQ[]
P.<x,y>=ProjectiveSpace(r,1)
f=DynamicalSystem([x^4 + k1*x^2*y^2 + y^4, k2*x^3*y + k3*x*y^3])
f.dehomogenize(1).dynatomic_polynomial(1)
\end{lstlisting}
\end{code}

    For this family, we again content ourselves with a census of $\Q$-rational preperiodic structures for parameters $k_1$, $k_2$, and $k_3$ with small height.

\begin{center}
Table: Preperiodic Graphs for Triples $(k_1,k_2,k_3)$

\smallskip

\middlexcolumn
\offinterlineskip
\begin{tabularx}{0.98\textwidth}{%
  | *{5}{>{\centering\arraybackslash}X|}
}
\hline
\heading{$(0,0,1)$} & \heading{$(0,0,2)$} & \heading{$(0,0,-2)$} & \heading{$(0,1,-1)$} & \heading{$(-2,0,1)$}\\
\pre{\xygraph{
!{<0cm,0cm>;<1cm,0cm>:<0cm,1cm>::}
!{(0,0) }*+{\underset{0}{\bullet}}="a"
!{(1,0) }*+{\underset{\infty}{\bullet}}="b"
"a":"b"
"b":@(rd,ru)"b"
}}&
\pre{\xygraph{
!{<0cm,0cm>;<1cm,0cm>:<0cm,1cm>::}
!{(0,0) }*+{\underset{0}{\bullet}}="a"
!{(1,0) }*+{\underset{\infty}{\bullet}}="b"
!{(0,1) }*+{\bullet}="c"
!{(1,1) }*+{\bullet}="d"
"a":"b"
"b":@(rd,ru)"b"
"c":@(rd,ru)"c"
"d":@(rd,ru)"d"
}}&
\pre{\xygraph{
!{<0cm,0cm>;<1cm,0cm>:<0cm,1cm>::}
!{(0,0) }*+{\underset{0}{\bullet}}="a"
!{(1.5,0) }*+{\underset{\infty}{\bullet}}="b"
!{(0,1) }*+{\bullet}="c"
!{(1.5,1) }*+{\bullet}="d"
"a":"b"
"b":@(rd,ru)"b"
"c":@/^1pc/"d"
"d":@/^1pc/"c"
}}&
\pre{\xygraph{
!{<0cm,0cm>;<1cm,0cm>:<0cm,1cm>::}
!{(0,0) }*+{\underset{0}{\bullet}}="a"
!{(1,0) }*+{\underset{\infty}{\bullet}}="b"
!{(0,1) }*+{\bullet}="c"
!{(1,1) }*+{\bullet}="d"
"a":"b"
"b":@(rd,ru)"b"
"c":"b"
"d":"b"
}}&
\pre{\xygraph{
!{<0cm,0cm>;<1cm,0cm>:<0cm,1cm>::}
!{(1,0.5) }*+{\underset{0}{\bullet}}="a"
!{(2,0.5) }*+{\underset{\infty}{\bullet}}="b"
!{(0,0) }*+{\bullet}="c"
!{(0,1) }*+{\bullet}="d"
"a":"b"
"b":@(rd,ru)"b"
"c":"a"
"d":"a"
}}\\
\hline
\end{tabularx}

\begin{tabularx}{0.98\textwidth}{%
  | *{4}{>{\centering\arraybackslash}X|}
}
\hline
\heading{$(0,1/3,-4/3)$} & \heading{$(0,3,-5)$} & \heading{$(0,-3,5)$} & \heading{$(0,2/3,-8/3)$}\\
\pre{\xygraph{
!{<0cm,0cm>;<1cm,0cm>:<0cm,1cm>::}
!{(2,-0.5) }*+{\underset{0}{\bullet}}="a"
!{(2,0.5) }*+{\underset{\infty}{\bullet}}="b"
!{(1,0) }*+{\bullet}="c"
!{(1,1) }*+{\bullet}="d"
!{(0,0) }*+{\bullet}="e"
!{(0,1) }*+{\bullet}="f"
"a":"b"
"b":@(rd,ru)"b"
"c":"b"
"d":"b"
"e":"c"
"f":"d"
}}&
\pre{\xygraph{
!{<0cm,0cm>;<1cm,0cm>:<0cm,1cm>::}
!{(0,0) }*+{\underset{0}{\bullet}}="a"
!{(1,0) }*+{\underset{\infty}{\bullet}}="b"
!{(1.9,0) }*+{\bullet}="c"
!{(3.4,0) }*+{\bullet}="d"
!{(1.9,1) }*+{\bullet}="e"
!{(3.4,1) }*+{\bullet}="f"
"a":"b"
"b":@(rd,ru)"b"
"c":@/^1pc/"d"
"d":@/^1pc/"c"
"e":@/^1pc/"f"
"f":@/^1pc/"e"
}}&
\pre{\xygraph{
!{<0cm,0cm>;<1cm,0cm>:<0cm,1cm>::}
!{(0,0) }*+{\underset{0}{\bullet}}="a"
!{(1,0) }*+{\underset{\infty}{\bullet}}="b"
!{(2,0) }*+{\bullet}="c"
!{(0,1) }*+{\bullet}="d"
!{(2,1) }*+{\bullet}="e"
!{(1,1) }*+{\bullet}="f"
"a":"b"
"b":@(rd,ru)"b"
"c":@(rd,ru)"c"
"d":@(rd,ru)"d"
"e":@(rd,ru)"e"
"f":@(rd,ru)"f"
}}&
\pre{\xygraph{
!{<0cm,0cm>;<1cm,0cm>:<0cm,1cm>::}
!{(0,0) }*+{\underset{0}{\bullet}}="a"
!{(1,0) }*+{\underset{\infty}{\bullet}}="b"
!{(0,1) }*+{\bullet}="c"
!{(1,1) }*+{\bullet}="d"
!{(1.75,1) }*+{\bullet}="e"
!{(3.25,1) }*+{\bullet}="f"
"a":"b"
"b":@(rd,ru)"b"
"c":"b"
"d":"b"
"e":@/^1pc/"f"
"f":@/^1pc/"e"
}}\\
\hline
\end{tabularx}

\begin{tabularx}{0.98\textwidth}{%
  | *{3}{>{\centering\arraybackslash}X|}
}
\hline
\heading{$(0,-2/3,8/3)$} & \heading{$(1,1,5)$} & \heading{$(1,-1,-5)$}\\
\pre{\xygraph{
!{<0cm,0cm>;<1cm,0cm>:<0cm,1cm>::}
!{(0,0) }*+{\underset{0}{\bullet}}="a"
!{(1,0) }*+{\underset{\infty}{\bullet}}="b"
!{(0,1) }*+{\bullet}="c"
!{(1,1) }*+{\bullet}="d"
!{(2,1) }*+{\bullet}="e"
!{(2,0) }*+{\bullet}="f"
"a":"b"
"b":@(rd,ru)"b"
"c":"b"
"d":"b"
"e":@(rd,ru)"e"
"f":@(rd,ru)"f"
}}&
\pre{\xygraph{
!{<0cm,0cm>;<1cm,0cm>:<0cm,1cm>::}
!{(0,0) }*+{\underset{0}{\bullet}}="a"
!{(1,0) }*+{\underset{\infty}{\bullet}}="b"
!{(2,0) }*+{\bullet}="c"
!{(3,0) }*+{\bullet}="d"
!{(2,1) }*+{\bullet}="e"
!{(3,1) }*+{\bullet}="f"
"a":"b"
"b":@(rd,ru)"b"
"c":"d"
"d":@(rd,ru)"d"
"e":"f"
"f":@(rd,ru)"f"
}}&
\pre{\xygraph{
!{<0cm,0cm>;<1cm,0cm>:<0cm,1cm>::}
!{(1,0) }*+{\underset{0}{\bullet}}="a"
!{(2,0) }*+{\underset{\infty}{\bullet}}="b"
!{(0,1) }*+{\bullet}="c"
!{(1,1) }*+{\bullet}="d"
!{(2.5,1) }*+{\bullet}="e"
!{(3.5,1) }*+{\bullet}="f"
"a":"b"
"b":@(rd,ru)"b"
"c":"d"
"d":@/^1pc/"e"
"e":@/^1pc/"d"
"f":"e"
}}\\
\hline
\end{tabularx}

\begin{tabularx}{0.98\textwidth}{%
  | *{3}{>{\centering\arraybackslash}X|}
}
\hline
\heading{$(-2,0,9/2)$} & \heading{$(-2,1,2)$} & \heading{$(-2,1,-1/4)$}\\
\pre{\xygraph{
!{<0cm,0cm>;<1cm,0cm>:<0cm,1cm>::}
!{(2,0.5) }*+{\underset{0}{\bullet}}="a"
!{(3,0.5) }*+{\underset{\infty}{\bullet}}="b"
!{(1,0) }*+{\bullet}="c"
!{(1,1) }*+{\bullet}="d"
!{(0,0) }*+{\bullet}="e"
!{(0,1) }*+{\bullet}="f"
"a":"b"
"b":@(rd,ru)"b"
"c":"a"
"d":"a"
"e":"c"
"f":"d"
}}&
\pre{\xygraph{
!{<0cm,0cm>;<1cm,0cm>:<0cm,1cm>::}
!{(1,0.5) }*+{\underset{0}{\bullet}}="a"
!{(2,0.5) }*+{\underset{\infty}{\bullet}}="b"
!{(0,0) }*+{\bullet}="c"
!{(0,1) }*+{\bullet}="d"
!{(3,1) }*+{\bullet}="e"
!{(3,0) }*+{\bullet}="f"
"a":"b"
"b":@(rd,ru)"b"
"c":"a"
"d":"a"
"e":@(rd,ru)"e"
"f":@(rd,ru)"f"
}}&
\pre{\xygraph{
!{<0cm,0cm>;<1cm,0cm>:<0cm,1cm>::}
!{(1,0.5) }*+{\underset{0}{\bullet}}="a"
!{(2,0.5) }*+{\underset{\infty}{\bullet}}="b"
!{(0,0) }*+{\bullet}="c"
!{(0,1) }*+{\bullet}="d"
!{(2,1.5) }*+{\bullet}="e"
!{(2,-0.5) }*+{\bullet}="f"
"a":"b"
"b":@(rd,ru)"b"
"c":"a"
"d":"a"
"e":"b"
"f":"b"
}}\\
\hline
\end{tabularx}

\begin{tabularx}{0.98\textwidth}{%
  | *{3}{>{\centering\arraybackslash}X|}
}
\hline
\heading{$(-2,1,-5/2)$} & \heading{$(1/4,9,-9/2)$} & \heading{$(1/4,9/4,-9/2)$}\\
\pre{\xygraph{
!{<0cm,0cm>;<1cm,0cm>:<0cm,1cm>::}
!{(1,0.5) }*+{\underset{0}{\bullet}}="a"
!{(2,0.5) }*+{\underset{\infty}{\bullet}}="b"
!{(0,0) }*+{\bullet}="c"
!{(0,1) }*+{\bullet}="d"
!{(2.5,1.25) }*+{\bullet}="e"
!{(4,1.25) }*+{\bullet}="f"
"a":"b"
"b":@(rd,ru)"b"
"c":"a"
"d":"a"
"e":@/^1pc/"f"
"f":@/^1pc/"e"
}}&
\pre{\xygraph{
!{<0cm,0cm>;<1cm,0cm>:<0cm,1cm>::}
!{(2,0.5) }*+{\underset{0}{\bullet}}="a"
!{(3,0.5) }*+{\underset{\infty}{\bullet}}="b"
!{(0,0) }*+{\bullet}="c"
!{(0,1) }*+{\bullet}="d"
!{(1,0) }*+{\bullet}="e"
!{(1,1) }*+{\bullet}="f"
"a":"b"
"b":@(rd,ru)"b"
"c":"d"
"c":"e"
"d":"f"
"e":"f"
}}&\pre{\xygraph{
!{<0cm,0cm>;<1cm,0cm>:<0cm,1cm>::}
!{(2.5,0.5) }*+{\underset{0}{\bullet}}="a"
!{(3.5,0.5) }*+{\underset{\infty}{\bullet}}="b"
!{(0,0) }*+{\bullet}="c"
!{(0,1) }*+{\bullet}="d"
!{(1,0) }*+{\bullet}="e"
!{(1.5,1) }*+{\bullet}="f"
"a":"b"
"b":@(rd,ru)"b"
"d":@/^1pc/"f"
"f":@/^1pc/"d"
"c":@(rd,ru)"c"
"e":@(rd,ru)"e"
}}\\
\hline
\end{tabularx}

\begin{tabularx}{0.98\textwidth}{%
  | *{3}{>{\centering\arraybackslash}X|}
}
\hline
\heading{$(1,7,-7)$} & \heading{$(1,-7,7)$} & \heading{$(1,7/2,-7/2)$}\\
\pre{\xygraph{
!{<0cm,0cm>;<1cm,0cm>:<0cm,1cm>::}
!{(1,1) }*+{\underset{0}{\bullet}}="a"
!{(2,1) }*+{\underset{\infty}{\bullet}}="b"
!{(1,2) }*+{\bullet}="c"
!{(2,2) }*+{\bullet}="d"
!{(0,0) }*+{\bullet}="e"
!{(1,0) }*+{\bullet}="f"
!{(2.5,0) }*+{\bullet}="g"
!{(3.5,0) }*+{\bullet}="h"
"a":"b"
"b":@(rd,ru)"b"
"c":"b"
"d":"b"
"e":"f"
"f":@/^1pc/"g"
"g":@/^1pc/"f"
"h":"g"
}}&
\pre{\xygraph{
!{<0cm,0cm>;<1cm,0cm>:<0cm,1cm>::}
!{(0,0) }*+{\underset{0}{\bullet}}="a"
!{(1,0) }*+{\underset{\infty}{\bullet}}="b"
!{(0,1) }*+{\bullet}="c"
!{(1,1) }*+{\bullet}="d"
!{(2,0) }*+{\bullet}="e"
!{(3,0) }*+{\bullet}="f"
!{(2,1) }*+{\bullet}="g"
!{(3,1) }*+{\bullet}="h"
"a":"b"
"b":@(rd,ru)"b"
"c":"b"
"d":"b"
"e":"f"
"f":@(rd,ru)"f"
"g":"h"
"h":@(rd,ru)"h"
}}&
\pre{\xygraph{
!{<0cm,0cm>;<1cm,0cm>:<0cm,1cm>::}
!{(2,1.75) }*+{\underset{0}{\bullet}}="a"
!{(2,0.5) }*+{\underset{\infty}{\bullet}}="b"
!{(1,0.5) }*+{\bullet}="c"
!{(3,0.5) }*+{\bullet}="d"
!{(0,0) }*+{\bullet}="e"
!{(0,1) }*+{\bullet}="f"
!{(4,0) }*+{\bullet}="g"
!{(4,1) }*+{\bullet}="h"
"a":"b"
"b":@(ld,rd)"b"
"c":"b"
"d":"b"
"e":"c"
"f":"c"
"g":"d"
"h":"d"
}}
\\
\hline
\end{tabularx}

\begin{tabularx}{0.98\textwidth}{%
  | *{3}{>{\centering\arraybackslash}X|}
}
\hline
\heading{$(-1/2,3,-9/2)$} & \heading{$(-1/2,-3,9/2)$} & \heading{$(-1/2,3/2,3/2)$}\\
\pre{\xygraph{
!{<0cm,0cm>;<1cm,0cm>:<0cm,1cm>::}
!{(0,1) }*+{\underset{0}{\bullet}}="a"
!{(1,1) }*+{\underset{\infty}{\bullet}}="b"
!{(2,1) }*+{\bullet}="c"
!{(3.5,1) }*+{\bullet}="d"
!{(0,0) }*+{\bullet}="e"
!{(1,0) }*+{\bullet}="f"
!{(2.5,0) }*+{\bullet}="g"
!{(3.5,0) }*+{\bullet}="h"
"a":"b"
"b":@(rd,ru)"b"
"c":@/^1pc/"d"
"d":@/^1pc/"c"
"e":"f"
"f":@/^1pc/"g"
"g":@/^1pc/"f"
"h":"g"
}}&
\pre{\xygraph{
!{<0cm,0cm>;<1cm,0cm>:<0cm,1cm>::}
!{(0,1) }*+{\underset{0}{\bullet}}="a"
!{(1,1) }*+{\underset{\infty}{\bullet}}="b"
!{(2,1) }*+{\bullet}="c"
!{(3,1) }*+{\bullet}="d"
!{(0,0) }*+{\bullet}="e"
!{(1,0) }*+{\bullet}="f"
!{(2,0) }*+{\bullet}="g"
!{(3,0) }*+{\bullet}="h"
"a":"b"
"b":@(rd,ru)"b"
"c":"d"
"d":@(rd,ru)"d"
"e":"f"
"f":@(rd,ru)"f"
"g":@(rd,ru)"g"
"h":@(rd,ru)"h"
}}&
\pre{\xygraph{
!{<0cm,0cm>;<1cm,0cm>:<0cm,1cm>::}
!{(0,0) }*+{\underset{0}{\bullet}}="a"
!{(0,1) }*+{\underset{\infty}{\bullet}}="b"
!{(1,0) }*+{\bullet}="c"
!{(2,0) }*+{\bullet}="d"
!{(3.5,0) }*+{\bullet}="e"
!{(1,1) }*+{\bullet}="f"
!{(2,1) }*+{\bullet}="g"
!{(3.5,1) }*+{\bullet}="h"
"a":"b"
"b":@(lu,ru)"b"
"c":"d"
"d":@/^1pc/"e"
"e":@/^1pc/"d"
"f":"g"
"g":@/^1pc/"h"
"h":@/^1pc/"g"
}}\\
\hline
\end{tabularx}

\begin{tabularx}{0.98\textwidth}{%
  | *{3}{>{\centering\arraybackslash}X|}
}
\hline
\heading{$(-1/2,3/2,-9/4)$} & \heading{$(-1/2,-3/2,9/4)$} & \heading{$(-2,0,9/4)$}\\
\pre{\xygraph{
!{<0cm,0cm>;<1cm,0cm>:<0cm,1cm>::}
!{(0,0) }*+{\underset{0}{\bullet}}="a"
!{(0,1) }*+{\underset{\infty}{\bullet}}="b"
!{(1,0) }*+{\bullet}="c"
!{(2,0) }*+{\bullet}="d"
!{(3,0) }*+{\bullet}="e"
!{(1,1) }*+{\bullet}="f"
!{(2,1) }*+{\bullet}="g"
!{(3,1) }*+{\bullet}="h"
"a":"b"
"b":@(lu,ru)"b"
"c":"d"
"d":"e"
"e":@(rd,ru)"e"
"f":"g"
"g":"h"
"h":@(rd,ru)"h"
}}&
\pre{\xygraph{
!{<0cm,0cm>;<1cm,0cm>:<0cm,1cm>::}
!{(2,0) }*+{\underset{0}{\bullet}}="a"
!{(3,0) }*+{\underset{\infty}{\bullet}}="b"
!{(1.5,1) }*+{\bullet}="c"
!{(2.75,1) }*+{\bullet}="d"
!{(0.75,1) }*+{\bullet}="e"
!{(0,1) }*+{\bullet}="f"
!{(3.5,1) }*+{\bullet}="g"
!{(4.25,1) }*+{\bullet}="h"
"a":"b"
"b":@(rd,ru)"b"
"c":@/^1pc/"d"
"d":@/^1pc/"c"
"e":"c"
"f":"e"
"g":"d"
"h":"g"
}}&
\pre{\xygraph{
!{<0cm,0cm>;<1cm,0cm>:<0cm,1cm>::}
!{(1,0.5) }*+{\underset{0}{\bullet}}="a"
!{(2,0.5) }*+{\underset{\infty}{\bullet}}="b"
!{(0,0) }*+{\bullet}="c"
!{(0,1) }*+{\bullet}="d"
!{(3,0) }*+{\bullet}="e"
!{(4,0) }*+{\bullet}="f"
!{(3,1) }*+{\bullet}="g"
!{(4,1) }*+{\bullet}="h"
"a":"b"
"b":@(rd,ru)"b"
"c":"a"
"d":"a"
"e":@(rd,ru)"e"
"f":@(rd,ru)"f"
"g":@(rd,ru)"g"
"h":@(rd,ru)"h"
}}\\
\hline
\end{tabularx}

\begin{tabularx}{0.98\textwidth}{%
  | *{3}{>{\centering\arraybackslash}X|}
}
\hline
\heading{$(-2,0,-9/4)$} & \heading{$(-2,1/4,7/2)$} & \heading{$(-2,4/5,-5)$}\\
\pre{\xygraph{
!{<0cm,0cm>;<1cm,0cm>:<0cm,1cm>::}
!{(1,0.5) }*+{\underset{0}{\bullet}}="a"
!{(2,0.5) }*+{\underset{\infty}{\bullet}}="b"
!{(0,0) }*+{\bullet}="c"
!{(0,1) }*+{\bullet}="d"
!{(3,0) }*+{\bullet}="e"
!{(4.5,0) }*+{\bullet}="f"
!{(3,1) }*+{\bullet}="g"
!{(4.5,1) }*+{\bullet}="h"
"a":"b"
"b":@(rd,ru)"b"
"c":"a"
"d":"a"
"e":@/^1pc/"f"
"f":@/^1pc/"e"
"g":@/^1pc/"h"
"h":@/^1pc/"g"
}}&
\pre{\xygraph{
!{<0cm,0cm>;<1cm,0cm>:<0cm,1cm>::}
!{(2,0.5) }*+{\underset{0}{\bullet}}="a"
!{(2,1.75) }*+{\underset{\infty}{\bullet}}="b"
!{(1,0.5) }*+{\bullet}="c"
!{(3,0.5) }*+{\bullet}="d"
!{(0,0) }*+{\bullet}="e"
!{(0,1) }*+{\bullet}="f"
!{(4,0) }*+{\bullet}="g"
!{(4,1) }*+{\bullet}="h"
"a":"b"
"b":@(lu,ru)"b"
"c":"a"
"d":"a"
"e":"c"
"f":"c"
"g":"d"
"h":"d"
}}&
\pre{\xygraph{
!{<0cm,0cm>;<1cm,0cm>:<0cm,1cm>::}
!{(2,1.25) }*+{\underset{0}{\bullet}}="a"
!{(2,0) }*+{\underset{\infty}{\bullet}}="b"
!{(1,1.25) }*+{\bullet}="c"
!{(3,1.25) }*+{\bullet}="d"
!{(1,0) }*+{\bullet}="e"
!{(3,0) }*+{\bullet}="f"
!{(0,0) }*+{\bullet}="g"
!{(4,0) }*+{\bullet}="h"
"a":"b"
"b":@(ld,rd)"b"
"c":"a"
"d":"a"
"e":"b"
"f":"b"
"g":"e"
"h":"f"
}}
\\
\hline
\end{tabularx}

\begin{tabularx}{0.98\textwidth}{%
  | *{3}{>{\centering\arraybackslash}X|}
}
\hline
\heading{$(-2,6,-3/8)$} & \heading{$(-2,7,1/2)$} & \heading{$(-2,-7,-1/2)$}\\
\pre{\xygraph{
!{<0cm,0cm>;<1cm,0cm>:<0cm,1cm>::}
!{(2,1.25) }*+{\underset{0}{\bullet}}="a"
!{(2,0) }*+{\underset{\infty}{\bullet}}="b"
!{(1,1.25) }*+{\bullet}="c"
!{(3,1.25) }*+{\bullet}="d"
!{(1,0) }*+{\bullet}="e"
!{(3,0) }*+{\bullet}="f"
!{(0,1.25) }*+{\bullet}="g"
!{(4,1.25) }*+{\bullet}="h"
"a":"b"
"b":@(ld,rd)"b"
"c":"a"
"d":"a"
"e":"b"
"f":"b"
"g":"c"
"h":"d"
}}&
\pre{\xygraph{
!{<0cm,0cm>;<1cm,0cm>:<0cm,1cm>::}
!{(1,0.5) }*+{\underset{0}{\bullet}}="a"
!{(2,0.5) }*+{\underset{\infty}{\bullet}}="b"
!{(0,0) }*+{\bullet}="c"
!{(0,1) }*+{\bullet}="d"
!{(3,0) }*+{\bullet}="e"
!{(4,0) }*+{\bullet}="f"
!{(3,1) }*+{\bullet}="g"
!{(4,1) }*+{\bullet}="h"
"a":"b"
"b":@(rd,ru)"b"
"c":"a"
"d":"a"
"e":"f"
"f":@(rd,ru)"f"
"g":"h"
"h":@(rd,ru)"h"
}}&
\pre{\xygraph{
!{<0cm,0cm>;<1cm,0cm>:<0cm,1cm>::}
!{(1,0.5) }*+{\underset{0}{\bullet}}="a"
!{(2,0.5) }*+{\underset{\infty}{\bullet}}="b"
!{(0,0) }*+{\bullet}="c"
!{(0,1) }*+{\bullet}="d"
!{(3,0) }*+{\bullet}="e"
!{(3,1.5) }*+{\bullet}="f"
!{(4,0) }*+{\bullet}="g"
!{(4,1.5) }*+{\bullet}="h"
"a":"b"
"b":@(rd,ru)"b"
"c":"a"
"d":"a"
"e":"g"
"f":"h"
"g":@/^1pc/"h"
"h":@/^1pc/"g"
}}
\\
\hline
\end{tabularx}

\begin{tabularx}{0.98\textwidth}{%
  | *{3}{>{\centering\arraybackslash}X|}
}
\hline
\heading{$(1/3,8/3,-3/2)$} & \heading{$(-3,1,1)$} & \heading{$(-3,1,-3/2)$}\\
\pre{\xygraph{
!{<0cm,0cm>;<1cm,0cm>:<0cm,1cm>::}
!{(0,0) }*+{\underset{0}{\bullet}}="a"
!{(1,0) }*+{\underset{\infty}{\bullet}}="b"
!{(0,1) }*+{\bullet}="c"
!{(1,1) }*+{\bullet}="d"
!{(2,0) }*+{\bullet}="e"
!{(3.5,0) }*+{\bullet}="f"
!{(2,1) }*+{\bullet}="g"
!{(3.5,1) }*+{\bullet}="h"
"a":"b"
"b":@(rd,ru)"b"
"c":"b"
"d":"b"
"e":@/^1pc/"f"
"f":@/^1pc/"e"
"g":@/^1pc/"h"
"h":@/^1pc/"g"
}}&
\pre{\xygraph{
!{<0cm,0cm>;<1cm,0cm>:<0cm,1cm>::}
!{(0,0) }*+{\underset{0}{\bullet}}="a"
!{(0,1.25) }*+{\underset{\infty}{\bullet}}="b"
!{(1,0) }*+{\bullet}="c"
!{(1,1) }*+{\bullet}="d"
!{(2,0) }*+{\bullet}="e"
!{(3,0) }*+{\bullet}="f"
!{(3,1) }*+{\bullet}="g"
!{(4,0) }*+{\bullet}="h"
"a":"b"
"b":@(lu,ru)"b"
"c":"e"
"d":"e"
"e":@(rd,ru)"e"
"f":"h"
"g":"h"
"h":@(rd,ru)"h"
}}&
\pre{\xygraph{
!{<0cm,0cm>;<1cm,0cm>:<0cm,1cm>::}
!{(0,0) }*+{\underset{0}{\bullet}}="a"
!{(1,0) }*+{\underset{\infty}{\bullet}}="b"
!{(0,1) }*+{\bullet}="c"
!{(1.5,1) }*+{\bullet}="d"
!{(2,0) }*+{\bullet}="e"
!{(3.5,0) }*+{\bullet}="f"
!{(2,1) }*+{\bullet}="g"
!{(3.5,1) }*+{\bullet}="h"
"a":"b"
"b":@(rd,ru)"b"
"c":@/^1pc/"d"
"d":@/^1pc/"c"
"e":@/^1pc/"f"
"f":@/^1pc/"e"
"g":@/^1pc/"h"
"h":@/^1pc/"g"
}}\\
\hline
\end{tabularx}

\begin{tabularx}{0.98\textwidth}{%
  | *{3}{>{\centering\arraybackslash}X|}
}
\hline
\heading{$(-3,-1,-1)$} & \heading{$(-3,-1,3/2)$} & \heading{$(-3,1/3,-4/3)$}\\
\pre{\xygraph{
!{<0cm,0cm>;<1cm,0cm>:<0cm,1cm>::}
!{(0,0) }*+{\underset{0}{\bullet}}="a"
!{(0,1.25) }*+{\underset{\infty}{\bullet}}="b"
!{(1,0) }*+{\bullet}="c"
!{(1,1) }*+{\bullet}="d"
!{(2,0.5) }*+{\bullet}="e"
!{(3.5,0.5) }*+{\bullet}="f"
!{(4.5,0) }*+{\bullet}="g"
!{(4.5,1) }*+{\bullet}="h"
"a":"b"
"b":@(rd,ru)"b"
"c":"e"
"d":"e"
"e":@/^1pc/"f"
"f":@/^1pc/"e"
"g":"f"
"h":"f"
}}&
\pre{\xygraph{
!{<0cm,0cm>;<1cm,0cm>:<0cm,1cm>::}
!{(0,0) }*+{\underset{0}{\bullet}}="a"
!{(1,0) }*+{\underset{\infty}{\bullet}}="b"
!{(0,1) }*+{\bullet}="c"
!{(1.5,1) }*+{\bullet}="d"
!{(2,0) }*+{\bullet}="e"
!{(3.5,0) }*+{\bullet}="f"
!{(2,1) }*+{\bullet}="g"
!{(3,1) }*+{\bullet}="h"
"a":"b"
"b":@(rd,ru)"b"
"c":@/^1pc/"d"
"d":@/^1pc/"c"
"e":@/^1pc/"f"
"f":@/^1pc/"e"
"g":@(rd,ru)"g"
"h":@(rd,ru)"h"
}}&
\pre{\xygraph{
!{<0cm,0cm>;<1cm,0cm>:<0cm,1cm>::}
!{(0,0) }*+{\underset{0}{\bullet}}="a"
!{(1,0) }*+{\underset{\infty}{\bullet}}="b"
!{(0,1) }*+{\bullet}="c"
!{(1,1) }*+{\bullet}="d"
!{(2,0) }*+{\bullet}="e"
!{(3.5,0) }*+{\bullet}="f"
!{(2,1) }*+{\bullet}="g"
!{(3,1) }*+{\bullet}="h"
"a":"b"
"b":@(rd,ru)"b"
"c":"b"
"d":"b"
"e":@/^1pc/"f"
"f":@/^1pc/"e"
"g":@(rd,ru)"g"
"h":@(rd,ru)"h"
}}\\
\hline
\end{tabularx}

\begin{tabularx}{0.98\textwidth}{%
  | *{3}{>{\centering\arraybackslash}X|}
}
\hline
\heading{$(-3,2/3,-1/6)$} & \heading{$(-3,3/2,-1)$} & \heading{$(-3,-3/2,1)$}\\
\pre{\xygraph{
!{<0cm,0cm>;<1cm,0cm>:<0cm,1cm>::}
!{(0,0) }*+{\underset{0}{\bullet}}="a"
!{(1,0) }*+{\underset{\infty}{\bullet}}="b"
!{(0,1) }*+{\bullet}="c"
!{(1,1) }*+{\bullet}="d"
!{(2,0) }*+{\bullet}="e"
!{(3,0) }*+{\bullet}="f"
!{(2,1) }*+{\bullet}="g"
!{(3,1) }*+{\bullet}="h"
"a":"b"
"b":@(rd,ru)"b"
"c":"b"
"d":"b"
"e":"f"
"f":"h"
"h":"g"
"g":"e"
}}&
\pre{\xygraph{
!{<0cm,0cm>;<1cm,0cm>:<0cm,1cm>::}
!{(0,0) }*+{\underset{0}{\bullet}}="a"
!{(0,1) }*+{\underset{\infty}{\bullet}}="b"
!{(1,0) }*+{\bullet}="c"
!{(2,0) }*+{\bullet}="d"
!{(1.5,1) }*+{\bullet}="e"
!{(3,0) }*+{\bullet}="f"
!{(4,0) }*+{\bullet}="g"
!{(3.5,1) }*+{\bullet}="h"
"a":"b"
"b":@(lu,ru)"b"
"c":"d"
"d":"e"
"e":"c"
"f":"g"
"g":"h"
"h":"f"
}}&
\pre{\xygraph{
!{<0cm,0cm>;<1cm,0cm>:<0cm,1cm>::}
!{(0,0) }*+{\underset{0}{\bullet}}="a"
!{(0,1) }*+{\underset{\infty}{\bullet}}="b"
!{(1,0.75) }*+{\bullet}="c"
!{(1.75,0) }*+{\bullet}="d"
!{(2.5,0.75) }*+{\bullet}="e"
!{(2.5,1.5) }*+{\bullet}="f"
!{(1.75,2.25) }*+{\bullet}="g"
!{(1,1.5) }*+{\bullet}="h"
"a":"b"
"b":@(lu,ru)"b"
"c":"d"
"d":"e"
"e":"f"
"f":"g"
"g":"h"
"h":"c"
}}\\
\hline
\end{tabularx}

\begin{tabularx}{0.98\textwidth}{%
  | *{3}{>{\centering\arraybackslash}X|}
}
\hline
\heading{$(-3,4/3,-1/3)$} & \heading{$(-3,-4/3,1/3)$} & \heading{$(-3,1/6,-2/3)$}\\
\pre{\xygraph{
!{<0cm,0cm>;<1cm,0cm>:<0cm,1cm>::}
!{(2,1.5) }*+{\underset{0}{\bullet}}="a"
!{(2,0.5) }*+{\underset{\infty}{\bullet}}="b"
!{(1,0) }*+{\bullet}="c"
!{(1,1) }*+{\bullet}="d"
!{(0,0) }*+{\bullet}="e"
!{(0,1) }*+{\bullet}="f"
!{(3.5,0) }*+{\bullet}="g"
!{(3.5,1.5) }*+{\bullet}="h"
"a":"b"
"b":@(rd,ru)"b"
"c":"b"
"d":"b"
"e":"c"
"f":"d"
"g":@/^1pc/"h"
"h":@/^1pc/"g"
}}&
\pre{\xygraph{
!{<0cm,0cm>;<1cm,0cm>:<0cm,1cm>::}
!{(2,1.5) }*+{\underset{0}{\bullet}}="a"
!{(2,0.5) }*+{\underset{\infty}{\bullet}}="b"
!{(1,0) }*+{\bullet}="c"
!{(1,1) }*+{\bullet}="d"
!{(0,0) }*+{\bullet}="e"
!{(0,1) }*+{\bullet}="f"
!{(3.5,0) }*+{\bullet}="g"
!{(3.5,1.5) }*+{\bullet}="h"
"a":"b"
"b":@(rd,ru)"b"
"c":"b"
"d":"b"
"e":"c"
"f":"d"
"g":@(rd,ru)"g"
"h":@(rd,ru)"h"
}}&
\pre{\xygraph{
!{<0cm,0cm>;<1cm,0cm>:<0cm,1cm>::}
!{(2.25,1) }*+{\underset{0}{\bullet}}="a"
!{(2.25,0) }*+{\underset{\infty}{\bullet}}="b"
!{(1.5,0) }*+{\bullet}="c"
!{(3,0) }*+{\bullet}="d"
!{(0.75,0) }*+{\bullet}="e"
!{(3.75,0) }*+{\bullet}="f"
!{(0,0) }*+{\bullet}="g"
!{(4.5,0) }*+{\bullet}="h"
"a":"b"
"b":@(ld,rd)"b"
"c":"b"
"d":"b"
"e":"c"
"f":"d"
"g":"e"
"h":"f"
}}
\\
\hline
\end{tabularx}

\begin{tabularx}{0.98\textwidth}{%
  | *{3}{>{\centering\arraybackslash}X|}
}
\hline
\heading{$(-4/3,8/9,-2/9)$} & \heading{$(-5/4,-5/4,2)$} & \heading{$(-3/4,1/6,-8/3)$}\\
\pre{\xygraph{
!{<0cm,0cm>;<1cm,0cm>:<0cm,1cm>::}
!{(0,0) }*+{\underset{0}{\bullet}}="a"
!{(1,0) }*+{\underset{\infty}{\bullet}}="b"
!{(0,1) }*+{\bullet}="c"
!{(1,1) }*+{\bullet}="d"
!{(2,0) }*+{\bullet}="e"
!{(3,0) }*+{\bullet}="f"
!{(2,1) }*+{\bullet}="g"
!{(3,1) }*+{\bullet}="h"
"a":"b"
"b":@(rd,ru)"b"
"c":"b"
"d":"b"
"e":@(rd,ru)"e"
"f":@(rd,ru)"f"
"g":@(rd,ru)"g"
"h":@(rd,ru)"h"
}}&
\pre{\xygraph{
!{<0cm,0cm>;<1cm,0cm>:<0cm,1cm>::}
!{(0,0) }*+{\underset{0}{\bullet}}="a"
!{(1,0) }*+{\underset{\infty}{\bullet}}="b"
!{(0,1) }*+{\bullet}="c"
!{(1.5,1) }*+{\bullet}="d"
!{(2,0) }*+{\bullet}="e"
!{(3,0) }*+{\bullet}="f"
!{(2,1) }*+{\bullet}="g"
!{(3,1) }*+{\bullet}="h"
"a":"b"
"b":@(rd,ru)"b"
"c":@/^1pc/"d"
"d":@/^1pc/"c"
"e":@(rd,ru)"e"
"f":@(rd,ru)"f"
"g":@(rd,ru)"g"
"h":@(rd,ru)"h"
}}&
\pre{\xygraph{
!{<0cm,0cm>;<1cm,0cm>:<0cm,1cm>::}
!{(0,0) }*+{\underset{0}{\bullet}}="a"
!{(1,0) }*+{\underset{\infty}{\bullet}}="b"
!{(0,1) }*+{\bullet}="c"
!{(1,1) }*+{\bullet}="d"
!{(2,0) }*+{\bullet}="e"
!{(3,0) }*+{\bullet}="f"
!{(4.5,0) }*+{\bullet}="g"
!{(2,1) }*+{\bullet}="h"
!{(3,1) }*+{\bullet}="i"
!{(4.5,1) }*+{\bullet}="j"
"a":"b"
"b":@(rd,ru)"b"
"c":"b"
"d":"b"
"e":"f"
"f":@/^1pc/"g"
"g":@/^1pc/"f"
"h":"i"
"i":@/^1pc/"j"
"j":@/^1pc/"i"
}}
\\
\hline
\end{tabularx}

\begin{tabularx}{0.98\textwidth}{%
  | *{3}{>{\centering\arraybackslash}X|}
}
\hline
\heading{$(-5/4,5/3,-8/3)$} & \heading{$(-5/4,8/3,-5/3)$} & \heading{$(-3/4,7/3,-7/3)$}\\
\pre{\xygraph{
!{<0cm,0cm>;<1cm,0cm>:<0cm,1cm>::}
!{(0,0) }*+{\underset{0}{\bullet}}="a"
!{(0,1) }*+{\underset{\infty}{\bullet}}="b"
!{(1,0) }*+{\bullet}="c"
!{(2.5,0) }*+{\bullet}="d"
!{(1,1) }*+{\bullet}="e"
!{(2.5,1) }*+{\bullet}="f"
!{(3,0) }*+{\bullet}="g"
!{(4.5,0) }*+{\bullet}="h"
!{(3,1) }*+{\bullet}="i"
!{(4.5,1) }*+{\bullet}="j"
"a":"b"
"b":@(lu,ru)"b"
"c":@/^1pc/"d"
"d":@/^1pc/"c"
"e":@/^1pc/"f"
"f":@/^1pc/"e"
"g":@/^1pc/"h"
"h":@/^1pc/"g"
"i":@/^1pc/"j"
"j":@/^1pc/"i"
}}&
\pre{\xygraph{
!{<0cm,0cm>;<1cm,0cm>:<0cm,1cm>::}
!{(0,0) }*+{\underset{0}{\bullet}}="a"
!{(0,1) }*+{\underset{\infty}{\bullet}}="b"
!{(1,0) }*+{\bullet}="c"
!{(1,1) }*+{\bullet}="d"
!{(2,0) }*+{\bullet}="e"
!{(2,1) }*+{\bullet}="f"
!{(3,0) }*+{\bullet}="g"
!{(3,1) }*+{\bullet}="h"
!{(4,0) }*+{\bullet}="i"
!{(4,1) }*+{\bullet}="j"
"a":"b"
"b":@(lu,ru)"b"
"c":"d"
"c":"e"
"d":"f"
"f":"e"
"g":"h"
"g":"i"
"i":"j"
"h":"j"
}}&
\pre{\xygraph{
!{<0cm,0cm>;<1cm,0cm>:<0cm,1cm>::}
!{(2,1.75) }*+{\underset{0}{\bullet}}="a"
!{(2,0.75) }*+{\underset{\infty}{\bullet}}="b"
!{(1,0.75) }*+{\bullet}="c"
!{(3,0.75) }*+{\bullet}="d"
!{(0,0) }*+{\bullet}="e"
!{(0,0.5) }*+{\bullet}="f"
!{(0,1) }*+{\bullet}="g"
!{(0,1.5) }*+{\bullet}="h"
!{(4,0) }*+{\bullet}="i"
!{(4,0.5) }*+{\bullet}="j"
!{(4,1) }*+{\bullet}="k"
!{(4,1.5) }*+{\bullet}="l"
"a":"b"
"b":@(ld,rd)"b"
"c":"b"
"d":"b"
"e":"c"
"f":"c"
"g":"c"
"h":"c"
"i":"d"
"j":"d"
"k":"d"
"l":"d"
}}\\
\hline
\end{tabularx}

\begin{tabularx}{0.98\textwidth}{%
  | *{2}{>{\centering\arraybackslash}X|}
}
\hline
\heading{$(-3/4,7/2,-7/2)$} & \heading{$(-3/4,-7/2,7/2)$}\\
\pre{\xygraph{
!{<0cm,0cm>;<1cm,0cm>:<0cm,1cm>::}
!{(0,0) }*+{\underset{0}{\bullet}}="a"
!{(1,0) }*+{\underset{\infty}{\bullet}}="b"
!{(0,1) }*+{\bullet}="c"
!{(1,1) }*+{\bullet}="d"
!{(2,0) }*+{\bullet}="e"
!{(2,1) }*+{\bullet}="f"
!{(2,0.5) }*+{\bullet}="g"
!{(3,0.5) }*+{\bullet}="h"
!{(4.5,0.5) }*+{\bullet}="i"
!{(5.5,0) }*+{\bullet}="j"
!{(5.5,0.5) }*+{\bullet}="k"
!{(5.5,1) }*+{\bullet}="l"
"a":"b"
"b":@(rd,ru)"b"
"c":"b"
"d":"b"
"e":"h"
"f":"h"
"g":"h"
"h":@/^1pc/"i"
"i":@/^1pc/"h"
"j":"i"
"k":"i"
"l":"i"
}}&
\pre{\xygraph{
!{<0cm,0cm>;<1cm,0cm>:<0cm,1cm>::}
!{(0,0) }*+{\underset{0}{\bullet}}="a"
!{(1,0) }*+{\underset{\infty}{\bullet}}="b"
!{(0,1) }*+{\bullet}="c"
!{(1,1) }*+{\bullet}="d"
!{(2,0) }*+{\bullet}="e"
!{(2,1) }*+{\bullet}="f"
!{(3,0) }*+{\bullet}="g"
!{(3,1) }*+{\bullet}="h"
!{(4,0) }*+{\bullet}="i"
!{(4,1) }*+{\bullet}="j"
!{(5,0) }*+{\bullet}="k"
!{(5,1) }*+{\bullet}="l"
"a":"b"
"b":@(rd,ru)"b"
"c":"b"
"d":"b"
"e":"g"
"f":"g"
"h":"g"
"g":@(rd,ru)"g"
"i":"k"
"j":"k"
"l":"k"
"k":@(rd,ru)"k"
}}\\
\hline
\end{tabularx}

\begin{tabularx}{0.98\textwidth}{%
  | *{2}{>{\centering\arraybackslash}X|}
}
\hline
\heading{$(7/4,8,-8)$} & \heading{$(7/4,-8,8)$}\\
\pre{\xygraph{
!{<0cm,0cm>;<1cm,0cm>:<0cm,1cm>::}
!{(0,0) }*+{\underset{0}{\bullet}}="a"
!{(1,0) }*+{\underset{\infty}{\bullet}}="b"
!{(0,1) }*+{\bullet}="c"
!{(1,1) }*+{\bullet}="d"
!{(2,0) }*+{\bullet}="e"
!{(3,0) }*+{\bullet}="f"
!{(4.5,0) }*+{\bullet}="g"
!{(6,0) }*+{\bullet}="h"
!{(2,1) }*+{\bullet}="i"
!{(3,1) }*+{\bullet}="j"
!{(4.5,1) }*+{\bullet}="k"
!{(6,1) }*+{\bullet}="l"
"a":"b"
"b":@(rd,ru)"b"
"c":"b"
"d":"b"
"e":"f"
"f":@/^1pc/"g"
"g":@/^1pc/"f"
"h":"g"
"i":"j"
"j":@/^1pc/"k"
"k":@/^1pc/"j"
"l":"k"
}}&
\pre{\xygraph{
!{<0cm,0cm>;<1cm,0cm>:<0cm,1cm>::}
!{(0,0) }*+{\underset{0}{\bullet}}="a"
!{(1,0) }*+{\underset{\infty}{\bullet}}="b"
!{(0,1) }*+{\bullet}="c"
!{(1,1) }*+{\bullet}="d"
!{(2,0) }*+{\bullet}="e"
!{(3,0) }*+{\bullet}="f"
!{(4,0) }*+{\bullet}="g"
!{(5,0) }*+{\bullet}="h"
!{(2,1) }*+{\bullet}="i"
!{(3,1) }*+{\bullet}="j"
!{(4,1) }*+{\bullet}="k"
!{(5,1) }*+{\bullet}="l"
"a":"b"
"b":@(rd,ru)"b"
"c":"b"
"d":"b"
"e":"f"
"f":@(rd,ru)"f"
"g":"h"
"h":@(rd,ru)"h"
"i":"j"
"j":@(rd,ru)"j"
"k":"l"
"l":@(rd,ru)"l"
}}\\
\hline
\end{tabularx}
\end{center}

\begin{code}
\begin{lstlisting}
gr=[[0, 0, 1],[0, 0, 2],[0, 0, -2],[0, 1, -1],[0, 1/3, -4/3],[0, 3, -5],[0, -3, 5],[0, 2/3, -8/3],[0, -2/3, 8/3],[1, 1, 5],[1, -1, -5],[1, 7, -7],[1, -7, 7],[1, 7/2, -7/2],[-1/2, 3, -9/2],[-1/2, -3, 9/2],[-1/2, 3/2, 3/2],[-1/2, 3/2, -9/4],[-1/2, -3/2, 9/4],[-2, 0, 1],[-2, 0, 9/2],[-2, 0, 9/4],[-2, 0, -9/4],[-2, 1, 2],[-2, 1, -1/4],[-2, 1, -5/2],[-2, 1/4, 7/2],[-2, 4/5, -5],[-2, 6, -3/8],[-2, 7, 1/2],[-2, -7, -1/2],[1/3, 8/3, -3/2],[-3, 1, 1],[-3, 1, -3/2],[-3, -1, -1],[-3, -1, 3/2],[-3, 1/3, -4/3],[-3, 2/3, -1/6],[-3, 3/2, -1],[-3, -3/2, 1],[-3, 4/3, -1/3],[-3, -4/3, 1/3],[-3, 1/6, -2/3],[1/4, 9, -9/2],[1/4, 9/4, -9/2],[-3/4, 1/6, -8/3],[-3/4, 7/2, -7/2],[-3/4, -7/2, 7/2],[-3/4, 7/3, -7/3],[-4/3, 8/9, -2/9],[-5/4, 5/3, -8/3],[-5/4, -5/4, 2],[-5/4, 8/3, -5/3],[7/4,8,-8],[7/4,-8,8]]

set_verbose(None)
P.<x,y>=ProjectiveSpace(QQ,1)
L=[]
for A,B,C in gr:
    f=DynamicalSystem([x^4+A*x^2*y^2+y^4,B*x^3*y+C*x*y^3])
    if f.is_morphism():
        G = f.rational_preperiodic_graph()
        found = False
        for a,b,c,g in L:
            if g.is_isomorphic(G):
                found=True
                break
        if not found:
            print(A,B,C,G, len(L)+1)
            L.append((A,B,C,G))
len(L)
\end{lstlisting}
\end{code}

Notice that there are $\Q$-rational periodic points of (minimal) period $\{1,2,3,4,6\}$. Are there $\Q$-rational points of (minimal) period $5$?

\section{Concluding Remarks} \label{sect_concluding}
There are a number of interesting problems that come both directly and indirectly from the results in this paper. We list a few of those here.
\begin{enumerate}
    \item Resolve the conjectures on the existence of periodic points for the various one-dimensional families.
    \item Determine all the rational points on the high genus curves that were left unresolved in $\A_4(D_3)$.
    \item Do a full analysis of rational preperiodic points for the two- and three-dimensional families.
    \item Classify $\A_d$ for $d \geq 5$.
    \item Use these families for future studies. For example, when looking at the cycle statistics of these families modulo primes, such as the average number of periodic points or the average tail length, the number of rational elements in the automorphism group appears to play a role. We have preliminary statistics on this topic that can be made available upon request.
\end{enumerate}

\providecommand\biburl[1]{\texttt{#1}}

\end{document}